\newcommand{\N}{{\mathds{N}}}
\newcommand{\Z}{{\mathds{Z}}}
\newcommand{\R}{{\mathds{R}}}
\newcommand{\C}{{\mathds{C}}}
\newcommand{\T}{{\mathds{T}}}
\newcommand{\U}{{\mathds{U}}}
\newcommand{\D}{{\mathfrak{D}}}
\newcommand{\A}{{\mathfrak{A}}}
\newcommand{\B}{{\mathfrak{B}}}
\newcommand{\M}{{\mathfrak{M}}}
\newcommand{\G}{{\mathfrak{G}}}
\newcommand{\bigslant}[2]{{\raisebox{.2em}{$#1$}\left/\raisebox{-.2em}{$#2$}\right.}}
\newcommand{\Nbar}{\overline{\N}_\ast}
\newcommand{\Lip}{{\mathsf{L}}}
\newcommand{\Hilbert}{{\mathscr{H}}}
\newcommand{\Diag}[2]{{\operatorname*{Diag}\left[{#1}\middle| {#2} \right]}}
\newcommand{\dist}{{\mathsf{dist}}}
\newcommand{\qpropinquity}[1]{{\mathsf{\Lambda}_{#1}}}
\newcommand{\propinquity}[1]{{\mathsf{\Lambda}^\ast_{#1}}}
\newcommand{\cBall}[4]{{{#1}\left[#3,#4\right]_{#2}}}
\newcommand{\Kantorovich}[1]{{\mathsf{mk}_{#1}}}
\newcommand{\boundedLipschitz}[1]{{\mathsf{bl}_{#1}}}
\newcommand{\Haus}[1]{{\mathsf{Haus}_{#1}}}
\newcommand{\StateSpace}{{\mathscr{S}}}
\newcommand{\unital}[1]{{\mathfrak{u}{#1}}}
\newcommand{\mongekant}{{Mon\-ge-Kan\-to\-ro\-vich metric}}
\newcommand{\qms}{quantum locally compact metric space}
\newcommand{\pqms}{proper quantum metric space}
\newcommand{\pqpms}{pointed proper quantum metric space}
\newcommand{\Lqcms}{{\JLL} quan\-tum com\-pact me\-tric spa\-ce}
\newcommand{\Qqcms}[1]{${#1}$--quasi-Lei\-bniz quan\-tum com\-pact me\-tric spa\-ce}
\newcommand{\gQqcms}{quasi-\-Leib\-niz quan\-tum com\-pact me\-tric spa\-ce}
\newcommand{\qcms}{quantum compact metric space}
\newcommand{\lcqms}{quantum locally compact metric space}
\newcommand{\unit}{1}
\newcommand{\sa}[1]{{\mathfrak{sa}\left({#1}\right)}}
\newcommand{\indicator}[1]{{\chi_{#1}}}
\newcommand{\compacts}[1]{{\mathcal{K}\left({#1}\right)}}
\newcommand{\corner}[2]{{\indicator{#1}{#2}\indicator{#1}}}
\newcommand{\Loc}[3]{{\mathfrak{Loc}\left[ {#1} \middle\vert {#3} \right]_{#2}}}
\newcommand{\Adm}{{\mathrm{Adm}}}
\newcommand{\Moyal}[1]{\mathfrak{C}_{#1}(\R^2)}
\newcommand{\inner}[2]{{\left<{#1},{#2}\right>}}
\newcommand{\JLL}{Lei\-bniz}
\newcommand{\tr}{{\operatorname*{tr}}}
\newcommand{\Mod}[2]{{{\raisebox{.2em}{$#1$}\left/\raisebox{-.2em}{$#2$}\right.}}}
\newcommand{\dom}[1]{{\operatorname*{dom}({#1})}}
\newcommand{\codom}[1]{{\operatorname*{codom}({#1})}}
\newcommand{\diam}[2]{{\mathrm{diam}\left({#1},{#2}\right)}}
\newcommand{\covn}[3]{{\mathrm{cov}_{{#1}}\left({#2}\middle\vert{#3}\right)}}
\newcommand{\tunnelset}[3]{{\text{\calligra Tunnels}\,\left[\left({#1}\right)\stackrel{#3}{\longrightarrow}\left({#2}\right)\right]}}
\newcommand{\journeyset}[3]{{\text{\calligra Journeys}\left[\left({#2}\right)\stackrel{#1}{\longrightarrow}\left({#3}\right)\right]}}
\newcommand{\finitedimclass}[1]{{\text{\calligra FiniteDim}}\left({#1}\right)}
\newcommand{\bridgereach}[2]{{\varrho\left({#1}\middle|{#2}\right)}}
\newcommand{\bridgeheight}[2]{{\varsigma\left({#1}\middle|{#2}\right)}}
\newcommand{\bridgelength}[2]{{\lambda\left({#1}\middle|{#2}\right)}}
\newcommand{\treklength}[1]{{\lambda\left({#1}\right)}}
\newcommand{\trekset}[3]{{\text{\calligra Treks}\left(\left({#1}\right)\stackrel{#3}{\longrightarrow}\left({#2}\right)\right)}}
\newcommand{\bridgenorm}[2]{{\mathsf{bn}_{ {#1}  }\left({#2}\right)}}
\newcommand{\Jordan}[2]{{{#1}\circ{#2}}} %{{{#1}\circledcirc{#2}}}
\newcommand{\Lie}[2]{{\left\{{#1},{#2}\right\}}} %{{\left[{#1},{#2}\right]_\ast}}
\newcommand{\targetsettunnel}[3]{{\mathfrak{t}_{#1}\left({#2}\middle\vert{#3}\right)}}
\newcommand{\liftsettunnel}[3]{{{\mathfrak{l}_{#1}\left({#2}\middle\vert{#3}\right)}}}
\newcommand{\tunneldepth}[1]{{\delta\left(#1\right)}}
\newcommand{\tunnelreach}[1]{{\rho\left(#1\right)}}
\newcommand{\tunnellength}[1]{{\lambda\left(#1\right)}}
\newcommand{\journeylength}[1]{{\lambda\left(#1\right)}}
\newcommand{\tunnelextent}[1]{{\chi\left({#1}\right)}}
\newcommand{\prox}{{\mathrm{prox}}}
\newcommand{\co}[1]{{\overline{\mathrm{co}}\left(#1\right)}}
\newcommand{\alg}[1]{{\mathfrak{#1}}}
\newcommand{\wcs}[1]{{\mathscr{K}\left(\mathscr{S}\left({#1}\right)\right)}}
\newcommand{\opnorm}[1]{{\left|\mkern-1.5mu\left|\mkern-1.5mu\left| {#1} \right|\mkern-1.5mu\right|\mkern-1.5mu\right|}}
\newcommand{\GH}{{\mathrm{GH}}}
\newcommand{\GHl}{{\mathrm{GH}_l}}
\newcommand{\almostsubseteq}[2]{\;{\subseteq^{#1}_{#2}}\;}
\newcommand{\Set}[2]{{\left\{\begin{aligned} #1 \end{aligned} \middle\vert\begin{aligned} #2 \end{aligned}\right\}}}
\newcommand{\set}[2]{{\left\{{#1}\middle\vert{#2}\right\}}}
\newcommand{\sigmaKantorovich}[1]{{\mathsf{mk}^\sigma_{#1}}}
\newcommand{\lsa}[3]{{ \mathfrak{sa}\left[{#1}\middle|{#3}\right]_{#2}  }}
\newcommand{\indmor}[2]{{\underrightarrow{#1^{#2}}}}
\newcommand{\CondExp}[2]{{\mathds{E}\left({#1}\middle\vert{#2}\right)}}
\theoremstyle{plain}
\newtheorem{theorem}{Theorem}[section]
\newtheorem{corollary}[theorem]{Corollary}
\newtheorem{lemma}[theorem]{Lemma}
\newtheorem{proposition}[theorem]{Proposition}
\newtheorem{theorem-definition}[theorem]{Theorem-Definition}
\theoremstyle{definition}
\newtheorem{definition}[theorem]{Definition}
\newtheorem{notation}[theorem]{Notation}
\newtheorem{convention}[theorem]{Convention}
\theoremstyle{remark}
\newtheorem{example}[theorem]{Example}
\newtheorem{remark}[theorem]{Remark}
\renewcommand{\geq}{\geqslant}
\renewcommand{\leq}{\leqslant}
\newcommand{\conv}[1]{{*_{#1}}} %{{\widehat{\star_{#1}}}}
\numberwithin{equation}{section}
\newcommand\@dotsep{4.5}
\def\@tocline#1#2#3#4#5#6#7{\relax
  \ifnum #1>\c@tocdepth % then omit
  \else
    \par \addpenalty\@secpenalty\addvspace{#2}%
    \begingroup \hyphenpenalty\@M
    \@ifempty{#4}{%
      \@tempdima\csname r@tocindent\number#1\endcsname\relax
    }{%
      \@tempdima#4\relax
    }%
    \parindent\z@ \leftskip#3\relax \advance\leftskip\@tempdima\relax
    \rightskip\@pnumwidth plus1em \parfillskip-\@pnumwidth
    #5\leavevmode\hskip-\@tempdima #6\relax
    \leaders\hbox{$\m@th
      \mkern \@dotsep mu\hbox{.}\mkern \@dotsep mu$}\hfill
    \hbox to\@pnumwidth{\@tocpagenum{#7}}\par
    \nobreak
    \endgroup
  \fi}
\begin{document}

\title{Quantum Metric Spaces and the Gromov-Hausdorff Propinquity}
\author{Fr\'{e}d\'{e}ric Latr\'{e}moli\`{e}re}
\email{frederic@math.du.edu}
\urladdr{http://www.math.du.edu/\symbol{126}frederic}
\address{Department of Mathematics \\ University of Denver \\ Denver CO 80208}

\date{\today}

\subjclass[2000]{Primary:  46L89, 46L30, 58B34.}
\keywords{Noncommutative metric geometry, Gromov-Hausdorff convergence, Monge-Kantorovich distance, Quantum Metric Spaces, Lip-norms}

\begin{abstract}
We present a survey of the dual Gromov-Hausdorff propinquity, a noncommutative analogue of the Gromov-Hausdorff distance which we introduced to provide a framework for the study of the noncommutative metric properties of C*-algebras. We first review the notions of {\lcqms s}, and present various examples of such structures. We then explain the construction of the dual Gromov-Hausdorff propinquity, first in the context of {\gQqcms s}, and then in the context of {\pqpms s}. We include a few new result concerning perturbations of the metrics on {\Lqcms s} in relation with the dual Gromov-Hausdorff propinquity.
\end{abstract}

\maketitle

\tableofcontents

%%%%%%%%%%%%%%%%%%%%%%%%%%%%%%%%%%%%%%%%%%%

\section{Introduction}

Noncommutative metric geometry proposes to study certain classes of noncommutative algebras as generalizations of algebras of Lipschitz functions over metric spaces, so that that methods from metric geometry may be applied to the analysis of such algebras. Quantum physical systems and other problems where noncommutative algebras appear naturally, such as in the study of certain types of singular spaces, provide the motivation for this research. Inspired by the work of Connes \cite{Connes89,Connes}, Rieffel introduced in \cite{Rieffel98a, Rieffel99} the notion of a compact quantum metric space and in \cite{Rieffel00} a generalization of the Gromov-Hausdorff distance \cite{Gromov,Gromov81}, thus providing in \cite{Latremoliere05, Rieffel01} a meaning to many approximations of classical and quantum spaces by matrix algebras found in the physics literature (see for instance \cite{Connes97,Seiberg99,tHooft02,Zumino98}), and pioneering a new set of techniques in the study of the geometry of C*-algebras (a sample of which is \cite{Ozawa05,Rieffel02,Rieffel09,Rieffel10, Rieffel10c}). This document presents some of the metric aspects of noncommutative geometry, and in particular, our understanding of the topologies which can be constructed on various large classes of quantum metric spaces, by means of our noncommutative analogues of the Gromov-Hausdorff distance \cite{Gromov81} called the dual Gromov-Hausdorff propinquity \cite{Latremoliere13,Latremoliere13b,Latremoliere13c,Latremoliere14,Latremoliere14b}.

The pursuit of an applicable theory of quantum metric spaces continues to raise many challenges, two of which have been addressed in our recent research. First, over the past decade, the search for a noncommutative analogue of the Gromov-Hausdorff distance which would be adequate for the study of the behavior of C*-algebraic structures with respect to metric convergence has proven an elusive query \cite{Rieffel10c}. We recently introduced a family of such metrics, called \emph{dual Gromov-Hausdorff propinquities} \cite{Latremoliere13,Latremoliere13b,Latremoliere13c,Latremoliere14}, adapted to the prospective applications of noncommutative metric geometry. Second, the search for a proper notion of \emph{locally compact quantum metric spaces} proved a delicate issue. Our work \cite{Latremoliere05b,Latremoliere12b} is the main contribution to a theory for such spaces, and we have recently added a framework for Gromov-Hausdorff convergence of proper quantum metric spaces \cite{Latremoliere14b}. The current document is build upon these two contributions.

The core concept for our work is the generalization of the {\mongekant} to the setting of C*-algebras, or in other terms, a generalization of the notion of a Lipschitz seminorm. The classical {\mongekant}, introduced by Kantorovich \cite{Kantorovich40} for his study of Monge's transportation problem, induces the weak* topology on the set of regular Borel probability measures on compact metric spaces. This property is, in fact, dual to the property that the set of real valued $1$-Lipschitz maps over a compact metric space is itself compact modulo the constant functions, thanks to Arz{\'e}la-Ascoli Theorem. Rieffel proposed to formalize these two properties and extend this duality to unital C*-algebras. Thus, a noncommutative Lipschitz seminorm, which Rieffel called a \emph{Lip-norm}, encodes a form of uniform equicontinuity and gives us a noncommutative Arz{\'e}la-Ascoli Theorem.

This picture does not extend to the locally compact metric space setting: the {\mongekant} associated to the Lipschitz seminorm from a locally compact metric space is an extended metric and does not typically metrize the weak* topology on the set of regular Borel probability measures. Instead, Dobrushin \cite{Dobrushin70} introduced a notion of ``metrically tights'' sets of probability measures, on which the restriction of the {\mongekant} does induces the weak* topology. Generalizing these ideas is not a straightforward matter, and in fact, there are only two approaches, both from our own work: while in \cite{Latremoliere05b}, we replace the {\mongekant} by the bounded-Lipschitz distance, in \cite{Latremoliere12b} we propose a noncommutative analogue of Dobrushin's tightness and study the {\mongekant}.

Another property of Lipschitz seminorms which connects it to the multiplication of functions is the Leibniz inequality. Yet, the Leibniz property of the Lipschitz seminorm does not seem to play a role in the topological properties of the associated {\mongekant}, and instead, it introduces some difficulties when trying to extend the Gromov-Hausdorff distance to quantum compact metric spaces \cite{kerr02,Rieffel10c}. Yet, current research in noncommutative metric geometry \cite{Rieffel06, Rieffel08, Rieffel09, Rieffel10, Rieffel10c, Rieffel11, Rieffel11b, Rieffel12, Rieffel14, Rieffel15} suggests that the Leibniz property, or at least some variant of this property relating Lip-norms with the C*-algebra multiplicative structure, is a desirable feature. Our recent work thus focused on addressing the challenges of working with Leibniz Lip-norms, and discovered that it actually provides benefits, such as ensuring that our new analogue of the Gromov-Hausdorff distance has the desired coincidence property. Consequently, the Leibniz property occupies a central role in these notes.

A motivation for the study of quantum compact metric spaces is to extend to noncommutative geometry the techniques and idea from metric geometry. In particular, the Gromov-Hausdorff distance introduces an intrinsic topology on the class of compact metric spaces, and a noncommutative analogue would provide a tool to construct approximations of quantum spaces, such as matricial approximations for quantum tori \cite{Connes97, Latremoliere05,Latremoliere13b}. Such approximations are at times found without a clear framework within the mathematical physics literature; yet they could provide a new mean to construct physical theory. Interestingly, the Gromov-Hausdorff distance appeared first in connection with the superspace approach to quantum gravity \cite{Wheeler68} in a proposal by Edwards \cite{Edwards75}.

The first noncommutative analogue of the Gromov-Hausdorff distance was due to Rieffel \cite{Rieffel00}. This distance was however only partially capturing the C*-algebraic structure underlying quantum compact metric spaces; in particular, distance zero did not imply *-isomorphism. Several alternatives to Rieffel's construction were offered \cite{kerr02,kerr09, li03,li05, li06} to address this matter, though none were built around the Leibniz property. Instead, they incorporate some quantum topological information in their analogues of the Gromov-Hausdorff distance, rather than tie together the quantum topological structure contained in C*-algebras with the quantum metric structures provided by Lip-norms.

Thus, we propose a different path for the construction of the dual Gromov-Hausdorff propinquity \cite{Latremoliere13,Latremoliere13b,Latremoliere13c,Latremoliere14}. Our approach relies on connecting the quantum topological structure and the quantum metric structure by requiring that a form of the Leibniz property holds for all Lip-norms considered in our construction. Our construction allows for quite some flexibility in the choice of which form the Leibniz property can take.

The dual Gromov-Hausdorff propinquity induces the same topology in the classical picture as the Gromov-Hausdorff distance, while also allowing to prove that quantum tori or the algebra of continuous functions on the sphere are limits of matrix algebras, with appropriate quantum metric structures. Our metric also solves the coincidence property issue --- *-iso\-mor\-phism is necessary for null distance, while being explicitly compatible with Leibniz Lip-norms, thus solving a decade of difficulties working with such seminorms. Moreover, our metric retains the natural features of Rieffel's original construction.  We thus propose that the dual Gromov-Hausdorff propinquity is the proper tool for the study of C*-algebraic structures under metric convergence, and a step in realizing this project is the recent work by Rieffel \cite{Rieffel08, Rieffel09, Rieffel10c, Rieffel15} on convergence of modules, which both motivated and benefited from our construction.

In this document, we first survey the notion of a {\lcqms}. We start with the class of Leibniz and quasi-Leibniz pairs, which are the basic ingredients of the theory \cite{Connes89} and allow us to define noncommutative versions of the {\mongekant} and bounded-Lipschitz metrics. We provide many examples of such structures. We then turn to the duality between Arz{\'e}la-Ascoli Theorem and the properties of the {\mongekant} and bounded-Lipschitz metrics from the noncommutative perspective. Rieffel pioneered these matters in his work \cite{Rieffel98a,Rieffel99}; our exposition however begins with our own extension of his original result to non-unital C*-algebras, in order to make our presentation less redundant. The compact quantum metric spaces introduced by Rieffel are presented in Section (\ref{compact-sec}).

We then move to our presentation of the dual Gromov-Hausdorff propinquity. The dual Gromov-Hausdorff propinquity is a noncommutative analogue of the Gromov-Hausdorff distance, originally designed to address issues which arose when applying the construction of Rieffel's quantum Gromov-Hausdorff distance while imposing that all involved Lip-norms are Leibniz. This construction indeed leads to an object, called the proximity in \cite{Rieffel10c}, which is not known to be even a pseudo metric, as the triangle inequality may fail. Thus, our Gromov-Hausdorff propinquity is a way to construct an actual metric on Leibniz and, more generally, quasi-Leibniz quantum compact metric spaces in order to address the same problems as the proximity aimed at solving, hence our choice of terminology, as propinquity and proximity are synonymous. It should be emphasized that taking the Leibniz property as a core feature in our construction actually allows us to fix the coincidence property of Rieffel's original metric as well.

We recall the basic properties of the Gromov-Hausdorff propinquity and the overall strategies to establish them. We also introduce an important specialization of the dual Gromov-Hausdorff propinquity, the quantum propinquity, for which several examples of convergence are discussed. The role of this specialized metric is yet to be fully understood, but it seems to be a useful mean to prove convergences for the dual Gromov-Hausdorff propinquity and to discuss convergence for matrix algebras over convergent {\Lqcms s} \cite{Rieffel15}. We also present a generalization of Gromov's compactness theorem for our new metric. We conclude with a section where we summarize our proposal for a topology on the class of {\pqpms s} inspired by the Gromov-Hausdorff convergence for proper metric spaces.

While this work is a survey, we do present a brief new result on perturbations of metrics which applies, for instance, to conformal deformations of spectral triples which give rise to {\Lqcms s}.

\section{Locally Compact Quantum Metric Spaces}

Alain Connes introduced in \cite{Connes89} the idea of a metric on a noncommutative space, motivated in part by the interaction between the notion of growth for a discrete group and the properties of associated unbounded Fredholm modules on the C*-algebras of such groups. This interaction, in turn, is inspired by Gromov's work \cite{Gromov81}. In \cite{Connes89}, the metric of a noncommutative space is a by-product of the central notion of spectral triple, and its original purpose seemed to have been a mean to prove \cite[Proposition 1]{Connes89} that the standard spectral triple constructed from the Dirac operator of a spin Riemannian manifold encodes the metric on the underlying length space.

The general form of Connes' metric for C*-algebras endowed with a spectral triple, given in \cite[Proposition 3]{Connes89}, is naturally interpreted as a noncommutative analogue of the {\mongekant} introduced by Kantorovich in 1940 \cite{Kantorovich40} in his work on the transportation problem of Monge, and since then a very important tool of probability theory \cite{Dudley}, transportation theory \cite{Villani09}, and many other fields such as fractal theory \cite{Barnsley93}.

The {\mongekant}, of course, is defined on any metric space, as long as one is flexible in one's notion of metric: for example, in the case of locally compact metric spaces, the {\mongekant} is, in fact, an extended metric, i.e. it may take the value $\infty$ between two probability measures. While difficulties arise in the non-compact setting, the fundamental nature of the {\mongekant} is a strong motivation to extend its construction, following Connes' initial idea, to C*-algebras. A welcomed consequence of such a generalization is the possibility to import techniques from metric geometry in noncommutative geometry. Inspired by Connes' proposal, the strategy followed by Rieffel \cite{Rieffel98a,Rieffel99} and later on ourselves \cite{Latremoliere05b, Latremoliere12b} is to find a proper analogue of Lipschitz seminorms in noncommutative geometry, of which the {\mongekant} will be the dual. When working in the noncompact setting, we also consider a variant of the {\mongekant}, known as the bounded-Lipschitz distance, which is at times, better behaved.

This section presents the notion of a quantum metric space. We begin with a brief review of the classical picture, to serve as our model. We then isolate, one by one, the properties that Lipschitz seminorms possess and we would wish to keep when working over general C*-algebras. The simplest property is encoded in the notion of a Lipschitz pair, which is the minimal ingredient to define the {\mongekant}. We then note that Lipschitz seminorms are lower semi-continuous, which makes notions of morphisms between Lipschitz pairs easier to work with. A more delicate property, which is easy to state yet at times challenging to use, is the Leibniz property. While most examples possess this property, its actual role took some time to be uncovered, and is more related to our next section, where we will discuss the dual Gromov-Hausdorff propinquity. Last, the essential property of Lipschitz seminorms relate to the Arz{\'e}la-Ascoli theorem --- and through duality, to the topology induced by the {\mongekant}. We begin our exposition on this last property in the non-unital setting, where it is beneficial to first work with the bounded-Lipschitz distance. We eventually provide a full picture of how one may define a {\lcqms}.

\subsection{The Classical Model}

Gel'fand duality \cite{Dixmier, Arveson76, Pedersen79} suggests that the proper mean to algebraically encode the topology of a locally compact Hausdorff space $X$ is to work with the C*-algebra $C_0(X)$ of $\C$-valued continuous functions on $X$, vanishing at infinity (i.e. continuous functions on the one-point compactification of $X$, vanishing at the infinity point). Thus, we seek to encode the metric information given by a locally compact, metric space $(X,\mathrm{d})$ in some manner at the level of the C*-algebra $C_0(X)$. When $X$ is compact, we will denote $C_0(X)$ simply by $C(X)$.

Let $(X,\mathrm{d})$ be a locally compact metric space. A natural dual notion to the metric is given by the Lipschitz seminorm, defined for any function $f : X \rightarrow \C$ by:
\begin{equation}\label{Lipschitz-seminorm-eq}
\mathrm{Lip}(f) = \sup \left\{ \frac{|f(x)-f(y)|}{\mathrm{d}(x,y)} : x, y\in X, x\not= y \right\}\text{.}
\end{equation}
We are thus led to two questions:
\begin{enumerate}
\item Can we recover the metric $\mathrm{d}$ from the Lipschitz seminorm $\mathrm{Lip}$?
\item What properties of the seminorm $\mathrm{Lip}$ remain meaningful in the larger context of noncommutative C*-algebras, yet capture the usefulness of the Lipschitz seminorm as a tool of analysis?
\end{enumerate}

By the Riesz-Markov-Kakutani Theorem , the dual of $C_0(X)$ consists of the regular $\C$-valued Borel measures, and in particular, the state space $\StateSpace(C_0(X))$ of $C_0(X)$ consists of the regular Borel probability measures.

The foundation upon which noncommutative metric geometry is built, and which owes to the study of the Monge transportation problem, consists of the metric induced by the dual seminorm of $\mathrm{Lip}$ on the set of regular probability measures $\StateSpace(C_0(X))$ of $X$ by setting, for all $\varphi, \psi \in \StateSpace(C_0(X))$:
\begin{equation}\label{Kantorovich-metric-eq}
\Kantorovich{\mathrm{Lip}}(\varphi,\psi) = \sup \left\{ \left|\int_X f \,d\varphi - \int_X f \, d\psi \right| : f \in C_0(X), \mathrm{Lip}(f)\leq 1 \right\}\text{.}
\end{equation}
This metric was introduced in 1940 by Kantorovich \cite{Kantorovich40} in his pioneering work on Monge's transportation problem. In his original work, Kantorovich expressed the distance between two probability measures $\varphi$ and $\psi$ over a metric space $(X,\mathrm{d})$ as:
\begin{equation*}
\inf\left\{ \int_X \mathrm{d}(x,y)\,d\pi(x,y) : \text{$\pi$ is a probability measure on $X^2$ with marginals $\varphi,\psi$} \right\}\text{.}
\end{equation*}

The duality relationship between the Monge-Kantorovich metric and the Lipschitz seminorm was first made explicit in 1958 by Kantorovich and Rubinstein \cite{Kantorovich58}, leading to the form of the metric given by Expression (\ref{Kantorovich-metric-eq}), which will serve as the basis for our work. The first occurrence of a noncommutative analogue of the {\mongekant}, in the context of spectral triples, can be found in the work of Connes \cite{Connes89}.

The {\mongekant} is also known as the Wasserstein metric \cite{Wasserstein69}, thus named by Dobrushin \cite{Dobrushin70}, the earth mover metric, the Hutchinson metric, and likely other names. Our choice of terminology attempts to reflect the historical development of this important construction and its original motivation.

The {\mongekant} associated with a \emph{compact} metric space $(X,\mathrm{d})$ possesses two fundamental properties which address the questions raised at the start of this section. First of all, the map $x \in X \mapsto \delta_x \in \StateSpace(C(X))$, where $\delta_x$ is the Dirac probability measure at $x \in X$, is an isometry from $(X,\mathrm{d})$ into $(\StateSpace(C(X)), \Kantorovich{\mathrm{Lip}})$. Since $\{\delta_x : x\in X\}$ endowed with the weak* topology is the Gel'fand spectrum of $C(X)$, the isometry $x\in X\mapsto \delta_x$ is indeed natural.

Moreover, and very importantly, the {\mongekant} $\Kantorovich{\mathrm{Lip}}$ extends $\mathrm{d}$ to the entire state space $\StateSpace(C(X))$ of the compact metric space $(X,\mathrm{d})$, and it metrizes the weak* topology on $\StateSpace(C(X))$. This fundamental property of $\Kantorovich{\mathrm{Lip}}$ is the root cause of its importance in probability theory and related fields, and will serve as the starting point for the theory of quantum metric spaces.

The {\mongekant} associated to a noncompact, locally compact Hausdorff space is a much more complicated object. To begin with, it is not generally true that one may recover the original metric from which the {\mongekant} is constructed. More challenging is the observation that the topology of the {\mongekant} on the space of regular Borel probability measures is not the weak* topology any longer.

Our research unearthed two approaches to handle the noncompact, locally compact quantum metric space theory. Our newest methods \cite{Latremoliere12b} involve extending to the noncommutative realm a result from Dobrushin \cite{Dobrushin70} which identifies a certain type of sets of regular probability measures whose weak* topology is metrized by the {\mongekant}.

Another approach \cite{Latremoliere05b} consists in using a variant of the {\mongekant}, called the bounded-Lipschitz metric and introduced by Fortet and Mourier \cite[Section 5]{Fortet-Mourier-53}. For any two $\varphi, \psi \in \StateSpace(C_0(X))$ and $r > 0$, we thus set:
\begin{equation*}
\boundedLipschitz{\mathrm{Lip},r}(\varphi, \psi) = \sup\left\{ \left| \int_X f \, d\varphi - \int_X g\, d\psi  \right| : f \in C_0(X), \|f\|_{C_0(X)} \leq 1, \mathrm{Lip}(f) \leq r  \right\}\text{.}
\end{equation*}

Whenever $(X,\mathrm{d})$ is a separable locally compact metric space, the bounded-Lipschitz distance $\boundedLipschitz{\mathrm{Lip},r}$ metrizes the weak* topology on $\StateSpace(C_0(X))$. Moreover, if $(X,\mathrm{d})$ is a proper metric space (i.e. all its closed balls are compact), then the map $x \in X \mapsto \delta_x$ is an isometry when restricted to any ball of radius at most $r$. In particular, if $(X,\mathrm{d})$ is in fact compact, then for $r > 0$ larger than the diameter of $(X,\mathrm{d})$, the bounded Lipschitz metric $\boundedLipschitz{\mathrm{Lip}, r}$ agrees with the {\mongekant} $\Kantorovich{\mathrm{Lip}}$. Thus, the bounded-Lipschitz metrics provide a possible alternate approach to quantum metric spaces, which we explored in our research as well.

In this section, we shall describe a framework which generalizes the construction of the {\mongekant} and the bounded-Lipschitz metrics. This framework raises many technical challenges, yet will allow us to later develop noncommutative analogues of the Gromov-Hausdorff distance.

\subsection{Leibniz Pairs}

This section introduces various structures involved in our final definition of a {\qms}. The following notation will be used throughout this document:
\begin{notation}
Let $\A$ be a C*-algebra. The norm of $\A$ is denoted $\|\cdot\|_\A$ and the state space of $\A$ is denoted by $\StateSpace(\A)$. The set of self-adjoint elements of $\A$ is denoted by $\sa{\A}$.
\end{notation}

\subsubsection{Lipschitz Pairs}

At the root of our work is a pair $(\A,\Lip)$ of a C*-algebra and a seminorm $\Lip$ which enjoys various properties. The following definition contains the minimal assumptions we will make on such a pair.

\begin{notation}
Let $\A$ be a C*-algebra. The smallest unital C*-algebra containing $\A$, i.e. either $\A$ if $\A$ is unital, or its standard unitization $\A\oplus\C$ \cite{Pedersen79} otherwise, is denoted by $\unital{\A}$. The unit of $\unital{\A}$ is always denoted by $\unit_{\A}$. Note that $\sa{\unital{\A}} = \sa{\A} \oplus \R\unit_\A$ if $\A$ is not unital. We identify every state of $\A$ with its unique extension as a state of $\unital{\A}$. Under this identification, the state space of $\unital{\A}$ equals to the quasi-state space of $\A$ \cite{Pedersen79}, and the weak* topology $\sigma(\A^\ast,\A)$ on $\StateSpace(\A)$ agrees with the weak* topology $\sigma(\unital{\A}^\ast,\unital{\A})$ restricted to $\StateSpace(\A)$.
\end{notation}

\begin{definition}[\cite{Rieffel98a}, \cite{Latremoliere12b}]\label{Lipschitz-pair-def}
A \emph{Lipschitz pair} $(\A,\Lip)$ is a pair of a C*-algebra $\A$ and a seminorm $\Lip$ on a dense subspace $\dom{\Lip}$ of $\sa{\unital{\A}}$ and such that:
\begin{equation*}
\{ a \in \sa{\unital{\A}} : \Lip(a) = 0 \} = \R\unit_\A\text{.}
\end{equation*}
A unital Lipschitz pair $(\A,\Lip)$ is a Lipschitz pair where $\A$ is unital.
\end{definition}

We wish to emphasize that the C*-algebra $\A$ of a Lipschitz pair $(\A,\Lip)$ may not be unital; if not then $\Lip$ is in fact a norm on some dense subspace of $\sa{\A}$. To ease our notations later on, we will employ the following convention throughout this document:

\begin{convention}
We adopt the usual convention that if $\Lip$ is a seminorm defined on a dense subspace $\dom{\Lip}$ of a topological vector space $V$, and if $a\in V$ is not in the domain of $\Lip$, then $\Lip(a) = \infty$. With this convention, we observe that:
\begin{equation*}
\dom{\Lip} = \left\{ a \in V : \Lip(a) < \infty \right\} \text{.}
\end{equation*}
Note that with this convention, we do not introduce any ambiguity when talking about lower semi-continuous seminorms by exchanging the original seminorm with its extension.

Moreover, with this convention, we set $0\cdot\infty = 0$.
\end{convention}

The central construction of noncommutative metric geometry is the extension of the {\mongekant} \cite{Kantorovich40, Kantorovich58} to any Lipschitz pair:

\begin{definition}[\cite{Kantorovich40}, \cite{Rieffel98a}, \cite{Latremoliere12b}]\label{Kantorovich-def}
The \emph{\mongekant} $\Kantorovich{\Lip}$ associated with a Lipschitz pair $(\A,\Lip)$ is the extended metric on the state space $\StateSpace(\A)$ of $\A$ defined by setting for all $\varphi,\psi \in \StateSpace(\A)$:
\begin{equation*}
\Kantorovich{\Lip}(\varphi,\psi) = \sup \left\{ |\varphi(a) - \psi(a)| : a\in\sa{\A} \text{ and }\Lip(a)\leq 1 \right\}\text{.}
\end{equation*}
\end{definition}

The {\mongekant} is, as defined, an extended metric: Definition (\ref{Lipschitz-pair-def}) ensures that, for any Lipschitz pair $(\A,\Lip)$, and for any $\varphi, \psi\in\StateSpace(\A)$, we have $\Kantorovich{\Lip}(\varphi,\psi)=0$ if and only if $\varphi = \psi$ thanks to the density of the domain of $\Lip$ in $\sa{\A}$, and moreover $\Kantorovich{\Lip}$ is obviously symmetric and satisfies the triangle inequality. However, in general, $\Kantorovich{\Lip}$ may take the value $\infty$. 

\begin{example}[Fundamental Example]\label{fundamental-LP-ex}
Let $(X,\mathrm{d})$ be a locally compact metric space, and let $\mathrm{Lip}$ be the Lipschitz seminorm on $\sa{C_0(X)}$ induced by $\mathrm{d}$ via Expression (\ref{Lipschitz-seminorm-eq}). Then $(C_0(X), \mathrm{Lip})$ is a Lipschitz pair, and $\Kantorovich{\mathrm{Lip}}$ is the original {\mongekant} of Expression (\ref{Kantorovich-metric-eq}). 

If $X = \R$ with its usual metric, in particular, and if we denote the Dirac probability measure at $x\in X$ by $\delta_x$, then we note that:
\begin{equation*}
\Kantorovich{\mathrm{Lip}}\left(\delta_0, \sum_{n\in\N} \frac{1}{2^{n}} \delta_{2^{2n}} \right) = \infty \text{.}
\end{equation*}

However, when $(X,\mathrm{d})$ is bounded, then $\Kantorovich{\mathrm{Lip}}$ is an actual metric. If moreover $(X,\mathrm{d})$ is compact, then for all $y\in X$, the map $f_y : x\in X \mapsto \mathrm{d}(x,y)$ satisfies $\mathrm{Lip}(f_y)\leq 1$ and $f_y \in C(X)$, and thus one easily checks that the map $x\in X\mapsto \delta_x$ is an isometry from $(C(X),\mathrm{d})$ into $(\StateSpace(C_0(X)),\Kantorovich{\mathrm{Lip}})$. More generally, if $(X,\mathrm{d})$ is proper, i.e. all its closed balls are compact, then $x\in X \mapsto \delta_x\in \StateSpace(C_0(X))$ is still an isometry \cite{Latremoliere14b}.

When $(X,\mathrm{d})$ is not proper, the map $x\in X\mapsto \delta_x$ need no longer be an isometry. For instance, for $X = (0,1)$ with its usual metric, since if $f \in C_0(X)$ and $\mathrm{Lip}(f) \leq 1$ then $\|f\|_{C_0(X)}\leq \frac{1}{2}$, and thus two states are at most at distance $\frac{1}{2}$ from each other for the {\mongekant}.
\end{example}

The metric given by Definition (\ref{Kantorovich-def}) has a long history and many names, as we discussed in the introductory section of this chapter. Our formulation is the result of some evolution of the idea of generalizing the {\mongekant} to noncommutative geometry. The first occurrence of such a construction is due to Connes \cite{Connes89}, where the seminorm $\Lip$ was obtained by means of a spectral triple, and the Lipschitz pairs thus constructed are unital.

\begin{notation}
If $\Hilbert$ is a Hilbert space and $T : \Hilbert \rightarrow \Hilbert$ is a linear map, then the operator norm of $T$ is denoted by $\opnorm{T}$.
\end{notation}

\begin{example}[\cite{Connes89}]\label{Connes-LP-ex}
Let $\A$ be a C*-algebra, $\pi$ a faithful *-re\-pre\-sen\-ta\-tion of $\A$ on some Hilbert space $\Hilbert$, and $D$ a self-adjoint, possibly unbounded operator on $\Hilbert$ such that:
\begin{enumerate}
\item $1+D^2$ has a compact inverse,
\item the *-subalgebra:
\begin{equation*}
\{a\in\A: \text{the closure of $[D,\pi(a)]$ is bounded} \}
\end{equation*}
is dense in $\A$, 
\item the set:
\begin{equation*}
\bigslant{\{a\in\A : \opnorm{[D,\pi(a)]} \leq 1\}}{\C\unit_\A}
\end{equation*}
is bounded. 
\end{enumerate}

For all $a\in\sa{\A}$, we define $\Lip(a) = \opnorm{[D,\pi(a)]}$. Then the pair $(\A,\Lip)$ with $\Lip : a\in\sa{\A} \mapsto \|[D,\pi(a)]\|$ is a Lipschitz pair. Indeed, if $\Lip(a) = 0$ for some $a\in\sa{\A}$, then for all $t\in\R$ then $\Lip(ta) = 0 \leq 1$, and by the third condition on our triple $(\A,\Hilbert,D)$, we must conclude that $a\in \R\unit_\A$.

A triple $(\A,\Hilbert,D)$ satisfying the two first conditions above is called an unbounded Fredholm module or a spectral triple \cite{Connes89, Connes}. When constructed from a spectral triple, the {\mongekant} $\Kantorovich{\Lip}$ is at times called the Connes' metric. For the purpose of noncommutative metric geometry, the condition that $D$ must have compact resolvant has yet to find a role; however this notion is essential for the development of noncommutative differential geometry \cite{Connes}.

In \cite{Connes89}, an example of such a structure is given by a compact connected Riemannian spin manifold $M$, with $\Hilbert$ the Hilbert space of square integrable sections of the spin bundle of $M$ associated to the cotangent bundle, and $D$ the Dirac operator of $M$. The {\mongekant} associated with the Lipschitz pair $(C(M), \Lip)$ obtained by the above construction, letting $C(M)$ act by multiplication on $\Hilbert$, is shown to extend the distance function on $M$ induced by the Riemannian metric \cite[Proposition 1]{Connes89}. 

Another example in \cite{Connes89} is given by $\A$ being the reduced C*-algebra of some discrete group $G$, while $\pi$ is the left regular representation on $\ell^2(G)$, and $D$ is the multiplication operator on $\ell^2(G)$ by a length function on $G$. A length function $\ell : G \rightarrow [0,\infty)$ is a map such that, for all $g, g' \in G$:
\begin{enumerate}
\item $\ell(g) = 0$ if and only if $g$ is the unit of $G$,
\item $\ell(gg') \leq \ell(g) + \ell(g')$,
\item $\ell(g^{-1}) = \ell(g)$.
\end{enumerate}
\end{example}

Now, the next step in the evolution of Definition (\ref{Kantorovich-def}) was the introduction by Rieffel \cite{Rieffel98a} of the concept of a quantum compact metric space, allowing for more general choices of seminorms in Lipschitz pairs. An example of central importance to our work, and which is found in the foundational paper \cite{Rieffel98a}, is as follows:

\begin{example}[\cite{Rieffel98a}]\label{ergodic-LP-ex}
Let $\alpha$ be a strongly continuous action of a compact group $G$ by *-automorphisms on a unital C*-algebra $\A$. For any continuous length function $\ell$ on $G$, we may define for all $a\in\sa{\A}$:

\begin{equation*}
\Lip(a) = \sup\left\{ \frac{\|\alpha^g(a)-a\|_\A}{\ell(g)} : g \in G, \text{ $g$ not the unit of $G$} \right\}\text{.}
\end{equation*} 

In \cite{Rieffel98a}, Rieffel proves that $(\A,\Lip)$ is a Lipschitz pair if and only if:
\begin{equation*}
\{a\in\sa{\A} : \forall g \in G \quad \alpha^g(a) = a \} = \R\unit_\A \text{.}
\end{equation*}
An action for which the fixed point C*-subalgebra is thus reduced to the scalars is called an \emph{ergodic action}. As we shall see later, Rieffel showed that in fact, ergodicity implies additional properties on the Lipschitz pair $(\A,\Lip)$.

A very important special case of this construction is given by the quantum tori $\A$ on which the tori acts via the dual action.

We note that a length function $\ell$ on $G$ allows one to define a left-invariant distance on $G$ by setting $d : g,g' \in G\mapsto \ell(g^{-1}g')$, and conversely given a left-invariant distance on $G$, the distance $\ell$ from any element of $G$ to the unit of $G$ is a length function. When $G$ is a compact metrizable group, there always exist a continuous left invariant metric, and thus a continuous length function.
\end{example}

It should be noted that Example (\ref{ergodic-LP-ex}) is not given as a Lipschitz pair from a spectral triple, though in \cite{Rieffel98a}, a related metric from the natural spectral triple on the quantum tori is also constructed. Moreover, for quantum tori, the construction of Example (\ref{Connes-LP-ex}) involving the length function may be applied as well, leading to interesting Lipschitz pairs over the quantum tori as well \cite{Ozawa05}.

Now, the type of objects found in the earlier work of Rieffel \cite{Rieffel98a, Rieffel99, Rieffel00} on compact quantum metric spaces was a bit more general than unital Lipschitz pairs. Indeed, Rieffel worked with pairs $(\A,\Lip)$ of an order-unit space $\A$ together with a seminorm $\Lip$ on $\A$. Of course, order-unit spaces are subspaces of the self-adjoint part of C*-algebras \cite{Alfsen01}, but in general, they do not have to be complete or closed under the Jordan or the Lie product --- in other words, the multiplicative structure is not playing a role. In sight of our Definition (\ref{Kantorovich-def}), one may naturally conclude that the multiplicative structure is not essential in the definition of quantum metric spaces. We will return to this matter in this document. We shall however emphasize that \emph{for our work, the proper setting is indeed given by the Lipschitz pairs, as we specifically focus on studying noncommutative analogues of the Gromov-Hausdorff distance which are well-suited to working with C*-algebras}. 

Another example of a Lipschitz pair is given by the spectral triples constructed in \cite{Dabrowski05} on the quantum groups $SU_q(2)$: Aguilar, one of our PhD student, showed that such spectral triples give rise to Lipschitz pairs \cite{Aguilar}. Another spectral triple on $SU_q(2)$ which gives rise to a Lipschitz pair is given in \cite{Chakraborty03}; in addition, several examples of Lipshitz pairs on quantum groups and associated spaces can be found in \cite{Voigt14, Li04}.

Our interest in the development of a theory of quantum \emph{locally compact} metric spaces, rather led us to the formulation of our Definition (\ref{Kantorovich-def}) in \cite{Latremoliere12b}, as the third step in the evolution of the noncommutative notion of {\mongekant}. In this setting, an important example which we employed in our work is given by another spectral triple, albeit in the non-unital setting. 

\begin{example}[\cite{Varilly04}]\label{Moyal-LP-ex}
A spectral triple on the C*-algebra of compact operators on a separable Hilbert space, seen as the Moyal plane, is constructed in \cite{Varilly04}. We refer to \cite{Folland89, Cagnache11,GraciaBondia88a, GraciaBondia88b, Varilly04} for detailed expositions on the Moyal plane as a noncommutative geometric object.

Fix $\theta > 0$. The Moyal plane $\mathfrak{M}_\theta$ is informally the quantum phase space of the quantum harmonic oscillator. It is a strict quantization of the usual plane $\R^2$ toward the canonical Poisson bracket on $C_0(\R^2)$, re-scaled by a ``Plank constant'' $\theta$.  The C*-algebra of continuous observables on the Moyal plane is the C*-algebra $\mathfrak{M}_\theta = C^\ast (\R^2,\sigma_\theta)$ where:
\begin{equation*}
\sigma : (p_1,q_1),(p_2,q_2) \in \R^2\times \R^2 \longmapsto \exp(2i\pi \theta (p_1q_2 - p_2q_1))
\end{equation*}
is a bicharacter on $\R^2$. This C*-algebra is easily seen to be *-isomorphic to the C*-algebra $\mathfrak{K}$ of compact operators on $L^2(\R)$. However, we follow here the standard presentation of the Moyal plane, which uses a twisted product (rather than a twisted convolution) obtained by conjugating the twisted convolution by the Fourier transform. This formulation provides a representation of $C^\ast(\R^2,\sigma)$ on $L^2(\R^2)$ which then serves as the basis for the construction of a Lipschitz pair.

Let $\mathcal{S}$ be the space of $\C$-valued Schwartz functions on $\R^2$. For any $f,g\in \mathcal{S}$ we define:
\begin{equation}
f\star g : x\in \R^2 \mapsto \frac{1}{\left(\pi\theta\right)^2}\iint_{\R^2\times \R^2} f(x+y)g(x+z)\sigma(y,z) \, dy dz \text{.}
\end{equation}
The pair $(\mathcal{S},\star)$ is an associative *-algebra, and is a *-algebra  which we denote by $\mathcal{S}_\theta$ if one takes complex conjugation as the *-operation. The integral defines a trace on $\mathcal{S}_\theta$.

Let $\pi$ be the representation $f \in \mathcal{S}_\theta \mapsto [g \in L^2(\R^2) \mapsto f\star g]$ --- one checks this is a well-defined *-representation and can be extended to $\mathfrak{M}_\theta$. For any nonzero vector $u \in \R^2$, we write $\frac{\partial}{\partial u}$ for the directional derivative along $u$, seen as as unbounded operator of  $L^2(\R^2)$. Denote by $\partial$ the partial derivative $\frac{\partial}{\partial \left(\frac{\sqrt{2}}{2},\frac{\sqrt{2}}{2}\right)} = \frac{\sqrt{2}}{2}\left(\frac{\partial}{\partial (1,0)} - i\frac{\partial}{\partial (0,1)}\right)$ on $L^2(\R^2)$. 

Then we define the following operators on $L^2(\R^2)\otimes \C^2$:
\begin{equation*}
\forall c \in \mathfrak{M}_\theta \;\; \Pi(c)= \begin{pmatrix}
\pi(c) & 0 \\ 0 & \pi(c) \end{pmatrix} \quad\text{ and }\quad 
D = -i\sqrt{2}\begin{pmatrix} 0 & \overline{\partial} \\ \partial & 0 \end{pmatrix} \text{.}
\end{equation*}

Then by \cite{Varilly04} $(\mathcal{S}_\theta,\Pi,D)$ is a candidate for a spectral triple for the Moyal plane $\mathfrak{M}_\theta$. In particular, $\Pi$ is a *-representation of $\mathfrak{M}_\theta$ on $L^2(\R^2)\otimes \C^2$, and the set $\{ a \in \sa{\mathcal{S}_\theta} : \opnorm{[\Pi(a),D]} < \infty \} = \sa{\mathcal{S}_\theta}$ is norm dense in $\mathfrak{M}_\theta$. Moreover, since $\Pi$ is faithful, one checks that for all $a\in \mathcal{S}_\theta$, if $[\Pi(a),D] = 0$ then $a = 0$ \cite{Varilly04}.

We thus obtain a Lipschitz pair $(\mathfrak{M}_\theta,\Lip_\theta)$ where:
\begin{equation}\label{moyal-lip-eq}
\Lip_\theta : c \in \mathcal{S}_\theta \mapsto \opnorm{[D,\Pi(c)]}\text{.}
\end{equation}

The metric properties of the Moyal plane for this particular spectral triple have been studied in particular in \cite{Cagnache11, Martinetti11, Wallet12}.
\end{example}

We were led in \cite{Latremoliere12b} to the observation that Lipschitz pairs are not quite sufficient to define the notion of a quantum locally compact metric space: another ingredient will be required. Nonetheless, having a Lipschitz pair at least allows for the definition of a {\mongekant} on such examples as the Moyal planes, and provides all we need to study the bounded-Lipschitz distances.

As indicated in the introduction to this section, however, the behavior of the {\mongekant} for non-unital Lipschitz pairs is a complicated issue --- already made clear in the case of commutative, non-unital Lipschitz pairs. In our earlier work on this particular challenge \cite{Latremoliere05b}, we generalized another metric on spaces of probability to the noncommutative setting: the bounded-Lipschitz distance, whose origin seems to go back at least to Fortet and Mourier \cite{Fortet-Mourier-53}. 

For our purpose, the noncommutative definition reads as follows:
\begin{definition}[\cite{Latremoliere05b}, Definition 2.3]\label{bounded-Lipschitz-def}
The \emph{Bounded-Lipschitz distance} $\boundedLipschitz{\Lip,r}$ associated with a Lipschitz pair $(\A,\Lip)$ and some $r \in (0,\infty)$ is defined, for any two states $\varphi,\psi \in \StateSpace(\A)$, by:
\begin{equation*}
\boundedLipschitz{\Lip, r}(\varphi,\psi) = \sup\left\{ |\varphi(a) - \psi(a)| : a \in \sa{\A}, \Lip(a) \leq 1, \|a\|_\A \leq r \right\}\text{.}
\end{equation*}
The real number $r$ is called a \emph{cut-off} of $\boundedLipschitz{\Lip,r}$.
\end{definition}

It is easy to check that, for any two $r , t > 0$, the bounded-Lipschitz distances $\boundedLipschitz{\Lip,r}$ and $\boundedLipschitz{\Lip,t}$ are (bi-Lipschitz) equivalent \cite[Proposition 2.10]{Latremoliere05b}. In the classical picture given by Example (\ref{fundamental-LP-ex}), the bounded-Lipschitz distance with cut-off $r > 0$, associated to a proper metric space $(X,\mathrm{d})$ restricts to $\min\{\mathrm{d}, r\}$ on the pure states. On the other hand, when working with finite diameter spaces, the bounded-Lipschitz distance agrees with the {\mongekant}, for the cut-off at least as large as the diameter:
\begin{proposition}\label{bounded-Kantorovich-prop}
Let $(\A,\Lip)$ be a Lipschitz pair such that $\diam{\StateSpace(\A)}{\Kantorovich{\Lip}} < \infty$. If $r \geq \diam{\StateSpace(\A)}{\Kantorovich{\Lip}}$ then $\boundedLipschitz{\Lip,r } = \Kantorovich{\Lip}$.
\end{proposition}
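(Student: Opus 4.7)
The plan is to establish the two inequalities $\boundedLipschitz{\Lip,r} \leq \Kantorovich{\Lip}$ and $\Kantorovich{\Lip} \leq \boundedLipschitz{\Lip,r}$ separately. The first is immediate from Definitions \ref{Kantorovich-def} and \ref{bounded-Lipschitz-def}: every $a \in \sa{\A}$ satisfying both $\Lip(a) \leq 1$ and $\|a\|_\A \leq r$ already appears as a test element in the supremum defining $\Kantorovich{\Lip}(\varphi, \psi)$, so the bounded-Lipschitz supremum is taken over a subset of the Monge-Kantorovich feasible set. The substance lies in the reverse inequality, which I would establish by showing that every $a \in \sa{\A}$ with $\Lip(a) \leq 1$ admits a replacement $b \in \sa{\A}$ with $\Lip(b) \leq 1$, $\|b\|_\A \leq r$, and $\varphi(b) - \psi(b) = \varphi(a) - \psi(a)$ for all states $\varphi, \psi \in \StateSpace(\A)$.

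In the unital case, I would take the scalar translate $b = a - t\unit_\A$ with $t := \frac{1}{2}(\max \sigma(a) + \min \sigma(a))$. The kernel condition $\{c \in \sa{\unital{\A}} : \Lip(c) = 0\} = \R\unit_\A$ from Definition \ref{Lipschitz-pair-def} gives $\Lip(b) = \Lip(a) \leq 1$, the normalization $\varphi(\unit_\A) = \psi(\unit_\A) = 1$ gives $\varphi(b) - \psi(b) = \varphi(a) - \psi(a)$, and centering the spectrum symmetrically about $0$ gives $\|b\|_\A = \frac{1}{2}(\max \sigma(a) - \min \sigma(a))$. The crucial estimate is then:
\begin{equation*}
\max \sigma(a) - \min \sigma(a) = \sup_{\varphi \in \StateSpace(\A)} \varphi(a) - \inf_{\psi \in \StateSpace(\A)} \psi(a) \leq \diam{\StateSpace(\A)}{\Kantorovich{\Lip}} \leq r \text{,}
\end{equation*}
using that $\varphi(a) - \psi(a) \leq \Kantorovich{\Lip}(\varphi, \psi)$ whenever $\Lip(a) \leq 1$, so $\|b\|_\A \leq r/2 \leq r$ as required.

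In the non-unital case, a scalar translate leaves $\sa{\A}$, so the translation argument is unavailable; I expect this to be the main obstacle. The resolution is to show that $b = a$ already works, by controlling $\|a\|_\A$ directly. Using the standard identity $\|a\|_\A = \sup_{\varphi \in \StateSpace(\A)} |\varphi(a)| = \max\bigl(\sup_\varphi \varphi(a), -\inf_\psi \psi(a)\bigr)$ for self-adjoint $a$, together with the fact that for non-unital $\A$ the value $0$ always belongs to the closure of the numerical range $\{\varphi(a) : \varphi \in \StateSpace(\A)\}$ (which can be seen via Cauchy-Schwarz from $\inf_{\varphi \in \StateSpace(\A)} \varphi(a^2) = 0$, itself a consequence of $0 \in \sigma(a^2)$ for any $a \in \A$ non-unital), one obtains $\sup_\varphi \varphi(a) \geq 0$ and $\inf_\psi \psi(a) \leq 0$. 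Combined with the oscillation bound from the preceding paragraph, this yields $\|a\|_\A \leq \sup_\varphi \varphi(a) - \inf_\psi \psi(a) \leq \diam{\StateSpace(\A)}{\Kantorovich{\Lip}} \leq r$, so no translation is needed and the desired inequality follows.
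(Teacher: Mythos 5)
Your proof is correct, and its non-unital half takes a genuinely different route from the paper's. The paper treats both cases uniformly with the translation trick you use in the unital case: it fixes a single state $\psi$ and replaces $a$ by $a - \psi(a)\unit_\A$, whose norm is at most $r$ since $|\varphi(a)-\psi(a)|\leq\Kantorovich{\Lip}(\varphi,\psi)\leq r$ for every state $\varphi$, while $\Lip$ is unchanged under scalar translation; your midpoint translate is the same idea with a sharper constant ($r/2$ rather than $r$, immaterial here). Where you diverge is the non-unital case: you rightly note that the translate leaves $\sa{\A}$ --- a point the paper's own computation passes over, since its final supremum ranges over translates lying in $\sa{\unital{\A}}$ while Definition (\ref{bounded-Lipschitz-def}) only tests elements of $\sa{\A}$ --- and you resolve it by proving the stronger statement that $\Lip(a)\leq 1$ already forces $\|a\|_\A\leq r$, so no replacement is needed at all. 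This buys a cleaner conclusion (in the non-unital setting the Lipschitz unit ball is itself norm-bounded by the diameter, consistent with the $X=(0,1)$ computation in Example (\ref{fundamental-LP-ex})), at the cost of a case split and of a nontrivial fact about numerical ranges.

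One caveat on that fact: your parenthetical derivation of $\inf_{\varphi\in\StateSpace(\A)}\varphi(a^2)=0$ from $0\in\sigma(a^2)$ is too quick. What $0\in\sigma(a^2)$ gives immediately is the infimum over states of $\unital{\A}$, and for a \emph{positive} element that infimum is automatically zero and carries no information: every state of $\unital{\A}$ has the form $t\varphi+(1-t)\varphi_\infty$ with $\varphi\in\StateSpace(\A)$ and $\varphi_\infty$ the character annihilating $\A$, so letting $t\rightarrow 0$ produces the value $0$ regardless of how $\varphi(a^2)$ behaves on $\StateSpace(\A)$. The statement you actually need --- no positive element of a non-unital C*-algebra has numerical range over $\StateSpace(\A)$ bounded away from $0$ --- is true, but requires an argument. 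For instance: if $\varphi(a^2)\geq\delta>0$ for all $\varphi\in\StateSpace(\A)$, then, since states of $\A$ are exactly the normal states of $\A^{\ast\ast}$ and satisfy $\varphi(\unit_{\A^{\ast\ast}})=1$, one concludes $a^2\geq\delta\unit_{\A^{\ast\ast}}$ in $\A^{\ast\ast}$; as $\unital{\A}$ embeds unitally in $\A^{\ast\ast}$ when $\A$ is not unital, spectral permanence gives $0\notin\sigma(a^2)$, contradicting non-unitality. With that lemma supplied, your Cauchy--Schwarz step and the estimate $\|a\|_\A=\max\left\{\sup_\varphi\varphi(a),\,-\inf_\psi\psi(a)\right\}\leq\sup_\varphi\varphi(a)-\inf_\psi\psi(a)\leq r$ go through exactly as you state, and the proposition follows.
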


\begin{proof}
Let us fix $\psi\in\StateSpace(\A)$. If $a\in \A$ with $\Lip(a)\leq 1$ and $\varphi\in\StateSpace(\A)$ then:
\begin{equation*}
|\varphi(a) - \psi(a)| \leq \Kantorovich{\Lip}(\varphi,\psi) \leq \diam{\StateSpace(\A)}{\Kantorovich{\Lip}} \leq r\text{.}
\end{equation*}
Thus $\|a-\psi(a)\unit_\A\|_\A \leq r$. 

We note that for all $t\in\R$ and $a\in\dom{\Lip}$:
\begin{equation*}
\Lip(a+t\unit_\A) \leq \Lip(a) + |t|\Lip(\unit_\A) = \Lip(a) \leq \Lip(a+t\unit_\A) + |-t|\Lip(\unit_\A) = \Lip(a+t\unit_\A)\text{,}
\end{equation*}
so $\Lip(a) = \Lip(a+t\unit_\A)$.

We now simply observe that for all $\varphi,\eta\in\StateSpace(\A)$:
\begin{equation*}
\begin{split}
\Kantorovich{\Lip}(&\varphi,\eta) \\
&= \sup \left\{ |\varphi(a) - \eta(a)| : a\in\sa{\A}, \Lip(a)\leq 1 \right\} \\
&=  \sup \left\{ |\varphi(a - \psi(a)\unit_\A ) - \eta(a - \psi(a)\unit_\A )| : a\in\sa{\A}, \Lip(a)\leq 1 \right\} \\
&=  \sup \left\{ |\varphi(a - \psi(a)\unit_\A ) - \eta(a - \psi(a)\unit_\A )| : a\in\sa{\A}, \Lip(a-\psi(a)\unit_\A)\leq 1 \right\} \\
&=  \sup \left\{ |\varphi(a - \psi(a)\unit_\A ) - \eta(a - \psi(a)\unit_\A )| \middle\vert \begin{array}{l}
a\in\sa{\A}\\ \Lip(a-\psi(a)\unit_\A)\leq 1\\ \|a-\psi(a)\unit_\A\|_\A\leq r \end{array}\right\} \\
&\leq \boundedLipschitz{\Lip,r}(\varphi,\eta)\text{.}
\end{split}
\end{equation*} 
Our proof is completed by noting that $\boundedLipschitz{\Lip,r} \leq \Kantorovich{\Lip}$ for all $r > 0$ by definition.
\end{proof}

The main question which we now wish to investigate concerns the topology induced by the {\mongekant} and the bounded-Lipschitz metrics on the state spaces of C*-algebras. The characterization of \emph{unital} Lipschitz pairs for which the {\mongekant} induces the weak* topology is the subject of \cite{Rieffel98a, Rieffel99}, and is the foundation of the theory of compact quantum metric spaces. For any Lipschitz pair, the question of when the bounded-Lipschitz distance metrizes the weak* topology is the subject of our work in \cite{Latremoliere05b}, which started the study of quantum locally compact metric spaces. We then continued this study with our work in \cite{Latremoliere12b} for the {\mongekant} of non-unital Lipschitz pair. The non-unital problem raises quite a few many interesting challenges, and of course generalize the earlier work of Rieffel, at least within the context of C*-algebras. These matters are addressed in the next few sections. 

Prior to presenting our work on the topological properties of the metrics associated with a Lipschitz pair, however, we discuss two important desirable properties of the classical Lipschitz seminorms which play a central role in our work with noncommutative analogues of the Gromov-Hausdorff distance. The first property is lower-semicontinuity, which makes the notion of morphisms between Lipschitz pairs easier to work with. The second property is the Leibniz property.

\subsubsection{Morphisms of Lipschitz pairs}

There is a natural notion of morphism between Lipschitz pairs, thus defining a category of Lipschitz pairs. The most relevant aspect of this discussion is the notion of isometry between quantum compact metric spaces. We will see that lower semicontinuity of $\Lip$ in a Lipschitz pair $(\A,\Lip)$ is a property tightly connected to the notion of morphisms for Lipschitz pairs.

A \emph{proper} *-morphism between two C*-algebras is a *-morphism which maps approximate units to approximate units. In particular, it is unital if its domain is unital. 

A natural notion of a proper Lipschitz map between quantum compact metric spaces is given by:
\begin{definition}\label{Lipschitz-map-def}
Let $(\A,\Lip_\A)$ and $(\B,\Lip_\B)$ be two Lipschitz pairs. A proper *-morphism $\pi : \A\rightarrow\B$ is \emph{$k$-Lipschitz} for some $k \geq 0$ if the dual map:
\begin{equation*}
\varphi \in \StateSpace(\B) \longmapsto \varphi\circ\pi \in \StateSpace(\A)
\end{equation*}
is $k$-Lipschitz from $(\StateSpace(\B),\Kantorovich{\Lip_\B})$ to $(\StateSpace(\A),\Kantorovich{\Lip_\A})$.

A proper *-morphism $\pi : \A\rightarrow\B$ is called \emph{Lipschitz} when it is $k$-Lipschitz for some $k\geq 0$.
\end{definition}

It is simple to check that taking as objects the Lipschitz pairs, and taking as morphisms proper Lipschitz *-morphisms defines a category. The isomorphism in this category would be given by bi-Lipschitz *-isomorphisms. 

However, as with the category of metric spaces, we will also wish to work with a stronger type of morphisms: isometries. 

McShane's Theorem \cite{McShane34} states that if $X$ is a nonempty subset of a metric space $(Z,\mathrm{d})$ and if $f : X\rightarrow\R$ is a function with Lipschitz seminorm $k \in [0,\infty)$ then there exists a function $g : Z\rightarrow\R$ whose restriction to $X$ is $f$ and with Lipschitz seminorm $k$. In other words, the Lipschitz seminorm on $C_0(X)$ is the quotient of the Lipschitz seminorm on $C_0(Z)$ when $(Z,\mathrm{d})$ is locally compact. More generally, a map $\pi : X\hookrightarrow Z$ between locally compact metric spaces is an isometry if and only if the quotient of Lipschitz seminorm on  $C_0(Z)$ by $\pi$ is the Lipschitz seminorm on $C_0(X)$ by $f \in C_0(Z) \mapsto f\circ\pi \in C_0(X)$ --- which is well-defined since isometries are always proper maps.

Thus, we introduce:
\begin{definition}[\cite{Rieffel00}]\label{isometry-def}
Let $(\A,\Lip_\A)$ and $(\B,\Lip_\B)$ be two Lipschitz pairs. A proper *-morphisms $\pi:\B\twoheadrightarrow\A$ is an \emph{isometric *-epimorphism} from $(\B,\Lip_\B)$ onto $(\A,\Lip_\A)$ when $\pi$ is a *-epimorphism and for all $a\in\sa{\A}$, we have:
\begin{equation*}
\Lip_\A(a) = \inf\left\{ \Lip_\B(b) : b\in\sa{\B}, \pi(b) = a\right\}\text{.}
\end{equation*}
\end{definition}

If $\pi : \A\rightarrow\B$ is an isometric *-epimorphism, where $(\A,\Lip_\A)$ and $(\B,\Lip_\B)$ are Lipschitz pairs, then $\varphi\in\StateSpace(\B) \mapsto \varphi\circ\pi$ is indeed an isometry \cite[Proposition 3.1]{Rieffel00}. Moreover, the composition of isometric *-epimorphisms is again an isometric *-epimorphism \cite[Proposition 3.7]{Rieffel00}. Thus, we have a subcategory of Lipschitz pairs, with morphisms given as isometries.

For this document, this subcategory will be our framework.

Of central importance to us are isometric *-isomorphisms:
\begin{definition}[\cite{Rieffel00}]\label{isometry-isomorphism-def}
Let $(\A,\Lip_\A)$ and $(\B,\Lip_\B)$ be two Lipschitz pairs. A *-iso\-mor\-phism $\pi : \A\rightarrow\B$ is an \emph{isometric *-iso\-mor\-phism} when for all $\varphi, \psi \in \StateSpace(\B)$:
\begin{equation*}
\Kantorovich{\Lip_\A}(\varphi\circ\pi, \psi\circ\pi) = \Kantorovich{\Lip_\B}(\varphi,\psi)\text{.}
\end{equation*}
\end{definition}

We pause for a remark about non-unital Lipschitz pairs. In general, the notion of morphisms between non-unital C*-algebras require some care: for instance, if $\pi : X\rightarrow Y$ is a continuous functions between two locally compact Hausdorff spaces $X$ and $Y$, then given $f \in C_0(Y)$, we may only expect that $f\circ\pi$ belongs to the multiplier $C_b(X)$ of $C_0(X)$, i.e. the C*-algebra of bounded continuous functions on $X$. Thus a common choice of definition for a morphism from a C*-algebra $\A$ to a C*-algebra $\B$ is a *-morphism from $\A$ to the multiplier C*-algebra of $\B$.

However, if $\pi$ is actually proper, then $f\in C_0(Y) \mapsto f\circ\pi \in C_0(X)$ is actually well-defined, and a proper *-morphism. Conversely, a proper *-morphism from $C_0(Y)$ to $C_0(X)$ is always of that form. For our purpose, since the {\mongekant} of a Lipschitz pair is defined on the state space of $\A$ and not its multiplier algebra, we prefer to limit ourselves to working with proper *-morphisms.

In particular, we note that a surjective isometry between metric spaces is always proper, and a *-isomorphism is always a proper *-morphism, thus for our key notion, this choice is not a source of concern.

\bigskip

The state space of a C*-algebra is a rather intricate world \cite{Alfsen01}, so it is desirable to express notions such as Lipschitz morphisms in terms of the immediate data provided by the Lipschitz pair. To this end, a natural question is: to what degree is the Lip-norm of a quantum compact metric space determined by its associated {\mongekant} ?

Let $(\A,\Lip)$ be a Lipschitz pair. We may define on $\sa{\A}$ a new seminorm $\Lip'$ (possibly taking the value $\infty$) by setting for all $a\in\sa{\A}$:
\begin{equation*}
\Lip'(a) = \sup\left\{ \frac{|\varphi(a)-\psi(a)|}{\Kantorovich{\Lip}(\varphi,\psi)} :  \varphi, \psi\in \StateSpace(\A), \varphi\not=\psi  \right\} \text{.}
\end{equation*}
While we always have $\Lip' \leq \Lip$, equality does not hold in general \cite[Example 3.5]{Rieffel99}. A particular observation is that, for all pair $\varphi , \psi \in \StateSpace(\A)$ of states, the map $a\in\sa{\A} \mapsto \frac{|\varphi(a)-\psi(a)|}{\Kantorovich{\Lip}(\varphi,\psi)}$ is continuous. Thus, as the supremum of continuous functions, $\Lip'$ is a lower semicontinuous function over $\sa{\A}$. Therefore, equality between $\Lip$ and $\Lip'$ may only occur if at least, $\Lip$ is lower semicontinuous on $\sa{\A}$. Rieffel showed in \cite{Rieffel99} that this necessary condition is also sufficient:
\begin{theorem}[\cite{Rieffel99}, Theorem 4.1]\label{lower-semicontinuous-thm}
Let $(\A,\Lip)$ be a unital Lipschitz pair. The following assertions are equivalent:
\begin{enumerate}
\item for all $a\in\sa{\A}$ we have:
\begin{equation*}
\Lip(a) = \sup\left\{ \frac{|\varphi(a)-\psi(a)|}{\Kantorovich{\Lip}(\varphi,\psi)} : \varphi, \psi\in \StateSpace(\A), \varphi\not=\psi  \right\}
\end{equation*}
\item $\Lip$ is lower semicontinuous,
\item $\{a\in\sa{\A} : \Lip(a) \leq 1 \}$ is closed in $\A$.
\end{enumerate}
\end{theorem}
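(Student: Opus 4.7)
The plan is to prove the cycle $(1) \Rightarrow (2) \Rightarrow (3) \Rightarrow (1)$, with the last implication being the substantive content and the others essentially formal.

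For $(1) \Rightarrow (2)$, observe that for each pair $\varphi,\psi \in \StateSpace(\A)$ with $\varphi \neq \psi$ and $\Kantorovich{\Lip}(\varphi,\psi) < \infty$, the map $a \in \sa{\A} \mapsto |\varphi(a) - \psi(a)| / \Kantorovich{\Lip}(\varphi,\psi)$ is norm-continuous, and a pointwise supremum of continuous functions is lower semicontinuous. For $(2) \Leftrightarrow (3)$, use that a nonnegative function on a topological space is lower semicontinuous if and only if each sublevel set $\{a : \Lip(a) \leq r\}$ is closed; positive homogeneity of $\Lip$ reduces this to checking $r = 1$ (with $r = 0$ handled by $\R\unit_\A$ being closed).

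The main work lies in $(3) \Rightarrow (1)$, which I would handle by a Hahn--Banach separation argument. Let $\Lip'$ denote the right-hand side of (1); the inequality $\Lip' \leq \Lip$ is immediate from the definition of $\Kantorovich{\Lip}$. Suppose for contradiction that $\Lip'(a_0) < \Lip(a_0)$ for some $a_0 \in \sa{\A}$; after rescaling, assume $\Lip'(a_0) < 1 < \Lip(a_0)$, so that $a_0 \notin L := \{b \in \sa{\A} : \Lip(b) \leq 1\}$. By hypothesis (3), $L$ is a closed, convex, symmetric subset of the real Banach space $\sa{\A}$, and $L + \R\unit_\A = L$ because $\Lip$ vanishes on $\R\unit_\A$. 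Hahn--Banach separation produces a continuous $\R$-linear functional $\ell : \sa{\A} \to \R$ with $\ell(a_0) > \sup_{b \in L} \ell(b)$; the translation invariance of $L$ forces $\ell(\unit_\A) = 0$, and in particular the supremum is finite.

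Extending $\ell$ to a Hermitian functional on $\A$ and applying the Jordan decomposition yields positive functionals $\ell_+, \ell_-$ with $\ell = \ell_+ - \ell_-$ and $\ell_+(\unit_\A) = \ell_-(\unit_\A) =: t > 0$ (the case $t = 0$ would force $\ell = 0$, contradicting $\ell(a_0) > 0$). Setting $\varphi = t^{-1}\ell_+$ and $\psi = t^{-1}\ell_-$ gives states with $\ell = t(\varphi - \psi)$, and since $\Lip$ is symmetric, $\sup_{b \in L} \ell(b) = t\,\Kantorovich{\Lip}(\varphi,\psi)$. Thus $\varphi(a_0) - \psi(a_0) > \Kantorovich{\Lip}(\varphi,\psi)$; note $\varphi \neq \psi$ (else $\ell = 0$) and $\Kantorovich{\Lip}(\varphi,\psi) > 0$ since $\dom{\Lip}$ is dense in $\sa{\A}$ and separates distinct states. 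Dividing yields $\Lip'(a_0) > 1$, the desired contradiction. The main obstacle is really bookkeeping: ensuring the separating functional descends to a genuine pair of distinct states with finite Kantorovich distance, which is where the quotient by $\R\unit_\A$ and the Jordan decomposition do the essential work.
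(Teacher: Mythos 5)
Your proof is correct and follows essentially the same route as the proof the paper defers to (Rieffel's Theorem 4.1): the equivalence of (2) and (3) by positive homogeneity of the seminorm, and the substantive implication $(3)\Rightarrow(1)$ by Hahn--Banach duality, where your separation of the point $a_0$ from the closed convex set $L$ together with the Jordan decomposition of a Hermitian functional vanishing at $\unit_\A$ into $t(\varphi-\psi)$ is exactly the mechanism behind Rieffel's bipolar argument on $\bigslant{\sa{\A}}{\R\unit_\A}$. The only cosmetic difference is that Rieffel packages the separation through the bipolar theorem on the quotient by the scalars, whereas you separate a single point directly; the essential ingredients and the bookkeeping (translation invariance $L+\R\unit_\A=L$ forcing $\ell(\unit_\A)=0$, and density of $\dom{\Lip}$ giving $\Kantorovich{\Lip}(\varphi,\psi)>0$) are identical.
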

We note that \cite[Theorem 4.1]{Rieffel99} is phrased for unital Lipschitz pairs, but its proof extends unchanged to general Lipschitz pairs.

The equivalence between the second and third assertion of Theorem (\ref{lower-semicontinuous-thm}) is immediate since $\Lip$ is a seminorm --- so it is positively homogeneous. 

Another observation in \cite[Proposition 4.4]{Rieffel99} is that, given a Lipschitz seminorm $(\A,\Lip)$ on a unital C*-algebra $\A$, one may always construct a lower semicontinuous seminorm $\Lip^c$ on some dense subset of $\sa{\A}$ containing the domain of $\Lip$ and such that $\Kantorovich{\Lip} = \Kantorovich{\Lip^c}$. Henceforth we will work with lower semi-continuous Lipschitz seminorms.

Now, under the assumption of lower semicontinuity for Lip-norms, it becomes possible to express the notions of Lipschitz *-morphisms and isometric *-mor\-ph\-isms in terms of Lip-norms.

\begin{theorem}
Let $(\A,\Lip_\A)$ and $(\B,\Lip_\B)$ be two quantum compact metric spaces with $\Lip_\A$ and $\Lip_\B$ lower semicontinuous.
\begin{enumerate}
\item A proper *-morphism $\pi : \A\rightarrow\B$ is  $k$-Lipschitz for some $k\geq 0$ if and only if for all $a\in\sa{\A}$ we have $\Lip_\B\circ\pi(a) \leq k \Lip_\A(a)$ for all $a\in\dom{\Lip_\A}$. 
\item A *-isomorphism $\pi$ is an isometric *-isomorphism if and only if:
\begin{equation*}
\Lip_\B\circ\pi(a) = \Lip_\A(a)
\end{equation*}
for all $a\in\dom{\Lip_\A}$.
\end{enumerate}
\end{theorem}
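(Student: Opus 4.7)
The plan is to use the duality afforded by Theorem \ref{lower-semicontinuous-thm}: under lower semicontinuity, the Lip-norm is fully recovered from the \mongekant, and this is precisely what allows us to translate conditions on the dual map back to conditions on $\Lip$ itself. The basic quantitative input used throughout is that, for any Lipschitz pair $(\A,\Lip_\A)$, any $a \in \dom{\Lip_\A}$, and any $\varphi,\psi \in \StateSpace(\A)$, one has $|\varphi(a) - \psi(a)| \leq \Lip_\A(a)\cdot \Kantorovich{\Lip_\A}(\varphi,\psi)$, obtained by rescaling $a$ to $a/\Lip_\A(a)$ when $\Lip_\A(a) > 0$ (and noting that $\Lip_\A(a) = 0$ forces $a \in \R\unit_\A$, so the left hand side vanishes).

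For the ``if'' direction of (1), I would assume $\Lip_\B(\pi(a)) \leq k\,\Lip_\A(a)$ for every $a \in \dom{\Lip_\A}$. Given $\varphi,\psi \in \StateSpace(\B)$ and $a \in \sa{\A}$ with $\Lip_\A(a) \leq 1$, the element $\pi(a)$ lies in $\sa{\B}$ with $\Lip_\B(\pi(a)) \leq k$, so the basic inequality applied in $\B$ gives $|\varphi(\pi(a)) - \psi(\pi(a))| \leq k\,\Kantorovich{\Lip_\B}(\varphi,\psi)$. Taking the supremum over such $a$ yields $\Kantorovich{\Lip_\A}(\varphi\circ\pi,\psi\circ\pi) \leq k\,\Kantorovich{\Lip_\B}(\varphi,\psi)$. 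Conversely, suppose $\pi$ is $k$-Lipschitz. For $a\in\dom{\Lip_\A}$ and $\varphi \neq \psi$ in $\StateSpace(\B)$, combining the basic inequality applied in $\A$ with the hypothesis gives
\begin{equation*}
|\varphi(\pi(a)) - \psi(\pi(a))| = |(\varphi\circ\pi)(a) - (\psi\circ\pi)(a)| \leq \Lip_\A(a)\,\Kantorovich{\Lip_\A}(\varphi\circ\pi,\psi\circ\pi) \leq k\,\Lip_\A(a)\,\Kantorovich{\Lip_\B}(\varphi,\psi).
\end{equation*}
Dividing by $\Kantorovich{\Lip_\B}(\varphi,\psi)$, taking the supremum over $\varphi \neq \psi$, and invoking Theorem \ref{lower-semicontinuous-thm} applied to $\Lip_\B$ (which is lower semicontinuous by hypothesis) yields $\Lip_\B(\pi(a)) \leq k\,\Lip_\A(a)$.

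The second assertion then follows from the first by a symmetry argument. A *-isomorphism $\pi$ is isometric in the sense of Definition \ref{isometry-isomorphism-def} precisely when both $\pi$ and $\pi^{-1}$ are $1$-Lipschitz in the sense of Definition \ref{Lipschitz-map-def}, since the map $\varphi \mapsto \varphi\circ\pi$ is a bijection $\StateSpace(\B) \to \StateSpace(\A)$ with inverse $\psi \mapsto \psi\circ\pi^{-1}$, and a bijection between metric spaces is an isometry if and only if it and its inverse are both $1$-Lipschitz. Applying (1) with $k=1$ to $\pi$ gives $\Lip_\B\circ\pi \leq \Lip_\A$ on $\dom{\Lip_\A}$, while applying it to $\pi^{-1}$ gives $\Lip_\A\circ\pi^{-1} \leq \Lip_\B$, i.e. $\Lip_\A \leq \Lip_\B\circ\pi$; the two combine to equality.

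The main obstacle is bookkeeping rather than substance: one has to track the dense domain of each Lip-norm and the unitization conventions with some care. A proper *-morphism extends canonically to a unital *-morphism $\unital{\A} \to \unital{\B}$, states of $\A$ and $\B$ are identified with states of their respective unitizations, and the convention $\Lip(a) = \infty$ off the domain makes any inequality of the form $\Lip_\B(\pi(a)) \leq k\,\Lip_\A(a)$ trivial when $a \notin \dom{\Lip_\A}$, so the quantifier over $a$ may be read either as ranging over $\sa{\A}$ or over $\dom{\Lip_\A}$ without ambiguity.
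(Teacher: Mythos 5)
Your proof of part (1) is correct, and it follows exactly the route the paper intends: the theorem is stated in the survey without proof, immediately after Theorem (\ref{lower-semicontinuous-thm}), precisely because it is the routine consequence of that duality. Your bookkeeping is right where it matters in (1): the basic inequality $|\varphi(a)-\psi(a)|\leq\Lip_\A(a)\Kantorovich{\Lip_\A}(\varphi,\psi)$ is justified (including the kernel case $\Lip_\A(a)=0$, where $a\in\R\unit_\A$), the division by $\Kantorovich{\Lip_\B}(\varphi,\psi)$ is legitimate since this metric is finite and positive on distinct states of a quantum compact metric space, and lower semicontinuity of $\Lip_\B$ is invoked exactly once, exactly where it is needed. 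The forward direction of (2) is also fine: isometry of the dual bijection gives $1$-Lipschitz in both directions, and the two inequalities from (1) combine to the equality on $\dom{\Lip_\A}$ (the first inequality guarantees $\pi(a)\in\dom{\Lip_\B}$, so the second may be applied at $b=\pi(a)$).

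There is, however, a genuine gap in the backward direction of (2). To conclude that $\pi^{-1}$ is $1$-Lipschitz by applying the ``if'' direction of (1) to $\pi^{-1}$, you need the hypothesis $\Lip_\A(\pi^{-1}(b))\leq\Lip_\B(b)$ for \emph{all} $b\in\dom{\Lip_\B}$; the assumed equality only yields this for $b\in\pi\left(\dom{\Lip_\A}\right)$, and nothing you prove shows $\pi\left(\dom{\Lip_\A}\right)\supseteq\dom{\Lip_\B}$. Your closing remark --- that the convention $\Lip=\infty$ off the domain lets the quantifier range over $\sa{\A}$ or $\dom{\Lip_\A}$ ``without ambiguity'' --- is valid for the \emph{inequalities} of part (1), where the off-domain statement is trivially true, but it fails for the \emph{equality} of part (2): for $a\notin\dom{\Lip_\A}$ the equality asserts $\Lip_\B(\pi(a))=\infty$, a condition with real content. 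Concretely, if there were $b\in\dom{\Lip_\B}$ with $\pi^{-1}(b)\notin\dom{\Lip_\A}$, then the unit ball of $\Lip_\A$ would be a proper closed subset of the pullback of the unit ball of $\Lip_\B$, so one only obtains $\Kantorovich{\Lip_\A}(\varphi\circ\pi,\psi\circ\pi)\leq\Kantorovich{\Lip_\B}(\varphi,\psi)$, and $\pi$ could a priori be $1$-Lipschitz without being isometric; indeed, by the lower-semicontinuous duality, the missing inequality $\Kantorovich{\Lip_\B}\leq\Kantorovich{\Lip_\A}\circ\pi^\ast$ is \emph{equivalent} to $\Lip_\A\circ\pi^{-1}\leq\Lip_\B$ on $\dom{\Lip_\B}$, so the step is circular rather than merely elided. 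The repair is either to read the hypothesis of (2) as an equality of $[0,\infty]$-valued functions on all of $\sa{\A}$ --- equivalently, $\pi$ maps $\dom{\Lip_\A}$ onto $\dom{\Lip_\B}$ and preserves the Lip-norm, under which reading your symmetric argument closes completely --- or to supply a separate argument that equality on $\dom{\Lip_\A}$ together with lower semicontinuity forces the domains to correspond; the latter does not follow from anything you wrote.
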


All of the Lipschitz pairs obtained via Examples (\ref{Connes-LP-ex}) and (\ref{ergodic-LP-ex}), provide lower semicontinuous seminorms. 

We conclude with another important subtlety, exposed in \cite{Rieffel99}. Let $(\A,\Lip)$ be a Lipschitz pair. Then $\Kantorovich{\Lip}$ induces a metric on the pure states of $\A$. This metric, in turn, can be used to defined a new Lipschitz pair $(\A,\Lip_e)$ by setting:
\begin{equation*}
\Lip_e : a\in \sa{\A} \longmapsto \sup\left\{ \frac{|\varphi(a) - \psi(a)|}{\Kantorovich{\Lip}(\varphi,\psi)} : \varphi, \psi \text{ distinct pure states } \right\}\text{.}
\end{equation*}
The natural motivation behind the definition of $\Lip_e$, of course, is that if $\A$ is Abelian, then $\Lip_e$ is the usual Lipschitz seminorm associated to the restriction of $\Kantorovich{\Lip}$ to the Gel'fand spectrum of $\A$. Now, as seen for instance in \cite[Example 7.1]{Rieffel99}, we do not have in general that $\Lip = \Lip_e$.

In general, Rieffel characterized Lipschitz seminorms in the classical picture in \cite[Theorem 8.1]{Rieffel99}. This characterization uses the underlying order on the self-adjoint elements, which is not quite as well behaved a tool in the noncommutative setting as in the commutative setting. It should be noted that even if $(\A,\Lip)$ satisfies all the properties we will enounce in Definition (\ref{LQCMS-def}), and if $\A$ is Abelian, it is still not true in general that $\Lip$ is in fact the Lipschitz seminorm for the restriction of $\Kantorovich{\Lip}$ to the Gel'fand spectrum of $\A$. In essence, we will keep the properties of the Lipschitz seminorms from the classical picture which we deem useful for our purpose, rather than try to retain a characterization of these seminorms.

\bigskip

Now, another property of Lipschitz seminorms is the Leibniz inequality, which connects them with the underlying multiplication of functions. It is this property, rather than any connection with the order of the self-adjoint part of C*-algebras, which we will retain for our noncommutative theory. In some fundamental sense, since two C*-algebras may be Jordan isomorphic without being *-isomorphic, it is more natural to connect the multiplicative structure and the quantum metric structure, rather than the order structure.

\subsubsection{The Leibniz property}

The Lipschitz seminorm $\mathrm{Lip}$ defined by a metric space $(X,\mathrm{d})$ via Expression (\ref{Lipschitz-seminorm-eq}) possesses a natural property with respect to the multiplicative structure of $C_0(X)$, namely:
\begin{equation*}
\mathrm{Lip}(f g) \leq \mathrm{Lip}(f) \|g\|_{C_0(X)} + \|f\|_{C_0(X)} \mathrm{Lip}(g)
\end{equation*}
for all $f, g \in C_0(X)$. This inequality is known as the Leibniz property --- though we will use this term in a slightly more general context. 

This property does not play any known role in the topological properties of the {\mongekant}: the characterizations of Lipschitz pairs for which the {\mongekant}, or the bounded-Lipschitz metrics, induces the weak* topology, do not depend on the Leibniz property. Thus, the Leibniz property was not a part of the original work of Rieffel, or our own earlier work \cite{Latremoliere05b, Latremoliere12b}, and in particular, not a part of Definition (\ref{Lipschitz-pair-def}). However, as research in noncommutative metric geometry progressed, the need for a property of the Leibniz type became evident. One reason is to allow for computations in work related to convergence of modules under Gromov-Hausdorff convergence \cite{Rieffel09, Rieffel10, Rieffel10c, Rieffel15}. Another reason is to address the coincidence property for noncommutative analogues of the Gromov-Hausdorff distance, which was one of our key contribution \cite{Latremoliere13,Latremoliere13b,Latremoliere13c,Latremoliere14}. Remarkably, requiring the Leibniz property, properly defined, for Lipschitz pairs, raises some difficulties. The main source of these difficulties is that the quotient of a Leibniz seminorm may not be Leibniz \cite{Blackadar91}. Yet, central notions, such as isometric *-epimorphisms, rely on the notion of quotient of seminorms.

Our own research \cite{Latremoliere15} led us to allow for more general forms of the Leibniz property. Thus, one will be able to choose a specific variant of the Leibniz identity adapted to whatever one's purpose might be, and then use the appropriate form of the dual Gromov-Hausdorff propinquity. We shall see examples of this approach in our section on compactness for the dual Gromov-Hausdorff propinquity and our section on conformal deformations. 

The first generalization of the Leibniz property for our use occurred in \cite{Latremoliere13}: since the seminorms of Lipschitz pairs are only defined on some dense subsets of the self-adjoint part of C*-algebras, and since the product of two self-adjoint elements is generally not self-adjoint, we replaced the product by the Jordan and the Lie product. As a second step in a subsequent work \cite{Latremoliere15}, motivated by our compactness theorem for the dual Gromov-Hausdorff propinquity, we adapted the notion of an $F$-Leibniz seminorm from Kerr \cite{kerr02} to our Jordan-Lie setting. 

Our current formulation of the Leibniz property for Lipschitz pairs is thus given as follows.

\begin{definition}[\cite{Latremoliere15}, Definition 2.4]\label{permissible-def}
We endow $[0,\infty]^4$ with the following order:
\begin{multline*}
\forall x=(x_1,x_2,x_3,x_4), y=(y_1,y_2,y_3,y_4) \\
x\preccurlyeq y \iff \left(\forall j\in\{1,\ldots,4\} \quad x_j\leq y_j\right)\text{.}
\end{multline*}
A function $F : [0,\infty]^4\rightarrow [0,\infty]$ is \emph{permissible} when:
\begin{enumerate}
\item $F$ is non-decreasing from $([0,\infty)^4,\preccurlyeq)$ to $([0,\infty),\leq)$,
\item for all $x,y,l_x,l_y \in [0,\infty)$ we have:
\begin{equation}\label{permissible-eq}
x l_y + y l_x \leq F(x,y,l_x,l_y)\text{.}
\end{equation}
\end{enumerate}
\end{definition}

\begin{notation}
Let $\A$ be a C*-algebra and $a,b \in \A$. The \emph{Jordan product} $\frac{ab+ba}{2}$ of $a,b$ is denoted by $\Jordan{a}{b}$, while the \emph{Lie product} $\frac{ab-ba}{2i}$ of $a,b$ is denoted by $\Lie{a}{b}$. For any $a,b \in \sa{\A}$ we have $\Jordan{a}{b}, \Lie{a}{b} \in \sa{\A}$ so that $\sa{\A}$ is a Jordan-Lie subalgebra of $\A$.
\end{notation}

\begin{definition}[\cite{Latremoliere15}, Definition 2.5]\label{quasi-Leibniz-def}
Let $F$ be a permissible function. A \emph{$F$-quasi-Leibniz pair} $(\A,\Lip)$ is a Lipschitz pair such that:
\begin{enumerate}
\item the domain $\dom{\Lip}$ of $\Lip$ is a dense Jordan-Lie subalgebra of $\sa{\A}$,
\item for all $a,b \in \dom{\Lip}$, we have:
\begin{equation*}
\Lip\left(\Jordan{a}{b}\right) \leq F\left(\|a\|_\A,\|b\|_\A,\Lip(a),\Lip(b)\right)
\end{equation*}
and
\begin{equation*}
\Lip\left(\Lie{a}{b}\right) \leq F\left(\|a\|_\A,\|b\|_\A,\Lip(a),\Lip(b)\right)\text{.}
\end{equation*}
\end{enumerate}
\end{definition}

\begin{definition}[\cite{Latremoliere13}, Definition 2.15]
A \emph{Leibniz pair} $(\A,\Lip)$ is a Lipschitz pair such that:
\begin{enumerate}
\item the domain $\dom{\Lip}$ of $\Lip$ is a Jordan-Lie subalgebra of $\sa{\A}$,
\item for all $a,b \in \dom{\Lip}$, we have:
\begin{equation*}
\Lip\left(\Jordan{a}{b}\right) \leq \|a\|_\A\Lip(b) + \Lip(a)\|b\|_\A
\end{equation*}
and
\begin{equation*}
\Lip\left(\Lie{a}{b}\right) \leq \|a\|_\A\Lip(b) + \Lip(a)\|b\|_\A\text{.}
\end{equation*}
\end{enumerate}
\end{definition}

\begin{remark}
A Leibniz pair is a $F$-quasi Leibniz pair for $F : x,y,l_x,l_y \in [0,\infty) \mapsto x l_y + y l_x$.
\end{remark}

\begin{remark}
Informally, Definition (\ref{permissible-def}) includes the condition that, given a Leibniz pair, the upper bound for the seminorm of a Jordan or Lie product is no worse than the bound given by the Leibniz inequality. The reason for this requirement will become apparent when we discuss the notion of composition of tunnels for the extent-based construction of the dual Gromov-Hausdorff propinquity in the next chapter.
\end{remark}

It is common, albeit not necessary, that Leibniz seminorms $\mathrm{S}$ are defined on some dense *-subalgebra of a C*-algebra and satisfy:
\begin{equation*}
\mathrm{S}(ab) \leq \|a\|_\A \mathrm{S}(b) + \mathrm{S}(a)\|b\|_\A
\end{equation*}
for all $a,b$ in the domain of $\mathrm{S}$; indeed if a seminorm satisfies these properties then its restriction to the self-adjoint space is Leibniz in our sense \cite[Proposition 2.17]{Latremoliere13}.

We note that Examples (\ref{fundamental-LP-ex}), (\ref{Connes-LP-ex}), (\ref{ergodic-LP-ex}), and (\ref{Moyal-LP-ex}), all provide Leibniz pairs. Examples of quasi-Leibniz pairs will occur for finite dimensional approximations of a large class of Leibniz quantum compact metric spaces in \cite{Latremoliere15}. There are some surprising sources of Leibniz pairs.

\begin{example}[Standard Deviation, \cite{Rieffel12}]
Let $\A$ be a C*-algebra, $\mu\in\StateSpace(\A)$. The \emph{standard deviation} of $a\in\sa{\A}$ under the law $\mu$ is computed as:
\begin{equation*}
\mathrm{stddev}_\mu(a) = \sqrt[2]{\mu(a^\ast a) - |\mu(a)|^2}\text{.}
\end{equation*}
Notably, if $\pi$ be the GNS representation of $\A$ from $\mu$ on the Hilbert space $L^2(\A,\mu)$ obtained by completing $\A$ for the norm associated to the inner product $a,b\in\A\mapsto\mu(a^\ast b)$, then $\mathrm{stddev}_\mu(a) = \|\pi(a)-\mu(a)\unit\|_{L^2(\A,\mu)}$ for all $a\in\sa{\A}$.

For non-self-adjoint elements, we propose to extend the definition of the standard deviation as such: if $a\in\A$ then:
\begin{equation*}
\mathrm{stddev}(a) = \max\{ \|a-\mu(a)\|_{L^2(\A,\mu)}, \|a^\ast-\mu(a^\ast)\|_{L^2(\A,\mu)} \}\text{.}
\end{equation*}

Rieffel proved in \cite[Theorem 3.5]{Rieffel12} that for all $a,b \in \sa{\A}$ we have:
\begin{equation*}
\mathrm{stddev}_\mu(ab) \leq \|a\|_\A \mathrm{stddev}_\mu(b) + \mathrm{stddev}_\mu(a)\|b\|_\A\text{.}
\end{equation*}
In particular, if $\mu$ is faithful, then $(\A,\mathrm{stddev}_\mu)$ is a Leibniz pair.

In \cite[Theorem 3.7]{Rieffel12}, it is shown that $\mathrm{stddev}_\mu$ is in fact strong Leibniz.
\end{example}

Another example of a Leibniz seminorm arises from an intriguing construction, though it does not provide a Leibniz pair in general --- we include it as it is nonetheless interesting:

\begin{example}[Quotients norms, \cite{Rieffel11}]
Let $\A$ be a C*-algebra and $\B$ a C*-subalgebra of $\A$ which contains an approximate unit for $\A$. For all $a\in\A$ we define:
\begin{equation*}
\Lip(a) = \inf\{ \|a-b\|_\A : b\in \B \}\text{.}
\end{equation*}
Then Rieffel showed in \cite{Rieffel11} that for all $a,b \in \A$ we have $\Lip(ab) \leq \|a\|_\A\Lip(b) + \|b\|_\A\Lip(a)$. Of course, $(\A,\Lip)$ is not a Lipschitz pair unless $\B = \C\unit_\A$, but we do obtain a Leibniz seminorm --- in fact, a strong Leibniz seminorm.
\end{example}

Another source of quasi-Leibniz seminorms is given by twisted differential calculi.

\begin{example}\label{module-LP-ex}
Let $\A$ be a C*-algebra and let $\Omega$ be a $\A$-$\A$ bimodule. Let $\sigma : \A\rightarrow \A$ be a continuous morphism of norm $\nu$ at least $1$ --- though not necessarily a *-morphism, so the norm of $\sigma$ is may be strictly larger than $1$. Let $\B$ be a dense *-subalgebra of $\A$. Let $d : \B \rightarrow \Omega$ be a linear map such that for all $a,b \in B$:
\begin{equation*}
d(ab) = da\cdot b + \sigma(a) db \text{,}
\end{equation*}
while $\ker d = \C\unit_\A$. 

Last, let $\|\cdot\|_\Omega$ be a bimodule norm on $\Omega$: namely for all $a,b \in \A$ and $\omega \in \Omega$ we have $\|a \omega b \|_\Omega \leq \|a\|_\A \|\omega\|_\Omega \|b\|_\A$.

If $\Lip : a \in \B \mapsto \|da\|_\Omega$ then $(\A,\Lip)$ is a $F$-quasi-Leibniz Lipschitz pair where:
\begin{equation*}
F : a,b,l_a,l_b \in [0,\infty) \mapsto \nu a l_b + b l_a \text{.}
\end{equation*}
\end{example}

The construction in Example (\ref{module-LP-ex}) is actually underlying a lot of our previous examples. For instance, if $(X,\mathrm{d})$ is a compact metric space, and if we let $\Omega = C_b(X\times X \setminus \Delta)$ where $\Delta = \{ (x,x) : x\in X\}$, then $\Omega$ is a $C(X)$-$C(X)$-bimodule via the actions:
\begin{equation*}
f\cdot g (x,y) = f(x)g(x,y) \text{ and }g\cdot f (x,y) = g(x,y)f(y)
\end{equation*}
for all $f\in C(X)$, $g\in \Omega$ and $(x,y) \in X^2 \setminus\Delta$. Moreover, the uniform norm $\|\cdot\|_\Omega$ is a bimodule norm on $\Omega$. Last, we may set for all $f\in C(X)$ and $(x,y) \in X^2\setminus\Delta$:
\begin{equation*}
df (x,y) = \frac{f(x)-f(y)}{d(x,y)}\text{.}
\end{equation*}
Then $(C(X),\Omega,d)$ is  a first order differential calculus and, moreover, if $\mathrm{Lip}$ is the Lipschitz seminorm associated with $\mathrm{d}$ defined by Expression (\ref{Lipschitz-seminorm-eq}) then $\mathrm{Lip}(f) = \|df\|_\Omega$ for all $f\in C(X)$.

Similarly, if $(\A,\Hilbert,D)$ is a spectral triple over a unital C*-algebra $\A$, then the C*-algebra of bounded linear operators on $\Hilbert$ is an $\A$-$\A$ bimodule, and $d : a\in \A\mapsto [D,\pi(a)]$ defines a derivation, so that again, the Lipschitz pair constructed in Example (\ref{Connes-LP-ex}) becomes a special case of the construction in Example (\ref{module-LP-ex}).

Allowing for a twist in Example (\ref{module-LP-ex}) permits us to adapt our setting to twisted spectral triples via a similar construction \cite{Connes08, Ponge14}. We will briefly investigate the special case of conformal deformations later on in this document.

We conclude this section with the following simple observation, which justifies that we may as well work with lower semicontinuous Leibniz pairs:
\begin{proposition}[\cite{Latremoliere15}, Lemma 3.1]
If $(\A,\Lip)$ is a Lipschitz pair and $\Lip$ is $F$-quasi-Leibniz for some \emph{continuous} permissible function $F$, and if $\Lip'$ is the closure of $\Lip$, then $(\A,\Lip')$ is also $F$-quasi-Leibniz.
\end{proposition}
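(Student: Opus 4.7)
The plan is to pass the $F$-quasi-Leibniz inequality for $\Lip$ to the closure $\Lip'$ by norm-approximation. Recall that the closure $\Lip'$ of the seminorm $\Lip$ is characterized by having for its closed unit ball the norm-closure in $\sa{\A}$ of the unit ball of $\Lip$; in particular $\Lip'$ is lower semicontinuous, $\Lip' \leq \Lip$ on $\dom{\Lip}$, and for each $a \in \dom{\Lip'}$ there exists a sequence $(a_n)_{n\in\N}$ in $\dom{\Lip}$ with $a_n \to a$ in $\|\cdot\|_\A$ and $\Lip(a_n) \to \Lip'(a)$. This last limit is obtained by combining a diagonal extraction from the unit-ball characterization (which gives $\limsup_n \Lip(a_n) \leq \Lip'(a)$) with the inequality $\Lip'(a) \leq \liminf_n \Lip'(a_n) \leq \liminf_n \Lip(a_n)$ coming from lower semicontinuity of $\Lip'$.

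First I would fix $a, b \in \dom{\Lip'}$ and pick approximating sequences $(a_n)$, $(b_n)$ in $\dom{\Lip}$ as just described. Since $\dom{\Lip}$ is a Jordan-Lie subalgebra of $\sa{\A}$ by the $F$-quasi-Leibniz hypothesis on $\Lip$, both $\Jordan{a_n}{b_n}$ and $\Lie{a_n}{b_n}$ lie in $\dom{\Lip}$ for every $n$. Continuity of the C*-algebraic multiplication then delivers $\Jordan{a_n}{b_n} \to \Jordan{a}{b}$ and $\Lie{a_n}{b_n} \to \Lie{a}{b}$ in norm, together with $\|a_n\|_\A \to \|a\|_\A$ and $\|b_n\|_\A \to \|b\|_\A$.

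Next I would apply the hypothesis on $\Lip$ to the approximants, obtaining $\Lip(\Jordan{a_n}{b_n}) \leq F(\|a_n\|_\A, \|b_n\|_\A, \Lip(a_n), \Lip(b_n))$ and the analogous bound with the Lie product. Because $a, b \in \dom{\Lip'}$, the target point $(\|a\|_\A, \|b\|_\A, \Lip'(a), \Lip'(b))$ lies in $[0,\infty)^4$, so by continuity of $F$ the right-hand side converges to $F(\|a\|_\A, \|b\|_\A, \Lip'(a), \Lip'(b))$. On the other side, lower semicontinuity of $\Lip'$ together with the pointwise inequality $\Lip' \leq \Lip$ on $\dom{\Lip}$ gives $\Lip'(\Jordan{a}{b}) \leq \liminf_n \Lip'(\Jordan{a_n}{b_n}) \leq \liminf_n \Lip(\Jordan{a_n}{b_n})$, and likewise for the Lie bracket. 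Chaining the two estimates yields the desired $F$-quasi-Leibniz inequality for $\Lip'$; as a by-product, $\Jordan{a}{b}$ and $\Lie{a}{b}$ lie in $\dom{\Lip'}$, so $\dom{\Lip'}$ is a Jordan-Lie subalgebra, and its density in $\sa{\A}$ is automatic from the containment $\dom{\Lip}\subseteq\dom{\Lip'}$.

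The only subtle point is the passage to the limit on the right-hand side: the hypothesis that $F$ is \emph{continuous}, together with the finiteness of $\Lip'(a)$ and $\Lip'(b)$, is precisely what is needed to turn $\liminf_n F(\dots)$ into the clean value $F(\|a\|_\A, \|b\|_\A, \Lip'(a), \Lip'(b))$. Without continuity of $F$, one would only obtain a $\limsup$-style bound that cannot be married with the lower-semicontinuity lower bound on the left-hand side, so that is where the hypothesis is essential; every other step is a routine approximation argument.
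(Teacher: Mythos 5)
Your proof is correct, and the underlying mechanism is the one the paper uses: approximate inside the unit ball of $\Lip$, exploit joint norm-continuity of the Jordan and Lie products together with continuity of $F$, and close the estimate using the closedness of the unit ball of $\Lip'$. The packaging, however, is genuinely different. The paper never manipulates seminorm values directly: it regards $\Lip'$ as the Minkowski functional of $\overline{\mathfrak{S}}$, where $\mathfrak{S} = \{a\in\sa{\A} : \Lip(a)\leq 1\}$, invokes the characterization lemma of \cite{Latremoliere15} translating the $F$-quasi-Leibniz property into the containment $\Jordan{a}{b}, \Lie{a}{b} \in F(\|a\|_\A,\|b\|_\A,1,1)\,\mathfrak{S}$ for $a,b\in\mathfrak{S}$, checks via continuity of $F$ that the containment survives the passage to $\overline{\mathfrak{S}}$, and then applies the same lemma in the reverse direction to conclude. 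You instead verify the inequality for $\Lip'$ directly: you first establish the sequence characterization of the closure --- every $a\in\dom{\Lip'}$ admits $a_n\to a$ in norm with $\Lip(a_n)\to\Lip'(a)$, the $\limsup$ bound by diagonal extraction from the ball description and the $\liminf$ bound from lower semicontinuity of $\Lip'$ combined with $\Lip'\leq\Lip$ --- and then chain lower semicontinuity on the left-hand side with continuity of $F$ on the right-hand side. Your route buys a self-contained argument, independent of the external lemma, which makes explicit both the approximation fact and precisely where continuity of $F$ is indispensable; your closing diagnosis is exactly right, since with a discontinuous $F$ the limiting procedure only yields a bound by a regularization of $F$ rather than $F$ itself. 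The paper's route is shorter once the lemma is granted, and it keeps the argument at the level of convex sets, the form in which it is reused elsewhere in \cite{Latremoliere15}. One small economy worth noting: the exact limit $\Lip(a_n)\to\Lip'(a)$ is slightly more than you need, as $\limsup_n \Lip(a_n)\leq \Lip'(a)$ together with the monotonicity built into permissibility already suffices on the right-hand side after passing to a subsequence; but establishing the exact limit costs you only one extra line and makes the final passage cleaner.
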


\begin{proof}
By definition, $\Lip'$ is the Minkowsky functional for the convex $\overline{\mathfrak{S}}$ where $\mathfrak{S} = \{a\in\sa{\A}:\Lip(a)\leq 1\}$. We now use \cite[Lemma 3.1]{Latremoliere15}. Note that the necessary condition of \cite[Lemma 3.1]{Latremoliere15} does not require that the convex is closed, hence since $\Lip$ is $F$-quasi-Leibniz:
\begin{equation*}
\Jordan{a}{b}, \Lie{a}{b} \in F(\|a\|_\A,\|b\|_\B,1, 1) \mathfrak{S}\text{,}
\end{equation*}
for all $a,b \in \sa{\A}$ with $\Lip(a),\Lip(b)\leq 1$. Now, since $F$ is continuous, we conclude that if $a,b \in \overline{\mathfrak{S}}$ then:
\begin{equation*}
\Jordan{a}{b}, \Lie{a}{b} \in F(\|a\|_\A,\|b\|_\B,1, 1) \overline{\mathfrak{S}}\text{.}
\end{equation*}
We can apply \cite[Lemma 3.1]{Latremoliere15} again to conclude.
\end{proof}

\bigskip

We thus have presented many examples of quasi-Leibniz pairs. We now study the topological properties of the associated metrics on the state space. The first such study is due to Rieffel \cite{Rieffel98a}. We also point out the interesting work of Pavlovic \cite{Pavlovic98}. Both these initial studies were carried out in the case of unital Lipschitz pairs.

Our own research begun with the study of the bounded-Lipschitz metrics, motivated by the challenge of working with non-unital Lipschitz pairs. We thus begin with our study of bounded-Lipschitz metrics and explore what is the fundamental property we shall require of Lipschitz pairs to be considered quantum metric spaces.

\subsection{Bounded-Lipschitz Metrics}

\subsubsection{Metrizing the weak* topology}

The bounded-Lipschitz metric associated with a Lipschitz pair $(\A,\Lip)$ is, indeed, a metric, and it is easy to check that it always endow the state space $\StateSpace(\A)$ with a finer topology than the weak* topology and a coarser topology than the norm topology.

There are several reasons why the weak* topology is, indeed, the topology which one desires the bounded-Lipschitz distance to metrize. To begin with, the classical model suggests that the a core property of Lipschitz seminorms is precisely that the associated bounded-Lipschitz distances are metrics for the weak* topology on Borel regular probability measure. The importance of the weak* topology in this context need little advertisement: it is of course the proper notion for convergence in law, a central concept of probability theory, illustrated for example by its role in the central limit theorems.

In the case of unital Lipschitz pairs, it is also quite natural to desire that the state space be compact for the topology given by the bounded-Lipschitz distance. Since the topology of a metric is Hausdorff, and since the bounded-Lipschitz topology is finer than the weak* topology, this desirable feature implies that the bounded-Lipschitz metric must metrize the weak* topology.

Moreover, the restriction of the weak* topology on the pure state space is the topology chosen for the Gel'fand spectrum of Abelian C*-algebras. Thus, when working in noncommutative geometry, one could consider that the requirement for the bounded-Lipschitz metric to metrize the weak* topology at least on the pure state space is a natural leftover from the classical picture. It is but a small step to then consider that the bounded-Lipschitz distance should metrize the weak* topology of the entire state space. 

Last, there is a natural physical motivation to work with the weak* topology on the state space: it is, after all, the topology of pointwise convergence for states; as such it is physically natural. Indeed, one obtain a base of neighborhoods for this topology by, physically speaking, taking all states which agree, within some given error $\varepsilon>0$, on some \emph{finite} set of observables.

Motivated by all these considerations, our purpose is thus to determine when the bounded-Lipschitz distance metrizes the weak* topology restricted to the state space.

A rather pleasant picture emerged from our study in \cite{Latremoliere05b}:

\begin{theorem}[\cite{Latremoliere05b}, Theorem 4.1]\label{bounded-Lipschitz-thm}
Let $(\A,\Lip)$ be a Lipschitz pair where $\A$ is separable. The bounded-Lipschitz distance on the state space $\StateSpace(\A)$, as per Definition (\ref{bounded-Lipschitz-def}), is given for any two $\varphi, \psi \in \StateSpace(\A)$, by:
\begin{equation*}
\boundedLipschitz{\Lip,1}(\varphi, \psi) = \sup\left\{ |\varphi(a) - \psi(a)| : a\in\sa{\A}, \Lip(a)\leq 1, \|a\|_\A \leq 1 \right\}\text{.}
\end{equation*}

The following assertions are equivalent:
\begin{enumerate}
\item the bounded-Lipschitz distance $\boundedLipschitz{\Lip,1}$ metrizes the restriction of the weak* topology to $\StateSpace(\A)$,
\item there exists a completely positive element $h \in \sa{\A}$ such that the set:
\begin{equation*}
\left\{ hah : a\in\sa{\A}, \Lip(a)\leq 1, \|a\|_\A\leq 1 \right\}
\end{equation*}
is precompact for $\|\cdot\|_\A$,
\item for all completely positive $h\in \sa{\A}$, the set:
\begin{equation*}
\left\{ hah : a\in\sa{\A}, \Lip(a)\leq 1, \|a\|_\A\leq 1 \right\}
\end{equation*}
is precompact for $\|\cdot\|_\A$.
\end{enumerate}
\end{theorem}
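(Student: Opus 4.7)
The plan is to prove the equivalence by the cycle $(3) \Rightarrow (2) \Rightarrow (1) \Rightarrow (3)$. The implication $(3) \Rightarrow (2)$ is automatic, since separability of $\A$ guarantees the existence of strictly positive self-adjoint elements, which may be used as $h$. As preliminary observations I would record: (a) separability of $\A$ implies the weak$^\ast$ topology is metrizable on the (weak$^\ast$-compact) quasi-state space $\StateSpace(\unital\A)$, so all convergence arguments can be run along sequences; (b) the formula $\boundedLipschitz{\Lip,1}(\varphi,\psi) = \sup\{|\varphi(a)-\psi(a)| : \Lip(a)\leq 1, \|a\|_\A\leq 1\}$ with both cut-offs set to $1$ comes from positive homogeneity of $\Lip$ and $\|\cdot\|_\A$; and (c) $\boundedLipschitz{\Lip,1}$ always induces a topology at least as fine as weak$^\ast$ on $\StateSpace(\A)$, because the set $L_1 = \{a\in\sa{\A} : \Lip(a)\leq 1, \|a\|_\A\leq 1\}$ has dense linear span in $\sa{\A}$ (dense domain plus rescaling).

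For $(1) \Rightarrow (3)$: fix a positive $h \in \sa{\A}$ and set $K = h L_1 h$. Since $\|hah\|_\A \leq \|h\|_\A^2$, the set $K$ is norm-bounded. The key estimate is that the restriction family $\{\hat b : b\in K\} \subset C(\StateSpace(\A))$ is uniformly equicontinuous with respect to $\boundedLipschitz{\Lip,1}$: for $b = hah$ with $a\in L_1$ and any $\varphi,\psi \in \StateSpace(\A)$, a Cauchy--Schwarz argument in the GNS representations, combined with the identity $hah = h\cdot(a\cdot h)$, bounds $|\varphi(b)-\psi(b)|$ by a constant (depending only on $\|h\|_\A$) times $\boundedLipschitz{\Lip,1}(\varphi,\psi)$. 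Under hypothesis (1), $(\StateSpace(\A),\boundedLipschitz{\Lip,1})$ is weak$^\ast$-metrizable and has (quasi-state) compact completion, so the Arz\'ela--Ascoli theorem yields precompactness of $\{\hat b : b\in K\}$ in the uniform norm. Since the Gelfand-type map $b\in\sa\A \mapsto \hat b$ is an isometry for $\|\cdot\|_\A$ to $\|\cdot\|_\infty$ (on the quasi-state space), $K$ is norm-precompact in $\A$.

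For $(2) \Rightarrow (1)$: assume $h\in\sa{\A}$ is given so that $hL_1h$ is norm-precompact. Without loss of generality I may take $h$ strictly positive (adding a small multiple of a strictly positive element preserves the precompactness of the resulting sandwich). By continuous functional calculus on $h$, build an approximate unit $(h_k)_{k\in\N}$ of $\A$ with each $h_k \in h\A h$. A bootstrap argument, using that for any polynomial $p$, the map $b \mapsto p(h)\, b\, p(h)$ sends the precompact set $hL_1 h$ to a precompact set via repeated continuous operations, together with uniform approximation of continuous functions by polynomials on $\mathrm{sp}(h)$, shows that $h_k L_1 h_k$ is norm-precompact for every $k$. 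Now, given a weak$^\ast$-convergent sequence $\varphi_n \to \varphi$ in $\StateSpace(\A)$ and $\varepsilon > 0$, choose $k$ such that $\varphi(1-h_k) < \varepsilon^2$, whence eventually $\varphi_n(1-h_k) < 2\varepsilon^2$. Decompose, for $a\in L_1$,
\begin{equation*}
\varphi_n(a)-\varphi(a) = [\varphi_n(a-h_k a h_k)] + [\varphi_n(h_k a h_k) - \varphi(h_k a h_k)] + [\varphi(h_k a h_k - a)].
\end{equation*}
The outer two terms are controlled by a Cauchy--Schwarz estimate of the form $|\omega(a-h_k a h_k)| \leq C\,\|a\|_\A\,\sqrt{\omega(1-h_k^2)}$, valid for any state $\omega$; the middle term is uniformly small in $a\in L_1$ by weak$^\ast$ convergence combined with norm-precompactness of $h_k L_1 h_k$ (a bounded sequence of functionals converges uniformly on norm-precompact sets iff it converges pointwise).

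The main obstacle will be the implication $(2) \Rightarrow (1)$, and within it two technical points: first, the bootstrap upgrading precompactness of $hL_1 h$ for a single $h$ to precompactness of $h_k L_1 h_k$ for a whole approximate unit produced by functional calculus on $h$, which uses preservation of precompactness under the bilinear operations $b \mapsto p(h) b p(h)$ and norm-limits; second, getting the right uniform Cauchy--Schwarz bound on $a - h_k a h_k$ so that the estimate depends only on $\|a\|_\A$ and $\varphi(1-h_k^2)$, both of which are under control. The rest of the argument is a careful diagonalization: choose $h_k$ to handle the limit state $\varphi$, use weak$^\ast$ convergence to handle the tail of $(\varphi_n)$, and combine the three pieces of the decomposition above.
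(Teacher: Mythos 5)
Your implication $(2)\Rightarrow(1)$ is essentially correct, and it is in fact a hands-on rendering of the mechanism used in \cite{Latremoliere05b}: the bootstrap $h_k \approx h\,p(h)$, so that $h_k L_1 h_k$ is a uniformly small perturbation of $p(h)(hL_1h)p(h)$ and hence precompact; the Cauchy--Schwarz tail bound $|\omega(a - h_k a h_k)| \leq 2\|a\|_\A\sqrt{\omega(\unit_\A - h_k)}$ computed in $\unital{\A}$; and metrizability of the weak* topology on $\StateSpace(\A)$ from separability, so that sequential arguments suffice. One caveat there: your ``without loss of generality $h$ strictly positive'' is not justified --- if $h' = h + \varepsilon k$, then $h'ah' = hah + \varepsilon(kah + hak) + \varepsilon^2 kak$, and the cross terms are in no way controlled by precompactness of $hL_1h$. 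In the source the element $h$ of assertion (2) is \emph{strictly} positive (the survey's ``completely positive'' is a slip --- compare Theorem (\ref{bounded-Lipschitz-thm3}) and the corollary following it), so no perturbation is needed; but note that the full statement then silently contains the transfer ``precompact for one strictly positive $h$ $\Rightarrow$ precompact for all'', which the paper obtains precisely because every strictly positive $h$ metrizes one and the same topology on bounded sets.

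The genuine gap is in $(1)\Rightarrow(3)$, and it is twofold. First, the compactness you invoke is false: for non-unital $\A$ the state space is not $\boundedLipschitz{\Lip,1}$-totally bounded, and its completion is not compact, even under hypothesis (1). Take $\A = C_0(\R)$ with the usual Lipschitz seminorm: (1) holds, yet $\boundedLipschitz{\Lip,1}(\delta_n,\delta_m) = 2$ for integers $|n-m|\geq 2$ (tent functions), so the Dirac states form an infinite uniformly separated set; moreover $\delta_n \to 0$ weak* in the quasi-state space while $\boundedLipschitz{\Lip,1}(\delta_n,0)=1$, so $\boundedLipschitz{\Lip,1}$ does \emph{not} metrize the weak* topology of the compact quasi-state space. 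Your parenthetical ``(quasi-state) compact completion'' is therefore wrong, and Arz{\'e}la--Ascoli cannot be run over $(\StateSpace(\A),\boundedLipschitz{\Lip,1})$. Second, the key equicontinuity estimate $|\varphi(hah)-\psi(hah)| \leq C(\|h\|_\A)\,\boundedLipschitz{\Lip,1}(\varphi,\psi)$ is unsubstantiated: Cauchy--Schwarz in GNS representations applies to states, not to the (non-positive) functional $\varphi - \psi$, and since the theorem assumes only a Lipschitz pair --- the paper stresses that these metrization results are independent of the Leibniz property --- there is no bound whatsoever on $\Lip(hah)$, hence no way to place $hah$ in a multiple of $L_1$ and exploit the definition of $\boundedLipschitz{\Lip,1}$.

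For comparison, the paper's route avoids both problems: it introduces the weakly uniform topology of Definition (\ref{wu}) and shows (via Kadison's function representation and Arz{\'e}la--Ascoli applied on weak* \emph{compact} subsets $K$ of $\StateSpace(\A)$ --- where compactness is genuine, and where the correct equicontinuous family is $\{\hat a : a \in L_1\}$, which is $1$-Lipschitz for $\boundedLipschitz{\Lip,1}$ \emph{by definition}) that (1) is equivalent to total boundedness of $L_1$ in that topology (Theorem (\ref{bounded-Lipschitz-thm2})); a separate and substantial theorem (Theorem (\ref{bounded-Lipschitz-thm3}), using separability and strict positivity) then shows that $(a,b)\mapsto \|h(b-a)h\|_\A$ metrizes the weakly uniform topology on bounded sets for \emph{any} strictly positive $h$, which simultaneously delivers $(1)\Rightarrow(3)$ and the $h$-independence $(2)\Leftrightarrow(3)$. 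That conversion --- from uniform control over weak* compact (tight) sets of states to the single seminorm $a\mapsto\|hah\|_\A$ --- is the hard core of \cite{Latremoliere05b}, and it is exactly the step your proposal has no substitute for.
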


We note that since all the bounded-Lipschitz metrics associated to a Lipschitz pair, for various cut-off values, are equivalent, Theorem (\ref{bounded-Lipschitz-thm}) applies to any such metrics as well.

By Proposition (\ref{bounded-Kantorovich-prop}), if $(\A,\Lip)$ is a Lipschitz pair such that $\diam{\StateSpace(\A)}{\Kantorovich{\Lip}}<\infty$, then the {\mongekant} $\Kantorovich{\Lip}$ and some bounded-Lipschitz distance agree. Thus, there is no ambiguity in which of the various natural metrics associated with $(\A,\Lip)$ to choose when working in this context, and this led us to define:

\begin{definition}[\cite{Latremoliere05b}]
A Lipschitz pair $(\A,\Lip)$ is a \emph{bounded quantum locally compact metric space} when:
\begin{enumerate}
\item $\diam{\StateSpace(\A)}{\Kantorovich{\Lip}} < \infty$,
\item the bounded-Lipschitz metric $\boundedLipschitz{\Lip,1}$ defined by $(\A,\Lip)$ metrizes the weak* topology on $\StateSpace(\A)$.
\end{enumerate}
\end{definition}

The important special case of unital Lipschitz pairs will be discussed in the section on the {\mongekant} later in this document, where we will see several examples. We include here three simple non-unital examples.

\begin{example}
If $(X,\mathrm{d})$ be a bounded, locally compact metric space. The Leibniz pair $(C(X),\mathrm{Lip})$ of Example (\ref{fundamental-LP-ex}), where $\mathrm{Lip}$ is the Lipschitz seminorm associated with $\mathrm{d}$, is a bounded quantum locally compact metric space.
\end{example}

\begin{example}[\cite{Latremoliere12b}, Section 4]
The pair $(\Moyal{\theta}, \Lip_\theta)$ of the Moyal plane with the seminorm $\Lip_\theta$ constructed in Example (\ref{Moyal-LP-ex}) satisfies the hypothesis of Theorem (\ref{bounded-Lipschitz-thm}), although its diameter for the {\mongekant} is not finite.
\end{example}

\begin{example}[\cite{Latremoliere05b}, Proposition 4.4]
If $(X,\mathds{d})$ is a locally compact metric space and $\alg{K}$ is an algebra of compact operators, then $C(X,\alg{K}) = C(X)\otimes \alg{K}$, endowed with the seminorm:
\begin{equation*}
\Lip : a\in C(X,\alg{K}) \longmapsto \sup\left\{ \frac{\|a(x) - a(y)\|_{\alg{K}}}{\mathrm{d}(x,y)} : x,y \in X, x\not= y \right\},
\end{equation*}
satisfies the assumption of Theorem (\ref{bounded-Lipschitz-thm}). If $(X,\mathrm{d})$ is bounded, then $(C(X,\alg{K}),\Lip)$ is a bounded quantum locally compact metric space.

This example may be adjusted to show that, for instance, many type I C*-crossed-products can be made into quantum locally compact metric spaces --- however, this particular choice of a metric structure is somewhat ad-hoc.
\end{example}

The proof of Theorem (\ref{bounded-Lipschitz-thm}) relies on an interesting new topology defined on C*-algebras, which we introduced in \cite{Latremoliere05b}. Notable among its properties is that this topology is weaker, and often strictly so, than the strict topology, while stronger than the weak topology, and usually strictly so. Definitions of unbounded Fredholm modules and spectral triples in the non-unital setting often involve conditions which borrow from the construction of the strict topology; our own work challenges this idea by proposing a new topology which emerged from metric considerations. We shall present this topology in our next section.

\bigskip

There has been quite a few applications of our work on bounded-Lipschitz distances in the literature. Indeed, until our own work on the {\mongekant} for non-unital Lipschitz pairs, the work in \cite{Latremoliere05b} was the only source to work with non-compact, quantum compact metric spaces. Thus, our work played a role, for instance, in mathematical physics \cite{Cagnache11, DAndrea10, Wallet12, Kellendock12, Hinz13}.

Another intriguing application can be found in the work of Bellissard, Marcolli and Reihani \cite{Bellissard10}, where our work on bounded-Lipschitz metrics is the corner stone for the construction of metrics on C*-crossed-products. The idea of \cite{Bellissard10} is that, for actions on quantum metric spaces which are not quasi-isometric, one should replace the original quantum space by a noncommutative analogue of the metric bundle, in the spirit of Connes and Moscovici's work, where the proper lift of the action will become isometric. This bundle is noncompact in general, hence the need to work with non-unital Lipschitz pairs --- and invoke our results described in this section. A follow-up of \cite{Bellissard10} using our bounded-Lipschitz metric can be found in \cite{Paterson14}.

\subsubsection{The weakly uniform topology on C*-algebras}

Theorem (\ref{bounded-Lipschitz-thm}) expresses that the bounded-Lipschitz metric distance for a Lipschitz pair $(\A,\Lip)$ metrizes the weak* topology on the state space $\StateSpace(\A)$ if and only if the unit ball for $\max\{\|\cdot\|_\A,\Lip\}$ is totally bounded for some topology, which, as it turns out, is metrizable on bounded subsets of $\A$. We now present this topology in greater detail and some consequences of its study, following \cite{Latremoliere05b}.

We shall need the following notation:

\begin{notation}
Let $\A$ be a C*-algebra. The class of all weak* compact subsets of $\StateSpace(\A)$ is denoted by $\wcs{\A}$.
\end{notation}

\begin{definition}[\cite{Latremoliere05b}, Definition 2.5]\label{wu}
The \emph{weakly uniform topology} $\mathrm{wu}$ on a C*-algebra $\A$ is the locally convex topology generated by the family of seminorms $(p_K)_{K\in\wcs{\A}}$, where for all $K\in \wcs{\A}$:
\begin{equation*}
\forall a \in \A \quad p_K(a) = \sup\{ |\varphi(a)| : \varphi\in K\} \text{.}
\end{equation*}
\end{definition}

The weakly uniform topology compares to the usual topologies on C*-algebras:
\begin{theorem}[\cite{Latremoliere05b}, Lemma 3.2, Proposition 3.3]
If $\A$ is a separable C*-algebra, then for all bounded $\B \subseteq \A$, the weakly uniform topology restricted to $\B$ is coarser than the strict topology restricted to $\B$ and finer than the weak topology restricted to $\B$.

If $\A$ is unital, the weakly uniform topology agrees with the norm topology (since $\StateSpace(\A)$ is then weak* compact).
\end{theorem}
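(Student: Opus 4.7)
The plan is to prove the theorem in three stages: weak is coarser than wu globally, wu is coarser than strict on bounded sets, and wu coincides with norm in the unital case.

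For the first stage, I would observe this holds on all of $\A$ without needing separability or the boundedness restriction. For every state $\varphi$, the singleton $\{\varphi\}$ is weak*-compact, so $a\mapsto|\varphi(a)|=p_{\{\varphi\}}(a)$ is already a defining wu-seminorm. Since every continuous linear functional on $\A$ decomposes as a finite combination of states (splitting into Hermitian and skew-Hermitian parts, then applying the Jordan decomposition to each), every basic weak-topology seminorm is dominated by a combination of wu-seminorms.

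The second stage contains the real work and is where separability enters. Fix a bounded $\B\subseteq\A$ with $\sup_{b\in\B}\|b\|_\A\leq M$, a net $(a_\lambda)\subseteq\B$ converging strictly to $a\in\B$, and a weak*-compact $K\subseteq\StateSpace(\A)$; the goal is $p_K(a_\lambda-a)\to 0$. I would fix a countable increasing approximate unit $(e_n)_{n\in\N}\subseteq\A$ with $0\leq e_n\leq\unit_\A$, available by separability, and then run a Dini-type argument: each map $\varphi\in K\mapsto\varphi(e_n)$ is weak*-continuous, monotone increasing in $n$, and converges pointwise to $1$, so Dini's theorem on the compact set $K$ upgrades this to $\sup_{\varphi\in K}(1-\varphi(e_n))\to 0$. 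Cauchy-Schwarz then yields $\varphi(e_n)^2\leq\varphi(e_n^2)$, whence $\sup_{\varphi\in K}(1-\varphi(e_n^2))\to 0$ as well. Given $\varepsilon>0$, pick $n$ with $\sup_{\varphi\in K}\sqrt{1-\varphi(e_n^2)}<\varepsilon/(8M)$; a further Cauchy-Schwarz applied to the positive sesquilinear form $(x,y)\mapsto\varphi(x^*y)$ on $\unital{\A}$, factoring $1-e_n^2=(1-e_n^2)^{1/2}(1-e_n^2)^{1/2}$ and using $c^*(1-e_n^2)c\leq c^*c\leq\|c\|_\A^2\unit_\A$ with $c=a_\lambda-a$, yields $|\varphi(a_\lambda-a)-\varphi(e_n^2(a_\lambda-a))|<\varepsilon/4$ uniformly in $\varphi\in K$ and $\lambda$. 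The residual term satisfies $|\varphi(e_n^2(a_\lambda-a))|\leq\|e_n(a_\lambda-a)\|_\A$, which tends to $0$ in $\lambda$ since $e_n\in\A$ is fixed and $(a_\lambda)$ converges strictly.

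Finally, when $\A$ is unital, $\StateSpace(\A)$ is itself weak*-compact by Banach-Alaoglu, hence lies in $\wcs{\A}$, so $p_{\StateSpace(\A)}$ is a wu-seminorm. This is the numerical radius, which on a unital C*-algebra is a norm equivalent to $\|\cdot\|_\A$ within a factor of $2$; combined with the trivial bound $p_K\leq\|\cdot\|_\A$ for every $K\in\wcs{\A}$, this forces wu to coincide with the norm topology. The principal obstacle in the overall argument is the Dini-based uniform approximation in the second stage, which is where separability is essential (guaranteeing a countable monotone approximate unit) and which requires the two-step Cauchy-Schwarz manipulation described above to convert uniform control of $\varphi(e_n)$ into the bound on $p_K(a_\lambda-a)$ via a cutoff by $e_n$ on which strict convergence can be invoked.
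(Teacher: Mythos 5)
Your proof is correct and is essentially the argument behind the cited result (the survey itself states this theorem without proof, deferring to \cite{Latremoliere05b}): the Jordan decomposition of functionals handles the comparison with the weak topology, a Dini-type uniform tightness estimate for weak*-compact sets of states --- combined with the approximate-unit cutoff $e_n^2$ and the two Cauchy--Schwarz applications --- handles the comparison with the strict topology on bounded sets, and the seminorm $p_{\StateSpace(\A)}$, which dominates $\|\cdot\|_\A$ up to a factor of $2$ while every $p_K$ is norm-bounded, settles the unital case. One small remark: Dini's theorem applies equally to increasing \emph{nets} of continuous functions on a compact space, so the countability of the approximate unit --- and hence separability --- is not genuinely needed in your second stage; the hypothesis is harmless here since it appears in the statement, but the real work lies in the tightness-plus-Cauchy--Schwarz mechanism rather than in separability.
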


In particular, the weakly uniform topology is Hausdorff.

We pause to mention that while we introduced the weakly uniform topology $\mathrm{wu}$ for the study of the topological properties of the bounded-Lipschitz metric for Lipschitz pairs, $\mathrm{wu}$ itself is defined for any C*-algebra regardless of any Lipschitz seminorm. We will see that the situation is somewhat similar for the study of the {\mongekant}.

Now, by means of the Arz{\'e}la-Ascoli Theorem and Kadisson functional representation Theorem \cite{Kadisson51}, we were able to show in \cite{Latremoliere05b} that:

\begin{theorem}[\cite{Latremoliere05b}, Theorem 2.6]\label{bounded-Lipschitz-thm2}
Let $(\A,\Lip)$ be a separable Lipschitz pair. The following assertions are equivalent:
\begin{enumerate}
\item the bounded-Lipschitz distance $\boundedLipschitz{\Lip,1}$ metrizes the weak* topology restricted to $\StateSpace(\A)$,
\item the set:
\begin{equation*}
\left\{ a\in\sa{\A} : \Lip(a)\leq 1, \|a\|_\A\leq 1 \right\}
\end{equation*}
is totally bounded in the weakly uniform topology.
\end{enumerate}
\end{theorem}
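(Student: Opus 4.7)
The plan is to recognize this statement as a noncommutative Arz\'ela-Ascoli theorem: the unit ball $L_1 = \{a \in \sa{\A} : \Lip(a)\leq 1, \|a\|_\A\leq 1\}$ is precisely the ``Lipschitz and bounded'' family of functions on the state space (via Kadison's functional representation $a \mapsto \hat{a}$, $\hat{a}(\varphi) = \varphi(a)$), while the weakly uniform topology on $\A$ is exactly the topology of uniform convergence of $\hat{a}$ on weak$^\ast$-compact subsets $K \in \wcs{\A}$. I would start by observing that $p_K(a) = \sup_{\varphi \in K} |\hat{a}(\varphi)|$ is literally the sup-norm of $\hat{a}|_K$, so total boundedness of $L_1$ for $\mathrm{wu}$ means precisely that $\{\hat{a}|_K : a \in L_1\}$ is totally bounded in $C(K)$ for each $K \in \wcs{\A}$.

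For the forward implication, assume $\boundedLipschitz{\Lip,1}$ metrizes the weak$^\ast$ topology on $\StateSpace(\A)$. Fix $K \in \wcs{\A}$; then $K$ is compact for the metric $\boundedLipschitz{\Lip,1}|_K$. The key observation is that for every $a \in L_1$ and all $\varphi, \psi \in \StateSpace(\A)$, the definition of the bounded-Lipschitz distance gives
\begin{equation*}
|\hat{a}(\varphi) - \hat{a}(\psi)| = |\varphi(a) - \psi(a)| \leq \boundedLipschitz{\Lip,1}(\varphi,\psi)\text{,}
\end{equation*}
so $\{\hat{a}|_K : a \in L_1\}$ is a uniformly $1$-Lipschitz family on the compact metric space $(K, \boundedLipschitz{\Lip,1})$, and uniformly bounded by $1$ since $\|a\|_\A \leq 1$. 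The classical Arz\'ela-Ascoli theorem then yields that this family is totally bounded in $C(K)$, i.e.\ $L_1$ is $p_K$-totally bounded. Since $K$ was arbitrary, $L_1$ is totally bounded in $\mathrm{wu}$.

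For the converse, assume $L_1$ is totally bounded in $\mathrm{wu}$. Since the $\boundedLipschitz{\Lip,1}$-topology is always finer than the weak$^\ast$ topology on $\StateSpace(\A)$, only the reverse inclusion needs proof. Because $\A$ is separable, the weak$^\ast$ topology on the bounded set $\StateSpace(\A)$ is metrizable, so it suffices to work with sequences. Let $\varphi_n \to \varphi$ weak$^\ast$ in $\StateSpace(\A)$. Then $K = \{\varphi_n : n \in \N\} \cup \{\varphi\}$ is weak$^\ast$-compact, so $K \in \wcs{\A}$. Given $\varepsilon > 0$, pick a finite $(\varepsilon/3)$-net $a_1, \ldots, a_N \in L_1$ for $p_K$. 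Weak$^\ast$ convergence gives $N_0$ such that $|\varphi_n(a_i) - \varphi(a_i)| < \varepsilon/3$ for all $n \geq N_0$ and $i = 1, \ldots, N$. For arbitrary $a \in L_1$, choose $i$ with $p_K(a - a_i) < \varepsilon/3$; then the triangle inequality
\begin{equation*}
|\varphi_n(a) - \varphi(a)| \leq |\varphi_n(a - a_i)| + |\varphi_n(a_i) - \varphi(a_i)| + |\varphi(a_i - a)| < \varepsilon
\end{equation*}
holds for $n \geq N_0$. Taking the supremum over $a \in L_1$ yields $\boundedLipschitz{\Lip,1}(\varphi_n, \varphi) \leq \varepsilon$, completing the proof.

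The genuinely substantive step is the forward direction, where one must bridge an abstract compactness hypothesis (metrizability of weak$^\ast$ by $\boundedLipschitz{\Lip,1}$) and a concrete compactness conclusion (equicontinuity on weak$^\ast$-compact sets). The bridge is the recognition that the $\boundedLipschitz{\Lip,1}$-distance is, by its very definition, the sharpest modulus of continuity that $L_1$ can have on $\StateSpace(\A)$. The reverse direction is a standard three-epsilon argument, with the only mild subtlety being the reduction to sequences, which is legitimate thanks to the separability hypothesis.
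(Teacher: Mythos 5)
Your proof is correct and follows essentially the same route the paper indicates for this theorem, namely Kadison's functional representation combined with the Arz\'ela-Ascoli theorem: the ball $\left\{a\in\sa{\A} : \Lip(a)\leq 1, \|a\|_\A\leq 1\right\}$ becomes a uniformly bounded, uniformly Lipschitz family on each weak* compact $K$, with the converse a three-epsilon argument where separability legitimizes the reduction to sequences. The one step you leave implicit --- passing from total boundedness for each seminorm $p_K$ separately to total boundedness for the weakly uniform topology --- is harmless, since $\wcs{\A}$ is stable under finite unions and so the generating family of seminorms is directed.
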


Theorem (\ref{bounded-Lipschitz-thm2}) contains the important observation that our topology is indeed the proper one to consider in the study of the metric properties of the bounded-Lipschitz distance; yet the weakly uniform topology, as defined, would seem difficult to use, and thus Theorem (\ref{bounded-Lipschitz-thm2}) may seem hard to apply. The next main step of \cite{Latremoliere05b}, which in fact occupies most of that paper, is to study the metrizability property of the weakly uniform topology on bounded subsets of $\A$. We thus obtain:

\begin{theorem}[\cite{Latremoliere05b}, Theorem 3.17]\label{bounded-Lipschitz-thm3}
Let $\A$ be a separable C*-algebra and $\B\subseteq \A$ be a bounded subset of $\A$. The weakly uniform topology on $\A$ restricted to $\B$ is metrizable, and moreover, for any strictly positive element $h\in\sa{\A}$, a metric is given by:
\begin{equation*}
a,b \in \B \longmapsto \|h(b-a)h\|_\A\text{.}
\end{equation*}
\end{theorem}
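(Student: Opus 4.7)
The plan is to verify that $d_h(a,b) := \|h(b-a)h\|_\A$ is a metric on $\sa{\A}$ and that its topology on $\B$ coincides with the weakly uniform topology restricted to $\B$; metrizability of $\mathrm{wu}|_\B$ then follows at once. For the metric property, $d_h$ is plainly a pseudo-metric. For positive definiteness, assume $h(b-a)h=0$ with $a,b\in\sa{\A}$. In the universal representation $\pi$, every nonzero vector $\xi$ defines (after normalization) a state $a\mapsto\langle\pi(a)\xi,\xi\rangle/\|\xi\|^2$, and strict positivity of $h$ forces $\langle\pi(h)\xi,\xi\rangle>0$; hence $\pi(h)$ is injective and, being self-adjoint, has dense range. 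Consequently $\pi(h)\pi(b-a)\pi(h)=0$ forces $\pi(b-a)=0$, whence $a=b$ by faithfulness of $\pi$.

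For the inclusion $\tau_{d_h}|_\B\subseteq\mathrm{wu}|_\B$, associate to each $\psi\in\StateSpace(\A)$ the positive functional $\ell_\psi(a):=\psi(hah)$, of norm $\psi(h^2)\in(0,\|h\|_\A^2]$, so that $\ell_\psi/\psi(h^2)\in\StateSpace(\A)$. For $\eta>0$, the set $K_\eta:=\{\ell_\psi/\psi(h^2):\psi\in\StateSpace(\A),\ \psi(h^2)\geq\eta\}$ lies in $\wcs{\A}$, being the weak*-continuous image of the weak*-closed (hence weak*-compact in the quasi-state space) subset $\{\psi:\psi(h^2)\geq\eta\}$. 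Splitting the supremum over $\psi(h^2)\gtrless\eta$ yields
\[
\|hah\|_\A \leq \eta\|a\|_\A + \|h\|_\A^2\,p_{K_\eta}(a).
\]
If $p_K(a_\lambda-a)\to 0$ for every $K\in\wcs{\A}$, then $\limsup\|h(a_\lambda-a)h\|_\A\leq 2M\eta$ with $M:=\sup_{b\in\B}\|b\|_\A$; sending $\eta\to 0$ gives $d_h(a_\lambda,a)\to 0$.

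The converse inclusion $\mathrm{wu}|_\B\subseteq\tau_{d_h}|_\B$ is the heart of the argument. Fix $K\in\wcs{\A}$, $\epsilon>0$, and a bounded net $(a_\lambda)\subseteq\B\cap\sa{\A}$ with $\|ha_\lambda h\|_\A\to 0$; rescale so that $\|h\|_\A\leq 1$. Strict positivity of $h$ means its support projection in $\A^{**}$ is $\unit$, so $h^{1/n}\nearrow\unit$ strongly; thus $\varphi(\unit-h^{1/n})\searrow 0$ pointwise on $\StateSpace(\A)$. Separability of $\A$ makes $K$ weak*-metrizable and compact, so Dini's theorem upgrades this to uniform convergence: choose $n_0$ with $\sup_{\varphi\in K}\varphi(\unit-h^{1/n_0})<\epsilon$. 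The decomposition $a-h^{1/n_0}ah^{1/n_0}=a(\unit-h^{1/n_0})+(\unit-h^{1/n_0})ah^{1/n_0}$ together with Cauchy-Schwarz for states and $(\unit-h^{1/n_0})^2\leq\unit-h^{1/n_0}$ give
\[
\sup_{\varphi\in K}\bigl|\varphi(a)-\varphi(h^{1/n_0}ah^{1/n_0})\bigr| \leq 2M\sqrt{\epsilon}
\]
uniformly in $a\in\B\cap\sa{\A}$. To bound $\|h^{1/n_0}ah^{1/n_0}\|_\A$ by $\|hah\|_\A$, I approximate $h^{1/n_0}\approx hg_\delta$ via continuous functional calculus, with $g_\delta=\psi_\delta(h)$ truncating $t\mapsto t^{1/n_0-1}$ continuously near $0$; then $\|h^{1/n_0}-hg_\delta\|_\A=O(\delta^{1/n_0})$ and $\|g_\delta\|_\A\leq\delta^{1/n_0-1}$. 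Using $[g_\delta,h]=0$ and submultiplicativity, one obtains
\[
\|h^{1/n_0}ah^{1/n_0}\|_\A \leq \|g_\delta\|_\A^2\,\|hah\|_\A + O\bigl(M\delta^{1/n_0}\bigr),
\]
so picking $\delta$ first small and then $\lambda$ large forces both contributions below $\epsilon$.

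The chief obstacle is precisely this final coupled limit: since $0$ can belong to the spectrum of $h$, passing from $\|hah\|_\A$ to $\|h^{1/n_0}ah^{1/n_0}\|_\A$ requires the tandem $\delta\to 0$ followed by $\lambda\to\infty$, made possible only by the commutation $[g_\delta,h]=0$ and careful spectral bookkeeping. The other indispensable input---Dini's theorem on a weak*-metrizable compact $K$---is where separability of $\A$ enters essentially, and once both containments are established, $d_h$ is the advertised metric for $\mathrm{wu}|_\B$.
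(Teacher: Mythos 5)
Your argument is correct in substance, and since this survey states the theorem without proof (citing \cite{Latremoliere05b}, Theorem 3.17), I assess it on its own terms: it assembles exactly the right mechanisms --- positive definiteness of $(a,b)\mapsto\|h(b-a)h\|_\A$ via injectivity and dense range of the image of $h$ in a faithful representation, compactness arguments in the quasi-state space for one containment, and the approximate unit $h^{1/n}$ attached to a strictly positive element with a Dini-type uniformity over weak* compact sets of states for the other. The coupled limit you flag as the chief obstacle is handled legitimately: once $n_0$ and $\delta$ are fixed, $\|g_\delta\|_\A\leq\delta^{1/n_0-1}$ is a constant, so the bound $\|h^{1/n_0}ah^{1/n_0}\|_\A\leq\|g_\delta\|_\A^2\|hah\|_\A+O\bigl(M\delta^{1/n_0}\bigr)$ does force $p_K(a_\lambda-a)\rightarrow 0$ after first choosing $\delta$ small and then $\lambda$ large.

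Three small points need repair, none fatal. First, the identity $\|x\|_\A=\sup\left\{|\varphi(x)|:\varphi\in\StateSpace(\A)\right\}$, which you use implicitly when deriving $\|hah\|_\A\leq\eta\|a\|_\A+\|h\|_\A^2\,p_{K_\eta}(a)$, holds only for self-adjoint $x$; since $\B$ is an arbitrary bounded subset of $\A$, you should reduce to the self-adjoint case via real and imaginary parts (note $h\,\Re(a)\,h=\Re(hah)$ and $p_K(\Re(a))\leq p_K(a)$), at the cost of a factor $2$ --- your restriction to $\B\cap\sa{\A}$ in the second containment is similarly unnecessary, as your Cauchy-Schwarz estimates never use self-adjointness. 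Second, when $\A$ is non-unital the set $\left\{\psi\in\StateSpace(\A):\psi(h^2)\geq\eta\right\}$ is not weak* compact; you should take its counterpart in the quasi-state space (positive functionals of norm at most $1$), which is weak* compact, and observe that $\|\ell_\psi\|=\psi(h^2)$ still holds for any positive functional $\psi$ (use an approximate unit: $he_\lambda h\rightarrow h^2$ in norm), so that $\ell_\psi/\psi(h^2)$ remains a genuine state and the continuous image $K_\eta$ is a compact subset of $\StateSpace(\A)$, i.e.\ an element of $\wcs{\A}$. Third, Dini's theorem requires only compactness of $K$, not metrizability, so separability of $\A$ is not what powers that step; where separability genuinely enters your proof is in guaranteeing that a strictly positive element $h\in\sa{\A}$ exists at all ($\sigma$-unitality). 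With these touch-ups both containments are sound, and metrizability of the weakly uniform topology on $\B$ follows as you say.
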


Now, putting the metrizability property of the weakly uniform topology in Theorem (\ref{bounded-Lipschitz-thm3}) with the characterization of bounded quantum locally compact quantum metric spaces given by Theorem (\ref{bounded-Lipschitz-thm2}), we obtain our Theorem (\ref{bounded-Lipschitz-thm}).

In particular, if $\A$ is unital, then $\unit_\A\in\sa{\A}$ is a strictly positive element of $\A$ and thus we recover that the norm topology and the weakly uniform topology agree on bounded subsets of $\A$ (of course, we observe from the definition that these two topologies agree on all of $\A$). In general, we note (since the square of a strictly positive element is strictly positive):

\begin{corollary}[\cite{Latremoliere05b}, Proposition 3.22]
If $\A$ is a separable C*-algebra and there exists a strictly positive \emph{central} element $h \in \sa{\A}$ then the strict topology of $\A$ and the weakly uniform topology of $\A$ agree on bounded subsets of $\A$.
\end{corollary}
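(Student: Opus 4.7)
The plan is to establish the non-trivial inclusion: on any bounded subset $\B \subseteq \A$, convergence in the weakly uniform topology implies convergence in the strict topology (the reverse inclusion is supplied by the theorem cited just before the corollary). By Theorem~\ref{bounded-Lipschitz-thm3} applied to the strictly positive element $h$, the weakly uniform topology on $\B$ is metrized by $(a,b) \mapsto \|h(b-a)h\|_\A$; centrality of $h$ collapses this to $\|h^2(b-a)\|_\A$. So the question reduces to showing that, given a bounded sequence $(b_n)$ in $\A$ with $\|h^2 b_n\|_\A \to 0$, one has $\|k b_n\|_\A \to 0$ and $\|b_n k\|_\A \to 0$ for every $k \in \A$.

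The first step I would carry out is a C*-identity bootstrap: write $\|h b_n\|_\A^2 = \|b_n^\ast h^2 b_n\|_\A$ and apply the submultiplicative estimate $\|b_n^\ast h^2 b_n\|_\A \leq \|b_n\|_\A \cdot \|h^2 b_n\|_\A$, so that uniform boundedness of $(b_n)$ together with $\|h^2 b_n\|_\A \to 0$ force $\|h b_n\|_\A \to 0$. This is the conceptual engine of the proof: it upgrades two-sided $h^2$-control into one-sided $h$-control without sacrificing the bound.

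From there I would appeal to strict positivity of $h$, which makes $h\A$ norm-dense in $\A$, so that any $k \in \A$ is approximated in norm by elements of the form $h k'$ with $k' \in \A$. Centrality of $h$ then lets me commute $h$ past $k'$, yielding $\|k b_n\|_\A \leq \varepsilon M + \|k'\|_\A \|h b_n\|_\A$ for any prescribed tolerance $\varepsilon > 0$, where $M$ bounds $\|b_n\|_\A$. Sending $n \to \infty$ and then $\varepsilon \to 0$ gives $\|k b_n\|_\A \to 0$. The symmetric argument using density of $\A h = h\A$ handles $\|b_n k\|_\A \to 0$, and strict convergence follows.

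The hard part, such as it is, lies in recognizing that the C*-identity supplies exactly the right upgrade from $h^2$-control to $h$-control. Without centrality, one would retain the C*-identity step but lose the clean identification $h(b-a)h = h^2(b-a)$, and one would need separate left and right density approximations with no guarantee of compatibility; centrality is precisely what permits the weakly uniform metric to be expressed through a single one-sided factor, so that density of $h\A$ directly controls all one-sided multiplications in the strict topology.
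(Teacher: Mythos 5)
Your proof is correct, but it takes a genuinely different route from the paper's. The intended argument in \cite{Latremoliere05b} --- signposted here by the parenthetical remark that the square of a strictly positive element is strictly positive --- is a pure comparison of metrics: by Theorem (\ref{bounded-Lipschitz-thm3}) the weakly uniform topology on a bounded set is metrized by $(a,b)\mapsto \|h(b-a)h\|_\A$, which centrality collapses to $\|h^2(b-a)\|_\A$, while by \cite[Theorem 3.21]{Latremoliere05b} the strict topology on a bounded set is metrized by $(a,b)\mapsto \max\left\{\|h^2(b-a)\|_\A, \|(b-a)h^2\|_\A\right\}$ applied to the strictly positive element $h^2$; centrality makes these two metrics literally identical, and the corollary follows with no further analysis. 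You bypass the second metrization theorem entirely and instead prove the nontrivial inclusion by hand, via the density of $h\A$ in $\A$ (a standard consequence of strict positivity, since $(h^{1/n})_{n}$ is then an approximate unit) together with commuting $h$ past the approximant; in effect you re-derive, in the central case, exactly the half of \cite[Theorem 3.21]{Latremoliere05b} that the corollary needs. What your route buys is self-containedness and a visible mechanism --- strict positivity supplies density, centrality lets one-sided $h$-control govern both one-sided multiplications --- at the cost of redoing work the paper already has on the shelf. Two small remarks: first, your C*-identity bootstrap $\|hb_n\|_\A^2 = \|b_n^\ast h^2 b_n\|_\A \leq \|b_n\|_\A \|h^2 b_n\|_\A$ is valid but dispensable, since $h^2$ is itself strictly positive, so $h^2\A$ is already dense in $\A$ and you may approximate $k$ by $h^2 k'$ and commute directly --- this is precisely the fact the paper's parenthetical flags; second, the easy inclusion (weakly uniform coarser than strict on bounded sets) comes from \cite[Lemma 3.2, Proposition 3.3]{Latremoliere05b}, stated earlier in the survey, rather than from the metrization theorem immediately preceding the corollary --- a harmless misattribution. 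Your reduction to sequences is legitimate precisely because the weakly uniform topology restricted to the bounded set is metrizable, which you correctly invoke before testing sequential strict convergence.
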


In particular, for Abelian C*-algebras, the weakly uniform topology and the strict topology agree on bounded subsets.

There are however examples which show that the weakly uniform topology is at times strictly coarser than the strict topology, even on bounded subset:

\begin{example}
The strict topology is strictly finer than the weakly uniform topology on the unit ball of the C*-algebra $\mathscr{K}$ of compact operators on a separable, infinite dimensional Hilbert space $\Hilbert$. Indeed, let $(\xi_n)_{n\in\N}$ be a Hilbert basis for $\Hilbert$ and for all $n\in\N$, let $p_n$ be the projection on $\C\xi_n$. Then $h = \sum_{n\in\N} \frac{1}{2^{n+1}} p_n$ is a strictly positive element in $\mathscr{K}$.

Let $t_n = \inner{\cdot}{\xi_0}\xi_n$ for all $n\in\N$, where $\inner{\cdot}{\cdot}$ is the inner product on $\Hilbert$. Then $\|t_n\|_{\mathscr{K}} =  1$ and $\|t_{n+1}-t_{n}\|_{\mathscr{K}} = \sqrt{2}$ for all $n\in\N$. Moreover, $h t_n h = \frac{1}{2^{n+2}}t_n $. Thus $(h t_n h)_{n\in\N}$ converges to $0$ in norm. On the other hand, $t_n = 2 t_n h$ for all $n\in\N$ and thus $(t_n h)_{n\in\N}$ does not converge, i.e. $(t_n)_{n\in\N}$ does not converge for the strict topology.
\end{example}

Thus, the weakly uniform topology was hidden because in both the Abelian world and the compact world, it agrees with two of the standard topologies of C*-algebras. Our work suggests, however, that the weakly uniform topology is more natural to consider in the study of metric properties of noncompact noncommutative geometries, in addition to the strict topology.

In fact, a natural question which arises from our work is to compare the strict topology and the weakly uniform topology by giving a state-space description of the strict topology. We proposed such a description in \cite{Latremoliere05b}. 

We begin by introducing another topology on C*-algebras:

\begin{definition}[\cite{Latremoliere05b}, Definition 3.19]
The \emph{strongly uniform topology} on a C*-algebra $\A$ is the locally convex topology generated by the family of seminorms
\begin{equation*}
(q_K)_{K\in\wcs{\A}}\text{,}
\end{equation*}
where for all $K\in\wcs{\A}$:
\begin{equation*}
\forall a \in \A \quad q_K(a) = \sup \left\{ \sqrt[2]{\varphi(a^\ast a)}, \sqrt[2]{\varphi(a a^\ast)} : \varphi\in K \right\} \text{.}
\end{equation*}
\end{definition}

We then have our state-space description of the strict topology in terms of the strongly uniform topology:

\begin{theorem}[\cite{Latremoliere05b}, Theorem 3.21]
Let $\A$ be a separable C*-algebra and $\B\subseteq \A$ be a bounded subset of $\A$. The strongly uniform topology on $\A$ restricted to $\B$ is metrizable, and moreover, for any strictly positive element $h\in\sa{\A}$, a metric is given by:
\begin{equation*}
a,b \in \B \longmapsto \max\{ \|h(b-a)\|_\A, \|(b-a)h\|_\A \} \text{.}
\end{equation*}
In particular, the strongly uniform topology and the strict topology agree on bounded subsets of $\A$.
\end{theorem}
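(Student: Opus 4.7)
The plan is to follow the strategy of Theorem~\ref{bounded-Lipschitz-thm3} for the weakly uniform topology, modifying the argument for the two-sided quantity $d(a,b):=\max\{\|h(b-a)\|_\A,\|(b-a)h\|_\A\}$. First, I would verify that $d$ is a metric; separation is the subtle point and uses strict positivity of $h$: if $hc=0$ for $c=b-a$, then $\varphi(c^*h^2c)=0$ for every state $\varphi$; whenever $\varphi(c^*c)>0$, the state $\tilde\varphi(x):=\varphi(c^*xc)/\varphi(c^*c)$ satisfies $\tilde\varphi(h^2)=0$, hence $\tilde\varphi(h)=0$ by Cauchy--Schwarz, contradicting strict positivity.

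Next I would show that on $\B$, with norm bound $M$, the metric $d$ induces the restriction of the strongly uniform topology. For the easier direction, start from
\begin{equation*}
\|hc\|_\A^2=\|hcc^*h\|_\A=\sup_{\psi:\,\psi(h^2)>0}\psi(h^2)\,\tilde\psi_h(cc^*),
\end{equation*}
with the supremum taken over the quasi-state space of $\A$ (weak*-compact in $\unital{\A}^*$), where $\tilde\psi_h(x):=\psi(hxh)/\psi(h^2)$ is a state of $\A$. The set $\tilde K_\varepsilon$ of quasi-states $\psi$ with $\psi(h^2)\geq\varepsilon$ is weak*-compact, and $\psi\mapsto\tilde\psi_h$ is weak*-continuous on it, so $\tilde K_\varepsilon':=\{\tilde\psi_h:\psi\in\tilde K_\varepsilon\}\in\wcs{\A}$. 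Splitting the supremum at $\psi(h^2)=\varepsilon$ yields
\begin{equation*}
\|hc\|_\A^2\leq\|h\|_\A^2\,q_{\tilde K_\varepsilon'}(c)^2+\varepsilon(2M)^2
\end{equation*}
for $c=b-a$ with $a,b\in\B$, and since $\varepsilon$ is arbitrary, strongly uniform-convergence implies $\|hc\|_\A\to 0$; the estimate for $\|ch\|_\A$ is symmetric.

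The reverse direction, $d$-convergence implies strongly uniform-convergence, is the main obstacle. Fix $K\in\wcs{\A}$; by weak*-compactness and subsequence extraction it suffices to show that whenever $\varphi_k\to\varphi$ weak* with $\varphi_k\in K$, and $c_k=b_k-a_k$ with $a_k,b_k\in\B$ and $\|hc_k\|_\A,\|c_kh\|_\A\to 0$, then $\varphi_k(c_k^*c_k)\to 0$ (the case $\varphi_k(c_kc_k^*)\to 0$ is analogous). Strict positivity of $h$ ensures that $u_n:=hg_n(h)$, with $g_n(t):=1/(t+1/n)$, is an approximate unit of $\A$. I decompose
\begin{equation*}
\pi_{\varphi_k}(c_k)\xi_{\varphi_k}=\pi_{\varphi_k}(c_kh)\pi_{\varphi_k}(g_n(h))\xi_{\varphi_k}+\pi_{\varphi_k}(c_k)\pi_{\varphi_k}(\unit_\A-u_n)\xi_{\varphi_k}.
\end{equation*}
The first term is bounded by $\|g_n(h)\|\cdot\|c_kh\|_\A\leq n\|c_kh\|_\A$, which vanishes in $k$ for each fixed $n$; the second is at most $2M\sqrt{\varphi_k(\unit_\A-u_n)}$ since $(\unit_\A-u_n)^2\leq\unit_\A-u_n$. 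Since $\varphi\mapsto\varphi(u_n)$ is weak*-continuous on $K$, monotone increasing in $n$, and converges pointwise to $1$, Dini's theorem yields $\sup_{\varphi\in K}\varphi(\unit_\A-u_n)\to 0$, and a standard $\varepsilon$-argument forces $\varphi_k(c_k^*c_k)\to 0$.

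Finally, the agreement of the strongly uniform and strict topologies on $\B$ reduces to showing that $d$ also metrizes the strict topology on $\B$: one direction is trivial since $h\in\A$, and for the other, for any $y\in\A$ the decomposition $yc_k=(y-yu_m)c_k+(yg_m(h))(hc_k)$ yields $\|yc_k\|_\A\leq 2M\|y-yu_m\|_\A+m\|y\|_\A\|hc_k\|_\A$, which can be made arbitrarily small by first choosing $m$ large and then $k$ large, with an analogous estimate for $\|c_ky\|_\A$.
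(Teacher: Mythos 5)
Your proof is correct and follows essentially the same route as the original argument in \cite{Latremoliere05b}, to which the survey defers (it reproduces no proof of this theorem): one direction via the compression $\psi\mapsto\psi(h\,\cdot\,h)/\psi(h^2)$, which turns the weak* compact sets of quasi-states $\{\psi : \psi(h^2)\geq\varepsilon\}$ into sets $\tilde K_\varepsilon'\in\wcs{\A}$ whose seminorms control $\|hc\|_\A$ and $\|ch\|_\A$ up to an arbitrarily small error on $\B$, and the other via the approximate unit $u_n=h\left(h+\frac{1}{n}\right)^{-1}$ together with Dini's theorem on a weak* compact $K\subseteq\StateSpace(\A)$, with the two-sided seminorms $q_K$ matching the two-sided metric exactly as you exploit when you treat $\varphi(c_k^\ast c_k)$ and $\varphi(c_k c_k^\ast)$ through the two halves $\|c_k h\|_\A$ and $\|h c_k\|_\A$. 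One minor remark: the subsequence extraction and the weak* limit $\varphi$ in your main direction are never used---your estimate $\sqrt{\varphi(c_k^\ast c_k)}\leq n\|c_k h\|_\A + 2M\sqrt{\varphi(\unit_\A-u_n)}$ holds uniformly for $\varphi\in K$, and it is precisely these uniform inequalities (in both directions) that yield the neighborhood inclusions needed to identify the metric topology with the restriction of the locally convex strongly uniform topology, since sequential continuity alone would only suffice in the direction starting from the first countable metric side.
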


When quantum metric spaces are not, in a natural manner, of bounded diameter, it is natural to wonder what can be said of the behavior of the {\mongekant}. This matter will occupy most of our next section. We however begin this next section with the compact case, which was understood by Rieffel in \cite{Rieffel98a} and to which Theorem (\ref{bounded-Lipschitz-thm}) applies as well.

\subsection{The Monge-Kantorovich Distance}

\subsubsection{Quantum Compact Metric Spaces}\label{compact-sec}

The notion of a quantum compact metric space is the foundation of noncommutative metric geometry, with its origins in \cite{Connes89} and its formalization in \cite{Rieffel98a,Rieffel99}. For our purpose, we shall focus on the C*-algebraic theory. However, it should be noted that Rieffel's definition and framework \cite{Rieffel98a,Rieffel99, Rieffel00} is more general, and involves order-unit spaces in place of C*-algebras. 

As we discussed in the section on the bounded-Lipschitz distance, the core property of Lipschitz seminorms which we keep in the noncommutative world is that the associated {\mongekant} metrizes the weak* topology on the state space. Thus, Rieffel proposed \cite{Rieffel98a}:

\begin{definition}[\cite{Rieffel98a}]\label{Rieffel-def}
A \emph{quantum compact metric space} $(\A,\Lip)$ is a unital Lipschitz pair whose {\mongekant}:
\begin{equation*}
\varphi,\psi \in \StateSpace(\A) \longmapsto \Kantorovich{\Lip}(\varphi,\psi) = \sup \left\{ |\varphi(a)-\psi(a)| : a\in\sa{\A},\Lip(a)\leq 1 \right\}
\end{equation*}
metrizes the weak* topology restricted to the state space $\StateSpace(\A)$ of $\A$.

When a Lipschitz pair $(\A,\Lip)$ is a quantum compact metric space, the seminorm $\Lip$ is referred to as a \emph{Lip-norm}.
\end{definition}

In \cite{Rieffel98a}, Rieffel characterized Lip-norms in term of the total-boundedness of their unit ball modulo scalars. Rieffel proposed several formulation of this theorem later on \cite{Rieffel99, Ozawa05}. The following summarizes his characterization of Lip-norms:

\begin{theorem}[\cite{Rieffel98a}, Theorem 1.9, and \cite{Rieffel99,Ozawa05}]\label{Rieffel-thm}
Let $(\A,\Lip)$ be a Lipschitz pair. The following assertions are equivalent:
\begin{enumerate}
\item $(\A,\Lip)$ is a quantum compact metric space,
\item the set:
\begin{equation*}
\{ \dot a : a \in \sa{\A}, \Lip(a) \leq 1 \}
\end{equation*}
is norm precompact in $\left(\bigslant{\sa{\A}}{\R\unit_\A}\right)$, where $a\in\sa{\A} \mapsto {\dot a}$ is the canonical surjection from $\sa{\A}$ onto $\left(\bigslant{\sa{\A}}{\R\unit_\A}\right)$,
\item there exists a state $\mu\in\StateSpace(\A)$ such that the set:
\begin{equation*}
\left\{ a\in\sa{\A} : \Lip(a)\leq 1, \mu(a) = 0 \right\}
\end{equation*}
is norm precompact in $\A$,
\item for all states $\mu\in\StateSpace(\A)$, the set:
\begin{equation*}
\left\{ a\in\sa{\A} : \Lip(a)\leq 1, \mu(a) = 0 \right\}
\end{equation*}
is norm precompact in $\A$,
\item the set:
\begin{equation*}
\left\{ a\in\sa{\A} : \Lip(a)\leq 1, \|a\|_\A \leq 1 \right\}
\end{equation*}
is norm precompact in $\A$ and $\diam{\StateSpace(\A)}{\Kantorovich{\Lip}} < \infty$.
\end{enumerate}
\end{theorem}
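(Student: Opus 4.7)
The plan is to establish the five equivalences using condition (2) as a hub, since it is the most intrinsic statement about the Lip-norm alone. I would first prove (2) $\Leftrightarrow$ (3) $\Leftrightarrow$ (4) by a direct translation argument, then tackle the central duality (1) $\Leftrightarrow$ (2), and finally handle (5) $\Leftrightarrow$ (2) using the finite-diameter hypothesis.

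For the equivalences among (2), (3), and (4), the key point is that $\Lip(a + t\unit_\A) = \Lip(a)$ for all $t \in \R$ and $a \in \dom{\Lip}$, a fact already used in the proof of Proposition \ref{bounded-Kantorovich-prop} above. Thus $\dot L_1 := \{\dot a : a \in \sa{\A}, \Lip(a) \leq 1\}$ is the image of the set in (3) or (4) under the canonical quotient $\pi : \sa{\A} \to \sa{\A}/\R\unit_\A$. Conversely, for any state $\mu \in \StateSpace(\A)$, the map $\sigma_\mu : \dot a \mapsto a - \mu(a)\unit_\A$ is a well-defined continuous linear right-inverse of $\pi$, and it sends $\dot L_1$ bijectively onto the set in (3) for that particular $\mu$. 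Since both $\pi$ and $\sigma_\mu$ are continuous linear maps, they preserve precompactness in each direction, which gives the chain (3) $\Leftrightarrow$ (2) $\Leftrightarrow$ (4) (applying (3) $\Rightarrow$ (2) for one $\mu$ and then (2) $\Rightarrow$ (4) for every $\mu$).

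For (1) $\Leftrightarrow$ (2), the central observation is that $\Kantorovich{\Lip}(\varphi,\psi)$ depends only on $\dot a$, since $\varphi(\unit_\A) = \psi(\unit_\A) = 1$, so $\Kantorovich{\Lip}$ is computed by pairing pairs of states with elements of $\dot L_1 \subset \sa{\A}/\R\unit_\A$. For (2) $\Rightarrow$ (1): the state space is weak* compact by Banach-Alaoglu, and hence norm-bounded in the dual; given $\varepsilon > 0$ and a finite $\varepsilon$-net $\dot a_1, \ldots, \dot a_n$ of $\dot L_1$ (using representatives $a_i$ chosen via some $\sigma_\mu$), one obtains
\begin{equation*}
\Kantorovich{\Lip}(\varphi,\psi) \leq \max_{i \leq n} |\varphi(a_i) - \psi(a_i)| + 2\varepsilon,
\end{equation*}
so that small $\Kantorovich{\Lip}$-balls contain basic weak* neighborhoods; the reverse containment is immediate since each $a \in L_1$ is a continuous affine function on the state space. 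For (1) $\Rightarrow$ (2), one dualizes: since $\StateSpace(\A)$ is weak* compact and $\Kantorovich{\Lip}$ metrizes this topology, the state space is $\Kantorovich{\Lip}$-totally-bounded, and an Arz{\'e}la-Ascoli-style argument identifies $\dot L_1$ with an equicontinuous family on this totally-bounded metric space, yielding norm total boundedness in the quotient.

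For (5) $\Leftrightarrow$ (2): given (2) together with finite diameter of $(\StateSpace(\A), \Kantorovich{\Lip})$, each $\dot a \in \dot L_1$ admits a representative of norm at most $\frac{1}{2}\diam{\StateSpace(\A)}{\Kantorovich{\Lip}}$ (center the spectrum of the self-adjoint $a$), after which norm-precompactness of the resulting set of representatives follows from precompactness in the quotient plus the uniform norm bound. Conversely, (5) passes to the quotient to give (2), while finite diameter follows from the norm bound on representatives via the definition of $\Kantorovich{\Lip}$.

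The principal obstacle is the direction (1) $\Rightarrow$ (2) of the central equivalence: converting weak* metrizability of the state space into genuine norm total boundedness of $\dot L_1$ in the quotient. The subtlety is that a Lip-unit-ball element is a priori only a continuous affine function on the state space, and extracting precompactness in the stronger quotient norm (rather than in some weaker pointwise topology) demands simultaneous use of weak* compactness of $\StateSpace(\A)$ and the equicontinuity of $\dot L_1$ encoded by the Lip-bound, plus an identification of the quotient norm with a supremum over the state space via a Kadison-style functional representation.
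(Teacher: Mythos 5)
Your proposal is correct in outline, but it takes a genuinely different route from this survey. Here, Theorem (\ref{Rieffel-thm}) is not reproved from scratch: the text derives the equivalence of (1) and (5) from the non-unital bounded-Lipschitz machinery --- Proposition (\ref{bounded-Kantorovich-prop}) identifies $\Kantorovich{\Lip}$ with the bounded-Lipschitz distance $\boundedLipschitz{\Lip,r}$ once the diameter is finite, and Theorem (\ref{bounded-Lipschitz-thm}), together with the fact that the weakly uniform topology agrees with the norm topology for unital C*-algebras, characterizes when that distance metrizes the weak* topology --- and then remarks that the remaining equivalences ``can be recovered fairly quickly.'' What you reconstruct instead is essentially Rieffel's original direct argument from the cited sources \cite{Rieffel98a,Rieffel99}: the hub (2) $\Leftrightarrow$ (3) $\Leftrightarrow$ (4) via the translation invariance $\Lip(a+t\unit_\A)=\Lip(a)$ and the continuous section $\sigma_\mu : \dot a\mapsto a-\mu(a)\unit_\A$ (which indeed has norm at most $2$), and the core (1) $\Leftrightarrow$ (2) via Kadison's function representation and Arzel\`a--Ascoli on the compact metric space $(\StateSpace(\A),\Kantorovich{\Lip})$; you correctly single out (1) $\Rightarrow$ (2) as the hard direction and name the right tools for it. The trade-off: your argument is self-contained and elementary, but both of your appeals to weak* compactness of $\StateSpace(\A)$ (Banach--Alaoglu in (2) $\Rightarrow$ (1), Arzel\`a--Ascoli in (1) $\Rightarrow$ (2)) confine it to unital Lipschitz pairs, whereas the paper's detour through the bounded-Lipschitz theory is precisely what extends the statement beyond the unital case --- at the price of a separability hypothesis in Theorem (\ref{bounded-Lipschitz-thm}) and heavier machinery.

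Two bookkeeping slips, neither fatal. First, in (2) $\Rightarrow$ (5) you treat finite diameter as given, but it is part of the conclusion of (5); it does follow from (2) in one line: total boundedness makes $\dot L_1$ bounded in the quotient norm by some $R$, whence $|\varphi(a)-\psi(a)| = |\varphi(a-t\unit_\A)-\psi(a-t\unit_\A)|\leq 2R$ whenever $\Lip(a)\leq 1$, so $\diam{\StateSpace(\A)}{\Kantorovich{\Lip}}\leq 2R$. Second, the spectrum-centering representative of norm at most half the diameter is what the direction (5) $\Rightarrow$ (2) requires --- to realize every class in $\dot L_1$ as the image of (a dilate of) the set in (5) before passing to the quotient --- whereas you attached it to (2) $\Rightarrow$ (5), where boundedness by $1$ plus quotient total boundedness and the one-dimensional fiber $\R\unit_\A$ already suffice; relatedly, your ``immediate'' reverse containment in (2) $\Rightarrow$ (1) should be justified by $|\varphi(a)-\psi(a)|\leq\Lip(a)\,\Kantorovich{\Lip}(\varphi,\psi)$ on the dense domain of $\Lip$ (or by noting that a continuous bijection from a compact space onto a Hausdorff space is a homeomorphism), not by continuity and affineness alone.
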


Now, as discussed in Proposition (\ref{bounded-Kantorovich-prop}), the bounded-Lipschitz distances and the {\mongekant} agree when the state space has finite diameter for the {\mongekant}. Thus Theorem (\ref{bounded-Lipschitz-thm}) implies the equivalence between (1) and (5) in Theorem (\ref{Rieffel-thm}). The other equivalences can then be recovered fairly quickly. Consequently, our work on the bounded-Lipschitz distance did extend the work of Rieffel from the unital to the general case of Lipschitz pairs.

Definition (\ref{Rieffel-def}) does not require that Lip-norms be lower semi-continuous and quasi-Leibniz. These two additional assumptions, as we have discussed, are useful to our work (we note that lower semi-continuity is a convenience while the quasi-Leibniz property will prove crucial). We can now define the objects which will be of central interest to us:

\begin{definition}[\cite{Latremoliere13,Latremoliere15}]\label{LQCMS-def}
Let $F$ be an permissible function from $[0,\infty)^4\rightarrow [0,\infty)$ (see Definition (\ref{permissible-def})). A unital Lipschitz pair $(\A,\Lip)$ is an \emph{\Qqcms{F}} when:
\begin{enumerate}
\item $(\A,\Lip)$ is a compact quantum metric space,
\item $\Lip$ is lower semicontinuous,
\item $(\A,\Lip)$ is an $F$-quasi-Leibniz pair.
\end{enumerate}
\end{definition}

In particular:

\begin{definition}[\cite{Latremoliere13}]
A unital Lipschitz pair $(\A,\Lip)$  is a {\Lqcms} when it is a Leibniz pair and $\Lip$ is a lower semicontinuous Lip-norm.
\end{definition}

\begin{remark}
Now, when $(\A,\Lip)$ is a quantum compact metric space, then Assertion (3) of Theorem (\ref{lower-semicontinuous-thm}) is equivalent to $\{a\in\sa{\A} :\Lip(a)\leq 1\}$ is compact in norm in $\sa{\A}$, since it is a totally bounded and closed subset of $\A$, which is complete.
\end{remark}

We note that other restrictions may be put on Lip-norms, besides lower semicontinuity or the Leibniz property. Rieffel introduced compact C*-metric spaces in \cite{Rieffel10c}, for instance, which are quantum compact metric spaces with Lip-norms satisfying the so-called strong Leibniz property. As we will see, many such additional properties can be incorporated in our construction of the dual Gromov-Hausdorff propinquity, which was built with this flexibility in mind. Compact C*-metric spaces are, in particular, {\Lqcms s}.

We now show that many of the unital Lipschitz pairs which we discussed in our first section are, in fact, {\Lqcms s}. It is notable that proving a Lipschitz pair is a quantum compact metric space is, typically, hard.

We begin with the original example from Rieffel in \cite{Rieffel98a}, which shows that all Lipschitz pairs in Example (\ref{ergodic-LP-ex}) are indeed quantum compact metric spaces.

\begin{theorem}[\cite{Rieffel98a}, Theorem 2.3]\label{Rieffel-ergo-thm}
Let $G$ be a compact group with unit $e$ and endowed with a continuous length function $\ell$, and let $\A$ be  a unital C*-algebra equipped with a strongly continuous action $\alpha$ of $G$ by *-automorphisms. For all $a\in\sa{\A}$ we define:
\begin{equation*}
\Lip(a) = \sup\left\{ \frac{\|\alpha^g(a)-a\|_\A}{\ell(g)} : g \in G\setminus\{e\} \right\}\text{.}
\end{equation*}

The following assertions are equivalent:
\begin{enumerate}
\item $(\A,\Lip)$ is a {\Lqcms},
\item  $(\A,\Lip)$ is a Lipschitz pair,
\item $\alpha$ is ergodic, i.e.:
\begin{equation*}
\left\{ a \in \A : \forall g \in G\quad \alpha^g(a) = a \right\} = \R\unit_\A\text{.}
\end{equation*}
\end{enumerate}
\end{theorem}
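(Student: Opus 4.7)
The plan is to prove the cycle with essentially all the work lying in $(3) \Rightarrow (1)$. The implication $(1) \Rightarrow (2)$ is immediate from Definition~(\ref{LQCMS-def}). For $(2) \Rightarrow (3)$, if $a \in \sa{\A}$ is $\alpha$-invariant then $\alpha^g(a) - a = 0$ for every $g \in G$, hence $\Lip(a) = 0$; the defining property of a Lipschitz pair (Definition~(\ref{Lipschitz-pair-def})) then forces $a \in \R\unit_\A$, i.e.\ ergodicity.

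For $(3) \Rightarrow (1)$, I would first dispatch the soft clauses of Definition~(\ref{LQCMS-def}). Lower semicontinuity is automatic since $\Lip$ is the pointwise supremum over $g \in G \setminus \{e\}$ of the $\|\cdot\|_\A$-continuous seminorms $a \mapsto \|\alpha^g(a) - a\|_\A / \ell(g)$. The Leibniz inequality follows from the identity
\begin{equation*}
\alpha^g(\Jordan{a}{b}) - \Jordan{a}{b} = \Jordan{\alpha^g(a) - a}{\alpha^g(b)} + \Jordan{a}{\alpha^g(b) - b}
\end{equation*}
together with isometricity of each $\alpha^g$, upon dividing by $\ell(g)$ and passing to the supremum; the Lie product is handled identically. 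Density of $\dom{\Lip}$ is obtained by smoothing: for Lipschitz $f : G \to \R_+$ with $\int_G f\,dg = 1$, the element $\alpha_f(a) := \int_G f(g)\alpha^g(a)\,dg$ lies in $\dom{\Lip}$ and converges in norm to $a$ as $\mathrm{supp}(f)$ concentrates near $e$.

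The crux, and the main obstacle, is showing that $\Lip$ is a Lip-norm. I would invoke criterion~(3) of Theorem~(\ref{Rieffel-thm}) at the canonical $\alpha$-invariant state. The averaging map $E(a) := \int_G \alpha^g(a)\,dg$ takes values in the $\alpha$-fixed subalgebra, so by ergodicity $E(a) = \tau(a)\unit_\A$ for a well-defined state $\tau \in \StateSpace(\A)$. For $a \in K := \{a \in \sa{\A} : \Lip(a) \leq 1,\, \tau(a) = 0\}$, writing $a = a - E(a) = \int_G (a - \alpha^g(a))\,dg$ yields the a priori bound $\|a\|_\A \leq \int_G \ell(g)\,dg =: L < \infty$. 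Total boundedness of $K$ is then established by a two-step approximation: first, for continuous $f : G \to \R_+$ with $\int f = 1$ and support in a small neighborhood $U$ of $e$, one has $\|a - \alpha_f(a)\|_\A \leq \sup_{g \in U}\ell(g)$ uniformly in $a \in K$; second, by the Peter--Weyl theorem, $f$ may be uniformly approximated by trigonometric polynomials $f_\eta$, and ergodicity of $\alpha$ forces each isotypic component of $\A$ to be finite dimensional (H{\o}egh-Krohn--Landstad--St{\o}rmer), whence $\alpha_{f_\eta}(\A)$ sits in a finite-dimensional subspace and $\alpha_{f_\eta}(K)$ is bounded there, hence totally bounded. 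The hardest ingredient is precisely this finite dimensionality of isotypic components under an ergodic compact group action --- the genuinely noncommutative input; the rest reduces to classical approximation on $G$.
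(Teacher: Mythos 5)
Your proposal is correct, and it is essentially the original argument of Rieffel for the cited result \cite{Rieffel98a}, which this survey quotes without proof: reduce, via criterion (3) of Theorem (\ref{Rieffel-thm}), to total boundedness of $\{a\in\sa{\A} : \Lip(a)\leq 1,\ \tau(a)=0\}$ at the unique $\alpha$-invariant state $\tau$, then combine convolution smoothing, uniform approximation of the kernel by trigonometric polynomials, and the H{\o}egh-Krohn--Landstad--St{\o}rmer finite dimensionality of the spectral subspaces of an ergodic action. The only points you leave implicit are routine: in $(3)\Rightarrow(1)$ one must also record that $\Lip(a)=0$ forces $\alpha$-invariance and hence $a\in\R\unit_\A$ (so the Lipschitz-pair axiom holds), and in the smoothing step $f$ should be taken Lipschitz for the \emph{right}-invariant metric $(g,h)\mapsto\ell(gh^{-1})$, so that $\left\|\alpha^h(\alpha_f(a))-\alpha_f(a)\right\|_\A\leq C\|a\|_\A\,\ell(h)$ without any conjugation-invariance assumption on $\ell$.
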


Thus, Example (\ref{ergodic-LP-ex}) provide a good source of quantum compact metric spaces. In particular, Noncommutative tori and noncommutative solenoids \cite{Latremoliere11c} thus provide examples of quantum compact metric spaces using Theorem (\ref{Rieffel-ergo-thm}) for the dual actions of, respectively, the tori and the product of two solenoid groups.

Spectral triples provide a source of Lipschitz pairs, yet one has to prove that a given spectral triple gives rise to a quantum compact metric space case by case. Of course, Lipschitz pairs constructed from spectral triples are always Leibniz. Moreover, Rieffel showed in \cite[Proposition 3.7]{Rieffel99} that spectral triples give Leibniz pairs with lower semicontinuous seminorms. The difficulty, of course, is to show that the associated {\mongekant} metrizes the weak* topology, using Theorem (\ref{Rieffel-thm}).
 
Ozawa and Rieffel proves that one of the first examples (\ref{Connes-LP-ex}) of a Lipschitz pair from \cite{Connes89} from Hyperbolic groups was indeed a quantum compact metric space in \cite{Ozawa05}:

\begin{theorem}[\cite{Ozawa05}, Theorem 1.2]
Let $G$ be a hyperbolic group and $l$ be the length function associated to some finite generating set of $G$. Let $\A$ be the reduced C*-algebra of $G$, $\pi$ the left regular representation of $G$ on $\ell^2(G)$, and $D$ be the multiplication operator by $l$ on $\ell^2(G)$. If we set $\Lip(a) = \opnorm{[D,\pi(a)]}$ for all $a\in\sa{\A}$ (accepting that $\Lip$ takes the value $\infty$), then $(\A,\Lip)$ is an {\Lqcms}.
\end{theorem}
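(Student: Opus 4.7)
The plan is to verify each of the three defining conditions of an \Lqcms\ for the pair $(\A,\Lip)$ with $\A = C^\ast_r(G)$ and $\Lip = \opnorm{[D,\pi(\cdot)]}$.

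\textbf{Step 1: Lipschitz pair.} Write $\lambda_g \in \A$ for the canonical unitary corresponding to $g \in G$, so that $\pi(\lambda_g)\delta_h = \delta_{gh}$ for all $h \in G$. The canonical faithful trace $\tau : a \in \A \mapsto \langle \pi(a)\delta_e,\delta_e\rangle$ yields Fourier coefficients $\hat a(g) = \tau(a\lambda_{g^{-1}})$ which determine $a$ uniquely. A direct computation gives
\[
\langle [D,\pi(a)]\delta_h, \delta_{g'}\rangle = (l(g')-l(h))\,\hat a(g'h^{-1})
\]
for $g',h \in G$. If $\Lip(a) = 0$, then taking $g' = e$ and $h = g^{-1}$ yields $-l(g)\,\hat a(g) = 0$, hence $\hat a(g) = 0$ for all $g \neq e$, and $a = \hat a(e)\unit_\A \in \R\unit_\A$.

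\textbf{Step 2: Leibniz property and lower semicontinuity.} The derivation identity $[D,\pi(ab)] = [D,\pi(a)]\pi(b) + \pi(a)[D,\pi(b)]$ together with $\|\pi(a)\| = \|a\|_\A$ give $\Lip(\Jordan{a}{b}),\Lip(\Lie{a}{b}) \leq \|a\|_\A \Lip(b) + \|b\|_\A \Lip(a)$, so $(\A,\Lip)$ is Leibniz. Lower semicontinuity follows from \cite[Proposition 3.7]{Rieffel99}, or directly by noting that $\{a \in \sa\A : \Lip(a) \leq 1\}$ is the intersection over unit vectors $\xi,\eta \in \ell^2(G)$ of the norm-closed sets $\{a : |\langle [D,\pi(a)]\xi,\eta\rangle| \leq 1\}$.

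\textbf{Step 3: Compact quantum metric space.} By Theorem \ref{Rieffel-thm} applied with the base state $\tau$, it suffices to show that
\[
K = \set{a \in \sa{\A}}{\Lip(a) \leq 1,\ \tau(a) = 0}
\]
is norm precompact in $\A$. For each $n \in \N$, let $P_n$ be Fourier truncation to the metric sphere $S_n = \{g \in G : l(g) = n\}$, defined on $\C[G]$ by $P_n a = \sum_{g\in S_n} \hat a(g)\lambda_g$. The two essential inputs are:
\begin{enumerate}
\item \emph{Haagerup rapid decay for hyperbolic $G$}: there exist $C_1 > 0$ and $s \geq 0$ such that for every $f \in \C[G]$ supported on $S_n$,
\[
\|\pi(f)\|_\A \leq C_1(1+n)^s\,\|f\|_{\ell^2(G)}\text{.}
\]
\item \emph{Hyperbolic commutator lower bound}: there exist $C_2 > 0$ and $\alpha > s$ such that for every $a \in \C[G]$ with Fourier support on $S_n$,
\[
\|P_n a\|_{\ell^2(G)} \leq C_2(1+n)^{-\alpha}\opnorm{[D,\pi(a)]}\text{.}
\]
\end{enumerate}
Combining (1) and (2) on the spherical components of $a \in K$ gives $\|P_n a\|_\A \leq C_1 C_2 (1+n)^{s-\alpha}$, so that the Fourier partial sums $\sum_{n \leq N} P_n a$ converge to $a$ in norm \emph{uniformly over $K$}. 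Since each such truncation lies in the finite-dimensional subspace $\mathrm{span}\{\lambda_g : l(g)\leq N\}$ and is uniformly norm-bounded on $K$, it maps $K$ into a totally bounded subset of a finite-dimensional space; a standard diagonal argument then yields total boundedness of $K$.

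\textbf{Main obstacle.} The hard input is (2): extracting $\ell^2$-decay of the spherical Fourier mass from the single commutator-norm constraint is where $\delta$-hyperbolicity (rather than mere finite generation) is indispensable. The thin-triangle condition forces $l(gh) - l(h)$ to be close to $\pm l(g)$ on a set of $h$ of uniformly positive density, converting matrix-coefficient control of $\pi(a)$ into an effective lower bound on $[D,\pi(a)]$. Calibrating the resulting exponent $\alpha$ so that it beats the Haagerup exponent $s$ is the real numerical content of the theorem; the analogous statement in, say, $\Z^d$ fails precisely because the hyperbolicity-driven rigidity between translation and length is absent.
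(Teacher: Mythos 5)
Your Steps 1 and 2 are correct: the matrix-entry computation $\langle [D,\pi(a)]\delta_h,\delta_{g'}\rangle=(l(g')-l(h))\hat a(g'h^{-1})$ is right, it does show $\Lip$ vanishes only on $\R\unit_\A$ in $\sa{\A}$, and the derivation identity plus weak-operator closedness of the Lip-ball give the Leibniz property and lower semicontinuity as in the survey's remarks. The fatal gap is your ingredient (2) in Step 3, which is false for every infinite finitely generated group, hyperbolic or not. Fix $g\in G$ with $l(g)=n$ and set $a=\lambda_g+\lambda_{g^{-1}}\in\sa{\A}$. Then $\|P_n a\|_{\ell^2(G)}\geq 1$, while $\opnorm{[D,\pi(\lambda_{g^{\pm 1}})]}=\sup_h|l(g^{\pm 1}h)-l(h)|=n$, so $\opnorm{[D,\pi(a)]}\leq 2n$; your inequality (2) then forces $1\leq 2C_2(1+n)^{1-\alpha}$ for all $n$, i.e.\ $\alpha\leq 1$. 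But for the motivating case of free groups (which are hyperbolic) the exponent $s=1$ in the spherical Haagerup inequality is sharp: $\|\pi(\chi_{S_n})\|\asymp (n+1)\|\chi_{S_n}\|_{\ell^2}$ in $F_k$, as one sees by evaluating against the spherical function at the bottom of the principal series. So no pair with $\alpha>s$ exists — and note that your own summation actually needs $\alpha>s+1$, not $\alpha>s$, for $\sum_n(1+n)^{s-\alpha}<\infty$. With $\alpha\leq 1\leq s$ the termwise bounds do not even tend to zero. There is a second unjustified step hiding in the same place: you apply (2) to $P_n a$, which presupposes $\opnorm{[D,\pi(P_n a)]}\leq\opnorm{[D,\pi(a)]}$; spherical truncation is not contractive (nor uniformly bounded) for either the C*-norm or $\Lip$, only polynomially bounded via rapid decay, which makes the accounting worse still. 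And this is not a calibration problem one can repair: since $G$ is infinite, every sphere contains some $g$, so any per-sphere estimate $\|P_na\|_{\ell^2}\leq\varepsilon_n\opnorm{[D,\pi(a)]}$ forces $\varepsilon_n\geq 1/(2n)$, whence $\sum_n(1+n)^s\varepsilon_n=\infty$ for every $s\geq 0$; absolute (hence your uniform) norm-convergence of the spherical Fourier series on the Lip-ball is unobtainable by this route.

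For comparison: the survey states this theorem without proof, citing \cite{Ozawa05}, and the Ozawa--Rieffel argument does keep your pillar (1) — property RD for hyperbolic groups — but replaces (2) by a genuinely different device. Hyperbolicity is used to dominate, in \emph{operator norm} and with constants uniform in $n$, suitable spherical truncations of $a$ by $\opnorm{[D,\pi(a)]}$ (morally: the commutator sees the full ``radial derivative'' because all matrix entries travel coherently toward the Gromov boundary), and total boundedness of the Lip-ball is then extracted through a more careful truncation scheme, in the spirit of Fej{\'e}r means, rather than by absolutely summing a per-sphere series. Your heuristic that thin triangles force $l(gh)-l(h)\approx\pm l(g)$ on many $h$ points in the right direction, but converting per-$g$ matrix-entry control into operator-norm lower bounds for superpositions over a whole sphere is the entire content of their key lemma, which you have black-boxed in a quantitatively impossible form. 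Finally, your closing claim about $\Z^d$ is wrong, and the survey itself contradicts it: the theorem of Rieffel quoted immediately before this one (\cite{Rieffel02}, Theorem 0.1) asserts that word-length functions on $\Z^d$ — indeed on all quantum tori $C^\ast(\Z^d,\sigma)$ — yield {\Lqcms s} via exactly this $D$. What fails for $\Z^d$ is only the hyperbolic \emph{mechanism}: for $\Z$, the two antipodal ends see the multipliers $k\mapsto k$ and $k\mapsto -k$, so the commutator controls $\sum_k k\hat a(k)$-type data while the radial derivative involves $\sum_k|k|\hat a(k)$, and recovering the latter from the former amounts to a Riesz projection, unbounded on $C^\ast_r(\Z)\cong C(\T)$. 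The statement survives there by entirely different methods; it is the proof, not the theorem, that needs hyperbolicity.
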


Another example of a Dirac operator from length functions on groups, for the quantum tori, is given by Rieffel in \cite{Rieffel02}, and preceded the previous result on Hyperbolic group C*-algebras.

\begin{theorem}[\cite{Rieffel02}, Theorem 0.1]
Let $l$ be a length function on $\Z^d$ which is either the word-length function for some finite set of generators of $\Z^d$, or which is the restriction of some norm on $\R^d$. Let $\sigma$ be a skew bicharacter of $\Z^d$. Let $\pi$ be the left regular representation of $C^\ast(\Z^d,\sigma)$ on $\ell^2(\Z^d)$ and $D$ be the operator of pointwise multiplication by $l$ on $\ell^2(\Z^d)$.

If, for all $a \in C^\ast(\Z^d,\sigma)$, we set:
\begin{equation*}
\Lip(a) = \opnorm{ [D,\pi(a) ] }
\end{equation*}
then $(C^\ast(\Z^d),\Lip)$ is an {\Lqcms}.
\end{theorem}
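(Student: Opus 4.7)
Setting $\A := C^\ast(\Z^d,\sigma)$, the plan is to verify in turn the four defining conditions of an {\Lqcms} for $(\A,\Lip)$: that it is a Lipschitz pair, that $\Lip$ is Leibniz, that $\Lip$ is lower semicontinuous, and the main point, that $\Kantorovich{\Lip}$ induces the weak* topology on $\StateSpace(\A)$. The first three are routine. The Leibniz inequality follows from $\pi$ being a unital *-representation via the derivation identity $[D,\pi(ab)] = [D,\pi(a)]\pi(b) + \pi(a)[D,\pi(b)]$, restricted to the Jordan-Lie structure on $\sa{\A}$. Lower semicontinuity is immediate from writing $\Lip(a) = \opnorm{[D,\pi(a)]}$ as a supremum of the continuous functionals $a\mapsto|\inner{[D,\pi(a)]\xi}{\eta}|$ as $\xi,\eta$ range over unit vectors in the domain of $D$. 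Finally, the Lipschitz pair condition $\ker\Lip = \R\unit_\A$ falls out of the explicit computation below showing $\Lip(U_x) = l(x) > 0$ for every $x \neq 0$, while the density of $\dom{\Lip}$ is clear from its containing the *-subalgebra of finitely supported twisted combinations $\sum_{x\in F} c_x U_x$.

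The main work is establishing Rieffel's total boundedness criterion from Theorem \ref{Rieffel-thm}: for some state $\mu\in\StateSpace(\A)$, the set $\{a\in\sa{\A} : \Lip(a)\leq 1,\mu(a)=0\}$ must be norm-precompact in $\A$. My strategy hinges on the dual action $\alpha$ of $\T^d$ on $\A$ given on generators by $\alpha_z(U_x) = z^x U_x$; this action is strongly continuous and ergodic, with fixed-point algebra $\C\unit_\A$. The crucial observation is that the unitary $V_z$ implementing $\alpha_z$ on $\ell^2(\Z^d)$ acts diagonally in the standard basis and hence commutes with the multiplication operator $D$. Consequently $[D,\pi(\alpha_z(a))] = V_z [D,\pi(a)] V_z^\ast$, so $\Lip$ is $\alpha$-invariant. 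Averaging against characters produces Fourier components $a_x = \int_{\T^d} z^{-x}\alpha_z(a)\dif z$, each a scalar multiple $c_x(a)U_x$ of a generator, and the integral form of the triangle inequality for seminorms yields $\Lip(a_x)\leq\Lip(a)$. A direct calculation on basis vectors then shows $\Lip(cU_x) = |c|\sup_{y\in\Z^d}|l(x+y)-l(y)| = |c|\,l(x)$, the last equality using the symmetry and subadditivity of $l$; thus $|c_x(a)|\leq\Lip(a)/l(x)$ whenever $x\neq 0$.

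The final step is to pass from this coefficient decay to norm precompactness via a kernel approximation. For each $N\in\N$, I would construct a finitely supported ``kernel'' $\phi_N : \Z^d\to[0,1]$ of Jackson or de la Vall\'ee Poussin type, supported in $\{l(x)\leq N\}$, so that the associated Fourier multiplier $T_N : a\mapsto \sum_{x\in\Z^d}\phi_N(x) c_x(a) U_x$ is a contraction on $\A$ and satisfies $\|a-T_N(a)\|_\A \leq \varepsilon_N \Lip(a)$ with $\varepsilon_N\to 0$. For fixed $N$, the image $T_N(\{a - c_0(a)\unit_\A : \Lip(a)\leq 1\})$ lies in the finite-dimensional span of $\{U_x : 0 < l(x)\leq N\}$ and is bounded there by the coefficient estimate, hence is totally bounded; uniform approximation by these totally bounded sets yields norm precompactness of the full unit ball modulo scalars. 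The main obstacle is the construction of the kernels $\phi_N$ together with the quantitative estimate $\|a - T_N(a)\|_\A \leq \varepsilon_N\Lip(a)$: this is where both hypotheses on $l$ enter, each ensuring the comparability $l(x) \asymp \|x\|$ for a genuine norm $\|\cdot\|$ on $\R^d$, which allows one to transplant classical Jackson-type approximation theorems into the twisted non-commutative setting of $\A$ while retaining norm control in the presence of the cocycle $\sigma$.
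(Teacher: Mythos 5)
Your preliminary reductions are sound, and they in fact parallel the opening of Rieffel's argument in \cite{Rieffel02}: the Leibniz and lower-semicontinuity checks, the invariance of $\Lip$ under the dual action (the diagonal unitaries implementing it commute with $D$), the computation $\Lip(c\,U_x)=|c|\,l(x)$, the coefficient bound $|c_x(a)|\leq \Lip(a)/l(x)$, and the contractivity of Fej\'er-type multipliers $T_N$, which are implemented by averaging the dual action against a positive kernel of total mass one, so the cocycle $\sigma$ causes no trouble there. The genuine gap is the one estimate you defer, $\|a-T_N(a)\|_\A\leq\varepsilon_N\Lip(a)$: it cannot be obtained by ``transplanting'' classical Jackson theory from the ingredients you have assembled, because a norm bound together with the coefficient bound is strictly weaker than $\Lip(a)\leq 1$ and is provably insufficient. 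Indeed, set:
\begin{equation*}
a_N = \frac{1}{\log N}\sum_{n=N}^{N^2}\frac{1}{n}\,U_{n e_1}\qquad (N\geq 2)\text{.}
\end{equation*}
Every $\T$-valued $2$-cocycle is trivial on the cyclic subgroup $\Z e_1$, so the $a_N$ live in a canonical copy of $C(\T)$ inside $C^\ast(\Z^d,\sigma)$; classical estimates on the sums $\sum_{N\leq n\leq N^2}e^{int}/n$ give $\sup_N\|a_N\|_\A<\infty$, and $|c_x(a_N)|\leq C/l(x)$, yet as functions on $\T$ the $a_N$ converge to $0$ pointwise away from $1$ while $a_N(1)\to 1$, so no subsequence is norm-Cauchy. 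Thus the set cut out by the only constraints your final step invokes fails to be totally bounded already for $d=1$, and no approximation rate valid on that set can exist. What Jackson-type theorems actually consume is a modulus-of-continuity estimate for translations --- here $\|\alpha^z(a)-a\|_\A\leq C\,\delta(z)\,\Lip(a)$, with $\delta(z)$ the distance from $z$ to the unit of $\T^d$ --- and nothing in your proposal produces it.

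Producing that estimate is the heart of Rieffel's proof. He conjugates the commutator by (suitably twisted) translation unitaries $\rho_y$ on $\ell^2(\Z^d)$, noting that $\rho_y^\ast[D,\pi(a)]\rho_y=[M_{\psi_y},\pi(a)]$ where $\psi_y=l(\cdot+y)-l(y)$, so all these operators have norm $\Lip(a)$; he then takes weak limits as $y\to\infty$ through the metric (horofunction) compactification of $(\Z^d,l)$. When the limit of $\psi_y$ is a \emph{linear} functional $\lambda$ dominated by $l$, the limiting commutator is again left twisted multiplication, by the element with Fourier coefficients $\lambda(x)c_x(a)$, which yields the derivative bound $\|\lambda\cdot a\|_\A\leq\Lip(a)$ in the C*-norm, not merely coefficientwise. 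Integrating these derivations along one-parameter subgroups of $\T^d$ gives exactly the dual-action Lipschitz estimate above, after which your Fej\'er/Jackson step does close. This is also where the two hypotheses on $l$ genuinely enter, and your closing claim that each merely ensures $l(x)\asymp\|x\|$ misses the point: bi-Lipschitz equivalence of length functions gives no comparison between the differences $l(x)-l(y)$, hence none between the associated commutator seminorms. The norm case supplies the linear horofunctions directly, while the word-length case requires identifying the Busemann-type limits of word metrics together with comparison results for length functions at bounded \emph{distance} (a word-length on $\Z^d$ lies at bounded distance from its associated polyhedral norm, and an additive perturbation of $l$ by at most $c$ changes $\Lip(a)$ by at most $2c\|a\|_\A$, which can be absorbed modulo scalars). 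Without this translation-to-infinity lemma, or an equivalent C*-norm bound on derivatives, your outline does not complete.
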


Other examples of spectral triples giving quantum compact metric spaces can be found in \cite{li03}, where Connes-Landi spheres are shown to be compact quantum metric spaces for their natural spectral triples. In a different direction, quantum Heisenberg manifolds are proven to be quantum compact metric spaces by H. Li in \cite{li09}.

Yet another example is given by $AF$ algebras. In the work of Antonescu and Christensen, the following construction is proposed:

\begin{theorem}[\cite{Antonescu04}, Theorem 2.1]]\label{AF-thm}
Let $\A$ be a unital AF C*-algebra, and write $\A = \overline{\bigcup_{n\in\N} \A_n}$ with $\A_n$ a finite dimensional C*-algebra for all $n\in\N$. Let $\varphi\in\StateSpace(\A)$ be faithful and denote by $\pi$ the GNS faithful representation of $\A$ the Hilbert space $\Hilbert$ obtained by completing $\A$ for the inner product $(a,b) \in \A\mapsto \inner{a}{b} = \varphi(b^\ast a)$. 

Thus $\A_n$ can be seen as a Hilbert subspace of $\Hilbert$ (since $\A_n$ is finite dimensional hence closed in $\Hilbert$). Let $Q_n$ be the projection onto $\A_{n+1}\cap \A_{n}^\perp$ for all $n \in \N$. 

There exists a sequence $(\alpha_n)_{n\in\N}$ of real numbers such that, if we set:
\begin{equation*}
D = \sum_{n\in\N} \alpha_n Q_n
\end{equation*}
and $\Lip:a\in\sa{\A} \mapsto \opnorm{[D,\pi(a)]}$, then $(\A,\Lip)$ is an {\Lqcms}.
\end{theorem}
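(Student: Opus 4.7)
The plan is to realise $(\A, \Lip)$ via the spectral-triple construction of Example 2.11 and invoke Rieffel's characterisation (Theorem 2.43) to reduce the metric condition to a precompactness statement settled by conditional-expectation approximation. For any sequence $(\alpha_n)_{n\in\N}$ with $|\alpha_n| \to \infty$, the operator $D = \sum_n \alpha_n Q_n$ is self-adjoint with compact resolvent, since each $Q_n$ has finite rank as $\A_{n+1}$ is finite dimensional. The union $\bigcup_n \A_n$ lies in $\dom{\Lip}$ because, for $a\in\A_n$, $\pi(a)$ couples only boundedly many levels, making $[D,\pi(a)]$ bounded on a dense domain. The Leibniz inequality for $\Lip$ follows from the derivation rule $[D,\pi(ab)] = [D,\pi(a)]\pi(b) + \pi(a)[D,\pi(b)]$ applied to Jordan and Lie products, while lower semicontinuity follows from expressing $\Lip(a)$ as a supremum over $\xi,\eta\in\dom{D}$ of the norm-continuous quantities $|\langle D\eta, \pi(a)\xi\rangle - \langle\eta,\pi(a)D\xi\rangle|$.

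The core step is the recursive choice of $(\alpha_n)$. Let $P_n$ denote the orthogonal projection of $\Hilbert$ onto $\A_n$, so that $P_n = \sum_{k<n} Q_k$. Using the decomposition $\Hilbert = \bigoplus_n Q_n\Hilbert$, one computes
\begin{equation*}
Q_m [D,\pi(a)] Q_n = (\alpha_m - \alpha_n)\, Q_m \pi(a) Q_n,
\end{equation*}
so every off-diagonal block satisfies $\opnorm{Q_m\pi(a)Q_n} \leq \Lip(a)/|\alpha_m - \alpha_n|$. Let $\Exp_n : \A \to \A_n$ be a unital completely positive retraction onto the finite-dimensional subalgebra $\A_n$ naturally associated with $P_n$. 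The Antonescu-Christensen strategy is to pick $\alpha_n$ inductively with $|\alpha_{n+1}-\alpha_n|$ large enough to yield, uniformly in $a$,
\begin{equation*}
\|a - \Exp_n(a)\|_\A \leq \varepsilon_n \Lip(a),
\end{equation*}
for some sequence $\varepsilon_n \downarrow 0$.

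Granting this estimate, the remaining verifications are routine. Since $\A_0 = \C\unit_\A$, any $a \in \sa{\A}$ with $\varphi(a) = 0$ obeys $\|a\|_\A \leq \varepsilon_0 \Lip(a)$; in particular $\Lip(a) = 0$ forces $a \in \R\unit_\A$, settling the Lipschitz pair property (equivalently, condition (3) of Example 2.11). For precompactness of $\{a \in \sa{\A} : \Lip(a)\leq 1,\, \varphi(a)=0\}$, fix $\varepsilon > 0$, choose $n$ with $\varepsilon_n < \varepsilon/2$, and cover the image under $\Exp_n$ (bounded in the finite-dimensional space $\sa{\A_n}$) by a finite $\varepsilon/2$-net; the uniform approximation lifts this to a finite $\varepsilon$-net of the original set. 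Theorem 2.43(3) then gives that $(\A,\Lip)$ is a quantum compact metric space, which combined with the Leibniz property and lower semicontinuity of $\Lip$ yields an \Lqcms.

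The main obstacle is the quantitative passage from block bounds on $\pi(a)$ to the C*-algebra norm $\|a - \Exp_n(a)\|_\A$ and the ensuing inductive calibration of $(\alpha_n)$. Individual blocks $Q_m\pi(a)Q_n$ are controlled immediately, but recombining them into an operator-norm bound on $\pi(a) - P_n\pi(a)P_n$, and then translating back to the C*-norm on $\A$ through a conditional expectation that need not come from a trace, requires simultaneous control of the short-range couplings (forcing large consecutive gaps $|\alpha_{n+1}-\alpha_n|$) and the long-range tails (forcing overall rapid growth of $|\alpha_n|$). Balancing these two pressures against one another is the technical heart of the Antonescu-Christensen construction.
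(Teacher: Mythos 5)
The survey states this theorem without proof, citing \cite{Antonescu04}, so your argument must stand on its own merits. Its architecture is the right one --- the block identity $Q_m[D,\pi(a)]Q_n = (\alpha_m-\alpha_n)Q_m\pi(a)Q_n$, an inductive calibration of the gaps, finite-dimensional approximation, and Rieffel's criterion, Theorem (\ref{Rieffel-thm}) --- and the peripheral verifications (density of $\bigcup_{n\in\N}\A_n$ in $\dom{\Lip}$, the Leibniz inequality, lower semicontinuity, and the fact that faithfulness of $\varphi$ makes $\pi$ faithful, so operator-norm bounds are C*-norm bounds) are correct. But the statement you yourself call the ``technical heart'' --- the existence of $(\alpha_n)_{n\in\N}$ with $\|a-\Exp_n(a)\|_\A\leq\varepsilon_n\Lip(a)$ uniformly in $a$ --- is essentially the whole content of the theorem, and you assert it rather than prove it: everything after ``granting this estimate'' is conditional. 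Worse, the vehicle you choose cannot carry it. An abstract unital completely positive retraction $\Exp_n:\A\rightarrow\A_n$ does exist, by injectivity of finite-dimensional C*-algebras, but it is non-canonical and bears no stated relation to the spectral decomposition of $\Hilbert$ into the spaces $Q_n\Hilbert$, so the block bounds $\opnorm{Q_m\pi(a)Q_n}\leq\Lip(a)/|\alpha_m-\alpha_n|$ give no control whatsoever on $a-\Exp_n(a)$ for such a map. (A smaller slip: with $Q_k$ projecting onto $\A_{k+1}\cap\A_k^\perp$ for $k\in\N$, one has $\sum_{k<n}Q_k = P_n-P_0$, not $P_n$; the level $\A_0$ must be carried separately.)

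The missing mechanism --- and it is how Antonescu and Christensen close this loop --- is to exploit the cyclic vector rather than a retraction. Assume without loss of generality that $\A_0=\C\unit_\A$, and identify $b\in\A$ with $\hat b = \pi(b)\xi_\varphi\in\Hilbert$; this map is injective because $\varphi$ is faithful. Define $a_n\in\A_n$ by $\widehat{a_n}=P_n\hat a$. Since $Q_k\xi_\varphi = 0$ for all $k\in\N$, we get $D\xi_\varphi=0$ and hence $\alpha_k Q_k\hat a = Q_k[D,\pi(a)]\xi_\varphi$, so that $\|\widehat{a_{k+1}}-\widehat{a_k}\|_\Hilbert = \|Q_k\hat a\|_\Hilbert \leq \Lip(a)/|\alpha_k|$: a bound on a single column block, available precisely because $\xi_\varphi$ sits in the lowest level. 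Faithfulness of $\varphi$ on the finite-dimensional space $\A_{k+1}$ yields constants $C_{k+1}<\infty$ with $\|b\|_\A\leq C_{k+1}\|\hat b\|_\Hilbert$ for all $b\in\A_{k+1}$, whence $\|a_{k+1}-a_k\|_\A\leq C_{k+1}\Lip(a)/|\alpha_k|$; choosing $\alpha_k$ recursively so that, say, $\alpha_k\geq 2^k C_{k+1}$ makes the telescoping series converge in $\A$ to an element whose image in $\Hilbert$ is $\hat a$, hence to $a$ itself, giving $\|a-a_n\|_\A\leq\varepsilon_n\Lip(a)$ with $\varepsilon_n\rightarrow 0$. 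This is where your ``short-range versus long-range'' tension is actually resolved: the growth of $\alpha_k$ is calibrated against the norm-equivalence constants $C_{k+1}$, not against off-diagonal tails of a full matrix decomposition. Note also that this construction automatically produces $a_0=\varphi(a)\unit_\A$, which is exactly what your normalization $\varphi(a)=0$ requires, whereas a generic retraction returns $\psi(a)\unit_\A$ for some unrelated state $\psi$. With this lemma supplied, your remaining steps (boundedness, the finite-dimensional $\varepsilon/2$-net, and Theorem (\ref{Rieffel-thm})) go through as written.
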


With K. Aguilar, the author actually proposes a different construction for Lip-norms on $AF$-algebras with a faitful tracial state:

\begin{notation}
Let $\mathcal{I} = (\A_n,\alpha_n)_{n\in\N}$ be an inductive sequence with limit $\A=\varinjlim \mathcal{I}$. We denote the canonical *-morphisms $\A_n \rightarrow\A$ by $\indmor{\alpha}{n}$ for all $n\in\N$.
\end{notation}

\begin{theorem}[\cite{Latremoliere15d}]\label{AF-lip-norms-thm}
Let $\A$ be an AF algebra endowed with a faithful tracial state $\mu$. Let $\mathcal{I} = (\A_n,\alpha_n)_{n\in\N}$ be an inductive sequence of finite dimensional C*-algebras with C*-inductive limit $\A$, with $\A_0 = \C$ and where $\alpha_n$ is unital and injective for all $n\in\N$.

Let $\pi$ be the GNS representation of $\A$ constructed from $\mu$ on the space $L^2(\A,\mu)$.

For all $n\in\N$, let:
\begin{equation*}
\CondExp{\cdot}{\indmor{\alpha}{n}(\A_n)} : \A\rightarrow\A
\end{equation*}
be the unique conditional expectation of $\A$ onto the canonical image $\indmor{\alpha}{n}\left(\A_n\right)$ of $\A_n$ in $\A$, and such that $\mu\circ\CondExp{\cdot}{\indmor{\alpha}{n}(\A_n)} = \mu$.

Let $\beta: \N\rightarrow (0,\infty)$ have limit $0$ at infinity. If, for all $a\in\sa{\A}$, we set:
\begin{equation*}
\Lip_{\mathcal{I},\mu}^\beta(a) = \sup\left\{\frac{\left\|a - \CondExp{a}{\indmor{\alpha}{n}(\A_n)}\right\|_\A}{\beta(n)} : n \in \N \right\}
\end{equation*}
then $\left(\A,\Lip_{\mathcal{I},\mu}^\beta\right)$ is a {\Qqcms{2}}. 
\end{theorem}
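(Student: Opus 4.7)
My plan is to verify the four conditions defining a {\Qqcms{2}}: that $(\A, \Lip_{\mathcal{I},\mu}^\beta)$ is a Lipschitz pair, that $\Lip_{\mathcal{I},\mu}^\beta$ is lower semicontinuous, that it is $F$-quasi-Leibniz for $F(x,y,l_x,l_y) = 2(xl_y + yl_x)$, and finally that the Monge-Kantorovich condition of Theorem (\ref{Rieffel-thm}) holds. Throughout, write $E_n = \CondExp{\cdot}{\indmor{\alpha}{n}(\A_n)}$; the key facts I will use about $E_n$ are that it is a unit-preserving conditional expectation, hence completely positive of norm one, and that because $\A_n$ is finite-dimensional, the image of any bounded set under $E_n$ lives in a finite-dimensional (and thus totally bounded) subspace of $\A$.

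For the Lipschitz pair property, note that $\A_0 = \C$ so $E_0(a) = \mu(a)\unit_\A$; hence $\Lip_{\mathcal{I},\mu}^\beta(a)=0$ forces $a = E_n(a)$ for every $n$, and in particular for $n=0$ gives $a \in \R\unit_\A$ when $a$ is self-adjoint. Density of the domain follows because each $a \in \indmor{\alpha}{n}(\A_n)_{\mathrm{sa}}$ satisfies $E_m(a)=a$ for all $m \geq n$, so only finitely many terms in the supremum are nonzero and $\Lip_{\mathcal{I},\mu}^\beta(a)<\infty$; the union of these subspaces is dense in $\sa{\A}$ by definition of the inductive limit. Lower semicontinuity is immediate, since $\Lip_{\mathcal{I},\mu}^\beta$ is a supremum of norm-continuous functions $a \mapsto \|a - E_n(a)\|_\A/\beta(n)$.

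The quasi-Leibniz estimate is the step where the constant $2$ enters. Since $E_n(a)E_n(b) \in \indmor{\alpha}{n}(\A_n)$ and $E_n$ has norm one, I compute
\begin{equation*}
\|ab - E_n(ab)\|_\A \leq \|ab - E_n(a)E_n(b)\|_\A + \|E_n(E_n(a)E_n(b) - ab)\|_\A \leq 2\|ab - E_n(a)E_n(b)\|_\A\text{.}
\end{equation*}
Inserting $\pm E_n(a)b$ then gives $\|ab - E_n(a)E_n(b)\|_\A \leq \|a - E_n(a)\|_\A\|b\|_\A + \|a\|_\A\|b - E_n(b)\|_\A$. Dividing by $\beta(n)$ and taking suprema yields $\Lip_{\mathcal{I},\mu}^\beta(ab) \leq 2(\Lip_{\mathcal{I},\mu}^\beta(a)\|b\|_\A + \|a\|_\A\Lip_{\mathcal{I},\mu}^\beta(b))$; by the triangle inequality the same bound holds for both Jordan and Lie products, establishing the $F$-quasi-Leibniz property with $F(x,y,l_x,l_y) = 2(xl_y + yl_x)$, which is clearly permissible.

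The heart of the proof is the compactness criterion, where I use the third condition of Theorem (\ref{Rieffel-thm}) with the state $\mu$. Set $\mathcal{B} = \{a \in \sa{\A} : \Lip_{\mathcal{I},\mu}^\beta(a) \leq 1, \mu(a)=0\}$. Since $E_0(a) = \mu(a)\unit_\A = 0$ on $\mathcal{B}$, we get $\|a\|_\A = \|a - E_0(a)\|_\A \leq \beta(0)$, so $\mathcal{B}$ is norm bounded. For each $n$, the set $\{E_n(a) : a \in \mathcal{B}\}$ is a bounded subset of the finite-dimensional space $\indmor{\alpha}{n}(\A_n)$, hence totally bounded; and for every $a \in \mathcal{B}$, $\|a - E_n(a)\|_\A \leq \beta(n)$. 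Given $\epsilon > 0$, choose $n$ with $\beta(n) < \epsilon/2$ and cover $\{E_n(a) : a \in \mathcal{B}\}$ by finitely many open balls of radius $\epsilon/2$; these yield a finite $\epsilon$-net for $\mathcal{B}$. Thus $\mathcal{B}$ is totally bounded in the complete space $\A$, hence norm precompact, and Theorem (\ref{Rieffel-thm}) delivers that $(\A, \Lip_{\mathcal{I},\mu}^\beta)$ is a quantum compact metric space. The only subtlety I anticipate is checking that $E_n$ indeed has operator norm one and preserves $\mu$ on the way to the quasi-Leibniz bound; this follows from the assumption that $\mu$ is a faithful tracial state, which guarantees existence and uniqueness of $\mu$-preserving conditional expectations onto the finite-dimensional subalgebras $\indmor{\alpha}{n}(\A_n)$.
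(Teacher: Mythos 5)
Your proposal is correct and follows, in all essentials, the argument of \cite{Latremoliere15d} for this theorem (which the survey cites rather than proves): the constant $2$ arises there in the same way, by re-projecting $ab - \CondExp{a}{\indmor{\alpha}{n}(\A_n)}\CondExp{b}{\indmor{\alpha}{n}(\A_n)}$ through the norm-one, $\mu$-preserving conditional expectation, and compactness is likewise obtained from the bound $\left\|a - \CondExp{a}{\indmor{\alpha}{n}(\A_n)}\right\|_\A \leq \beta(n)\Lip_{\mathcal{I},\mu}^\beta(a)$ combined with total boundedness of bounded subsets of the finite dimensional subalgebras $\indmor{\alpha}{n}(\A_n)$, via the criterion of Theorem (\ref{Rieffel-thm}). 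Your checks of the Lipschitz-pair condition (using $\A_0 = \C$ so that $\CondExp{\cdot}{\indmor{\alpha}{0}(\A_0)} = \mu(\cdot)\unit_\A$) and of lower semicontinuity as a supremum of continuous seminorms are also sound, so there is nothing to add.
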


The advantage of the quantum metrics presented in Theorem (\ref{AF-lip-norms-thm}) is that they recover the usual ultrametrics on Cantor sets and moreover, they allow to construct natural continuous surjections from the Baire space to UHF algebras \cite{Glimm60} and Effr{\"o}s-Shen AF algebras \cite{Effros80}. Moreover, AF algebras equipped with such a Lip-norm are limits of the finite dimensional algebras of the chosen inductive sequence, for the propinquity. We refer the reader to \cite{Latremoliere15d} for these results.

\bigskip

Other examples of quantum compact metric spaces can be found in the literature dealing with quantum groups, in particular in \cite{Voigt14, Li04}.

Now, we turn to the question of how to define a quantum locally compact metric space, based upon a similar intuition as for quantum compact metric spaces. Bounded-Lipschitz distances offer one avenue for exploring such a notion, though it requires us to work with infinitely many metrics as soon as the state space does not have a finite diameter for the {\mongekant}. It is natural to ask what can be said about the {\mongekant} for general Lipschitz pairs. This became the subject of our own research, presented in the next subsection.

\subsubsection{Quantum locally compact metric spaces}

The {\mongekant} associated to a general Lipschitz pair is not quite as well behaved as for unital Lipschitz pairs:
\begin{enumerate}
\item the {\mongekant} is in fact, an extended metric in general, i.e. it may take the value $\infty$ (see Example (\ref{fundamental-LP-ex})),
\item the topology generated by the {\mongekant} is usually not the weak* topology, even when restricted to closed balls: for instance, denoting the Dirac probability measure at $x\in\R$ by $\delta_x$, the sequence $\left( \frac{n}{n+1}\delta_0 + \frac{1}{n+1}\delta_n \right)_{n\in\N}$ weak* converge to $\delta_0$ yet $\Kantorovich{\Lip}\left(\delta_0, \frac{n}{n+1}\delta_0 + \frac{1}{n+1}\delta_n\right) = 1$, where $\Lip$ is the Lipschitz seminorm from the usual metric on $\R$,
\item the weak* topology, on the other hand, is neither compact nor even locally compact on the state space of non-unital C*-algebras.
\end{enumerate}

We thus must revise our approach to quantum metric spaces if we wish to extend our theory to noncompact, quantum locally compact metric spaces. If at least to satisfy a natural curiosity, we are thus led to the question of what property the {\mongekant} possesses in general which may be meaningful in the noncommutative context. 

The difficulties with the {\mongekant} arise because of the problem of escape to infinity. A method to control the behavior of set of probability measures at infinity is suggested by a useful characterization of weak* compact sets of probability measures. Indeed, a set of probability measures $\mathscr{S}$ over some locally compact Hausdorff space $X$ is weak* precompact if and only if it is uniformly tight, i.e. when for any $\varepsilon > 0$, there exists a compact subset $K$ of $X$ such that:
\begin{equation*}
\sup\left\{ \mu\left(K^\complement\right)  : \mu \in \mathscr{S} \right\} < \varepsilon\text{.}
\end{equation*}
This notion is of course topological, not metric, yet the behavior of the {\mongekant} is also controlled by a form of tightness, albeit one which must involve explicitly the metric of the underlying space. Dobrushin \cite{Dobrushin70} made this crucial observation:

\begin{theorem}[Dobrushin, \cite{Dobrushin70}]\label{Dobrushin-thm}
Let $(X,\mathrm{d})$ be a locally compact metric space, and let $\Lip$ be the Lipschitz seminorm associated with $\mathrm{d}$. If $\mathscr{S}$ is a subset of the state space $\StateSpace(C_0(X))$ such that, for $x_0 \in X$:
\begin{equation}\label{D-tight}
\lim_{r\rightarrow\infty} \sup\left\{ \int_{x \in X : \mathrm{d}(x,x_0) > r} d(x,x_0) \, d\mu(x) : \mu \in \mathscr{S} \right\} = 0
\end{equation}
then the topology induced on $\mathscr{S}$ by $\Kantorovich{\Lip}$ is the weak* topology restricted to $\mathscr{S}$.
\end{theorem}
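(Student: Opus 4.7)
The plan is to verify both topological inclusions on $\mathscr{S}$ separately; only the second uses Dobrushin's hypothesis (\ref{D-tight}). The easy direction --- that $\Kantorovich{\Lip}$-convergence implies weak*-convergence --- in fact holds on all of $\StateSpace(C_0(X))$. Given $f\in C_0(X)$ and $\eta>0$, I would approximate $f$ uniformly by a Lipschitz $g\in C_0(X)$ with $\|f-g\|_\infty<\eta$ (Lipschitz functions are sup-norm dense in $C_0(X)$, via convolution against a compactly supported Lipschitz approximate identity). Then for any two states $\varphi,\psi$,
\[
|\varphi(f)-\psi(f)|\leq 2\eta+\Lip(g)\,\Kantorovich{\Lip}(\varphi,\psi)\text{,}
\]
so $\Kantorovich{\Lip}$-convergence forces $\varphi(f)\to\psi(f)$.

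For the converse, I would fix a net $(\mu_\lambda)$ in $\mathscr{S}$ weak*-converging to $\mu\in\mathscr{S}$ and $\varepsilon>0$, and use (\ref{D-tight}) to choose $r>0$ with $\sup_{\nu\in\mathscr{S}}\int_{\{d(x,x_0)>r\}}d(x,x_0)\,d\nu<\varepsilon/6$. For $a\in C_0(X)$ with $\Lip(a)\leq 1$, set $b=a-a(x_0)$; then $b(x_0)=0$, $\Lip(b)\leq 1$, and $|b(x)|\leq d(x,x_0)$. Since $\mu_\lambda,\mu$ are probabilities, $\mu_\lambda(a)-\mu(a)=(\mu_\lambda-\mu)(b)$. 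By local compactness, fix a compact neighborhood $K$ of $\overline{B}(x_0,r)$ and a Lipschitz cutoff $\chi\in C_c(X)$ with $\chi\equiv 1$ on $\overline{B}(x_0,r)$, $0\leq\chi\leq 1$, and $\operatorname{supp}\chi\subset K$. Split
\[
(\mu_\lambda-\mu)(b)=(\mu_\lambda-\mu)(b\chi)+(\mu_\lambda-\mu)(b(1-\chi))\text{.}
\]
The second summand is bounded by $\varepsilon/3$: the integrand is dominated by $d(\cdot,x_0)\mathbf{1}_{\{d(\cdot,x_0)>r\}}$, and Dobrushin's bound applies to both measures.

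For the bulk term I would apply Arz{\'e}la-Ascoli: the family $\mathcal{F}=\{b\chi:a\in C_0(X),\Lip(a)\leq 1\}\subseteq C_c(X)$ is supported in the fixed compact $K$, uniformly bounded, and equicontinuous (the Lipschitz seminorm of $b\chi$ is controlled through $\Lip(b)\leq 1$ and $\Lip(\chi)$), hence totally bounded in the sup norm. Pick a finite $\varepsilon/6$-net $\tilde b_1,\dots,\tilde b_N\in C_0(X)$; weak*-convergence then supplies $\lambda_0$ with $\max_i|\mu_\lambda(\tilde b_i)-\mu(\tilde b_i)|<\varepsilon/6$ for all $\lambda\geq\lambda_0$. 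For any $a$ choose $\tilde b_i$ within $\varepsilon/6$ of $b\chi$ and combine the bulk estimate ($\leq\varepsilon/2$ by a triangle inequality splitting $(\mu_\lambda-\mu)(b\chi)$ through $\tilde b_i$) with the tail estimate to obtain $|\mu_\lambda(a)-\mu(a)|<\varepsilon$ uniformly in $a$, whence $\Kantorovich{\Lip}(\mu_\lambda,\mu)\leq\varepsilon$.

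The main obstacle is producing a totally bounded approximation of the unit Lip-ball by functions actually in $C_0(X)$ on a non-proper locally compact space, since $\overline{B}(x_0,r)$ itself need not be compact. Local compactness must be invoked to supply the compact neighborhood $K$ and the Lipschitz cutoff $\chi$; the small error introduced on the annulus $K\setminus\overline{B}(x_0,r)$ is reabsorbed by Dobrushin's tail control because that annulus lies in $\{d(\cdot,x_0)>r\}$. This triple interplay --- local compactness supplying $K$ and $\chi$, Arz{\'e}la-Ascoli providing the finite-dimensional approximation, and (\ref{D-tight}) controlling the escape of mass --- is precisely what makes Dobrushin's integrated tightness (as opposed to mere weak* tightness of $\mathscr{S}$) the right hypothesis for this result.
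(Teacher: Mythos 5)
The paper gives no proof of this theorem --- it only cites Dobrushin --- so your proposal stands on its own, and its skeleton (two directions; sup-norm density of Lipschitz functions for the easy direction; a tail/bulk split with Dobrushin's condition (\ref{D-tight}) killing the tail and an Arz{\'e}la-Ascoli finite net handling the bulk, all uniformly in $a$) is indeed the right one. But there is a genuine gap at the step ``by local compactness, fix a compact neighborhood $K$ of $\overline{B}(x_0,r)$.'' Local compactness supplies compact neighborhoods of \emph{compact} sets only; in a locally compact space that is not proper, the closed ball $\overline{B}(x_0,r)$ need not be compact, and a non-compact closed set is contained in no compact set whatsoever (a closed subset of a compact set is compact). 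Concretely, take $X$ infinite with the discrete metric $\mathrm{d}\equiv 1$ off the diagonal, or a countable disjoint union of unit intervals at mutual distance $1$: these are locally compact with finite diameter, so every ball of radius $\geq 1$ is the whole non-compact space and your $K$ and cutoff $\chi$ do not exist. Worse, as the paper itself remarks, on finite-diameter spaces \emph{every} $\mathscr{S}\subseteq\StateSpace(C_0(X))$ is Dobrushin tight while typically not uniformly tight, so your argument collapses exactly in the regime where the theorem goes beyond proper spaces (where Dobrushin tightness does imply tightness and your proof is fine). Your closing paragraph, which asserts that local compactness ``must be invoked to supply the compact neighborhood $K$,'' conflates local compactness with properness --- this is the heart of the error, not a presentational slip.

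The gap is repairable without changing your architecture, but the compact set must come from the \emph{limit state}, not from the metric: after choosing $r$ from (\ref{D-tight}), use inner regularity of $\mu$ to pick a compact $K'$ with $\mu(K')>1-\eta$, where $\eta=\varepsilon/(12r)$, and use local compactness (legitimately, since $K'$ is compact) to build a Lipschitz $\chi\in C_c(X)$ with $0\leq\chi\leq 1$ and $\chi\equiv 1$ on $K'$. One has the pointwise bound $|b|(1-\chi)\leq r(1-\chi)+\mathrm{d}(\cdot,x_0)\mathbf{1}_{\{\mathrm{d}(\cdot,x_0)>r\}}$, and since $\mu_\lambda(1-\chi)=1-\mu_\lambda(\chi)\rightarrow 1-\mu(\chi)<\eta$ (here $\chi\in C_0(X)$ and all states are probability measures), the net \emph{eventually} satisfies $\mu_\lambda(1-\chi)<2\eta$; thus the tail term is eventually at most $2r\eta+\varepsilon/6\leq\varepsilon/3$ for $\mu_\lambda$ and $\mu$ together, with the far tail still controlled by (\ref{D-tight}). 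Your Arz{\'e}la-Ascoli step then runs verbatim on the compact $\operatorname{supp}\chi$, and since $r$, $K'$, $\chi$ and the finite net are all chosen independently of $a$, the uniformity needed to conclude $\Kantorovich{\Lip}(\mu_\lambda,\mu)\leq\varepsilon$ is preserved. Two minor repairs in the easy direction: ``convolution against an approximate identity'' is meaningless on a bare metric space; obtain sup-norm density of Lipschitz elements of $C_c(X)$ in $C_0(X)$ via Stone-Weierstrass, or via the infimal convolution $g_t:x\mapsto\inf_y\left(f(y)+t\,\mathrm{d}(x,y)\right)$ followed by multiplication by a compactly supported Lipschitz cutoff (which exists by the same local-compactness argument), noting that $g_t$ alone need not vanish at infinity.
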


The condition expressed by Equation (\ref{D-tight}) will be labeled \emph{Dobrushin tightness}. This condition does not depend on the choice of the base point named $x_0$ in Equation (\ref{D-tight}) thanks to the triangle inequality. For proper metric spaces, Dobrushin's tightness is a strengthening of the notion of uniform tightness. For more general metric spaces, Dobrushin's tightness may be quite unrelated to tightness --- for instance, every subset of a finite diameter locally compact metric space is Dobrushin's tight, though they are not always tight. Related to this observation, we note that our work \cite{Latremoliere05b} on the bounded Lipschitz distances already addressed the notion of finite diameter quantum locally compact metric spaces without any recourse to some notion of Dobrushin tightness.

We took up in \cite{Latremoliere12b} the challenge to use Dobrushin's Theorem (\ref{Dobrushin-thm}) as the basis for a theory of quantum locally compact metric spaces.

There are several difficulties to overcome for this program. To begin with, Dobrushin's Theorem (\ref{Dobrushin-thm}) invokes the distance function itself, or rather the distance from a given point function. Unfortunately, even in the classical case, for metric spaces of infinite radius, such a function does not lie in the C*-algebra of continuous functions vanishing at infinity --- not even the C*-algebra of bounded continuous functions. In fact, the basic structure of quantum metric spaces is given by a generalized Lipschitz seminorm, and its associated {\mongekant}: we have no candidate for the function distance even in the compact case. To be even more specific, if $(\A,\Lip)$ is a unital Lipschitz pair, then for any fixed $\varphi \in \StateSpace(\A)$, the map $\psi \in \StateSpace(\A) \mapsto \Kantorovich{\Lip}(\varphi,\psi)$ is weak* continuous and convex, but not affine in general --- hence it does not correspond, via Kadisson functional calculus, to any element in $\sa{\A}$.

The second difficulty for our program is that Dobrushin's tightness involves a notion of taking the limit at infinity, which is encoded by taking integrals on the complement of closed balls --- it thus relies on a notion of locality and its dual notion of being, so to speak, far away. Of course, one may argue that the fundamental difference between the commutative and noncommutative world is precisely that locality becomes ill-defined in the noncommutative world. In fact, studies on the question of limits at infinity within general C*-algebras \cite{Akemann89} reveals that such notions are not canonical.

Motivated by all these observations, our idea in \cite{Latremoliere12b} is to introduce a mean to define limits at infinity within C*-algebras, accepting that this mean is an additional choice in our definition of quantum metric spaces. A natural approach is to choose a set of commuting ``observables'', for which the notions of locally and going to infinity are well-defined. We thus introduced in \cite{Latremoliere12b}:

\begin{definition}[\cite{Latremoliere12b}, Definition 2.15]
A \emph{topography} on a C*-algebra $\A$ is an Abelian C*-subalgebra $\M$ of $\A$ containing an approximate identity for $\A$. A \emph{topographic quantum space} $(\A,\M)$ is an ordered pair of a C*-algebra $\A$ and a topography $\M$ on $\A$.
\end{definition}

Our terminology is inspired by the notion of a topographic map. For separable C*-algebras, we shall see that a natural choice of topography for our purpose is of the form $C^\ast(h)$ with $h$ a strictly positive element, which one may regard as an ``altitude'' function, whose ``level sets'' will play a central role in our work. The requirement that a topography on a C*-algebra $\A$ contains an approximate unit for $\A$ is desired to make sense of the notion of going to $\infty$ in $\A$, and not just in the topography. It should be noted that in practice, a natural mean to define topography is via Abelian approximate units, and the Abelian C*-algebras they generate (see \cite[Lemma 2.20]{Latremoliere12b}).

Topographies are commutative C*-algebras, so we take advantage of Gel'fand duality; we will thus use the following notation:

\begin{notation}
Let $(\A,\M)$ be a topographic quantum space. The Gel'fand spectrum of the Abelian C*-algebra $\M$ is denoted by $\M^\sigma$. Moreover, the set of all compact subsets of $\M^\sigma$ is denoted by $\compacts{\M}$ and is ordered by \emph{reverse} inclusion $\succ$. As such $(\compacts{\M},\succ)$ is a directed set.
\end{notation}

\begin{notation}
We regard $\A^{\ast\ast}$ as the universal enveloping Von Neumann algebra of $\A$ (see \cite{Pedersen79}). Let $(\A,\M)$ be a topographic quantum space. If $K\in\compacts{M}$ then we denote by $\chi_X$ the projection in $\A^{\ast\ast}$ defined as the indicator function of $K$ in $\M$.

We also note that every state $\varphi$ on $\A$ trivially extends to a state on $\A^{\ast\ast}$, which will denote as $\varphi$. 
\end{notation}

A first application of the notion of a topographic quantum space is that we may define a notion of (uniformly) tight set of states, in analogy with the classical case:

\begin{definition}[\cite{Latremoliere12b}, Definition 2.21]\label{tight-def}
Let $(\A,\M)$ be a topographic quantum space. A subset $\mathscr{S}$ of $\StateSpace(\A)$ is \emph{tight} when:
\begin{equation*}
\lim_{K \in \compacts{\M}} \sup \left\{ |\varphi(1-\chi_K)| : \varphi \in \mathscr{S} \right\} = 0\text{.}
\end{equation*}
\end{definition}

Tightness characterizes weak* precompact sets of states of a topographic quantum space, as desired:

\begin{theorem}[\cite{Latremoliere12b}, Theorem 2.22]
Let $(\A,\M)$ be a topographic quantum space. The weak* closure of a subset $\mathscr{S}$ of $\StateSpace(\A)$ is weak* compact if, and only if $\mathscr{S}$ is tight.
\end{theorem}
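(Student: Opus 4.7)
The plan is to prove this equivalence as a noncommutative Prokhorov-type theorem, leveraging the Gel'fand identification $\M \cong C_0(\M^\sigma)$ together with the Banach--Alaoglu compactness of $\StateSpace(\unital{\A})$. The defining property that $\M$ contain an approximate identity for $\A$ is crucial: it allows Borel projections $\chi_K \in \A^{\ast\ast}$ associated to compact sets in $\M^\sigma$ to be both dominated from above by Urysohn functions in $\M \subseteq \A$ and approached from below by an increasing approximate unit, bridging the measure-theoretic condition of tightness with weak* continuity of states on $\A$.

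For the direction from tightness to weak* precompactness, I would embed $\mathscr{S} \hookrightarrow \StateSpace(\unital{\A})$ and show every weak* cluster point $\psi$ of $\mathscr{S}$ in that compact ambient space actually lies in $\StateSpace(\A)$; by the identification in the preamble, this amounts to verifying $\|\psi|_\A\| = 1$. Given $\varepsilon > 0$, tightness yields $K \in \compacts{\M}$ with $\varphi(\chi_K) > 1 - \varepsilon$ for every $\varphi \in \mathscr{S}$. Urysohn's lemma in $\M^\sigma$ then provides $f \in \M$ with $0 \leq f \leq 1$, $f \equiv 1$ on $K$ and compactly supported, so $f \geq \chi_K$ in $\M^{\ast\ast} \subseteq \A^{\ast\ast}$; consequently $\varphi(f) \geq \varphi(\chi_K) > 1 - \varepsilon$ for every $\varphi \in \mathscr{S}$. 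Because $f \in \A$, weak* continuity of $\eta \mapsto \eta(f)$ passes this inequality to the limit: $\psi(f) \geq 1 - \varepsilon$. Together with $\|f\|_\A \leq 1$, this forces $\|\psi|_\A\| \geq 1 - \varepsilon$; letting $\varepsilon \to 0$ gives $\|\psi|_\A\| = 1$, so $\psi|_\A$ is a positive functional of norm $1$, i.e.\ a state of $\A$, whose unique state extension back to $\unital{\A}$ is necessarily $\psi$. Thus the weak* closure of $\mathscr{S}$ in $\StateSpace(\unital{\A})$ coincides with its closure in $\StateSpace(\A)$ and is weak* compact.

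For the converse, assume $\overline{\mathscr{S}}$ is weak* compact in $\StateSpace(\A)$. Pick an increasing approximate identity $(u_\lambda)_{\lambda \in \Lambda}$ for $\A$ lying in $\M$; every state $\varphi$ of $\A$ satisfies $\varphi(u_\lambda) \to 1$. Fix $\varepsilon > 0$: each $\varphi \in \overline{\mathscr{S}}$ admits some $\lambda(\varphi)$ with $\varphi(u_{\lambda(\varphi)}) > 1 - \varepsilon/2$, and since $u_{\lambda(\varphi)} \in \A$ the map $\eta \mapsto \eta(u_{\lambda(\varphi)})$ is weak* continuous, providing an open neighborhood $V_\varphi$ of $\varphi$ on which $\eta(u_{\lambda(\varphi)}) > 1 - \varepsilon$. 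Compactness extracts a finite subcover $V_{\varphi_1},\ldots,V_{\varphi_n}$, and by directedness of $\Lambda$ there is $\lambda^\ast$ dominating all $\lambda(\varphi_i)$; monotonicity then yields $\psi(u_{\lambda^\ast}) > 1 - \varepsilon$ for every $\psi \in \overline{\mathscr{S}}$. Identify $u_{\lambda^\ast}$ with $g \in C_0(\M^\sigma)$, $0 \leq g \leq 1$, and set $K := \{x \in \M^\sigma : g(x) \geq 1/2\}$, which is compact since $g \in C_0$. The pointwise inequality $g \leq \tfrac{1}{2}(\unit + \chi_K)$ holds in $\M^{\ast\ast}$ (value $\leq 1$ on $K$ and value $< 1/2$ on its complement), so applying $\psi$ gives $\psi(g) \leq \tfrac{1}{2} + \tfrac{1}{2}\psi(\chi_K)$, whence $\psi(\unit - \chi_K) < 2\varepsilon$ for all $\psi \in \overline{\mathscr{S}} \supseteq \mathscr{S}$. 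Because $\chi_{K'} \geq \chi_K$ whenever $K' \supseteq K$, the directed limit in Definition~\ref{tight-def} is $0$, establishing tightness. The only genuine subtlety throughout is the careful passage between Borel projections in $\A^{\ast\ast}$ and continuous functions in $\M \subseteq \A$, mediated by the inclusion $\M^{\ast\ast} \subseteq \A^{\ast\ast}$ and the extension of each state of $\A$ to a normal state on $\A^{\ast\ast}$.
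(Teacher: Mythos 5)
Your proof is correct and follows essentially the same route as the cited proof in \cite{Latremoliere12b}: the forward direction via Banach--Alaoglu compactness of $\StateSpace(\unital{\A})$ together with Urysohn functions in $\M \cong C_0(\M^\sigma)$ dominating $\chi_K$ to show no mass escapes in the limit, and the converse via an increasing approximate unit of $\A$ inside $\M$ with a Dini-type uniformity argument (your finite-subcover step is precisely the proof of Dini's theorem for nets) followed by passing to a compact level set of $u_{\lambda^\ast}$. The only point worth a line of justification is the existence of an \emph{increasing} approximate unit for $\A$ within $\M$, which is standard (take the positive open unit ball of $\M$, directed by the C*-order, and use domination by the approximate unit that $\M$ contains by definition of a topography).
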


An important structure provided by a topography on a C*-algebra is the \emph{local state space}, consisting of the states which are indeed locally supported in the sense of the topography. These states will play an important role in our work.

\begin{definition}[\cite{Latremoliere12b}, Definition 2.23]
Let $(\A,\M)$ be a topographic quantum space. A state $\varphi\in\StateSpace(\A)$ is \emph{local} when there exists $K\in \compacts{\M}$ such that $\varphi(\chi_K) = 1$. The set of all local states of $(\A,\M)$, denoted by $\StateSpace(\A|\M)$, is called the \emph{local state space} of $(\A,\M)$.
\end{definition}

We note that \cite[Proposition 2.24]{Latremoliere12b} shows that the local state space is norm dense in the state space of a topographic quantum space.

Our insight toward a theory of a quantum locally compact metric spaces was to propose an extension to the notion of a Lipschitz pair which includes a topography:

\begin{definition}[\cite{Latremoliere12b}, Definition 2.27]\label{Lipschitz-triple-def}
A \emph{Lipschitz triple} $(\A,\Lip,\M)$ is a triple where $(\A,\Lip)$ is a Lipschitz pair and $\
M$ is a topography on $\A$.
\end{definition}

The notion of a Lipschitz triple did not occur in the compact or Abelian cases because there is a canonical topography in each of these cases, and it occurred implicitly in the bounded quantum compact metric spaces, though Theorem (\ref{bounded-Lipschitz-thm}) is not affected by which topography is chosen --- a situation which differs greatly from the general picture we now describe.

\begin{example}[\cite{Latremoliere12b}]
If $(\A,\Lip)$ is a unital Lipschitz pair, then $(\A,\Lip,\C\unit_\A)$ is a Lipschitz triple.
\end{example}

\begin{example}[\cite{Latremoliere12b}]
If $(\A,\Lip)$ is a Lipschitz pair with $\A$ Abelian, then $(\A,\Lip,\A)$ is a Lipschitz triple.
\end{example}

\begin{example}[\cite{Latremoliere12b}]
If $(\A,\Lip)$ is a Lipschitz pair with $\A$ separable and
\begin{equation*}
\diam{\StateSpace(\A)}{\Kantorovich{\Lip}} < \infty\text{,}
\end{equation*}
then $(\A,\Lip,C^\ast(h))$ is a Lipschitz triple for any strictly positive $h \in \sa{\A}$. 
\end{example}

We will see that in general, the topography is an important part of the theory of quantum locally compact metric spaces, and different choices of topographies lead to different situations (for instance, given a Lipschitz pair $(\A,\Lip)$, there may be a topography $\M$ such that $(\A,\Lip,\M)$ is a quantum locally compact metric space, yet another topography $\alg{N}$ such that $(\A,\Lip,\alg{N})$ is not a quantum locally compact metric space).

\bigskip

We can now present the core notion of our theory of {\lcqms s}. Putting together the topography and the quantum metric structure from a Lipschitz triple, we introduce in \cite{Latremoliere12b} an analogue to the notion of Dobrushin tightness.

\begin{definition}[\cite{Latremoliere12b}, Definition 2.28]
Let $(\A,\Lip,\M)$ be a Lipschitz triple. A subset $\mathscr{S}$ is \emph{tame} when for some local state $\mu \in \StateSpace(\A|\M)$:
\begin{equation*}
\lim_{K \in \compacts{\M}} \sup\left\{ |\varphi\left(a - \corner{K}{a}\right)| : \varphi \in \mathscr{S}, a\in\sa{\A}, \Lip(a)\leq 1, \mu(a) = 0 \right\} = 0\text{.}
\end{equation*}
\end{definition}

The difference between our notion of a tame subset of states and the notion of Dobrushin's tightness is that tameness implies tightness:

\begin{theorem}[\cite{Latremoliere12b}, Theorem 2.30]
Let $(\A,\Lip,\M)$ be a Lipschitz triple. A tame subset of $\StateSpace(\A)$ is tight.
\end{theorem}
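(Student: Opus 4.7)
The plan is to convert the control that tameness places on $a - \chi_K a \chi_K$ into the control tightness demands on $\unit_\A - \chi_K$, by exhibiting a single element $a \in \dom{\Lip}$ with $\mu(a)=0$ and $\Lip(a)\leq 1$ for which $a - \chi_K a \chi_K$ equals, up to a fixed scalar, the positive element $\chi_K - \unit_\A$.

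Fix the local state $\mu \in \StateSpace(\A|\M)$ furnished by the tameness hypothesis, together with $K_0 \in \compacts{\M}$ satisfying $\mu(\chi_{K_0}) = 1$. First I would select a Lipschitz ``cutoff'' $h \in \dom{\Lip} \cap \sa{\M}$ with $0 \leq h \leq \unit_\A$, $h \chi_{K_0} = \chi_{K_0}$, and $h = h \chi_{K^\ast}$ for some $K^\ast \in \compacts{\M}$ containing $K_0$; informally, $h$ equals $1$ on $K_0$ and vanishes outside $K^\ast$. Set $L := \Lip(h) > 0$ and $a := L^{-1}(h - \unit_\A)$. Then $\Lip(a) \leq 1$, and since $h$ and $\chi_{K_0}$ commute in the Abelian von Neumann algebra $\M^{\ast\ast}$ with $h = \chi_{K_0} + h(\unit_\A - \chi_{K_0}) \geq \chi_{K_0}$, positivity of $\mu$ combined with $h \leq \unit_\A$ forces $\mu(h)=1$, whence $\mu(a) = 0$.

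The key computation is then immediate: for any $K \in \compacts{\M}$ with $K \supseteq K^\ast$, both $h$ and $\chi_K$ lie in $\M^{\ast\ast}$, so $\chi_K h \chi_K = h\chi_K^2 = h \chi_K = h$, where the last equality uses $h = h \chi_{K^\ast}$ and $\chi_{K^\ast}\chi_K = \chi_{K^\ast}$. Hence
\begin{equation*}
a - \chi_K a \chi_K \;=\; L^{-1}\bigl((h - \unit_\A) - (h - \chi_K)\bigr) \;=\; -L^{-1}(\unit_\A - \chi_K)\text{.}
\end{equation*}
Since $\unit_\A - \chi_K \geq 0$, every state $\varphi$ satisfies $\varphi(\unit_\A - \chi_K) = L\,|\varphi(a - \chi_K a \chi_K)|$. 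Now, given $\varepsilon > 0$, apply the tameness hypothesis with tolerance $\varepsilon/L$ (the constant $L$ is fixed, depending only on the chosen $h$) to obtain $K_1 \in \compacts{\M}$ such that $\sup_{\varphi \in \mathscr{S}}|\varphi(a - \chi_K a \chi_K)| < \varepsilon/L$ for every $K \supseteq K_1$. Then for every $K \supseteq K_1 \cup K^\ast$, $\sup_{\varphi \in \mathscr{S}}|\varphi(\unit_\A - \chi_K)| < \varepsilon$, which is precisely tightness.

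The main obstacle is the very first step: producing $h \in \dom{\Lip} \cap \sa{\M}$ with the prescribed $\M$-supports. A continuous Urysohn-type cutoff on the Gel'fand spectrum $\M^\sigma$ always lies in $\sa{\M}$ but need not have finite Lip-norm, so one must invoke density of $\dom{\Lip}$ in $\sa{\unital{\A}}$, possibly together with a mild enlargement of $K_0$ and $K^\ast$, to approximate it by an element of $\dom{\Lip}$ retaining both membership in $\sa{\M}$ and the support requirements. This is where the compatibility between the topography $\M$ and the Lip-norm $\Lip$ is actually exercised; everything else is a short algebraic manipulation in $\M^{\ast\ast}$.
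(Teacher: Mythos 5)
Your reduction of tightness to a single, well-chosen test element is the right kind of move, and your corner algebra is correct: if some $a$ with $\Lip(a)\leq 1$ and $\mu(a)=0$ satisfies $a - \corner{K}{a} = -L^{-1}(\unit_\A - \chi_K)$ for all large $K$, then tameness applied to that one element yields tightness exactly as you say. The proof nevertheless fails at the point you flag, and the failure is not a removable technicality. Nothing in the definition of a Lipschitz triple ties $\dom{\Lip}$ to the topography: $\M$ is only assumed to be an Abelian C*-subalgebra containing an approximate unit for $\A$, and the set $\dom{\Lip}\cap\sa{\M}$ may contain nothing but scalar multiples of the unit. Requiring $\sa{\M}$ to contain a dense set of finite-Lip-norm elements is precisely the later, strictly stronger axiom of a \emph{standard} quantum locally compact metric space (Definition 3.2.3 of \cite{Latremoliere14b}, discussed in the last section of this paper), and density of the locally supported Lipschitz elements $\Loc{\A}{\M}{\star}$ is likewise an axiom of \emph{proper} quantum metric spaces, not of Lipschitz triples. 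Your proposed rescue --- approximating a Urysohn cutoff via density of $\dom{\Lip}$ in $\sa{\unital{\A}}$ --- cannot work as described: norm-density produces approximants in $\sa{\unital{\A}}$ with no reason to lie in $\sa{\M}$, and no norm-approximation restores the exact identities $h\indicator{K_0}=\indicator{K_0}$ and $h = \corner{K^\ast}{h}$ on which your computation of $a-\corner{K}{a}$ rests. A telling symptom: your argument never invokes the one axiom that does link $\M$ to $\A$, namely that $\M$ contains an approximate unit for $\A$.

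The scheme survives once the exact support identities are replaced by asymptotic ones, and this is where the approximate unit enters. That axiom gives, for each \emph{fixed} $f\in\sa{\A}$, that $\lim_{K\in\compacts{\M}}\|f - \corner{K}{f}\|_{\A^{\ast\ast}} = 0$. Now pick a positive $x\in\sa{\A}$ with $\mu(x)=1$ and, by density, $b\in\dom{\Lip}$ with $\|b-x\|_{\unital{\A}}<\frac{1}{2}$. In the non-unital case (the unital case is trivially tight, as $\M$ is then unital and $\chi_{\M^\sigma}=\unit_\A$), write $b = f + t\unit_\A$ with $f\in\sa{\A}$; contractivity of the quotient map $\unital{\A}\rightarrow\bigslant{\unital{\A}}{\A}$ forces $|t|<\frac{1}{2}$, hence $\mu(f)=\mu(b)-t>0$, and in particular $b\notin\R\unit_\A$, so $\Lip(b)>0$. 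Then $a := \Lip(b)^{-1}(b-\mu(b)\unit_\A) = \Lip(b)^{-1}(f - \mu(f)\unit_\A)$ lies in your test set, and since $\corner{K}{\unit_\A}=\chi_K$:
\begin{equation*}
\mu(f)\,\varphi(\unit_\A-\chi_K) \leq \Lip(b)\,\left|\varphi\left(a - \corner{K}{a}\right)\right| + \left\|f - \corner{K}{f}\right\|_{\A^{\ast\ast}}
\end{equation*}
for every $\varphi\in\mathscr{S}$; tameness controls the first term uniformly over $\mathscr{S}$ and the approximate-unit limit controls the second, since $f$ is a single fixed element of $\A$. No element of $\sa{\M}$, no exact supports, and no commutation with $\chi_K$ are needed --- only a single Lipschitz element whose scalar component at infinity, after centering at $\mu$, is nonzero. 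Two minor points in the same spirit: your witness $a$ lies in $\sa{\unital{\A}}$ rather than $\sa{\A}$, which is harmless since the tameness test set of \cite{Latremoliere12b} ranges over $\sa{\unital{\A}}$ (compare the sets appearing in Theorem (\ref{lcqms-char-thm})); and your claim $L=\Lip(h)>0$ can fail (e.g. $h=\unit_\A$ when $\A$ is unital), again only in the trivially tight case.
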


In general, a tame set of regular probability measures is Dobrushin tight, but the converse only holds for proper metric spaces. However, with our sights firmly turned toward a generalized Gromov-Hausdorff topology for {\pqms s}, and since all the bounded quantum locally compact metric spaces will fit our general framework as well (and are only proper when compact), this distinction seems to raise no difficulty.

Now, an important source of tame sets of sets is given by the following example:

\begin{example}[\cite{Latremoliere12b}, Proposition 2.29]\label{local-tame-set-ex}
Let $(\A,\Lip,\M)$ be a Lipschitz triple. Let $K \in \compacts{\M}$. The set:
\begin{equation*}
\StateSpace(\A|K) = \left\{ \varphi \in \StateSpace(\A) : \varphi(\chi_K) = 1 \right\}
\end{equation*}
is tame. 
\end{example}
While the notion of a tame set explicitly involves the quantum metric structure, we note that the tame sets given by Example (\ref{local-tame-set-ex}) are ``universally tame'': they would be tame no matter what the seminorm $\Lip$ is (but for the same topography). Of course, not all tame sets are so nicely behaved.

Example (\ref{local-tame-set-ex}) is tightly related to the role of the local state space since $\StateSpace(\A|\M) = \bigcup_{K\in\compacts{\M}}\StateSpace(\A|\M)$.

Since tame sets of states are weak* precompact, if we ever wish the weak* topology to be metrized by the {\mongekant} on tame sets, then tame sets must have finite diameter for the {\mongekant}. We thus define:

\begin{definition}[\cite{Latremoliere12b}, Definition 2.31]
A Lipschitz triple $(\A,\Lip,\M)$ is \emph{regular} when, for all $K\in\compacts{\M}$:
\begin{equation*}
\diam{\StateSpace(\A|K)}{\Kantorovich{\Lip}} < \infty\text{.}
\end{equation*}
\end{definition}

Regularity is a similar condition as the finiteness of the diameter of the state space for the {\mongekant} in the original work of Rieffel on compact quantum metric spaces \cite{Rieffel98a}, as seen in Assertion (5) of Theorem (\ref{Rieffel-thm}) for instance. While regularity involves only tame sets of the form given in Example (\ref{local-tame-set-ex}), it implies that all tame sets are bounded for the {\mongekant}:
\begin{proposition}[\cite{Latremoliere12b}, Proposition 2.36]
Let $(\A,\Lip,\M)$ be a regular Lipschitz triple. If $\mu\in\StateSpace(\A|\M)$ and $\mathscr{K}$ is tame, then $\mathscr{K}$ is contained in a closed ball of center $\mu$ for $\Kantorovich{\Lip}$.
\end{proposition}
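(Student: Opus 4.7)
The plan is to reduce the problem to bounding $\Kantorovich{\Lip}(\nu,\varphi)$ uniformly in $\varphi\in\mathscr{K}$, where $\nu$ is the local state witnessing tameness, and then invoke regularity on a single well-chosen compact set. Concretely, let $\nu\in\StateSpace(\A|\M)$ be the local state from the definition of tameness applied to $\mathscr{K}$, and pick $K_\mu, K_\nu\in\compacts{\M}$ with $\mu(\chi_{K_\mu}) = \nu(\chi_{K_\nu}) = 1$. By tameness (with $\varepsilon = 1$), there exists $K_1\in\compacts{\M}$ such that for every $K\supseteq K_1$,
\begin{equation*}
\sup\left\{|\varphi(a-\corner{K}{a})| : \varphi\in\mathscr{K},\ \Lip(a)\leq 1,\ \nu(a)=0\right\} \leq 1.
\end{equation*}
Set $K := K_1\cup K_\mu\cup K_\nu$. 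Then $\mu,\nu\in\StateSpace(\A|K)$, and by regularity the diameter $D := \diam{\StateSpace(\A|K)}{\Kantorovich{\Lip}}$ is finite, so in particular $\Kantorovich{\Lip}(\mu,\nu)\leq D$. By the triangle inequality, it now suffices to show $\Kantorovich{\Lip}(\nu,\varphi)$ is bounded uniformly over $\varphi\in\mathscr{K}$.

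Fix $\varphi\in\mathscr{K}$ and $a\in\sa{\A}$ with $\Lip(a)\leq 1$. Replacing $a$ by $a-\nu(a)\unit_\A$ leaves both $\Lip(a)$ and $\varphi(a)-\nu(a)$ unchanged while normalizing $\nu(a)=0$. Then decompose
\begin{equation*}
\varphi(a)\;=\;\varphi(\corner{K}{a})\;+\;\varphi(a-\corner{K}{a}),
\end{equation*}
where the second term is bounded by $1$ by construction of $K$. Everything now hinges on controlling $\varphi(\corner{K}{a})$.

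The main obstacle is producing, out of $\varphi$, a genuine element of $\StateSpace(\A|K)$ whose value on $a$ recovers $\varphi(\corner{K}{a})$ up to a harmless scalar. When $\varphi(\chi_K) = 0$, Cauchy--Schwarz in the state extension $\varphi$ on $\A^{\ast\ast}$ already gives $\varphi(\corner{K}{a}) = 0$. Otherwise, define
\begin{equation*}
\psi : b\in\A \longmapsto \frac{\varphi(\chi_K b\chi_K)}{\varphi(\chi_K)}.
\end{equation*}
The work is to check that $\psi$ is a state on $\A$ and that $\psi(\chi_K) = 1$. Positivity is immediate, while to pin down the norm one uses that $\M$ contains an approximate identity $(e_\lambda)$ of $\A$ and that $\chi_K$ is in the weak* closure of $\M$ in $\A^{\ast\ast}$, so $\chi_K e_\lambda \chi_K \to \chi_K$ in the relevant topology and $\psi(e_\lambda)\to 1$; thus $\psi\in\StateSpace(\A)$. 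Extending $\psi$ back to $\A^{\ast\ast}$ gives $\psi(\chi_K) = \varphi(\chi_K\chi_K\chi_K)/\varphi(\chi_K) = 1$, so $\psi\in\StateSpace(\A|K)$.

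Regularity then yields $|\psi(a)| = |\psi(a)-\nu(a)|\leq \Kantorovich{\Lip}(\nu,\psi)\leq D$ (since $\nu\in\StateSpace(\A|K)$ as well), and therefore
\begin{equation*}
|\varphi(\corner{K}{a})|\;=\;\varphi(\chi_K)\,|\psi(a)|\;\leq\;D.
\end{equation*}
Combining the two bounds gives $|\varphi(a)-\nu(a)|=|\varphi(a)|\leq D+1$ uniformly over all $a\in\sa{\A}$ with $\Lip(a)\leq 1$ and all $\varphi\in\mathscr{K}$, hence $\Kantorovich{\Lip}(\nu,\varphi)\leq D+1$. By the triangle inequality, $\Kantorovich{\Lip}(\mu,\varphi)\leq 2D+1$ for all $\varphi\in\mathscr{K}$, placing $\mathscr{K}$ inside the closed ball of center $\mu$ and radius $2D+1$ for $\Kantorovich{\Lip}$.
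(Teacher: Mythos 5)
Your proof is correct. The survey states this proposition without proof, citing \cite{Latremoliere12b}; in that source the statement sits just after Proposition 2.34 (reproduced in this survey as Proposition (\ref{norm-bound-prop})), and the short route it affords is to bound $|\varphi(\corner{K}{a})|\leq\|\corner{K}{a}\|_{\A^{\ast\ast}}\leq r_K$ uniformly over \emph{all} states at once, then add the tameness estimate on $\varphi(a-\corner{K}{a})$ and finish, exactly as you do, by joining $\nu$ to $\mu$ inside a single $\StateSpace(\A|K)$ via regularity and the triangle inequality. Your argument replaces the appeal to that norm bound by a self-contained, state-wise substitute: the compression $\psi : b\mapsto\varphi(\chi_K b\chi_K)/\varphi(\chi_K)$ is shown to be a state with $\psi(\chi_K)=1$, hence lies in $\StateSpace(\A|K)$ where the diameter bound $D$ applies, and the degenerate case $\varphi(\chi_K)=0$ is correctly dispatched by Cauchy--Schwarz. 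In effect you re-prove, for the single state $\varphi$ at hand, the implication of Proposition (\ref{norm-bound-prop}) that is actually needed --- note that $\|\corner{K}{a}\|_{\A^{\ast\ast}}$ is a supremum of $|\rho(\corner{K}{a})|$ over states $\rho$, and your compression trick is precisely how such evaluations are related to $\StateSpace(\A|K)$. What the black-box route buys is brevity; what yours buys is independence from Proposition (\ref{norm-bound-prop}), at the cost of redoing the functional-analytic bookkeeping (normal extension of $\varphi$ to $\A^{\ast\ast}$, the approximate-unit argument showing $\|\psi\|=1$, which works because the topography $\M$ contains an approximate unit for $\A$), all of which you carry out correctly.

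One small caution about conventions: the normalization $a\mapsto a-\nu(a)\unit_\A$ leaves $\sa{\A}$ when $\A$ is not unital, while the tameness condition is displayed in this survey with $a\in\sa{\A}$. It should be read, as in \cite{Latremoliere12b} and consistently with Theorem (\ref{lcqms-char-thm}) here, with $a$ ranging over $\sa{\unital{\A}}$, where $\Lip$ is defined; since $\Kantorovich{\Lip}$ is unchanged when scalars are added to test elements, this is the only step where the distinction matters, and with that reading your normalization, and hence the whole proof, is complete.
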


A nice consequence of regularity of Lipschitz triples is that testing if a set of states is tight can be done using any local state:
\begin{theorem}[\cite{Latremoliere12b}, Theorem 2.35]
Let $(\A,\Lip,\M)$ be a regular Lipschitz triple. A subset $\mathscr{S}$ of $\StateSpace(\A)$ is tame if and only if for every local state $\mu \in \StateSpace(\A|\M)$ we have:
\begin{equation*}
\lim_{K \in \compacts{\M}} \sup\left\{ |\varphi\left(a - \corner{K}{a}\right)| : \varphi \in \mathscr{S}, a\in\sa{\A}, \Lip(a)\leq 1, \mu(a) = 0 \right\} = 0\text{.}
\end{equation*}
\end{theorem}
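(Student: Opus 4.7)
The plan is to verify the $(\Leftarrow)$ direction immediately, noting that $\StateSpace(\A|\M)$ is nonempty, and to devote the work to the $(\Rightarrow)$ direction via a base-point change argument. Suppose $\mathscr{S}$ is tame at some $\mu_0 \in \StateSpace(\A|\M)$ in the sense of Definition~2.28, and fix an arbitrary $\mu \in \StateSpace(\A|\M)$; the goal is to show the same vanishing limit along the net $(\compacts{\M},\succ)$ at $\mu$.

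The first step is to extract from regularity a uniform control over base-point changes. Since $\mu$ and $\mu_0$ are both local, their supporting compacts can be unioned into a single $K^\ast \in \compacts{\M}$ with $\mu,\mu_0 \in \StateSpace(\A|K^\ast)$; regularity then yields $D := \diam{\StateSpace(\A|K^\ast)}{\Kantorovich{\Lip}} < \infty$, so in particular $\Kantorovich{\Lip}(\mu_0,\mu) \leq D$. For any $a \in \sa{\A}$ with $\Lip(a) \leq 1$ and $\mu(a) = 0$ this gives
\begin{equation*}
|\mu_0(a)| = |\mu_0(a) - \mu(a)| \leq \Kantorovich{\Lip}(\mu_0,\mu) \leq D.
\end{equation*}

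The second step is an algebraic identity in $\A^{\ast\ast}$. Given such $a$, set $b := a - \mu_0(a)\unit_\A$, an element of $\sa{\unital{\A}}$ satisfying $\Lip(b) = \Lip(a) \leq 1$ and $\mu_0(b) = 0$. Since $\corner{K}{\unit_\A} = \chi_K$, a direct computation yields
\begin{equation*}
a - \corner{K}{a} \;=\; \bigl(b - \corner{K}{b}\bigr) + \mu_0(a)\bigl(\unit_\A - \chi_K\bigr),
\end{equation*}
so that for any $\varphi \in \mathscr{S}$,
\begin{equation*}
|\varphi(a - \corner{K}{a})| \;\leq\; |\varphi(b - \corner{K}{b})| + D \cdot |\varphi(\unit_\A - \chi_K)|.
\end{equation*}

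The first summand on the right vanishes uniformly over $\mathscr{S}$ as $K$ increases in $(\compacts{\M},\succ)$ by the assumed tameness at $\mu_0$ applied to $b$; the second vanishes uniformly by tightness of $\mathscr{S}$, which itself follows from tameness via Theorem~2.30 above. Taking the supremum over admissible $a$ and $\varphi$ produces the claimed limit at $\mu$. The main obstacle is the technicality that $b$ typically lies in $\sa{\unital{\A}}$ rather than $\sa{\A}$ when $\A$ is non-unital, so a strict application of tameness at $\mu_0$ requires a small amendment: one observes that a scalar shift $a \mapsto a + s\unit_\A$ alters $\varphi(a - \corner{K}{a})$ only by the controlled quantity $s \cdot \varphi(\unit_\A - \chi_K)$, so the tameness estimate transfers from $\sa{\A}$ to $\sa{\unital{\A}}$ up to a correction already absorbed by tightness, which is exactly what the bound $|s| = |\mu_0(a)| \leq D$ from regularity secures.
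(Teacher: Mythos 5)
Your $(\Leftarrow)$ direction and the skeleton of your $(\Rightarrow)$ direction are sound, and the individual computations are correct: the union $K^\ast$ of supporting compacts is compact with $\mu,\mu_0\in\StateSpace(\A|K^\ast)$, regularity gives $|\mu_0(a)|\leq\Kantorovich{\Lip}(\mu_0,\mu)\leq D$, the identity $a-\corner{K}{a}=(b-\corner{K}{b})+\mu_0(a)(\unit_\A-\chi_K)$ holds in $\A^{\ast\ast}$ since $\chi_K\unit_\A\chi_K = \chi_K$, and $\Lip(b)=\Lip(a)$ because $\Lip(\unit_\A)=0$. The genuine gap is exactly at the point you flag and then wave away: your ``transfer of the tameness estimate from $\sa{\A}$ to $\sa{\unital{\A}}$'' is circular under the definition as stated here (tameness quantifies over $\{c\in\sa{\A}:\Lip(c)\leq 1,\ \mu_0(c)=0\}$). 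When $\A$ is non-unital, the unique decomposition of $b=a-\mu_0(a)\unit_\A$ into an $\sa{\A}$-part plus a scalar has $\sa{\A}$-part equal to the original $a$, whose $\mu_0$-value is $\mu_0(a)\neq 0$ in general; so undoing the scalar shift lands you on an element to which the tameness hypothesis does not apply, and you are back where you started. No shift by a multiple of $\unit_\A$ can move you, inside $\sa{\A}$, from the class $\{|\mu_0(\cdot)|\leq D\}$ to the class $\{\mu_0(\cdot)=0\}$. Nor can you bound the offending term brutally: the set $\{a\in\sa{\A}:\Lip(a)\leq 1,\ |\mu_0(a)|\leq D\}$ need not be norm bounded, since a regular Lipschitz triple may have infinite Monge-Kantorovich diameter (the Moyal plane, for instance).

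The repair is to recenter with a fixed \emph{Lipschitz reference element of $\sa{\A}$} rather than with the unit. Since $\Lip(\unit_\A)=0$, the $\sa{\A}$-part of any element of $\dom{\Lip}$ again lies in $\dom{\Lip}$, so $\dom{\Lip}\cap\sa{\A}$ is dense in $\sa{\A}$ and one may choose $h\in\dom{\Lip}\cap\sa{\A}$ with $\mu_0(h)=1$. Writing $a=(a-\mu_0(a)h)+\mu_0(a)h$, the first summand lies in $\sa{\A}$, is $\mu_0$-centered, and satisfies $\Lip(a-\mu_0(a)h)\leq 1+D\Lip(h)$, so tameness at $\mu_0$, rescaled by homogeneity, controls it uniformly; for the single element $h$, Cauchy-Schwarz in $\A^{\ast\ast}$ applied to $h-\corner{K}{h}=(\unit_\A-\chi_K)h+\chi_K h(\unit_\A-\chi_K)$ yields
\begin{equation*}
\left|\varphi\left(h-\corner{K}{h}\right)\right| \leq 2\|h\|_\A\sqrt{\varphi(\unit_\A-\chi_K)}\text{,}
\end{equation*}
which tends to $0$ uniformly on $\mathscr{S}$ by tightness (Theorem 2.30), exactly the role you wanted tightness to play. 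With this substitution your decomposition closes the argument. I note for fairness that if one reads the tameness condition as quantifying over $\sa{\unital{\A}}$ --- as in the source \cite{Latremoliere12b}, and consistently with Theorem 3.10 of this survey, which quantifies over $\sa{\unital{\A}}$ --- then your element $b$ is directly admissible, your main estimate is already a complete proof, and no amendment is needed at all; the circular patch is only required, and only fails, under the literal $\sa{\A}$ reading.
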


Another consequence of --- in fact, an equivalence with --- regularity is:
\begin{proposition}[\cite{Latremoliere12b}, Proposition 2.34]\label{norm-bound-prop}
Let $(\A,\Lip,\M)$ be a Lipschitz triple and $\mu \in \StateSpace(\A|\M)$. The following assertions are equivalent:
\begin{enumerate}
\item $(\A,\Lip,\M)$ is regular,
\item for all $K\in\compacts{\M}$, there exists $r_K in (0,\infty)$ such that for all $a\in\sa{\A}$ with $\Lip(a)\leq 1$, we have:
\begin{equation*}
\|\corner{K}{a}\|_{\A^{\ast\ast}} \leq r_K\text{.}
\end{equation*}
\end{enumerate}
\end{proposition}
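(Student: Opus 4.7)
The plan is to exploit the duality
\begin{equation*}
\|\corner{K}{a}\|_{\A^{\ast\ast}} = \sup\left\{ |\varphi(a)| : \varphi \in \StateSpace(\A|K) \right\}\text{,}
\end{equation*}
valid for every self-adjoint $a\in\sa{\A}$ and every $K\in\compacts{\M}$. This reduces to the standard ``norm from states'' formula applied to the corner $\chi_K\A^{\ast\ast}\chi_K$, whose state space is naturally identified with $\StateSpace(\A|K)$ via $\varphi \mapsto (b\mapsto \varphi(\chi_K b \chi_K))$: indeed a state $\varphi$ on $\A^{\ast\ast}$ is supported on $\chi_K$ exactly when $\varphi(\chi_K)=1$, and then $\varphi(\chi_K b \chi_K)=\varphi(b)$.

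The second ingredient is a standard enlargement trick that makes $\mu$ admissible at $K$. Since $\mu$ is local, choose $K_0\in\compacts{\M}$ with $\mu(\chi_{K_0})=1$, and, for any $K\in\compacts{\M}$, set $K' = K\cup K_0 \in \compacts{\M}$. Then $\chi_K\leq \chi_{K'}$ and $\chi_{K_0}\leq \chi_{K'}$, so $\mu\in\StateSpace(\A|K')$ and $\StateSpace(\A|K)\subseteq\StateSpace(\A|K')$.

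With the normalization $\mu(a)=0$ understood in the second assertion (this is unavoidable: translating $a$ by a scalar preserves $\Lip(a)$ but inflates $\|\corner{K}{a}\|_{\A^{\ast\ast}}$ arbitrarily), the two implications are quick. For (1)$\Rightarrow$(2), regularity gives $d_{K'} := \diam{\StateSpace(\A|K')}{\Kantorovich{\Lip}}<\infty$, and for every $a$ with $\Lip(a)\leq 1$ and $\mu(a)=0$, every $\varphi\in\StateSpace(\A|K)\subseteq\StateSpace(\A|K')$ satisfies
\begin{equation*}
|\varphi(a)| = |\varphi(a)-\mu(a)| \leq \Kantorovich{\Lip}(\varphi,\mu) \leq d_{K'}\text{,}
\end{equation*}
whence the duality identity yields $\|\corner{K}{a}\|_{\A^{\ast\ast}}\leq d_{K'}$, so $r_K := d_{K'}$ works. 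For (2)$\Rightarrow$(1), given $\varphi,\psi\in\StateSpace(\A|K)$ and $a$ with $\Lip(a)\leq 1$, replace $a$ by $b := a - \mu(a)\unit_\A$; since $\Lip(b)=\Lip(a)$ and $\mu(b)=0$, applying the hypothesis at $K'$ gives $|\varphi(a)-\psi(a)| = |\varphi(b)-\psi(b)| \leq 2\|\corner{K'}{b}\|_{\A^{\ast\ast}} \leq 2r_{K'}$, so $\diam{\StateSpace(\A|K)}{\Kantorovich{\Lip}}\leq 2r_{K'}<\infty$.

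The only substantive step is the duality identity, which is where the enveloping von Neumann algebra $\A^{\ast\ast}$ enters essentially; once this is in place, the argument is a direct combination of the enlargement $K\mapsto K\cup K_0$ and the normalization by $\mu$.
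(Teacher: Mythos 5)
Your overall strategy---the duality between corner norms in $\A^{\ast\ast}$ and states supported on $\chi_K$, the enlargement $K' = K\cup K_0$ making $\mu$ admissible, and the normalization at $\mu$ (which you correctly identified as missing from the statement as printed)---is the right one and is essentially the argument of \cite{Latremoliere12b}. But there is a genuine gap in your direction (2)$\Rightarrow$(1), visible precisely in the non-unital case, which is the case of interest for Lipschitz triples. You formulate the corrected hypothesis as ranging over $a\in\sa{\A}$ with $\mu(a)=0$, and you then apply it to $b = a-\mu(a)\unit_\A$. When $\A$ is not unital and $\mu(a)\not=0$, we have $b\in\sa{\unital{\A}}\setminus\sa{\A}$, so your hypothesis says nothing about $\corner{K'}{b}$. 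This is not a cosmetic slip: under your reading, (2)$\Rightarrow$(1) is actually false. Take $\A=\M=c_0(\N)$, $\mu=\delta_0$, and for $x\in\sa{c_0(\N)}$, $t\in\R$ set
\begin{equation*}
\Lip(x+t\unit_\A) = \sup_{n\geq 1} \frac{|x(n)-n\,x(0)|}{n^3}\text{;}
\end{equation*}
the kernel condition holds since $x(n)=n x(0)$ for all $n$ forces $x(0)=0$ when $x\in c_0(\N)$. If $x(0)=0$ and $\Lip(x)\leq 1$ then $|x(n)|\leq n^3$, so the corners $\corner{K}{x}$ are uniformly bounded over every compact (i.e.\ finite) $K\subseteq\N$: your version of (2) holds. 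Yet for $s\geq 2$ the finitely supported $x_s$ with $x_s(n)=ns$ for $n\leq \lceil\sqrt{s}\,\rceil$ and $x_s(n)=0$ beyond satisfies $\Lip(x_s)\leq 1$ and $x_s(2)-x_s(0)=s$, whence $\Kantorovich{\Lip}(\delta_0,\delta_2)=\infty$ and the triple is not regular. The correct statement---the one in \cite{Latremoliere12b}, and consistent with the sets appearing in Theorems (\ref{lcqms-char-thm}) and (\ref{separable-lcqms-thm}) of this survey---quantifies assertion (2) over $a\in\sa{\unital{\A}}$ with $\Lip(a)\leq 1$ and $\mu(a)=0$. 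With that reading your proof goes through verbatim: every such $a$ equals $a_0-\mu(a_0)\unit_\A$ for some $a_0\in\sa{\A}$ with $\Lip(a_0)=\Lip(a)$, so both implications close up exactly as you wrote them.

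A secondary, repairable imprecision: the state space of the corner $\chi_K\A^{\ast\ast}\chi_K$ is \emph{not} naturally identified with $\StateSpace(\A|K)$. A singular state of the corner (for instance, a limit along a free ultrafilter when $\A=c_0(\N)$) extends to a state of $\A^{\ast\ast}$ supported on $\chi_K$ whose restriction to $\A$ has norm strictly less than $1$, hence is not an element of $\StateSpace(\A|K)$. Your duality identity
\begin{equation*}
\|\corner{K}{a}\|_{\A^{\ast\ast}} = \sup\left\{ |\varphi(a)| : \varphi\in\StateSpace(\A|K) \right\}
\end{equation*}
is nevertheless true for self-adjoint $a$, because the norm of a self-adjoint element of a von Neumann algebra is already attained as a supremum over \emph{normal} states---e.g.\ vector states of the universal representation with vectors in the range of $\chi_K$---and a normal state of $\A^{\ast\ast}$ with $\varphi(\chi_K)=1$ does restrict to an element of $\StateSpace(\A|K)$ (normality, together with the fact that an approximate unit of $\A$ converges strongly to the unit in the universal representation, gives $\|\varphi|_\A\|=1$). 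So keep the identity, but justify it via normal states rather than via an identification of state spaces.
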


By Proposition (\ref{norm-bound-prop}), for a regular Lipschitz triple $(\A,\Lip,\M)$, the sets $\{ a\in\sa{\A} : \Lip(a)\leq 1, \mu(a) = 0\}$ for some local state $\mu$ and any $K\in\compacts{\M}$, is bounded for a certain locally convex topology. We shall see this topology and these sets again in Theorem (\ref{lcqms-char-thm}).
\bigskip

We now have the necessary ingredients to give the main definition of our work \cite{Latremoliere12b}.

\begin{definition}[\cite{Latremoliere12b}, Definition 3.1]
A Lipschitz triple $(\A,\Lip,\M)$ is a \emph{\lcqms} when the associated {\mongekant} $\Kantorovich{\Lip}$ metrizes the weak* topology restricted to any tame subset of $\StateSpace(\A)$.
\end{definition}

Remarkably, the topography of a {\lcqms} carries a natural metric structure:
\begin{theorem}[\cite{Latremoliere12b}, Theorem 3.2]\label{induced-metric-thm}
Let $(\A,\Lip,\M)$ be a {\lcqms}. Let $\M^\sigma$ be the Gel'fand spectrum of $\M$. For any two states $\rho, \omega$ of $\M$, we set:
\begin{equation*}
\mathrm{d}(\omega,\rho) = \inf \left\{ \Kantorovich{\Lip}(\varphi, \psi) : \varphi, \psi \in \StateSpace(\A), [\varphi]_\M = \omega, [\psi]_\M = \rho \right\}
\end{equation*}
where $[\cdot]_\M$ is meant for the restriction to $\M$.

Then $\mathrm{d}$ is an extended metric on $\StateSpace(\M)$, such that for all $K\in\compacts{\M}$, the topology induced by $\mathrm{d}$ on $\StateSpace(\M|K)$ is the weak* topology. Moreover, $(\M^\sigma, \mathrm{d})$ is a locally compact metric space whose topology agrees with the weak* topology on $\M^\sigma$.
\end{theorem}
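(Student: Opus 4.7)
The plan is to verify the axioms of an extended metric and the topological assertions in sequence. Symmetry and vanishing of $\mathrm{d}$ on the diagonal are immediate from the symmetry of $\Kantorovich{\Lip}$ and the fact that every state of $\M$ admits at least one extension to $\A$. The non-trivial points are the triangle inequality, non-degeneracy, and the topological claim.

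For the triangle inequality, I would attempt to identify $\mathrm{d}$ with the {\mongekant} of the restricted Lipschitz pair $\bigl(\M, \Lip|_{\sa{\M}\cap\dom{\Lip}}\bigr)$; that is,
\begin{equation*}
\mathrm{d}(\omega,\rho) \,=\, \sup\set{|\omega(m) - \rho(m)|}{m\in\sa{\M}\cap\dom{\Lip},\ \Lip(m)\leq 1}\text{.}
\end{equation*}
The inequality ``$\geq$'' is immediate since $\sa{\M}\subseteq\sa{\unital{\A}}$ and, for any extensions $\varphi,\psi$ of $\omega,\rho$, the right-hand side is bounded above by $\Kantorovich{\Lip}(\varphi,\psi)$. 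The reverse inequality is the substantive step and amounts to a noncommutative McShane-type extension result, producing for each $\varepsilon>0$ extensions $\varphi_\omega,\psi_\rho\in\StateSpace(\A)$ whose $\Kantorovich{\Lip}$-distance approaches $\mathrm{mk}_{\Lip|_\M}(\omega,\rho)$. Once this identification is established, $\mathrm{d}$ is manifestly an extended pseudo-metric. Non-degeneracy then reduces to the fact that $\sa{\M}\cap\dom{\Lip}$ separates states of $\M$; alternatively, if $\mathrm{d}(\omega,\rho)=0$, I would extract a weak*-cluster pair of extensions with $\Kantorovich{\Lip}$-distance zero using weak*-lower-semicontinuity of $\Kantorovich{\Lip}$ as a supremum of weak*-continuous functions, collapse them using the defining density in Definition (\ref{Lipschitz-pair-def}), and restrict to $\M$.

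For the topological statement on $\StateSpace(\M|K)$ with $K\in\compacts{\M}$: by Example (\ref{local-tame-set-ex}), the set $\StateSpace(\A|K)$ is tame, and the {\lcqms} hypothesis then ensures $\Kantorovich{\Lip}$ metrizes the weak* topology on $\StateSpace(\A|K)$, which is moreover weak*-compact. The restriction map
\begin{equation*}
r: \varphi\in\StateSpace(\A|K)\longmapsto \varphi|_\M\in\StateSpace(\M|K)
\end{equation*}
is weak*-continuous and surjective, where surjectivity follows from Hahn-Banach extension of states of $\M$. As a continuous surjection from a compact space to a Hausdorff space, $r$ is a topological quotient map. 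Since $\mathrm{d}$ is precisely the quotient pseudo-metric of $\Kantorovich{\Lip}$ along $r$, it induces the quotient topology, which is the weak* topology on $\StateSpace(\M|K)$.

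The final claim on $\M^\sigma$ follows from Gelfand duality: $\M^\sigma$ is locally compact Hausdorff, identified with the characters of $\M$. For distinct $\chi_1,\chi_2 \in \M^\sigma$, taking $K = \{\chi_1,\chi_2\}\in\compacts{\M}$ gives $\chi_1,\chi_2\in\StateSpace(\M|K)$, a set of finite $\mathrm{d}$-diameter because $\StateSpace(\A|K)$ has finite $\Kantorovich{\Lip}$-diameter as a weak*-compact, $\Kantorovich{\Lip}$-metrizable space; hence $\mathrm{d}$ restricted to $\M^\sigma$ is an honest finite-valued metric. Any compact $K\subseteq \M^\sigma$ embeds into $\StateSpace(\M|K)$ compatibly with both topologies, so $\mathrm{d}$ metrizes the weak* topology on $K$ by the previous step, and local compactness of $(\M^\sigma,\mathrm{d})$ transfers from local compactness of $\M^\sigma$ in the weak* topology. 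I expect the McShane-type extension underlying the triangle inequality to be the principal technical obstacle, as noncommutative Lipschitz extensions are delicate and may require an argument tailored specifically to the structure of {\lcqms s}.
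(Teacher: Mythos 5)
Your triangle-inequality strategy---identifying $\mathrm{d}$ with the {\mongekant} of the restricted pair $\bigl(\M,\Lip|_{\sa{\M}\cap\dom{\Lip}}\bigr)$ via a noncommutative McShane extension---fails, for two independent reasons, and this is a genuine gap at the core of the proposal. First, the hypothesis of the theorem is a general {\lcqms}: nothing in \cite{Latremoliere12b} forces $\sa{\M}\cap\dom{\Lip}$ to be dense in $\sa{\M}$, so the restricted pair need not be a Lipschitz pair at all, and your right-hand side need not even separate states of $\M$; density of the Lipschitz elements of the topography is precisely the \emph{standardness} axiom which this survey introduces only later, from \cite{Latremoliere14b}, and explicitly presents as a strengthening of the axioms of \cite{Latremoliere12b}. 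Second, even when density holds, the identification is simply false. Take $\A = M_2(\C)$ with $\M$ the diagonal subalgebra and, writing $D = a_{11}-a_{22}$, $r = \mathrm{Re}\,a_{12}$, $s = \mathrm{Im}\,a_{12}$ for $a\in\sa{\A}$, set:
\begin{equation*}
\Lip(a) = \max\left\{ |D+4r|, |r|, |s| \right\}\text{.}
\end{equation*}
This is a lower semicontinuous Lip-norm whose unit ball modulo $\R\unit_\A$ is bounded, hence totally bounded, so $(\A,\Lip)$ is a quantum compact metric space by Theorem (\ref{Rieffel-thm}), and $(\A,\Lip,\M)$ is a {\lcqms} by \cite[Theorem 4.2]{Latremoliere12b} as quoted in this paper. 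The characters $\delta_1,\delta_2\in\M^\sigma$ admit \emph{unique} state extensions $a\mapsto a_{11}$ and $a\mapsto a_{22}$ (a positive semidefinite density matrix with a vanishing diagonal entry has vanishing off-diagonal entries), so $\mathrm{d}(\delta_1,\delta_2) = \sup\{|D| : \Lip(a)\leq 1\} = 5$, whereas your supremum computed over diagonal $a$ only (where $\Lip(a) = |D|$) equals $1$. Thus no choice of extensions can realize the restricted distance: what you flag as ``the principal technical obstacle'' is an impossibility, not a difficulty, and the triangle inequality---the one axiom that does not follow formally from the definition of $\mathrm{d}$ as an infimum over pairs of extensions---is left without proof.

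The same mislabel undermines your topology step: the fiber infimum $\mathrm{d}$ is \emph{not} ``the quotient pseudo-metric of $\Kantorovich{\Lip}$ along $r$''; quotient pseudo-metrics are defined by chaining precisely because fiber infima generically violate the triangle inequality. Indeed, in the example above, with $\omega = \frac{1}{2}(\delta_1+\delta_2)$, the optimal extensions of $\omega$ for the two distances are the vector states at $\frac{1}{\sqrt{2}}(1,-1)$ and $\frac{1}{\sqrt{2}}(1,1)$ respectively, and one computes $\mathrm{d}(\delta_1,\omega) = \mathrm{d}(\omega,\delta_2) = \frac{3}{2}$ while $\mathrm{d}(\delta_1,\delta_2)=5$: the infimum formula does not chain through mixed states of $\M$ at all. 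This shows that the metric substance of the theorem lives on $\M^\sigma$, and that any correct proof must exploit what is special about \emph{pure} states of the topography---for instance, every extension $\varphi\in\StateSpace(\A)$ of a character $\chi\in\M^\sigma$ satisfies the multiplicative-domain identity $\varphi(ma)=\chi(m)\varphi(a)$ for all $m\in\M$, $a\in\A$, a rigidity with no analogue for mixed states and no counterpart in your reduction to the abelian pair $(\M,\Lip|_\M)$. Your remaining ingredients are sound but sit downstream of the metric axioms: nonemptiness and weak*-compactness of the extension sets together with lower semicontinuity of $\Kantorovich{\Lip}$ do give non-degeneracy, and tameness of $\StateSpace(\A|K)$ with the compact-to-Hausdorff quotient argument would give the topological claim on $\StateSpace(\M|K)$ (openness of preimages of $\mathrm{d}$-balls follows because the distance to a weak*-compact fiber is weak*-continuous)---but only once $\mathrm{d}$ is known to be a metric, which your argument does not establish.
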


\bigskip

The key result in \cite{Latremoliere12b} is our characterization of {\lcqms s} in terms of the Lipschitz triple data, in the spirit of our work on the bounded-Lipschitz distance \cite{Latremoliere05b}:

\begin{theorem}[\cite{Latremoliere12b}, Theorem 3.10]\label{lcqms-char-thm}
Let $(\A,\Lip,\M)$ be a Lipschitz triple. The following assertions are equivalent:
\begin{enumerate}
\item $(\A,\Lip,\M)$ is a {\lcqms},
\item for all $s, t \in \M$ compactly supported and for all local state $\mu$ of $\A$, the set:
\begin{equation*}
\left\{ s a t : a\in\sa{\unital{\A}}, \Lip(a)\leq 1, \mu(a) = 0 \right\}
\end{equation*}
is precompact for $\|\cdot\|_\A$,
\item for all $s, t \in \M$ compactly supported and for some local state $\mu$ of $\A$, the set:
\begin{equation*}
\left\{ s a t : a\in\sa{\unital{\A}}, \Lip(a)\leq 1, \mu(a) = 0 \right\}
\end{equation*}
is precompact for $\|\cdot\|_\A$.
\end{enumerate}
\end{theorem}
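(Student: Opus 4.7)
My plan is to close the cycle of implications: the step (2) $\Rightarrow$ (3) is immediate, since local states exist by the density of $\StateSpace(\A|\M)$ in $\StateSpace(\A)$, so specializing the universal quantifier over $\mu$ to any such state gives the existential one. The substantive content is therefore in (3) $\Rightarrow$ (1) and (1) $\Rightarrow$ (2), and I will carry out both by leveraging two preliminary reductions: replacing $a\in\sa{\unital{\A}}$ by $a-\mu(a)\unit_\A$ (legitimate since $\Lip$ annihilates constants and state-differences kill constants), and observing that (1) forces regularity of $(\A,\Lip,\M)$. The latter holds because each $\StateSpace(\A|K)$ is tame by Example~\ref{local-tame-set-ex}, hence weak*-compact, hence $\Kantorovich{\Lip}$-compact under (1), hence bounded.

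For (3) $\Rightarrow$ (1), I fix a tame subset $\mathscr{K}\subseteq\StateSpace(\A)$ and a net $(\varphi_\alpha)$ in $\mathscr{K}$ converging weak* to $\varphi$, and show $\Kantorovich{\Lip}(\varphi_\alpha,\varphi)\to 0$. Given $\varepsilon>0$, tameness of $\mathscr{K}$ produces a $K\in\compacts{\M}$ such that $|\psi(a-\corner{K}{a})|<\varepsilon$ uniformly over $\psi\in\mathscr{K}$ and $a$ in the normalized Lip-unit-ball. I then pick a compactly supported $s\in\sa{\M}_+$ equal to the unit of $\unital{\A}$ on $K$, so that $\corner{K}{a}=\chi_K\,sas\,\chi_K$ in $\A^{\ast\ast}$ and hence $\psi(a)$ and $\psi(sas)$ differ by at most $2\varepsilon$ for $\psi\in\mathscr{K}$. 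Hypothesis (3) applied to the pair $(s,s)$ provides a finite $\varepsilon$-net $\{sa_js\}_{j=1}^N$ of the precompact set $\{sas:\Lip(a)\leq 1,\mu(a)=0\}$ in $\A$. For each normalized $a$ choose the closest $sa_j s$ to $sas$; combining the resulting three-term estimate with the weak*-convergence of $(\varphi_\alpha)$ on the $N$ fixed elements $sa_j s$ drives $\Kantorovich{\Lip}(\varphi_\alpha,\varphi)$ below a constant multiple of $\varepsilon$ eventually.

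For (1) $\Rightarrow$ (2), I fix compactly supported $s,t\in\M$ together with a local state $\mu$, and pick a compactly supported cutoff $\tilde{s}\in\sa{\M}_+$ equal to the unit of $\unital{\A}$ on a compact $K$ containing the supports of $s$, $t$ and of $\mu$, so that $s\tilde{s}=s$, $t\tilde{s}=t$, and $\tilde{s}$ dominates $\chi_K$. Since $sat = s(\tilde{s}a\tilde{s})t$ and left-right multiplication by $s$ and $t$ is norm continuous, it suffices to show $\{\tilde{s}a\tilde{s}:\Lip(a)\leq 1,\mu(a)=0\}$ is norm-precompact in $\A$. By the regularity derived above and Proposition~\ref{norm-bound-prop}, this set is norm-bounded. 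Next, for any state $\varphi\in\StateSpace(\A)$ with $\varphi(\tilde{s}^2)>0$, the rescaled functional $\psi_\varphi(\cdot) = \varphi(\tilde{s}\cdot\tilde{s})/\varphi(\tilde{s}^2)$ is a local state supported on $K$, hence lies in the tame set $\StateSpace(\A|K)$. On the latter, $\Kantorovich{\Lip}$ metrizes the weak* topology by (1), so the normalized $a$'s restrict to a uniformly bounded, $1$-Lipschitz family on the compact metric space $(\StateSpace(\A|K),\Kantorovich{\Lip})$. Arzel\`{a}-Ascoli then yields precompactness of $\{a\mid_{\StateSpace(\A|K)}\}$ in the uniform norm, and transferring back through the identity $\varphi(\tilde{s}a\tilde{s})=\varphi(\tilde{s}^2)\,\psi_\varphi(a)$ converts this into norm-precompactness of $\{\tilde{s}a\tilde{s}\}$ in $\A$.

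The main obstacle I anticipate is precisely this last transfer step: while on the state-space side the argument is a clean application of Arzel\`{a}-Ascoli, one must verify rigorously that an $\varepsilon$-net for $\{\psi\mapsto \psi_\varphi(a)\}$ in the uniform norm on $\StateSpace(\A|K)$ produces an $\varepsilon$-net for $\{\tilde{s}a\tilde{s}\}$ in the operator norm on $\A$. Making this precise requires computing $\|\tilde{s}(a-a')\tilde{s}\|_\A$ as a supremum over states of $\A$ and showing that this supremum is approximated by states whose rescaled versions land in $\StateSpace(\A|K)$; the degenerate case $\varphi(\tilde{s}^2)=0$, the reduction of general $s,t\in\M$ to the positive self-adjoint cutoff $\tilde{s}$ via the standard decomposition into positive/negative and real/imaginary parts, and the systematic use of $\A^{\ast\ast}$ to manipulate $\chi_K$, together constitute the technical heart of the argument.
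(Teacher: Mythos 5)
Your architecture is sound and, modulo packaging, it follows the same two ideas as the actual proof: the implication (1) $\Rightarrow$ (2) by Arzel\`{a}--Ascoli on the compact metric space $(\StateSpace(\A|K),\Kantorovich{\Lip})$, and (3) $\Rightarrow$ (1) by a finite $\varepsilon$-net and a three-term estimate; the paper organizes both halves through the topographic topology of Definition (\ref{topographic-top-def}) and Theorem (\ref{lcqms-char0-thm}), scaffolding which you have simply inlined. There is, however, one genuine gap in your (3) $\Rightarrow$ (1): you silently use a single local state $\mu$ both as the witness of tameness of $\mathscr{K}$ and as the state appearing in hypothesis (3). Both are existential quantifiers over different data --- a set is tame when the cut-off condition holds for \emph{some} local state $\mu'$, while (3) provides precompactness for \emph{some} local state $\mu_0$ --- and nothing makes $\mu'=\mu_0$. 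Your $\varepsilon$-net lives in $\{sas : \Lip(a)\leq 1, \mu_0(a)=0\}$ while your tameness estimate applies to the ball $\{a : \Lip(a)\leq 1, \mu'(a)=0\}$; translating one normalization into the other produces extra terms $\mu_0(a)s^2$ with $a$ in the $\mu'$-normalized ball, and boundedness of $a\mapsto\mu_0(a)$ there is precisely the statement $\Kantorovich{\Lip}(\mu_0,\mu')<\infty$, a regularity-type fact which is not free. Note that you derive regularity only from (1) and use it only in (1) $\Rightarrow$ (2); the implication (3) $\Rightarrow$ (1) needs its own regularity step. The repair is short: a local state $\psi$ with $\psi(\chi_K)=1$ satisfies $\psi(a)=\psi(\corner{K}{a})=\psi(\chi_K s a s \chi_K)=\psi(sas)$ whenever $s\equiv 1$ on $K$ (Cauchy--Schwarz against $\unit_\A-\chi_K$), so precompactness --- hence boundedness --- of the sets in (3) yields the bounded-corner condition of Proposition (\ref{norm-bound-prop}) and thus regularity, after which \cite[Theorem 2.35]{Latremoliere12b} lets you test tameness of $\mathscr{K}$ against the $\mu_0$ of (3), and \cite[Proposition 2.36]{Latremoliere12b} supplies the finiteness of $\Kantorovich{\Lip}$ on $\mathscr{K}$ that the word ``metrizes'' tacitly requires.

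A secondary gloss in the same implication: your claim that $\psi(a)$ and $\psi(sas)$ differ by at most $2\varepsilon$ for $\psi\in\mathscr{K}$ is not a pure tameness estimate. Tameness controls $|\psi(a-\corner{K}{a})|$, but the remaining term $|\psi(\chi_K (sas)\chi_K)-\psi(sas)|$ requires \emph{tightness} of $\mathscr{K}$ (tame implies tight, \cite[Theorem 2.30]{Latremoliere12b}) together with the estimate $|\psi((\unit_\A-\chi_K)x)|\leq \psi(\unit_\A-\chi_K)^{\frac{1}{2}}\|x\|_{\A^{\ast\ast}}$ and the uniform bound on $\|sas\|_\A$ coming from precompactness; since both tameness and tightness are limits over the directed set $\compacts{\M}$, a common $K$ can be chosen. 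Together with the weak*-closedness of $\StateSpace(\A|K)$ (needed so that the space in your (1) $\Rightarrow$ (2) argument is genuinely a \emph{compact} metric space; this follows from a portmanteau-type argument applied to the restrictions of states to $\M$) and the transfer step you already flagged as the technical heart, these points are routine, and with the regularity repair above your plan goes through.
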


When working with separable C*-algebras, we obtain a somewhat more practical version of Theorem (\ref{lcqms-char-thm}):

\begin{theorem}[\cite{Latremoliere12b}, Theorem 3.11]\label{separable-lcqms-thm}
Let $(\A,\Lip,\M)$ be a Lipschitz triple. If $\A$ is separable, then the following assertions are equivalent:
\begin{enumerate}
\item $(\A,\Lip,\M)$ is a {\lcqms},
\item there exists a strictly positive $h \in \sa{\M}$ and a local state $\mu$ of $\A$ such that the set:
\begin{equation*}
\left\{ h a h : a\in\sa{\unital{\A}}, \Lip(a)\leq 1, \mu(a) = 0 \right\}
\end{equation*}
is precompact for $\|\cdot\|_\A$,
\item there exists a strictly positive $h \in \sa{\M}$ such that, for all local states $\mu$ of $\A$, the set:
\begin{equation*}
\left\{ h a h : a\in\sa{\unital{\A}}, \Lip(a)\leq 1, \mu(a) = 0 \right\}
\end{equation*}
is precompact for $\|\cdot\|_\A$.
\end{enumerate}
\end{theorem}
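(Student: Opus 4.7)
The strategy is to reduce Theorem \ref{separable-lcqms-thm} to the general Theorem \ref{lcqms-char-thm}, exploiting separability of $\A$ to replace the universal quantifier over compactly supported $s, t \in \M$ by the existence of a single strictly positive $h \in \sa{\M}$. Separability of $\A$ passes to $\M$, so $\M^\sigma$ is second countable and $\sa{\M}$ admits strictly positive elements --- for instance, $h = \sum_{n} 2^{-n} h_n$ where $(h_n)$ is a countable family of positive, compactly supported, norm-one elements of $\M$ whose supports cover $\M^\sigma$. The implication (3) $\Rightarrow$ (2) is immediate.

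For (2) $\Rightarrow$ (1), the key Gel'fand-duality observation is that when $h \in \sa{\M}$ is strictly positive, any compactly supported $f \in \M$ factors as $f = g h$ for some $g \in \M$: the function $g := f/h$, well-defined and continuous on $\M^\sigma$ by strict positivity of $h$, has compact support contained in that of $f$ and so lies in $\M$. Given arbitrary compactly supported $s, t \in \M$, write $s = g_s h$ and $t = h g_t$; then $s a t = g_s (h a h) g_t$ for every $a$, and since left/right multiplication by fixed bounded elements preserves norm-precompactness, precompactness of $\{h a h : a \in \mathcal{B}_\mu\}$ transfers to $\{s a t : a \in \mathcal{B}_\mu\}$, where $\mathcal{B}_\mu := \{a \in \sa{\unital{\A}} : \Lip(a)\leq 1,\, \mu(a) = 0\}$. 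Theorem \ref{lcqms-char-thm} then yields (1).

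For (1) $\Rightarrow$ (3), one first observes that a \lcqms{} is necessarily regular: for each $K \in \compacts{\M}$, the tame set $\StateSpace(\A|K)$ of Example \ref{local-tame-set-ex} is weak*-precompact, hence $\Kantorovich{\Lip}$-precompact by (1), and so of finite $\Kantorovich{\Lip}$-diameter. Proposition \ref{norm-bound-prop} then supplies uniform bounds $\|\chi_K a \chi_K\|_{\A^{\ast\ast}} \leq r_K$ whenever $\Lip(a) \leq 1$. Fix any strictly positive $h \in \sa{\M}$ and any local state $\mu$, and approximate $h$ in norm by compactly supported $h_n := h\,\phi_n(h) \in \sa{\M}$, where $\phi_n$ is a continuous cutoff equal to $1$ on $[1/n,\infty)$ and vanishing on $[0,1/(2n)]$, so that $\|h - h_n\| \leq 1/n$. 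By Theorem \ref{lcqms-char-thm} applied to $s = t = h_n$, the set $\{h_n a h_n : a \in \mathcal{B}_\mu\}$ is precompact for every $n$; a diagonal extraction then produces from any sequence in $\mathcal{B}_\mu$ a subsequence $(a_{k_j})$ along which $(h_n a_{k_j} h_n)_j$ is Cauchy for every $n$.

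The main obstacle lies in upgrading this to norm-Cauchy convergence of $(h a_{k_j} h)_j$, since $\|a\|$ is not bounded on $\mathcal{B}_\mu$. The plan is to decompose
\begin{equation*}
h a h - h_n a h_n = h(1-\phi_n(h))\,a\,h + h\,\phi_n(h)\,a\,(1-\phi_n(h))\,h
\end{equation*}
and, using that $\phi_n(h)$ and the indicator $\chi_{K_n}$ of a compact $K_n \supseteq \mathrm{supp}(\phi_n(h))$ both lie in the commutative $\M^{\ast\ast}$, insert $\chi_{K_n}$ so as to replace the action of $a$ by that of its corner $\chi_{K_n} a \chi_{K_n}$ in each summand; the remaining off-corner terms then carry the factor $h(1-\phi_n(h))$ of norm at most $1/n$, while the compressed pieces are uniformly controlled by the regularity constant $r_{K_n}$. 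Making this absorption rigorous and genuinely uniform in $a \in \mathcal{B}_\mu$ is the technical heart of the argument; once obtained, $(h a_{k_j} h)_j$ is Cauchy and (3) follows.
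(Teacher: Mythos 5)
Your reductions (3) $\Rightarrow$ (2) and (2) $\Rightarrow$ (1) are sound: the Gel'fand-duality factorization $s = (s/h)h$ of a compactly supported element of $\M$ through a strictly positive $h$ is correct, and it does transfer precompactness of $\left\{hah\right\}$ to all the sets $\left\{sat\right\}$ required by Theorem (\ref{lcqms-char-thm}). The fatal problem is in (1) $\Rightarrow$ (3): you have misread the quantifier. Assertion (3) asserts the \emph{existence} of one strictly positive $h\in\sa{\M}$ that works, whereas you fix an \emph{arbitrary} strictly positive $h$ and attempt to prove precompactness of $\left\{ hah : a \in \mathcal{B}_\mu\right\}$ for it. That stronger statement is false, so the technical step you defer cannot be carried out. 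Concretely, take $\A = \M = C_0(\R)$ with $\Lip$ the Lipschitz seminorm of the usual metric --- a {\lcqms} --- let $h(x) = (1+x^2)^{-1/4}$, a strictly positive element of $\M$, let $\mu = \delta_0$ (a local state), and let $f_R(x) = \min\{|x|,R\}$, so that $f_R \in \sa{\unital{\A}}$, $\Lip(f_R)\leq 1$ and $\mu(f_R)=0$. Evaluating at $|x| = 2^m$, we get for all $m > n$:
\begin{equation*}
\left\| h\left(f_{2^m}-f_{2^n}\right)h\right\|_{C_0(\R)} \geq \frac{2^m - 2^n}{\sqrt{1+4^m}} \geq \frac{1}{2\sqrt{2}}\text{,}
\end{equation*}
so $\left\{ h f_{2^m} h : m \in \N \right\}$ is uniformly separated, hence not totally bounded. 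This also pinpoints why your absorption scheme cannot be made uniform: if $\sup_{a\in\mathcal{B}_\mu}\|hah - h_n a h_n\|_\A$ tended to $0$, total boundedness would pass from the precompact sets $\left\{h_n a h_n\right\}$ to $\left\{hah\right\}$, which the example forbids. The factor $h(1-\phi_n(h))$ of norm at most $1/n$ multiplies $ah$, whose norm grows without bound over $\mathcal{B}_\mu$; inserting $\chi_{K_n}$ does not save the estimate, because the off-corner part $(1-\chi_{K_n})ah$ is precisely where the growth lives, and the regularity bound $r_{K_n}$ controls only the corner $\chi_{K_n}a\chi_{K_n}$, with $r_{K_n}\to\infty$ possibly faster than the $1/n$ gain.

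The repair --- and this is how the actual proof of \cite[Theorem 3.11]{Latremoliere12b} proceeds, after the reduction to the topographic-topology characterization of Theorem (\ref{lcqms-char0-thm}) --- is to \emph{construct} $h$ from the regularity data rather than fix it in advance. Your preliminary observation that a {\lcqms} is regular is fine; now choose an exhausting sequence $(K_n)_{n\in\N}$ in $\compacts{\M}$, let $r_n$ be the constants of Proposition (\ref{norm-bound-prop}) for $K_n$, pick $e_n\in\sa{\M}$ compactly supported with $0\leq e_n\leq 1$ and $e_n = 1$ on $K_n$, and set for instance $h = \sum_{n\in\N} 2^{-n}(1+r_{n+1})^{-1} e_n$. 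For such an $h$ the decay of the tail of $h$ beats the growth of the corner bounds, the approximation $\sup_{a\in\mathcal{B}_\mu}\|hah - h_n a h_n\|_\A \to 0$ becomes genuinely uniform, and your diagonal extraction then closes the argument exactly as you intended. In short, the architecture of your plan fits the existential statement, but the content of the theorem is precisely that $h$ must be engineered so that $h^2$ vanishes at infinity faster than $K\mapsto r_K$ grows; for an arbitrary strictly positive $h$ the conclusion simply fails.
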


We may apply Theorem (\ref{lcqms-char-thm}) to establish several interesting examples of {\lcqms s}.

\begin{example}[\cite{Latremoliere12b}, Theorem 4.1]
If $(X,\mathrm{d})$ is a locally compact metric space and $\Lip$ is the Lipschitz seminorm associated with $\mathrm{d}$, as in Example (\ref{fundamental-LP-ex}), then:
\begin{equation*}
(C_0(X), \Lip, C_0(X))
\end{equation*}
is a {\lcqms}.
\end{example}

\begin{example}[\cite{Latremoliere12b}, Theorem 4.2]
Let $(\A,\Lip)$ be a unital Lipschitz pair. Then $\M\subseteq \A$ is a topography for $\A$ if and only if $\M$ is a unital Abelian C*-subalgebra of $\A$, and moreover the following are equivalent:
\begin{enumerate}
\item $(\A,\Lip)$ is a quantum compact metric space,
\item $(\A,\Lip,\C\unit_\A)$ is a {\lcqms},
\item $(\A,\Lip,\M)$ is a {\lcqms} for some topography $\M$ of $\A$,
\item $(\A,\Lip,\M)$ is a {\lcqms} for all topographies $\M$ of $\A$.
\end{enumerate}

Thus, all quantum compact metric spaces are indeed {\lcqms s}.
\end{example}

\begin{example}[\cite{Latremoliere12b}, Theorem 4.6]
Let $(\A,\Lip)$ be a separable Lipschitz pair. Then $(\A,\Lip)$ is a bounded quantum locally compact metric space if and only if, for some (and hence, for all) strictly positive element $h\in\sa{\A}$:
\begin{enumerate}
\item the Lipschitz triple $(\A,\Lip,C^\ast(h))$ is a {\lcqms},
\item $\diam{\StateSpace(\A)}{\Kantorovich{\Lip}} < \infty$. 
\end{enumerate}

This statement is reassuring: it states that bounded quantum locally compact metric spaces are, well, bounded quantum locally compact metric spaces --- with two distinct definitions of these words, although \cite[Theorem 4.6]{Latremoliere12b} states that these definitions agree after all.
\end{example}

\begin{example}[\cite{Latremoliere12b}, Theorem 4.9]
The Moyal plane, as discussed in Example (\ref{Moyal-LP-ex}), is a {\lcqms}.
\end{example}

We also note that in \cite[Section 4.4]{Latremoliere12b}, we give another example of {\lcqms} constructed over the algebra of compact operators, which show that the choice of topography matters when working with infinite-diameter {\lcqms s}. This contrasts with \cite[Theorem 4.6]{Latremoliere12b} where all topographies will do when working with bounded --- and in particular, compact --- {\lcqms}.

\bigskip

Our strategy to prove Theorem (\ref{lcqms-char-thm}) follows a similar path to our work in \cite{Latremoliere05b}, although the techniques are more involved. The key is to introduce a new topology on topographic quantum spaces:

\begin{definition}[\cite{Latremoliere12b}, Definition 3.5]\label{topographic-top-def}
Let $(\A,\M)$ be a topographic quantum space. The \emph{topographic topology} on $\A$ is the locally convex topology generated by the seminorms:
\begin{equation*}
n_K : a\in\sa{\A} \longmapsto \sup\left\{|\varphi(a)| : \varphi\in\StateSpace(\A|K) \right\}
\end{equation*}
for all $K \in \compacts{\M}$.
\end{definition}

This topology differs from the weakly uniform topology since it only involves seminorms associated with certain tame sets, rather than all the tight sets. However, on bounded sets, these two topologies agree:

\begin{proposition}[\cite{Latremoliere12b}, Proposition 3.8]\label{bounded-agree-prop}
Let $(\A,\M)$ be a topographic quantum space and $\B\subseteq \A$ be a bounded subset of $\A$. The weakly uniform topology and the topographic topology agree on $\B$.
\end{proposition}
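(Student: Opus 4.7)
The plan is to prove the two inclusions of topologies separately: the inclusion of the topographic topology into the weakly uniform one holds on all of $\A$, while the reverse inclusion is where boundedness is used.

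For the first inclusion, fix $K'\in\compacts{\M}$. By the example preceding this proposition, $\StateSpace(\A|K')$ is tame, hence tight by the earlier theorem relating tameness and tightness, and therefore its weak* closure $\widehat{K'} := \overline{\StateSpace(\A|K')}^{w^*}$ is weak* compact by the characterization of tight sets. Since $\widehat{K'}\in\wcs{\A}$ and the evaluation $\varphi\mapsto\varphi(a)$ is weak*-continuous for each $a\in\A$, taking the sup over $\StateSpace(\A|K')$ and over its weak* closure gives the same value, so $n_{K'}(a) = p_{\widehat{K'}}(a)$. Hence every generating seminorm of the topographic topology is a generating seminorm of the weakly uniform topology, yielding the inclusion on all of $\A$ and therefore on $\B$.

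For the reverse inclusion on $\B$, let $M := \sup\{\|b\|_\A : b\in \B\}$ and let $K\in\wcs{\A}$. Since $K$ is weak* compact it is tight, so given $\varepsilon>0$ we can pick $K'\in\compacts{\M}$ with $\sup_{\varphi\in K}\varphi(1-\chi_{K'})<\varepsilon^2$. Writing $p := \chi_{K'}$, which is a projection in $\A^{\ast\ast}$, decompose any $a\in\A$ as
\[
a = pap + (1-p)ap + a(1-p).
\]
Applying the Cauchy--Schwarz inequality $|\varphi(xy)|^2\leq \varphi(xx^\ast)\varphi(y^\ast y)$ with $x = 1-p$ (so $xx^\ast = 1-p$) and then with $y = 1-p$ yields
\[
|\varphi((1-p)ap)| \leq \sqrt{\varphi(1-p)}\sqrt{\varphi(pa^\ast a p)} \leq \varepsilon\|a\|_\A,
\]
and symmetrically $|\varphi(a(1-p))|\leq \varepsilon\|a\|_\A$. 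For the diagonal piece $\varphi(pap)$, if $\varphi(p)=0$ then Cauchy--Schwarz forces $\varphi(pap)=0$; otherwise the formula $\psi(b) := \varphi(pbp)/\varphi(p)$ defines a state on $\A$ whose canonical normal extension to $\A^{\ast\ast}$ satisfies $\psi(p)=1$, so $\psi\in\StateSpace(\A|K')$ and thus $|\varphi(pap)| = \varphi(p)|\psi(a)| \leq n_{K'}(a)$.

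Combining the three bounds gives $|\varphi(a)| \leq n_{K'}(a) + 2\varepsilon\|a\|_\A$ for every $\varphi\in K$ and every $a\in\A$, so taking the sup over $\varphi\in K$ and restricting to differences $a-b$ with $a,b\in \B$ (where $\|a-b\|_\A\leq 2M$) yields $p_K(a-b) \leq n_{K'}(a-b) + 4\varepsilon M$. Given any $\delta>0$, choosing $\varepsilon = \delta/(8M)$ shows that the topographic neighborhood $\{c\in \B : n_{K'}(c)<\delta/2\}$ is contained in $\{c\in \B : p_K(c)<\delta\}$, so the weakly uniform topology on $\B$ is coarser than the topographic one. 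The main technical obstacle is the construction of the auxiliary state $\psi$, which is what lets us compare an arbitrary weak* compact set $K$ with the more rigid structure of local state spaces $\StateSpace(\A|K')$; the boundedness hypothesis on $\B$ is essential precisely because the Cauchy--Schwarz error terms scale with $\|a\|_\A$.
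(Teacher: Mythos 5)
Your proof is correct and follows what is essentially the paper's own route: for one inclusion you observe that $\StateSpace(\A|K')$ is tight, so $n_{K'}$ coincides with the seminorm $p_{\widehat{K'}}$ attached to its weak* closure $\widehat{K'}\in\wcs{\A}$; for the other you use tightness of each weak* compact $K$, the corner decomposition $a = \chi_{K'}a\chi_{K'} + (\unit-\chi_{K'})a\chi_{K'} + a(\unit-\chi_{K'})$ in $\A^{\ast\ast}$ with Cauchy--Schwarz on the off-diagonal terms and the renormalized corner state $\psi = \varphi(\chi_{K'}\,\cdot\,\chi_{K'})/\varphi(\chi_{K'}) \in \StateSpace(\A|K')$ on the diagonal term, with boundedness of $\B$ absorbing the $\varepsilon\|a\|_\A$ errors. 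One small slip to fix: a bare topographic quantum space carries no Lip-norm, so ``tame'' is not defined in this setting and your appeal to the tame-implies-tight theorem is not literally available --- but tightness of $\StateSpace(\A|K')$ is immediate from the definition, since $\varphi(\chi_K)\geq\varphi(\chi_{K'}) = 1$ for every $K\in\compacts{\M}$ containing $K'$, so your first inclusion stands unchanged.
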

Proposition (\ref{bounded-agree-prop}) shows why, when working with the bounded-Lipschitz distance, the weakly uniform topology was used with no reference to any topography.

\bigskip

The connection between the topographic topology and the {\mongekant} is reminiscent of the relation between the weakly uniform topology and the bounded-Lipschitz metric:
\begin{theorem}[\cite{Latremoliere12b}, Theorem 3.9]\label{lcqms-char0-thm}
Let $(\A,\Lip,\M)$ be a regular Lipschitz triple. The following assertions are equivalent:
\begin{enumerate}
\item $(\A,\Lip,\M)$ is a {\lcqms},
\item for all $\mu \in \StateSpace(\A|\M)$, the set:
\begin{equation*}
\{ a\in\sa{\A} : \Lip(a)\leq 1, \mu(a) = 0 \}
\end{equation*}
is totally bounded for the topographic topology,
\item for some $\mu\in\StateSpace(\A|\M)$, the set:
\begin{equation*}
\{ a\in\sa{\A} : \Lip(a)\leq 1, \mu(a) = 0 \}
\end{equation*}
is totally bounded for the topographic topology.
\end{enumerate}
\end{theorem}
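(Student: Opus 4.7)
My plan is to dispatch the trivial implication (2)$\Rightarrow$(3) immediately, and then organize the substance of the argument around (1)$\Rightarrow$(2), which I would prove by an Arzel\`a-Ascoli argument on local state spaces, and (3)$\Rightarrow$(1), which I would prove by combining tameness with a Urysohn-type approximation of the projection $\chi_K$ inside the topography $\M$. Throughout, the regularity hypothesis will be used both to secure uniform bounds needed for Arzel\`a-Ascoli and to ensure that tame sets have finite $\Kantorovich{\Lip}$-diameter.

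For (1)$\Rightarrow$(2), I would fix $\mu\in\StateSpace(\A|\M)$ and $K\in\compacts{\M}$, and set $B=\{a\in\sa{\A}:\Lip(a)\le1,\mu(a)=0\}$. By Example~\ref{local-tame-set-ex}, $\StateSpace(\A|K)$ is tame, so (1) forces $\Kantorovich{\Lip}$ to induce the weak* topology on it; since $\StateSpace(\A|K)$ is weak*-compact, it is $\Kantorovich{\Lip}$-compact. Enlarging $K$ to some $K'\in\compacts{\M}$ with $\mu\in\StateSpace(\A|K')$, regularity furnishes $d := \diam{\StateSpace(\A|K')}{\Kantorovich{\Lip}}<\infty$. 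For each $a\in B$, the evaluation $\hat a(\varphi)=\varphi(a)$ defines a 1-Lipschitz function on $(\StateSpace(\A|K),\Kantorovich{\Lip})$ satisfying $|\hat a(\varphi)|=|\varphi(a)-\mu(a)|\le d$, so $\{\hat a:a\in B\}$ is uniformly bounded and equicontinuous on a compact metric space. Arzel\`a-Ascoli yields precompactness in sup norm, which on $B$ coincides with $n_K$. Ranging over $K\in\compacts{\M}$ gives total boundedness for the topographic topology.

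For (3)$\Rightarrow$(1), since $\Kantorovich{\Lip}$-convergence always entails weak*-convergence on any tame set (by density of $\dom{\Lip}$ and the finite diameter coming from regularity), it suffices to establish the converse on a tame $\mathscr{S}$. Fix $\varphi\in\mathscr{S}$ and $\varepsilon>0$. Tameness provides $K\in\compacts{\M}$ with $|\eta(a-\corner{K}{a})|<\varepsilon$ for every $\eta\in\mathscr{S}$ and $a\in B$, while (3) supplies a finite $\varepsilon$-net $a_1,\dots,a_n\in B$ for $n_K$. By Urysohn in the spectrum $\M^\sigma$, I would select $h\in\sa{\M}$ with $0\le h\le 1$, $\chi_K h=\chi_K$, and $h$ compactly supported, so that each $ha_ih\in\sa{\A}$. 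The key inequality $|\eta(\corner{K}{b})|\le n_K(b)$, valid for every $\eta\in\StateSpace(\A)$ and $b\in\sa{\A}$ (obtained by normalizing $b\mapsto\eta(\chi_K b\chi_K)/\eta(\chi_K)$ into a state in $\StateSpace(\A|K)$ when $\eta(\chi_K)>0$, and by Cauchy-Schwarz when $\eta(\chi_K)=0$), combined with the identity $\corner{K}{ha_ih}=\corner{K}{a_i}$ forced by $\chi_K h=\chi_K$, then yields, for any $\psi\in\mathscr{S}$, $a\in B$, and $i$ chosen with $n_K(a-a_i)<\varepsilon$, an estimate of the form
\begin{equation*}
|\psi(a)-\varphi(a)| \le 4\varepsilon + |\psi(ha_ih)-\varphi(ha_ih)| + R(\psi,\varphi),
\end{equation*}
where $R$ collects the error in replacing $\corner{K}{a_i}$ by $ha_ih$ and is controlled uniformly in $\psi\in\mathscr{S}$ by reapplying tameness on a larger compact $K'\supseteq\mathrm{supp}\,h$ together with the tightness consequence of tameness bounding $\psi(1-h)$. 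Because $ha_ih\in\A$ and only finitely many indices $i$ intervene, the set of $\psi\in\mathscr{S}$ with $|\psi(ha_ih)-\varphi(ha_ih)|<\varepsilon$ for every $i$ is a weak* neighborhood of $\varphi$ on which $\Kantorovich{\Lip}(\psi,\varphi)=O(\varepsilon)$.

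The main obstacle, concentrated in (3)$\Rightarrow$(1), is that the cutoff projection $\chi_K$ lives in $\A^{\ast\ast}$ and not in $\A$, so weak* convergence on $\A$ cannot be applied directly to $\corner{K}{a_i}$. Bridging this gap requires the Urysohn bump $h\in\sa{\M}\subset\A$, and the delicate point is to control, uniformly over $\mathscr{S}$, the error between $\corner{K}{a_i}$ and $ha_ih$; this forces a second invocation of tameness, together with the tightness it implies, and must be arranged by taking $K$ large enough from the outset so that there is ``room'' inside $\compacts{\M}$ for the Urysohn construction. A subsidiary subtlety in (1)$\Rightarrow$(2) is verifying uniform boundedness of $\{\hat a:a\in B\}$, which depends on Proposition~\ref{norm-bound-prop} and on choosing $K'\supseteq K$ to contain the support of $\mu$.
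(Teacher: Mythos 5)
Your reduction (2)$\Rightarrow$(3), your Arzel\`a-Ascoli argument for (1)$\Rightarrow$(2) (compactness of $(\StateSpace(\A|K),\Kantorovich{\Lip})$ from tameness of $\StateSpace(\A|K)$ plus assertion (1), the uniform bound from regularity applied to $K'=K\cup K_\mu$, and the identification of the sup norm on $\StateSpace(\A|K)$ with $n_K$, using that the family $(n_K)_{K\in\compacts{\M}}$ is directed) are all correct, and so is your key inequality $|\eta(\corner{K}{b})|\leq n_K(b)$ --- modulo the small verification that the normalized corner functional $b\mapsto \eta(\corner{K}{b})/\eta(\chi_K)$, restricted to $\A$, really lies in $\StateSpace(\A|K)$, which holds because $\chi_K$ is a \emph{closed} projection. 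The gap is in (3)$\Rightarrow$(1), at exactly the point you flag as delicate: the error term $R(\psi,\varphi)$ cannot be controlled by the tools you name. The discrepancy you must bound is $|\psi(h a_i h - \corner{K}{h a_i h})|$, and by Cauchy-Schwarz it is of order $\psi(1-\chi_K)^{1/2}\,\|h a_i h\|_\A$: it lives at the compact $K$ that was \emph{already fixed} by tameness and by the choice of the $n_K$-net $a_1,\ldots,a_n$. Tightness only makes $\psi(1-\chi_{K'})$ small for compacts $K'$ chosen \emph{afterwards and larger}, and bounding $\psi(1-h)$ is beside the point, since the discrepancy between $ha_ih$ and its $K$-corner sits on the annulus between $K$ and $\mathrm{supp}\,h$, whose $\psi$-mass is of order $1-\psi(\chi_K)$, not $\psi(1-h)$. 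If you instead try to reorder the quantifiers --- take the region $\{h=1\}$ to be a huge compact obtained from tightness first --- then $\mathrm{supp}\,h$ grows too, and the only available norm bound, $\|hah\|_\A\leq r_{K'}$ with $K'\supseteq\mathrm{supp}\,h$ from Proposition (\ref{norm-bound-prop}) (recall that $B$ itself is \emph{not} norm bounded in the locally compact setting), grows without control with $K'$; the product of the tightness tail and $r_{K'}$ cannot be forced to zero. No order of choices closes this loop, so the claimed weak* neighborhood on which $\Kantorovich{\Lip}(\psi,\varphi)=O(\varepsilon)$ is not established.

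The fix is to avoid pushing $\corner{K}{a_i}$ into $\A$ altogether: weak* continuity is never actually needed against these corners. Your decomposition $|(\psi-\varphi)(a)|\leq 4\varepsilon + |(\psi-\varphi)(\corner{K}{a_i})|$ already proves that any tame set $\mathscr{S}$ is \emph{totally bounded} for $\Kantorovich{\Lip}$: partition $\mathscr{S}$ into finitely many classes according to the values $(\psi(\corner{K}{a_i}))_{i\leq n}$ up to $\varepsilon$ (these values are bounded by $n_K(a_i)$, and no continuity is required), and two states in the same class are within $5\varepsilon$ of each other. Then argue by contradiction: if $(\psi_\lambda)$ is a net in $\mathscr{S}$ weak*-converging to $\varphi\in\mathscr{S}$ with $\Kantorovich{\Lip}(\psi_\lambda,\varphi)\geq\delta$, total boundedness (of finite diameter, by regularity) yields a $\Kantorovich{\Lip}$-Cauchy subnet via the completion, and a $\Kantorovich{\Lip}$-Cauchy net which weak*-converges to $\varphi$ must converge to $\varphi$ in $\Kantorovich{\Lip}$, because $|(\psi_\lambda-\varphi)(a)| = \lim_{\lambda'}|(\psi_\lambda-\psi_{\lambda'})(a)|$ uniformly over $\Lip(a)\leq 1$ --- here only evaluations against elements of $\sa{\A}$ occur. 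This retains your two genuine ingredients (the corner inequality and the tameness decomposition) while eliminating the Urysohn bridging. Note also that the present survey does not reprint the argument --- it cites \cite{Latremoliere12b} --- but your (1)$\Rightarrow$(2) is in the spirit the survey indicates, namely the Arzel\`a-Ascoli strategy carried over from the bounded-Lipschitz setting of \cite{Latremoliere05b}.
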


Now, additional effort can be applied to make Theorem (\ref{lcqms-char0-thm}) more amenable to applications, by stating conditions in terms of the basic ingredients of a Lipschitz triple, we proved Theorems (\ref{lcqms-char-thm}) and (\ref{separable-lcqms-thm}).

\bigskip

We now have defined notions of {\lcqms s}, and we wish to define an analogue of the Gromov-Hausdorff distance on them. We will focus on our own construction of such an analogue in the rest of this document, starting with the compact framework.

\section{The Gromov-Hausdorff Propinquity}

As an informal motivation for our work and the introduction of the quantum dual Gromov-Hausdorff propinquity, we begin this section with the problem which stimulated much of our research. For all $n\in\N$, let us be given a complex numbers $\rho_n$ such that $\rho_n^n = 1$, and let us define the two $n\times n$ unitary matrices:
\begin{equation*}
U_n = \begin{pmatrix}
0 & 0 & \hdots &          0 &1 \\
1 & 0      & \hdots &  0 & 0 \\
0 & 1 & \ddots &  \vdots & \vdots\\
\vdots & \ddots & \ddots  & 0 & 0 \\
0  & \hdots       & \hdots       &1& 0\\ 
\end{pmatrix}
\text{ and }
V_n = \begin{pmatrix}
1 &      & & & &\\
  & \rho_n & & & &\\
  & & \rho_n^2 & & &\\
  & & & \ddots & & \\
  & & & & & \rho_n^{n-1}
\end{pmatrix}\text{.}
\end{equation*}
By construction, $U_nV_n = \rho_n V_n U_n$. Such pairs of matrices appear in the literature in mathematical physics as well as quantum information theory, among others. The C*-algebras $C^\ast(U_n,V_n)$ are sometimes called fuzzy tori. Often, a desirable outcome of some computations carried out over fuzzy tori is that one can obtain interesting results when $n$ goes to infinity under the condition that the sequence $(\rho_n)_{n\in\N}$ converges --- examples of such situations are found in the mathematical physics literature, for instance \cite{Connes97,Seiberg99,tHooft02,Zumino98}, to cite but a few. 

Informally, one would expect that the limit of the fuzzy tori $C^\ast(U_n,V_n)$ would be the universal C*-algebra  $C^\ast(U,V)$ generated by two unitaries $U$ and $V$ subject to the relation $UV = \rho VU$ where $\rho = \lim_{n\rightarrow\infty} \rho_n$, i.e. a quantum torus. Yet, as quantum tori are not AF --- for instance, their $K_1$ groups are nontrivial --- making sense of such a limiting process is challenging. 

Rieffel proposed \cite{Rieffel00} to start investigating such problems by finding a noncommutative analogue of the Gromov-Hausdorff distance, based upon the metric geometry of the state space provided by the structure of quantum metric spaces described in the previous section.  Rieffel's quantum Gromov-Hausdorff distance \cite{Rieffel00} provides a first framework in which such a limit can be justified, and we proved that indeed, fuzzy tori converge to the quantum tori in our first paper \cite{Latremoliere05}. However, Rieffel's distance may be null between *-isomorphic C*-algebras: in other words, it does not capture the C*-algebraic structure fully. 

This relative lack of connection between the C*-algebraic structure and the first noncommutative analogue of the Gromov-Hausdorff distance sparked quite a lot of research, in an effort to obtain at least the desired coincidence property that distance zero implies *-isomorphism. Many papers were written using a first approach to this problem \cite{kerr02,li03,li05,kerr09}: encapsulate additional C*-algebraic information directly in the construction of a quantum version of the Gromov-Hausdorff distance. In other words, the quantum metric structure and the quantum topology are not connected; instead the Gromov-Hausdorff analogue tries to include a measure on how both are close.

However, recent research in noncommutative metric geometry has made apparent that the natural connection between the quantum metric structure, provided by Lip-norms, and the quantum topological structure, provided by the C*-algebras, is the Leibniz property (in some form), and that this connection is a key tool if one wishes to explore how C*-algebraic structures behave with respect to convergence --- an important example of such a research project is Rieffel's work on convergence of modules \cite{Rieffel08, Rieffel09, Rieffel10c, Rieffel11}. Yet, as seen in \cite{kerr02, Rieffel10c}, the construction of a noncommutative Gromov-Hausdorff distance within the realm of {\Lqcms} proved challenging --- for instance, the proximity of Rieffel \cite{Rieffel10c} is not known to satisfy the triangle inequality. It is largely the triangle inequality property of a prospective noncommutative Gromov-Hausdorff distance within the category of {\Lqcms s} which raises difficulties.

We proposed in \cite{Latremoliere13,Latremoliere13b, Latremoliere13c, Latremoliere14, Latremoliere14b, Latremoliere15}  that a second route to create a noncommutative analogue of the Gromov-Hausdorff distance adapted to C*-algebras is to embrace the Leibniz property. We call our new metric the dual Gromov-Hausdorff propinquity. It has the desired coincidence property --- *-isomorphism is necessary for null distance --- and provides a framework where all objects under considerations are {\Lqcms s}, or more generally {\Qqcms{F}s} for some a priori choice of a permissible function $F$, i.e. a form of the Leibniz identity. Moreover, the dual Gromov-Hausdorff propinquity gives the same topology as the Gromov-Hausdorff distance when restricted to the classical picture, and is a complete metric. Thus, our effort answered the challenge of constructing such a metric, and addresses the coincidence property by tying together the quantum metric structure and quantum topology.

In fact, our construction may be applied to various subcategories of {\gQqcms s}, allowing one to choose which properties of quantum metric spaces one may need. We refer to the metrics thus obtained as specialization of the dual Gromov-Hausdorff propinquity. A particularly relevant such specialization is the quantum propinquity, which we introduced in \cite{Latremoliere13}. This metric dominates the more general dual Gromov-Hausdorff propinquity, and provides a tool to establish examples of convergence, such as the convergence of fuzzy tori to the quantum tori discussed in this introduction \cite{Latremoliere13b}. Notably, this form of the dual Gromov-Hausdorff propinquity plays a role in Rieffel's research on module convergence \cite{Rieffel15}, where, paired with ideas from Wu \cite{Wu05, Wu06a, Wu06b} on Lip-norms for operator spaces, it allows to study convergence of matrix algebras over {\Lqcms s}.

We begin our chapter with a brief overview of the Gromov-Hausdorff distance, and then proceed to describe our new family of metrics. We then introduce a special form which plays an important role in current research, and was our original construction. We then discuss the convergence of fuzzy tori to the quantum tori, and then discuss some notions of perturbations of the metric on {\Lqcms s}. 

We note that our section will focus on the dual Gromov-Hausdorff propinquity, and we refer readers to the above mentioned references for an exposition on earlier proposals for noncommutative analogues of the Gromov-Hausdorff distance. In addition to the original version in \cite{Rieffel00}, we also refer to the survey \cite{Rieffel08}.

\subsection{The Gromov-Hausdorff distance}

The Gromov-Hausdorff distance is a metric between arbitrary compact metric spaces introduced in \cite{Gromov81} by Gromov in his study of the problem of relating growth of groups to some of their structure. More specifically, Gromov proved that Cayley graphs of groups with polynomial growths converge, in a proper sense, to certain manifolds, and was able to infer from this convergence that such groups are virtually nilpotent (i.e. contain a nilpotent subgroup of finite index).

The original Gromov-Hausdorff distance \cite{Gromov81} was introduced in the context of locally compact metric spaces. We will discuss a noncommutative analogue of Gromov's construction for quantum proper metric spaces in a latter chapter. In this chapter, we shall focus on the restriction of the Gromov-Hausdorff distance to the class of compact metric spaces. Interestingly, for compact metric spaces, this metric was already presented by Edwards \cite{Edwards75}, motivated by Wheeler's superspace approach to quantum gravitation \cite{Wheeler68}.

We begin our summary with the first notion of a metric on compact subsets of a metric space, due to F. Hausdorff \cite{Hausdorff}:

\begin{definition}[p. 293, \cite{Hausdorff}]
Let $(X,\mathrm{d})$ be a metric space and let $\mathscr{K}(X,\mathrm{d})$ be the set of all nonempty compact subsets of $(X,\mathrm{d})$. For any two $A, B \in \mathscr{K}(X,\mathrm{d})$, we set:
\begin{equation*}
\Haus{\mathrm{d}}(A,B) = \max\left\{ \sup_{x\in A} \mathrm{d}(x,B), \sup_{x\in B} \mathrm{d}(x,A) \right\}
\end{equation*}
where:
\begin{equation*}
d(x,C) = \inf\{ \mathrm{d}(x,y) : y \in C \}
\end{equation*}
for all $x\in X$ and $\emptyset \not= C \subseteq X$. 
\end{definition}

\begin{theorem}[\cite{Hausdorff}]
Let $(X,\mathrm{d})$ be a metric space. Then $\Haus{\mathrm{d}}$ is a metric on the set of all nonempty compact subsets $\mathscr{K}(X,\mathrm{d})$ of $(X,\mathrm{d})$. Moreover, if $(X,\mathrm{d})$ is complete, then so is $(\mathscr{K}(X),\Haus{\mathrm{d}})$ and if $(X,\mathrm{d})$ is compact, then so is $(\mathscr{K}(X),\Haus{\mathrm{d}})$.
\end{theorem}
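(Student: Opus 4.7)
The plan is to verify the metric axioms first, then establish completeness by building a candidate limit from any Cauchy sequence, and finally deduce compactness from a finite cover argument combined with the completeness already obtained.

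For the metric axioms, non-negativity and symmetry are immediate from the definition. The separation property uses compactness essentially: if $\Haus{\mathrm{d}}(A,B) = 0$, then $\sup_{x \in A} \mathrm{d}(x,B) = 0$ forces every $x \in A$ to lie in $\overline{B}$, and since $B$ is compact hence closed, $A \subseteq B$; reversing the roles gives $A = B$. For the triangle inequality, the pointwise estimate $\mathrm{d}(x,C) \leq \mathrm{d}(x,y) + \mathrm{d}(y,C)$, valid for any $y \in B$, minimized over $y$ and then supremized over $x \in A$, yields $\sup_{x \in A} \mathrm{d}(x,C) \leq \Haus{\mathrm{d}}(A,B) + \Haus{\mathrm{d}}(B,C)$; the symmetric bound is identical, and taking the maximum concludes.

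For completeness, starting from a Cauchy sequence $(A_n)_{n\in\N}$ in $(\mathscr{K}(X,\mathrm{d}),\Haus{\mathrm{d}})$, I would extract a subsequence $(A_{n_k})_{k\in\N}$ with $\Haus{\mathrm{d}}(A_{n_k}, A_{n_{k+1}}) < 2^{-k}$, and define $A$ to be the set of all limits in $X$ of convergent sequences $(x_k)_{k\in\N}$ with $x_k \in A_{n_k}$. The core technical lemma is that every point $y \in A_{n_j}$ admits such an approximating sequence: by the Hausdorff-distance bound one finds $x_{j+1} \in A_{n_{j+1}}$ with $\mathrm{d}(y, x_{j+1}) < 2^{-j}$, then inductively $x_{k+1} \in A_{n_{k+1}}$ with $\mathrm{d}(x_k, x_{k+1}) < 2^{-k}$, producing a Cauchy sequence in $(X,\mathrm{d})$ which converges by completeness of $X$. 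This simultaneously shows that $A$ is nonempty and that $\sup_{y \in A_{n_j}} \mathrm{d}(y, A) \leq 2^{-j+1}$. Conversely, any $x \in A$ is within $2^{-j+1}$ of $A_{n_j}$ by construction, so $\Haus{\mathrm{d}}(A_{n_j}, A) \to 0$; combined with the Cauchy property, $(A_n)$ itself converges to $A$. Closedness of $A$ follows from its alternate description $A = \bigcap_N \overline{\bigcup_{k \geq N} A_{n_k}}$, and total boundedness of $A$ follows by comparing with the totally bounded set $A_{n_j}$ up to a small Hausdorff error; hence $A$ is compact and lies in $\mathscr{K}(X,\mathrm{d})$.

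For the compactness statement, when $(X,\mathrm{d})$ is compact it is a fortiori complete, so the previous paragraph delivers completeness of $(\mathscr{K}(X,\mathrm{d}),\Haus{\mathrm{d}})$ for free. It remains to prove total boundedness. Given $\varepsilon > 0$, pick a finite $\varepsilon$-net $F \subseteq X$; then the finite collection of nonempty subsets of $F$ serves as an $\varepsilon$-net in $(\mathscr{K}(X,\mathrm{d}),\Haus{\mathrm{d}})$: for any $K \in \mathscr{K}(X,\mathrm{d})$, the set $S_K = \{x \in F : \mathrm{d}(x,K) < \varepsilon\}$ is nonempty (each $x\in K$ lies within $\varepsilon$ of some element of $F$) and satisfies $\Haus{\mathrm{d}}(K, S_K) \leq \varepsilon$ by a direct check of the two sup's. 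The main technical obstacle is the completeness step, specifically pinning down the correct candidate limit and verifying in one stroke that it is nonempty, compact, and close to every sufficiently late $A_n$; the metric axioms and the finite-net argument for compactness are comparatively routine.
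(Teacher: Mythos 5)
Your proof is correct and is the standard one: the separation axiom via closedness of compact sets, the triangle inequality from the pointwise estimate $\mathrm{d}(x,C)\leq\mathrm{d}(x,y)+\mathrm{d}(y,C)$, the candidate limit $A=\bigcap_{N}\overline{\bigcup_{k\geq N}A_{n_k}}$ together with the $2^{-k}$-chain lemma for completeness, and finite subsets of a finite $\varepsilon$-net for total boundedness of the hyperspace. The paper states this classical theorem without proof, simply citing \cite{Hausdorff}, so there is no internal argument to compare against; your route is the textbook treatment (see e.g. \cite{burago01}), and all the steps you flag as routine indeed go through as you describe.
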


Gromov proposes an intrinsic form of the Hausdorff distance, defined between arbitrary compact metric spaces. By intrinsic, we mean that Gromov's distance does not depend on a particular ambient space in which the two metric spaces live.

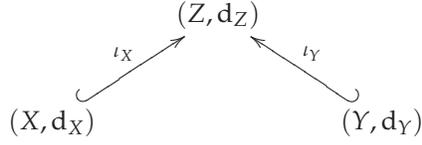
\begin{figure}[h]
\begin{equation*}
\xymatrix{
 & (Z,\mathrm{d}_Z)  & \\
(X,\mathrm{d}_X) \ar@{^{(}->}[ur]^{\iota_X} & & (Y,\mathrm{d}_Y)  \ar@{_{(}->}[ul]_{\iota_Y}
}
\end{equation*}
\caption{Gromov-Hausdorff Isometric Embeddings}\label{GH-fig}
\end{figure}

\begin{definition}[\cite{Edwards75, Gromov81}]\label{GH-def}
Let $(X,\mathrm{d}_X)$ and $(Y,\mathrm{d}_Y)$ be two compact metric spaces. We define:
\begin{multline*}
\GH((X,\mathrm{d}_X),(Y,\mathrm{d}_Y)) = \\
\inf\left\{ \Haus{\mathrm{d_Z}}(\iota_X(X),\iota_Y(Y)) : \exists (Z,\mathrm{d}_Z) \quad \exists \iota_X : X\hookrightarrow Z, \iota_Y:Y\hookrightarrow Z\quad \right. \\ \left. \text{$\iota_X$, $\iota_Y$ are isometries into the compact metric space $(Z,\mathrm{d}_Z)$}  \right\} \text{.}
\end{multline*}
\end{definition}

Thus, the Gromov-Hausdorff distance is the infimum of the Hausdorff distance between two given compact metric spaces for all possible ambient compact metric spaces, as in Figure (\ref{GH-fig}). Gromov established:

\begin{theorem}[\cite{Gromov81}]
The Gromov-Hausdorff distance $\GH$ is well-defined, and satisfies, for any compact metric spaces $(X,\mathrm{d}_X)$, $(Y,\mathrm{d}_Y)$ and $(Z,\mathrm{d}_Z)$:
\begin{enumerate}
\item $\GH((X,\mathrm{d}_X),(Y,\mathrm{d}_Y)) \leq \diam{X}{\mathrm{d}_X} + \diam{Y}{\mathrm{d}_Y}$,
\item $\GH((X,\mathrm{d}_X), (Y,\mathrm{d}_Y)) = 0$ if and only if there exists an isometry from $(X,\mathrm{d}_X)$ onto $(Y,\mathrm{d}_Y)$,
\item $\GH((X,\mathrm{d}_X), (Z,\mathrm{d}_Z)) \leq \GH((X,\mathrm{d}_X), (Y,\mathrm{d}_Y)) + \GH((Y,\mathrm{d}_Y), (Z,\mathrm{d}_Z))$,
\item $\GH((X,\mathrm{d}_X), (Y,\mathrm{d}_Y)) = \GH((Y,\mathrm{d}_Y), (X,\mathrm{d}_X))$,
\item $\GH$ is complete.
\end{enumerate}
\end{theorem}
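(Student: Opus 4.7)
The plan is to treat the five conclusions in separate stages, most of which reduce to classical Hausdorff metric arguments once a suitable ambient compact metric space is produced. First, for well-definedness and assertion (1), I would check that the class of admissible isometric embeddings in Definition \ref{GH-def} is nonempty for any two compact $(X,\mathrm{d}_X)$, $(Y,\mathrm{d}_Y)$. Concretely, on the disjoint union $Z = X \sqcup Y$ I would set $\mathrm{d}_Z$ to restrict to $\mathrm{d}_X$ on $X$, to $\mathrm{d}_Y$ on $Y$, and to $\mathrm{d}_Z(x,y) = r$ for $x\in X$, $y\in Y$, with $r \geq \tfrac{1}{2}\max\{\diam{X}{\mathrm{d}_X},\diam{Y}{\mathrm{d}_Y}\}$. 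A direct check of the triangle inequality shows $\mathrm{d}_Z$ is a metric making the canonical inclusions isometric, and the Hausdorff distance between $X$ and $Y$ inside $Z$ is $r$. Taking $r = \diam{X}{\mathrm{d}_X} + \diam{Y}{\mathrm{d}_Y}$ (say) then yields (1). Symmetry (4) is immediate from Definition \ref{GH-def}.

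For the triangle inequality (3), given $\varepsilon>0$, I would pick isometric embeddings $X,Y \hookrightarrow (U,\mathrm{d}_U)$ with Hausdorff distance within $\varepsilon$ of $\GH(X,Y)$, and $Y,Z \hookrightarrow (V,\mathrm{d}_V)$ with Hausdorff distance within $\varepsilon$ of $\GH(Y,Z)$. I would then amalgamate $U$ and $V$ along the two copies of $Y$, equipping $U\sqcup_Y V$ with the distance
\[
\mathrm{d}(u,v) = \inf_{y\in Y}\left(\mathrm{d}_U(u,\iota^U_Y(y)) + \mathrm{d}_V(\iota^V_Y(y),v)\right)
\]
for $u\in U$, $v\in V$, extending by $\mathrm{d}_U$, $\mathrm{d}_V$ within each factor. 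After verifying this is a pseudo-metric, quotienting by the zero-distance equivalence, and observing the quotient is still a compact metric space containing isometric copies of $X$ and $Z$, the triangle inequality for $\Haus{\mathrm{d}}$ gives the desired estimate, and letting $\varepsilon \to 0$ concludes.

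The coincidence property (2) is the first substantive step. The ``if'' direction is immediate: an isometry $\varphi : X \to Y$ onto $Y$ lets us use $(Y,\mathrm{d}_Y)$ itself as the ambient space, with Hausdorff distance $0$. Conversely, if $\GH(X,Y)=0$, for each $n \in \N$ I would select embeddings $\iota^n_X : X \hookrightarrow Z_n$, $\iota^n_Y : Y \hookrightarrow Z_n$ with $\Haus{\mathrm{d}_{Z_n}}(\iota^n_X(X),\iota^n_Y(Y))<1/n$, and choose $f_n : X \to Y$, $g_n : Y \to X$ such that $\mathrm{d}_{Z_n}(\iota^n_X(x),\iota^n_Y(f_n(x)))<1/n$ and symmetrically for $g_n$. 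The triangle inequality in $Z_n$ forces $|\mathrm{d}_Y(f_n(x),f_n(x'))-\mathrm{d}_X(x,x')|<2/n$, so the $f_n$ are $(2/n)$-approximate isometries. Using a countable dense subset of $X$ and a diagonal extraction enabled by the compactness of $Y$, I would produce a pointwise limit $f : X \to Y$, necessarily a distance-preserving map, and analogously extract $g : Y \to X$. The relations $\mathrm{d}_{Z_n}(\iota^n_X(g_n\circ f_n(x)),\iota^n_X(x)) < 2/n$ then force $g\circ f = \mathrm{id}_X$ in the limit, and similarly $f\circ g = \mathrm{id}_Y$, so $f$ is the required onto isometry.

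Completeness (5) is the main obstacle. My plan is to take a Cauchy sequence $(X_n,\mathrm{d}_n)_{n\in\N}$ and extract a subsequence (still called $X_n$) with $\GH(X_n,X_{n+1}) < 2^{-n}$, which suffices once (1)-(4) are known. Iterating the amalgamation construction of step (3), I would embed each consecutive pair $X_n, X_{n+1}$ isometrically into a compact $Z_n$ realising the bound $2^{-n}$, then glue the $Z_n$ sequentially along the common $X_{n+1}$ to form a single pseudo-metric space $Z_\infty$ containing isometric copies of all $X_n$. The nontrivial technical point is to verify that the completion of $Z_\infty$ is totally bounded: this follows because at each stage $n$, any point of $X_k$ for $k\geq n$ lies within $2^{-n+1}$ of $X_n$, and each $X_n$ is compact, yielding uniform total boundedness. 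Then $Z := \overline{Z_\infty}$ is compact, the sequence of images $(X_n)_{n\in\N}$ is Cauchy in $(\compacts{Z},\Haus{\mathrm{d}_Z})$, which is itself complete, so it converges to some $X_\infty \in \compacts{Z}$, and $\GH(X_n,X_\infty) \leq \Haus{\mathrm{d}_Z}(X_n, X_\infty) \to 0$ gives convergence in $\GH$.
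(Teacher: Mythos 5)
Your proposal is correct in outline and is the standard classical argument (essentially the one found in Burago--Burago--Ivanov); note, however, that the paper does not actually prove this theorem --- it is quoted from Gromov's work, and the only justification the survey supplies is the well-definedness remark immediately following the statement, which uses exactly your first step: the set over which the infimum is taken is nonempty because one can metrize the disjoint union $X\coprod Y$, and it is bounded below by zero. So there is no in-paper proof of items (2)--(5) to compare against; your amalgamation constructions for the triangle inequality and completeness, and your approximate-isometry extraction for the coincidence property, are the standard routes and are sound.

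One point in your completeness argument needs tightening. You glue the full ambient spaces $Z_n$ along the copies of $X_{n+1}$, but your total-boundedness verification only controls points lying in some $X_k$: a point of $Z_n$ far from both $X_n$ and $X_{n+1}$ is invisible to the estimate ``any point of $X_k$, $k\geq n$, lies within $2^{-n+1}$ of $X_n$,'' and since the $Z_n$ are arbitrary compact spaces realizing the bound $2^{-n}$, the glued space as you define it need not be totally bounded. The fix is immediate: replace each $Z_n$ by the subspace $\iota_{X_n}(X_n)\cup\iota_{X_{n+1}}(X_{n+1})$ with the induced metric --- the relevant Hausdorff distance is unchanged --- so that $Z_\infty$ is precisely the union of the $X_n$, on which your estimate does yield total boundedness; completeness of the Hausdorff metric on the closed subsets of the compact completion then concludes as you describe. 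Two trivial remarks: in (1), when both spaces are singletons your choice $r = \diam{X}{\mathrm{d}_X} + \diam{Y}{\mathrm{d}_Y} = 0$ is not admissible as a cross-distance, but then $\GH = 0$ and the inequality holds vacuously; and in (2), the diagonal extraction should be arranged simultaneously for $(f_n)$ and $(g_n)$ on dense subsets of $X$ and $Y$, after which the uniform $(2/n)$-distortion supplies the equicontinuity needed to pass $g_n(f_n(x))\to g(f(x))$, exactly as you indicate.
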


We note that, while there is no set of all possible metric spaces containing isometric copies of two given compact metric spaces within ZF, the definition of the Gromov-Hausdorff distance does not raise any difficulty within this same axiomatic: indeed, for any set $E$, and any predicate $P$, the axiom of selection in ZF implies that $\{ x\in E : P(x) \}$ is a set. Note that consequently, $\GH$ is indeed the infimum of a set of real numbers, satisfying some predicate. Moreover, this set of real numbers is not empty --- one may construct an easy metric with the desired properties on the disjoint union $X\coprod Y$ --- and is bounded below by $0$. Thus, $\GH$ is certainly well-defined.

$\GH$ is a complete distance on the class of all compact metric spaces, up to isometry.  Note that since every compact metric is separable, one may in fact consider the Gromov-Hausdorff distance as a metric over the set consisting of all possible metrics over $\N$ with compact completion. We will however not need this description.

There is, moreover, a very natural compactness criterion for classes of compact metric spaces for the Gromov-Hausdorff distance \cite{Gromov81, Gromov}. This result was key to the original application of this metric in group theory. We will discuss this theorem when presenting its analogue for the dual Gromov-Hausdorff propinquity.

We refer to Gromov's book \cite{Gromov} and Burago and al \cite{burago01} for an exposition of properties and applications of the Gromov-Hausdorff distance in geometry. Our purpose is to generalize this metric to the realm of {\gQqcms s}.

\subsection{The Dual Gromov-Hausdorff Propinquity}

\subsubsection{Tunnels}

The dual of Figure (\ref{GH-fig}), thanks to our discussion around Definition (\ref{isometry-def}), is given naturally by Figure (\ref{tunnel-fig}), and the following definition:

\begin{definition}[\cite{Latremoliere13c}, Definition 3.1, \cite{Latremoliere15}, Definition 2.15]\label{tunnel-def}
Let $F$ be a permissible function, and let $(\A,\Lip_\A)$ and $(\B,\Lip_\B)$ be two {\Qqcms{F}s}. An $F$-\emph{tunnel} $\tau = (\D,\Lip_\D,\pi_\A,\pi_\B)$ from $(\A,\Lip_\A)$ to $(\B,\Lip_\B)$ is a quadruple where:
\begin{enumerate}
\item $(\D,\Lip_\D)$ is a {\Qqcms{F}}, 
\item $\pi_\A$ and $\pi_\B$ are isometric epimorphisms from $(\D,\Lip_\D)$ onto, respectively, $(\A,\Lip_\A)$ and $(\B,\Lip_\B)$.
\end{enumerate}
We call $(\A,\Lip_\A)$ the \emph{domain} $\dom{\tau}$ of $\tau$ and we call $(\B,\Lip_\B)$ the \emph{codomain} $\codom{\tau}$ of $\tau$. 
\end{definition}

Our original definition \cite[Definition 3.1]{Latremoliere13c} was made in the context of {\Lqcms s} only.

\begin{remark}
In his original construction, Rieffel \cite{Rieffel00} defined, for any two quantum compact metric spaces $(\A,\Lip_\A)$ and $(\B,\Lip_\B)$, an \emph{admissible Lip-norm} $\Lip$ as a Lip-norm on $\A\oplus\B$ whose quotient to $\A$ and $\B$ are respectively given as $\Lip_\A$ and $\Lip_\B$, \emph{without any quasi-Leibniz condition}. Thus $(\A\oplus\B, \Lip, \pi_\A,\pi_\B)$, with $\pi_\A:\A\oplus\B\twoheadrightarrow\A$ and $\pi_\B: \A\oplus\B\twoheadrightarrow\B$ the canonical surjections, is a tunnel when $\Lip$ is admissible and satisfy an appropriate Leibniz property. 

Rieffel defined a quantity associated with admissible Lip-norms, akin to our reach for tunnels, defined below. The infimum of this quantity over all admissible Lip-norms for two given quantum compact metric spaces is the quantum Gromov-Hausdorff distance between these spaces \cite{Rieffel00}. As we mentioned, it may be null even if the underlying C*-algebras are not *-isomorphic. Moreover, admissible Lip-norms do not need to possess any relation with the multiplicative structure --- in fact, Rieffel's theory is developed for order-unit spaces instead of C*-algebras. Thus, distance zero leads to an isomorphism of order-unit space.

If ones wishes to be able to carry out computations with the admissible Lip-norms which give a good estimate on the quantum Gromov-Hausdorff distance, then one may desire to impose that admissible Lip-norms be Leibniz, for instance \cite{Rieffel10c}. However, doing so without modifying Rieffel's construction otherwise leads to an object called the proximity, which may not satisfy the triangle inequality. Thus our work in this section resolves this apparent trade-off.
\end{remark}

\begin{figure}[h]
\begin{equation*}
\xymatrix{
 & (\D,\Lip_\D)  \ar@{->>}[dl]^{\pi_\A} \ar@{->>}[dr]_{\pi_\B} & \\
(\A,\Lip_\A)  & & (\B,\Lip_\B)
}
\end{equation*}
\caption{A tunnel}\label{tunnel-fig}
\end{figure}
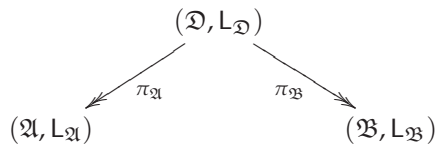

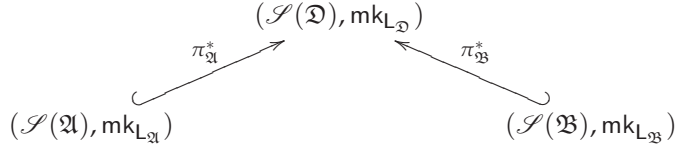
\begin{figure}[h]
\begin{equation*}
\xymatrix{
 & (\StateSpace(\D),\Kantorovich{\Lip_\D})  & \\
(\StateSpace(\A),\Kantorovich{\Lip_\A}) \ar@{^{(}->}[ur]^{\pi_\A^\ast} & & (\StateSpace(\B),\Kantorovich{\Lip_\B})  \ar@{_{(}->}[ul]_{\pi_\B^\ast}}
\end{equation*}
\caption{The dual of a tunnel}\label{dual-tunnel-fig}
\end{figure}

\begin{notation}
Let $\pi : \A\rightarrow\B$ be a unital *-morphism. We denote by $\pi^\ast$ the restriction of the dual map of $\pi$ to $\StateSpace(\B)$, i.e.
\begin{equation*}
\pi^\ast : \varphi \in\StateSpace(\B)\rightarrow \varphi\circ\pi \in \StateSpace(\A)\text{.}
\end{equation*}
\end{notation}

As observed in our section of morphisms of Lipschitz pairs, if $(\D,\Lip_\D,\pi,\rho)$ is a tunnel from $(\A,\Lip_\A)$ to $(\B,\Lip_\B)$, then $\pi^\ast$ and $\rho^\ast$ are isometries from, respectively, $(\StateSpace(\A),\Kantorovich{\Lip_\A})$ and $(\StateSpace(\B),\Kantorovich{\Lip_\B})$ into $(\StateSpace(\D),\Kantorovich{\Lip_\D})$. Thus we obtain Figure (\ref{dual-tunnel-fig}), which is naturally reminiscent of Figure (\ref{GH-fig}).

\bigskip

The construction of the dual Gromov-Hausdorff propinquity begins by associating numerical values to a tunnel, meant to measure how far apart the domain and codomain of a tunnel are. 

The first such numerical value, introduced in \cite{Latremoliere13c}, is a natural analogue of the Hausdorff distance between spaces embedded in a metric space, and is called the reach of the tunnel.

\begin{definition}[\cite{Latremoliere13c}, Definition 3.4]\label{tunnel-reach}
Let $F$ be a permissible function. Let $(\A,\Lip_\A)$ and $(\B,\Lip_\B)$ be two {\Qqcms{F}s}, and let $\tau = (\D,\Lip_\D,\pi_\A,\pi_\B)$ be an $F$-tunnel from $(\A,\Lip_\A)$ to $(\B,\Lip_\B)$. The \emph{reach} $\tunnelreach{\tau}$ of $\tau$ is the non-negative real number:
\begin{equation*}
\tunnelreach{\tau} = \Haus{\Kantorovich{\Lip_\D}}\left(\pi_\A^\ast\left(\StateSpace(\A)\right), \pi_\B^\ast\left(\StateSpace(\B)\right) \right)\text{.}
\end{equation*}
\end{definition}

The second value introduced in \cite{Latremoliere13c} is new to our construction, and has no direct equivalent in the classical picture. Indeed, McShane Theorem \cite{McShane34} can be strengthened easily by noticing that since the pointwise maximum and minimum of two $k$-Lipschitz functions is again $k$-Lipschitz, if $X\subseteq Z$, with $(Z,\mathrm{d})$ a metric space, and $f : X\rightarrow\R$ is $k$-Lipschitz, then there exists a $k$-Lipschitz extension $g : Z\rightarrow\R$ of $f$ with the same uniform norm as $f$. 

In the noncommutative world, we may not expect, in general, that if $(\D, \Lip_\D, \allowbreak \pi_\A, \pi_\B)$ is a tunnel from $(\A,\Lip_\A)$ to $(\B,\Lip_\B)$, and if $a\in\sa{\A}$ and $\varepsilon > 0$, then there exists $d\in\pi_\A^{-1}(\{a\})$ with not only $\Lip_\A(a)\leq \Lip_\D(d)\leq \Lip_\A(a)+\varepsilon$ but also $\|a\|_\A\leq\|d\|_\D\leq\|a\|_\A+\varepsilon$: the truncation argument used in the classical setting does not carry to the general framework of quantum compact metric spaces.

In order to obtain some information on the norms of lifts of elements with finite Lip-norms in a tunnel, we are thus led to the following definition:

\begin{definition}[\cite{Latremoliere13c}, Definition 3.7]\label{tunnel-depth}
Let $F$ be a permissible function. Let $(\A,\Lip_\A)$ and $(\B,\Lip_\B)$ be two {\Qqcms{F}s}, and let $\tau = (\D,\Lip_\D,\pi_\A,\pi_\B)$ be an $F$-tunnel from $(\A,\Lip_\A)$ to $(\B,\Lip_\B)$. The \emph{depth} $\tunneldepth{\tau}$ of $\tau$ is the non-negative real number:
\begin{equation*}
\tunnelreach{\tau} = \Haus{\Kantorovich{\Lip_\D}}\left(\StateSpace(\D),\co{\pi_\A^\ast\left(\StateSpace(\A)\right)\cup\pi_\B^\ast\left(\StateSpace(\B)\right)} \right)\text{,}
\end{equation*}
where $\co{E}$ is the weak* closure of the convex envelope of $E$ for any $E\subseteq \D^\ast$.
\end{definition}

As we discussed, the depth does not enter in the classical picture, or in fact in any construction of analogues of the Gromov-Hausdorff distance prior to the dual propinquity. There is actually two very important reasons for this.

First, it is easy to check that for any two compact metric spaces $(X,\mathrm{d}_X)$ and $(Y,\mathrm{d}_Y)$, we have the identity:
\begin{equation*}
\GH((X,\mathrm{d}_X),(Y,\mathrm{d}_Y)) = \inf\left\{ \Haus{\mathds{d}}(X,Y) : \mathrm{d} \in \Adm\{\mathrm{d_X},\mathrm{d_Y}\} \right\}
\end{equation*}
where $X\coprod Y$ is the disjoint union of $X$ and $Y$ and $\Adm\{\mathrm{d}_X,\mathrm{d}_Y\}$ is the set of all compact metrics on $X\coprod Y$ whose restriction to $X$ and $Y$ are, respectively, given by $\mathrm{d}_X$ and $\mathrm{d}_Y$.

Now, $C(X\coprod Y)$ is *-isomorphic to $C(X)\oplus C(Y)$, and trivially, for any tunnel of the form $(C(X)\oplus C(Y),\Lip, \pi_X, \pi_Y)$, where $\pi_X:C(X)\oplus C(Y)\twoheadrightarrow C(X)$ and $\pi_X:C(X)\oplus C(Y)\twoheadrightarrow C(X)$ are the canonical surjections, the depth is null. So, informally speaking, it is always possible to compute the Gromov-Hausdorff distance between two classical metric spaces using only tunnels with zero depth. This property does not extend to the noncommutative setting.

The second reason for the importance of the depth of the tunnel is that its purpose is to control the norm of lifts of elements of finite Lip-norm, which is essential if we wish to apply the quasi-Leibniz property. The entire purpose of our construction of the propinquity is indeed to be compatible with the Leibniz property, but also put it to use: it will be crucial to obtain *-isomorphisms between {\gQqcms s} at distance zero for our propinquity. No other noncommutative analogue of the Gromov-Hausdorff distance relies on the Leibniz property. One could  thus argue that the depth is a core contribution from the construction of our propinquity.

Now, we can create two natural synthetic numerical values for tunnels, which capture both the reach and depth, and allow for the construction of a metric. Originally, we propose the length \cite{Latremoliere13c}:

\begin{definition}[\cite{Latremoliere13c}, Definition 3.9]\label{tunnel-length}
Let $F$ be a permissible function. Let $(\A,\Lip_\A)$ and $(\B,\Lip_\B)$ be two {\Qqcms{F}s}, and let $\tau = (\D,\Lip_\D,\pi_\A,\pi_\B)$ be an $F$-tunnel from $(\A,\Lip_\A)$ to $(\B,\Lip_\B)$. The \emph{length} $\tunnellength{\tau}$ of $\tau$ is the non-negative real number:
\begin{equation*}
\tunnellength{\tau} = \max\left\{ \tunnelreach{\tau}, \tunneldepth{\tau} \right\}\text{.}
\end{equation*}
\end{definition}

In our later work \cite{Latremoliere14}, we noticed that an equivalent quantity to the length was theoretically quite useful, in particular in providing a nice proof that the propinquity satisfies the triangle inequality --- a nontrivial fact using the length, and a challenge in general when working with Leibniz Lip-norms \cite{kerr02, Rieffel10c}. In practice, the length may seem a bit more tractable, although time will tell which of the length and the extent is easiest to use. 

\begin{definition}[\cite{Latremoliere14}, Definition 2.11] \label{tunnel-extent}
Let $F$ be a permissible function. Let $(\A,\Lip_\A)$ and $(\B,\Lip_\B)$ be two {\Qqcms{F}s}, and let $\tau = (\D,\Lip_\D,\pi_\A,\pi_\B)$ be an $F$-tunnel from $(\A,\Lip_\A)$ to $(\B,\Lip_\B)$. The \emph{extent} $\tunnelextent{\tau}$ of $\tau$ is the non-negative real number:
\begin{multline*}
\tunnelextent{\tau} = \max\left\{ \Haus{\Kantorovich{\Lip_\D}}\left(\StateSpace(\D),\pi_\A^\ast\left(\StateSpace(\A)\right)\right)),\right.\\
\left. \Haus{\Kantorovich{\Lip_\D}}\left(\StateSpace(\D), \pi_\B^\ast\left(\StateSpace(\B)\right)\right)\right\}\text{.}
\end{multline*}
\end{definition}

The relationship between length and extent is described by:

\begin{proposition}[\cite{Latremoliere14}, Proposition 2.12]\label{length-extent-eq-prop}
For any permissible function $F$, any {\Qqcms{F}s} $(\A,\Lip_\A)$ and $(\B,\Lip_\B)$ and any $F$-tunnel $\tau$ from $(\A,\Lip_\A)$ to $(\B,\Lip_\B)$, we have:
\begin{equation*}
\tunnellength{\tau} \leq \tunnelextent{\tau} \leq 2\tunnellength{\tau}\text{.}
\end{equation*}
\end{proposition}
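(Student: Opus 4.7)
The plan is to treat the two inequalities separately, each resting on two elementary geometric facts that I would establish first: (a) each image $\pi_\A^\ast(\StateSpace(\A))$ and $\pi_\B^\ast(\StateSpace(\B))$ is a convex subset of $\StateSpace(\D)$, because $\pi_\A^\ast$ and $\pi_\B^\ast$ are affine maps between convex state spaces; and (b) the Monge-Kantorovich metric is convex in each argument, which follows by expanding $\Kantorovich{\Lip_\D}(\sum_i t_i \mu_i, \psi)$ as a supremum of $|(\sum_i t_i \mu_i - \psi)(a)|$ over $a\in\sa{\D}$ with $\Lip_\D(a)\leq 1$ and using linearity of states.

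For the inequality $\tunnellength{\tau} \leq \tunnelextent{\tau}$, I would first note that $\pi_\A^\ast(\StateSpace(\A))$ and $\pi_\B^\ast(\StateSpace(\B))$ both sit inside $\StateSpace(\D)$. For any $\alpha \in \pi_\A^\ast(\StateSpace(\A))$, since $\alpha$ lies in $\StateSpace(\D)$, the inequality $\Kantorovich{\Lip_\D}(\alpha, \pi_\B^\ast(\StateSpace(\B))) \leq \Haus{\Kantorovich{\Lip_\D}}(\StateSpace(\D), \pi_\B^\ast(\StateSpace(\B))) \leq \tunnelextent{\tau}$ is immediate; applying the symmetric bound from the $\B$ side and taking suprema gives $\tunnelreach{\tau} \leq \tunnelextent{\tau}$. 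For the depth term, the inclusion $\pi_\A^\ast(\StateSpace(\A)) \subseteq \co{\pi_\A^\ast(\StateSpace(\A)) \cup \pi_\B^\ast(\StateSpace(\B))}$ yields $\Kantorovich{\Lip_\D}(\mu, \co{\pi_\A^\ast(\StateSpace(\A)) \cup \pi_\B^\ast(\StateSpace(\B))}) \leq \Kantorovich{\Lip_\D}(\mu, \pi_\A^\ast(\StateSpace(\A)))$ for every $\mu \in \StateSpace(\D)$, hence $\tunneldepth{\tau} \leq \tunnelextent{\tau}$.

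For the reverse inequality $\tunnelextent{\tau} \leq 2 \tunnellength{\tau}$, I would fix $\mu \in \StateSpace(\D)$ and $\varepsilon > 0$, and first use the depth to find $\nu$ in the closed convex hull of $\pi_\A^\ast(\StateSpace(\A)) \cup \pi_\B^\ast(\StateSpace(\B))$ with $\Kantorovich{\Lip_\D}(\mu,\nu) \leq \tunneldepth{\tau}$. Next I would approximate $\nu$ within $\varepsilon$ for $\Kantorovich{\Lip_\D}$ by a genuine finite convex combination $\nu' = \sum_i t_i \alpha_i + \sum_j s_j \beta_j$ with $\alpha_i \in \pi_\A^\ast(\StateSpace(\A))$ and $\beta_j \in \pi_\B^\ast(\StateSpace(\B))$. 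Using the reach, for each $j$ I would select $\beta_j' \in \pi_\A^\ast(\StateSpace(\A))$ with $\Kantorovich{\Lip_\D}(\beta_j,\beta_j') \leq \tunnelreach{\tau} + \varepsilon$, and set $\nu'' = \sum_i t_i \alpha_i + \sum_j s_j \beta_j'$. By convexity of $\pi_\A^\ast(\StateSpace(\A))$ the state $\nu''$ lies in $\pi_\A^\ast(\StateSpace(\A))$, and by the convexity of the Monge-Kantorovich metric applied argument-by-argument, $\Kantorovich{\Lip_\D}(\nu',\nu'') \leq \tunnelreach{\tau} + \varepsilon$. The triangle inequality then gives $\Kantorovich{\Lip_\D}(\mu, \pi_\A^\ast(\StateSpace(\A))) \leq \tunneldepth{\tau} + \tunnelreach{\tau} + 2\varepsilon$; the symmetric argument handles the other side, and letting $\varepsilon \to 0$ before taking the supremum over $\mu$ would yield $\tunnelextent{\tau} \leq \tunneldepth{\tau} + \tunnelreach{\tau} \leq 2\tunnellength{\tau}$.

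The main obstacle will be the bookkeeping around the closure defining $\co{\cdot}$: one must first approximate $\nu$ by an honest finite convex combination before being able to exploit the reach by swapping each $\beta_j$ for a nearby state in $\pi_\A^\ast(\StateSpace(\A))$, and the convexity of $\pi_\A^\ast(\StateSpace(\A))$ is what ensures the result of the swap remains in the target set. Everything else reduces to direct triangle-inequality computations from the definitions of reach, depth, length, and extent.
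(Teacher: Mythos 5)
Your proposal is correct and follows essentially the same route as the proof in \cite{Latremoliere14}: the first inequality comes from the inclusions $\pi_\A^\ast(\StateSpace(\A)),\pi_\B^\ast(\StateSpace(\B))\subseteq\StateSpace(\D)$ and $\pi_\A^\ast(\StateSpace(\A))\subseteq\co{\pi_\A^\ast(\StateSpace(\A))\cup\pi_\B^\ast(\StateSpace(\B))}$, and the second from chaining the depth with a reach-based swap of the $\B$-side states inside a finite convex combination, using convexity of the images and of the {\mongekant} exactly as in the original argument (which merely reduces further to two-term combinations, a cosmetic difference). The one step you leave implicit --- approximating a point of the weak*-closed convex hull within $\varepsilon$ for $\Kantorovich{\Lip_\D}$ by an honest finite convex combination --- is legitimate because $(\D,\Lip_\D)$ is a quantum compact metric space, so $\Kantorovich{\Lip_\D}$ metrizes the weak* topology on $\StateSpace(\D)$ and the two closures coincide.
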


We thus have two constructions for the dual Gromov-Hausdorff propinquity, one using length and one using extent, though Proposition (\ref{length-extent-eq-prop}) suggests that both constructions would lead to equivalent metrics. Informally, one may expect that the dual Gromov-Hausdorff propinquity is the infimum of the length, or the extent, of all $F$-tunnels between any two given {\Qqcms{F}s} --- once a particular permissible function $F$ has been fixed. This informal approach actually only works for the extent.

There is however, a small subtlety to consider. Once we have fixed a particular permissible function $F$, there is still quite a lot of  choices one may consider regarding the collection of $F$-tunnels which may desirable to work with. The point to emphasize is that computing the dual Gromov-Hausdorff propinquity between two {\Qqcms{F}s} involves working within the {\Qqcms{F}s} coming from tunnels, and one might wish to have more structure than just the $F$-quasi-Leibniz property. A particularly relevant example comes from \cite{Rieffel10c}, where one may want to work with so-called \emph{strong Leibniz} Lip-norms, i.e. Leibniz Lip-norms $\Lip$ defined on a dense subspace of the whole C*-algebra $\A$ and such that for all invertible $a\in \A$ we have $\Lip(a^{-1})\leq\|a\|^2 \Lip(a)$. 

Our construction of the dual Gromov-Hausdorff propinquity allows for various constraints on tunnels, within some reasonable conditions to ensure the resulting object is indeed a metric. These conditions, however, depend slightly on whether we use the length of the extent for our construction. 

We will  begin our exposition with the extent. We will explain how to use the length instead when discussing the quantum Gromov-Hausdorff propinquity, which is not compatible with the extent construction, but is a special case of the length construction of the dual Gromov-Hausdorff propinquity. 

\subsubsection{A First Construction of the dual Gromov-Hausdorff Propinquity and The triangle Inequality}

This section proposes our construction of the dual Gromov-Hausdorff using the extent of tunnels. We begin by exploring the notion of composition of tunnels, which is the basis for our proof of the triangle inequality for the dual propinquity:

\begin{theorem}[\cite{Latremoliere14}, Theorem 3.1]\label{tunnel-composition-thm}
Let $F$ be a permissible function. Let $(\A_1,\Lip_1)$, $(\A_2,\Lip_2)$ and $(\A_3,\Lip_3)$ be three {\Qqcms{F}s} and let $\tau_{12} = (\D_{12},\Lip_{12},\pi_1,\rho_1)$ and $\tau_{23}=(\D_{23},\Lip_{23},\pi_2,\rho_2)$ be two $F$ tunnels, respectively from $(\A_1,\Lip_1)$ to $(\A_2,\Lip_2)$ and from $(\A_2,\Lip_2)$ to $(\A_3,\Lip_3)$.

If $\varepsilon > 0$, then there exists an $F$-tunnel $\tau$ from $(\A_1,\Lip_1)$ to $(\A_3,\Lip_3)$ such that:
\begin{equation*}
\tunnelextent{\tau} \leq \tunnelextent{\tau_{12}} + \tunnelextent{\tau_{23}} + \varepsilon\text{.}
\end{equation*}
\end{theorem}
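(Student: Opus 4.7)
The plan is to build $\tau$ as an amalgamated direct sum. Set $\D := \D_{12}\oplus\D_{23}$ with componentwise Jordan-Lie and $\ast$-structure, let $\Pi_1 : (d_1,d_2)\mapsto \pi_1(d_1)$ and $\Pi_3 : (d_1,d_2)\mapsto \rho_2(d_2)$, and define
\begin{equation*}
\Lip_\D(d_1,d_2) = \max\left\{\Lip_{12}(d_1),\,\Lip_{23}(d_2),\,\tfrac{1}{\varepsilon}\|\rho_1(d_1)-\pi_2(d_2)\|_{\A_2}\right\}\text{.}
\end{equation*}
The third term of the maximum forces $\ker \Lip_\D = \R\unit_\D$, and $(\D,\Lip_\D)$ inherits the Lip-norm property and lower semicontinuity from the two given tunnels via Theorem \ref{Rieffel-thm} applied summand-wise. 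For the $F$-quasi-Leibniz property, the first two components of the max are handled by componentwise multiplication together with the monotonicity of $F$; for the third component, the identity
\begin{equation*}
\rho_1(d_1)\rho_1(d_1') - \pi_2(d_2)\pi_2(d_2') = \rho_1(d_1)\bigl(\rho_1(d_1')-\pi_2(d_2')\bigr) + \bigl(\rho_1(d_1)-\pi_2(d_2)\bigr)\pi_2(d_2')
\end{equation*}
together with its Jordan/Lie analogue yield $\varepsilon^{-1}\|\cdots\|\leq \|(d_1,d_2)\|_\D\Lip_\D(d_1',d_2') + \Lip_\D(d_1,d_2)\|(d_1',d_2')\|_\D$, which is absorbed by the permissibility condition in Definition \ref{permissible-def}.

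Next, I would verify that $\Pi_1$ is an isometric $\ast$-epimorphism from $(\D,\Lip_\D)$ onto $(\A_1,\Lip_1)$. Fix $a\in\sa{\A_1}$ and $\eta>0$. Lift $a$ to $d_1\in\D_{12}$ with $\pi_1(d_1)=a$ and $\Lip_{12}(d_1)\leq\Lip_1(a)+\eta$ by the isometricity of $\pi_1$. Since $\rho_1$ is isometric, $\Lip_2(\rho_1(d_1))\leq\Lip_{12}(d_1)$, so the isometricity of $\pi_2$ produces $d_2\in\D_{23}$ with $\pi_2(d_2)=\rho_1(d_1)$ \emph{exactly} and $\Lip_{23}(d_2)\leq\Lip_2(\rho_1(d_1))+\eta$. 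Then $\Lip_\D(d_1,d_2)\leq\Lip_1(a)+2\eta$ with the third term of the maximum vanishing, and sending $\eta\to 0$ shows $\Pi_1$ is an isometric epimorphism; a symmetric argument works for $\Pi_3$.

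The heart of the proof is the extent estimate by a chaining argument. Given $\varphi\in\StateSpace(\D)$, decompose $\varphi=t\varphi_{12}+(1-t)\varphi_{23}$ with $\varphi_{12}\in\StateSpace(\D_{12})$, $\varphi_{23}\in\StateSpace(\D_{23})$. Then successively: by $\tunnelextent{\tau_{23}}$, find $\mu_2\in\StateSpace(\A_2)$ with $\Kantorovich{\Lip_{23}}(\varphi_{23},\pi_2^\ast\mu_2)\leq\tunnelextent{\tau_{23}}$; noting that $\rho_1^\ast\mu_2\in\StateSpace(\D_{12})$, use $\tunnelextent{\tau_{12}}$ to obtain $\mu_1\in\StateSpace(\A_1)$ with $\Kantorovich{\Lip_{12}}(\rho_1^\ast\mu_2,\pi_1^\ast\mu_1)\leq\tunnelextent{\tau_{12}}$; and similarly find $\mu_1'\in\StateSpace(\A_1)$ with $\Kantorovich{\Lip_{12}}(\varphi_{12},\pi_1^\ast\mu_1')\leq\tunnelextent{\tau_{12}}$. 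Set $\psi:=t\mu_1'+(1-t)\mu_1\in\StateSpace(\A_1)$. For $(d_1,d_2)$ with $\Lip_\D(d_1,d_2)\leq 1$, the telescoping
\begin{equation*}
\varphi_{23}(d_2)-\mu_1(\pi_1(d_1)) = [\varphi_{23}(d_2)-\mu_2(\pi_2(d_2))] + \mu_2(\pi_2(d_2)-\rho_1(d_1)) + [(\rho_1^\ast\mu_2)(d_1)-(\pi_1^\ast\mu_1)(d_1)]
\end{equation*}
bounds its three summands by $\tunnelextent{\tau_{23}}$, $\varepsilon$, and $\tunnelextent{\tau_{12}}$ respectively (using $\Lip_{23}(d_2)\leq 1$, $\|\rho_1(d_1)-\pi_2(d_2)\|_{\A_2}\leq\varepsilon$, and $\Lip_{12}(d_1)\leq 1$); combined with $|\varphi_{12}(d_1)-\mu_1'(\pi_1(d_1))|\leq\tunnelextent{\tau_{12}}$, a convex combination in $t$ yields $\Kantorovich{\Lip_\D}(\varphi,\Pi_1^\ast\psi)\leq\tunnelextent{\tau_{12}}+\tunnelextent{\tau_{23}}+\varepsilon$. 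A symmetric estimate handles $\Pi_3$. The main technical obstacle is ensuring that the quasi-Leibniz inequality survives the $\varepsilon^{-1}$-weighting of the gluing term uniformly in $\varepsilon$, which is precisely where the monotonicity axiom of permissible $F$ is indispensable.
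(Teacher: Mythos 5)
Your proposal is correct and is essentially the paper's own proof: the amalgamated sum $\D_{12}\oplus\D_{23}$ equipped with $\Lip_\D = \max\left\{\Lip_{12},\Lip_{23},\varepsilon^{-1}\left\|\rho_1(\cdot)-\pi_2(\cdot)\right\|_{\A_2}\right\}$, the Leibniz-type estimate for the gluing seminorm $N$, and the chaining of states through $\StateSpace(\A_2)$ via $\rho_1^\ast$ are exactly the construction of \cite{Latremoliere14}, Theorem 3.1, as sketched in the discussion following the theorem in this survey. One small correction: what lets the $\varepsilon^{-1}$-weighted gluing term be absorbed into $F$ is condition (2) of Definition (\ref{permissible-def}), namely the lower bound $x l_y + y l_x \leq F(x,y,l_x,l_y)$ --- the paper emphasizes this was the very reason for that axiom --- whereas the monotonicity of $F$ is only needed for the two componentwise terms of the maximum, so your closing sentence attributes the key step to the wrong clause of permissibility.
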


Keeping the notations of Theorem (\ref{tunnel-composition-thm}), let us briefly indicate what a possible tunnel $\tau$ would look like. Let $\D = \D_{12} \oplus \D_{23}$. We define for all $(d_1,d_2) \in \sa{\D} = \sa{\D_{12}} \oplus \sa{\D_{23}}$:
\begin{equation*}
\Lip (d_1,d_2) = \max\left\{ \Lip_{12}(d_1), \Lip_{23}(d_2), \frac{1}{\varepsilon} \|\rho_1(d_1) - \pi_2(d_2)\|_{\A_2} \right\}\text{.}
\end{equation*}
Set $\pi : (d_1,d_2)\in \D \mapsto \pi_1(d_1)\in\A_1$ and $\rho : (d_1,d_2) \in \D \mapsto \rho_2(d_2)\in\A_3$. In \cite{Latremoliere14}, we check that $(\D, \Lip, \pi, \rho)$ is indeed an $F$-tunnel with the desired extend. There are two comments which arise from this construction. First, even if $\D_{12} = \A_1\oplus\A_2$ and $\D_{23} = \A_2\oplus\A_3$, then $\D$ is not *-isomorphic to $\A_1\oplus\A_3$: thus, allowing for more general embeddings than just into the noncommutative direct sum for tunnels is essential to this construction. This is a key difference between our construction and all the earlier constructions of analogues of the Gromov-Hausdorff distance.

Second, we note that the map $N : (d_1,d_2)\in \D \mapsto \|\rho_1(d_1) - \pi_2(d_2)\|_{\A_2}$ satisfies a form of Leibniz inequality:
\begin{equation*}
N(d_1d_1', d_2 d_2') \leq \|d_1\|_{\D_1} N(d_1',d_2') + N(d_1,d_2)\|d_2\|_{\D_2}
\end{equation*}
for all $d_1,d_1'\in\D_1$, $d_2,d_2'\in\D_2$. Since $F$ is permissible, and thus:
\begin{equation}\label{perm-Leib-eq}
F(a,b,l_a,l_b) \geq a l_b + b l_a
\end{equation}
for all $a,b,l_a,l_b \geq 0$, we conclude that indeed, $\Lip$ is $F$-quasi-Leibniz. This is precisely for this observation that we required that permissibility includes the condition given by Inequality (\ref{perm-Leib-eq}).

Composing tunnels is the tool which we use to prove that our dual Gromov-Hausdorff propinquity satisfy the triangle inequality, so whatever restriction we may consider putting on tunnels later on, it must be compatible with some form of composition. More generally, we shall require the following compatibility conditions between a class of tunnels and a class of {\gQqcms s} so that we can carry on our construction.

\begin{definition}[\cite{Latremoliere14}, Definition 3.5]\label{appropriate-def}
Let $F$ be a permissible function. Let $\mathcal{C}$ be a nonempty class of {\Qqcms{F}s}. A class $\mathcal{T}$ of $F$-tunnels is \emph{appropriate} for $\mathcal{C}$ when: 
\begin{enumerate}
\item $\mathcal{T}$ is connected: For any $\mathds{A}, \mathds{B} \in \mathcal{C}$, there exists $\tau \in \mathcal{T}$ from $\mathds{A}$ to $\mathds{B}$,
\item $\mathcal{T}$ is symmetric: if $\tau = (\D,\Lip_\D,\pi,\rho) \in \mathcal{T}$ then $\tau^{-1} = (\D,\Lip_\D,\rho,\pi) \in \mathcal{T}$,
\item $\mathcal{T}$ is triangular: if $\tau,\tau' \in \mathcal{T}$ and if the domain of $\tau'$ is the codomain of $\tau$, then for all $\varepsilon > 0$ there exists $\tau''$ from the domain of $\tau$ to the codomain of $\tau'$ such that:
\begin{equation*}
\tunnelextent{\tau''} \leq \tunnelextent{\tau} + \tunnelextent{\tau'} + \varepsilon\text{.}
\end{equation*}
\item $\mathcal{T}$ is specific: if $\tau \in \mathcal{T}$ then the domain and codomain of $\tau$ lies in $\mathcal{C}$,
\item $\mathcal{T}$ is definite: for any $(\A,\Lip_\A), (\B,\Lip_\B) \in \mathcal{C}$, if there exists an isometric *-isomorphism $h : \A\rightarrow\B$ then both $(\A, \Lip_\A, \mathrm{id}_\A, h^{-1})$ and $(\B,\Lip_\B,h,\mathrm{id}_\B)$ belong to $\mathcal{T}$, where $\mathrm{id}_E$ is the identity map of the set $E$ for any set.
\end{enumerate}
\end{definition}

\begin{example}
Let $F$ be a permissible function and let. The class $\mathfrak{TQQCMS}$ of all $F$-tunnels is appropriate for the class  $\mathfrak{QQCMS}$ of all {\Qqcms{F}s}. Most assertions from Definition (\ref{appropriate-def}) are trivially check, and Theorem (\ref{tunnel-composition-thm}) ensures that the triangularity property is satisfied.
\end{example}

The following notation will prove useful:
\begin{notation}
Let $F$ be a permissible function. Let $\mathcal{C}$ be a nonempty class of {\Qqcms{F}s} and let $\mathcal{T}$ be a $\mathcal{C}$-appropriate class of $F$-tunnels. If $(\A,\Lip_\A)$ and $(\B,\Lip_\B)$ are in $\mathcal{C}$, then the class of all tunnels from $(\A,\Lip_\A)$ to $(\B,\Lip_\B)$ which belong to $\mathcal{T}$ is denoted by:
\begin{equation*}
\tunnelset{\A,\Lip_\A}{\B,\Lip_\B}{\mathcal{T}}\text{.}
\end{equation*}
\end{notation}

We now define the main object of our research, a new noncommutative analogue of the Gromov-Hausdorff distance adapted to the C*-algebraic setting which we call the dual Gromov-Hausdorff propinquity.

\begin{definition}[\cite{Latremoliere13c}, Definition 3.21,\cite{Latremoliere14}, Definition 3.6]\label{propinquity-def}
Let $F$ be an admissible function. Let $\mathcal{C}$ be a nonempty class of {\Qqcms{F}s} and let $\mathcal{T}$ be a $\mathcal{C}$-appropriate class of $F$-tunnels. Let $(\A,\Lip_\A)$ and $(\B,\Lip_\B)$ in $\mathcal{C}$.  The \emph{$\mathcal{T}$-dual Gromov-Hausdorff propinquity} $\propinquity{\mathcal{T}}\left((\A,\Lip_\A),(\B,\Lip_\B)\right)$ between $(\A,\Lip_\A)$ and $(\B,\Lip_\B)$ is defined as:
\begin{equation*}
\propinquity{\mathcal{T}}\left((\A,\Lip_\A),(\B,\Lip_\B)\right) = \inf\left\{ \tunnelextent{\tau} : \tau \in \tunnelset{\A,\Lip_\A}{\B,\Lip_\B}{\mathcal{T}} \right\}\text{.}
\end{equation*} 
\end{definition}

\begin{notation}
When working with the class of all {\Lqcms}, if $\mathcal{T}$ is the class of all Leibniz tunnels, then $\propinquity{\mathcal{T}}$ is simply denoted by $\propinquity{}$. If $F$ is some permissible function, and we work with the class $\mathfrak{TQQCMS}$ of all $F$-tunnels, then $\propinquity{\mathfrak{TQQCMS}}$ is simply denoted by $\propinquity{F}$.
\end{notation}

By default, the dual Gromov-Hausdorff propinquity refers to the distance $\propinquity{}$ on the class of {\Lqcms s} using all possible Leibniz tunnels. Yet, many results from our construction apply to the various forms the propinquity can take.

To begin with, we observe that:

\begin{proposition}[\cite{Latremoliere13c}, Proposition 3.24]
Let $F$ be a permissible function. Let $\mathcal{C}$ be a class of {\Qqcms{F}s} and let $\mathcal{T}$ be a $\mathcal{C}$-appropriate class of $F$-tunnels. Then:
\begin{equation*}
\propinquity{\mathcal{T}}((\A,\Lip_\A),(\B,\Lip_\B)) < \infty\text{.}
\end{equation*}

Moreover, if $\mathcal{T}$ is the class of all $F$-tunnels, then:
\begin{equation*}
\propinquity{\mathcal{T}}((\A,\Lip_\A),(\B,\Lip_\B)) \leq \max\{\diam{\StateSpace(\A)}{\Kantorovich{\Lip_\A}}, \diam{\StateSpace(\B)}{\Kantorovich{\Lip_\B}} \}\text{.}
\end{equation*}
\end{proposition}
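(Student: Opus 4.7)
The first assertion follows at once from the connectedness axiom of Definition \ref{appropriate-def}: $\tunnelset{\A,\Lip_\A}{\B,\Lip_\B}{\mathcal{T}}$ is non-empty, and for any $\tau = (\D,\Lip_\D,\pi_\A,\pi_\B)$ in it, $(\D,\Lip_\D)$ is a quantum compact metric space, so $\Kantorovich{\Lip_\D}$ metrizes the weak-* topology on the compact state space $\StateSpace(\D)$ and has finite diameter. Hence $\tunnelextent{\tau}$, being a maximum of Hausdorff distances inside this bounded metric space, is finite, and the infimum defining $\propinquity{\mathcal{T}}((\A,\Lip_\A),(\B,\Lip_\B))$ is too.

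For the sharp diameter bound when $\mathcal{T}$ is all $F$-tunnels, write $D_\A, D_\B$ for the two diameters and $D = \max(D_\A, D_\B)$. The plan is to exhibit, for every $\varepsilon > 0$, an explicit $F$-tunnel of extent at most $D+\varepsilon$. Set $\D = \A\oplus\B$ with coordinate projections $\pi_\A, \pi_\B$, fix base states $\mu_\A \in \StateSpace(\A)$, $\mu_\B \in \StateSpace(\B)$, and define
\[
\Lip_\D^\varepsilon(a,b) = \max\left\{\Lip_\A(a),\; \Lip_\B(b),\; \frac{|\mu_\A(a)-\mu_\B(b)|}{\varepsilon}\right\}.
\]
Using the lifts $a\mapsto (a,\mu_\A(a)\unit_\B)$ and $b\mapsto(\mu_\B(b)\unit_\A, b)$, I would verify the quotients of $\Lip_\D^\varepsilon$ under $\pi_\A,\pi_\B$ are $\Lip_\A,\Lip_\B$; the kernel is $\R\unit_\D$; and by Theorem \ref{Rieffel-thm} that $\Lip_\D^\varepsilon$ is a Lip-norm, since the norm-precompactness of $\{(a,b):\Lip_\D^\varepsilon(a,b)\leq 1, \mu_\A(a)=0\}$ reduces via the decomposition $b = \mu_\B(b)\unit_\B + b^0$ (with $|\mu_\B(b)|\leq\varepsilon$) to the known precompactness of the Lip-unit-balls-modulo-scalars for $(\A,\Lip_\A)$ and $(\B,\Lip_\B)$.

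The crucial estimate bounds the two Hausdorff distances composing $\tunnelextent{\tau_\varepsilon}$. Every $\omega\in\StateSpace(\D)$ writes as $\omega = t\,\varphi\circ\pi_\A + (1-t)\,\psi\circ\pi_\B$ for some $t\in[0,1]$, $\varphi\in\StateSpace(\A)$, $\psi\in\StateSpace(\B)$. The key trick is to approximate $\omega$ not by $\pi_\A^*(\varphi)$ but by $\pi_\A^*(t\varphi + (1-t)\mu_\A)$, giving
\[
\omega(a,b) - (t\varphi + (1-t)\mu_\A)(a) = (1-t)\bigl(\psi(b)-\mu_\A(a)\bigr),
\]
whose absolute value, under $\Lip_\D^\varepsilon(a,b)\leq 1$, is bounded by $(1-t)(|\psi(b)-\mu_\B(b)|+|\mu_\B(b)-\mu_\A(a)|)\leq (1-t)(D_\B + \varepsilon)$. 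The $D_\A$ contribution disappears because of the $\mu_\A$ pivot. So $\Haus{\Kantorovich{\Lip_\D^\varepsilon}}(\StateSpace(\D),\pi_\A^*(\StateSpace(\A)))\leq D_\B + \varepsilon$; the symmetric argument with $\mu_\B$ gives the other Hausdorff distance at most $D_\A + \varepsilon$. The maximum is $\max(D_\A,D_\B)+\varepsilon = D+\varepsilon$, and letting $\varepsilon\to 0^+$ yields the bound.

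The principal obstacle is verifying that $\Lip_\D^\varepsilon$ is $F$-quasi-Leibniz, so that $\tau_\varepsilon$ actually lies in $\mathcal{T}$. The $\Lip_\A$ and $\Lip_\B$ components of the maximum transfer the quasi-Leibniz property from $(\A,\Lip_\A)$ and $(\B,\Lip_\B)$ via monotonicity of $F$, but the bridging seminorm $(a,b)\mapsto |\mu_\A(a)-\mu_\B(b)|/\varepsilon$ expanded on a Jordan or Lie product via $a=\mu_\A(a)\unit_\A+a^0$, $b=\mu_\B(b)\unit_\B + b^0$ produces cross terms controlled only by $\|a^0\|_\A\|a'^0\|_\A/\varepsilon$. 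Absorbing these into the bound $F(\|(a,b)\|_\D,\|(a',b')\|_\D,\Lip_\D^\varepsilon(a,b),\Lip_\D^\varepsilon(a',b'))$ requires using the compactness estimate $\|a^0\|_\A\leq D\,\Lip_\A(a)\leq D\,\Lip_\D^\varepsilon(a,b)$ and leveraging permissibility of $F$; a mild adjustment of the bridging term may be needed to keep the resulting quasi-Leibniz inequality valid for every permissible $F$, and this is the technically subtle step of the argument.
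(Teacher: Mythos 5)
Your finiteness argument is correct and is essentially the intended one: connectedness of a $\mathcal{C}$-appropriate class supplies at least one tunnel, and every tunnel has finite extent because $(\D,\Lip_\D)$ is a quantum compact metric space, so $\diam{\StateSpace(\D)}{\Kantorovich{\Lip_\D}}<\infty$ by Theorem (\ref{Rieffel-thm}). The diameter bound, however, has a genuine gap at exactly the step you flagged, and it is not a repairable subtlety: the quadruple $(\A\oplus\B,\Lip_\D^\varepsilon,\pi_\A,\pi_\B)$ is simply not an $F$-tunnel once $\varepsilon$ is small. Indeed, pick $a=a'\in\dom{\Lip_\A}$ self-adjoint with $\mu_\A(a)=0$ but $\mu_\A(a^2)>0$ --- possible whenever $\mu_\A$ is not multiplicative, and matrix algebras, to which the proposition must apply, admit no multiplicative states at all, so no choice of base states rescues you --- and set $b=b'=0$. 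Then $\Lip_\D^\varepsilon(a,0)=\Lip_\A(a)$ is independent of $\varepsilon$, while $\Lip_\D^\varepsilon\left(\Jordan{(a,0)}{(a,0)}\right)\geq\frac{1}{\varepsilon}\mu_\A(a^2)\rightarrow\infty$ as $\varepsilon\rightarrow 0^+$, whereas the required upper bound $F\left(\|(a,0)\|_\D,\|(a,0)\|_\D,\Lip_\A(a),\Lip_\A(a)\right)$ is fixed. Your proposed absorption via $\|a-\mu_\A(a)\unit_\A\|_\A\leq D\,\Lip_\A(a)$ only produces cross terms of order $\frac{D^2}{\varepsilon}\Lip_\A(a)\Lip_\A(a')$, which no fixed permissible $F$ dominates; and you cannot instead take $\varepsilon$ of order $D$, because your Hausdorff estimate $D_\B+\varepsilon$ needs $\varepsilon$ small. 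This is the basic phenomenon that scalar, state-defined bridge seminorms are not Leibniz.

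The cited proof keeps your entire metric skeleton --- $\D=\A\oplus\B$, a maximum of three seminorms, the lifts $a\mapsto(a,\mu_\A(a)\unit_\B)$ and $b\mapsto(\mu_\B(b)\unit_\A,b)$, and a convex-decomposition estimate on $\StateSpace(\A\oplus\B)$ --- but replaces your bridging seminorm by one which is \emph{exactly} Leibniz: $N(a,b)=\|a\otimes\unit_\B-\unit_\A\otimes b\|_{\A\otimes\B}$, i.e. the trivial bridge with the unit as pivot. The inequality $N(aa',bb')\leq\|a\|_\A N(a',b')+N(a,b)\|b'\|_\B$ is precisely the computation this survey records after Theorem (\ref{tunnel-composition-thm}), and Inequality (\ref{perm-Leib-eq}) in the definition of permissibility is exactly what lets every permissible $F$ absorb it. For $\lambda>D$, the seminorm $\max\left\{\Lip_\A,\Lip_\B,\frac{1}{\lambda}N\right\}$ is then an $F$-quasi-Leibniz Lip-norm whose quotients are $\Lip_\A$ and $\Lip_\B$; your lifts still witness this, since $\|a-\mu_\A(a)\unit_\A\|_\A\leq D_\A\Lip_\A(a)\leq\lambda\Lip_\A(a)$. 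Your decomposition trick even simplifies: if $\omega=t\,\varphi\circ\pi_\A+(1-t)\,\psi\circ\pi_\B$ then for \emph{any} $\varphi'\in\StateSpace(\A)$ one has $|\varphi'(a)-\psi(b)|=|(\varphi'\otimes\psi)(a\otimes\unit_\B-\unit_\A\otimes b)|\leq\lambda$ whenever the Lip-norm is at most $1$, so both Hausdorff distances entering the extent are at most $\lambda$, and letting $\lambda\downarrow D$ yields the stated bound.
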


Theorem (\ref{tunnel-composition-thm}) ensures that some class of tunnels are triangular --- in particular, the class of all tunnels for a given choice of the Leibniz property. In turn, this property of appropriate classes of tunnels allows us to prove:

\begin{theorem}[\cite{Latremoliere13c}, \cite{Latremoliere14}, Theorem 3.7]
Let $F$ be a permissible function. Let $\mathcal{C}$ be a nonempty class of {\Qqcms{F}s} and let $\mathcal{T}$ be a $\mathcal{C}$-appropriate class of tunnels. For all $(\A,\Lip_\A)$, $(\B,\Lip_\B)$ and $(\D,\Lip_\D)$ we have:
\begin{equation*}
\propinquity{\mathcal{T}}\left((\A,\Lip_\A),(\D,\Lip_\D)\right) \leq \propinquity{\mathcal{T}}\left((\A,\Lip_\A),(\B,\Lip_\B)\right) + \propinquity{\mathcal{T}}\left((\B,\Lip_\B),(\D,\Lip_\D)\right)
\end{equation*}
and
\begin{equation*}
\propinquity{\mathcal{T}}\left((\A,\Lip_\A),(\B,\Lip_\B)\right) = \propinquity{\mathcal{T}}\left((\B,\Lip_\B),(\A,\Lip_\A)\right)\text{.}
\end{equation*}
\end{theorem}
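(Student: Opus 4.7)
The proof hinges on turning the appropriateness conditions from Definition \ref{appropriate-def} into direct numerical statements about the infimum defining $\propinquity{\mathcal{T}}$. The connectedness condition guarantees that $\tunnelset{\A,\Lip_\A}{\B,\Lip_\B}{\mathcal{T}}$ is nonempty for every pair of elements of $\mathcal{C}$, so the infimum in Definition \ref{propinquity-def} is taken over a nonempty set of non-negative reals and thus is a well-defined element of $[0,\infty)$.

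For symmetry, I would first observe that the definition of the extent in Definition \ref{tunnel-extent} is manifestly invariant under swapping the two isometric epimorphisms, i.e.\ $\tunnelextent{\tau^{-1}} = \tunnelextent{\tau}$ for any tunnel $\tau = (\D,\Lip_\D,\pi,\rho)$, where $\tau^{-1} = (\D,\Lip_\D,\rho,\pi)$. By the symmetry clause in Definition \ref{appropriate-def}, $\tau \in \mathcal{T}$ if and only if $\tau^{-1} \in \mathcal{T}$, and the map $\tau \mapsto \tau^{-1}$ gives a bijection
\begin{equation*}
\tunnelset{\A,\Lip_\A}{\B,\Lip_\B}{\mathcal{T}} \longrightarrow \tunnelset{\B,\Lip_\B}{\A,\Lip_\A}{\mathcal{T}}
\end{equation*}
preserving the extent. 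Taking infima yields the stated equality.

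For the triangle inequality, I would argue by the standard ``$\varepsilon/3$ trick''. Fix $\varepsilon > 0$. By the definition of the infimum, pick $\tau_1 \in \tunnelset{\A,\Lip_\A}{\B,\Lip_\B}{\mathcal{T}}$ and $\tau_2 \in \tunnelset{\B,\Lip_\B}{\D,\Lip_\D}{\mathcal{T}}$ with
\begin{equation*}
\tunnelextent{\tau_1} \leq \propinquity{\mathcal{T}}\bigl((\A,\Lip_\A),(\B,\Lip_\B)\bigr) + \tfrac{\varepsilon}{3}, \qquad \tunnelextent{\tau_2} \leq \propinquity{\mathcal{T}}\bigl((\B,\Lip_\B),(\D,\Lip_\D)\bigr) + \tfrac{\varepsilon}{3}.
\end{equation*}
The triangularity clause in Definition \ref{appropriate-def} supplies some $\tau \in \tunnelset{\A,\Lip_\A}{\D,\Lip_\D}{\mathcal{T}}$ with $\tunnelextent{\tau} \leq \tunnelextent{\tau_1} + \tunnelextent{\tau_2} + \tfrac{\varepsilon}{3}$. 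Combining these inequalities gives
\begin{equation*}
\propinquity{\mathcal{T}}\bigl((\A,\Lip_\A),(\D,\Lip_\D)\bigr) \leq \tunnelextent{\tau} \leq \propinquity{\mathcal{T}}\bigl((\A,\Lip_\A),(\B,\Lip_\B)\bigr) + \propinquity{\mathcal{T}}\bigl((\B,\Lip_\B),(\D,\Lip_\D)\bigr) + \varepsilon,
\end{equation*}
and letting $\varepsilon \to 0^+$ yields the triangle inequality.

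The only substantive obstacle is hidden inside the triangularity hypothesis itself, which is why Theorem \ref{tunnel-composition-thm} was established beforehand: the actual composition of two tunnels $\tau_{12}$ and $\tau_{23}$ via the construction on $\D_{12} \oplus \D_{23}$ with the seminorm
\begin{equation*}
\Lip(d_1,d_2) = \max\left\{ \Lip_{12}(d_1),\, \Lip_{23}(d_2),\, \tfrac{1}{\varepsilon}\|\rho_1(d_1) - \pi_2(d_2)\|_{\A_2} \right\}
\end{equation*}
is where the $F$-quasi-Leibniz property and the permissibility condition \eqref{permissible-eq} are actually used. Once triangularity is packaged into Definition \ref{appropriate-def}, the passage to the propinquity is purely formal, and no further structural facts about tunnels are needed.
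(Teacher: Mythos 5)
Your proof is correct and follows essentially the same route as the paper's: symmetry comes from the extent being invariant under swapping the two legs of a tunnel together with the symmetry clause of Definition (\ref{appropriate-def}), and the triangle inequality is the standard $\varepsilon$-approximation argument where the triangularity clause (underwritten by the composition Theorem (\ref{tunnel-composition-thm})) supplies the needed tunnel from $(\A,\Lip_\A)$ to $(\D,\Lip_\D)$. You also correctly identify that all the analytic content (the quasi-Leibniz property and permissibility via Inequality (\ref{permissible-eq})) lives in the composition theorem, so that the passage to $\propinquity{\mathcal{T}}$ is purely formal.
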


We now turn to a core motivation of our construction: the dual Gro\-mov-\-Haus\-dorff propinquity is, in fact, a metric up to isometric *-isomorphism.

\subsubsection{Coincidence Property}

We established in \cite{Latremoliere13c} the main theorem that the dual Gromov-Hausdorff propinquity is, in fact, a metric up to *-isomorphism: thus our metric genuinely captures the C*-algebraic structure.

\begin{theorem}[\cite{Latremoliere13c}, Theorem 4.16]\label{coincidence-thm}
Let $F$ be a permissible function. Let $\mathcal{C}\not=\emptyset$ be a class of {\Qqcms{F}s} and let $\mathcal{T}$ be a $\mathcal{C}$-appropriate class of tunnels. For all $(\A,\Lip_\A)$ and $(\B,\Lip_\B)$ in $\mathcal{C}$, the following two assertions are equivalent:
\begin{enumerate}
\item $\propinquity{\mathcal{T}}\left((\A,\Lip_\A), (\B,\Lip_\B)\right) = 0$,
\item there exists a *-isomorphism $\theta : \A \xlongrightarrow{\simeq} \B$ such that for all $a\in\A$ we have $\Lip_\B\circ \theta(a) = \Lip_\A(a)$.
\end{enumerate}
\end{theorem}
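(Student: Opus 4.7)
The backward implication is immediate from the definiteness axiom of $\mathcal{T}$: if $\theta:\A\to\B$ is an isometric *-isomorphism with $\Lip_\B\circ\theta=\Lip_\A$, then $(\B,\Lip_\B,\theta,\mathrm{id}_\B)\in\mathcal{T}$ has extent $0$, so $\propinquity{\mathcal{T}}\le 0$. For the forward direction, pick tunnels $\tau_n=(\D_n,\Lip_n,\pi_n,\rho_n)\in\mathcal{T}$ with $\varepsilon_n:=\tunnelextent{\tau_n}\to 0$. For each $a\in\dom{\Lip_\A}$, the isometric *-epimorphism property of $\pi_n$ provides a near-optimal lift $d_n(a)\in\sa{\D_n}$ with $\pi_n(d_n(a))=a$ and $\Lip_n(d_n(a))\le\Lip_\A(a)+\varepsilon_n$. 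The extent bound yields the key inequality: for every $d\in\sa{\D_n}$, each state of $\D_n$ lies within $\varepsilon_n\Lip_n(d)$ (in the dual norm) of a state of the form $\varphi\circ\pi_n$, so that $\|d\|_{\D_n}\le\|\pi_n(d)\|_\A+\varepsilon_n\Lip_n(d)$. In particular $\theta_n(a):=\rho_n(d_n(a))\in\sa{\B}$ satisfies $\Lip_\B(\theta_n(a))\le\Lip_\A(a)+\varepsilon_n$ and $\|\theta_n(a)\|_\B$ is bounded uniformly in $n$.

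Rieffel's compactness criterion (Theorem \ref{Rieffel-thm}) makes $\{b\in\sa{\B}:\Lip_\B(b)\le L,\|b\|_\B\le R\}$ norm-compact for any $L,R>0$. Fix a countable $\Q$-linear subspace $\Lambda\subseteq\dom{\Lip_\A}$ that is norm-dense, and extract diagonally a subsequence (still indexed by $n$) along which $\theta_n(a)\to\theta(a)$ in norm for every $a\in\Lambda$. Lower semicontinuity of $\Lip_\B$ gives $\Lip_\B(\theta(a))\le\Lip_\A(a)$, and the resulting map extends by uniform continuity to an $\R$-linear contraction $\theta:\sa{\A}\to\sa{\B}$ with $\Lip_\B\circ\theta\le\Lip_\A$.

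The crucial step is that $\theta$ respects the Jordan and Lie products; this is exactly where the $F$-quasi-Leibniz hypothesis is indispensable. For $a,a'\in\Lambda$, the elements
\begin{equation*}
e_n:=\Jordan{d_n(a)}{d_n(a')}-d_n(\Jordan{a}{a'})\in\ker\pi_n
\end{equation*}
have $\Lip_n$-norms bounded independently of $n$: Definition \ref{quasi-Leibniz-def} bounds $\Lip_n(\Jordan{d_n(a)}{d_n(a')})$ by $F$ applied to norms and Lip-norms of the lifts, all of which are uniformly controlled by Step 1. Since $\pi_n(e_n)=0$, the displayed norm inequality forces $\|e_n\|_{\D_n}\le\varepsilon_n\Lip_n(e_n)\to 0$, whence $\rho_n(e_n)\to 0$ in $\B$. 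But $\rho_n(\Jordan{d_n(a)}{d_n(a')})=\Jordan{\theta_n(a)}{\theta_n(a')}$ and $\rho_n(d_n(\Jordan{a}{a'}))=\theta_n(\Jordan{a}{a'})$, so passing to the limit gives $\theta(\Jordan{a}{a'})=\Jordan{\theta(a)}{\theta(a')}$, and the same argument for the Lie bracket shows $\theta$ is a Jordan-Lie morphism, hence extends to a *-homomorphism $\A\to\B$.

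Finally, the reversed tunnels $\tau_n^{-1}=(\D_n,\Lip_n,\rho_n,\pi_n)$ (available by symmetry of $\mathcal{T}$) yield symmetrically a *-homomorphism $\theta':\B\to\A$ with $\Lip_\A\circ\theta'\le\Lip_\B$. A diagonal extraction compatible with the one above makes $\theta$ and $\theta'$ mutually inverse on a dense set, so $\theta$ is a *-isomorphism; combining $\Lip_\B\circ\theta\le\Lip_\A$ with $\Lip_\A\circ\theta'\le\Lip_\B$ then forces equality $\Lip_\B\circ\theta=\Lip_\A$. The principal obstacle is Step 3, the multiplicativity of $\theta$: without the $F$-quasi-Leibniz control there is no bound on $\Lip_n(\Jordan{d_n(a)}{d_n(a')})$, and the extent-based argument that propagates vanishing through $\rho_n$ collapses; this is precisely why the Leibniz-type axioms, otherwise invisible to the topology of $\Kantorovich{\Lip}$, become decisive for the coincidence property.
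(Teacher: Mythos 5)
Your proposal is correct and is essentially the paper's own argument: your key norm estimate $\|d\|_{\D_n}\leq\|\pi_n(d)\|_\A+\varepsilon_n\Lip_n(d)$ is Proposition (\ref{fundamental-extent-prop}), your quasi-Leibniz control of lifts of Jordan--Lie products is Proposition (\ref{generalized-morphism-prop}) phrased via target sets, and your diagonal extraction using Rieffel's compactness criterion together with lower semicontinuity is precisely the ``Arz{\'e}la--Ascoli theorem for tunnels'' of \cite{Latremoliere13c}, with the reversed tunnels $\tau_n^{-1}$ furnishing the inverse morphism exactly as in the paper. The only routine repairs needed are to take $\Lambda$ to be a countable $\Q$-Jordan--Lie subalgebra of $\dom{\Lip_\A}$ (so that $\theta_n(\Jordan{a}{a'})$ itself converges) and to observe that the $\R$-linearity of the limit map, which you assert, follows from the same $\ker\pi_n$-vanishing mechanism you deploy for multiplicativity, since $d_n(a)+d_n(a')-d_n(a+a')\in\ker\pi_n$ with uniformly bounded Lip-norm.
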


The proof of this important theorem relies on an interpretation of tunnels as a form of morphisms, akin to a correspondence between metric spaces. Theorem (\ref{tunnel-composition-thm}) showed that tunnels may be composed, although not in a unique manner. The conditions of Definition (\ref{appropriate-def}) could be read as the description of a structure modeled after a category, with tunnels for morphisms, albeit in a loose sense. Now, we push this analogy somewhat further.

We begin by defining the image of an element by a tunnel. Such an image is of course a set, and again depends on an additional choice of a real number. We call this image the target set of an element.

\begin{definition}[\cite{Latremoliere13c}, Definition 4.1]\label{targetset-def}
Let $F$ be a permissible function, $\mathcal{C}\not=\emptyset$ be a class of {\Qqcms{F}s} and $\mathcal{T}$ a $\mathcal{C}$-appro\-priate class of $F$-tunnels. Let:
\begin{equation*}
\tau=(\D,\Lip_\D,\pi_\A,\pi_\B) \in \tunnelset{\A,\Lip_\A}{\B,\Lip_\B}{\mathcal{T}}\text{.}
\end{equation*}

For any $a\in\sa{\A}$, and any $l \geq \Lip_\A(a)$, we define the \emph{lift set} of $a$ by:
\begin{equation*}
\liftsettunnel{\tau}{a}{l} = \left\{ d\in\sa{\D} : d\in\sa{\D}, \pi_\A(d) = a \text{ and } \Lip_\D(d)\leq l \right\}
\end{equation*}
and the \emph{target set} of $a$ by:
\begin{equation*}
\targetsettunnel{\tau}{a}{l} = \pi_\B\left(\liftsettunnel{\tau}{a}{l}\right)\text{.}
\end{equation*}
\end{definition}

Target sets are compact and nonempty under the assumptions of Definition (\ref{targetset-def}) by \cite[Lemma 4.2]{Latremoliere13c}.

One of most important result about the extent of a tunnel is akin to a statement about the continuity of a tunnel as a generalized morphism.

\begin{proposition}[\cite{Latremoliere13c}, Proposition 4.4]\label{fundamental-extent-prop}
Let $F$ be a permissible function, $\mathcal{C}\not=\emptyset$ be a class of {\Qqcms{F}s} and $\mathcal{T}$ a $\mathcal{C}$-appropriate class of $F$-tunnels. Let:
\begin{equation*}
\tau=(\D,\Lip_\D,\pi_\A,\pi_\B) \in \tunnelset{\A,\Lip_\A}{\B,\Lip_\B}{\mathcal{T}}\text{.}
\end{equation*}

Let $a\in\dom{\Lip_\A}$ and $l \in \R$ with $\Lip_\A(a)\leq l$. If $d\in\liftsettunnel{\tau}{a}{l}$, then:
\begin{equation*}
\|d\|_\D \leq \|a\|_\A + l \tunnelextent{\tau}\text{.}
\end{equation*}

Consequently, if $b\in\targetsettunnel{\tau}{a}{l}$ then:
\begin{equation*}
\|b\|_\B \leq \|a\|_\A + l \tunnelextent{\tau}\text{.}
\end{equation*}
\end{proposition}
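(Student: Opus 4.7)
The plan is to exploit the duality between the operator norm of a self-adjoint element and the state space, combined with the fact that the extent controls the Kantorovich distance from any state of $\D$ to the image of a state of $\A$. Since $d \in \sa{\D}$, we have $\|d\|_\D = \sup\{|\varphi(d)| : \varphi \in \StateSpace(\D)\}$, so it suffices to bound $|\varphi(d)|$ uniformly in $\varphi \in \StateSpace(\D)$.

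Fix $\varphi \in \StateSpace(\D)$. By Definition~\ref{tunnel-extent}, since $\tunnelextent{\tau}$ dominates the Hausdorff distance (for $\Kantorovich{\Lip_\D}$) from $\StateSpace(\D)$ to $\pi_\A^\ast(\StateSpace(\A))$, for every $\varepsilon>0$ there exists $\mu\in\StateSpace(\A)$ with
\begin{equation*}
\Kantorovich{\Lip_\D}(\varphi,\mu\circ\pi_\A) \leq \tunnelextent{\tau} + \varepsilon\text{.}
\end{equation*}
The key step is then the triangle inequality estimate
\begin{equation*}
|\varphi(d)| \leq |\varphi(d) - \mu\circ\pi_\A(d)| + |\mu\circ\pi_\A(d)|\text{.}
\end{equation*}
The first term is bounded by $\Lip_\D(d)\,\Kantorovich{\Lip_\D}(\varphi,\mu\circ\pi_\A)$ directly from the definition of the {\mongekant} in Definition~\ref{Kantorovich-def} (using $\Lip_\D(d) \leq l$), and the second term equals $|\mu(a)|$ since $\pi_\A(d)=a$, hence is bounded by $\|a\|_\A$. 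Combining these and letting $\varepsilon\downarrow 0$ yields $|\varphi(d)| \leq \|a\|_\A + l\,\tunnelextent{\tau}$. Taking the supremum over $\varphi\in\StateSpace(\D)$ gives the first conclusion.

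For the second statement, if $b\in\targetsettunnel{\tau}{a}{l}$ then by Definition~\ref{targetset-def} there exists $d\in\liftsettunnel{\tau}{a}{l}$ with $b=\pi_\B(d)$. Since $\pi_\B$ is a *-homomorphism, it is norm-decreasing, so $\|b\|_\B \leq \|d\|_\D$, and the bound follows from the first part. There is no real obstacle here: the only subtlety worth noting is that the argument does not need the depth separately --- it uses only the reach-type half of the extent relative to $\pi_\A^\ast(\StateSpace(\A))$, which illustrates why the extent (rather than the length) is the tight quantity to control norms of lifts through a tunnel.
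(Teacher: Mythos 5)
Your proof is correct and is essentially the paper's own argument: one realizes $\|d\|_\D$ as $\sup_{\varphi\in\StateSpace(\D)}|\varphi(d)|$ (valid since $d$ is self-adjoint and $\D$ is unital), picks for each $\varphi\in\StateSpace(\D)$ a state $\mu\in\StateSpace(\A)$ with $\Kantorovich{\Lip_\D}(\varphi,\mu\circ\pi_\A)\leq\tunnelextent{\tau}+\varepsilon$, bounds $|\varphi(d)-\mu\circ\pi_\A(d)|\leq l\,\Kantorovich{\Lip_\D}(\varphi,\mu\circ\pi_\A)$ and $|\mu(a)|\leq\|a\|_\A$, and then passes to $\pi_\B$ using contractivity of *-morphisms. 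Your closing remark is also accurate: the argument uses only the component $\Haus{\Kantorovich{\Lip_\D}}\left(\StateSpace(\D),\pi_\A^\ast\left(\StateSpace(\A)\right)\right)$ of the extent, which is precisely why the extent (unlike the length, which splits this control between reach and depth) gives the norm estimate in one step.
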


As we wish to see tunnels as generalized morphisms, we should naturally connect the underlying algebraic structures of the Jordan-Lie algebras of the domain of Lip-norms with target sets. These generalized algebraic morphisms notions are given by the following:

\begin{proposition}[\cite{Latremoliere13c}, Corollary 4.5, Proposition 4.8]\label{generalized-morphism-prop}
Let $F$ be a permissible function, $\mathcal{C}\not=\emptyset$ be a class of {\Qqcms{F}s} and $\mathcal{T}$ a $\mathcal{C}$-appropriate class of $F$-tunnels. Let:
\begin{equation*}
\tau=(\D,\Lip_\D,\pi_\A,\pi_\B) \in \tunnelset{\A,\Lip_\A}{\B,\Lip_\B}{\mathcal{T}}\text{.}
\end{equation*}

If $a,a'\in\dom{\Lip_\A}$ and $l \in \R$ with $\max\{\Lip_\A(a),\Lip_\A(a')\}\leq l$, then for all $b\in\targetsettunnel{\tau}{a}{l}$ and $b'\in\targetsettunnel{\tau}{a'}{l}$:
\begin{enumerate}
\item for all $t\in\R$, we have:
\begin{equation*}
b + t b' \in \targetsettunnel{\tau}{a+ta'}{(1+|t|)l}\text{,}
\end{equation*}
\item we have:
\begin{equation*}
\Jordan{b}{b'} \in \targetsettunnel{\tau}{\Jordan{a}{a'}}{F(\|a\|_\A+2\tunnelextent{\tau}, \|a'\|_\A+2\tunnelextent{\tau}, l, l)}
\end{equation*}
and
\begin{equation*}
\Lie{b}{b'} \in \targetsettunnel{\tau}{\Lie{a}{a'}}{F(\|a\|_\A+2\tunnelextent{\tau}, \|a'\|_\A+2\tunnelextent{\tau}, l, l)}\text{.}
\end{equation*}
\end{enumerate}
\end{proposition}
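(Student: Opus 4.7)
The plan is to derive both assertions by producing an explicit element in the appropriate lift set of $\D$ and reading off its images under $\pi_\A$ and $\pi_\B$. By Definition~\ref{targetset-def}, since $b \in \targetsettunnel{\tau}{a}{l}$ and $b' \in \targetsettunnel{\tau}{a'}{l}$, I would begin by fixing common lifts $d, d' \in \sa{\D}$ satisfying $\pi_\A(d) = a$, $\pi_\B(d) = b$, $\pi_\A(d') = a'$, $\pi_\B(d') = b'$, and $\Lip_\D(d), \Lip_\D(d') \leq l$. Both claims then reduce to forming an appropriate algebraic combination of $d$ and $d'$, verifying that the combination still lies in a suitable lift set, and reading off its image under $\pi_\B$.

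For assertion (1), I would form $d + t d' \in \sa{\D}$ for any $t \in \R$. Linearity of $\pi_\A$ and $\pi_\B$ gives $\pi_\A(d + t d') = a + t a'$ and $\pi_\B(d + t d') = b + t b'$, while the seminorm inequality gives $\Lip_\D(d + t d') \leq (1 + |t|) l$. Hence $d + t d'$ witnesses membership of $b + t b'$ in $\targetsettunnel{\tau}{a + t a'}{(1 + |t|) l}$.

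For assertion (2), I would replace the linear combination by $\Jordan{d}{d'}$, respectively $\Lie{d}{d'}$. Both lie in $\sa{\D}$ because $\dom{\Lip_\D}$ is a Jordan--Lie subalgebra of $\sa{\D}$. Since $\pi_\A$ and $\pi_\B$ are *-morphisms, they commute with both Jordan and Lie products, so $\pi_\A(\Jordan{d}{d'}) = \Jordan{a}{a'}$, $\pi_\B(\Jordan{d}{d'}) = \Jordan{b}{b'}$, and analogously for $\Lie{\cdot}{\cdot}$. The $F$-quasi-Leibniz property of $(\D, \Lip_\D)$ gives
\begin{equation*}
\Lip_\D(\Jordan{d}{d'}) \leq F\bigl(\|d\|_\D, \|d'\|_\D, \Lip_\D(d), \Lip_\D(d')\bigr),
\end{equation*}
and the same bound for $\Lie{d}{d'}$. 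To convert this into an estimate intrinsic to $\A$, I would invoke Proposition~\ref{fundamental-extent-prop} to control each of $\|d\|_\D$ and $\|d'\|_\D$ in terms of $\|a\|_\A$, $\|a'\|_\A$, and $\tunnelextent{\tau}$; together with $\Lip_\D(d), \Lip_\D(d') \leq l$ and the monotonicity of $F$ from Definition~\ref{permissible-def}, this dominates the right-hand side by the expression appearing in the statement, thereby placing $\Jordan{b}{b'}$ and $\Lie{b}{b'}$ in the asserted target sets.

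The argument is a direct sequence of consequences of the definitions, and I do not anticipate any substantial obstacle. The one subtlety to track is that for each starting element one must select a \emph{single} lift in $\sa{\D}$ simultaneously realizing its image on both the $\A$-side and the $\B$-side, which is exactly what Definition~\ref{targetset-def} supplies. The conceptual role of Proposition~\ref{fundamental-extent-prop} is to bridge the gap between the $F$-quasi-Leibniz inequality --- which a priori produces an estimate in terms of the opaque norms $\|d\|_\D, \|d'\|_\D$ --- and an intrinsic bound expressed in $\|a\|_\A, \|a'\|_\A$; without such a bridge, the quasi-Leibniz property could not be pushed through the quotient structure of the tunnel.
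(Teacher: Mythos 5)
Your proposal is precisely the argument of the cited proof: fix simultaneous lifts $d\in\liftsettunnel{\tau}{a}{l}$ and $d'\in\liftsettunnel{\tau}{a'}{l}$, form $d+td'$, respectively $\Jordan{d}{d'}$ and $\Lie{d}{d'}$, use the fact that $\pi_\A,\pi_\B$ are Jordan--Lie morphisms on self-adjoint parts, apply the $F$-quasi-Leibniz property of $(\D,\Lip_\D)$, and convert $\|d\|_\D,\|d'\|_\D$ into data about $a,a'$ via Proposition (\ref{fundamental-extent-prop}) and the monotonicity of $F$. Assertion (1) is complete as you state it, and you correctly isolate the one structural subtlety, namely that a single lift must witness both coordinates at once. (One routine point worth a line: for the target sets on the right-hand sides to be defined one needs the new parameter to dominate $\Lip_\A$ of the combined element, which follows from the seminorm axioms for (1) and from the $F$-quasi-Leibniz property of $(\A,\Lip_\A)$ together with monotonicity of $F$ for (2).)

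The one step you should not wave through is the final domination. Proposition (\ref{fundamental-extent-prop}) gives $\|d\|_\D\leq\|a\|_\A+l\,\tunnelextent{\tau}$, so your chain of estimates places $\Jordan{b}{b'}$ in $\targetsettunnel{\tau}{\Jordan{a}{a'}}{F\left(\|a\|_\A+l\tunnelextent{\tau},\|a'\|_\A+l\tunnelextent{\tau},l,l\right)}$, and monotonicity of $F$ upgrades this to the printed parameter $F(\|a\|_\A+2\tunnelextent{\tau},\|a'\|_\A+2\tunnelextent{\tau},l,l)$ only when $l\leq 2$. The coefficient of the extent genuinely must scale with $l$ --- the norm estimate for a lift is proportional to the Lip-norm bound, exactly as in Corollary (\ref{diameter-corollary}) where the term $2l\tunnelextent{\tau}$ appears --- so for $l>2$ the displayed constant cannot be reached by this method (the source's length-based bound reads $\|d\|_\D\leq\|a\|_\A+2l\tunnellength{\tau}$, and the survey's display appears to have dropped the factor $l$ in transcription). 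Your sentence asserting that the quasi-Leibniz estimate ``dominates the right-hand side by the expression appearing in the statement'' is therefore unjustified as written; replace it either by the explicit $l$-dependent parameter above, which is what your argument actually proves and what the applications use, or by a standing normalization $l\leq 2$ under which the two expressions coincide.
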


Propositions (\ref{fundamental-extent-prop}) and (\ref{generalized-morphism-prop}) complete our picture of tunnels as generalized morphisms and are the key to Theorem (\ref{coincidence-thm}). Moreover, a key consequence of these two propositions, at the center of our proof of Theorem (\ref{coincidence-thm}), is that the diameters of target sets are controlled by the extent, or equivalently, by the lengths of tunnels:

\begin{corollary}[\cite{Latremoliere13c}, Corollary 4.5]\label{diameter-corollary}
Let $F$ be a permissible function, $\mathcal{C}$ be a nonempty class of {\Qqcms{F}s} and $\mathcal{T}$ a $\mathcal{C}$-appropriate class of $F$-tunnels. Let:
\begin{equation*}
\tau=(\D,\Lip_\D,\pi_\A,\pi_\B) \in \tunnelset{\A,\Lip_\A}{\B,\Lip_\B}{\mathcal{T}}\text{.}
\end{equation*}

If $a,a'\in\dom{\Lip_\A}$ and $l \in \R$ with $\max\{\Lip_\A(a),\Lip_\A(a')\}\leq l$, then for all $b\in\targetsettunnel{\tau}{a}{l}$ and $b'\in\targetsettunnel{\tau}{a'}{l}$:
\begin{equation*}
\|b-b'\|_\B \leq \|a-a'\|_\A + 2 l \tunnelextent{\tau}\text{.}
\end{equation*}
In particular:
\begin{equation*}
\diam{\targetsettunnel{\tau}{a}{l}}{\|\cdot\|_\B} \leq 2 l \tunnelextent{\tau}\text{.}
\end{equation*}
\end{corollary}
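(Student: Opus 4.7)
The plan is to derive the corollary as an immediate consequence of the two Propositions that precede it, namely the norm bound (Proposition \ref{fundamental-extent-prop}) and the affine compatibility assertion of Proposition \ref{generalized-morphism-prop}. The key observation is that the difference $b-b'$ is itself a target-set element for the difference $a-a'$, at the cost of doubling the Lip-norm parameter; once this is recognized, the norm bound for target sets applies directly.

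First, fix $a,a'\in\dom{\Lip_\A}$ with $\max\{\Lip_\A(a),\Lip_\A(a')\}\leq l$, and pick $b\in\targetsettunnel{\tau}{a}{l}$ and $b'\in\targetsettunnel{\tau}{a'}{l}$. Applying Proposition \ref{generalized-morphism-prop}(1) with $t=-1$, we have
\begin{equation*}
b - b' \in \targetsettunnel{\tau}{a - a'}{(1+|-1|)l} = \targetsettunnel{\tau}{a-a'}{2l}\text{.}
\end{equation*}
Note that this is consistent with the Lip-norm hypothesis since by the triangle inequality $\Lip_\A(a-a')\leq \Lip_\A(a)+\Lip_\A(a')\leq 2l$.

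Next, I would invoke Proposition \ref{fundamental-extent-prop} applied to the element $a - a'$ with the Lip-norm parameter $2l$: since $b - b' \in \targetsettunnel{\tau}{a-a'}{2l}$, we obtain
\begin{equation*}
\|b - b'\|_\B \leq \|a - a'\|_\A + 2 l \tunnelextent{\tau}\text{,}
\end{equation*}
which is the first claimed inequality. For the diameter assertion, it suffices to specialize $a' = a$ in the inequality just proved: the self-adjoint element $a$ trivially satisfies $\Lip_\A(a)\leq l$, and for any two $b,b'\in\targetsettunnel{\tau}{a}{l}$ we have $\|b-b'\|_\B \leq \|a-a\|_\A + 2l\tunnelextent{\tau} = 2l\tunnelextent{\tau}$, and taking the supremum over such pairs yields the diameter bound.

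There is no real obstacle here; the entire content of the corollary is absorbed into the earlier propositions, and the proof amounts to a correct bookkeeping of the Lip-norm parameter. The only point requiring attention is the doubling of the Lip-norm parameter when passing to the difference, which is precisely what accounts for the factor of $2$ in the bound and which, in turn, explains why the extent $\tunnelextent{\tau}$ (as opposed to some other synthetic quantity) is the natural way to measure the failure of a tunnel to witness an isometric *-isomorphism.
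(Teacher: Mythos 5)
Your proof is correct and follows exactly the route the paper intends: the corollary is obtained by applying Proposition (\ref{generalized-morphism-prop})(1) with $t=-1$ to place $b-b'$ in $\targetsettunnel{\tau}{a-a'}{2l}$, then invoking the norm bound of Proposition (\ref{fundamental-extent-prop}) for the element $a-a'$ with parameter $2l$, and specializing $a'=a$ for the diameter estimate. The bookkeeping of the doubled Lip-norm parameter, including the check that $\Lip_\A(a-a')\leq 2l$ so the target set is well-defined, is handled correctly.
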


The strategy to prove Theorem (\ref{coincidence-thm}) in \cite{Latremoliere13c} consists, therefore, in proving that if we are given a sequence of tunnels whose extent converge to zero, then we can find a subsequence of tunnels which, seen as generalized morphisms, converge to an actual isometric Jordan-Lie morphism which is also continuous for the Lip-norms. Then, using the fact that tunnels are always invertible, one hope to build an inverse morphism at the limit (possibly extracting another subsequence), and concludes with Theorem (\ref{coincidence-thm}). This strategy proves, indeed, successful. Central to the construction of the convergent subsequence of tunnels is the fact that the unit ball for Lip-norms of {\gQqcms s} are totally bounded modulo scalars by Theorem (\ref{Rieffel-thm}), and hence compact modulo scalar, since our Lip-norms are always lower semi-continuous. Pushing our analogy between tunnels and morphisms one step further, we could claim that our proof in \cite{Latremoliere13c} includes a form of the Arz{\'e}la-Ascoli theorem for tunnels.

We thus constructed a metric on {\Lqcms s}, and more generally on {\Qqcms{F}s} for any choice of a permissible $F$ (we note that we must choose $F$ first and then get a metric via our construction; we do not get a metric on the class of  all {\gQqcms s}). Our efforts, of course, were motivated by finding an analogue of the Gromov-Hausdorff distance in noncommutative geometry, and the next section shows that this goal was achieved as well.

\subsubsection{Comparison with Gromov-Hausdorff and other Metrics}

The dual propinquity can be compared to three important objects. First is the quantum Gro\-mov-Haus\-dorff distance \emph{$\dist_q$} \cite{Rieffel00}, which is a pseudo-metric on the class of quantum compact metric spaces and was the first noncommutative analogue of the Gromov-Hausdorff distance. Second is the proximity \emph{$\prox$}, a modified version of the quantum Gromov-Hausdorff distance introduced by Rieffel in \cite{Rieffel10c} to deal with compact C*-metric spaces, which are a type of {\Lqcms s}.  Yet the proximity is not known to satisfy the triangle inequality. Our metric takes its name from the proximity. Last, of course, we wish to compare our new metric to the Gromov-Hausdorff distance \emph{$\GH$}, when working with classical metric spaces.

The following two theorems summarize our results:

\begin{theorem}[\cite{Latremoliere13c}, Theorem 5.5]\label{ncpropinquity-comparison-thm}
Let $F$ be a permissible function and $\mathcal{C}$ be a non-empty class of {\Qqcms{F}s}. Let $(\A,\Lip_\A)$ and $(\B,\Lip_\B)$ be in $\mathcal{C}$, and let $\mathcal{T}$ be a $\mathcal{C}$-appropriate class of tunnels. Then:
\begin{equation}\label{comparison-eq-1}
\mathrm{dist}_q((\A,\Lip_\A),(\B,\Lip_\B))\leq \propinquity{\mathcal{T}}((\A,\Lip_\A),(\B,\Lip_\B))  \text{.}
\end{equation}
If $\mathcal{T}\subseteq\mathcal{G}$ are two $\mathcal{C}$-appropriate classes of tunnels, then:
\begin{equation}\label{comparison-eq-2}
\propinquity{\mathcal{G}}((\A,\Lip_\A),(\B,\Lip_\B))\leq\propinquity{\mathcal{T}}((\A,\Lip_\A),(\B,\Lip_\B)) \text{.}
\end{equation}

Moreover, if $(\A,\Lip_\A)$ and $(\B,\Lip_\B)$ are both compact C*-metric spaces, then:
\begin{equation}\label{comparison-eq-3}
\propinquity{}((\A,\Lip_\A),(\B,\Lip_\B)) \leq \propinquity{\ast}((\A,\Lip_\A),(\B,\Lip_\B)) \leq \prox((\A,\Lip),(\B,\Lip_\B)) \text{,}
\end{equation}
where $\propinquity{\ast}$ is the specialized dual propinquity to the class of compact C*-metric spaces.
\end{theorem}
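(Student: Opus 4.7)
The plan is to handle the three inequalities separately, with the first being the substantive step and the other two following from monotonicity of infima plus a direct construction.

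For Inequality (\ref{comparison-eq-1}), given an $F$-tunnel $\tau = (\D,\Lip_\D,\pi_\A,\pi_\B) \in \tunnelset{\A,\Lip_\A}{\B,\Lip_\B}{\mathcal{T}}$, I would construct an admissible Lip-norm on $\A\oplus\B$ as the quotient:
\begin{equation*}
\Lip(a,b) = \inf\left\{ \Lip_\D(d) : d\in\sa{\D}, \pi_\A(d) = a, \pi_\B(d) = b \right\}
\end{equation*}
for $(a,b) \in \sa{\A\oplus\B}$. Because $\pi_\A$ and $\pi_\B$ are isometric *-epimorphisms from $(\D,\Lip_\D)$ onto $(\A,\Lip_\A)$ and $(\B,\Lip_\B)$ respectively, the quotient seminorm $\Lip$ restricts to $\Lip_\A$ on the first summand and to $\Lip_\B$ on the second, so $\Lip$ is admissible in Rieffel's sense. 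Viewing $\StateSpace(\A)$ and $\StateSpace(\B)$ as embedded in $\StateSpace(\A\oplus\B)$ via the canonical projections, the Hausdorff distance between these images with respect to $\Kantorovich{\Lip}$ is dominated by $\tunnelextent{\tau}$: this follows because each state of $\A\oplus\B$ extends to a state of $\D$ (via Hahn-Banach and the surjectivity of the quotient map), and then Definition (\ref{tunnel-extent}) controls its Kantorovich distance to a state coming from $\A$ or from $\B$. Passing to the infimum over $\tau \in \tunnelset{\A,\Lip_\A}{\B,\Lip_\B}{\mathcal{T}}$ yields the claim.

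Inequality (\ref{comparison-eq-2}) is immediate from Definition (\ref{propinquity-def}): since $\tunnelset{\A,\Lip_\A}{\B,\Lip_\B}{\mathcal{T}}\subseteq \tunnelset{\A,\Lip_\A}{\B,\Lip_\B}{\mathcal{G}}$, the infimum of $\tunnelextent{\cdot}$ over the larger class can only be smaller.

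For Inequality (\ref{comparison-eq-3}), the first bound $\propinquity{} \leq \propinquity{\ast}$ is a direct application of Inequality (\ref{comparison-eq-2}) once one observes that tunnels between compact C*-metric spaces (where the connecting {\Lqcms} is itself a compact C*-metric space with strong Leibniz Lip-norm) form a subclass of the class of all Leibniz tunnels. For the second bound $\propinquity{\ast} \leq \prox$, recall that the proximity of Rieffel \cite{Rieffel10c} is defined as the infimum, over admissible strong-Leibniz Lip-norms $\Lip$ on $\A\oplus\B$, of the Hausdorff distance between $\StateSpace(\A)$ and $\StateSpace(\B)$ inside $(\StateSpace(\A\oplus\B),\Kantorovich{\Lip})$. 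For each such $\Lip$, the quadruple $\tau_\Lip = (\A\oplus\B,\Lip,\pi_\A,\pi_\B)$ with $\pi_\A,\pi_\B$ the canonical coordinate projections is a specialized tunnel between compact C*-metric spaces; a short argument using convexity of $\co{\pi_\A^\ast(\StateSpace(\A))\cup\pi_\B^\ast(\StateSpace(\B))}$ shows that the depth of $\tau_\Lip$ is no larger than its reach, so its extent equals exactly the Hausdorff quantity defining $\prox$. Taking infima gives the inequality. The main obstacle is the control of the extent in Step~1: specifically, verifying that an arbitrary state of $\A\oplus\B$ with respect to the quotient $\Lip$ is within $\tunnelextent{\tau}$ of a state coming from either factor. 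This requires the weak* lower semicontinuity of $\Kantorovich{\Lip_\D}$ together with a careful application of the Hahn-Banach theorem to lift states through the quotient and then invoke Definition (\ref{tunnel-extent}).
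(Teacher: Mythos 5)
Your treatment of Inequality (\ref{comparison-eq-2}) is correct and is the intended one-line argument: the infimum defining $\propinquity{\mathcal{G}}$ runs over a superset of tunnels. Your treatment of Inequality (\ref{comparison-eq-3}) is also essentially right, and in fact your claim about the depth can be sharpened: for a direct-sum tunnel $\tau_{\Lip} = (\A\oplus\B,\Lip,\iota_\A,\iota_\B)$ every state of $\A\oplus\B$ is a convex combination of states pulled back from the two factors, so $\co{\iota_\A^\ast(\StateSpace(\A))\cup\iota_\B^\ast(\StateSpace(\B))} = \StateSpace(\A\oplus\B)$ and the depth is exactly zero; joint convexity of $\Kantorovich{\Lip}$ then gives extent $=$ reach $=$ the Hausdorff quantity defining $\prox$. (You should also replace Rieffel's admissible Lip-norm by its closure so that the middle space is honestly a compact C*-metric space; this changes neither the {\mongekant} nor the strong Leibniz property.)

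The genuine gap is in your proof of Inequality (\ref{comparison-eq-1}), which is the substantive part of the theorem. Your quotient seminorm is taken along the joint map $d\in\sa{\D}\mapsto(\pi_\A(d),\pi_\B(d))$, and this map is \emph{not} surjective onto $\sa{\A\oplus\B}$ in general: each coordinate is onto, but the pair need not be. Consequently your $\Lip$ is identically $\infty$ off the image, its domain need not be dense in $\sa{\A\oplus\B}$, and the induced pseudo-metric on $\StateSpace(\A\oplus\B)$ need not separate states --- so $\Lip$ need not be a Lip-norm, hence is not admissible in Rieffel's sense (its quotients onto the factors are indeed $\Lip_\A$ and $\Lip_\B$, but that is not enough). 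The simplest counterexample is the diagonal tunnel $(\A,\Lip_\A,\mathrm{id}_\A,\mathrm{id}_\A)$ from $(\A,\Lip_\A)$ to itself, of extent zero: the image of the joint map is the diagonal, and the two distinct states $\varphi\circ\iota_\A$ and $\varphi\circ\iota_\B$ of $\A\oplus\A$ agree on it, so your seminorm assigns them distance zero; your argument thus cannot even recover $\mathrm{dist}_q((\A,\Lip_\A),(\A,\Lip_\A))=0$. A side remark: the step you single out as the main obstacle --- lifting states via Hahn-Banach --- is actually vacuous, since states of $\A\oplus\B$ pull back to states of $\D$ along the unital *-morphism $(\pi_\A,\pi_\B)$ directly; the Hausdorff estimate via the reach and Definition (\ref{tunnel-extent}) is the easy half.

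The missing idea is the same $\varepsilon$-penalty device the paper uses to compose tunnels in Theorem (\ref{tunnel-composition-thm}): pass to $\D\oplus\D$, whose joint map $(d_1,d_2)\mapsto(\pi_\A(d_1),\pi_\B(d_2))$ \emph{is} surjective, and set, for $\varepsilon>0$:
\begin{equation*}
\Lip_\varepsilon(a,b) = \inf\left\{ \max\left\{\Lip_\D(d_1),\Lip_\D(d_2),\frac{1}{\varepsilon}\|d_1-d_2\|_\D\right\} : \pi_\A(d_1)=a,\ \pi_\B(d_2)=b \right\}\text{.}
\end{equation*}
This is a quotient of a Lip-norm along a surjection, hence a Lip-norm; taking $d_1=d_2$ shows its quotients on the two factors are exactly $\Lip_\A$ and $\Lip_\B$, so it is admissible (note that Rieffel's $\mathrm{dist}_q$ requires no Leibniz condition, so the penalty term causes no harm here). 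If $\Lip_\varepsilon(a,b)\leq 1$ and $\psi\in\StateSpace(\B)$ is chosen, via the extent, with $\Kantorovich{\Lip_\D}(\varphi\circ\pi_\A,\psi\circ\pi_\B)\leq\tunnelextent{\tau}$, then $|\varphi(a)-\psi(b)|\leq \Kantorovich{\Lip_\D}(\varphi\circ\pi_\A,\psi\circ\pi_\B)+\|d_1-d_2\|_\D\leq\tunnelextent{\tau}+\varepsilon$, so the Hausdorff distance defining $\mathrm{dist}_q$ is at most $\tunnelextent{\tau}+\varepsilon$; letting $\varepsilon\to 0$ and infimizing over $\tau$ yields Inequality (\ref{comparison-eq-1}).
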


We note that in \cite{Latremoliere13c}, where we constructed the propinquity using lengths, the class of tunnels was compatible with, rather than appropriate for the choice of a class of {\Lqcms s}. The proof is however unaffected by this small change, and we will discuss the length construction in a later section.

\begin{theorem}[\cite{Latremoliere13c}, Corollary 5.7]
Let $(X,\mathsf{d}_X)$ and $(Y,\mathsf{d}_Y)$ be two compact metric spaces, and let $\mathsf{GH}$ be the Gro\-mov-Haus\-dorff distance \cite{Gromov}. Then:
\begin{equation*}
\propinquity{}((C(X),\Lip_X),(C(Y),\Lip_Y)) \leq \mathsf{GH}((X,\mathsf{d}_X),(Y,\mathsf{d}_Y))\text{,}
\end{equation*}
where $\Lip_X$ and $\Lip_Y$ are, respectively, the Lipschitz seminorms associated to $\mathsf{d}_X$ and $\mathsf{d}_Y$.

Thus, the topology induced by the dual Gromov-Hausdorff propinquity on the class of compact metric spaces agrees with the topology induced by the Gromov-Hausdorff distance.
\end{theorem}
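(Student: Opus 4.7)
The plan is to bound $\propinquity{}$ above by $\GH$ by constructing, for each admissible pair of isometric embeddings of $X$ and $Y$ into a common compact metric space, a single Leibniz tunnel whose extent is controlled by the ambient Hausdorff distance. Fix isometric embeddings $\iota_X\colon X\hookrightarrow Z$ and $\iota_Y\colon Y\hookrightarrow Z$ into some compact metric space $(Z,\mathrm{d}_Z)$, and let $r = \Haus{\mathrm{d}_Z}(\iota_X(X),\iota_Y(Y))$. Replacing $Z$ by its compact subspace $\iota_X(X)\cup\iota_Y(Y)$ with the induced metric leaves $r$ unchanged, so we may assume that every point of $Z$ lies within distance $r$ of both $\iota_X(X)$ and $\iota_Y(Y)$. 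The candidate tunnel is $\tau = (C(Z),\Lip_Z,\pi_X,\pi_Y)$, where $\Lip_Z$ is the Lipschitz seminorm on $C(Z)$ induced by $\mathrm{d}_Z$, and $\pi_X(g) = g\circ\iota_X$, $\pi_Y(g) = g\circ\iota_Y$. By Example \ref{fundamental-LP-ex} the pair $(C(Z),\Lip_Z)$ is a {\Lqcms}; Tietze's extension theorem gives the surjectivity of $\pi_X$ and $\pi_Y$, while McShane's extension theorem shows that any Lipschitz function on $\iota_X(X)$ (respectively $\iota_Y(Y)$) extends to $C(Z)$ with the same Lipschitz constant, so that $\pi_X$ and $\pi_Y$ are isometric *-epimorphisms in the sense of Definition \ref{isometry-def}. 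Hence $\tau$ is a Leibniz tunnel.

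To show $\tunnelextent{\tau}\leq r$, note that $\pi_X^\ast(\StateSpace(C(X)))$ embeds into $\StateSpace(C(Z))$ as those regular Borel probability measures on $Z$ whose support lies in $\iota_X(X)$, so the relevant Hausdorff distance in $\Kantorovich{\Lip_Z}$ reduces to a one-sided estimate: every $\mu\in\StateSpace(C(Z))$ must admit some $\nu\in\pi_X^\ast(\StateSpace(C(X)))$ with $\Kantorovich{\Lip_Z}(\mu,\nu)\leq r$. Given such a $\mu$ and any $\varepsilon>0$, I would partition $Z$ into finitely many Borel sets $B_1,\dots,B_n$ each of diameter at most $\varepsilon$, pick $z_i\in B_i$, and select $x_i\in\iota_X(X)$ with $\mathrm{d}_Z(z_i,x_i)\leq r+\varepsilon$. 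The atomic measure $\mu_X = \sum_{i=1}^n \mu(B_i)\delta_{x_i}$ is then supported on $\iota_X(X)$, and for every $f\in C(Z)$ with $\Lip_Z(f)\leq 1$ the pointwise bound $|f(z)-f(x_i)|\leq \mathrm{d}_Z(z,x_i)\leq r+2\varepsilon$ for $z\in B_i$ yields
\begin{equation*}
\left|\int_Z f\,d\mu - \int_Z f\,d\mu_X\right| \leq \sum_{i=1}^n \int_{B_i}|f(z)-f(x_i)|\,d\mu(z) \leq r+2\varepsilon\text{.}
\end{equation*}
The analogous argument with $Y$ in place of $X$ furnishes the matching bound. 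Letting $\varepsilon\to 0$ gives $\tunnelextent{\tau}\leq r$, and taking the infimum over admissible embeddings yields $\propinquity{}((C(X),\Lip_X),(C(Y),\Lip_Y))\leq\GH((X,\mathrm{d}_X),(Y,\mathrm{d}_Y))$.

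For the topology statement, I would combine this bound with Theorem \ref{ncpropinquity-comparison-thm}, which supplies $\mathrm{dist}_q\leq\propinquity{}$, together with Rieffel's result from \cite{Rieffel00} that $\mathrm{dist}_q$ agrees with $\GH$ on the class of classical compact metric spaces. These chain to $\GH = \mathrm{dist}_q\leq\propinquity{}\leq\GH$, so all three distances coincide on this subclass, and in particular induce the same topology. The main technical points, rather than any deep obstacle, are the verification of the isometric epimorphism property, which is a clean application of McShane's theorem, and the finite-partition approximation of arbitrary probability measures on $Z$, which is routine once the $\varepsilon$-bookkeeping is set up carefully.
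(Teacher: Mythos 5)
Your proof of the inequality is correct and follows essentially the route of the paper. The cited proof works with tunnels of the form $\left(C(X\coprod Y),\Lip_{\mathrm{d}},\pi_X,\pi_Y\right)$ for admissible metrics $\mathrm{d}$ on the disjoint union --- exactly the classical tunnels the survey singles out when it remarks that they have null depth --- and your construction with a general ambient $(Z,\mathrm{d}_Z)$, restriction maps made into isometric *-epimorphisms via Tietze and McShane, and the finite Borel partition producing $\mu_X=\sum_i\mu(B_i)\delta_{x_i}$ with $\Kantorovich{\Lip_Z}(\mu,\mu_X)\leq r+2\varepsilon$, is the same argument in mildly greater generality. Your bookkeeping is sound: the reduction of $Z$ to $\iota_X(X)\cup\iota_Y(Y)$ is what makes the one-sided estimate uniform, the one-sided reduction of the extent is legitimate because $\pi_X^\ast(\StateSpace(C(X)))\subseteq\StateSpace(C(Z))$, and $\mu_X$ is indeed of the form $\pi_X^\ast\nu$ since $\iota_X$ is a homeomorphism onto its image. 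Bounding the extent directly, rather than the reach and depth separately, matches Definition (\ref{propinquity-def}).

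The one genuine error is in your final paragraph. Rieffel did not prove that $\mathrm{dist}_q$ \emph{equals} $\GH$ on classical compact metric spaces, and your chain $\GH = \mathrm{dist}_q \leq \propinquity{} \leq \GH$, which would force all three metrics to coincide on this class, is unjustified --- $\mathrm{dist}_q$ is in general smaller than $\GH$, and no metric equality is known (nor expected). What Rieffel proved is that $\mathrm{dist}_q$ induces the same \emph{topology} as $\GH$ on the class of classical compact metric spaces. The repair is immediate and is precisely the survey's own argument: from $\mathrm{dist}_q \leq \propinquity{} \leq \GH$, where the first inequality is Theorem (\ref{ncpropinquity-comparison-thm}), convergence in $\propinquity{}$ implies convergence in $\mathrm{dist}_q$, hence convergence in $\GH$ by Rieffel's topological result, while conversely $\GH$-convergence implies $\propinquity{}$-convergence by the inequality you just established. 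So the three topologies agree even though the three metrics need not, and the theorem's second assertion follows; you should state it this way rather than as a coincidence of distances.
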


Rieffel's quantum Gromov-Hausdorff distance is known to metrize the same topology as the Gromov-Hausdorff distance on the class of classical compact metric spaces; since the propinquity is between Rieffel's metric and Gromov's metric on this class, it also provides the same topology.

Thus, it is fair to see the dual Gromov-Hausdorff propinquity as a noncommutative analogue of the Gromov-Hausdorff distance. Theorem (\ref{coincidence-thm}) shows that our metric does remember the C*-algebraic distance and fix the coincidence property matter for the quantum Gromov-Hausdorff distance in the C*-algebra framework. By construction, the dual Gromov-Hausdorff propinquity allows to work entirely within the framework of {\Lqcms s}, unlike any other construction of noncommutative Gromov-Hausdorff distances, at least without sacrificing the triangle inequality. Moreover, one may adjust the construction to work within various sub-classes of {\Lqcms s}, or even choose a more lenient form of the Leibniz property, and still work with a well-behaved metric.

\subsubsection{Completeness}

The dual Gromov-Hausdorff propinquity shares another desirable property with the Gromov-Hausdorff distance: it is a complete metric:

\begin{theorem}[\cite{Latremoliere13c}, Theorem 6.27]\label{complete-thm}
Let $F$ be a \emph{continuous} permissible function. The dual Gromov-Hausdorff propinquity $\propinquity{F}$ is a complete metric.
\end{theorem}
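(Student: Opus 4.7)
The plan is to prove completeness by constructing, from an arbitrary Cauchy sequence, a limit {\Qqcms{F}} to which it converges, following a dualized version of the classical Gromov-Hausdorff completeness argument in which tunnels play the role of approximate isometric embeddings.

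First, given a Cauchy sequence $(\A_n,\Lip_n)_{n\in\N}$ for $\propinquity{F}$, I would extract a rapidly Cauchy subsequence. Since $\propinquity{F}$ is defined as an infimum of extents over $F$-tunnels, I may assume (after relabeling) the existence of $F$-tunnels $\tau_n$ from $(\A_n,\Lip_n)$ to $(\A_{n+1},\Lip_{n+1})$ with $\tunnelextent{\tau_n} < 2^{-n}$. Iterated composition via Theorem \ref{tunnel-composition-thm} produces composite $F$-tunnels $\tau_{n,m}$ from $(\A_n,\Lip_n)$ to $(\A_m,\Lip_m)$ for $m>n$, whose extents are bounded by $\sum_{k=n}^{m-1} 2^{-k}$ up to an arbitrarily small correction. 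Corollary \ref{diameter-corollary} then implies that for any $a\in\dom{\Lip_n}$ and any $l\geq \Lip_n(a)$, the target sets $\targetsettunnel{\tau_{n,m}}{a}{l}\subseteq\sa{\A_m}$ have diameter at most $2l\,\tunnelextent{\tau_{n,m}}$, yielding a Cauchy-type coherence for elements threaded through the tunnels.

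Second, I would build the candidate limit $(\A_\infty,\Lip_\infty)$ by considering coherent families $(a_n)_{n\geq N}$ with $a_n\in\dom{\Lip_n}$, $L:=\sup_n \Lip_n(a_n)<\infty$, and $a_{n+1}\in\targetsettunnel{\tau_n}{a_n}{L}$, modulo the equivalence relation identifying families whose components asymptotically agree in norm. Such families form a real Jordan-Lie algebra by Proposition \ref{generalized-morphism-prop}, equipped with the norm $\|(a_n)\|_\infty = \lim_n \|a_n\|_{\A_n}$ (whose existence follows from Proposition \ref{fundamental-extent-prop} together with the diameter control above) and the seminorm $\Lip_\infty((a_n)) = \liminf_n \Lip_n(a_n)$. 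Total boundedness of the $\Lip_n$-unit balls modulo scalars, provided by Theorem \ref{Rieffel-thm}, combined with a diagonal extraction argument, allows one to complete this structure into a genuine C*-algebra $\A_\infty$ with lower semi-continuous seminorm $\Lip_\infty$ and canonical surjections $\pi_n^\infty:\A_\infty\twoheadrightarrow\A_n$ sending a coherent family to (a lift of) its $n$-th component.

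Third, I would verify that $(\A_\infty,\Lip_\infty)$ is itself a {\Qqcms{F}} and realizes the limit. The $F$-quasi-Leibniz inequality for $\Lip_\infty$ is obtained by passing to $\liminf$ in the corresponding inequalities for each $\Lip_n$ evaluated on coherent families; this step is exactly where continuity of $F$ is essential, since the limit must pass inside $F$ applied to norms and Lip-norms. Total boundedness of the unit ball of $\Lip_\infty$ modulo scalars is inherited from the approximating structures by a compactness argument using the tunnels. Finally, the coherent-family construction naturally assembles into $F$-tunnels between $(\A_\infty,\Lip_\infty)$ and each $(\A_n,\Lip_n)$ whose extent is controlled by the tail $\sum_{k\geq n} 2^{-k}$, yielding $\propinquity{F}((\A_\infty,\Lip_\infty),(\A_n,\Lip_n))\to 0$, which together with the triangle inequality and the Cauchy condition establishes convergence of the original sequence.

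The hardest part will be the simultaneous construction and verification of the algebraic and metric structure of $\A_\infty$: coherent families must be regular enough to generate a genuine C*-algebra (closed under multiplication, involution, and norm-complete) but permissive enough that $\Lip_\infty$ is a bona fide Lip-norm with dense $F$-quasi-Leibniz domain. Continuity of $F$ is not a cosmetic hypothesis but enters precisely when transferring the $F$-quasi-Leibniz bounds through the limiting process; without it, the limit seminorm might only satisfy a $G$-quasi-Leibniz inequality for some strictly larger permissible $G$, so the ambient class of $F$-tunnels would fail to be closed under Cauchy limits and the completeness of $\propinquity{F}$ itself would collapse.
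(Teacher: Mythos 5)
Your plan follows essentially the same route as the paper's proof: the limit is constructed as a quotient of an algebra of coherent sequences threaded through composed tunnels with geometrically summable extents, and continuity of $F$ is invoked exactly where the paper uses it, namely to transfer the quasi-Leibniz inequality to the limit. The one refinement you should make explicit is that $\Lip_\infty$ must be the quotient seminorm (an infimum of $\liminf_n \Lip_n(a_n)$ over all representatives, so that it is well defined on equivalence classes rather than attached to a single family), and the quasi-Leibniz transfer then rests on the lemma the paper isolates as the crux --- every element of the quotient admits, for each $\varepsilon > 0$, a lift whose norm and Lip-norm are both within $\varepsilon$ of those of the element --- after which continuity of $F$ closes the argument as you describe.
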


The proof of completeness is quite technical. We shall only mention one aspect of the construction of the limit of a Cauchy sequence for the Gromov-Hausdorff propinquity, which significantly impact the structure of our proof of Theorem (\ref{complete-thm}).

The candidate for a limit is constructed as a quotient of a particular {\gQqcms}. However, quotient of Leibniz seminorms may not be Leibniz --- a difficulty which carries to the more general quasi-Leibniz situation. Thus, while we can obtain a quantum compact metric space as a limit, the proper Leibniz property requires quite some care. 

The idea is that any element $a$ of the quotients with a given Lip-norm admit, for any $\varepsilon > 0$, some lift with both Lip-norm and norm within $\varepsilon$  of the norm and Lip-norm of $a$. Assuming that the chosen permissible function is continuous, we then can obtain the desired quasi-Leibniz property at the limit. We refer to \cite[Section 6]{Latremoliere13c} and to \cite{Latremoliere15} for the proofs and a detailed account of these technical matters.

\subsection{Gromov's Compactness and Finite Dimensional Approximations}

\subsubsection{Compactness for the Dual Gromov-Hausdorff Propinquity}

Gromov's compactness theorem \cite{Gromov81} is a central tool when working with the Gromov-Hausdorff distance, and reads as follows:

\begin{theorem}[Gromov's Compactness Theorem]\label{Gromov-Compactness-thm}
A class $\mathcal{S}$ of compact metric spaces is totally bounded for the Gromov-Hausdorff distance if, and only if the following two assertions hold:
\begin{enumerate}
\item there exists $D\geq 0$ such that for all $(X,\mathrm{m}) \in \mathcal{S}$, the diameter of $(X,\mathrm{m})$ is less or equal to $D$,
\item there exists a function $G: (0,\infty)\rightarrow \N$ such that for every $(X,\mathrm{m}) \in \mathcal{S}$, and for every $\varepsilon > 0$, the smallest number $\mathrm{Cov}_{(X,\mathrm{m})}(\varepsilon)$ of balls of radius $\varepsilon$ needed to cover $(X,\mathrm{m})$ is no more than $G(\varepsilon)$. 
\end{enumerate}
Since the Gromov-Hausdorff distance is complete, a class of compact metric spaces is compact for the Gromov-Hausdorff distance if and only if it is closed and totally bounded.
\end{theorem}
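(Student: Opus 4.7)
My plan is to prove the two implications of the equivalence separately, using the fact that $\mathsf{GH}$--completeness (already stated in the excerpt and implicit in the theorem) reduces the compactness statement to total boundedness.

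For the necessary direction, I would first observe that whenever two compact metric spaces $(X,\mathrm{m})$ and $(Y,\mathrm{n})$ are at $\mathsf{GH}$--distance less than $r$, a straightforward argument using isometric embeddings into a common ambient space shows $\bigl|\mathrm{diam}(X,\mathrm{m})-\mathrm{diam}(Y,\mathrm{n})\bigr|\leq 2r$ and, moreover, that a finite $\varepsilon$--net of $Y$ can be transported to a $(2r+\varepsilon)$--net of $X$ of the same cardinality (by taking nearest points in $X$ to the images of the net of $Y$ inside a near-optimal common ambient space). Assuming $\mathcal{S}$ is totally bounded, pick a finite $1$--net $\mathcal{S}_1 \subseteq \mathcal{S}$; then $\mathrm{diam}(X,\mathrm{m}) \leq 2 + \max_{(Y,\mathrm{n})\in \mathcal{S}_1} \mathrm{diam}(Y,\mathrm{n})$ for every $(X,\mathrm{m})\in\mathcal{S}$, giving the uniform diameter bound. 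For the covering bound at scale $\varepsilon$, apply the same reasoning with a finite $\tfrac{\varepsilon}{3}$--net and set $G(\varepsilon)$ to be the maximum of the covering numbers at scale $\tfrac{\varepsilon}{3}$ across that finite net.

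For the sufficient direction, fix $\varepsilon>0$ and set $N = G(\varepsilon)$. For every $(X,\mathrm{m}) \in \mathcal{S}$ choose an $\varepsilon$--net $\{x_1,\ldots,x_N\}$ (padding with repetitions if it is of smaller cardinality) and record the matrix $M(X,\mathrm{m}) = \bigl(\mathrm{m}(x_i,x_j)\bigr)_{1\leq i,j\leq N}\in [0,D]^{N\times N}$. The cube $[0,D]^{N\times N}$ is compact for the sup norm, hence admits a finite $\varepsilon$--net. Picking one representative $(X,\mathrm{m})\in\mathcal{S}$ per nonempty sup--ball of radius $\varepsilon$ hit by $\{M(X,\mathrm{m}):(X,\mathrm{m})\in\mathcal{S}\}$ yields a \emph{finite} subfamily $\mathcal{S}_\varepsilon\subseteq\mathcal{S}$. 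The claim is that $\mathcal{S}_\varepsilon$ is a $5\varepsilon$--net for $\mathcal{S}$ (or some similar explicit constant).

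To establish the claim, given $(X,\mathrm{m}),(Y,\mathrm{n})\in\mathcal{S}$ with nets $\{x_i\}$, $\{y_i\}$ such that $\|M(X,\mathrm{m})-M(Y,\mathrm{n})\|_\infty\leq \varepsilon$, build a pseudo-metric $\rho$ on $X\sqcup Y$ extending $\mathrm{m}$ and $\mathrm{n}$ by declaring $\rho(x_i,y_i)=\varepsilon$ and then extending to arbitrary $x\in X$, $y\in Y$ by
\begin{equation*}
\rho(x,y) = \min_{1\leq i \leq N}\bigl(\mathrm{m}(x,x_i) + \varepsilon + \mathrm{n}(y_i,y)\bigr).
\end{equation*}
Checking the triangle inequality on mixed triples reduces to the bound $\bigl|\mathrm{m}(x_i,x_j)-\mathrm{n}(y_i,y_j)\bigr|\leq \varepsilon$, and after quotienting the relation $\rho(\,\cdot\,,\,\cdot\,)=0$ one obtains an honest compact metric space $(Z,\rho)$ into which $X$ and $Y$ embed isometrically. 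Since every $x\in X$ is within $\varepsilon$ of some $x_i$, hence within $2\varepsilon$ of $y_i$, and symmetrically for $Y$, we get $\Haus{\rho}(X,Y)\leq 2\varepsilon$, so $\mathsf{GH}((X,\mathrm{m}),(Y,\mathrm{n}))\leq 2\varepsilon$.

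The main obstacle, as usual with Gromov's compactness theorem, is the construction of the ambient metric $\rho$ and the verification of the triangle inequality for mixed triples of the form $(x,y,x')$ or $(x,y,y')$; this is where the sup--norm bound on the distance matrices must be used carefully and where the choice of the constant in front of $\varepsilon$ in the definition of $\rho$ has to be tuned so that the triangle inequality passes through.
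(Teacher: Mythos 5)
The paper does not actually prove this theorem: it is stated as background, cited to \cite{Gromov81, Gromov}, so there is no internal proof to compare against. Your argument is the standard textbook proof (as in Gromov's book or Burago--Burago--Ivanov) via distance matrices of finite $\varepsilon$-nets, and it is correct. The one obstacle you flag --- the mixed triangle inequalities for $\rho$ --- does close with the constants as you set them up: if $\delta = \|M(X,\mathrm{m})-M(Y,\mathrm{n})\|_\infty$, then for the triple $(x,y,x')$ the minimizing indices $i,j$ give
\begin{equation*}
\rho(x,y)+\rho(y,x') \geq \mathrm{m}(x,x_i) + \mathrm{n}(y_i,y_j) + \mathrm{m}(x_j,x') + 2\varepsilon \geq \mathrm{m}(x,x') + (2\varepsilon - \delta)\text{,}
\end{equation*}
and symmetrically for $(y,x,y')$, so the gluing constant $\varepsilon$ works precisely when $\delta \leq 2\varepsilon$ --- which is what two spaces landing in the same sup-ball of radius $\varepsilon$ in $[0,D]^{N\times N}$ give you (note this means the hypothesis of your claim should read $\delta\leq 2\varepsilon$ rather than $\leq\varepsilon$; with that correction the net constant comes out as $2\varepsilon$, better than your conservative $5\varepsilon$). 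Two further cosmetic points: since the extension formula forces $\rho \geq \varepsilon$ on all mixed pairs, no identification occurs and the quotient step is vacuous ($X\sqcup Y$ is already a compact metric space, as required by Definition (\ref{GH-def})); and the declared value $\rho(x_i,y_i)=\varepsilon$ is only an upper bound under the min formula, which is all the Hausdorff estimate $\Haus{\rho}(X,Y)\leq 2\varepsilon$ needs. The necessity direction, including transporting nets across a near-optimal ambient embedding and taking a finite $\frac{\varepsilon}{3}$-net of $\mathcal{S}$, is likewise sound modulo routine epsilon slack.
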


Theorem (\ref{Gromov-Compactness-thm}) relates intimately to the matter of finite approximations for metric spaces: of course, every compact metric space is the Gromov-Hausdorff limit of its finite subsets. In particular, the covering number for a compact metric space is always finite, for any $\varepsilon > 0$. The situation is more complicated in the noncommutative setting, as we shall see in the next section. 

We now present our analogue of Theorem (\ref{Gromov-Compactness-thm}) for the dual propinquity. We shall need a few regularity conditions on our choice of a quasi-Leibniz property:

\begin{definition}[\cite{Latremoliere15}, Definition 3.4]
A function $F : [0,\infty)^4 \rightarrow [0,\infty)$ is \emph{strongly permissible} when:
\begin{enumerate}
\item $F$ is permissible,
\item $F$ is continuous,
\item for all $\lambda, \mu, x,y,l_x,l_y \in [0,\infty)$ we have:
\begin{equation*}
\lambda\mu F(x,y,l_x,l_y) = F(\lambda x, \mu y, \lambda l_x, \mu l_y)\text{,}
\end{equation*}
\item for all $x,y \in [0,\infty)$ we have $F(x,y,0,0) = 0$.
\end{enumerate}
\end{definition}

A first and important observation is that sets of finite dimensional {\gQqcms s} with bounded diameter form compact sets for the dual Gromov-Hausdorff propinquity.

\begin{notation}
Let $F$ be a permissible function. We let $\text{\calligra QuasiLeibniz}(F)$ be the class of all {\Qqcms{F}s} and we let $\finitedimclass{F}$ be the class of all finite dimensional {\Qqcms{F}s}.
\end{notation}

\begin{theorem}[\cite{Latremoliere15}, Theorem 3.6]
Let $F$ be a strongly permissible function. For all $d\in\N$ and $D\geq 0$, the class:
\begin{equation*}
\left\{ (\A,\Lip) \in \finitedimclass{F} : \dim_\C \A \leq d, \diam{\StateSpace(\A)}{\Kantorovich{\Lip}} \leq D \right\}
\end{equation*}
is compact for the dual Gromov-Hausdorff propinquity $\propinquity{F}$.
\end{theorem}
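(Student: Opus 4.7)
The plan is to prove sequential compactness and then invoke completeness of $\propinquity{F}$ (Theorem \ref{complete-thm}, which applies because strong permissibility includes continuity of $F$). Given a sequence $(\A_n, \Lip_n)_{n \in \N}$ in the class, my first move is a reduction to a single underlying C*-algebra: complex C*-algebras of dimension at most $d$ fall into finitely many *-isomorphism classes (each being $\bigoplus_j M_{n_j}(\C)$ with $\sum_j n_j^2 \leq d$), so by the pigeonhole principle some infinite subsequence is supported on a single finite-dimensional $\A$. Since each $\Lip_n$ is lower semicontinuous, it is the Minkowski functional of $B_n = \{a \in \sa{\A} : \Lip_n(a) \leq 1\}$, and the identity $\diam{\StateSpace(\A)}{\Kantorovich{\Lip_n}} = 2\sup\{\|\dot a\| : a \in B_n\}$ on the quotient $\sa{\A}/\R\unit_\A$ shows that the quotient unit balls $\dot B_n$ lie in a common compact subset of this finite-dimensional space. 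The Blaschke selection theorem then yields a further subsequence with $\dot B_n \to \dot B$ in Hausdorff distance; I let $B$ be the preimage of $\dot B$ in $\sa{\A}$ and $\Lip$ its Minkowski functional.

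The candidate limit $(\A, \Lip)$ must be shown to lie in the class. The seminorm $\Lip$ is lower semicontinuous by construction, with kernel exactly $\R\unit_\A$ (the boundedness of $\dot B$ controls this), and the $F$-quasi-Leibniz property transfers via a $\Gamma$-convergence argument: for $a, b \in \sa{\A}$ with $\Lip(a) = \alpha, \Lip(b) = \beta$ finite, the Hausdorff convergence of the quotient unit balls produces approximants $a_n \to a$, $b_n \to b$ with $\limsup_n \Lip_n(a_n) \leq \alpha$ and $\limsup_n \Lip_n(b_n) \leq \beta$. Applying the $F$-quasi-Leibniz inequality to each $(a_n, b_n)$, using continuity and monotonicity of $F$ to pass to the limit, and exploiting the $\Gamma$-liminf inequality $\Lip(\Jordan{a}{b}) \leq \liminf_n \Lip_n(\Jordan{a_n}{b_n})$ delivers the desired $\Lip(\Jordan{a}{b}) \leq F(\|a\|_\A, \|b\|_\A, \alpha, \beta)$; the Lie case is analogous. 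In finite dimension the precompactness in Theorem \ref{Rieffel-thm}(5) is automatic, so $(\A, \Lip)$ is a {\Qqcms{F}} of dimension $\leq d$ and diameter $\leq D$.

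It remains to show $\propinquity{F}((\A, \Lip_n), (\A, \Lip)) \to 0$. Let $\varepsilon_n \to 0$ bound the Hausdorff distance between $\dot B_n$ and $\dot B$, and on $\D_n = \A \oplus \A$ define
\[
\Lip_{\D_n}(a, b) = \max\left\{\Lip_n(a),\ \Lip(b),\ \tfrac{1}{\varepsilon_n}\|a - b\|_\A\right\},
\]
with the two coordinate projections $\pi_1, \pi_2 : \D_n \twoheadrightarrow \A$ as candidate isometric *-epimorphisms. The quotient seminorms are computed correctly: for $a \in \alpha B_n$, the scaled Hausdorff convergence together with compactness (so the infimum is attained) produces, after a scalar shift, some $b \in \alpha B$ with $\|a - b\|_\A \leq \alpha \varepsilon_n$, whence $\Lip_{\D_n}(a, b) \leq \alpha$; the reverse direction is symmetric. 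The $F$-quasi-Leibniz property of $\Lip_{\D_n}$ reduces, on the first two clauses, to the $F$-quasi-Leibniz property of $\Lip_n$ and $\Lip$ combined with the monotonicity of $F$; on the bridge clause, it follows from the product estimate $\|\Jordan{a}{a'} - \Jordan{b}{b'}\|_\A \leq \|a'\|_\A\|a - b\|_\A + \|b\|_\A \|a' - b'\|_\A$ and the defining permissibility inequality $F(x, y, l_x, l_y) \geq x l_y + y l_x$, which absorbs the bridge contribution. A direct estimate on the dual embeddings of $\StateSpace(\A)$ into $\StateSpace(\D_n)$ yields $\tunnelextent{\tau_n} = O(\varepsilon_n D)$, finishing the argument.

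The main obstacle is the tight interplay between the bridge term of the tunnel and the quasi-Leibniz constraint: the scaling and vanishing properties of strong permissibility are needed to produce the $\varepsilon_n \to 0$ quantitative control, the defining inequality of permissibility is what allows the bridge term to absorb into $F$, and the continuity of $F$ is needed for both the $\Gamma$-convergence step and the completeness invocation. Without all three structural assumptions of strong permissibility, either the limit Lip-norm fails to be quasi-Leibniz of the right type, or the tunnel fails to land in the appropriate class.
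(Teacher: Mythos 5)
Your architecture --- pigeonhole over the finitely many *-isomorphism classes of C*-algebras of complex dimension at most $d$, the identity $\diam{\StateSpace(\A)}{\Kantorovich{\Lip_n}} = 2\sup\{\|\dot a\| : \Lip_n(a)\leq 1\}$, Blaschke selection on the quotient unit balls, and the three-term tunnel $\max\left\{\Lip_n(a),\Lip(b),\tfrac{1}{\varepsilon_n}\|a-b\|_\A\right\}$ whose bridge clause is absorbed by the permissibility inequality $x l_y + y l_x \leq F(x,y,l_x,l_y)$ --- is sound in outline. But there is a genuine gap at the step ``let $B$ be the preimage of $\dot B$ and $\Lip$ its Minkowski functional'': you implicitly assume that the Hausdorff limit $\dot B$ spans $\bigslant{\sa{\A}}{\R\unit_\A}$, equivalently that $\dom{\Lip}$ is dense in $\sa{\A}$. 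The hypotheses only bound the diameter \emph{above}, so metric collapse inside the class is possible and the limit ball can be degenerate. Concretely, take $\A = \C\oplus\C$ and $\Lip_n(a,b) = n|a-b|$: each $(\A,\Lip_n)$ is a {\Lqcms} with $\diam{\StateSpace(\A)}{\Kantorovich{\Lip_n}} = \frac{1}{n}\leq D$, so the whole sequence lies in your class for $d\geq 2$, yet $\dot B_n$ shrinks to $\{0\}$. Your limit functional is then $+\infty$ off $\R\unit_\A$, so $(\A,\Lip)$ is not a Lipschitz pair, and the second quotient seminorm of your tunnel is not a Lip-norm. In fact the true limit is the one-point space $(\C,0)$: the tunnel $(\C\oplus\C,\Lip_n,\mathrm{id},\pi_1)$, with $\pi_1$ the first coordinate projection, has extent at most $\frac{1}{n}$, and by the coincidence property (Theorem (\ref{coincidence-thm})) no subsequence can converge to \emph{any} quantum metric on $\C\oplus\C$. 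So your fixed-algebra reduction cannot terminate the argument in this perfectly admissible case --- which is precisely why the statement concerns the class $\{\dim_\C\A\leq d\}$ rather than Lip-norms on one fixed algebra.

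The repair uses machinery you already have. Let $W$ be the span of $B$ in $\sa{\A}$; your own $\Gamma$-convergence computation shows $W$ is a Jordan--Lie subalgebra containing $\unit_\A$: for $a,b\in W$ with approximants $a_n\rightarrow a$, $b_n\rightarrow b$ satisfying $\limsup_n\Lip_n(a_n)\leq\Lip(a)$, $\limsup_n \Lip_n(b_n)\leq \Lip(b)$, continuity of $F$ bounds $\Lip_n\left(\Jordan{a_n}{b_n}\right)$ eventually by $F(\|a\|_\A,\|b\|_\A,\Lip(a),\Lip(b))+\delta$, and Hausdorff convergence of the balls pushes this bound to the limit, so $\Lip\left(\Jordan{a}{b}\right)<\infty$ (similarly for $\Lie{a}{b}$). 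Since $xy = \Jordan{x}{y} + i\Lie{x}{y}$, the space $\A' = W + iW$ is a C*-subalgebra of $\A$, and on $\A'$ the functional $\Lip$ has dense (indeed full) domain, kernel $\R\unit_\A$ (boundedness of $\dot B$), lower semicontinuity, the $F$-quasi-Leibniz property by the same limit argument, and a compact unit ball modulo scalars, so $(\A',\Lip)$ is in the class by Theorem (\ref{Rieffel-thm}) --- possibly of strictly smaller dimension. Your tunnel construction then goes through verbatim from $(\A,\Lip_n)$ to $(\A',\Lip)$, the bridge term $\tfrac{1}{\varepsilon_n}\|a-b\|_\A$ making sense because $\A'\subseteq\A$; the quotient computations, the absorption of the bridge clause into $F$, and the $O(\varepsilon_n)$ extent estimate are unaffected. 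With that one structural insertion your proof is complete.
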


We now define an analogue of the covering number in our setting, inspired by the hypothesis of Theorem (\ref{Gromov-Compactness-thm}). 

\begin{definition}[\cite{Latremoliere15}, Definition 4.1]
Let $F$ be a permissible function. Let $(\A,\Lip)$ be a {\Qqcms{F}} and $\varepsilon > 0$. The \emph{$F$-covering number} of $(\A,\Lip)$ is:
\begin{equation*}
\covn{F}{\A,\Lip_\A}{\varepsilon} = \min\left\{ \dim_\C(\B,\Lip_\B) : \begin{array}{l}
\exists (\B,\Lip_\B)\in \text{\calligra QuasiLeibniz}(F)\\
\propinquity{F}((\A,\Lip_\A),(\B,\Lip_\B)) \leq \varepsilon
\end{array}
\right\}\text{.}
\end{equation*}
\end{definition}

Without additional requirement, the covering number $\covn{F}{\A,\Lip}{\varepsilon}$ of a given {\Qqcms{F}} $(\A,\Lip_\A)$ may well be infinite, i.e. there may be no finite dimensional approximations, at least for small values of $\varepsilon > 0$.

However, when the covering number is indeed finite, we get the following analogue of Theorem (\ref{Gromov-Compactness-thm}):

\begin{theorem}[\cite{Latremoliere15}, Theorem 4.2]
Let $F$ be a strongly permissible function. Let $\mathcal{A}$ be a nonempty subclass of the closure of $\finitedimclass{F}$ for the dual Gromov-Hausdorff propinquity $\propinquity{F}$. The following two assertions are equivalent:
\begin{enumerate}
\item the class $\mathcal{A}$ is totally bounded for the dual Gromov-Hausdorff propinquity $\propinquity{F}$,
\item there exists a function $C : [0,\infty) \rightarrow \N$ and $D\geq 0$ such that, for all $(\A,\Lip) \in \mathcal{A}$, we have:
\begin{itemize}
\item $\forall\varepsilon>0\quad \covn{F}{\A,\Lip}{\varepsilon} \leq C(\varepsilon)$,
\item $\diam{\A}{\Lip} \leq D$\text{.}
\end{itemize}
\end{enumerate}
In particular, since $\propinquity{F}$ is complete, compact classes of {\Qqcms{F}s} are the closed, totally bounded classes for $\propinquity{F}$.
\end{theorem}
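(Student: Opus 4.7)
The plan is to reduce the result to Theorem 3.6, which already provides compactness for classes of finite-dimensional {\Qqcms{F}s} with bounded dimension and bounded diameter. The equivalence is essentially a standard totally-bounded-from-finite-dimensional-approximations argument, once one verifies that both the diameter and the propinquity behave continuously in the right way. The underlying metric geometry being complete (Theorem \ref{complete-thm}), the ``in particular'' clause about compactness follows from the equivalence and the usual characterization of compact subsets in complete metric spaces.

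For the direction (2)$\Rightarrow$(1), fix $\eta>0$ and set $\varepsilon=\eta/3$. For each $(\A,\Lip)\in\mathcal{A}$, the bound $\covn{F}{\A,\Lip}{\varepsilon}\leq C(\varepsilon)$ furnishes some $(\B,\Lip_\B)\in\finitedimclass{F}$ with $\dim_\C \B \leq C(\varepsilon)$ and $\propinquity{F}((\A,\Lip),(\B,\Lip_\B))\leq\varepsilon$. I would first check (or cite) that the extent of any tunnel of extent at most $\varepsilon$ forces $|\diam{\B}{\Lip_\B}-\diam{\A}{\Lip}|\leq 2\varepsilon$, since the state spaces embed isometrically into a common state space $\varepsilon$-close in Hausdorff distance. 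Consequently the collection $\mathcal{B}$ of all such finite-dimensional approximations is contained in
\begin{equation*}
\left\{(\B,\Lip_\B)\in\finitedimclass{F} : \dim_\C\B\leq C(\varepsilon),\ \diam{\B}{\Lip_\B}\leq D+2\varepsilon\right\},
\end{equation*}
which is compact by Theorem \ref{Gromov-Compactness-thm}'s noncommutative counterpart (Theorem 3.6 of \cite{Latremoliere15} cited in the excerpt). Hence $\mathcal{B}$ has a finite $\varepsilon$-net $(\B_1,\Lip_1),\ldots,(\B_n,\Lip_n)$, and the triangle inequality yields $\propinquity{F}((\A,\Lip),(\B_i,\Lip_i))\leq 2\varepsilon<\eta$ for some $i$, making $\{(\B_i,\Lip_i)\}_{i=1}^n$ an $\eta$-net of $\mathcal{A}$.

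For the converse (1)$\Rightarrow$(2), I would first derive the diameter bound. Fix any $(\A_0,\Lip_0)\in\mathcal{A}$; since $\mathcal{A}$ is totally bounded, it is bounded, so there exists $R>0$ with $\propinquity{F}((\A_0,\Lip_0),(\A,\Lip))\leq R$ for all $(\A,\Lip)\in\mathcal{A}$. The same diameter-continuity observation as above gives $\diam{\A}{\Lip}\leq\diam{\A_0}{\Lip_0}+2R=:D$. Next, for $\varepsilon>0$, total boundedness yields a finite $\varepsilon/2$-net $\{(\A_1,\Lip_1),\ldots,(\A_n,\Lip_n)\}\subseteq\mathcal{A}$. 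Because $\mathcal{A}\subseteq\overline{\finitedimclass{F}}$ in the propinquity, each $(\A_i,\Lip_i)$ is within $\varepsilon/2$ of some $(\B_i,\Lip_{\B_i})\in\finitedimclass{F}$. For any $(\A,\Lip)\in\mathcal{A}$, pick $i$ with $\propinquity{F}((\A,\Lip),(\A_i,\Lip_i))\leq\varepsilon/2$; the triangle inequality gives $\propinquity{F}((\A,\Lip),(\B_i,\Lip_{\B_i}))\leq\varepsilon$, so $\covn{F}{\A,\Lip}{\varepsilon}\leq\max_{1\leq i\leq n}\dim_\C\B_i=:C(\varepsilon)$, which is the desired bound.

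The main obstacle I expect is not conceptual but notational: one needs the (presumably routine) diameter-continuity estimate $|\diam{\StateSpace(\A)}{\Kantorovich{\Lip_\A}}-\diam{\StateSpace(\B)}{\Kantorovich{\Lip_\B}}|\leq 2\propinquity{F}((\A,\Lip_\A),(\B,\Lip_\B))$, which follows directly from Definition \ref{tunnel-extent} by lifting states through $\pi_\A^\ast$ and $\pi_\B^\ast$ into $\StateSpace(\D)$ and applying the triangle inequality for $\Kantorovich{\Lip_\D}$. The hypothesis that $\mathcal{A}\subseteq\overline{\finitedimclass{F}}$ is used only in the (1)$\Rightarrow$(2) direction, as otherwise there is no reason a priori that the finite covering numbers $\covn{F}{\A,\Lip}{\varepsilon}$ are even finite. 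The completeness of $\propinquity{F}$ (Theorem \ref{complete-thm}), which requires strong permissibility of $F$, then upgrades ``totally bounded and closed'' to ``compact''.
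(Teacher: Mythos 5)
Your proposal is correct and follows essentially the same route as the paper's: the direction (2)$\Rightarrow$(1) reduces to the compactness of the bounded-dimension, bounded-diameter class of finite dimensional {\Qqcms{F}s} (Theorem 3.6 of \cite{Latremoliere15}) together with the $2$-Lipschitz dependence of $\diam{\StateSpace(\A)}{\Kantorovich{\Lip}}$ on $\propinquity{F}$, while (1)$\Rightarrow$(2) extracts finite nets and invokes the closure hypothesis exactly as in the original argument, which is also where that hypothesis is genuinely needed. One small refinement: the deferred diameter estimate does hold with constant $2$, but the clean way to see it is that for a tunnel $(\D,\Lip_\D,\pi_\A,\pi_\B)$ both $\diam{\StateSpace(\A)}{\Kantorovich{\Lip_\A}}$ and $\diam{\StateSpace(\B)}{\Kantorovich{\Lip_\B}}$ are squeezed between $\diam{\StateSpace(\D)}{\Kantorovich{\Lip_\D}}$ and that quantity minus $2\tunnelextent{\tau}$, since both images are subsets of $\StateSpace(\D)$ that are $\tunnelextent{\tau}$-dense in it; comparing the two images directly via their Hausdorff distance, as your parenthetical suggests, would only yield the (still sufficient) constant $4$.
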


We now must address the question of which {\gQqcms s} are limits of finite dimensional {\gQqcms s}. This matter was in fact the key motivation for the introduction of {\gQqcms s} in our theory, which we originally \cite{Latremoliere13,Latremoliere13b,Latremoliere13c,Latremoliere14} developed for {\Lqcms s}.

\subsubsection{Finite dimensional Approximations}

The field of C*-algebras is quite rich in notions of finite-dimensional approximations in a quantum topological sense: nuclearity, exactness, quasi-diagonality, and AF algebras are important examples. It is natural to ask: is there a way to connect some form of quantum topological finite dimensional approximation with quantum metric finite dimensional approximations? 

Our own research gave us some results in this direction. An appropriate notion of topological finite approximations which we propose is modeled after quasi-diagonality together with nuclearity.

\begin{definition}[\cite{Latremoliere15}, Definition 5.1]
A unital C*-algebra $\A$ is A \emph{pseudo-diagonal} when, for all finite subset $\alg{F}$ of $\A$ and for all $\varepsilon > 0$, there exists a finite dimensional C*-algebra $\B$ and two positive, unital maps $\psi: \A\rightarrow\A$ and $\varphi:\A\rightarrow\B$ such that:
\begin{enumerate}
\item for all $a\in\alg{F}$ we have $\|a-\varphi\circ\psi(a)\|_\A \leq\varepsilon$,
\item for all $a,b\in\alg{F}$ we have:
\begin{equation*}
\|\psi(a)\psi(b)-\psi(ab)\|_\B \leq \varepsilon\text{.}
\end{equation*}
\end{enumerate}
\end{definition}

Pseudo-diagonality does not involve completely positive maps, but it involves unital maps. Our concept is inspired by the characterization of nuclear quasi-diagonal C*-algebras of Blackadar and Kirchberg \cite{Blackadar97}:
\begin{theorem}
A C*-algebra $\A$ is nuclear, quasi-diagonal if and only if for every $\varepsilon > 0$ and for every finite set $F$ of $\A$, there exists a finite dimensional C*-algebra $\B$ and two completely positive contractions $\varphi : \A\rightarrow\B$ and $\psi: \B\rightarrow\A$ such that:
\begin{enumerate}
\item for all $a\in F$ we have $\|a-\psi\circ\varphi(a)\|_\A \leq \varepsilon$,
\item for all $a, b\in F$ we have $\|\varphi(ab) - \varphi(a)\varphi(b)\|_\B \leq \varepsilon$.
\end{enumerate}
\end{theorem}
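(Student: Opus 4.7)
The plan is to prove each direction separately, with the backward direction being essentially definitional unpacking and the forward direction requiring coordination between two independent approximation schemes.

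For the easy direction, assume the approximate factorizations exist. Nuclearity follows because any finite dimensional C*-algebra $\B$ embeds unitally and completely isometrically into some matrix algebra $M_k$, and there is a conditional expectation $E: M_k \to \B$ which is completely positive and contractive; post-composing $\varphi$ with the embedding and pre-composing $\psi$ with $E$ produces the standard matrix factorization required by the Choi-Effros characterization of nuclearity. Quasi-diagonality then follows by working in a faithful representation $\pi: \A \to B(H)$: use the approximate factorizations together with an exhaustion of $\A$ by finite sets to construct increasing finite rank projections $p_n$ on $H$ which asymptotically commute with $\pi(\A)$ and converge strongly to $I$, using Voiculescu's quasi-diagonality criterion to organize the construction.

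For the forward direction, fix $\varepsilon > 0$ and a finite set $F \subset \A$. Fix a faithful representation $\pi: \A \to B(H)$ and, using quasi-diagonality, choose a finite rank projection $p$ with $\|[p, \pi(a)]\| < \delta$ for all $a \in F$ where $\delta$ will be chosen small in terms of $\varepsilon$ and $\max_{a \in F}\|a\|$. Define $\tilde\varphi: \A \to pB(H)p \cong M_k$ by $\tilde\varphi(a) = p\pi(a)p$. This is completely positive and unital after rescaling, and a direct computation yields
\begin{equation*}
\tilde\varphi(ab) - \tilde\varphi(a)\tilde\varphi(b) = p\pi(a)(1-p)\pi(b)p,
\end{equation*}
whose norm is bounded by $\|a\|\cdot\|[\pi(b), p]\| \le \|a\|\delta$, giving approximate multiplicativity on $F$. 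The missing ingredient is a map back into $\A$: $\tilde\varphi$ lands in $B(pH)$, not $\A$, and composing with an inclusion back does not yield $\A$-valued data with $\psi \circ \tilde\varphi \approx \mathrm{id}_F$.

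This coordination step is the main obstacle. The plan is to invoke nuclearity to produce an independent approximate factorization $\alpha: \A \to M_m$, $\beta: M_m \to \A$ with $\|\beta \circ \alpha(a) - a\|_\A < \varepsilon$ for $a \in F$, and then combine the two constructions using a perturbation/stability argument: roughly, one takes $\B = M_m$ and modifies $\alpha$ by a small amount using the near-multiplicativity data from $\tilde\varphi$ (via an Arveson-style extension of $\tilde\varphi \circ \beta: M_m \to M_k$ as a unital completely positive map) so that the resulting $\varphi$ inherits both the approximate multiplicativity from $\tilde\varphi$ and the approximate section property from $\alpha$. The delicate point, and the technical heart of the Blackadar-Kirchberg argument, is that the perturbation needed to make near-multiplicativity and approximate factorization compatible must be controlled uniformly; this is typically achieved by a Stinespring dilation and compression argument applied to $\tilde\varphi$, together with a careful choice of $\delta$ small enough that both error bounds remain within $\varepsilon$ after combination. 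Once $\varphi$ is built, set $\psi = \beta$ (adjusted accordingly) and verify the two bounds on $F$.
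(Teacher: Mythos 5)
First, a point of comparison: the paper does not prove this statement at all --- it quotes it as background from Blackadar and Kirchberg \cite{Blackadar97} --- so your attempt must be measured against the known argument rather than against anything in the text. Your backward direction is essentially correct and can even be streamlined. Nuclearity is exactly the completely positive approximation property, which already allows arbitrary finite-dimensional C*-algebras, so the detour through $M_k$ and a conditional expectation is harmless but unnecessary. For quasi-diagonality, note that since $\psi$ is contractive, $\|\varphi(a)\| \geq \|\psi(\varphi(a))\| \geq \|a\| - \varepsilon$ for $a \in F$, so the maps $\varphi$ are approximately isometric as well as approximately multiplicative; Voiculescu's abstract criterion then yields quasi-diagonality immediately. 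You should not attempt to rebuild the almost-commuting finite-rank projections by hand, as you propose --- passing from such asymptotically multiplicative, asymptotically isometric maps to projections is the \emph{content} of Voiculescu's theorem, not an application of it.

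The forward direction, however, has a genuine gap, located exactly where you flag it. Two concrete problems. First, the mechanism you propose cannot work as stated: an ``Arveson-style extension of $\tilde\varphi\circ\beta : M_m \rightarrow M_k$'' is vacuous, since the domain is already all of $M_m$ and there is nothing to extend; and ``modifying $\alpha$ by a small amount'' so that it simultaneously inherits approximate multiplicativity from $\tilde\varphi$ and retains $\beta\circ\alpha \approx \mathrm{id}$ on $F$ is precisely the statement to be proved --- no perturbation lemma is cited or sketched that produces it. Second, the workable scheme runs in the opposite direction: one keeps $\varphi = \tilde\varphi$, the compression (which quasi-diagonality allows one to choose approximately \emph{isometric} on $F$, a condition you omit but which is forced by $\psi\circ\varphi\approx\mathrm{id}$ and is indispensable below), and constructs the downward map as $\psi = \beta\circ\gamma$, where $\gamma : M_k \rightarrow M_m$ is a completely positive contractive extension --- via injectivity of $M_m$ and Arveson's theorem --- of an approximate version of $\alpha\circ\varphi^{-1}$ defined on the operator system $\varphi(\A)\subseteq M_k$. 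The real difficulty, and the technical heart of \cite{Blackadar97}, is that $\varphi$ is only approximately isometric and approximately multiplicative on a finite set, so $\alpha\circ\varphi^{-1}$ is not literally defined and the errors must be controlled quantitatively; equivalently, one can assemble the maps $\varphi_n$ into an honest embedding of $\A$ into the ultraproduct $\prod_n M_{k_n}\big/\bigoplus_n M_{k_n}$, extend there, and then transfer back to a fixed level $n$ --- the transfer being the delicate step, since blockwise restrictions of maps defined on the quotient vanish. Until this extension/perturbation step is actually carried out, your forward direction is an accurate description of the obstacle rather than a proof.
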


A little work allows us to prove that:

\begin{theorem}[\cite{Latremoliere15}, Corollary 5.5]
A unital nuclear quasi-diagonal C*-algebra $\A$ is pseudo-diagonal.
\end{theorem}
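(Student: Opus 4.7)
The plan is to apply the Blackadar--Kirchberg characterization of nuclear quasi-diagonal C*-algebras quoted above and then upgrade its completely positive contractions to positive unital maps by standard rank-one corrections. Let $\alg{F}\subseteq\A$ be finite and $\varepsilon>0$; we may enlarge $\alg{F}$ to contain $1_\A$, set $M=1+\max\{\|a\|_\A : a\in\alg{F}\}$, and choose $\varepsilon'=\varepsilon/M$.

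First, apply Blackadar--Kirchberg to $\alg{F}$ and $\varepsilon'$ to obtain a finite dimensional C*-algebra $\B$ and completely positive contractions $\varphi_0 : \A\to\B$ and $\psi_0 : \B\to\A$ such that $\|a-\psi_0\circ\varphi_0(a)\|_\A\leq \varepsilon'$ for $a\in\alg{F}$ and $\|\varphi_0(ab)-\varphi_0(a)\varphi_0(b)\|_\B\leq\varepsilon'$ for $a,b\in\alg{F}$. A standard refinement available when $\A$ is unital allows $\varphi_0$ to be taken unital, so that $\varphi_0(1_\A)=1_\B$; this is the only delicate ingredient of the argument and is where the unitality of $\A$ enters in an essential way. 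Setting $\varphi=\varphi_0$, we already have one of the desired positive unital maps.

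Next, upgrade $\psi_0$ to a unital map. Since $\varphi_0(1_\A)=1_\B$ and $1_\A\in\alg{F}$, the first approximation bound applied at $a=1_\A$ gives $\|1_\A-\psi_0(1_\B)\|_\A\leq\varepsilon'$. As $\psi_0$ is CP contractive, $\psi_0(1_\B)\leq 1_\A$, so $1_\A-\psi_0(1_\B)\geq 0$. Fix any state $\tau$ of $\B$ and define
\begin{equation*}
\psi : b\in\B \longmapsto \psi_0(b) + \tau(b)\bigl(1_\A-\psi_0(1_\B)\bigr) \in \A\text{.}
\end{equation*}
Then $\psi$ is positive (since for $b\geq 0$ both summands are positive), $\psi(1_\B)=\psi_0(1_\B)+(1_\A-\psi_0(1_\B))=1_\A$ so $\psi$ is unital, and one has the uniform perturbation estimate $\|\psi(b)-\psi_0(b)\|_\A\leq\varepsilon'\|b\|_\B$ for every $b\in\B$.

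Finally, verify the two defining conditions of pseudo-diagonality for the pair $(\varphi,\psi)$. The approximate multiplicativity condition depends only on $\varphi=\varphi_0$ and is inherited directly from Blackadar--Kirchberg. For the approximate factorization condition, the triangle inequality yields, for every $a\in\alg{F}$,
\begin{equation*}
\|a-\psi\circ\varphi(a)\|_\A \leq \|a-\psi_0\circ\varphi_0(a)\|_\A + \|\psi\circ\varphi_0(a)-\psi_0\circ\varphi_0(a)\|_\A \leq \varepsilon' + \varepsilon'\|\varphi_0(a)\|_\B \leq \varepsilon' M = \varepsilon\text{,}
\end{equation*}
using that $\varphi_0$ is contractive. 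The hard part of the plan is the first-step refinement: arranging $\varphi_0$ to be UCP rather than merely CP contractive, since the analogous rank-one correction applied to $\varphi_0$ itself would require $\|1_\B-\varphi_0(1_\A)\|_\B$ to be small and this is not guaranteed by the bare Blackadar--Kirchberg hypotheses. Once $\varphi_0$ is UCP, the correction of $\psi_0$ and the error propagation are entirely routine.
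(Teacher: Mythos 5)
Your architecture is sound and the second half is correct: enlarging $\alg{F}$ to contain $\unit_\A$, the state correction $\psi(b)=\psi_0(b)+\tau(b)(\unit_\A-\psi_0(\unit_\B))$ is indeed positive and unital, the perturbation bound $\|\psi(b)-\psi_0(b)\|_\A\leq\varepsilon'\|b\|_\B$ holds, and your error propagation at the end is fine. But the proposal has a genuine gap exactly where you flag it: the unital $\varphi_0$ is never constructed. ``A standard refinement allows $\varphi_0$ to be taken unital'' is an assertion, not an argument, and it is not a reduction of the statement being proved --- it \emph{is} the statement, since everything else in your write-up is routine perturbation. The refinement really does require work: the bare Blackadar--Kirchberg data never forces $\varphi_0(\unit_\A)$ to be near $\unit_\B$ (replace $\B$ by $\B\oplus M_n$, $\varphi_0$ by $\varphi_0\oplus 0$, and let $\psi_0$ ignore the second summand; all the hypotheses persist), so, as you yourself observe, no rank-one correction on the $\B$ side can succeed and a modification of $\B$ itself is unavoidable. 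As it stands you have reduced the corollary to an unproved claim of the same strength.

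The missing step can be supplied, and your enlargement $\unit_\A\in\alg{F}$ is precisely what makes it possible. Set $x=\varphi_0(\unit_\A)$, so $0\leq x\leq \unit_\B$. Approximate multiplicativity at $a=b=\unit_\A$ gives $\|x-x^2\|_\B\leq\varepsilon'$, whence the spectrum of $x$ lies in $[0,2\varepsilon']\cup[1-2\varepsilon',1]$ once $\varepsilon'<\frac{1}{4}$; moreover $\|\unit_\A-\psi_0(x)\|_\A\leq\varepsilon'$ together with contractivity of $\psi_0$ forces $\|x\|_\B\geq 1-\varepsilon'$, so the spectral projection $p$ of $x$ for $[1-2\varepsilon',1]$ is nonzero, commutes with $x$, and satisfies $\|x-p\|_\B\leq 2\varepsilon'$. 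Replace $\B$ by the finite dimensional unital corner $p\B p$, set $c=pxp$, which is invertible in $p\B p$ with $c\geq(1-2\varepsilon')p$, and define $\varphi\colon a\mapsto c^{-1/2}p\varphi_0(a)pc^{-1/2}$, a unital (indeed completely) positive map from $\A$ to $p\B p$, with up map $\psi_0$ composed with the inclusion $p\B p\subseteq\B$. You must then re-verify \emph{both} Blackadar--Kirchberg estimates for this new pair --- the inequalities $\|\varphi_0(a)-x\varphi_0(a)\|_\B\leq\varepsilon'$ and $\|\varphi_0(a)-\varphi_0(a)x\|_\B\leq\varepsilon'$, obtained from approximate multiplicativity against $\unit_\A$, are what let you move $p$ and $c^{-1/2}$ past $\varphi_0(a)$ --- and they survive only with errors of order $\varepsilon'(1+\|a\|_\A)(1+\|b\|_\A)$, so your budget $\varepsilon'=\varepsilon/M$ must shrink to roughly $\varepsilon/M^2$. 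This cut-down-and-normalization is the actual content of \cite[Corollary 5.5]{Latremoliere15} beyond the quoted characterization (the survey itself only cites the result); once it is carried out, your treatment of $\psi_0$ applies verbatim to its restriction to the corner, since $\|\unit_\A-\psi_0(p)\|_\A\leq 3\varepsilon'$.
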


The importance of pseudo-diagonal {\Lqcms s} is that they admit finite dimensional approximations for the dual Gromov-Hausdorff propinquity, albeit at the cost of relaxing the Leibniz property a little. Formally, we introduce a small variation on the Leibniz property, whose role in our approximation theorem was the motivation to extend the dual Gromov-Hausdorff propinquity to {\gQqcms s}.

\begin{notation}\label{Qqcms-CD-notation}
Let $C\geq 1$ and $D\geq 0$. Let:
\begin{equation*}
F_{C,D} : x,y,l_x,l_y \in [0,\infty) \mapsto C(x l_y + y l_x) + D l_x l_y \text{.}
\end{equation*}
A {\Qqcms{F_{C,D}}} is called a {\Qqcms{(C,D)}}.
\end{notation}

We proved the following approximation result:

\begin{theorem}[\cite{Latremoliere15}, Theorem 5.7]\label{approx-thm}
Let $C\geq 1$ and $D\geq 0$. If $(\A,\Lip)$ is a {\Qqcms{(C,D)}} and $\A$ is pseudo-diagonal, then for any $\varepsilon > 0$, there exists a sequence $\left((\A_n,\Lip_n)\right)_{n\in\N}$ of {\Qqcms{(C+\varepsilon, D+\varepsilon)}s} such that:
\begin{enumerate}
\item for all $n\in\N$, the C*-algebra $\A$ is finite dimensional,
\item we have:
\begin{equation*}
\lim_{n\rightarrow\infty} \propinquity{(C+\varepsilon, D+\varepsilon)} \left((\A_n,\Lip_n), (\A,\Lip)\right) = 0\text{.}
\end{equation*}
\end{enumerate}
\end{theorem}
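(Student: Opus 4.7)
The plan is to use pseudo-diagonality to produce, for each tolerance $\delta>0$, a pair of positive unital transfer maps between $\A$ and a finite-dimensional C*-algebra $\B$ which are approximately mutually inverse and one of which is approximately multiplicative on a prescribed finite set, and then to convert each such pair into a tunnel on $\A\oplus\B$ whose extent is $O(\sqrt{\delta})$. The $\sqrt{\delta}$-inflation of the quasi-Leibniz constants from $(C,D)$ to $(C+\varepsilon,D+\varepsilon)$ is precisely what absorbs the multiplicative defect of the transfer maps.

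First I would apply Theorem \ref{Rieffel-thm} to extract an increasing sequence $(F_n)_{n\in\N}$ of finite self-adjoint subsets of the unit Lip-ball of $(\A,\Lip)$, each containing $\unit_\A$ and closed (up to rescaling back into the unit ball) under Jordan and Lie products, whose classes modulo $\R\unit_\A$ become uniformly $\tfrac{1}{n}$-dense in the unit Lip-ball modulo scalars. Pick a sequence $\delta_n\downarrow 0$ with $\sqrt{\delta_n}<\varepsilon$ eventually, decreasing fast enough to dominate the norms of elements of $F_n$. For each $n$, apply pseudo-diagonality to $F_n$ with tolerance $\delta_n$ (in the standard Blackadar--Kirchberg direction, producing $\varphi_n\colon\A\to\B_n$ and $\psi_n\colon\B_n\to\A$ with $\|\psi_n\varphi_n(a)-a\|_\A\leq\delta_n$ and $\|\varphi_n(\Jordan{a}{a'})-\Jordan{\varphi_n(a)}{\varphi_n(a')}\|_{\B_n}\leq\delta_n$ for $a,a'\in F_n$, together with the analogous Lie bound).

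Next, on $\D_n := \A\oplus\B_n$ define the bridge-style seminorm
\begin{equation*}
L_n(a,b) = \max\left\{\Lip(a),\ \tfrac{1}{\sqrt{\delta_n}}\|\varphi_n(a)-b\|_{\B_n}\right\},
\end{equation*}
and let $\Lip_n$ on $\B_n$ be the quotient of $L_n$ through the second coordinate projection. Then the canonical projections $\pi_\A$ and $\pi_{\B_n}$ are isometric *-epimorphisms onto $(\A,\Lip)$ and $(\B_n,\Lip_n)$ respectively: the key observation for $\pi_\A$ is that $L_n(a,\varphi_n(a))=\Lip(a)$ for all $a\in\dom{\Lip}$, while isometry for $\pi_{\B_n}$ is built in by the quotient definition. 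Since $\B_n$ is finite dimensional and $\varphi_n(\unit_\A)=\unit_{\B_n}$, the kernel condition for a Lip-norm on $\B_n$ holds. For the extent of $\tau_n := (\D_n,L_n,\pi_\A,\pi_{\B_n})$, states of $\B_n$ pull back via the second projection and states of $\A$ via the first, and the Kantorovich distances between these lifts and their partners on the opposite side (via $\psi_n$ and $\varphi_n$) are $O(\sqrt{\delta_n})$ using $\|\psi_n\varphi_n(a)-a\|_\A\leq\delta_n$ on $F_n$ together with $\tfrac{1}{n}$-density.

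The main obstacle is verifying that $L_n$ is $F_{C+\varepsilon,D+\varepsilon}$-quasi-Leibniz for $n$ large, whence the quotient $\Lip_n$ inherits the property. Writing $N_n(a,b) := \tfrac{1}{\sqrt{\delta_n}}\|\varphi_n(a)-b\|_{\B_n}$ and inserting $\pm\Jordan{\varphi_n(a)}{\varphi_n(a')}$, one estimates
\begin{equation*}
N_n\bigl(\Jordan{(a,b)}{(a',b')}\bigr) \leq \|a'\|_\A N_n(a,b) + \|b\|_{\B_n} N_n(a',b') + \tfrac{1}{\sqrt{\delta_n}}\bigl\|\varphi_n(\Jordan{a}{a'})-\Jordan{\varphi_n(a)}{\varphi_n(a')}\bigr\|_{\B_n},
\end{equation*}
and the last summand is bounded by $\sqrt{\delta_n}$ for $a,a'$ drawn from $F_n$. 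A density-and-scaling argument, using $\tfrac{1}{n}$-density of $F_n$ modulo $\R\unit_\A$ and the invariance of Jordan products under scalar shifts, extends this bound to arbitrary $a,a'\in\dom{\Lip}$ at the cost of an additional $\sqrt{\delta_n}L_n(a,b)L_n(a',b')$ term. Combining with the $(C,D)$-quasi-Leibniz bound for $\Lip$ on the first coordinate, and using $\|a'\|_\A N_n(a,b) \leq \|(a',b')\|_{\D_n}L_n(a,b)$ together with $C\geq 1$, yields the $F_{C,D+\sqrt{\delta_n}}$-quasi-Leibniz property of $L_n$, subsumed by $F_{C+\varepsilon,D+\varepsilon}$ for large $n$. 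The Lie case is parallel. This density-scaling step---promoting the purely $F_n$-local multiplicativity of $\varphi_n$ to a global Leibniz-type estimate compatible with strong permissibility---is the most delicate bookkeeping in the argument.
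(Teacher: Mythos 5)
Your overall architecture --- pseudo-diagonality applied to increasingly dense finite subsets of the Lip-unit ball, the bridge-type seminorm $L_n(a,b)=\max\left\{\Lip(a),\tfrac{1}{\sqrt{\delta_n}}\|\varphi_n(a)-b\|_{\B_n}\right\}$ on $\A\oplus\B_n$, and extent estimates of order $\sqrt{\delta_n}$ --- is close in spirit to the paper's, and your insertion estimate for $N_n$ on Jordan and Lie products is sound. The genuine gap is the single phrase ``whence the quotient $\Lip_n$ inherits the property.'' Quotients of (quasi-)Leibniz seminorms need not be (quasi-)Leibniz \cite{Blackadar91}: applying the quasi-Leibniz estimate for $L_n$ to lifts $(a,b)$ and $(a',b')$ bounds $\Lip_n\left(\Jordan{b}{b'}\right)$ in terms of $\|a\|_\A$ and $\|a'\|_\A$, not $\|b\|_{\B_n}$ and $\|b'\|_{\B_n}$, and a lift $(a,b)$ with $L_n(a,b)$ close to $\Lip_n(b)$ carries no a priori bound on $\|a\|_\A$ in your construction. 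This is precisely the obstruction the theorem was designed to circumvent: the survey states verbatim, in the discussion following the statement, that ``a simple quotient would not work, as we would have no control over the Leibniz property.'' The proof in \cite{Latremoliere15} instead constructs the Lip-norm on $\B_n$ directly from the pseudo-diagonality maps; the crucial point is that $\psi_n(b)$ provides a canonical lift with $\|\psi_n(b)\|_\A\leq\|b\|_{\B_n}$ for self-adjoint $b$, since $\psi_n$ is positive and unital, and it is this built-in norm control --- absent from a bare quotient --- that produces the $(C+\varepsilon,D+\varepsilon)$ constants.

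Your route can be repaired, but the repair is a genuine additional argument rather than an inheritance. The norm-control estimate of Proposition (\ref{fundamental-extent-prop}) is purely metric --- its proof uses no Leibniz hypothesis on the codomain --- so once you have verified that $\Lip_n$ is a lower semicontinuous Lip-norm (lower semicontinuity is required by Definition (\ref{LQCMS-def}) and does not pass to quotients for free; here it can be rescued via compactness modulo scalars of the $L_n$-unit ball, Theorem (\ref{Rieffel-thm})) and that $\tunnelextent{\tau_n}$ is $O(\sqrt{\delta_n})$, every $b\in\sa{\B_n}$ admits lifts $d$ with $\Lip_{\D_n}(d)\leq(1+\epsilon_n)\Lip_n(b)$ and $\|d\|_{\D_n}\leq\|b\|_{\B_n}+\Lip_{\D_n}(d)\,\tunnelextent{\tau_n}$; feeding such lifts into the quasi-Leibniz estimate for $L_n$ then gives $\Lip_n$ an $F$-quasi-Leibniz property with constants $\left(C(1+\epsilon_n),\ (D+\eta_n+2C\tunnelextent{\tau_n})(1+\epsilon_n)^2\right)$, subsumed by $F_{C+\varepsilon,D+\varepsilon}$ for $n$ large --- none of which appears in your write-up. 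Separately, your calibration of $\delta_n$ is backwards: the extension of the multiplicative defect from $F_n$ to $\dom{\Lip}$ by $\tfrac{1}{n}$-density contributes an error of order $\tfrac{1}{n\sqrt{\delta_n}}$, so $\delta_n$ must not decrease too fast relative to $\tfrac{1}{n}$ (for instance $\delta_n=\tfrac{1}{n}$ works), contrary to your prescription that $\delta_n$ decrease ``fast enough to dominate the norms of elements of $F_n$.''
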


An important observation is that if $C=1$, $D=0$ then Theorem (\ref{approx-thm}) gives finite dimensional approximations of {\Lqcms s} by {\Qqcms{(1+\varepsilon, \varepsilon)}s} for any $\varepsilon > 0$, but not for $\varepsilon = 0$ in general. The difficulty which we encountered occurred was to define a Lip-norm on the finite dimensional approximations provided by the pseudo-diagonal property: a simple quotient would not work, as we would have no control over then Leibniz property. Our new approach gives a better result, as we find approximations which are ``as close to Leibniz'' as possible, though not Leibniz, by constructing our Lip-norms on the finite dimensional algebras thanks to the maps from pseudo-diagonality, in a slightly tricky way.

We shall see later in this document that for some specific examples, such as quantum tori, we do manage to obtain finite dimensional {\Lqcms s} approximations.

We now turn to a natural question: how does one construct tunnels? A special form of tunnels is in fact the basis for our original construction of the quantum propinquity, which can now be seen as a specialization of the dual propinquity.

\subsection{The Quantum Gromov-Hausdorff Propinquity}

\subsubsection{Bridges and Treks}

The quantum Gromov-Hausdorff propinquity \cite{Latremoliere13} is a specialization of the dual Gromov-Hausdorff propinquity \cite{Latremoliere13c}, although we discovered it first, and it plays an important role in the proof of the convergence of several examples of {\Lqcms s} for the dual propinquity. At the core of the quantum Gromov-Hausdorff propinquity is the concept of a \emph{bridge}, which is a natural source of Leibniz Lip-norms.

Indeed, a mean to get seminorms with the Leibniz property is to use derivations, as for instance in Example (\ref{module-LP-ex}). The quantum propinquity specifically uses bounded inner derivations in C*-algebras. The key ingredient is the notion of a bridge.

\begin{figure}[h]\label{bridge-fig}
\begin{equation*}
\xymatrix{
 & (\D,\omega \in \D)  & \\
\A \ar@{^{(}->}[ur]^{\pi_\A} & & \B \ar@{_{(}->}[ul]_{\pi_\B}
}
\end{equation*}
\caption{A bridge}
\end{figure}
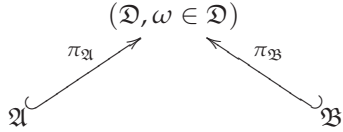

\begin{definition}[\cite{Latremoliere13}, Definition 3.1]
Let $\D$ be a unital C*-algebra and $\omega\in\D$. The $1$-level set of $\omega$ is:
\begin{equation*}
\StateSpace(\A|\omega) = \left\{ \varphi \in \StateSpace(\A) \middle\vert \forall d \in \D \quad \begin{cases}
\varphi((\unit_\A-\omega)^\ast (\unit_\A-\omega)) = 0\\
\varphi((\unit_\A-\omega)(\unit_\A-\omega)^\ast) = 0
 \end{cases} \right\}\text{.}
\end{equation*}
\end{definition}

\begin{lemma}[\cite{Latremoliere13}, Lemma 3.4]
Let $\D$ be a unital C*-algebra. If $\omega \in \D$ then:
\begin{equation*}
\StateSpace(\A|\omega) = \left\{ \varphi\in\StateSpace(\D) : \forall d\in\D \quad \varphi(d) = \varphi(d\omega)=\varphi(\omega d) \right\}\text{.}
\end{equation*}
\end{lemma}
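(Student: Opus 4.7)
The plan is to prove both inclusions using only the Cauchy--Schwarz inequality for states (i.e.\ for any state $\varphi$ on a unital C*-algebra and any $x,y\in \D$, $|\varphi(y^*x)|^2\leq \varphi(y^*y)\varphi(x^*x)$) together with elementary substitutions. Neither direction should be difficult; the real content is in the first direction, where positivity of $\varphi$ is what unlocks the identities from the two vanishing-of-expectation conditions.

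For the inclusion $(\subseteq)$, I would start with $\varphi\in\StateSpace(\A|\omega)$, so that $\varphi\bigl((\unit_\A-\omega)^*(\unit_\A-\omega)\bigr)=0$ and $\varphi\bigl((\unit_\A-\omega)(\unit_\A-\omega)^*\bigr)=0$. Applying Cauchy--Schwarz with $y=\unit_\A-\omega$ and $x=d^*$ for arbitrary $d\in\D$ yields $|\varphi((\unit_\A-\omega)^* d^*)|^2\leq \varphi((\unit_\A-\omega)^*(\unit_\A-\omega))\,\varphi(dd^*)=0$; taking complex conjugates gives $\varphi(d(\unit_\A-\omega))=0$, i.e.\ $\varphi(d)=\varphi(d\omega)$. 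A parallel application with $y=(\unit_\A-\omega)^*$ and $x=d$ uses the second vanishing hypothesis to give $\varphi((\unit_\A-\omega)d)=0$, hence $\varphi(d)=\varphi(\omega d)$. This shows $\varphi$ lies in the right-hand set.

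For the inclusion $(\supseteq)$, assume that $\varphi(d)=\varphi(d\omega)=\varphi(\omega d)$ for every $d\in\D$. Rewriting the two equalities as $\varphi\bigl(d(\unit_\A-\omega)\bigr)=0$ and $\varphi\bigl((\unit_\A-\omega)d\bigr)=0$ respectively, and then specializing $d=(\unit_\A-\omega)^*$ in each, immediately yields $\varphi\bigl((\unit_\A-\omega)^*(\unit_\A-\omega)\bigr)=0$ and $\varphi\bigl((\unit_\A-\omega)(\unit_\A-\omega)^*\bigr)=0$, so $\varphi\in\StateSpace(\A|\omega)$.

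The only observation worth flagging as the ``hard'' part is recognizing that the two symmetric conditions defining $\StateSpace(\A|\omega)$ precisely package the two Cauchy--Schwarz inputs needed to control $\varphi$ on the left ideal generated by $\unit_\A-\omega$ and on the right ideal generated by $\unit_\A-\omega$ separately; once this is seen, both directions are essentially immediate, and no further C*-algebraic machinery is needed.
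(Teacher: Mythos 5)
Your proof is correct and follows essentially the same route as the paper's: \cite{Latremoliere13} likewise obtains $\varphi\bigl(d(\unit_\D-\omega)\bigr)=0$ and $\varphi\bigl((\unit_\D-\omega)d\bigr)=0$ from the two vanishing conditions via the Cauchy--Schwarz inequality for states, and gets the converse inclusion by specializing $d=(\unit_\D-\omega)^\ast$ exactly as you do. Both directions check out, including your conjugation step turning $\varphi\bigl((\unit_\D-\omega)^\ast d^\ast\bigr)=0$ into $\varphi\bigl(d(\unit_\D-\omega)\bigr)=0$, so there is nothing to add.
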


\begin{definition}[\cite{Latremoliere13}, Definition 3.6]
Let $\A$ and $\B$ be two unital C*-algebras. A \emph{bridge} $\gamma = (\D,\omega,\pi_\A,\pi_\B)$ is given by a unital C*-algebra, two unital *-mono\-mor\-phi\-sms $\pi_\A : \A \hookrightarrow \D$ and $\pi_\B : \B \hookrightarrow \D$, and $\omega \in \D$ such that the $1$-level set $\StateSpace(\A|\omega)$ of $\omega$ is not empty.
\end{definition}

\begin{notation}
When $\gamma = (\D,\omega,\pi,\rho)$ is a bridge, $\omega$ is called the pivot of $\gamma$, the domain of $\pi$ is the called the domain $\dom{\gamma}$ of $\gamma$ while the domain of $\rho$ is called the codomain $\codom{\gamma}$ of $\gamma$.
\end{notation}

The role of the pivot is illustrated in our next section on convergence for the quantum tori. When pivot are restricted to always be the unit, then our construction would lead to the unital nuclear distance \cite{kerr09}; our metric is however more flexible to work with.

Bridges will allow us to construct tunnels. Of course, we wish to be able to compute the length of such tunnels from the data provided by the bridge. As with tunnels, we associate two natural numbers to bridges between quantum compact metric spaces, which will then allow us to compute their length.

The first of these quantities measure how far the domain and the codomain of a bridge are apart, using a metric given by the inner derivation defined by the pivot. Formally, we define:

\begin{definition}[\cite{Latremoliere13}, Definition 3.10]
Let $\A$ and $\B$ be two unital C*-algebras and let $\gamma = (\D,\omega,\pi_\A,\pi_\B)$ be a bridge from $\A$ to $\B$. The \emph{bridge seminorm} $\bridgenorm{\gamma}{\cdot}$ of $\gamma$ is given for all $a\in\A$ and $b\in \B$ by:
\begin{equation*}
\bridgenorm{\gamma}{a,b} = \|\pi_\A(a)\omega - \omega \pi_\B(b)\|_\D\text{.}
\end{equation*}
\end{definition}

We can now define the reach of a bridge in terms of the bridge seminorm (which, we note, is indeed a seminorm on $\A\oplus\B$).

\begin{definition}[\cite{Latremoliere13}, Definition 3.14]
Let $(\A,\Lip_\A)$ and $(\B,\Lip_\B)$ be two quantum compact metric spaces and:
\begin{equation*}
\gamma = (\D,\omega,\pi_\A,\pi_\B)
\end{equation*}
be a bridge from $(\A,\Lip_\A)$ to $(\B,\Lip_\B)$. The \emph{reach} $\bridgereach{\gamma}{\Lip_\A,\Lip_\B}$ of $\gamma$ with respect to $\Lip_\A,\Lip_\B$ is the non-negative real number:
\begin{multline*}
\max\{ \sup\{ \inf \{ \bridgenorm{\gamma}{a,b} : b\in\sa{\B}, \Lip_\B(b)\leq 1\} : a\in\sa{\A}, \Lip_\A(a) \leq 1 \} ,\\
\sup \{ \inf \{ \bridgenorm{\gamma}{a,b} : a \in \sa{\A}, \Lip_\A(a)\leq 1\} : b \in\sa{\B}, \Lip_\B(b) \leq 1 \} \}\text{.}
\end{multline*}
\end{definition}

Thus the reach is the Hausdorff distance between Lipschitz balls for the seminorm $\bridgenorm{\gamma}{\cdot,\cdot}$ on $\A\oplus\B$. This quantity is always finite by \cite[Lemma 3.15]{Latremoliere13}.

The reach of a bridge represents, metaphorically, the length of its span between its domain and codomain. However, one must first get on the bridge and then, once the span crossed, get off the bridge: in some sense, the span measures how far the images of the $1$-level set of the pivot in the state spaces of the domain and the codomain are from each other, and we must now measure how far these images are from being the entire state spaces in each of the domain and codomain. This number is provided by the height of the bridge:

\begin{definition}[\cite{Latremoliere13}, Definition 3.16]
Let $(\A,\Lip_\A)$ and $(\B,\Lip_\B)$ be two quantum compact metric spaces and:
\begin{equation*}
\gamma = (\D,\omega,\pi_\A,\pi_\B)
\end{equation*}
be a bridge from $(\A,\Lip_\A)$ to $(\B,\Lip_\B)$. The \emph{height} $\bridgeheight{\gamma}{\Lip_\A,\Lip_\B}$ of $\gamma$ with respect to $\Lip_\A, \Lip_\B$ is the non-negative real number:
\begin{multline*}
\max\{ \Haus{\Kantorovich{\Lip_\A}}\left(\StateSpace(\A), \pi_\A^\ast\left(\StateSpace(\D|\omega)\right)\right), \Haus{\Kantorovich{\Lip_\B}}\left(\StateSpace(\B), \pi_\B^\ast\left(\StateSpace(\D|\omega)\right)\right) \}\text{.}
\end{multline*}
\end{definition}

We now bring these two quantities together:

\begin{definition}[\cite{Latremoliere13}, Definition 3.17]
Let $(\A,\Lip_\A)$ and $(\B,\Lip_\B)$ be two quantum compact metric spaces and:
\begin{equation*}
\gamma = (\D,\omega,\pi_\A,\pi_\B)
\end{equation*}
be a bridge from $(\A,\Lip_\A)$ to $(\B,\Lip_\B)$. The \emph{length} $\bridgelength{\gamma}{\Lip_\A,\Lip_\B}$ of $\gamma$ with respect to $\Lip_\A, \Lip_\B$ is the non-negative real number:
\begin{equation*}
\max\{ \bridgereach{\gamma}{\Lip_\A,\Lip_\B}, \bridgeheight{\gamma}{\Lip_\A,\Lip_\B} \}\text{.}
\end{equation*}
\end{definition}

A bridge is a mean to define a special kind of tunnel, which is very useful in practice \cite{Rieffel11, Latremoliere13b, Rieffel15}. The bridge itself has no quantum metric structure, which is why its reach and height must be decorated with the metric structures of its domain and codomain. This is in sharp contrast with tunnels, which do carry their own Lip-norms. Moreover, Figure (\ref{bridge-fig}) is, in a sense, backward if compared to Figure (\ref{tunnel-fig}) --- it is not the dual picture to Figure (\ref{GH-fig})! This inversion may appear counter-intuitive, though the next theorem should clarify this matter. The dual relationship from bridges to tunnels (which has no known inverse) justifies the name \emph{dual} Gromov-Hausdorff propinquity.

\begin{theorem}[\cite{Latremoliere13}, Theorem 6.3]\label{tunnel-from-bridge-thm}
Let $F$ be a permissible function, and let $(\A,\Lip_\A)$ and $(\B,\Lip_\B)$ be two {\Qqcms{F}s}. Let:
\begin{equation*}
\gamma = (\D,\omega,\pi_\A,\pi_\B)
\end{equation*}
be a bridge from $\A$ to $\B$. For all $\lambda > \bridgelength{\gamma}{\Lip_\A,\Lip_\B}$, and for all $a\in\sa{\A}$ and $b\in\sa{\B}$, we define:
\begin{equation*}
\Lip_\lambda (a,b) = \max\left\{ \Lip_\A(a), \Lip_\B(b), \frac{1}{\lambda} \bridgenorm{\gamma}{a,b} \right\}\text{.}
\end{equation*}
Let $\iota_\A : \A\oplus\B \twoheadrightarrow \A$ and $\iota_\B : \A\oplus\B \twoheadrightarrow \B$ be the two canonical surjections. Then $(\A\oplus\B, \Lip_\lambda, \iota_\A, \iota_\B)$ is an $F$-tunnel from $(\A,\Lip_\A)$ to $(\B,\Lip_\B)$ of length at most $\lambda$. 
\end{theorem}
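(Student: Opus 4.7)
The plan is to verify that $\tau=(\A\oplus\B,\Lip_\lambda,\iota_\A,\iota_\B)$ satisfies the three defining requirements of Definition~\ref{tunnel-def} together with the length bound. The first task is to show that $(\A\oplus\B,\Lip_\lambda)$ is an $F$-quasi-Leibniz compact quantum metric space. As a finite maximum of lower semicontinuous seminorms on the dense Jordan-Lie subspace $\dom{\Lip_\A}\oplus\dom{\Lip_\B}\subseteq\sa{\A\oplus\B}$, the map $\Lip_\lambda$ is a lower semicontinuous seminorm. Its kernel reduces to pairs $(t\unit_\A,s\unit_\B)$ with $\|(t-s)\omega\|_\D=0$; since $\StateSpace(\D|\omega)\neq\emptyset$ by hypothesis, $\omega\neq 0$, which forces $s=t$, so the kernel is $\R\unit_{\A\oplus\B}$. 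The $F$-quasi-Leibniz property for the two componentwise summands is inherited from $\Lip_\A$ and $\Lip_\B$; for the bridge seminorm term, I would use the telescoping identity
\[
\pi_\A(aa')\omega-\omega\pi_\B(bb')=\pi_\A(a)\bigl(\pi_\A(a')\omega-\omega\pi_\B(b')\bigr)+\bigl(\pi_\A(a)\omega-\omega\pi_\B(b)\bigr)\pi_\B(b'),
\]
together with its swap under $a\leftrightarrow a'$, $b\leftrightarrow b'$. Averaging these yields
\[
\bridgenorm{\gamma}{\Jordan{a}{a'},\Jordan{b}{b'}}\leq\|(a,b)\|_{\A\oplus\B}\,\bridgenorm{\gamma}{a',b'}+\|(a',b')\|_{\A\oplus\B}\,\bridgenorm{\gamma}{a,b},
\]
and similarly for the Lie product. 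Dividing by $\lambda$ and invoking $\bridgenorm{\gamma}{\cdot,\cdot}\leq\lambda\Lip_\lambda$, the permissibility inequality $xl_y+yl_x\leq F(x,y,l_x,l_y)$, and the monotonicity of $F$ gives the $F$-quasi-Leibniz property of $\Lip_\lambda$. That $(\A\oplus\B,\Lip_\lambda)$ is then a compact quantum metric space follows from Theorem~\ref{Rieffel-thm}: for fixed states $\mu_\A,\mu_\B$, the slice $\{(a,b):\Lip_\lambda(a,b)\leq 1,\ \mu_\A(a)=\mu_\B(b)=0\}$ sits inside the product of two norm-compact subsets furnished by the compact quantum metric space structures of $(\A,\Lip_\A)$ and $(\B,\Lip_\B)$.

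The second task is to verify that $\iota_\A$ is an isometric $\ast$-epimorphism (the argument for $\iota_\B$ is symmetric). The inequality $\Lip_\A\circ\iota_\A\leq\Lip_\lambda$ is built into the definition, so one needs to produce, for every $a\in\sa{\A}$, some $b\in\sa{\B}$ with $\Lip_\lambda(a,b)=\Lip_\A(a)$. If $\Lip_\A(a)=0$ then $a=t\unit_\A$ for some $t\in\R$, and choosing $b=t\unit_\B$ gives $\bridgenorm{\gamma}{a,b}=0$, hence $\Lip_\lambda(a,b)=0$. If $\Lip_\A(a)=r>0$, fix $\varepsilon>0$ so small that $\bridgereach{\gamma}{\Lip_\A,\Lip_\B}+\varepsilon<\lambda$; by the very definition of the reach there exists $b'\in\sa{\B}$ with $\Lip_\B(b')\leq 1$ and $\bridgenorm{\gamma}{a/r,b'}\leq\bridgereach{\gamma}{\Lip_\A,\Lip_\B}+\varepsilon$, so $b:=rb'$ satisfies $\Lip_\B(b)\leq r$ and $\tfrac{1}{\lambda}\bridgenorm{\gamma}{a,b}<r$, giving $\Lip_\lambda(a,b)=r$.

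The third task is the length bound. Since every state on a direct sum of C*-algebras is a genuine convex combination of the form $t\iota_\A^\ast(\varphi_\A)+(1-t)\iota_\B^\ast(\varphi_\B)$ and the resulting convex hull is already weak$\ast$ closed, one has $\StateSpace(\A\oplus\B)=\co{\iota_\A^\ast(\StateSpace(\A))\cup\iota_\B^\ast(\StateSpace(\B))}$, so $\tunneldepth{\tau}=0$ and the length coincides with the reach. For $\varphi\in\StateSpace(\A)$ and $\varepsilon>0$, the bridge height provides $\mu\in\StateSpace(\D|\omega)$ with $\Kantorovich{\Lip_\A}(\varphi,\mu\circ\pi_\A)\leq\bridgeheight{\gamma}{\Lip_\A,\Lip_\B}+\varepsilon$; set $\psi:=\mu\circ\pi_\B\in\StateSpace(\B)$. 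The $1$-level identity $\mu(d)=\mu(d\omega)=\mu(\omega d)$ together with the triangle inequality yields, for every $(a,b)$ with $\Lip_\lambda(a,b)\leq 1$,
\[
|\varphi(a)-\psi(b)|\leq\Kantorovich{\Lip_\A}(\varphi,\mu\circ\pi_\A)\,\Lip_\A(a)+\bigl|\mu\bigl(\pi_\A(a)\omega-\omega\pi_\B(b)\bigr)\bigr|,
\]
whose second summand is bounded by $\bridgenorm{\gamma}{a,b}\leq\lambda$. The main obstacle is sharpening this naive estimate, which only bounds the reach by $\bridgeheight{\gamma}{\Lip_\A,\Lip_\B}+\lambda$, to the claimed tight bound $\lambda$: the slack is recovered by exploiting the strict inequality $\lambda>\bridgelength{\gamma}{\Lip_\A,\Lip_\B}\geq\bridgeheight{\gamma}{\Lip_\A,\Lip_\B}$, rescaling the witness element by the ratio $\Lip_\A(a)/\Lip_\lambda(a,b)$ so that the two contributions are measured against the same normalization, and letting $\varepsilon\to 0$ in a symmetric argument for the other arm of the Hausdorff distance, using the norm-compactness of the unit ball modulo scalars from the first step.
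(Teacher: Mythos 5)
Your telescoping identity for the bridge seminorm, the kernel computation, the lower semicontinuity remark, the isometric-epimorphism argument via the bridge reach (realizing the quotient seminorm by rescaling a reach-witness), and the observation that the depth vanishes because $\StateSpace(\A\oplus\B)$ is already the closed convex hull of the two embedded state spaces are all correct, and they follow the standard route. There are, however, two gaps. The first is fixable: your compactness slice imposes the two normalizations $\mu_\A(a)=\mu_\B(b)=0$, which amounts to quotienting by the two-dimensional space $\R\unit_\A\oplus\R\unit_\B$, whereas every criterion of Theorem (\ref{Rieffel-thm}) normalizes by the one-dimensional $\R\unit_{\A\oplus\B}$, i.e. by a single state of the direct sum. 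Your slice is a \emph{subset} of the set that must be shown precompact, not a superset, and its precompactness holds verbatim for the seminorm $\max\{\Lip_\A,\Lip_\B\}$, which is not even a Lipschitz pair --- so this step cannot detect the bridge term at all. The repair is to let the pivot enter the metric estimate, not merely the kernel computation: for $\mu\in\StateSpace(\D|\omega)$ and $\Lip_\lambda(a,b)\leq 1$ one has $|\mu\circ\pi_\A(a)-\mu\circ\pi_\B(b)|=|\mu(\pi_\A(a)\omega-\omega\pi_\B(b))|\leq\bridgenorm{\gamma}{a,b}\leq\lambda$, which bounds the scalar offset of $b$ relative to $a$ and permits criterion (3) of Theorem (\ref{Rieffel-thm}) with the single state $(a,b)\mapsto\mu\circ\pi_\A(a)$.

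The second gap is fatal to the bound you are chasing: the ``slack'' in your naive reach estimate cannot be recovered. Rescaling by $\Lip_\A(a)/\Lip_\lambda(a,b)$ buys nothing, because the constraints $\Lip_\A(a)\leq 1$ and $\bridgenorm{\gamma}{a,b}\leq\lambda$ can be saturated \emph{simultaneously}, at a pair tested against a state which realizes the height. Concretely, take $\A=C(\{p,q\})$ with $\mathrm{d}(p,q)=2$, $\B=\C$ with $\Lip_\B=0$, $\D=\A$, $\pi_\A=\mathrm{id}$, $\pi_\B:t\mapsto t\unit_\D$, and pivot $\omega=\indicator{\{p\}}$; then $\StateSpace(\D|\omega)=\{\delta_p\}$, the reach of $\gamma$ is $0$, and its height --- hence its length --- is $2$. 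For $\lambda>2$, the pair given by $a(p)=\lambda$, $a(q)=\lambda+2$ and $b=0$ satisfies $\Lip_\lambda(a,b)=1$, and since $\StateSpace(\B)$ is a singleton, $\Kantorovich{\Lip_\lambda}(\delta_q\circ\iota_\A,\psi\circ\iota_\B)\geq|a(q)-\psi(0)|=\lambda+2$ for the only available $\psi$: the reach of this very tunnel equals $\lambda+\bridgeheight{\gamma}{\Lip_\A,\Lip_\B}$. Thus your first estimate --- reach at most $\bridgeheight{\gamma}{\Lip_\A,\Lip_\B}+\lambda$, obtained from the height together with the pivot identity $\mu(d)=\mu(d\omega)=\mu(\omega d)$ --- is sharp, and no choice of $\psi$, symmetrization, or $\varepsilon\to 0$ limit improves it to $\lambda$. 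What your method genuinely delivers is a tunnel of length at most $\lambda+\bridgeheight{\gamma}{\Lip_\A,\Lip_\B}\leq 2\lambda$, which is precisely the estimate feeding the comparison $\dist_q\leq 2\qpropinquity{\mathcal{C}}$ quoted elsewhere in this survey; the bound $\lambda$ alone holds when the height contribution is absorbed, e.g. for unit pivots, where $\pi_\A^\ast(\StateSpace(\D|\omega))$ exhausts $\StateSpace(\A)$. You should either carry the height term in your conclusion or add a hypothesis forcing it to vanish; as written, the final step is not a sharpening awaiting details but an impossibility.
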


It is therefore natural to define, for any permissible function $F$, the class $\mathcal{T}_F$ of all $F$-tunnels obtained from the construction in Theorem (\ref{tunnel-from-bridge-thm})  and use it to construct a specialized version of the dual Gromov-Hausdorff propinquity. Unfortunately, $\mathcal{T}_F$ is not triangular. 

This difficulty is in fact quite important from the perspective of the development of a noncommutative analogue of the Gromov-Hausdorff distance. Indeed, Theorem (\ref{tunnel-composition-thm}) allows to compose tunnels, but we see immediately that this approach, when applied to two tunnels in $\mathcal{T}_F$, will not lead to a tunnel in $\mathcal{T}_F$. A simple observation is that indeed, we would require taking a quotient in order to return the composed tunnel in $\mathcal{T}_F$. Yet this procedure would, in general, destroy the Leibniz property. This exact difficulty has prevented many earlier metrics \cite{kerr02, Rieffel10c} to be well-behaved with respect to the Leibniz property because the triangle inequality may fail.

We however developed another approach to the construction of both the quantum propinquity and the dual propinquity which allows us to still obtain well-behaved metrics over classes of tunnels which may not be appropriate, such as $\mathcal{T}_F$. These constructions rely on the notion of a finite path between quantum compact metric spaces, consisting of bridges or tunnels. In \cite{Latremoliere13}, we originally introduced treks:

\begin{definition}[\cite{Latremoliere13}, Definition 3.20]
Let $F$ be a permissible function. Let $\mathcal{C}$ be a nonempty class of {\Qqcms{F}s} and let $(\A,\Lip_\A), (\B,\Lip_\B) \in \mathcal{C}$. A \emph{$\mathcal{C}$-trek} from $(\A,\Lip_\A)$ to $(\B,\Lip_\B)$ is a finite family:
\begin{equation*}
\Gamma = \left( \A_j,\Lip_k, \gamma_j, \A_{j+1}, \Lip_{j+1} : j = 1,\ldots,n \right)
\end{equation*}
where:
\begin{enumerate}
\item for all $j\in\{1,\ldots,n+1\}$ we have $(\A_j,\Lip_j) \in \mathcal{C}$,
\item we have $(\A_1,\Lip_1) = (\A,\Lip_\A)$ and $(\A_{n+1}, \Lip_{n+1}) = (\B,\Lip_\B)$,
\item for all $j\in \{1,\ldots,n\}$, we are given a bridge $\gamma_j$ from $(\A_j,\Lip_j)$ to $(\A_{j+1},\Lip_{j+1})$.
\end{enumerate}
\end{definition}

Since each bridge of a trek gives rise to a tunnel, we have a natural notion of a journey as well. 

\begin{definition}[\cite{Latremoliere13c}, Definition 3.18]
Let $F$ be a permissible function, $\mathcal{C}$ be a nonempty class of {\Qqcms{F}s} and $\mathcal{T}$ be a nonempty class of tunnels. Let $(\A,\Lip_\A)$ and $(\B,\Lip_\B)$ in $\mathcal{C}$. 

A \emph{$\mathcal{T}$-journey} from  $(\A,\Lip_\A)$ to $(\B,\Lip_\B)$ is a finite family:
\begin{equation*}
\Upsilon = \left( \A_j,\Lip_j,\tau_j,\A_{j+1},\Lip_{j+1} : j = 1,\ldots, n \right)
\end{equation*}
where:
\begin{enumerate}
\item for all $j\in\{1,\ldots,n+1\}$ we have $(\A_j,\Lip_j) \in \mathcal{C}$,
\item we have $(\A_1,\Lip_1) = (\A,\Lip_\A)$ and $(\A_{n+1}, \Lip_{n+1}) = (\B,\Lip_\B)$,
\item for all $j\in \{1,\ldots,n\}$, we are given a tunnel:
\begin{equation*}
\gamma_j\in\tunnelset{(\A_j,\Lip_j)}{(\A_{j+1},\Lip_{j+1})}{\mathcal{T}}\text{.}
\end{equation*}
\end{enumerate}
\end{definition}

In order for the construction of the dual propinquity based on the length of tunnels to lead to a metric, we require some compatibility condition on what classes of tunnels may be used to build journeys. The difference between this notion of compatibility and the notion of an appropriate class is that we do not require the class to be triangular: the introduction of journeys and treks provide a natural notion of composition from which the triangle inequality will follow. We also relax the notion of connectedness.

\begin{definition}[\cite{Latremoliere13c}, Definition 3.11]
Let $F$ be a permissible function, $\mathcal{C}$ be a nonempty class of {\Qqcms{F}s}. A class $\mathcal{T}$ of tunnels is \emph{$\mathcal{C}$-compatible} when:
\begin{enumerate}
\item $\mathcal{T}$ is weakly connected: For any $\mathds{A}, \mathds{B} \in \mathcal{C}$, there exists a $\mathcal{T}$-journey from $\mathds{A}$ to $\mathds{B}$,
\item $\mathcal{T}$ is symmetric: if $\tau = (\D,\Lip_\D,\pi,\rho) \in \mathcal{T}$ then $\tau^{-1} = (\D,\Lip_\D,\rho,\pi) \in \mathcal{T}$,
\item $\mathcal{T}$ is specific: if $\tau \in \mathcal{T}$ then the domain and codomain of $\tau$ lies in $\mathcal{C}$,
\item $\mathcal{T}$ is definite: for any $(\A,\Lip_\A), (\B,\Lip_\B) \in \mathcal{C}$, if there exists an isometric *-isomorphism $h : \A\rightarrow\B$ then both $(\A, \Lip_\A, \mathrm{id}_\A, h^{-1})$ and $(\B,\Lip_\B,h,\mathrm{id}_\B)$ belong to $\mathcal{T}$, where $\mathrm{id}_E$ is the identity map of the set $E$ for any set.
\end{enumerate}
\end{definition}

In particular, for any permissible function $F$, if $\mathcal{C}_F$ is the class of all {\Qqcms{F}s}, then the class $\mathcal{T}_F$ of all tunnels build by Theorem (\ref{tunnel-from-bridge-thm}) from bridges between arbitrary {\Qqcms{F}s} is $\mathcal{C}_F$-compatible.

Treks and journeys can be composed by concatenation. Now, we define the length of both these types of paths between quantum compact metric spaces:

\begin{definition}[\cite{Latremoliere13},  Definition 3.22; \cite{Latremoliere13c}, Definition 3.20]
The \emph{length} of a trek:
\begin{equation*}
\Gamma = \left(\A_j,\Lip_j,\gamma_j,\A_{j+1},\Lip_{j+1} : j=0,\ldots,n\right)
\end{equation*}
is the non-negative real number:
\begin{equation*}
\treklength{\Gamma} = \sum_{j=1}^n \bridgelength{\gamma_j}{\Lip_j,\Lip_{j+1}} \text{.}
\end{equation*}
Similarly, the \emph{length} of a journey:
\begin{equation*}
\Upsilon = \left(\A_j,\Lip_j,\gamma_j,\A_{j+1},\Lip_{j+1} : j=0,\ldots,n\right)
\end{equation*}
is the non-negative real number:
\begin{equation*}
\journeylength{\Upsilon} = \sum_{j=1}^n \tunnellength{\tau_j} \text{.}
\end{equation*}
\end{definition}

With these concepts defined, we now introduce the quantum propinquity.

\subsubsection{The Quantum Propinquity}

\begin{notation}
Let $\mathcal{C}$ be a nonempty class of {\Qqcms{F}s} for some permissible function $F$. The set of all $\mathcal{C}$-treks from $(\A,\Lip_\A) \in \mathcal{C}$ to $(\B,\Lip_\B) \in \mathcal{C}$ is denoted by:
\begin{equation*}
\trekset{\A,\Lip_\A}{\B,\Lip_\B}{\mathcal{C}}\text{.}
\end{equation*}
\end{notation}

\begin{definition}[\cite{Latremoliere13}, Definition 4.2]\label{quantum-propinquity-def}
Let $F$ be a permissible function. Let $\mathcal{C}$ be a nonempty class of {\Qqcms{F}s}. The \emph{quantum Gromov-Hausdorff $\mathcal{C}$-propinquity} $\qpropinquity{\mathcal{C}}((\A,\Lip_\A),(\B,\Lip_\B))$ between $(\A,\Lip_\A)\in \mathcal{C}$ and $(\B,\Lip_\B)\in \mathcal{C}$ is:
\begin{equation*}
\inf\left\{ \treklength{\Gamma} : \Gamma \in \trekset{\A,\Lip_\A}{\B,\Lip_\B}{\mathcal{C}} \right\}\text{.}
\end{equation*}
\end{definition}

The main results in \cite{Latremoliere13} are that the quantum propinquity is indeed a metric up to isometric *-isomorphism, that it induces the same topology as the Gromov-Hausdorff distance on classical metric spaces, and dominates Rieffel's quantum Gromov-Hausdorff distance.

\begin{theorem}[\cite{Latremoliere13}, Proposition 4.6, Proposition 4.7, Theorem 5.13]
Let $F$ be a permissible function. Let $\mathcal{C}$ be a nonempty class of {\Qqcms{F}s}. Then, for all $(\A,\Lip_\A), (\B,\Lip_\B), (\D,\Lip_\D) \in \mathcal{C}$, we have:
\begin{enumerate}
\item $\qpropinquity{\mathcal{C}}((\A,\Lip_\A),(\B,\Lip_\B)) \leq \max\{\diam{\StateSpace(\A)}{\Kantorovich{\Lip_\A}}, \diam{\StateSpace(\B)}{\Kantorovich{\Lip_\B}} \}$,
\item $\qpropinquity{\mathcal{C}}((\A,\Lip_\A),(\B,\Lip_\B)) = \qpropinquity{\mathcal{C}}((\B,\Lip_\B),(\A,\Lip_\A))$,
\item we have:
\begin{equation*}
\qpropinquity{\mathcal{C}}((\A,\Lip_\A),(\B,\Lip_\B)) \leq \qpropinquity{\mathcal{C}}((\A,\Lip_\A),(\D,\Lip_\D)) + \qpropinquity{\mathcal{C}}((\D,\Lip_\D),(\B,\Lip_\B))\text{,}
\end{equation*}
\item $\qpropinquity{\mathcal{C}}((\A,\Lip_\A),(\B,\Lip_\B)) = 0$ if and only if there exists a *-isomorphism $\pi : \A\rightarrow \B$ such that $\Lip_\B\circ h = \Lip_\A$.
\end{enumerate}
\end{theorem}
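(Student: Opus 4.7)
The first three assertions follow from relatively direct constructions within the trek framework, while the coincidence property in (4) is the essential content and reduces cleanly to the already-established dual propinquity coincidence theorem. For (1), exhibit a single-bridge trek realizing the claimed upper bound: take $\D = \A \otimes_{\max} \B$ with the canonical unital *-monomorphisms $\iota_\A : a \mapsto a \otimes \unit_\B$ and $\iota_\B : b \mapsto \unit_\A \otimes b$, and pivot $\omega = \unit_\D$. Since the pivot is the unit, $\StateSpace(\D|\omega) = \StateSpace(\D)$, and every state of $\A$ lifts to a product state on $\D$, forcing the height to vanish. For the reach, fix $\mu_\A \in \StateSpace(\A)$ and, given $a \in \sa{\A}$ with $\Lip_\A(a) \leq 1$, take the scalar lift $b = \mu_\A(a) \unit_\B$ with $\Lip_\B(b) = 0$ to get $\bridgenorm{\gamma}{a,b} = \|a - \mu_\A(a) \unit_\A\|_\A \leq \diam{\StateSpace(\A)}{\Kantorovich{\Lip_\A}}$, with the symmetric estimate on the $\B$ side; this bounds $\bridgelength{\gamma}{\Lip_\A,\Lip_\B}$ as required.

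For assertion (2), I would define the inverse of a bridge $\gamma = (\D,\omega,\pi_\A,\pi_\B)$ to be $\gamma^{-1} = (\D,\omega^\ast,\pi_\B,\pi_\A)$. The $1$-level set is invariant under $\omega \mapsto \omega^\ast$ (its defining conditions are), and adjunction yields $\bridgenorm{\gamma^{-1}}{b,a} = \bridgenorm{\gamma}{a,b}$ for self-adjoint $a,b$, so the length is preserved; reversing a trek and inverting each bridge in it produces a trek of equal length in the opposite direction, whence the infima agree. For assertion (3), simply concatenate: a trek $\A \leadsto \D$ followed by a trek $\D \leadsto \B$ yields a trek $\A \leadsto \B$ of additive length, and infimizing separately over each piece gives the triangle inequality.

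Assertion (4) is the main analytical content. The easy direction is by explicit construction: given an isometric *-isomorphism $\pi:\A\to\B$, the one-bridge trek $(\B,\unit_\B,\pi,\mathrm{id}_\B)$ has zero height and, taking $b = \pi(a)$ with $\Lip_\B(b) = \Lip_\A(a)$, zero reach. The converse is the hard part, and my plan is to invoke the previously proved Theorem (\ref{coincidence-thm}). By Theorem (\ref{tunnel-from-bridge-thm}), every bridge of length strictly less than $\lambda$ produces an $F$-tunnel of length $\lambda$ on $\A\oplus\B$ whose Lip-norm $\Lip_\lambda$ is $F$-quasi-Leibniz by construction; applying this conversion bridge-by-bridge along a trek yields a journey whose length approximates the trek length arbitrarily well. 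Hence the dual Gromov-Hausdorff propinquity is dominated by $\qpropinquity{\mathcal{C}}$, so vanishing of the quantum propinquity forces the dual propinquity to vanish, and Theorem (\ref{coincidence-thm}) furnishes the required isometric *-isomorphism $\pi:\A\to\B$ with $\Lip_\B\circ\pi = \Lip_\A$. The principal obstacle throughout is precisely this converse direction: recovering an actual *-isomorphism from the mere vanishing of an infimum of real numbers. The delicacy is packaged in the proof of Theorem (\ref{coincidence-thm}), which relies on an Arzel\`a--Ascoli--type extraction using total boundedness of Lip-norm unit balls modulo scalars (Theorem (\ref{Rieffel-thm})) combined with lower semicontinuity of the limiting Lip-norm; reducing to that theorem via Theorem (\ref{tunnel-from-bridge-thm}) is the most economical path and avoids reproducing the compactness machinery directly at the level of bridges.
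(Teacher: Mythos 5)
Your treatment of assertions (1)--(3) is correct and essentially the argument of \cite{Latremoliere13} (Propositions 4.6 and 4.7), to which this survey defers: the single-bridge trek on a C*-tensor product with unit pivot, with scalar lifts $b = \mu_\A(a)\unit_\B$, gives the diameter bound (the diameters are finite since {\Qqcms{F}s} are quantum compact metric spaces); the inverse bridge $(\D,\omega^\ast,\pi_\B,\pi_\A)$ gives symmetry exactly as you check; and concatenation of treks is precisely how the quantum propinquity secures the triangle inequality --- indeed treks exist for this purpose. Your easy direction of (4) is also the standard one. Where you genuinely diverge from the source is the hard direction of (4): \cite{Latremoliere13} predates the dual propinquity and proves coincidence \emph{directly}, by an Arzel\`a--Ascoli-type extraction performed on target sets of treks, whereas you reduce to Theorem (\ref{coincidence-thm}) via Theorem (\ref{tunnel-from-bridge-thm}). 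Within this survey's logical organization your reduction is legitimate, since the dual propinquity's coincidence property is established independently in \cite{Latremoliere13c}; the survey itself records the output of your argument in the identity $\qpropinquity{F} = \propinquity{\mathcal{T}_F}$ and in item (5) of Theorem (\ref{alt-propinquity-thm}). What your route buys is economy --- the compactness machinery is invoked once, at the level of tunnels --- at the cost of not being available historically and of one citation-level wrinkle you should repair: converting a trek bridge-by-bridge produces a \emph{journey}, so what you dominate is the journey-based propinquity $\propinquity{\mathcal{T}_F,\mathrm{alt}}$, and the class $\mathcal{T}_F$ of tunnels arising from bridges is compatible but \emph{not} triangular (the survey says so explicitly), hence not appropriate, so Theorem (\ref{coincidence-thm}) does not apply to it as stated. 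Either invoke item (4) of Theorem (\ref{alt-propinquity-thm}), which gives coincidence for the alternate propinquity over compatible classes, or pass to the class of all $F$-tunnels, which is appropriate: compose the tunnels of your journey using Theorem (\ref{tunnel-composition-thm}) and the comparison $\tunnelextent{\tau}\leq 2\tunnellength{\tau}$ of Proposition (\ref{length-extent-eq-prop}) to obtain $\propinquity{F} \leq 2\,\qpropinquity{F}$, which suffices since you only need vanishing to propagate. With either one-line patch your proof is complete.
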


The comparison theorem for the quantum propinquity is given as:

\begin{theorem}[\cite{Latremoliere13}, Corollary 6.4, Theorem 6.6]
Let $F$ be a permissible function. Let $\mathcal{C}$ be a nonempty class of {\Qqcms{F}s}. Then, for all $(\A,\Lip_\A), (\B,\Lip_\B) \in \mathcal{C}$, we have:
\begin{equation*}
\dist((\A,\Lip_\A),(\B,\Lip_\B)) \leq 2 \qpropinquity{\mathcal{C}}((\A,\Lip_\A),(\B,\Lip_\B)) \text{.}
\end{equation*}
Moreover, if $(X,\mathrm{d}_X)$ and $(Y,\mathrm{d}_Y)$ are compact metric spaces, and if $\mathrm{Lip}_X$ and $\mathrm{Lip}_Y$ are the two Lipschitz seminorms associated, respectively, to $\mathrm{d}_X$ and $\mathrm{d}_Y$, then:
\begin{equation*}
\qpropinquity{\mathcal{C}}((C(X),\mathrm{Lip}_X,C(Y),\mathrm{Lip}_Y)) \leq \GH((X,\mathrm{d}_X), (Y,\mathrm{d}_Y)) \text{.}
\end{equation*}
In particular, the topology induced by the quantum Gromov-Hausdorff propinquity on the class of compact metric spaces agrees with the topology of the Gromov-Hausdorff distance.
\end{theorem}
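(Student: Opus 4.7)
The statement has two independent inequalities, and the topological equivalence is a formal consequence. The plan for the first inequality is to fix an arbitrary trek $\Gamma = (\A_j,\Lip_j,\gamma_j,\A_{j+1},\Lip_{j+1}:j=1,\ldots,n)$ from $(\A,\Lip_\A)$ to $(\B,\Lip_\B)$, and for each bridge $\gamma_j$ of length $\lambda_j$ (and every $\lambda > \lambda_j$), apply Theorem (\ref{tunnel-from-bridge-thm}) to obtain an admissible $F$-quasi-Leibniz tunnel on $\A_j \oplus \A_{j+1}$ of length at most $\lambda$, and hence of extent at most $2\lambda$ by Proposition (\ref{length-extent-eq-prop}). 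These tunnels can then be concatenated, using Theorem (\ref{tunnel-composition-thm}), into a single tunnel from $(\A,\Lip_\A)$ to $(\B,\Lip_\B)$ whose extent exceeds $\sum_j 2\lambda_j = 2\treklength{\Gamma}$ by an arbitrarily small amount; hence $\propinquity{\mathcal{T}_F}((\A,\Lip_\A),(\B,\Lip_\B)) \leq 2\treklength{\Gamma}$. Invoking the already-established comparison $\mathrm{dist}_q \leq \propinquity{\mathcal{T}_F}$ of Theorem (\ref{ncpropinquity-comparison-thm}) and taking the infimum over $\Gamma$ produces the first inequality.

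For the second inequality, the plan is as follows. Given $\varepsilon > 0$, let $(Z,\mathrm{d}_Z)$ be a compact metric space with isometric embeddings $X, Y \hookrightarrow Z$ satisfying $\Haus{\mathrm{d}_Z}(X,Y) < \GH((X,\mathrm{d}_X),(Y,\mathrm{d}_Y)) + \varepsilon$. I will construct a single bridge $\gamma = (\D,\omega,\pi_X,\pi_Y)$ from $(C(X),\mathrm{Lip}_X)$ to $(C(Y),\mathrm{Lip}_Y)$ whose length is at most $\Haus{\mathrm{d}_Z}(X,Y) + \varepsilon$. The natural choice is $\D = C(X\times Y)$ with the unital *-monomorphisms $\pi_X(f)(x,y) = f(x)$ and $\pi_Y(g)(x,y) = g(y)$, and for the pivot a continuous $\omega \in C(X\times Y)$ with $0 \leq \omega \leq 1$, equal to $1$ on $\{(x,y) : \mathrm{d}_Z(x,y) \leq \Haus{\mathrm{d}_Z}(X,Y)\}$ and supported in $\{(x,y) : \mathrm{d}_Z(x,y) \leq \Haus{\mathrm{d}_Z}(X,Y) + \varepsilon\}$. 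The Gromov-Hausdorff condition guarantees the $1$-level set of $\omega$ is nonempty, containing Dirac measures at all sufficiently close pairs. The bridge seminorm reduces to $\bridgenorm{\gamma}{f,g} = \sup_{(x,y)} |f(x)-g(y)|\,\omega(x,y)$; for the reach, given any $1$-Lipschitz $f \in C(X)$, I would apply McShane's theorem to extend $f$ to a $1$-Lipschitz function on $Z$ and restrict to $Y$ to obtain a $1$-Lipschitz match $g$, giving $|f(x)-g(y)| \leq \mathrm{d}_Z(x,y)$ uniformly, so the support property of $\omega$ bounds the reach by $\Haus{\mathrm{d}_Z}(X,Y) + \varepsilon$. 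For the height, the pushforward of the Dirac measures $\delta_{(x,y)}$ in the $1$-level set to $\StateSpace(C(X))$ yields $\delta_x$ for all $x \in X$ admitting some $y \in Y$ within $\Haus{\mathrm{d}_Z}(X,Y)$ --- which by the Hausdorff condition is every $x \in X$ --- and a convex-hull and weak*-density argument extends the control to arbitrary states in Kantorovich metric within $\Haus{\mathrm{d}_Z}(X,Y)$. Letting $\varepsilon \to 0$ then yields the second inequality.

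The topological equivalence is then immediate: the two inequalities sandwich $\qpropinquity{\mathcal{C}}$ between $\frac{1}{2}\mathrm{dist}_q$ and $\GH$ on classical compact metric spaces, and Rieffel's theorem \cite{Rieffel00} that $\mathrm{dist}_q$ metrizes the Gromov-Hausdorff topology closes the squeeze. The hardest step will be the height estimate in the classical bridge construction: one must carefully show that the pushforward of the pivot's $1$-level set (or of its convex hull) is close in Kantorovich metric --- not merely in the weak* topology --- to every state on $C(X)$ and $C(Y)$, which requires a quantitative approximation argument combining the Gromov-Hausdorff proximity of $X$ and $Y$ in $Z$ with Kantorovich-Rubinstein duality between the metric on Borel probability measures and the Lipschitz seminorm.
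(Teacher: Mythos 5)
Your proposal is correct, but your proof of the first inequality takes a genuinely different route from the original argument in \cite{Latremoliere13}. There, each bridge $\gamma_j$ of a trek is converted into a tunnel on $\A_j\oplus\A_{j+1}$ whose Lip-norm is admissible in Rieffel's sense, the per-bridge estimate $\dist((\A_j,\Lip_j),(\A_{j+1},\Lip_{j+1}))\leq 2\,\bridgelength{\gamma_j}{\Lip_j,\Lip_{j+1}}$ is established directly, and the conclusion follows by summing along the trek using the triangle inequality \emph{of $\dist$ itself} --- no composition of tunnels is needed, which matters historically since the extent-based composition of Theorem (\ref{tunnel-composition-thm}) postdates \cite{Latremoliere13}. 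You instead concatenate the tunnels inside the dual propinquity, paying the factor $2$ through $\tunnelextent{\tau}\leq 2\tunnellength{\tau}$ of Proposition (\ref{length-extent-eq-prop}), and then invoke $\dist\leq\propinquity{F}$ from Theorem (\ref{ncpropinquity-comparison-thm}). This is valid and arguably more systematic, though it uses heavier machinery and places the factor $2$ at the length-to-extent conversion rather than at the cost of crossing a single bridge; note also that item (5) of Theorem (\ref{alt-propinquity-thm}) already records the sharper bound $\dist\leq\qpropinquity{\mathcal{C}}$, so nothing is lost by your detour.

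For the classical comparison, your bridge implements the same localization idea as the paper's: where \cite{Latremoliere13} restricts to the closed set of pairs $(x,y)$ with $\mathrm{d}_Z(x,y)$ at most $\Haus{\mathrm{d}_Z}(X,Y)+\varepsilon$, you keep $\D=C(X\times Y)$ and localize via a cutoff pivot; the McShane extension handles the reach in both versions exactly as you describe. One correction: the height estimate you flag as the hardest step is in fact trivial, and no quantitative Kantorovich--Rubinstein argument is required. The $1$-level set of your pivot is weak*-compact and convex, so its image under $\pi_X^\ast$ is a weak*-compact convex subset of $\StateSpace(C(X))$ which contains every Dirac state $\delta_x$ (compactness of $Y$ provides, for each $x\in X$, some $y$ with $\mathrm{d}_Z(x,y)\leq\Haus{\mathrm{d}_Z}(X,Y)$, and $\pi_X^\ast\delta_{(x,y)}=\delta_x$); by Krein--Milman this image is all of $\StateSpace(C(X))$, and likewise on the $Y$ side, so the height of your bridge is exactly $0$. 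Alternatively, since $(C(X),\mathrm{Lip}_X)$ is a quantum compact metric space, $\Kantorovich{\mathrm{Lip}_X}$ metrizes the weak* topology, so weak*-density of the image would suffice in any case. Your squeeze argument for the equality of topologies is the one the survey itself records and is fine.
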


As we shall discuss in the next section, examples of convergence for the dual propinquity come often from convergence in the sense of the quantum propinquity. Indeed, these two metrics are comparable --- as Theorem (\ref{tunnel-from-bridge-thm}) strongly suggests. As we mentioned earlier, the issue is to define the dual Gromov-Hausdorff propinquity so that we may use compatible, rather than appropriate classes of tunnels. Indeed, using journeys, one may propose an alternative form of the dual Gromov-Hausdorff propinquity (and its original form in \cite{Latremoliere13c}), which is really an equivalent metric whenever both constructions can be carried out:

\begin{definition}[\cite{Latremoliere13c}]\label{propinquity-alt-def}
Let $F$ be a permissible function, $\mathcal{C}$ be a nonempty class of {\Qqcms{F}s} and $\mathcal{T}$ be a $\mathcal{C}$-compatible class of tunnels. If $(\A,\Lip_\A),(\B,\Lip_\B) \in \mathcal{C}$ then the \emph{dual propinquity} (alternate version) $\propinquity{\mathcal{T},\mathrm{alt}}((\A,\Lip_\A),(\B,\Lip_\B))$ is the non-negative real number:
\begin{equation*}
\inf\left\{ \journeylength{\Upsilon} : \Upsilon \in \journeyset{\A,\Lip_\A}{\B,\Lip_\B}{\mathcal{T}} \right\}\text{.}
\end{equation*}
\end{definition}

We proved directly in \cite{Latremoliere13c} that $\propinquity{\mathrm{alt}}$ is also a metric, and discussed the equivalence of our constructions in \cite{Latremoliere14}:

\begin{theorem}[\cite{Latremoliere13c}, Theorem 4.16, Theorem 4.17, \cite{Latremoliere14}]\label{alt-propinquity-thm}
Let $F$ be a permissible function. Let $\mathcal{C}$ be a nonempty class of {\Qqcms{F}s} and let $\mathcal{T}$ be a $\mathcal{C}$-compatible class of tunnels. Then, for all $(\A,\Lip_\A), (\B,\Lip_\B), (\D,\Lip_\D) \in \mathcal{C}$, we have:
\begin{enumerate}
\item We have:
\begin{multline*}
\propinquity{\mathcal{T},\mathrm{alt}}((\A,\Lip_\A),(\B,\Lip_\B)) \\ \leq \max\{\diam{\StateSpace(\A)}{\Kantorovich{\Lip_\A}}, \diam{\StateSpace(\B)}{\Kantorovich{\Lip_\B}} \}\text{,}
\end{multline*}
\item $\propinquity{\mathcal{T},\mathrm{alt}}((\A,\Lip_\A),(\B,\Lip_\B)) = \propinquity{\mathcal{T},\mathrm{alt}}((\B,\Lip_\B),(\A,\Lip_\A))$,
\item we have:
\begin{equation*}
\propinquity{\mathcal{T},\mathrm{alt}}((\A,\Lip_\A),(\B,\Lip_\B)) \leq \propinquity{\mathcal{T},\mathrm{alt}}((\A,\Lip_\A),(\D,\Lip_\D)) + \propinquity{\mathcal{T},\mathrm{alt}}((\D,\Lip_\D),(\B,\Lip_\B))\text{,}
\end{equation*}
\item $\propinquity{\mathcal{T},\mathrm{alt}}((\A,\Lip_\A),(\B,\Lip_\B)) = 0$ if and only if there exists a *-isomorphism $\pi : \A\rightarrow \B$ such that $\Lip_\B\circ h = \Lip_\A$.
\item if $\mathcal{T}_{\mathcal{C}}$ is the class of all tunnels between elements of $\mathcal{C}$:
\begin{equation*}
\dist((\A,\Lip_\A),(\B,\Lip_\B)) \leq \propinquity{\mathcal{T}_{\mathcal{C},\mathrm{alt}}}((\A,\Lip_\A),(\B,\Lip_\B)) \leq \qpropinquity{\mathcal{C}}((\A,\Lip_\A,\B,\Lip_\B))\text{,}
\end{equation*}
\item If moreover, $\mathcal{T}$ is $\mathcal{C}$-appropriate, i.e. it is also triangular, then:
\begin{equation*}
\propinquity{\mathcal{T},\mathrm{alt}} ((\A,\Lip_\A),(\B,\Lip_\B)) \leq \propinquity{\mathcal{T}}((\A,\Lip_\A),(\B,\Lip_\B)) \leq 2\propinquity{\mathcal{T},\mathrm{alt}}((\A,\Lip_\A),(\B,\Lip_\B))\text{.}
\end{equation*}
\end{enumerate}
\end{theorem}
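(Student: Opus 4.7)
The plan is to establish the six assertions in sequence, reusing machinery from the tunnel-based construction but rerouting through journeys instead of single tunnels.

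First I would establish that $\propinquity{\mathcal{T},\mathrm{alt}}$ is finite and bounded by the diameter. Weak connectedness from the definition of $\mathcal{C}$-compatibility guarantees the existence of at least one journey between any two objects in $\mathcal{C}$, so the infimum in Definition \ref{propinquity-alt-def} is taken over a nonempty set. For the diameter estimate, I would build a one-tunnel journey using the direct sum $\A\oplus\B$ with the Lip-norm $\Lip(a,b) = \max\{\Lip_\A(a),\Lip_\B(b)\}$ (this is an $F$-tunnel in the Leibniz or quasi-Leibniz category, and lies in $\mathcal{T}$ in the ``universal'' case via the definite condition coupled with weak connectedness arguments standard in \cite{Latremoliere13c}); its extent is at most the maximum of the two diameters. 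Symmetry follows immediately by reversing the order of tunnels in a journey and invoking the symmetric axiom of $\mathcal{C}$-compatibility, and the triangle inequality follows from concatenation of journeys, since the length is additive over concatenation.

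The coincidence property is the substantial step. One direction is straightforward: if $h$ is an isometric $*$-isomorphism, the definite axiom provides a one-tunnel journey $(\A,\Lip_\A,\mathrm{id}_\A,h^{-1})$ whose extent is zero, giving $\propinquity{\mathcal{T},\mathrm{alt}} = 0$. Conversely, assume the distance is null. Choose journeys $\Upsilon_n$ of length at most $2^{-n}$. Each tunnel in $\Upsilon_n$ has length bounded by the total length, so the composition of tunnels of $\Upsilon_n$ (in the sense of the target-set analysis of Propositions \ref{fundamental-extent-prop}, \ref{generalized-morphism-prop}, and Corollary \ref{diameter-corollary}) gives, for each $a\in\dom{\Lip_\A}$ and each $l\geq\Lip_\A(a)$, a nonempty target subset of $\sa{\B}$ of norm-diameter at most $4l\cdot 2^{-n}$, controlled by the total length of the journey. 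Applying the Arzela-Ascoli style compactness of the Lip-unit ball modulo scalars (Theorem \ref{Rieffel-thm}), together with lower semicontinuity of $\Lip_\B$, I would extract a diagonal subsequence converging to a map $\varphi : \dom{\Lip_\A}\rightarrow \dom{\Lip_\B}$ which is $\R$-linear, Jordan-Lie preserving (by Proposition \ref{generalized-morphism-prop} applied to composed journeys, and continuity of $F$), norm-nonincreasing, and satisfies $\Lip_\B\circ\varphi\leq\Lip_\A$. Running the same extraction on the reversed journeys produces a reverse map $\psi$, and a standard diagonal argument upgrades the pair $(\varphi,\psi)$ to mutual inverses of a $*$-isomorphism that is an isometry for the Lip-norms, completing coincidence. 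This whole argument is essentially \cite[Theorem 4.16]{Latremoliere13c} transplanted from the length-extent setting to the journey length setting, which is legitimate because any tunnel of length $\lambda$ has extent at most $2\lambda$ by Proposition \ref{length-extent-eq-prop}.

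For the comparison with $\dist$ and the quantum propinquity, I would observe that any tunnel $(\D,\Lip_\D,\pi,\rho)$ induces an admissible Lip-norm for Rieffel's quantum Gromov-Hausdorff distance via the quotient construction $(a,b)\mapsto\inf\{\Lip_\D(d):\pi(d)=a,\rho(d)=b\}$ on $\A\oplus\B$, with the relevant reach dominated by the tunnel's length. Summing over a journey and using the known fact that $\dist$ satisfies a triangle-type inequality along treks, one obtains $\dist\leq\propinquity{\mathcal{T},\mathrm{alt}}$. For the upper bound by $\qpropinquity{\mathcal{C}}$, Theorem \ref{tunnel-from-bridge-thm} converts each bridge in a trek into a tunnel of length at most the bridge's length, turning the trek into a journey of the same total length, so the infima compare directly. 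Finally, when $\mathcal{T}$ is $\mathcal{C}$-appropriate, the inequality $\propinquity{\mathcal{T},\mathrm{alt}}\leq\propinquity{\mathcal{T}}$ is immediate since any single tunnel is a journey of length equal to its length, which is at most its extent by Proposition \ref{length-extent-eq-prop}, while the reverse inequality $\propinquity{\mathcal{T}}\leq 2\propinquity{\mathcal{T},\mathrm{alt}}$ is obtained by iterating the triangularity axiom of $\mathcal{C}$-appropriate classes to collapse a journey into a single tunnel whose extent is arbitrarily close to the sum of the extents of its constituents, and then using $\mathrm{extent}\leq 2\cdot\mathrm{length}$ once more. The main obstacle throughout is the coincidence property, whose delicate Arzela-Ascoli extraction requires lower semicontinuity, continuity of the permissible $F$, and the target-set formalism, all of which are already in hand from the length-based construction.
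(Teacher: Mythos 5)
Your architecture is essentially the one followed in the cited proofs: concatenation of journeys for the triangle inequality, reversal for symmetry, the target-set calculus of Propositions (\ref{fundamental-extent-prop}) and (\ref{generalized-morphism-prop}) plus an Arz{\'e}la-Ascoli extraction (using total boundedness of Lip-balls modulo scalars from Theorem (\ref{Rieffel-thm}) and lower semicontinuity) for coincidence, the quotient admissible Lip-norm for $\dist \leq \propinquity{\mathcal{T},\mathrm{alt}}$, Theorem (\ref{tunnel-from-bridge-thm}) to convert treks into journeys for the comparison with $\qpropinquity{\mathcal{C}}$, and Proposition (\ref{length-extent-eq-prop}) together with iterated triangularity for assertion (6). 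All of those steps go through as you describe.

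There is, however, one concrete error: the tunnel you propose for the diameter bound in assertion (1) does not exist. On $\A\oplus\B$, the seminorm $\Lip(a,b) = \max\{\Lip_\A(a),\Lip_\B(b)\}$ has kernel $\R\unit_\A\oplus\R\unit_\B$, which is two-dimensional, so $(\A\oplus\B,\Lip)$ violates the kernel condition of Definition (\ref{Lipschitz-pair-def}) and is not a Lipschitz pair at all; dually, $\Kantorovich{\Lip}$ assigns infinite distance between any state pulled back from $\A$ and any state pulled back from $\B$, so the would-be tunnel has infinite reach rather than reach bounded by the diameters. The two factors must be coupled: take a bridge $\gamma = (\D,\unit_\D,\pi_\A,\pi_\B)$ with pivot the unit (so the $1$-level set is all of $\StateSpace(\D)$ and the height vanishes), for instance $\D = \A\otimes\B$, and apply Theorem (\ref{tunnel-from-bridge-thm}): the extra term $\frac{1}{\lambda}\bridgenorm{\gamma}{a,b}$ in $\Lip_\lambda$ shrinks the kernel to $\R(\unit_\A,\unit_\B)$ and, choosing $b$ to be an appropriate scalar for each $a$ in the Lip-ball (and vice versa), yields a tunnel of length at most the maximum of the diameters. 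Note also, as your hedge already suggests, that this tunnel is only guaranteed to lie in $\mathcal{T}$ in the universal case; for a general $\mathcal{C}$-compatible class, weak connectedness directly gives only finiteness of $\propinquity{\mathcal{T},\mathrm{alt}}$. A minor overstatement elsewhere: continuity of $F$ is not needed in the coincidence extraction --- monotonicity of $F$ together with the diameter control of Corollary (\ref{diameter-corollary}) suffices to force the limit map to be a Jordan-Lie morphism; continuity of $F$ is only required for completeness, as in Theorem (\ref{complete-thm}).
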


Thus, the dual Gromov-Hausdorff propinquity can be defined for compatible classes of tunnels. In particular, for any permissible $F$, we can check that if $\mathcal{T}_F$ is the class of all $F$-tunnels obtained from bridges via Theorem (\ref{tunnel-from-bridge-thm}), which is compatible with the class of all {\Qqcms{F}s}, then:
\begin{equation*}
\qpropinquity{F} = \propinquity{\mathcal{T}_F}\text{.}
\end{equation*}

\bigskip

In summary, the dual Gromov-Hausdorff propinquity is a complete metric on the class of {\Lqcms s}, and even {\Qqcms{F}s} for a given permissible function $F$, i.e. an appropriate choice of a quasi-Leibniz relation. Moreover, the construction of the dual Gromov-Hausdorff propinquity can be specialized to various subcategories of {\Lqcms s} or {\Qqcms{F}s}. Among these specializations, the most important is also the original metric which we introduced, the quantum Gromov-Hausdorff propinquity. The quantum propinquity answers long standing questions regarding the computation of upper bounds for Rieffel's Gromov-Hausdorff distance when given structures we call bridges, which are a useful source of Leibniz Lip-norms. Moreover, the very specific form of the Leibniz Lip-norms coming from bridges allow for algebraic manipulations, which proved useful when extending certain convergence result to matrix algebras over convergent sequences of {\Lqcms s} \cite{Rieffel15}. Notably, the dual Gromov-Hausdorff propinquity admits an alternative construction, up to equivalence and up to a mild restriction on the possible choices of tunnels one may invoke. This alternative construction using extents instead of length, is an elegant way to handle the difficulties attached to the triangle inequality. However, it is not compatible with the construction of the quantum propinquity. The main lesson from these many constructions is that our approach is, in fact, very flexible and thus more likely to provide the framework for future research about noncommutative analogues of the Gromov-Hausdorff distance.

We now turn to examples of convergence for our new metrics, starting with the fundamental example of quantum tori. This example has played a central role in our work and the general development of the theory of noncommutative metric geometry, and even noncommutative geometry.

\subsection{Quantum Tori}

\subsubsection{Background}

This preliminary subsection contains a brief summary of the various facts and notations we will use in our work with quantum and fuzzy tori. 

\begin{theorem-definition}
Let $G$ be an Abelian discrete group, $\T = \{z\in\C :|z|=1\}$ and $\sigma : G \rightarrow \T$ be a $2$-cocyle over $G$, i.e. for all $x, y, z \in G$ we have:
\begin{equation*}
\sigma(x,y)\sigma(x+y,z) = \sigma(x,y+z) \sigma(y,z)\text{.}
\end{equation*}
For any two functions $f ,g : G\rightarrow \C$ with finite support, we define:
\begin{equation*}
f \ast_\sigma g : x\in G \mapsto \sum_{y \in G} f(y) g(x-y) \sigma(y, x-y)\text{.}
\end{equation*}
The vector space $C_c(G)$ of $\C$-valued functions over $G$ with finite support is an associative $*$-algebra for the multiplication $\ast_\sigma$ and the adjoint operation defined, for all $f \in C_c(G)$ and $x\in G$ by:
\begin{equation*}
f^\ast (x) = \overline{f(-x)}\text{.}
\end{equation*}
\end{theorem-definition}

\begin{notation}
For any set $E$, the Hilbert space $\ell^2(E)$ is the space $\{ (x_g)_{g\in E} : \sum_{g \in E} |x_e|^2 < \infty\}$ equipped with the pointwise addition and scalar multiplication. Note that the sum notation is meant for the notion of a summable family.

For each $g \in E$ we denote by $\delta_g$ the element of $\ell^2(G)$ defined as the family which is zero for all indices in $E$ except at $g$, where it is one.
\end{notation}

\begin{theorem-definition}\label{twisted-cstar-def}
Let $G$ be an Abelian discrete group and let $\sigma$ be a multiplier over $G$. For every $g \in G$, we define:
\begin{equation*}
U^g : \delta_h = \sigma(h,g-h) U^{g-h}\text{.}
\end{equation*}
Then $g \in G \mapsto U^g$ is a $\sigma$-projective representation of $G$ on $\ell^2(G)$, i.e.
\begin{equation*}
U^g U^h = \sigma(g,h) U^{g+h}\text{.}
\end{equation*}

The map $\pi : f \in C_c(G) \mapsto \sum_{g \in G} f(g) U^g$ is a faithful *-representation of $(C_c(G),\ast_\sigma)$ on $\ell^2(G)$.

The twisted group C*-algebra $C^\ast(G,\sigma)$ of discrete Abelian group $G$ for a multiplier $\sigma$  of $G$ is the norm closure of $\pi(C_c(G))$.

The C*-algebra $C^\ast(G,\sigma)$ enjoys two important properties:
\begin{enumerate}
\item\emph{Universality}: if $g\in G\mapsto V^g$ is a $\sigma$-projective representation of $G$ on some Hilbert space $\Hilbert$, then there exists a *-morphism $\rho : C^\ast(G,\sigma) \rightarrow C^\ast(\{V^g : g\in G\})$ such that for all $g\in G$ we have $\rho(U^g) = V^g$,
\item if $\eta$ is a multiplier of $G$ which is cohomologous to $\sigma$ then $C^\ast(G,\eta)$ and $C^\ast(G,\sigma)$ are *-isomorphic.
\end{enumerate}
\end{theorem-definition}
Whenever convenient, we will identity $C_c(G)$ with a dense subset of $C^\ast(G,\sigma)$ for any discrete Abelian group $G$ and multiplier $\sigma$ of $G$.

\begin{theorem-definition}
Let $G$ be a compact Abelian group and let $\widehat{G}$ be its Pontryagin dual group. Let $\sigma$ be a multiplier of $\widehat{G}$. For all $f \in C_c(\widehat{G})$, $g \in G$ and $\chi \in \widehat{G}$, we define:
\begin{equation*}
\alpha^g(f) (\chi) = \chi(g) f(\chi)\text{.}
\end{equation*}
The action $\alpha$ of $G$ extends to $C^\ast(\widehat{G},\sigma)$ to a strongly continuous action by *-automorphisms, called the \emph{dual action} of $G$ on $C^\ast(\widehat{G},\sigma)$.
\end{theorem-definition}

Therefore, we now have all the needed ingredients to define a structure of {\Lqcms} on twisted group C*-algebras of Abelian discrete groups, using Example (\ref{ergodic-LP-ex}) and Theorem (\ref{Rieffel-ergo-thm}):

\begin{theorem-definition}\label{twisted-lip-def}
Let $G$ be a compact Abelian group endowed with a continuous length function $l$, and $\sigma$ a multiplier of $\widehat{G}$. For all $a\in C^\ast(\widehat{G},\sigma)$ we define:
\begin{equation*}
\Lip(a) = \sup\left\{\frac{\|a-\alpha^g(a)\|_{C^\ast(\widehat{G},\sigma)}}{l(g)} : g\in G\setminus\{1\} \right\}\text{,}
\end{equation*}
where $\alpha$ is the dual action of $G$ on $C^\ast(\widehat{G},\sigma)$ and where $\Lip$ may assume the value $\infty$.

The pair $(C^\ast(G,\sigma),\Lip)$ is a {\Lqcms}.
\end{theorem-definition}

\subsubsection{Finite Dimensional Approximations of Quantum Tori}

We established in \cite{Latremoliere05} the following fundamental example of convergence for Rieffel's quantum Gromov-Hausdorff distance:

\begin{theorem}[\cite{Latremoliere05}, Theorem 3.13]\label{qt-main-1}
Let $G_\infty$ be a compact Abelian group endowed with a continuous length function $l$. Let $(G_n)_{n\in\N}$ be a sequence of compact subgroups of $G_\infty$ converging, for the Hausdorff distance defined by $l$, to $G_\infty$.

If, for all $n\in\N$, we let $\sigma_n$ be a skew-bicharacter of $\widehat{G_\infty}$ which induces a skew-bicharacter of $\widehat{G_n}$ (also denoted by $\sigma_n$) and such that the sequence $(\sigma_n)_{n\in\N}$ converges pointwise to some skew bicharacter $\sigma_\infty$ of $G_\infty$, then:
\begin{equation*}
\lim_{n\rightarrow\infty} \dist((C^\ast(\widehat{G_n}),\sigma_n), \Lip_n), (C^\ast(\widehat{G_\infty},\sigma_\infty),\Lip_\infty)) = 0\text{,}
\end{equation*}
where $\Lip_n$ is, for all $n\in\N\cup\{\infty\}$, the Lip-norm induced on $C^\ast(\widehat{G_n},\sigma_n)$ by the length function $l$ and the dual action of $G_n$.
\end{theorem}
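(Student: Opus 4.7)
The plan is to leverage the framework of the dual propinquity developed in this survey. By Theorems \ref{ncpropinquity-comparison-thm} and \ref{alt-propinquity-thm}, applied to the class of {\Lqcms s} with Leibniz tunnels, we have $\dist \leq \propinquity{} \leq \qpropinquity{}$, so it suffices to construct, for each $n$, a trek from the {\Lqcms} $\A_n = (C^\ast(\widehat{G_n}, \sigma_n), \Lip_n)$ to $\A_\infty = (C^\ast(\widehat{G_\infty}, \sigma_\infty), \Lip_\infty)$ whose length tends to zero as $n \to \infty$.

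The construction I propose proceeds in two stages, joined as a trek of length two. The inclusion $G_n \hookrightarrow G_\infty$ dualizes to a quotient map $q_n : \widehat{G_\infty} \twoheadrightarrow \widehat{G_n}$; the hypothesis that $\sigma_n$ induces a skew-bicharacter of $\widehat{G_n}$ precisely says that $\sigma_n$ factors through $q_n \times q_n$, yielding a canonical surjective $\ast$-morphism $Q_n : C^\ast(\widehat{G_\infty}, \sigma_n) \twoheadrightarrow \A_n$ defined on generators by $U^\chi \mapsto U^{q_n(\chi)}$. Consider the intermediate {\Lqcms} $\B_n = (C^\ast(\widehat{G_\infty}, \sigma_n), \Lip_n')$, where $\Lip_n'$ is induced by $l$ and the dual action of $G_\infty$ on $C^\ast(\widehat{G_\infty}, \sigma_n)$ (which is ergodic, hence yields a Leibniz pair by Theorem \ref{Rieffel-ergo-thm}). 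The first bridge, from $\A_n$ to $\B_n$, is built using $Q_n$ together with a faithful representation of $\B_n$, with its length controlled by $\Haus{l}(G_n, G_\infty)$: indeed, $\Lip_n$ on $\A_n$ corresponds under $Q_n$ to the seminorm defined by the restriction of the $G_\infty$-action to $G_n$, which approaches $\Lip_n'$ uniformly on bounded sets as $G_n \to G_\infty$. The second bridge, from $\B_n$ to $\A_\infty$, exploits that both algebras are completions of the same underlying $\ast$-subalgebra $C_c(\widehat{G_\infty})$ with only the twisted convolutions differing; the natural bridge has the unit as pivot, and its bridge seminorm at a pair $(a, a)$ is controlled by a quantity involving $|\sigma_n(x, y) - \sigma_\infty(x, y)|$ over indices $x, y$ appearing in the spectral support of $a$.

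The main obstacle will be reconciling the two qualitatively different convergences --- Hausdorff convergence $G_n \to G_\infty$ entering through $l$, and pointwise cocycle convergence $\sigma_n \to \sigma_\infty$ entering through the multiplicative structure --- within bridges whose Lip-norms remain Leibniz and whose reach and height vanish simultaneously. Since the convergence $\sigma_n \to \sigma_\infty$ is only pointwise on $\widehat{G_\infty}$, the estimate for the second bridge seminorm is not immediate for arbitrary elements. The essential input is Theorem \ref{Rieffel-thm}: the $\Lip_\infty$-unit ball modulo scalars is norm-precompact, so at any prescribed accuracy $\varepsilon > 0$ every element of Lip-norm at most one admits an $\varepsilon$-norm approximation whose spectral content is essentially supported on a compact subset of $\widehat{G_\infty}$, where $\sigma_n \to \sigma_\infty$ uniformly; the same argument applies uniformly in $n$ to the family $(\B_n)_{n\in\N}$, since the $\Lip_n'$ are all defined by the common length function $l$, producing the required vanishing of the trek length.
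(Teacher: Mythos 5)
Your reduction $\dist_q \leq \propinquity{} \leq \qpropinquity{}$ is legitimate, and the two-stage decomposition (first change the group, then change the cocycle) is a reasonable plan --- but note it commits you to proving the strictly stronger Theorem (\ref{qt-main-2}), and both of your key bridge estimates fail as stated. For the first bridge, the claim that the seminorm coming from the restricted $G_n$-action ``approaches $\Lip_n'$ uniformly on bounded sets'' is false: writing $G_n^\perp = \ker q_n \subseteq \widehat{G_\infty}$ for the annihilator of $G_n$, every generator $U^\chi$ with $\chi \in G_n^\perp$ nontrivial is fixed by the $G_n$-action, so the restricted seminorm vanishes on the whole subalgebra $C^\ast(G_n^\perp,\sigma_n)$ while $\Lip_n'$ does not; rescaling by $1/\Lip_n'(U^\chi)$ produces elements of the $\Lip_n'$-unit ball on which the two seminorms differ by $1$. (Such elements do have small norm for large $n$, so a small reach is not excluded --- but establishing it requires a quantitative Lipschitz \emph{lifting} along $Q_n$: given $a$ with $\Lip_n(a)\leq 1$, produce $b$ with $Q_n(b)$ close to $a$ and $\Lip_n'(b)$ controlled. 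You supply no mechanism for this; in \cite{Latremoliere05} this is exactly the role of the Fej{\'e}r-kernel Theorem (\ref{Fejer-approx-thm}), which pushes every Lipschitz element into a common span of $\{u^p : p \in q_n(S)\}$ for a single finite set $S\subseteq\widehat{G_\infty}$ on which $q_n$ is injective, trivializing the lift.) Your appeal to Theorem (\ref{Rieffel-thm}) gives precompactness of each unit ball separately but not uniformity in $n$; and since $\widehat{G_\infty}$ is discrete, ``compact spectral support'' means finite support, which with Lip-norm control is again the Fej{\'e}r argument rather than a consequence of precompactness.

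The second bridge is the fatal one. A bridge requires a single unital C*-algebra $\D$ receiving both $C^\ast(\widehat{G_\infty},\sigma_n)$ and $C^\ast(\widehat{G_\infty},\sigma_\infty)$; since the identity map on $C_c(\widehat{G_\infty})$ is not a *-morphism between the two twists, you must represent both on a common Hilbert space, say via the regular representations $\pi_{\sigma_n}, \pi_{\sigma_\infty}$ on $\ell^2(\widehat{G_\infty})$. With pivot $\omega = \unit$, already for a single generator $a = U^x$ the bridge norm is, up to conventions, $\opnorm{\pi_{\sigma_n}(U^x) - \pi_{\sigma_\infty}(U^x)} = \sup_{y\in\widehat{G_\infty}} |\sigma_n(x,y)-\sigma_\infty(x,y)|$: the second variable runs over the \emph{entire infinite group}, not over the spectral support of $a$, and pointwise convergence $\sigma_n\rightarrow\sigma_\infty$ gives no control of this supremum. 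This is precisely the weak-operator-versus-norm obstruction this survey emphasizes (the map $\sigma \mapsto \pi_\sigma(f)$ is norm-discontinuous), and it is the reason \cite{Latremoliere13b} uses trace-class pivots commuting suitably with the relevant projections --- the unit pivot cannot work. Note finally that the paper's own proof of the theorem you were asked about sidesteps all of this: because $\dist_q$ admits arbitrary, non-Leibniz admissible Lip-norms on order-unit spaces, it pivots through the common finite-dimensional subspace $V$ furnished by Theorem (\ref{Fejer-approx-thm}) and builds admissible Lip-norms on $V\oplus V$ from continuous fields of states via Theorem (\ref{continuity-thm}); repairing your bridge-based route would amount to redeveloping the heavier machinery behind Theorem (\ref{qt-main-2}).
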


This theorem answers the question raised in the introduction to this chapter: we can approximate quantum tori by other quantum tori or even by matrix algebras. However, the quantum propinquity is a stronger metric, and it is desirable to strengthen Theorem (\ref{qt-main-1}) so that it holds for our new metrics, which are better adapted to the C*-algebraic structures. 

The proof of Theorem (\ref{qt-main-1}) is very involved, and its enhancement even more so. We rather briefly sketch the general idea on how we obtained this result. Our goal is to indicate some of the ideas involved in proving such theorems.

Our proof begins with a first approximation theorem:

\begin{notation}
Let $G$ be an Abelian compact group and $\psi : G\rightarrow\R$ be continuous. Let $\alpha$ be the dual action of $G$ on $C^\ast(\widehat{G},\sigma)$ for some skew-bicharacter $\sigma$ of the Pontryagin dual $\widehat{G}$ of $G$. We define:
\begin{equation*}
\alpha^\phi : a \in C^\ast\left(\widehat{G},\sigma\right) \mapsto \int_{G} \phi(\omega)\alpha^\omega(a)\,d\lambda(\omega)\in C^\ast\left(\widehat{G},\sigma\right)\text{,}\
\end{equation*}
where $\lambda$ is the Haar probability measure on $G$.
\end{notation}

\begin{theorem}[\cite{Latremoliere05}, proof of Proposition 3.8] \label{Fejer-approx-thm}
Let $G_\infty$ be a compact Abelian group, $l$ be a continuous length function on $G_\infty$, and let $(G_n)_{n\in\N}$ be a sequence of compact subgroups of $G_\infty$ which converge for the Hausdorff distance induced by $l$ to $G_\infty$. We denote by $\widehat{G_n}$ the Pontryagin dual groups of $G$ and $G_n$ for all $n\in\Nbar$.

For any $n\in\N\cup\{\infty\}$ and any skew-bicharacter $\sigma$ of $\widehat{G}_n$, we denote the norm of $C^\ast(\widehat{G_n},\sigma)$ by $\|\cdot\|_{n,\sigma}$, and we denote the dual action of $G_n$ on $C^\ast(\widehat{G_n},\sigma)$ by $\alpha_{n,\sigma}$. The Lip-norm defined by Theorem (\ref{twisted-lip-def}) by the action $\alpha_{n,\sigma}$ on $C^\ast(\widehat{G}_n,\sigma)$ and the restriction of $l$ to $G_n$ is denoted by $\Lip_{n,\sigma}$.

If $\varepsilon > 0$, then there exists a positive, continuous function $\phi :G_\infty \rightarrow \R$ and $N\in\N$ such that:
\begin{enumerate}
\item For all $n\geq N$, all skew-bicharacters $\sigma$ of $\widehat{G_n}$ and for all $a\in \sa{\C^\ast(\widehat{G_n},\sigma)}$ we have $\|a-\alpha_{n,\sigma}^\phi (a)\|_{n,\sigma} \leq \varepsilon \Lip_{n,\sigma}(a)$ and $\Lip_{n,\sigma}(\alpha_{n,\sigma}^\phi(a))\leq\Lip_{n,\sigma}(a)$,
\item There exists a finite subset $S$ of $\widehat{G_\infty}$ with $0\in S$ such that, for all $n\geq N$ and any skew-bicharacter $\sigma$ of $\widehat{G_n}$, the restriction of the canonical surjection $q_n: \widehat{G}\rightarrow \widehat{G_n}$ is injective on $S$ and the range of $\alpha_n^\phi$ is the span of $\{ u_{n,\sigma}^p : p \in q_c(S) \}$ where for all $p \in \widehat{G_n}$, the unitary $u_{n,\sigma}^p$ is defined in Theorem-Definition (\ref{twisted-cstar-def}).
\end{enumerate}
\end{theorem}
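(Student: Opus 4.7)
The plan is to produce a single positive trigonometric polynomial $\phi$ on $G_\infty$, independent of $n$ and of $\sigma$, whose Fourier spectrum is a finite symmetric set $S \subseteq \widehat{G_\infty}$ containing $0$, and then to verify that once $n$ is large enough for the restriction map $q_n$ to be injective on $S$, the averaging operator $\alpha_{n,\sigma}^\phi$ does what is required simultaneously for all skew-bicharacters $\sigma$ of $\widehat{G_n}$. Two observations make uniformity in $\sigma$ automatic: the inequality $\|a - \alpha_{n,\sigma}^\omega(a)\|_{n,\sigma} \leq l(\omega)\Lip_{n,\sigma}(a)$ is built into the definition of $\Lip_{n,\sigma}$, and $\alpha_{n,\sigma}^\phi$ is a convex combination of $\ast$-automorphisms, hence a unital contraction that commutes with every $\alpha_{n,\sigma}^h$ since $G_n$ is Abelian.

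To construct $\phi$, I would choose a non-negative continuous function $\psi$ on $G_\infty$ supported in $\{l < \varepsilon/4\}$ with $\int_{G_\infty}\psi\,d\lambda_\infty = 1$, and let $\phi$ be the modulus-squared of a sufficiently close trigonometric-polynomial approximation to $\sqrt{\psi}$, suitably renormalized: this yields $\phi$ positive, $\int_{G_\infty}\phi\,d\lambda_\infty = 1$, $\int_{G_\infty} l\,\phi\,d\lambda_\infty < \varepsilon/2$, and a finite symmetric spectrum $S$ containing $0$ with all Fourier coefficients $c_s \neq 0$ for $s \in S$ (after possibly shrinking $S$) and $c_0 = 1$. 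I would then choose $N \in \N$ so that, for all $n \geq N$: (i) $q_n|_S$ is injective, which holds eventually because each nonzero $s \in S$ is nontrivial at some $\omega \in G_\infty$ and hence nontrivial on $G_n$ for $n$ large by Hausdorff convergence and continuity of $s$; and (ii) $\bigl|\int_{G_n} l\,\phi\,d\lambda_n - \int_{G_\infty} l\,\phi\,d\lambda_\infty\bigr| < \varepsilon/2$, which follows from the weak-$\ast$ convergence $\lambda_n \to \lambda_\infty$, itself a consequence of Hausdorff convergence of compact subgroups, proved by reducing via uniform approximation to characters and invoking Schur orthogonality. Injectivity of $q_n|_S$ and Schur orthogonality simultaneously yield $\int_{G_n}\phi\,d\lambda_n = c_0 = 1$, so each $\alpha_{n,\sigma}^\phi$ is unital.

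The two estimates then follow quickly. Writing
\[
a - \alpha_{n,\sigma}^\phi(a) = \int_{G_n} \phi(\omega)\bigl(a - \alpha_{n,\sigma}^\omega(a)\bigr)\,d\lambda_n(\omega),
\]
the triangle inequality and the defining bound for $\Lip_{n,\sigma}$ give $\|a - \alpha_{n,\sigma}^\phi(a)\|_{n,\sigma} \leq \bigl(\int_{G_n} l\,\phi\,d\lambda_n\bigr)\Lip_{n,\sigma}(a) \leq \varepsilon\,\Lip_{n,\sigma}(a)$. For the Lip-norm bound, commutation of $\alpha_{n,\sigma}^h$ with $\alpha_{n,\sigma}^\phi$ yields $\alpha_{n,\sigma}^\phi(a) - \alpha_{n,\sigma}^h\bigl(\alpha_{n,\sigma}^\phi(a)\bigr) = \alpha_{n,\sigma}^\phi\bigl(a - \alpha_{n,\sigma}^h(a)\bigr)$; contractivity of $\alpha_{n,\sigma}^\phi$, division by $l(h)$, and taking supremum over $h \neq 1$ give $\Lip_{n,\sigma}\bigl(\alpha_{n,\sigma}^\phi(a)\bigr) \leq \Lip_{n,\sigma}(a)$.

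For the range, $\alpha_{n,\sigma}^\phi$ acts diagonally on generators: $\alpha_{n,\sigma}^\phi(u_{n,\sigma}^p) = \bigl(\int_{G_n}\phi(\omega)\langle p,\omega\rangle\,d\lambda_n(\omega)\bigr)u_{n,\sigma}^p$. Expanding $\phi = \sum_{s\in S} c_s \langle s,\cdot\rangle$ and using $\langle s,\omega\rangle = \langle q_n(s),\omega\rangle$ on $G_n$, Schur orthogonality plus injectivity of $q_n|_S$ show this coefficient equals $c_s$ when $-p = q_n(s)$ for the unique such $s \in S$, and vanishes otherwise. Symmetry of $S$ and $c_s \neq 0$ throughout $S$ then identify the range of $\alpha_{n,\sigma}^\phi$ with the span of $\{u_{n,\sigma}^p : p \in q_n(S)\}$. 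The principal difficulty in this plan is the soft-analytic step of producing $\phi$ simultaneously positive, of finite spectrum, and concentrated enough against $l$, together with the weak convergence of Haar measures $\lambda_n \to \lambda_\infty$; once these are in hand, the remaining work is a direct Fourier-analytic calculation on the compact Abelian groups $G_n$.
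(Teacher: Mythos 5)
Your proposal is correct and follows essentially the same route as the cited proof in \cite{Latremoliere05}: a Fej{\'e}r-type kernel (the normalized modulus-square of a trigonometric polynomial approximating $\sqrt{\psi}$, giving a positive function with finite symmetric spectrum $S \ni 0$ and small integral against $l$), combined with weak-$\ast$ convergence of the Haar measures of $G_n$ to that of $G_\infty$, eventual injectivity of $q_n$ on $S$, and the standard averaging and commutation estimates for $\alpha^\phi_{n,\sigma}$. One small repair: injectivity of $q_n|_S$ requires applying your ``eventually nontrivial on $G_n$'' argument to the finite set of differences $\{s - s' : s \neq s' \in S\}$, not merely to the nonzero elements of $S$ --- the identical argument, applied to finitely many nonzero characters, so nothing essential changes.
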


A key feature of Theorem (\ref{Fejer-approx-thm}) is that, for $n$ large enough and using the same notation as in Theorem (\ref{Fejer-approx-thm}), the range of $\alpha^\phi$ is, in some sense, always the set finite dimensional vector subspace of $C_c(G)$ (neglecting the identifications of that space within each twisted group C*-algebra). Thus, a natural path to explore is to use this common space as a pivot space. Unfortunately, such a space will not be closed under the Jordan or Lie multiplication --- it is only an order-unit subspace. However, Rieffel's distance was defined on the class of order-unit subspaces with Lip-norms, so computations can be carried forward in this setting.

This strategy is indeed the one we applied in \cite{Latremoliere05}. We thus have a finite dimensional space to work on, with a sequence of norms and a sequence of Lip-norms. As it were, the hypothesis of Theorem (\ref{qt-main-1}) precisely ensure that these sequences have the desired convergence properties, thanks to a continuous field structure argument. Namely:

\begin{theorem}[\cite{Latremoliere05}, Corollary 2.9]\label{continuity-thm}
Let $G_\infty$ be an Abelian compact group endowed with a length function $l$, and let $(G_n)_{n\in\N}$ be a sequence of compact subgroups of $G_\infty$ converging to $G_\infty$ for the Hausdorff distance defined by $l$. For all $n\in\N$, let $\sigma_n$ be a skew bicharacter of $\widehat{G_\infty}$ which induces a skew bicharacter on $\widehat{G_n}$, and such that the sequence $(\sigma_n)_{n\in\N}$ converges pointwise to some skew bicharacter $\sigma_\infty$ on $\widehat{G_\infty}$.

Let $\mathscr{G} = \prod_{n\in\N\cup\{\infty\}} \widehat{G_n}$ be endowed with the groupoid structure given by declaring that $(g,g') \in \mathscr{G}^{(2)}$, i.e. $(g,g') \in \mathscr{G}^2$ is composable, if and only if there exists $n\in\N\cup\{\infty\}$ such that $g,g' \in \widehat{G_n}$, in which case, of course, the product of $g$ and $g'$ is simply $gg' \in \widehat{G_n}$. Last, let:
\begin{equation*}
\gamma  : (g,g') \in \mathscr{G}^{(2)} \mapsto \sigma_n(g,g') \text{ if $g \in \widehat{G_n}$ for some $n\in \N\cup\{\infty\}$} \text{.}
\end{equation*}

Then $\left(\left(C^\ast\left(\widehat{G_n},\sigma_n\right) : n \in \N\cup\{\infty\} \right), C^\ast(\mathscr{G},\gamma)\right)$ is a continuous field of C*-algebras.
\end{theorem}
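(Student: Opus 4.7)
The plan is to realize the family $\left(C^\ast\left(\widehat{G_n},\sigma_n\right)\right)_{n \in \N\cup\{\infty\}}$ as the fibers of a single twisted groupoid C*-algebra, and then invoke the standard principle that such algebras form upper-semicontinuous C*-bundles over their unit space, supplemented by an ad hoc lower-semicontinuity argument using canonical tracial states. First I would make precise the topology on $\mathscr{G}$, which should be read as the disjoint union $\coprod_{n \in \N\cup\{\infty\}}\widehat{G_n}$ equipped with the group-bundle structure. Pontryagin duality converts the closed inclusions $G_n \hookrightarrow G_\infty$ into quotient maps $q_n:\widehat{G_\infty}\twoheadrightarrow\widehat{G_n}$ whose kernels $H_n$ are the annihilators of $G_n$. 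Hausdorff convergence of $G_n$ to $G_\infty$ for $l$ is dual to the statement that every finite subset $F\subseteq\widehat{G_\infty}$ satisfies $F\cap H_n=\{0\}$ eventually, so that $q_n|_F$ is injective for $n$ large. Declare a net $(g_n)$ with $g_n\in\widehat{G_n}$ convergent to $g_\infty\in\widehat{G_\infty}$ when there exist lifts $\tilde g_n \in \widehat{G_\infty}$ with $q_n(\tilde g_n)=g_n$ and $\tilde g_n\to g_\infty$ in $\widehat{G_\infty}$; this turns $\mathscr{G}$ into a locally compact groupoid over the one-point compactification $\N\cup\{\infty\}$, with identity section $n\mapsto 0_{\widehat{G_n}}$ and continuous multiplication restricted to each fiber.

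Next I would check that $\gamma$ is a continuous $\T$-valued 2-cocycle on $\mathscr{G}^{(2)}$. Given convergent nets $\tilde g_n\to g_\infty$ and $\tilde g_n'\to g_\infty'$, the hypothesis that $\sigma_n\to\sigma_\infty$ pointwise on $\widehat{G_\infty}\times\widehat{G_\infty}$, combined with the fact that the lifts are eventually constant as elements of $\widehat{G_\infty}$ (a consequence of the discreteness of $\widehat{G_\infty}$ and the Hausdorff convergence), yields $\gamma(g_n,g_n')=\sigma_n(\tilde g_n,\tilde g_n')\to\sigma_\infty(g_\infty,g_\infty')$. With this, $C^\ast(\mathscr{G},\gamma)$ is a well-defined twisted groupoid C*-algebra, and its fiber at each $n\in\N\cup\{\infty\}$ is tautologically $C^\ast(\widehat{G_n},\sigma_n)$ by the universal property of Theorem-Definition \ref{twisted-cstar-def}. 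Upper semicontinuity of $n\mapsto \|f(n)\|_{n,\sigma_n}$ for $f\in C_c(\mathscr{G})$ now follows from the fact that any compatible family of $\sigma_n$-projective representations $(u_n^g)_{g\in\widehat{G_n}}$ integrates to a single representation of $C^\ast(\mathscr{G},\gamma)$, so that evaluation at $n$ is a norm-decreasing $\ast$-homomorphism whose image is dense in the fiber.

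The core obstacle, and the main work, is lower semicontinuity of $n\mapsto\|f(n)\|_{n,\sigma_n}$ for $f \in C_c(\mathscr{G})$. Here I would exploit that each $C^\ast(\widehat{G_n},\sigma_n)$ carries a canonical faithful tracial state $\tau_n$, obtained by averaging the dual action over the Haar measure of $G_n$, and that $\tau_n$ picks out the coefficient at $0\in\widehat{G_n}$ of any element of $C_c(\widehat{G_n})$. For a section of the form $f=\sum_{p\in S} c_p\,\mathbf{1}_p$ where $S\subseteq\widehat{G_\infty}$ is a fixed finite set (with $q_n|_S$ injective for $n$ large) and $\mathbf{1}_p$ selects the class of $p$ in each fiber, a finite number of invocations of the pointwise convergence $\sigma_n\to\sigma_\infty$ forces $\tau_n(f(n)^\ast f(n)^k) \to \tau_\infty(f(\infty)^\ast f(\infty)^k)$ for every fixed $k$. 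Combining this with the faithfulness of the GNS representation associated with each $\tau_n$, a spectral/polynomial approximation argument yields $\liminf_n \|f(n)\|_{n,\sigma_n} \geq \|f(\infty)\|_{\infty,\sigma_\infty}$. Together with upper semicontinuity, the norm function is then continuous on a dense $\ast$-subalgebra of $C^\ast(\mathscr{G},\gamma)$ that exhausts each fiber, which verifies the continuous-field axioms. The delicate point throughout is controlling the limit $n=\infty$: at finite $n$ the dual groups are genuinely different, and I expect the technical heart of the argument to be the uniform estimates that justify passing pointwise convergence of the cocycles through the GNS construction.
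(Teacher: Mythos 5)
Your proposal is correct in substance and shares its overall architecture with the proof in the cited source \cite{Latremoliere05}: realize the family as the fibers of the twisted groupoid C*-algebra of the group bundle $\mathscr{G}$ over $\N\cup\{\infty\}$, observe that every finite $n$ is isolated so that continuity of $n\mapsto\|f(n)\|_{n,\sigma_n}$ is only at issue at $\infty$, obtain upper semicontinuity from the universal (full) algebra --- $C^\ast(\mathscr{G},\gamma)$ is a $C(\N\cup\{\infty\})$-algebra and evaluation at a point of the unit space is a quotient map --- and then prove lower semicontinuity at $\infty$ by hand, using the duality you correctly isolate between Hausdorff convergence of the subgroups $G_n$ and the eventual triviality of $F\cap H_n$ for each fixed finite $F\subseteq\widehat{G_\infty}$. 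Where you genuinely diverge is the lower-semicontinuity step: the cited argument stays on the reduced side with the left regular representations, picking a finitely supported unit vector $\xi\in\ell^2(\widehat{G_\infty})$ almost attaining $\|f(\infty)\|$, transporting it to $\ell^2(\widehat{G_n})$ through $q_n$ (injective on the relevant finite set for $n$ large), and letting pointwise convergence of the cocycles give $\|\pi_n(f(n))\xi_n\|\to\|\pi_\infty(f(\infty))\xi\|$, which yields the liminf inequality for every $f\in C_c(\mathscr{G})$ in one stroke; you instead route through the canonical tracial states and moment convergence of spectral measures. Your variant is valid, but to close it you should make three points explicit: first, the uniform bound $\|f(n)\|_{n,\sigma_n}\leq\sum_{p}|c_p|$, so that all spectral measures of $f(n)^\ast f(n)$ live in a common compact interval and moment convergence upgrades to weak* convergence of measures; second, the quantifier order --- $k$ is fixed before $n\to\infty$, since the finite set of alternating sums occurring in $\tau_n\left((f(n)^\ast f(n))^k\right)$ grows with $k$ and must eventually avoid $H_n\setminus\{0\}$; third, faithfulness of $\tau_\infty$ on the \emph{full} algebra $C^\ast(\widehat{G_\infty},\sigma_\infty)$, i.e. the coincidence of full and reduced twisted group C*-algebras, which holds by amenability of the discrete abelian group $\widehat{G_\infty}$ --- exactly the amenability the regular-representation route also uses, so neither approach avoids it. Finally, your extension from the dense *-subalgebra of sections with continuous norm to all of $C^\ast(\mathscr{G},\gamma)$ is fine as stated, because upper semicontinuity holds for all sections and lower semicontinuity survives uniform approximation in the section norm; on balance the vector-state argument is shorter, while your trace-moment argument buys information intrinsic to the canonical traces (for instance, weak* continuity of the tracial states along the field) at the cost of the bookkeeping above.
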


Now, let us use the notations of Theorem (\ref{qt-main-1}) and Theorem (\ref{Fejer-approx-thm}). Let $\varepsilon > 0$. We thus have an $N\in \N$ and finite dimensional subspace $V$ of $\sa{C^\ast(\widehat{G}_\infty,\sigma_\infty)}$, such that we may regard $V$ as a finite dimensional subspace of $\sa{C^\ast(\widehat{G_n},\sigma_n)}$ for $n\geq N$, and thus $V$ comes equipped, for all $n\geq N$, with a norm $\|\cdot\|_{n}$ and a Lip-norm $\Lip_n$. Moreover, from the fact that for all $a\in \sa{C^\ast(\widehat{G_n},\sigma_n)}$ with $\Lip_n(a)\leq 1$, we have:
\begin{equation*}
\|a-\alpha_n^\phi(a)\|\leq \varepsilon\text{,}
\end{equation*}
we infer with a little bit of work that $\dist(C^\ast(\widehat{G_n},\sigma_n), (V,\Lip_n)) \leq \varepsilon$, where we must stress that the notation $(V,\Lip_n)$ must be understood as looking at $V$ with both the norm $\|\cdot\|_n$ and the Lip-norm $\Lip_n$.

From the continuity Theorem (\ref{continuity-thm}), we can deduce that for all $f\in V$, the sequences $(\Lip_n(f))_{n\geq N}$ and $(\|f\|_n)_{n\geq N}$ converge to $\Lip_\infty(f)$ and $\|f\|_\infty$. From this, it is then possible to conclude that $\lim_{n\rightarrow\infty}\dist((V,\Lip_n),(V,\Lip_\infty)) = 0$. To prove this last result, we require the construction of Lip-norms on $V\oplus V$ which arise from continuous fields of states --- they possess no natural connection with any multiplicative structure of the underlying C*-algebras. Each step is quite technically involved.

While all these efforts do allow us to establish Theorem (\ref{qt-main-1}), they are not quite enough to conclude a stronger result about finite dimensional approximations of quantum tori for the quantum propinquity. Indeed, as we saw, they rely on a pivot space which is not a C*-algebra, and Lip-norms which are not possibly Leibniz even if extended somehow to the underlying C*-algebras. Much effort must be done to fix these issues.

One approach, proposed by Li \cite{li03}, uses a rather abstract construction about continuous field subtrivialization. For a continuous field $(\A_x : x\in X ; \Gamma)$ of nuclear C*-algebras over some compact space $X$ and with structure algebra $\Gamma$ \cite{Dixmier}, Blanchard \cite{Blanchard97} proved that there exists a Hilbert space $\Hilbert$ and, for all $x\in X$, a faithful *-representation $\pi_x$ of $A_x$ on $\Hilbert$ such that, for all $\gamma \in \Gamma$, the map $x\in X\mapsto \pi_x(\gamma(x))$ is actually continuous in norm. This very strong result then allows us to proceed from Theorems (\ref{Fejer-approx-thm}) and Theorems (\ref{continuity-thm}) and its corollaries regarding continuity of fields of Lip-norms to prove that bridges of the form:
\begin{equation*}
(\mathscr{B}(\Hilbert), \unit_{\mathscr{B}}, \pi_n, \pi_\infty)
\end{equation*}
have lengths which converge to $0$, where the *-representations $\pi_n$ of $C^\ast(\widehat{G_n},\sigma_n)$ (for $n\in \N\setminus\{\infty\}$) are obtained by applying Blanchard's subtrivialization theorem to the continuous field given by Theorem (\ref{continuity-thm}), and where $\mathscr{B}(\Hilbert)$ is the C*-algebra of all bounded linear operators on $\Hilbert$. We are thus led to:

\begin{theorem}[\cite{Latremoliere13}, Theorem 6.8]\label{qt-main-2}
Let $G_\infty$ be a compact Abelian group endowed with a continuous length function $l$. Let $(G_n)_{n\in\N}$ be a sequence of compact subgroups of $G_\infty$ converging, for the Hausdorff distance defined by $l$, to $G_\infty$.

If, for all $n\in\N$, we let $\sigma_n$ be a skew-bicharacter of $\widehat{G_\infty}$ which induces a skew-bicharacter of $\widehat{G_n}$ (also denoted by $\sigma_n$) and such that the sequence $(\sigma_n)_{n\in\N}$ converges pointwise to some skew bicharacter $\sigma_\infty$ of $G_\infty$, then:
\begin{equation*}
\lim_{n\rightarrow\infty} \qpropinquity{}((C^\ast(\widehat{G_n}),\sigma_n), \Lip_n), (C^\ast(\widehat{G_\infty},\sigma_\infty),\Lip_\infty)) = 0\text{,}
\end{equation*}
where $\Lip_n$ is, for all $n\in\N\cup\{\infty\}$, the Lip-norm induced on $C^\ast(\widehat{G_n},\sigma_n)$ by the length function $l$ and the dual action of $G_n$.
\end{theorem}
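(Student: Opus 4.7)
The plan is to upgrade Theorem \ref{qt-main-1} from Rieffel's quantum Gromov-Hausdorff distance to the quantum propinquity by constructing explicit bridges whose lengths can be controlled via the continuous field structure of Theorem \ref{continuity-thm}. The essential new ingredient, compared to the proof of Theorem \ref{qt-main-1}, is that bridges live inside an honest C*-algebra (not merely an order-unit space), so we must arrange for all the twisted group C*-algebras $C^\ast(\widehat{G_n},\sigma_n)$ to be realized faithfully and compatibly inside a single ambient C*-algebra on which the multiplicative structure is continuous in $n$. This is exactly the job of Blanchard's subtrivialization theorem applied to the continuous field from Theorem \ref{continuity-thm}.

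First, I would fix $\varepsilon > 0$ and invoke Theorem \ref{Fejer-approx-thm} to obtain a positive continuous function $\phi$ on $G_\infty$, an integer $N\in\N$, and a finite subset $S\subseteq\widehat{G_\infty}$ with $0\in S$, such that for all $n\geq N$ and every $a\in\sa{C^\ast(\widehat{G_n},\sigma_n)}$ with $\Lip_{n,\sigma_n}(a)\leq 1$, we have $\|a-\alpha_{n,\sigma_n}^\phi(a)\|_{n,\sigma_n}\leq\varepsilon$, with the image of $\alpha_{n,\sigma_n}^\phi$ spanned by the common finite set of generators indexed by $S$. This provides the uniform finite-dimensional approximation which reduces the estimation of bridge reach to a computation on a fixed finite-dimensional vector space carrying $n$-dependent norms and Lip-norms, all of which converge to their $\infty$-counterparts by Theorem \ref{continuity-thm}.

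Next, I would apply Blanchard's subtrivialization theorem to the continuous field $\bigl((C^\ast(\widehat{G_n},\sigma_n))_{n\in\N\cup\{\infty\}}, C^\ast(\mathscr{G},\gamma)\bigr)$ constructed in Theorem \ref{continuity-thm}, which is justified by nuclearity of the fibers. This yields a Hilbert space $\Hilbert$ and faithful *-representations $\pi_n : C^\ast(\widehat{G_n},\sigma_n)\hookrightarrow\mathscr{B}(\Hilbert)$, for all $n\in\N\cup\{\infty\}$, such that for every continuous section $\gamma$ of the field, the map $n\mapsto\pi_n(\gamma(n))$ is norm-continuous. For each $n\geq N$, define the bridge $\eta_n = (\mathscr{B}(\Hilbert),\unit_{\mathscr{B}(\Hilbert)},\pi_n,\pi_\infty)$ from $(C^\ast(\widehat{G_n},\sigma_n),\Lip_{n,\sigma_n})$ to $(C^\ast(\widehat{G_\infty},\sigma_\infty),\Lip_{\infty,\sigma_\infty})$. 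With the unit as pivot, the height is automatically $0$ since $\StateSpace(\mathscr{B}(\Hilbert)|\unit) = \StateSpace(\mathscr{B}(\Hilbert))$ restricts surjectively onto both state spaces via $\pi_n^\ast$ and $\pi_\infty^\ast$, so $\bridgelength{\eta_n}{\Lip_{n,\sigma_n},\Lip_{\infty,\sigma_\infty}} = \bridgereach{\eta_n}{\Lip_{n,\sigma_n},\Lip_{\infty,\sigma_\infty}}$.

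The main obstacle, and the heart of the argument, is to bound this reach. Given $a\in\sa{C^\ast(\widehat{G_n},\sigma_n)}$ with $\Lip_{n,\sigma_n}(a)\leq 1$, the Fejér step replaces $a$ by $\alpha_{n,\sigma_n}^\phi(a)$ at a cost of $\varepsilon$ in bridge-seminorm, and places us inside a fixed finite-dimensional subspace $V$ indexed by $S$. Because the subtrivialization turns each basis element $u_{n,\sigma_n}^p$ ($p\in S$) into a norm-continuous section in $\mathscr{B}(\Hilbert)$, for $n$ large the element $\pi_n(\alpha_{n,\sigma_n}^\phi(a))$ is norm-close to $\pi_\infty(b)$ for the element $b\in V$ with the same coordinates, and one checks by the continuity of the field of Lip-norms on $V$ (a consequence of Theorem \ref{continuity-thm} together with Theorem \ref{Fejer-approx-thm}) that $\Lip_{\infty,\sigma_\infty}(b)\leq 1+o(1)$; rescaling yields a Lipschitz witness on the $\infty$-side at bridge-seminorm distance $o(1)+\varepsilon$. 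Running the symmetric argument gives $\bridgereach{\eta_n}{\Lip_{n,\sigma_n},\Lip_{\infty,\sigma_\infty}}\to 0$, hence $\qpropinquity{}$ convergence by Definition \ref{quantum-propinquity-def}. The delicate point throughout is controlling Lip-norms across the field, which is where the argument substantially exceeds that of Theorem \ref{qt-main-1}, since bridge-seminorms involve operator norms of commutators $\pi_n(a)\unit - \unit\,\pi_\infty(b)$ in $\mathscr{B}(\Hilbert)$ rather than the weaker order-unit norm on a formal direct sum.
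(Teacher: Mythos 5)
Your proposal is correct and takes essentially the same route as the paper's proof of Theorem (\ref{qt-main-2}): the Fej{\'e}r-type uniform approximation of Theorem (\ref{Fejer-approx-thm}) reducing matters to a common finite-dimensional range with convergent norms and Lip-norms via the continuous field of Theorem (\ref{continuity-thm}), followed by Blanchard's subtrivialization (justified by nuclearity of the fibers) to realize all the twisted group C*-algebras faithfully in a single $\mathscr{B}(\Hilbert)$ with norm-continuous sections, yielding bridges $(\mathscr{B}(\Hilbert),\unit,\pi_n,\pi_\infty)$ whose lengths tend to $0$. Your observation that the unit pivot forces the height to vanish --- since every state of a unital C*-subalgebra extends, so $\pi_n^\ast(\StateSpace(\mathscr{B}(\Hilbert)))$ is all of the state space and the length reduces to the reach --- is accurate and consistent with the paper's construction.
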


We note that, in addition to quantum tori, Theorem (\ref{qt-main-2}) may be applied to show that the family of noncommutative solenoids is continuous as a function from the solenoid group to the twisted C*-algebras of $\Z\left[\frac{1}{p}\right]\times \Z\left[\frac{1}{p}\right]$ --- where elements of the solenoids give rise to skew bicharacters in a natural manner, as discussed in \cite{Latremoliere11c}.

We propose a different and more explicit proof of Theorem (\ref{qt-main-2}) where $G_\infty = \Z^d$ for some $d\in \N\setminus\{0,1\}$ in \cite{Latremoliere13b} based on using the left regular representations given by Theorem (\ref{twisted-cstar-def}), instead of the subtrivialization representations, which may be less natural. Our bridges in this setting are quite different since their pivot are trace class operators. 

While the proof in \cite{Latremoliere13b} is too technical to be summarized effectively, we wish to provide a sense of the use of the pivot element of bridges in the definition of the quantum propinquity. Indeed, in \cite{Latremoliere13b}, we use the pivot element to promote a convergence in the strong operator topology to a convergence in norm.

We begin with some notations taken from \cite{Latremoliere13b}.

\begin{notation}
Let $\N_\ast = \N \setminus \{0,1\}$. Let $\Nbar = \N_\ast\cup\{\infty\}$ be the one-point compactification of $\N_\ast$. For any $d \in \N_\ast$ and $k = (k_1,k_2,\ldots,k_d) \in \Nbar^d$, we set:
\begin{equation*}
k\Z^d = \prod_{j=1}^d k_j \Z \quad\text{ and }\quad\Z^d_k = \Mod{\Z^d}{k\Z^d} \text{,}
\end{equation*} 
with the convention that $\infty\Z = \{0\}$, so that $\Z^d_{(\infty,\ldots,\infty)} = \Z^d$. The Pontryagin dual of $\Z^d_k$ is denoted by $\U^d_k$. In particular, if $k\in \N^d$ then $\Z^d_k$ is finite and thus self-dual. However, we shall always consider $\U_k^d$ as a compact subgroup of the $d$-torus $\U^d = \U^d_{(\infty,\ldots,\infty)}$, where $\U = \{ z \in \C : |z| = 1 \}$ is the unitary group of $\C$.
\end{notation}

Our bridges between quantum tori, and more generally twisted group C*-al\-ge\-bras of finite products of cyclic groups, will be of the form $\left(\mathscr{B}(\ell^2(\Z^d)),\omega,\pi,\rho\right)$ where $\pi$ and $\rho$ will be non-degenerate faithful representations constructed from left regular representations of these algebras. More formally:

\begin{notation}
Let $d \in \N_\ast$ and $k = (k_1,\ldots,k_d) \in \Nbar$. Let:
\begin{equation*}
I_k = \prod_{j=1}^d \left\{ \left\lfloor \frac{1-k_j}{2} \right\rfloor, \left\lfloor \frac{1-k_j}{2} \right\rfloor+1,\ldots,\left\lfloor \frac{k_j-1}{2} \right\rfloor\right\}\text{.}
\end{equation*}
\end{notation}

We observe that, by construction, the set:
\begin{equation}\label{partition-eq}
\mathscr{P}_k = \left\{ I_k + n : n \in k\Z^d \right\}
\end{equation}
is a partition of $\Z^d$. This is not the partition of $\Z^d$ consisting of the translates of the usual standard domain of $\Z^d$ by $k\Z^d$, but we will find it a bit more convenient (though one could, at the expense of worse notations later on, work with the standard partition of $\Z^d$ in cosets of $k\Z^d$).

Fix $d\in\N_\ast$ and $k \in \Nbar^d$. The canonical surjection $q_k : \Z^d \rightarrow \Z^d_k$ restricts to a bijection from $I_k$ onto $\Z^d_k$. We thus can define an isometric embedding $\vartheta_k : \ell^2\left(\Z^d_k\right)\rightarrow \ell^2(\Z^d)$ by setting for all $\xi \in \ell^2\left(\Z^d_k\right)$:
\begin{equation}\label{vartheta-embedding-def}
\vartheta_k(\xi) : n\in \Z^d \longmapsto \begin{cases}\xi(q_k(n)) \text{ if $n\in I_k$}\\0 \text{ otherwise.}\end{cases}
\end{equation}
Since $\vartheta_k$ is an isometry by construction, $\vartheta_k^\ast \vartheta_k$ is the identity of $\ell^2\left(\Z^d_k\right)$. Therefore, for all skew bicharacter $\sigma$ of $\Z^d_k$, the map $\vartheta_k \pi_{k,\sigma}(\cdot)\vartheta_k^\ast$ is a \emph{non-unital} *-representation of $C^\ast\left(\Z^d_k,\sigma\right)$ on $\ell^2(\Z^d)$. To construct a non-degenerate representation (or, equivalently, unital *-monomorphisms), we proceed as follows. Since $\mathscr{P}_k$, defined by Equation (\ref{partition-eq}), is a partition of $\Z^d$, we have the following decomposition of $\ell^2(\Z^d)$ in a Hilbert direct sum:
\begin{equation}\label{l2-decomposition-k-eq}
\ell^2(\Z^d) = \bigoplus_{n \in k\Z^d} \overline{\operatorname{span} \{ e_j : j \in I_k + n \}}
\end{equation}
with $(e_j)_{j\in\Z^d}$ the canonical basis of $\ell^2(\Z^d)$ given by $e_m(n)\in\{0,1\}$ and $e_m(n) = 1$ if and only if $n=m$, for all $m,n\in\Z^d$. Note that the range of $\vartheta_k$  is $\overline{\operatorname{span} \{ e_j : j \in I_k  \}}$.

For all $n \in k\Z^d$, let 
\begin{equation*}
u_n : \overline{\operatorname{span}\{e_j : j \in I_k\}} \longrightarrow \overline{\operatorname{span}\{e_j : j \in I_k + n\}}
\end{equation*}
be the unitary defined by extending linearly and continuously the map:
\begin{equation*}
e_j \in \{ e_m : m \in I_k \} \longmapsto e_{j+n}\text{.}
\end{equation*}

We now define:

\begin{notation}[\cite{Latremoliere13b}, Notation 4.1.2]\label{rep}
Let $d\in \N_\ast$ and $k \in \Nbar^d$. Let $\sigma$ be a skew-bicharacter of $\Z^d_k$, and $\rho_{k,\sigma}$ the representation of $\left(\ell^1\left(\Z^d_k\right),\conv{k,\sigma},\cdot^\ast\right)$ on $\ell^2(\Z^d)$ defined by Theorem (\ref{twisted-cstar-def}). 

Let $\xi \in \ell^2(\Z^d)$, and write $\xi = \sum_{j \in k\Z^d} \xi_j$ with $\xi_j \in \overline{\operatorname*{span}\{e_m : m \in I_k + j \}}$. Such a decomposition is unique by Equation (\ref{l2-decomposition-k-eq}). Define for all $a\in C^\ast(\Z^d_k,\sigma)$:
\begin{equation}\label{rep-eq}
\pi_{k,\sigma}(a)\xi = \sum_{j\in k\Z^d} u_j\vartheta_k \rho_{k,\sigma}(a)\vartheta_k^\ast u_j^\ast \xi_j \text{,}
\end{equation}
which is well-defined since $\|u_n\vartheta_k \rho_{k,\sigma}(a)\vartheta_k^\ast u_j^\ast \xi_j\|_2\leq\|a\|_{k,\sigma} \|\xi_j\|_2$ for all $j\in k\Z^d$, and $\sum_{j\in kZ^d}\|\xi_j\|^2_2 = \|\xi\|_2^2 < \infty$ by definition of $\xi$.

It is easy to check that $\pi_{k,\sigma}$ thus defined is a faithful, non-degenerate (i.e. unital) *-representation of $C^\ast(\Z_k^d,\sigma)$ on $\ell^2(\Z^d)$, which acts ``diagonally'' in the decomposition of $\ell^2(\Z^d)$ given by Equation (\ref{l2-decomposition-k-eq}).
\end{notation}

The representations $\pi_{k,\sigma}$ for $k\in\Nbar^d$ and $\sigma$ a skew bicharacter of $\Z^d_k$ will be the maps used to defined our bridges, whose ambient space will always be $\mathscr{B}(\ell^2(\Z^d))$. 

Let $B$ be the space of skew bicharacters of $\Z^d$ with the topology of pointwise convergence. We note that, for any $f \in C_c(\Z^d)$, the map $\sigma\in B \mapsto \pi_{\infty^d, \sigma}(f)$ is not continuous in norm, though it is continuous for the weak operator topology. Thus, if $\omega$ is a trace class operator on $\ell^2(\Z^d)$, then $\sigma\in B\mapsto \pi_{\infty^d,\sigma}(f) \omega$ becomes norm continuous. This motivates us to choose a trace class pivot.

However, the bridge norm will be of the form $\opnorm{\pi_{k,\sigma}(\cdot)\omega - \omega\pi_{k',\sigma'}(\cdot)}$, and $\sigma\in B\mapsto \omega\pi_{\infty^d,\sigma}(f)$ is not continuous in norm either in general, even if $\omega$ is trace class. Thus, we wish to commute our pivot with one of the representation. We thus begin with the following theorem:

\begin{notation}
Let $d\in\N_\ast$. Let $(\lambda_n)_{n\in\Z^d}$ be a bounded family of complex numbers indexed by $\Z^d$. The operator $\Diag{\lambda_n}{n\in\Z^d}$ on $\ell^2(\Z^d)$ is defined by setting for all $n \in \Z^d$:
\begin{equation*}
\Diag{\lambda_n}{n\in\Z^d}e_n = \lambda_n e_n\text{,}
\end{equation*}
where $(e_n)_{n\in\Z^d}$ is the canonical Hilbert basis of $\ell^2(\Z^d)$.
\end{notation}

\begin{notation}
For any $d\in\N_\ast$ and any $n = (n_1,\ldots,n_j)\in\Z^d$, we define:
\begin{equation*}
|n| = \sum_{j=1}^d |n_j|\text{.}
\end{equation*}
We note that $|\cdot|$ thus defined is the length  function on $\Z^d$ associated with the canonical generators of $\Z^d$. Thus, in particular, for any $n,m\in\Z^d$ we have $| |n| - |m| | \leq |n-m| \leq |n| + |m|$. 
\end{notation}

\begin{notation}\label{wedge-notation}
For any $d\in\N_\ast$, and for any $k = (k_1,\ldots,k_d)\in \Nbar^d$, we denote by $\wedge k$ the element of $\Nbar$ defined as:
\begin{equation*}
\wedge k = \min \{ |n| : n \not\in I_k \} = \min\left\{ \left\lceil\frac{k_j-1}{2}\right\rceil : j=1,\ldots,d \right\} + 1\text{.}
\end{equation*}
\end{notation}

\begin{notation}\label{weight-notation}
Let $M,N\in\N_\ast$ be given. We define, for all $d\in\N_\ast$:
\begin{equation*}
w_{N,M} : n \in \Z^d \longmapsto \begin{cases}
1 &\text{if $|n| \leq N$,}\\
\frac{M + N - |n|}{M}&\text{ if $N\leq |n| \leq M + N$,}\\
0 &\text{otherwise.}
\end{cases}
\end{equation*}
\end{notation}

\begin{theorem}[\cite{Latremoliere13b}, Theorem 5.1.5]\label{qt-commutator-thm}
Let $d\in\N_\ast$, $k\in\Nbar^d$, and $\sigma$ a skew-bicharacter of $\Z^d_k$. Let $N,M \in \N_\ast$ such that $N+M < \wedge k$. Define, using Notation (\ref{weight-notation}):
\begin{equation*}
\omega_{N,M} = \Diag{ w_{N,M}(n) }{n\in\Z^d}\text{.}
\end{equation*}
Then $\omega_{N,M}$ is a finite rank operator such that, for all $m \in I_k\subseteq \Z^d$, we have:
\begin{equation*}
\left\| \left[\omega_{N,M},\pi_{k,\sigma}\left(\delta_{q_k(m)}\right)\right]\right\|_{\B^d} \leq \frac{|m|}{M} \text{,}
\end{equation*}
where $q_k : \Z^d\rightarrow \Z^d_k$ is the canonical surjection and $\pi_{c,\theta}$ is given by Notation (\ref{rep}) for all $(c,\theta)\in\Xi^d$.
\end{theorem}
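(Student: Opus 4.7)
The strategy is to compute the commutator directly, exploiting the block-diagonal structure of $T := \pi_{k,\sigma}(\delta_{q_k(m)})$ with respect to the orthogonal decomposition $\ell^2(\Z^d) = \bigoplus_{n \in k\Z^d} H_n$ from Equation~(\ref{l2-decomposition-k-eq}), together with the tight support of $\omega_{N,M}$ guaranteed by the hypothesis $N+M<\wedge k$. Throughout, set $H_n := \overline{\operatorname{span}\{e_{j+n} : j \in I_k\}}$.

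First I would verify that $\omega_{N,M}$ is finite rank and annihilates $H_n$ for every $n\in k\Z^d\setminus\{0\}$. Since $w_{N,M}(q) = 0$ whenever $|q| \geq M + N$ and $M + N < \wedge k$, the support of $w_{N,M}$ is contained in $\{q \in \Z^d : |q| < \wedge k\} \subseteq I_k$, giving finite rank. Moreover, for $n \in k\Z^d \setminus \{0\}$ and $j \in I_k$, the elements $n+j$ and $j$ lie in the same $k\Z^d$-coset but are distinct; since $I_k$ is a system of coset representatives, this forces $n + j \notin I_k$, hence $|n+j| \geq \wedge k > M + N$ and $w_{N,M}(n+j)=0$.

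Next, by Notation~\ref{rep}, $T$ preserves each $H_n$ and acts there by $u_n\vartheta_k\rho_{k,\sigma}(\delta_{q_k(m)})\vartheta_k^* u_n^*$; concretely, for $j \in I_k$ and $n\in k\Z^d$,
\[
T e_{j+n} = c_j\, e_{j'+n}, \quad |c_j|=1,
\]
where $j' \in I_k$ is the unique coset representative with $j' \equiv j+m \pmod{k\Z^d}$. Combined with the first step, $[\omega_{N,M}, T]$ vanishes on each $H_n$ with $n \neq 0$; on $H_0$ it acts as the weighted permutation
\[
[\omega_{N,M}, T] e_j = c_j\bigl(w_{N,M}(j') - w_{N,M}(j)\bigr) e_{j'} \quad (j \in I_k).
\]
Since $j \mapsto j'$ is a bijection of $I_k$, the families $\{e_j\}_{j\in I_k}$ and $\{e_{j'}\}_{j\in I_k}$ are each orthonormal, so the operator norm of this weighted permutation equals $\sup_{j \in I_k}|w_{N,M}(j') - w_{N,M}(j)|$.

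The remaining step---the heart of the argument---is the coordinate-wise estimate $\bigl||j'| - |j|\bigr| \leq |m|$. Writing $j+m = j'+kr$ with $r \in \Z^d$: in coordinates with $r_i = 0$ we have $j'_i = j_i + m_i$, so $\bigl||j'_i| - |j_i|\bigr| \leq |m_i|$ by the reverse triangle inequality. For a coordinate with upward wrap ($r_i = 1$), the constraints $j_i, m_i \leq \lfloor (k_i-1)/2 \rfloor$ together with $j_i + m_i = (j+m)_i \geq \lfloor (k_i-1)/2 \rfloor + 1 = \lceil k_i/2 \rceil$ force $j_i, m_i \geq 1$; then $j'_i = j_i + m_i - k_i < 0$ and $|j'_i| - |j_i| = k_i - 2j_i - m_i$, and the inequality $\bigl||j'_i| - |j_i|\bigr| \leq |m_i|$ reduces to $0 \leq k_i - 2 j_i \leq 2 m_i$, which follows from $j_i < k_i/2$ and $j_i + m_i \geq k_i/2$. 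The downward wrap case is symmetric. Summing over $i$ yields $\bigl||j'| - |j|\bigr| \leq \sum_i |m_i| = |m|$. Finally, $w_{N,M}$ is the composition of $|\cdot|$ with the $(1/M)$-Lipschitz function $t \mapsto \max\{0, \min\{1, (M+N-t)/M\}\}$, so $|w_{N,M}(j') - w_{N,M}(j)| \leq |m|/M$, completing the proof.
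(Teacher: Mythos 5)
Your proof is correct and is essentially the argument of \cite{Latremoliere13b}: the hypothesis $N+M<\wedge k$ confines $\omega_{N,M}$ to the central block of the decomposition (\ref{l2-decomposition-k-eq}), on which $\pi_{k,\sigma}\left(\delta_{q_k(m)}\right)$ acts as a phased permutation $e_j\mapsto c_j e_{j'}$, so that $\left\|\left[\omega_{N,M},\pi_{k,\sigma}\left(\delta_{q_k(m)}\right)\right]\right\| = \sup_{j\in I_k}\left|w_{N,M}(j')-w_{N,M}(j)\right|$, which your inequality $\bigl||j'|-|j|\bigr|\leq|m|$ and the $\frac{1}{M}$-Lipschitz radial profile of $w_{N,M}$ bound by $\frac{|m|}{M}$. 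Two cosmetic points: you implicitly use that each coordinate wraps at most once, i.e. $r_i\in\{-1,0,1\}$, which is immediate since $|j_i+m_i-j_i'|\leq 3k_i/2<2k_i$; and your coordinatewise case analysis can be compressed by observing that $|\cdot|$ restricted to $I_k$ computes the quotient length $\ell_k(x)=\min\set{|n|}{q_k(n)=x}$ on $\Z^d_k$, whence $\bigl||j'|-|j|\bigr|\leq\ell_k(q_k(m))\leq|m|$ is just the triangle inequality.
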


Now, we will choose a pivot of the form given in Theorem (\ref{qt-commutator-thm}), because this very theorem will allow us to bound the reach of the resulting bridge. To bound the height, however, requires another tool, given by our next lemma.

\begin{lemma}[\cite{Latremoliere13b}] \label{F_N-lemma}
Let $\mathfrak{L}_1^{1+}$ be the set of all positive trace class operators on $\ell^2(\Z^d)$ of trace $1$. For any $A\in\mathfrak{L}_1^{1+}$, we define:
\begin{equation*}
\psi_A : T \in \B(\ell^2(\Z^d)) \longmapsto \tr(AT)\text{.}
\end{equation*}
Let $\sigma$ be a skew-bicharacter of $\Z^d$ and $l$ be a continuous length function on $\U^d$. Let $\varepsilon>0$. There exists $N\in\N$ and a finite set $\mathfrak{F}_N$ of $\mathfrak{L}_1^+$ such that:
\begin{equation*}
\Haus{\Kantorovich{\Lip_{l,\infty^d,\sigma}}}(\StateSpace(\A_{\infty^d,\sigma}),\{ \psi_A\circ\pi_{\infty^d,\sigma} : A\in \mathfrak{F}_N \}) \leq \varepsilon
\end{equation*}
and
\begin{equation*}
\forall A \in \mathfrak{F}_N\quad P_N A P_N = P_N A = A P_N = A
\end{equation*}
where $P_N$ is the projection of $\ell^2\left(\Z^d\right)$ onto the span of $\{e_n : |n|\leq N\}$, with $(e_n)_{n\in\Z^d}$ the canonical Hilbert basis of $\ell^2\left(\Z^d\right)$.
\end{lemma}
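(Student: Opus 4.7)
The plan is to exploit the fact that $(\A_{\infty^d,\sigma},\Lip_{l,\infty^d,\sigma})$ is a {\Lqcms}, so its state space is a \emph{compact} metric space under $\Kantorovich{\Lip_{l,\infty^d,\sigma}}$, and then to produce the approximating finite set by a density-plus-truncation argument. First, I would note that for $k=\infty^d$ we have $I_{\infty^d}=\Z^d$, so $\vartheta_{\infty^d}$ is the identity and $\pi_{\infty^d,\sigma}$ coincides with the (faithful) left regular *-representation $\rho_{\infty^d,\sigma}$ of $C^\ast(\Z^d,\sigma)$ on $\ell^2(\Z^d)$ from Theorem-Definition (\ref{twisted-cstar-def}). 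By Theorem (\ref{Rieffel-ergo-thm}) applied to the ergodic dual action of $\U^d$, $(\A_{\infty^d,\sigma},\Lip_{l,\infty^d,\sigma})$ is a {\Lqcms}, and in particular $\Kantorovich{\Lip_{l,\infty^d,\sigma}}$ metrizes the weak* topology on $\StateSpace(\A_{\infty^d,\sigma})$, which is therefore a compact metric space.

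Second, I would invoke the classical fact that because $\pi_{\infty^d,\sigma}$ is faithful, the set of normal states pulled back along $\pi_{\infty^d,\sigma}$, that is:
\begin{equation*}
\mathscr{N} = \left\{ \psi_A\circ\pi_{\infty^d,\sigma} : A\in\mathfrak{L}_1^{1+}\right\}\text{,}
\end{equation*}
is weak*-dense in $\StateSpace(\A_{\infty^d,\sigma})$ (pure vector states are weak*-dense in the pure states by Kadison's transitivity, and Krein--Milman gives the full statement). By Step 1, $\mathscr{N}$ is therefore $\Kantorovich{\Lip_{l,\infty^d,\sigma}}$-dense, and total boundedness of the compact metric space $\StateSpace(\A_{\infty^d,\sigma})$ produces a finite subfamily $A_1,\ldots,A_k\in\mathfrak{L}_1^{1+}$ such that $\{\psi_{A_j}\circ\pi_{\infty^d,\sigma}\}_{j=1}^k$ is $\frac{\varepsilon}{2}$-dense.

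Third, I would truncate each $A_j$ to the range of $P_N$. Since $A_j$ is positive trace-class and $P_N\to I$ strongly, dominated convergence applied to the spectral decomposition of $A_j$ yields $P_N A_j P_N\to A_j$ in trace norm, so $\tr(P_N A_j P_N)\to 1$. For all $N$ sufficiently large, define:
\begin{equation*}
A_j^{(N)} = \frac{P_N A_j P_N}{\tr(P_N A_j P_N)}\in\mathfrak{L}_1^{1+}\text{.}
\end{equation*}
Then $A_j^{(N)}\to A_j$ in trace norm, and by construction $A_j^{(N)}=P_N A_j^{(N)}=A_j^{(N)} P_N= P_N A_j^{(N)} P_N$. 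Since $|\tr((A_j^{(N)}-A_j)T)|\leq\|A_j^{(N)}-A_j\|_1\|T\|_{\mathscr{B}(\ell^2(\Z^d))}$, we have $\psi_{A_j^{(N)}}\to\psi_{A_j}$ in norm on $\mathscr{B}(\ell^2(\Z^d))^\ast$, hence $\psi_{A_j^{(N)}}\circ\pi_{\infty^d,\sigma}\to \psi_{A_j}\circ\pi_{\infty^d,\sigma}$ in norm on $\A_{\infty^d,\sigma}^\ast$; a fortiori, this convergence holds in the weak* topology, which on the state space agrees with $\Kantorovich{\Lip_{l,\infty^d,\sigma}}$. Choosing $N$ uniformly large so that each $\Kantorovich{\Lip_{l,\infty^d,\sigma}}(\psi_{A_j^{(N)}}\circ\pi_{\infty^d,\sigma}, \psi_{A_j}\circ\pi_{\infty^d,\sigma})<\varepsilon/2$, the triangle inequality gives that $\mathfrak{F}_N=\{A_j^{(N)}:j=1,\ldots,k\}$ satisfies the required Hausdorff distance bound.

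The main obstacle is purely conceptual rather than computational: one must know that for a faithful *-representation the associated normal states are weak*-dense in the full state space, which is the ingredient that links the Hilbert-space picture (where truncation by $P_N$ makes sense) to the abstract Monge--Kantorovich picture on $\StateSpace(\A_{\infty^d,\sigma})$. Once that density is in hand, everything else reduces to standard trace-class approximation, and the compactness of $\StateSpace(\A_{\infty^d,\sigma})$ provided by the {\Lqcms} structure turns density into the desired finite $\varepsilon$-net.
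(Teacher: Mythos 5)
Your argument is correct, and it is essentially the proof of the cited lemma: this survey imports the statement from \cite{Latremoliere13b} without proof, and the underlying argument there rests on the same three pillars you use --- compactness of $\left(\StateSpace(\A_{\infty^d,\sigma}),\Kantorovich{\Lip_{l,\infty^d,\sigma}}\right)$ coming from the ergodic dual action via Theorem (\ref{Rieffel-ergo-thm}), weak* density of the $\pi_{\infty^d,\sigma}$-normal states, and trace-norm truncation by the finite-rank projections $P_N$ followed by renormalization.

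One justification should be repaired. Kadison's transitivity theorem is not the right tool for your Step 2: the left regular representation $\pi_{\infty^d,\sigma}$ is not irreducible, so its vector states need not be pure, and transitivity says nothing about weak* approximation of arbitrary pure states by vector states of a given faithful representation. The fact you actually need --- that for a faithful representation every state is a weak* limit of convex combinations of vector states --- is Fell's theorem on weak containment (see \cite{Dixmier}, 3.4.2), or can be proved directly by Hahn--Banach: if the weak*-closed convex hull of $\mathscr{N}$ omitted some state $\varphi$, there would exist $a\in\sa{\A_{\infty^d,\sigma}}$ and $t\in\R$ with $\inner{\pi_{\infty^d,\sigma}(a)\xi}{\xi}\leq t$ for every unit vector $\xi$ while $\varphi(a)>t$; the first condition forces $\pi_{\infty^d,\sigma}(a)\leq t\unit$, and faithfulness together with unitality of $\pi_{\infty^d,\sigma}$ then gives $a\leq t\unit_\A$, contradicting $\varphi(a)>t$. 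With that fact in hand, the rest of your proof stands as written; you can even bypass the appeal to metrization of the weak* topology in Step 3, since states agree on $\unit_\A$ and \cite[Proposition 2.2]{Rieffel99} yields $\Kantorovich{\Lip_{l,\infty^d,\sigma}}(\varphi,\psi)\leq r\,\|\varphi-\psi\|_{\A^\ast}$ with $r$ the radius of the state space, so your trace-norm estimate $\|A_j^{(N)}-A_j\|_1$ controls the Monge--Kantorovich distance directly and uniformly over the finitely many indices $j$.
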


Now, we proceed, informally, as follows. For a given $\varepsilon > 0$ and a given skew bicharacter $\sigma$ of $\Z^d$, we use Theorem (\ref{Fejer-approx-thm}) to get a Fejer kernel $\phi : \Z^d \rightarrow \R$ and a neighborhood $U$ of $\infty^d$ in $\Nbar$ such that the maps $\alpha_{k,\sigma}^\phi$ have a common, finite dimensional range when restricted to the self-adjoint parts of $C^\ast(\Z^d_k,\eta)$ for $k\in U$ and $\eta$ any possible skew bicharacter.

We also use Lemma (\ref{F_N-lemma}) to obtain a projection $P$ so that the set:
\begin{equation*}
 \{\mathrm{tr}(A\pi_{\infty^d,\sigma}(\cdot)) : A\text{ is trace class}, AP=PA=A \}
\end{equation*}
is $\varepsilon$-dense in $(\StateSpace(C^\ast(\Z^d,\sigma)), \Kantorovich{\Lip_{\infty^d,\sigma}})$. 

We now use the combination of these two observations and the construction of continuous fields of states so that:
\begin{equation*}
 \{\mathrm{tr}(A\pi_{\k,\eta}(\cdot)) : A\text{ is trace class}, AP=PA=A \}
\end{equation*}
is $6\varepsilon$-dense in $(\StateSpace(C^\ast(\Z^d_k,\eta)), \Kantorovich{\Lip_{k,\eta}})$, for $k$ in some neighborhood of $\infty^d$ and $\eta$ any skew-bicharcater.

We then use the fact that our pivot commutes with $P$ to show the height of our bridges is no more than $6\varepsilon$. On the other hand, to compute the reach, we use again the continuity of the norm and Lip-norms, the fact that our pivot is trace class --- which allows us to exploit weak operator convergence and turn into norm convergence as explained above, thanks to the commutation property of Theorem (\ref{qt-commutator-thm}). All put together, we obtain the desired estimates and conclude again that Theorem (\ref{qt-main-2}) holds for quantum tori, albeit with more natural representations than the general technique.

While very involved, this method actually produces fairly explicit objects. The effort needed to develop the techniques in \cite{Latremoliere13b} are motivated by two goals: first, that these methods can be extended to many other cases. Second, as our research moves toward understanding modules over {\Lqcms s} which converge to some limit for the quantum propinquity, the explicit construction in \cite{Latremoliere13b} gives us hopes that we may carry some computations in a relatively concrete setting. Another example of such large efforts dedicated to obtain fairly explicit proofs of convergence in the metric sense can be found in \cite{Rieffel09,Rieffel10c,Rieffel11}.

\subsection{Matrix Converging to the Sphere}

Another fundamental example of convergence in noncommutative metric geometry is given by matrix approximations of the C*-algebra $C(S^2)$, where $S^2$ is the $2$-sphere $\{ (x,y,z)\in\R^3 : x^2+y^2+z^2 = 1 \}$, as studied by Rieffel \cite{Rieffel01, Rieffel08, Rieffel09, Rieffel10, Rieffel10c, Rieffel15}. In \cite{Rieffel01}, $C(S^2)$, with an appropriate metric, is shown to be the limit of matrix algebras for Rieffel's quantum Gromov-Hausdorff distance. In subsequent works, motivated by his work on vector bundles over $S^2$ in \cite{Rieffel08}, Rieffel explored the problem of modifying his original construction so that convergence would only involve strong Leibniz Lip-norms. Eventually, this work motivated our own \cite{Latremoliere13, Latremoliere13b, Latremoliere13c}. Now, our metric plays an interesting role in the most current developments on finite dimensional approximations of $C(S^2)$, which can be made sense of using the quantum Gromov-Hausdorff propinquity, which the benefit of extending to matrix algebras over $C(S^2)$ \cite{Rieffel15}. In this section, we follow \cite{Rieffel15} and summarize the construction of finite dimensional approximations of $C(S^2)$.

Rieffel's setup begins with a compact group $G$ and an irreducible unitary representation $U$ of $G$ on some Hilbert space $\Hilbert$, necessarily finite dimensional \cite{Dixmier}. If $\B$ is the C*-algebra of all linear operators on $\Hilbert$, and if we set $\alpha^g (T) = U^g T U^{g^{-1}}$ for all $T \in \B$ and $g\in G$, then $\alpha$ is an ergodic action of $G$ on $\B$ --- since $U$ is irreducible.

Thus, if we choose a continuous length function $l$ on $G$, then by Theorem (\ref{Rieffel-ergo-thm}), we may define a Lip-norm $\Lip_U$ on $\B$ via $\alpha$. In this section, we will assume that $l$ is invariant via conjugation.

Now, let $P \in \B$ be a rank one projection, and let:
\begin{equation*}
H = \left\{ g \in G : \alpha^g(P) = P \right\}\text{.}
\end{equation*}
The C*-algebra $\A = C\left(\bigslant{G}{H}\right)$ of the continuous functions on the homogeneous space $\bigslant{G}{H}$ is endowed with a Lip-norm $\Lip_\A$ obtained via the natural action of $G$ on $\bigslant{G}{H}$, and the length function $l$ on $G$, again as in Theorem (\ref{Rieffel-ergo-thm}).  Alternatively, $\Lip_\A$ is the usual Lipschitz seminorm induced on $\A$ via the quotient metric of $l$ on $\bigslant{G}{H}$.

Thus, we have two {\Lqcms s}: $(\A,\Lip_\A)$, which is a commutative space with a classical structure, and $(\B,\Lip_\B)$, which is a finite dimensional {\Lqcms}.

 The \emph{Berezin symbol} of $T\in \B$ is the function:
\begin{equation*}
\sigma_T : g \in \bigslant{G}{H} \mapsto \mathrm{tr}(T \alpha^g(P))
\end{equation*}
where, by abuse of notation, for any $g \in \bigslant{G}{H}$, we denote by $\alpha^g(P)$ the value of $\alpha^h(P)$ for any $h\in G$ such that $h H = g$. We use the notation $\mathrm{tr}$ to refer to the usual trace on $\B$ whose value on the identity of $\B$ is $\dim_\C \Hilbert$. The Berezin symbol is a positive linear map of norm at most $1$ and equivariant from the action $\alpha$ to the action of $G$ by left translation on $\bigslant{G}{H}$.

If we endow $\B$ with the inner product $T,R \in \B \mapsto \frac{1}{\dim \Hilbert}\mathrm{tr}(R^\ast T)$, and if we regard $\sigma$ as a continuous linear operator from $\B$ to $L^2\left(\bigslant{G}{H}, \mu\right)$, where $\mu$ is the $G$-invariant probability measure on $\bigslant{G}{H}$ for the action of $G$ by left translations, then $\sigma$ has an adjoint denoted by $\breve{\sigma}$ in \cite{Rieffel01}. An explicit formula for this quantization map is:
\begin{equation*}
\breve{\sigma} : f\in L^2\left(\bigslant{G}{H},\mu\right) \mapsto \dim\Hilbert \int_{\bigslant{G}{H}} f(g) \alpha^g(P) \, d\mu(g)\text{,}
\end{equation*}
again with the same abuse of notation as before for $\alpha^g(P)$ where $g\in \bigslant{G}{H}$. 

Thanks to the equivariance of $\sigma$ and $\breve{\sigma}$, we note that for all $T \in \B$ we have $\Lip_\A(\sigma_T) \leq \Lip_B(T)$ and for all $f\in L^2\left(\bigslant{G}{H},\mu \right)$ we have $\Lip_\B(\breve{\sigma}(f))\leq \Lip_\A(f)$. Thus it would be natural to build a bridge using the maps $\sigma$ and $\breve{\sigma}$ --- though these maps are not multiplicative, so this requires some additional work.

None the less, Rieffel used $\sigma$ and $\breve{\sigma}$ in \cite{Rieffel01} to derive estimates on how far $(\A,\Lip_\A)$ and $(\B,\Lip_\B)$ are for the quantum Gromov-Hausdorff distance.

We now see how to use the above framework to build approximations of $C(S^2)$: we would consider $G = SU(2)$ above. More generally, we assume henceforth that $G$ is a semisimple compact Lie group endowed with a conjugation invariant length function $l$, and we begin by choosing $U_1$, as above, an irreducible unitary representation of $G$ on some Hilbert space $\Hilbert_1$. Let $\xi$ be a normalized vector of highest weight associated with $U_1$ and $P$ be projection $P_1$ on the space $\C\xi$. We let $(\B_1,\Lip_{\B_1})$ be the {\Lqcms} constructed as above on the matrix algebra $\B_1$ of all linear operators on $\Hilbert_1$.

For every $n\in\N$, $n\geq 1$, we now let $\xi_n = \xi^{\otimes n} \in \Hilbert^{\otimes n}$, and we denote by $U_n$ the irreducible unitary representation of $G$ obtained by restricting $U_1^{\otimes n}$ to the $U_1^{\otimes n}$ invariant subspace $\Hilbert_n$ generated by $\xi_n$. We note that this new setup also matches our general description above, so we may carry out the same construction, obtaining a {\Lqcms} $(\B_n,\Lip_{\B_n})$ where $\B_n$ is the C*-algebra of all operators on $\Hilbert_n$, and we choose for our projection $P_n$ the projection on $\C\xi_n$. A key observation is that the stabilizer subgroup of $P_1$ and $P_n$ are the same $H$ for all $n\in\N$. Thus $\A = C\left(\bigslant{G}{H}\right)$, in the above construction, is always the same classical space.

Rieffel proved the following in \cite{Rieffel10c}:

\begin{theorem}[\cite{Rieffel10c}, Theorem 9.1]
Using the construction described in this section, we have:
\begin{equation*}
\lim_{n\rightarrow\infty} \prox((\A,\Lip_\A),(\B_n,\Lip_{\B_n}))  = 0\text{,}
\end{equation*}
where $\prox$ is Rieffel's proximity.
\end{theorem}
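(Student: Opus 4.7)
The plan is to construct, for each $n$, an admissible Leibniz-compatible seminorm on $\A\oplus\B_n$ whose associated reach (in the sense of Rieffel's proximity) tends to zero as $n\to\infty$, using the Berezin covariant symbol $\sigma$ and its contravariant adjoint $\breve{\sigma}$ as the mechanism that transfers information between the commutative homogeneous space and the matricial side. The key point is that both $\sigma$ and $\breve{\sigma}$ are $G$-equivariant, positive, and contractive of norm at most one, so the two Lip-norm inequalities already recorded in the excerpt, namely $\Lip_\A(\sigma_T)\leq \Lip_{\B_n}(T)$ and $\Lip_{\B_n}(\breve{\sigma}(f))\leq \Lip_\A(f)$, are immediate from the conjugation invariance of $l$.

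The analytic heart of the argument is an approximate inversion estimate: I would aim to establish the existence of a sequence $(\gamma_n)_{n\in\N}$ with $\gamma_n\to 0$ such that, for every $T\in\B_n$ and every $f$ in the Lipschitz domain of $\A$,
\begin{equation*}
\|\breve{\sigma}(\sigma_T)-T\|_{\B_n} \leq \gamma_n \Lip_{\B_n}(T)
\quad\text{and}\quad
\|\sigma_{\breve{\sigma}(f)}-f\|_{\A} \leq \gamma_n \Lip_\A(f).
\end{equation*}
The natural route is spectral: the composition $\breve{\sigma}\circ\sigma$ is $G$-equivariant on $\B_n$, so by Schur's lemma it acts as a scalar on each isotypic component of $\B_n\cong \Hilbert_n\otimes\Hilbert_n^\ast$ under $\alpha$. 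Decomposing via the Clebsch--Gordan rules for the highest-weight representation $U_n=U_1^{\otimes n}|_{\Hilbert_n}$, one can compute those scalars and show that they converge to $1$ on every fixed isotypic type as $n\to\infty$; the Lip-norm bound then limits the total mass that $T$ can place on high isotypic components through a Peter--Weyl type estimate and the invariance of $l$ under conjugation.

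Granted these estimates, I would assemble an admissible seminorm on $\A\oplus\B_n$ by setting
\begin{equation*}
L_n(f,T)=\max\left\{\Lip_\A(f),\ \Lip_{\B_n}(T),\ \tfrac{1}{\gamma_n}\|f-\sigma_T\|_\A\right\}.
\end{equation*}
The first two terms ensure that the quotients of $L_n$ onto $\A$ and $\B_n$ recover $\Lip_\A$ and $\Lip_{\B_n}$: given $T$ with $\Lip_{\B_n}(T)\leq 1$, the pair $(\sigma_T,T)$ has $L_n$-seminorm at most one by the contractivity of $\sigma$; given $f$ with $\Lip_\A(f)\leq 1$, the pair $(f,\breve{\sigma}(f))$ has $L_n$-seminorm at most one by the approximate inversion estimate for $\sigma\circ\breve{\sigma}$. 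The third term is a bounded seminorm and hence automatically strong Leibniz, so $L_n$ inherits the strong Leibniz property from $\Lip_\A$ and $\Lip_{\B_n}$, which is what the proximity framework requires. By the strong Leibniz calibration of the term $\gamma_n^{-1}\|f-\sigma_T\|_\A$, the reach of this admissible seminorm is of order $\gamma_n$, and $\prox((\A,\Lip_\A),(\B_n,\Lip_{\B_n}))\leq C\gamma_n$ for an absolute constant $C$.

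The main obstacle is unquestionably the quantitative control of $\gamma_n$: one needs a uniform estimate tying the norm-error of the Berezin transform $\breve{\sigma}\circ\sigma$ on an element $T$ to its Lip-norm $\Lip_{\B_n}(T)$, independent of $n$ in its form and vanishing in its rate. A secondary, but genuinely delicate, point is checking that the resulting $L_n$ is an honest strong Leibniz Lip-norm and not merely a Lipschitz seminorm; the $\gamma_n^{-1}\|f-\sigma_T\|_\A$ piece respects the strong Leibniz rule because $\sigma$ is a unital completely positive map and $f-\sigma_T$ can be expressed as a difference whose derivation-like bound factors through the operator-norm product, but this requires a careful verification rather than a formal manipulation.
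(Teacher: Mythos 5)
Your construction is essentially Rieffel's \emph{original} proof from \cite{Rieffel01} that $\lim_{n\rightarrow\infty}\dist_q((\A,\Lip_\A),(\B_n,\Lip_{\B_n}))=0$: there the admissible Lip-norm on $\A\oplus\B_n$ is exactly of the form $\max\left\{\Lip_\A(f),\Lip_{\B_n}(T),\gamma_n^{-1}\|f-\sigma_T\|_\A\right\}$, the quotient conditions are checked with the pairs $(\sigma_T,T)$ and $(f,\breve{\sigma}(f))$, and the rate $\gamma_n$ comes from Berezin transform estimates (which Rieffel obtains by writing $\sigma_{\breve{\sigma}(f)}$ as the convolution of $f$ with a probability density built from $|\langle\xi_n,U^g\xi_n\rangle|^2=|\langle\xi_1,U^g\xi_1\rangle|^{2n}$, which concentrates at the identity --- rather than by your Clebsch--Gordan eigenvalue route, though that can also be made to work). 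The gap sits at the one point where the statement at hand differs from \cite{Rieffel01}: the proximity $\prox$ requires all admissible seminorms to be \emph{strong Leibniz}, and your bridge term is not. The claim that a bounded seminorm is ``automatically strong Leibniz'' is false, and for this specific $N_n(f,T)=\gamma_n^{-1}\|f-\sigma_T\|_\A$ the Leibniz inequality genuinely fails: writing
\begin{equation*}
fg-\sigma_{TS} = (f-\sigma_T)g + \sigma_T(g-\sigma_S) + \left(\sigma_T\sigma_S-\sigma_{TS}\right)\text{,}
\end{equation*}
the first two terms are controlled as the Leibniz rule demands, but the defect $\sigma_T\sigma_S-\sigma_{TS}$ is dominated by neither $N_n(f,T)$ nor $N_n(g,S)$, precisely because the Berezin symbol map is not multiplicative. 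This survey flags exactly this obstruction (``though these maps are not multiplicative, so this requires some additional work''), and overcoming it is the entire raison d'\^{e}tre of \cite{Rieffel10c}; your proposal, as written, proves convergence for $\dist_q$ but not for $\prox$.

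What Rieffel actually does in \cite{Rieffel10c} is replace the symbol-comparison bridge by a pivot bridge. Take $\D_n = C\left(\bigslant{G}{H}\right)\otimes\B_n$, embed $f\mapsto f\otimes\unit$ and $T\mapsto\unit\otimes T$, and let the pivot be the coherent-state field $\omega_n : gH \mapsto \alpha^g(P_n)$, so that the bridge seminorm is
\begin{equation*}
N_n(f,T) = \gamma_n^{-1}\sup_{x\in\bigslant{G}{H}}\left\| f(x)\,\alpha^x(P_n) - \alpha^x(P_n)\, T\right\|_{\B_n}\text{,}
\end{equation*}
i.e. $\gamma_n^{-1}\bridgenorm{\gamma}{f,T}$ in the notation of this survey. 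Seminorms of this shape are strong Leibniz by the derivation-type identity $\pi(fg)\omega-\omega\pi(TS)=\pi(f)\left(\pi(g)\omega-\omega\pi(S)\right)+\left(\pi(f)\omega-\omega\pi(T)\right)\pi(S)$ --- this is the same mechanism as the bridges of Definition 3.10 of \cite{Latremoliere13} recalled above. Your Berezin estimates are not wasted: the maps $\sigma$ and $\breve{\sigma}$ re-enter in \cite{Rieffel10c} to bound the reach and height of this pivot bridge (note that $\sigma_T(x)=\mathrm{tr}(T\alpha^x(P_n))$ is recovered from $\alpha^x(P_n)T\alpha^x(P_n) = \sigma_T(x)\alpha^x(P_n)$, since $P_n$ has rank one), so the approximate-inversion estimate with $\gamma_n\rightarrow 0$ remains the analytic heart --- but it must be routed through the pivot rather than used as the bridge itself.
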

Consequently, by \cite[Theorem 5.5]{Latremoliere13c}, we also have:
\begin{equation}\label{sphere-cv-eq}
\lim_{n\rightarrow\infty} \propinquity{}((\A,\Lip_\A),(\B_n,\Lip_{\B_n})) = 0\text{.}
\end{equation}

We also note that in \cite{Rieffel09}, Rieffel showed that the above sequence $(\B_n,\Lip_{\B_n})_{n\in\N}$ is Cauchy for $\prox$ --- as $\prox$ is not known to satisfy the triangle inequality, this fact does not follow from \cite[Theorem 9.1]{Rieffel10c}. However, more is actually proven in \cite{Rieffel09}: indeed, Rieffel in fact proved that $(\B_n,\Lip_{\B_n})_{n\in\N}$ is Cauchy for the quantum propinquity, though not in these words as we had yet to introduce our metric. In fact, Rieffel points out that the sort of constructions he carried out in \cite{Rieffel09} did not give a priori easy estimates on the quantum Gromov-Hausdorff distance. This matter is resolved with our work on the quantum propinquity \cite{Latremoliere13}. Since the dual propinquity is complete and dominated by the quantum propinquity, we thus have another proof of the limit in Expression (\ref{sphere-cv-eq}).

In \cite[Theorem 6.8]{Rieffel15}, Rieffel proves that in fact, $\left(C\left(\bigslant{G}{H}\right),\Lip_\A\right)$ is the limit of $(\B_n,\Lip_{\B_n})_{n\in\N}$ for the quantum propinquity. More importantly, Rieffel extends this convergence to matrix algebras over $C\left(\bigslant{G}{H}\right)$ in \cite[Theorem 6.10]{Rieffel15}, in the following sense. If $(\A,\Lip)$ is any {\Lqcms} then there is a natural extension of the notion of Lip-norms to matrix algebras $M_k(\A)$ over $\A$, by applying $\Lip$ to every matrix entry of an element in $\M_k(\A)$. In \cite{Rieffel15}, the notions of a bridge and its length are extended to this setting, and is shown to converge to $0$ when applied to the example described in this section, for a fixed $k \in \N$.

Thus the quantum propinquity appears as a natural tool in the study of convergence of modules, and future work will hopefully carry this project to fruition.

\subsection{New Results on Perturbations of the quantum metrics and the Quantum Propinquity}

\subsubsection{A simple perturbation lemma}

A simple application of the quantum Gromov-Hausdorff propinquity is to provide a framework for discussing perturbations of the metric structure of {\Lqcms s}. This section presents results concerned with perturbation of the metrics, and \emph{which are new to this survey}. We present a new lemma which simplify some computations for the quantum propinquity, and then, three new examples of applications: continuity for conformal deformations of spectral triples, continuity for another type of perturbation of spectral triples, and last, a generalization to the quantum propinquity of our result on dimensional collapse for quantum tori \cite{Latremoliere05}.

\begin{lemma}\label{perturb-lemma}
Let $F$ be a permissible function. Let $(\A,\Lip_\A)$ and $(\B,\Lip_\B)$ be two {\Qqcms{F}s}. If there exists a bridge $\gamma  = (\D,\pi_\A,\pi_\B,\omega)$ from $(\A,\Lip_\A)$ to $(\B,\Lip_\B)$ with $\|\omega\|_\D\leq 1$ and $\delta > 0$ such that:
\begin{enumerate}
\item for all $a\in\dom{\Lip_\A}$ there exists $b\in \sa{\B}$ such that:
\begin{equation*}
\max \left\{ \|\pi_\A(a)\omega - \omega\pi_\B(b)\|_\D, |\Lip_\A(a) - \Lip_\B(b)| \right\} \leq \delta\Lip_\A(a) \text{,}
\end{equation*}
\item for all $b\in\dom{\Lip_\B}$, there exists $a\in\sa{\A}$ such that:
\begin{equation*}
\max \left\{ \|\pi_\A(a)\omega - \omega\pi_\B(b)\|_\D, |\Lip_\A(a) - \Lip_\B(b)| \right\} \leq \delta\Lip_\B(b) \text{,}
\end{equation*}
\end{enumerate}
then:
\begin{multline*}
\qpropinquity{F}((\A,\Lip_\A),(\B,\Lip_\B)) \leq \\ \max\left\{\delta\left(1 + \max\left\{\diam{\StateSpace(\A)}{\Kantorovich{\Lip_\A}}, \diam{\StateSpace(\B)}{\Kantorovich{\Lip_\B}}\right\} \right), \right. \\ \left. \bridgeheight{\gamma}{\Lip_\A,\Lip_\B} \right\} \text{.}
\end{multline*}
\end{lemma}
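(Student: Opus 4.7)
The plan is to bound the quantum propinquity by the length of the single-bridge trek consisting of $\gamma$ alone. Since by Definition (\ref{quantum-propinquity-def}) the propinquity is an infimum of trek lengths and a single bridge is a length-one trek, it suffices to bound $\max\{\bridgereach{\gamma}{\Lip_\A,\Lip_\B},\bridgeheight{\gamma}{\Lip_\A,\Lip_\B}\}$ by the claimed quantity. The height already appears on the right, so the real task is to estimate the reach of $\gamma$ by $\delta\bigl(1+\tfrac12\max\{\diam{\StateSpace(\A)}{\Kantorovich{\Lip_\A}},\diam{\StateSpace(\B)}{\Kantorovich{\Lip_\B}}\}\bigr)$.

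The strategy for the reach is a shift-and-rescale argument. Given $a\in\sa{\A}$ with $\Lip_\A(a)\leq 1$, I first observe that because $\Lip_\A$ is shift-invariant and because for any self-adjoint $a$ the spectrum of $a$ has spread $\max\mathrm{spec}(a)-\min\mathrm{spec}(a)\leq \Lip_\A(a)\diam{\StateSpace(\A)}{\Kantorovich{\Lip_\A}}$ (by definition of $\Kantorovich{\Lip_\A}$ applied to the extremal states), I may choose $s\in\R$ so that $\tilde a=a-s\unit_\A$ satisfies $\|\tilde a\|_\A\leq\tfrac12\diam{\StateSpace(\A)}{\Kantorovich{\Lip_\A}}$. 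Applying hypothesis~(1) to $\tilde a$ yields $\tilde b_0\in\sa{\B}$ with $\Lip_\B(\tilde b_0)\leq 1+\delta$ and $\|\pi_\A(\tilde a)\omega-\omega\pi_\B(\tilde b_0)\|_\D\leq\delta$. Set $\tilde b=\tilde b_0/(1+\delta)$, so $\Lip_\B(\tilde b)\leq 1$; the decomposition
\begin{equation*}
\pi_\A(\tilde a)\omega-\omega\pi_\B(\tilde b)=\tfrac{1}{1+\delta}\bigl(\pi_\A(\tilde a)\omega-\omega\pi_\B(\tilde b_0)\bigr)+\tfrac{\delta}{1+\delta}\pi_\A(\tilde a)\omega\text{,}
\end{equation*}
together with $\|\omega\|_\D\leq 1$ and $\|\pi_\A(\tilde a)\|_\D\leq\|\tilde a\|_\A$, yields $\bridgenorm{\gamma}{\tilde a,\tilde b}\leq\delta\bigl(1+\tfrac12\diam{\StateSpace(\A)}{\Kantorovich{\Lip_\A}}\bigr)$. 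Returning to the original $a$, I take $b=\tilde b-s\unit_\B$: then $\Lip_\B(b)=\Lip_\B(\tilde b)\leq 1$, and because $\pi_\A$ and $\pi_\B$ are unital the scalar $s$ cancels in the pivot expression, so $\bridgenorm{\gamma}{a,b}=\bridgenorm{\gamma}{\tilde a,\tilde b}$. An exactly symmetric argument based on hypothesis~(2) gives the reciprocal estimate with $\diam{\StateSpace(\B)}{\Kantorovich{\Lip_\B}}$, and taking the maximum bounds the reach as required.

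The main subtlety — and the reason the estimate is not a direct application of the hypotheses — is that the bridge seminorm $\bridgenorm{\gamma}{\cdot,\cdot}$ is invariant under the \emph{diagonal} shift $(a,b)\mapsto(a-s\unit_\A,b-s\unit_\B)$ but not under independent shifts of either argument. One therefore cannot freely normalize $\tilde b_0$ alone to reduce its norm without breaking the bound, and the argument must channel the cross term that arises from the $1/(1+\delta)$ rescaling through $\|\tilde a\|_\A$ rather than $\|\tilde b_0\|_\B$. This is exactly what makes the factor $\tfrac12\diam{\StateSpace(\A)}{\Kantorovich{\Lip_\A}}$ appear on the $a$-side of the reach, and its counterpart on the $b$-side, yielding the asymmetric max in the final bound.
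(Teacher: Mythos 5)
Your proof is correct and follows essentially the same route as the paper's: both bound the propinquity by the length of the single-bridge trek through $\gamma$, rescale the lifted element by $\frac{1}{1+\delta}$ to restore $\Lip$-norm at most $1$, and absorb the resulting cross term of size $\frac{\delta}{1+\delta}$ via a scalar shift controlled by the radius bound of \cite[Proposition 2.2]{Rieffel99} --- your spectral-midpoint normalization of $a$ on the domain side versus the paper's replacement of $b$ by $\frac{1}{1+\delta}b+\delta t\unit_\B$ on the codomain side is only a cosmetic difference, yielding the same final estimate. One trivial slip to fix: since $\tilde a = a - s\unit_\A$, the element matched to $a$ must be $b = \tilde b + s\unit_\B$ (not $\tilde b - s\unit_\B$) for the diagonal shift to cancel in $\bridgenorm{\gamma}{\cdot,\cdot}$.
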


\begin{proof}
Let $R > \max\{\diam{\StateSpace(\A)}{\Lip_\A},\diam{\StateSpace(\B)}{\Lip_\B}\}$. Let $a\in \sa{\A}$ with $\Lip_\A(a)\leq 1$. Then there exists $b \in \sa{\B}$ with $\|\pi_\A(a)\omega - \omega \pi_\B(b)\|_\D\leq\delta$ and $\Lip_\B(b)\leq 1 + \delta$. Thus:
\begin{equation*}
\Lip_\B\left(\frac{1}{\delta+1}b\right)\leq 1\text{,}
\end{equation*}
so by \cite[Proposition 2.2]{Rieffel99}, we conclude that there exists $t\in \R$ such that:
\begin{equation*}
\left\|\frac{1}{\delta+1}b - t\unit_\B \right\|_\B \leq R\text{.}
\end{equation*}
Thus:
\begin{equation*}
\begin{split}
\left\|\pi_\A(a)\omega - \omega \pi_\B\left(\frac{1}{1+\delta}b + \delta t \unit_\B \right) \right\|_\D &\leq \|\pi_\A(a)\omega - \omega \pi_\B(b)\|_\D \\ 
&\quad + \delta\left\|\frac{1}{\delta+1}b - t \unit_\B\right\|_\B \|\omega\|_\D\\
&\leq \delta(1+R)\text{.}
\end{split}
\end{equation*}
while $\Lip_\B\left(\frac{1}{1+\delta}b + t \delta \unit_\B\right) \leq 1$.
The result is symmetric in $(\A,\Lip_\A)$ and $(\B,\Lip_\B)$.

Thus by Definition (\ref{quantum-propinquity-def}), our lemma is proven.
\end{proof}

In particular, a consequence of Lemma (\ref{perturb-lemma}) is:

\begin{proposition}\label{dil-prop}
Let $(\A,\Lip_\A)$ and $(\B,\Lip_\B)$ be two {\Qqcms{F}} for some permissible function $F$. If $\alpha : \A \rightarrow \B$ is a $\delta$-bi-Lipschitz *-isomorphism for some $\delta \geq 1$, i.e.:
\begin{equation*}
\delta^{-1} \Lip_\B \circ \pi \leq \Lip_\A \leq \delta \Lip_\B\circ \pi\text{,}
\end{equation*}
then:
\begin{multline*}
  \qpropinquity{}\left((\A,\Lip_\A), (\B,\Lip_\B)\right) \\ \leq \left(\delta-1 \right)\left(1 + \max\{\diam{\StateSpace(\A)}{\Kantorovich{\Lip_\A}},\diam{\StateSpace(\A)}{\Kantorovich{\Lip_\A}}\} \right)\text{.}
\end{multline*}
\end{proposition}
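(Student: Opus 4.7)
The natural strategy is to bound the propinquity by the length of a single bridge whose existence is supplied by the *-isomorphism $\alpha$. The obvious candidate is $\gamma = (\B, \alpha, \mathrm{id}_\B, \unit_\B)$, with $\B$ as the ambient C*-algebra and the unit as the pivot; the bridge seminorm then reduces to $\bridgenorm{\gamma}{a,b} = \|\alpha(a) - b\|_\B$.

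The height of $\gamma$ is immediately zero: since $\omega = \unit_\B$, the $1$-level set $\StateSpace(\B|\unit_\B)$ equals the full state space $\StateSpace(\B)$, whose pullback via $\mathrm{id}_\B$ is $\StateSpace(\B)$ itself and whose pullback via the *-isomorphism $\alpha$ exhausts $\StateSpace(\A)$.

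For the reach, given $a \in \sa{\A}$ with $\Lip_\A(a) \leq 1$, I would set $b_0 = \delta^{-1}\alpha(a)$; the bi-Lipschitz hypothesis yields $\Lip_\B(b_0) = \delta^{-1}\Lip_\B(\alpha(a)) \leq \delta^{-1}\cdot\delta\Lip_\A(a) \leq 1$. The residue $\alpha(a) - b_0 = (1-\delta^{-1})\alpha(a)$ is self-adjoint with Lip-norm at most $(\delta-1)\Lip_\A(a) \leq |1-\delta|$, so \cite[Proposition 2.2]{Rieffel99} furnishes $t \in \R$ such that $\|(1-\delta^{-1})\alpha(a) - t\unit_\B\|_\B \leq \tfrac{|1-\delta|}{2}\diam{\StateSpace(\B)}{\Kantorovich{\Lip_\B}}$, and setting $b = b_0 + t\unit_\B$ preserves $\Lip_\B(b) \leq 1$ since Lip-norms are insensitive to scalar translates. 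The symmetric construction $a = \delta^{-1}\alpha^{-1}(b) + s\unit_\A$ handles the other inner infimum in the reach; invoking the mirror bridge $\gamma' = (\A, \mathrm{id}_\A, \alpha^{-1}, \unit_\A)$ together with the symmetry of the propinquity produces the analogous estimate in terms of $\diam{\StateSpace(\A)}{\Kantorovich{\Lip_\A}}$, so that altogether $\bridgelength{\gamma}{\Lip_\A,\Lip_\B} \leq \tfrac{|1-\delta|}{2}\max\{\diam{\StateSpace(\A)}{\Kantorovich{\Lip_\A}}, \diam{\StateSpace(\B)}{\Kantorovich{\Lip_\B}}\}$.

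Since $\gamma$ alone is a valid trek between $(\A,\Lip_\A)$ and $(\B,\Lip_\B)$, Definition \ref{quantum-propinquity-def} bounds the quantum propinquity by $\bridgelength{\gamma}{\Lip_\A,\Lip_\B}$, and the stated inequality follows a fortiori from the elementary estimate $\tfrac{M}{2} \leq \tfrac{1}{2} + M$ for $M \geq 0$. No substantial obstacle is involved; the sole point of care is the invariance of Lip-norms under translation by scalars, which licenses the insertion of the corrections $t\unit_\B$ and $s\unit_\A$ without violating normalization. Alternatively, Lemma \ref{perturb-lemma} applies to the same bridge with the choices $b = \alpha(a)$ and $a = \alpha^{-1}(b)$, for which the bridge seminorm vanishes identically and the Lip-norm discrepancy is bounded by $|1-\delta|$ times the relevant Lip-norm, yielding a comparable estimate of the same order.
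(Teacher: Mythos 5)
Your proof is correct, and your closing ``alternative'' is in fact verbatim the paper's own argument: the paper proves this proposition by applying Lemma (\ref{perturb-lemma}) to exactly the bridge $(\B,\unit_\B,\alpha,\mathrm{id}_\B)$ with the choices $b=\alpha(a)$ and $a=\alpha^{-1}(b)$, for which the bridge seminorm vanishes and $|\Lip_\A(a)-\Lip_\B(\alpha(a))|\leq|1-\delta|\,\Lip_\A(a)$ by the bi-Lipschitz hypothesis. Your main argument instead unfolds by hand the mechanism hidden inside Lemma (\ref{perturb-lemma}) --- recentring $\delta^{-1}\alpha(a)$ by a scalar via \cite[Proposition 2.2]{Rieffel99} and bounding the reach of the bridge directly --- and this buys a slightly sharper estimate, $\qpropinquity{}\leq \frac{|1-\delta|}{2}\max\left\{\diam{\StateSpace(\A)}{\Kantorovich{\Lip_\A}},\diam{\StateSpace(\B)}{\Kantorovich{\Lip_\B}}\right\}$, which implies the stated inequality, whereas the lemma route gives $|1-\delta|\left(1+\frac{1}{2}\max\{\cdot\}\right)$ (your honest remark that this is only ``of the same order'' as the displayed bound reflects a genuine cosmetic mismatch already present in the paper's statement). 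Two small tidy-ups: in the paper's convention a bridge is $(\D,\omega,\pi_\A,\pi_\B)$ with the pivot in second position, so the bridge should be written $(\B,\unit_\B,\alpha,\mathrm{id}_\B)$; and both halves of your reach estimate already take place in $\B$ and produce the factor $\diam{\StateSpace(\B)}{\Kantorovich{\Lip_\B}}$, so the ``mirror bridge'' $\gamma'$ is unnecessary --- the single bridge, viewed as a one-step trek, suffices as you note.
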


\begin{proof}
Indeed, we simply consider the bridge $(\B, \mathrm{id}_\B, \pi, \unit_\B)$ and note that for all $b\in \dom{\Lip_\B}$ we have $\|b-\pi(\pi^{-1}(b))\|_\B = 0$ and $\Lip_\A(\pi^{-1}(b)) \leq \delta \Lip_\B(b)$ so:
\begin{equation*}
|\Lip_\B(b) - \Lip_\A(\pi^{-1}(b))| \leq \left( \delta - 1 \right)\Lip_\B(b)\text{.}
\end{equation*}
The computation is symmetric in $\A$ and $\B$, and thus our result follows from our Lemma (\ref{perturb-lemma}). 
\end{proof}

We note that the Leibniz property does not play any role, but the lower semicontinuity is used in translating the bi-Lipschitz property in terms of the Lip-norms.

\subsubsection{Perturbation of the metrics from spectral triples}

We now propose a couple of other examples inspired by the noncommutative geometry literature. We begin with small perturbations of a conformal type, as in \cite{Connes08, Ponge14}, which leads to twisted spectral triples. This result borrows from Example (\ref{Connes-LP-ex}) and Example (\ref{module-LP-ex}).

\begin{theorem}
  Let $\A$ be a unital C*-algebra, $\pi$ a faithful unital *-representation of $\A$ on some Hilbert space $\Hilbert$ and $D$ be a not necessarily bounded self-adjoint operator on a dense subspace $\dom{D}$ of $\Hilbert$, such that if
  \begin{equation*}
    \dom{\Lip} = \left\{ a\in\sa{\A} : \pi(a)\dom{D}\subseteq\dom{D}, [D,\pi(a)]\text{ is bounded} \right\}\text,
  \end{equation*}
  and if:
  \begin{equation*}
    \Lip : a\in\sa{\A} \longmapsto \opnorm{[D,\pi(a)]} \text,
  \end{equation*}
  then $(\A,\Lip)$ is a {\Lqcms}. 

  Let $\mathrm{GLip}(\A)$ be the set of all invertible elements $h$ in $\dom{\Lip}$ with $\Lip(h) < \infty$.  For any $h\in\mathrm{GLip}(\A)$, we define $D_h = \pi(h)D\pi(h)$, $\sigma_h : a\in\A\mapsto h^2 a h^{-2}$ and:
\begin{equation*}
\Lip_h : a\in \sa{\A} \longmapsto \opnorm{D_h\pi(a) - \pi(\sigma_h(a))D_h} \text{.}
\end{equation*}

Then $(\A,\Lip_h)$ is a {\Qqcms{\left(\|h^2\|_\A\|h^{-2}\|_\A, 0 \right)}} and moreover, if $(h_n)_{n\in\N}$ is a sequence in $\mathrm{GLip}(\A)$ which converges to $h \in \mathrm{GLip}$ in $\A$, and such that:
\begin{equation*}
\lim_{n\rightarrow\infty} \Lip(h_n^{-1}h) = \lim_{n\rightarrow\infty} \Lip(h_n h^{-1}) = 0\text{,} 
\end{equation*}
then:
\begin{equation*}
\lim_{n\rightarrow\infty} \qpropinquity{M,0}\left((\A,\Lip_{h_n}),(\A,\Lip_h)\right) = 0 \text{,}
\end{equation*}
where $M\geq \sup_{n\in\N} \|h_n^2\|_\A\|h_n^{-2}\|_\A$.
\end{theorem}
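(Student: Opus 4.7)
The proof splits into two stages: verifying that $(\A,\Lip_h)$ is a $(\|h^2\|_\A\|h^{-2}\|_\A,0)$-quasi-Leibniz compact quantum metric space, and then establishing convergence in $\qpropinquity{M,0}$.

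For the first stage, direct expansion gives the identity
\[
D_h\pi(a) - \pi(\sigma_h(a))D_h \;=\; \pi(h)\,[D,\pi(hah^{-1})]\,\pi(h)\text{,}
\]
and the multiplicativity of $\sigma_h$ produces the twisted Leibniz identity
\[
D_h\pi(ab) - \pi(\sigma_h(ab))D_h \;=\; \bigl(D_h\pi(a)-\pi(\sigma_h(a))D_h\bigr)\pi(b) + \pi(\sigma_h(a))\bigl(D_h\pi(b)-\pi(\sigma_h(b))D_h\bigr)\text{.}
\]
Taking operator norms and using $\|\sigma_h(a)\|_\A\leq\|h^2\|_\A\|h^{-2}\|_\A\|a\|_\A$, then symmetrizing for Jordan and Lie products, yields the claimed $(\|h^2\|_\A\|h^{-2}\|_\A,0)$-quasi-Leibniz inequality. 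Since $[D,\pi(\cdot)]$ vanishes exactly on scalars and $h$ is invertible, $\Lip_h$ vanishes exactly on $\R\unit_\A$; lower semicontinuity is inherited from $\Lip$. To verify the Lip-norm property via Theorem \ref{Rieffel-thm}, I use the two-sided bound $\|h^{-1}\|_\A^{-2}\opnorm{[D,\pi(hah^{-1})]}\leq \Lip_h(a)\leq\|h\|_\A^2\opnorm{[D,\pi(hah^{-1})]}$ together with the fact that $a\mapsto hah^{-1}$ is a bounded linear isomorphism of $\A$, so that total boundedness of the $\Lip$-unit ball modulo scalars transfers to the $\Lip_h$-unit ball.

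For the convergence I apply Lemma \ref{perturb-lemma} with the trivial bridge $\gamma=(\A,\unit_\A,\mathrm{id}_\A,\mathrm{id}_\A)$, whose height is zero and whose bridge seminorm is $\|a-b\|_\A$. Given $a\in\sa{\A}$, I fix a state $\mu$ of $\A$, subtract $\mu(a)\unit_\A$ (which leaves neither Lip-norm changed), and set
\[
b_n \;=\; \tfrac{1}{2}\bigl(h^{-1}h_n\,a\,h_n^{-1}h + hh_n^{-1}\,a\,h_nh^{-1}\bigr)\text{,}
\]
a self-adjoint approximant designed so that in the commuting case $b_n=h^{-1}h_na h_n^{-1}h$ satisfies $hb_nh^{-1}=h_nah_n^{-1}$, matching the inner structures of $\Lip_h(b_n)$ and $\Lip_{h_n}(a)$ exactly; symmetrization restores self-adjointness at the cost of a cross-term of order $\|h-h_n\|_\A$. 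Expanding $[D,\pi(hb_n h^{-1})]$ via the ordinary Leibniz rule and factoring through $\pi(hh_n^{-1})$ and $\pi(h_n h^{-1})$, whose Lip-norms tend to zero by hypothesis while their operator norms remain bounded, yields $|\Lip_h(b_n)-\Lip_{h_n}(a)|\leq \varepsilon_n\Lip_{h_n}(a)$ with $\varepsilon_n\to 0$. A parallel expansion of $a-b_n$, combined with $\|a-\mu(a)\unit_\A\|_\A\leq \diam{\StateSpace(\A)}{\Kantorovich{\Lip_{h_n}}}\Lip_{h_n}(a)$, gives $\|a-b_n\|_\A\leq\eta_n\Lip_{h_n}(a)$; the reverse direction follows by swapping the roles of $h$ and $h_n$, so that both hypotheses of Lemma \ref{perturb-lemma} are met and the extent shrinks to zero.

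The main obstacle is securing a uniform bound on the diameters $\diam{\StateSpace(\A)}{\Kantorovich{\Lip_{h_n}}}$, which is what upgrades the norm smallness of $h_n-h$ to the relative bound required by Lemma \ref{perturb-lemma}. This rests on the observation that $h_nh^{-1}\to\unit_\A$ and $h_n^{-1}h\to\unit_\A$ in norm (consequences of the hypotheses, since $\|h_nh^{-1}-\unit_\A\|_\A\leq\|h_n-h\|_\A\|h^{-1}\|_\A$ and $\|h_n^{-1}\|_\A$ remains bounded via the Leibniz bound on $\Lip(h_n^{-1})$), so that $\|h_n\|_\A$ and $\|h_n^{-1}\|_\A$ are uniformly bounded; combining this with the equivalence between $\Lip_{h_n}$ and $\Lip\circ\mathrm{Ad}(h_n)$ up to a bounded factor, the diameters of $(\A,\Lip_{h_n})$ are uniformly dominated by that of $(\A,\Lip)$, which supplies the remaining uniform estimate.
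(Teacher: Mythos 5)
Your first stage and your closing paragraph essentially reproduce the paper's own argument: the identity $D_h\pi(a)-\pi(\sigma_h(a))D_h = \pi(h)[D,\pi(hah^{-1})]\pi(h)$, the twisted Leibniz rule, the transfer of total boundedness through $a\mapsto hah^{-1}$ (note that $\mathrm{Ad}(h)$ does not preserve self-adjointness, so as in the paper you must pass to the full algebra modulo $\C\unit_\A$ and split into real and imaginary parts, at the cost of a $\sqrt{2}$), and the uniform bounds on $\|h_n^{\pm 1}\|_\A$ and on the diameters are all as in the paper, as is the use of Lemma (\ref{perturb-lemma}) with the trivial bridge. Where you diverge is the introduction of the approximant $b_n$, and that is where there is a genuine gap.

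Writing $u = h^{-1}h_n$, your $b_n$ is $\Re\left(uau^{-1}\right)$, and conjugating, $hb_nh^{-1} = \frac{1}{2}\left(c + vcv^{-1}\right)$ with $c = h_nah_n^{-1}$ and $v = h^2h_n^{-2}$. The first summand behaves exactly as you describe: only the outer factors $\pi(hh_n^{-1})$ and $\pi(h_n^{-1}h)$ appear, and norm convergence of these to $\unit_\A$ suffices. But the second summand forces an expansion of $[D,\pi(vcv^{-1})]$ whose edge terms carry the coefficient $\opnorm{[D,\pi(v)]} = \Lip(v)$ against a factor of size $\|c\|_\A$; relative to $\Lip_{h_n}(a)$ this contributes a coefficient proportional to $\Lip(v)$, \emph{not} to $\|h-h_n\|_\A$. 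Your assertion that symmetrization costs only "a cross-term of order $\|h-h_n\|_\A$" conflates norm smallness with Lip-norm smallness: $\Lip$ is merely lower semicontinuous, so $v\to\unit_\A$ in norm gives no control on $\Lip(v)$, and the naive Leibniz estimate on $v = h\left(hh_n^{-1}\right)h_n^{-1}$ leaves the non-vanishing terms $\Lip(h)\|h_n^{-1}\|_\A$ and $\|h\|_\A\Lip(h_n^{-1})$; likewise "factoring through $\pi(hh_n^{-1})$ and $\pi(h_nh^{-1})$" never produces the conjugator $h^2h_n^{-2}$. The step can be repaired, but it needs an argument you do not supply: write $v = w\cdot\left(h_nwh_n^{-1}\right)$ with $w = hh_n^{-1}$, note $\Lip(w) = \Lip(h_n^{-1}h)\to 0$ (using $\Lip(x^\ast) = \Lip(x)$, since your hypotheses concern $h_n^{-1}h$ and $h_nh^{-1}$), and estimate $\Lip\left(h_nwh_n^{-1}\right) = \Lip\left(h_n(w-\unit_\A)h_n^{-1}\right)$ so that the dangerous terms acquire the vanishing factor $\|w-\unit_\A\|_\A$; this in turn requires $\left(\Lip(h_n)\right)_{n\in\N}$ and $\left(\Lip(h_n^{-1})\right)_{n\in\N}$ bounded, which the paper gets from the strong Leibniz inequality $\Lip(a^{-1})\leq\|a^{-1}\|_\A^2\Lip(a)$, itself proved via $[D,\pi(a^{-1})] = -\pi(a^{-1})[D,\pi(a)]\pi(a^{-1})$. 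The paper avoids the whole issue: it takes $b = a$ in Lemma (\ref{perturb-lemma}) and proves the two-sided relative estimate $|\Lip_h(a)-\Lip_{h_n}(a)|\leq g(h,h_n)\Lip_{h_n}(a)$ directly on the same element, using $[D_h,\pi(a)]_h = \pi(hh_n^{-1})\left[D_{h_n},\pi(h_n^{-1}hah^{-1}h_n)\right]_{h_n}\pi(h_n^{-1}h)$, the quasi-Leibniz property of $\Lip_{h_n}$, and \cite[Proposition 2.2]{Rieffel99} — no auxiliary approximant, hence no cross-term, ever arises. You should either adopt that route or insert the $\Lip(h^2h_n^{-2})\to 0$ lemma above.
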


\begin{proof}
Fix $h, w\in \mathrm{GLip}$ and denote $\pi(h)$ by $k$ and $\pi(w)$ by $m$.

To simplify notation, for all $a\in\A$, we write:
\begin{equation*}
[D_h, \pi(a)]_h = D_h \pi(a) - \pi\circ\sigma_h(a)D_h\text{.}
\end{equation*}
We note that $\Lip_h$ is well-defined on the subalgebra
\begin{equation*}
  \D = \left\{ a \in \A : \pi(a)\dom{D}\subseteq\dom{D}, [D,\pi(a)]\text{ is bounded } \right\}
\end{equation*}
which is dense in $\A$, which follows easily since $\D$ contains $\dom{\Lip}$. It can also be checked that $\D$ is actually a *-subalgebra of $\A$ (since $D$ is self-adjoint, and elements of $\D$ have bounded commutators with $D$).

Moreover, if $a,b \in \D$, then:
\begin{equation*}
\begin{split}
\Lip_h(ab) &= \opnorm{[D_h,\pi(ab)]_h} \\
&= \opnorm{D_h \pi(a)\pi(b) - \pi(\sigma_h(a))\pi(\sigma_h(b))D_h}\\
&\leq \opnorm{D_h \pi(a)\pi(b) - \pi(\sigma_h(a))D_h\pi(b)} \\
&\quad + \opnorm{\pi(\sigma_h(a))D_h\pi(b) - \pi(\sigma_h(a))\pi(\sigma_h(b))D_h}\\
&\leq \Lip_h(a)\|b\|_\A + \|\sigma_h(a)\|_\A\Lip_h(b) \text{.}
\end{split}
\end{equation*}
Thus, in particular, $\Lip_h$ is quasi-Leibniz for the permissible function:
\begin{equation*}
F: (x,y,l_x,l_y) \in [0,\infty)^4\longmapsto \|h^2\|_\A\|h^{-2}\|_\A x l_y + y l_x 
\end{equation*}
where we note that $\|h^2\|_\A\|h^{-2}\|_\A \geq 1$. Of  course, the same holds for $\Lip_w$ (the quasi-Leibniz relation depends on the choice of $h$ and $w$; however we can find a uniform quasi-Leibniz property applicable to any sequence satisfying our theorem, as we shall see later on).

% Last, we note that for all $a\in\mathrm{GLip}(\A)$ we have:
% \begin{equation*}
% \begin{split}
% 0 &= [D,\unit_\A] = [D,\pi(a)\pi(a)^{-1}]\\
% &= [D,\pi(a)]\pi(a)^{-1} + \pi(a)[D,\pi(a)^{-1}]
% \end{split}
% \end{equation*}
% thus $[D,\pi(a)^{-1}] = - \pi(a)^{-1}[D,\pi(a)]\pi(a)^{-1}$, and therefore:
% \begin{equation*}
% \Lip(a^{-1})\leq \|a^{-1}\|_\A^2 \Lip(a)\text{.}
% \end{equation*}
% Thus $\Lip$ is a \emph{strong Leibniz} Lip-norm \cite{Rieffel10c}, which we will need later on. 

We begin with a simple computation for all $a\in\dom{\Lip}$:
\begin{equation}\label{conformal-pert-eq1}
\begin{split}
[D_h, \pi(a)]_h &= [kDk , \pi(a)]_h \\
&= k D k \pi(a) - k^2\pi(a)k^{-2} k D k\\
&= k(D k\pi(a)k^{-1} - k\pi(a)k^{-1} D)k\\
&= km^{-1}(mDm m^{-1}k\pi(a)k^{-1}m - mk\pi(a)k^{-1}m^{-1} mDm)m^{-1}k\\
&= km^{-1} [mDm, m^{-1}k\pi(a)k^{-1}m]_w m^{-1}k \\
&= km^{-1} [D_m, \pi(w^{-1}h a h^{-1}w)]_w m^{-1}k \text{.}
\end{split}
\end{equation}

In particular, for all $a\in \dom{\Lip}$, we have
\begin{align*}
  \opnorm{[D, \pi(a)]}
  &=\opnorm{m^{-1} [mDm, m^{-1} \pi(a) m]_w m^{-1}}\\
  &\leq \|w^{-1}\|_\A^2 \Lip_w(w^{-1} a w) \text.
\end{align*}
Therefore, $\opnorm{[D,\pi(w a w^{-1})]} \leq \|w^{-1}\|_\A^2 \Lip_w(a)$ for all $a\in\dom{\Lip}$. Thus, if $\Lip_w(a) =0$, then $w a w^{-1} \in \C\unit_\A$, which implies $a\in\C\unit_\A$.

Now, let $\mu\in\StateSpace(\A)$ be a fixed state of $\A$. Suppose $(a_n)_{n\in\N}$ is a sequence in $\sa{\A}$ such that $\Lip_w(a_n)\leq 1$, and $\mu(a_n) = 0$, for all $n\in\N$. Therefore, $\opnorm{[D,w a w^{-1}]} \leq 1$ for all $n\in\N$. Now,
\begin{align*}
  1
  &\geq \opnorm{[D,(w a w^{-1})]^\ast} \\
  &= \opnorm{(w a w^{-1})^\ast D^\ast - D^\ast (w a w^{-1})^\ast} \\
  &= \opnorm{D(w a w^{-1})^\ast - (w a w^{-1})^\ast D} \text.
\end{align*}

Denoting, for any $b\in \A$, the real part $\frac{b+b^\ast}{2}$ by $\Re b$, and the imaginary part $\frac{b-b^\ast}{2}$ of $b \in \A$ by $\Im b$, we observe that $\Lip(\Re(w a_n w^{-1})) \leq 1$ and $\Lip(\Im (w a_n w^{-1})) \leq 1$. Thus, both $(\Re(w a _n w^{-1}))_{n\in\N}$ and $(\Im(w a _n w^{-1}))_{n\in\N}$ have convergent subsequences, since $\Lip$ is a Lip-norm; therefore, so does $(a_n)_{n\in\N}$. Thus, $\Lip_w$ is also a Lip-norm.

Let $r_\xi = \diam{\StateSpace(\A)}{\Kantorovich{\Lip_\xi}}$ for all $\xi \in \mathrm{GLip}(\A)$ and let $a\in\sa{\A}$ with $\Lip(a)<\infty$. By \cite[Proposition 2.2]{Rieffel99}, for all $a\in\sa{\A}$, there exists $t\in \R$ such that $\|a-t\unit_\A\|_\A \leq r_w \Lip_w(a)$. Thus:
\begin{equation*}
\begin{split}
\opnorm{[D_h , \pi(a)]_h} &= \opnorm{[kDk, \pi(a-t\unit_\A)]_h}\\
&=\opnorm{km^{-1} [mDm, m^{-1}k\pi(a-t\unit_\A)k^{-1}m]_w m^{-1} k}\\
&\leq \|w^{-1} h\|_\A^2 \Lip_w(w^{-1}h (a-t\unit_\A) h^{-1}w) \\
&\leq \|w^{-1} h\|_\A^2 \left( \Lip_w(w^{-1}h(a-t\unit_\A))\|h^{-1}w\|_\A \right.\\
&\quad  \left.+ \|w^2\|_\A\|w^{-2}\|_\A \|w^{-1}h\|_\A\|a-t\unit\|_\A \Lip_w(h^{-1}w)\right) \\
&\leq   \|w^{-1} h\|_\A^2 \left(\Lip_w(w^{-1} h)\|a-t\unit_\C\|_\A \|h^{-1}w\|_\A \right.\\
&\quad + \|\sigma_w(w^{-1} h)\|_\A \Lip_w(a) \|h^{-1}w\|_\A\\
&\quad  \left.+  \|w^2\|_\A\|w^{-2}\|_\A \|w^{-1}h\|_\A\|a-t\unit\|_\A \Lip_w(h^{-1}w) \right) \\
&\leq \Lip_w(a) \|w^{-1}h\|^2 \left( r_w \Lip_w(w^{-1}h)\|h^{-1}w\|_\A\right. \\
&\quad  + \|\sigma_w(w^{-1} h)\|_\A\|h^{-1}w\|_\A \\
&\quad \left. +  r_w \|w\|_\A^2\|w^{-2}\|_\A \Lip_w(h^{-1}w)\|w^{-1}h\|_\A\right)\text{.}
\end{split}
\end{equation*}

Define
\begin{multline*}
  f(w,h) = \|w^{-1}h\|^2 ( r_w \Lip_w(w^{-1}h)\|h^{-1}w\|_\A
\\ + \|\sigma_w(w^{-1} h)\|_\A\|h^{-1}w\|_\A  +  r_w \|w\|_\A^2\|w^{-2}\|_\A \Lip_w(h^{-1}w)\|w^{-1}h\|_\A)\text{.}
\end{multline*}

We thus have shown that, by symmetry,
\begin{equation*}
  f(h,w)^{-1} \Lip_w(a) \leq \Lip_h(a) \leq f(w,h) \Lip_w(a) \text.
\end{equation*}

Now, assume $(\omega_n)_{n\in\N}$ is sequence in $\mathrm{GLip}(\A)$ such that $\lim_{n\rightarrow\infty} \|\omega_n - h\|_{\A} = \lim_{n\rightarrow\infty} \Lip(\omega_n h^{-1}) = 0$. Note that $\left(\|\omega_n h^{-1}\}\right)_{n\in\N}$ converges to $1$, as does $\left(\|\omega_n h^{-1}\}\right)_{n\in\N}$. Similarly, the sequence $(\|\omega_n\|_\A)_{n\in\N}$ and $(\|\omega_n^{-1}\|)_{n\in\N}$ are bounded.

Now
\begin{equation*}
  \Lip_{\omega_n}( \omega_n^{-1} h )\leq f(\omega_n,\unit_\A) \Lip(\omega_n^{-1} h) 
\end{equation*}
so $\lim_{n\rightarrow\infty} \Lip_{\omega_n}(\omega_n^{-1} h) = 0$.

On the other hand, since $h\omega_n^{-1}$ converges in norm to $\unit_\A$, we also have
\begin{equation*}
  \|\sigma_{\omega_n}(\omega_n^{-1} h)\|_\A = \|\omega_n h \omega_n^{-2}\|_\A \xrightarrow 1
\end{equation*}

Therefore, $\lim_{n\rightarrow\infty} f(\omega_n,h) = 1$. By Proposition (\ref{dil-prop}), our result is proven.
\end{proof}

Another approach to metric fluctuations is given by the following example.

\begin{proposition}
  Let $\A$ be a unital C*-algebra, $\pi$ a unital faithful *-representation of $\A$ on some Hilbert space $\Hilbert$, and $D$ a self-adjoint, possibly unbounded operator on a dense subspace $\dom{D}$ of $\Hilbert$, such that if:
  \begin{equation*}
    \dom{\Lip} = \left\{ a \in \sa{\A} : \pi(a)\dom{D}\subseteq\dom{D}, [D,\pi(a)] \text{ is bounded } \right\}
  \end{equation*}
  and
  \begin{equation*}
    \Lip : a\in\A \longmapsto \opnorm{[D,\pi(a)]}
  \end{equation*}
  then $(\A,\Lip)$ is a {\Lqcms}. Let $r = \diam{\A}{\Lip}$ be the diameter of $(\StateSpace(\A),\Kantorovich{\Lip_D})$.

  Let $\B$ be the C*-algebra of all bounded linear operators on $\Hilbert$. 

  For any $\omega \in \sa{\B}$ on $\Hilbert$, we define:
  \begin{equation*}
    D_\omega = D+\omega\text{ and }\Lip_\omega : a\in\dom{\Lip} \mapsto \opnorm{[D_\omega, \pi(a)]}\text{.}
  \end{equation*}
  
  If $\|\omega\|_{\B} < \frac{1}{2r}$, then the pair $(\A,\Lip_\omega)$ is a {\Lqcms} and, moreover:
\begin{equation*}
\omega \in \{ \xi \in \sa{\B} : \|\xi\|_{\B} < \frac{1}{2r} \} \longmapsto (\A,\Lip_\omega)
\end{equation*}
is continuous for the quantum Gromov-Hausdorff propinquity $\qpropinquity{}$.
\end{proposition}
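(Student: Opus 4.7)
The proof proceeds in three steps, closely parallel to the argument for conformal perturbations.

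\textbf{Step 1: Verifying the {\Lqcms} structure.} First I would use the elementary identity $[D_\omega, \pi(a)] = [D, \pi(a)] + [\omega, \pi(a)]$, which together with $\opnorm{[\omega, \pi(a)]} \leq 2\|\omega\|_\B \|a\|_\A$ produces the key pointwise estimate
\begin{equation*}
|\Lip_\omega(a) - \Lip(a)| \leq 2\|\omega\|_\B \|a\|_\A
\end{equation*}
on the common domain. This immediately gives $\dom \Lip_\omega = \dom \Lip$, and the Leibniz property of $\Lip_\omega$ follows as in Example (\ref{Connes-LP-ex}) since $\Lip_\omega$ again arises from the commutator with a self-adjoint operator. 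For the {\Lqcms} property via Theorem (\ref{Rieffel-thm})(5), the precompactness of $\{a \in \sa{\A} : \Lip_\omega(a) \leq 1, \|a\|_\A \leq 1\}$ is immediate from the inclusion into $\{a : \Lip(a) \leq 1 + 2\|\omega\|_\B, \|a\|_\A \leq 1\}$, while for the diameter, fixing a state $\psi$ and letting $a' = a - \psi(a)\unit_\A$, the inequality $\|a'\|_\A \leq D \Lip(a') \leq D(1 + 2\|\omega\|_\B \|a'\|_\A)$ with $D = \diam{\StateSpace(\A)}{\Kantorovich{\Lip}}$ gives a finite bound on $\|a'\|_\A$ once $\|\omega\|_\B < (2D)^{-1}$; for larger $\omega$ I would iterate by decomposing $\omega$ as a finite sum of self-adjoint pieces of controlled norm.

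\textbf{Step 2: Continuity via Lemma (\ref{perturb-lemma}).} Fixing $\omega_0 \in \sa{\B}$ and a sequence $(\omega_n)_{n\in\N}$ converging in norm to $\omega_0$, the plan is to use the trivial bridge $\gamma = (\A, \unit_\A, \mathrm{id}_\A, \mathrm{id}_\A)$ from $(\A, \Lip_{\omega_n})$ to $(\A, \Lip_{\omega_0})$, whose bridge seminorm equals $\|a - b\|_\A$ and whose height vanishes since the $1$-level set of $\unit_\A$ is the full state space. Writing $D_n = \diam{\StateSpace(\A)}{\Kantorovich{\Lip_{\omega_n}}}$, fixing a state $\psi$, and setting
\begin{equation*}
a' = a - \psi(a)\unit_\A, \quad \eta_n = 2\|\omega_n - \omega_0\|_\B D_n, \quad b = \frac{a'}{1+\eta_n} + \psi(a)\unit_\A,
\end{equation*}
elementary manipulations using $\|a'\|_\A \leq D_n \Lip_{\omega_n}(a)$ and the estimate from Step 1 (applied to $\omega_n - \omega_0$ rather than $\omega$) yield both
\begin{equation*}
\|a - b\|_\A \leq \eta_n D_n \Lip_{\omega_n}(a) \quad \text{and} \quad |\Lip_{\omega_n}(a) - \Lip_{\omega_0}(b)| \leq 2\eta_n \Lip_{\omega_n}(a).
\end{equation*}
After verifying the symmetric condition with $\omega_n$ and $\omega_0$ interchanged, Lemma (\ref{perturb-lemma}) with $\delta_n = \eta_n \max\{2, D_n\}$ produces
\begin{equation*}
\qpropinquity{}((\A, \Lip_{\omega_n}), (\A, \Lip_{\omega_0})) \leq \delta_n\left(1 + \tfrac{1}{2}\max\{D_n, D_0\}\right),
\end{equation*}
which vanishes as $n \to \infty$ provided the sequence $(D_n)_{n\in\N}$ is locally bounded.

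\textbf{Main obstacle.} The principal technical difficulty is to secure a locally uniform bound on $D_n$ near $\omega_0$, without which the estimate $\delta_n \to 0$ in Step 2 may fail. This is handled by the iterative splitting of Step 1, which yields an explicit control of $D_n$ as a continuous function of $\|\omega_n\|_\B$ (and of $D$), hence locally bounded. A subsidiary subtlety, also treated by the same iteration, is the verification that $\{a \in \sa{\A} : \Lip_\omega(a) = 0\} = \R\unit_\A$: the vanishing of $\Lip_\omega(a)$ only gives $\opnorm{[D, \pi(a)]} \leq 2\|\omega\|_\B \|a\|_\A$, so concluding $a \in \R\unit_\A$ requires applying the diameter bound in the small-$\|\omega\|_\B$ regime and then propagating to arbitrary bounded self-adjoint $\omega$ by the same decomposition.
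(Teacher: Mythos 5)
Your Step 2 is, in substance, the paper's own argument: the paper likewise works with the trivial bridge $(\A,\mathrm{id}_\A,\mathrm{id}_\A,\unit_\A)$, whose height vanishes, and feeds a Lip-relative estimate into Lemma (\ref{perturb-lemma}). The paper reaches that estimate in one stroke by exploiting from the outset that commutators annihilate scalars, so $\Lip_\omega(a)=\Lip_\omega(a-t\unit_\A)$ for every $t\in\R$, whence
\begin{equation*}
\left|\Lip_\omega(a)-\Lip_{\omega'}(a)\right| \leq \opnorm{\left[\omega-\omega',\pi(a-t\unit_\A)\right]} \leq 2\|\omega-\omega'\|_\B \inf_{t\in\R}\|a-t\unit_\A\|_\A\text{,}
\end{equation*}
and then \cite[Proposition 2.2]{Rieffel99} bounds the infimum by $r_\omega\Lip_\omega(a)$, with $r_\omega=\diam{\StateSpace(\A)}{\Kantorovich{\Lip_\omega}}$. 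With this bound one may simply take $b=a$ in Lemma (\ref{perturb-lemma}); your centering $a'=a-\psi(a)\unit_\A$ is the same translation trick deployed later, and your rescaled $b=\frac{a'}{1+\eta_n}+\psi(a)\unit_\A$ re-enacts a normalization that the proof of Lemma (\ref{perturb-lemma}) already performs internally --- superfluous, though not wrong.

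The genuine gap is the ``iterative splitting'' by which Step 1 claims the Lip-norm property, the diameter bound, and the nondegeneracy $\{a\in\sa{\A}:\Lip_\omega(a)=0\}=\R\unit_\A$ for \emph{arbitrary} bounded self-adjoint $\omega$. Your small-perturbation step yields a diameter recursion of the form $r_{k+1}\leq r_k/(1-c\,r_k\|\omega\|_\B/n)$ when $\omega$ is cut into $n$ pieces of norm $\|\omega\|_\B/n$; passing to reciprocals, this telescopes to $1/r_n \geq 1/r_0 - c\|\omega\|_\B$. Refining the partition therefore moves the admissibility threshold not at all: the scheme is confined to $\|\omega\|_\B < (c\,r_0)^{-1}$ no matter how fine the decomposition, mirroring the finite-time blowup of the comparison equation $\dot r = c\,r^2$. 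Worse, no propagation scheme can succeed, because the unrestricted claim fails: the hypotheses permit $D$ bounded, and then $\omega = -D\in\sa{\B}$ gives $\Lip_\omega\equiv 0$. Concretely, take $\A=\C^2$ acting diagonally on $\Hilbert=\C^2$ and $D=\begin{pmatrix}0&1\\1&0\end{pmatrix}$, so that $\Lip(a_x,a_y)=|a_x-a_y|$ makes $(\A,\Lip)$ the two-point {\Lqcms}; perturbing by $\omega=-D$ destroys the Lipschitz pair entirely. So the degeneracy issue you flag in your ``Main obstacle'' paragraph is not a subsidiary subtlety curable by the same iteration --- it is a counterexample to the propagation.

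What your method does prove, and all that Lemma (\ref{perturb-lemma}) can deliver, is the local statement: on the set of $\omega$ for which $\Lip_\omega$ is a Lip-norm (an open condition, since $r_\omega\|\omega'-\omega\|_\B<1$ keeps $\omega'$ in it), the map $\omega\mapsto(\A,\Lip_\omega)$ is locally Lipschitz for $\qpropinquity{}$. You should state the smallness hypothesis rather than attempt to remove it; for what it is worth, the paper's own printed inclusion $\{\Lip_\omega\leq 1\}\subseteq\{\Lip\leq 1+r\|\omega\|_\B\}$ conceals the same circularity, as the displayed computation there actually yields $\Lip(a)\leq(1-r\|\omega\|_\B)^{-1}$, which is meaningful only when $r\|\omega\|_\B<1$.
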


\begin{proof}
For all $a\in\dom{\Lip}$ and for all $t\in\R$, we have
\begin{align*}
  \left|\Lip_\omega(a)-\Lip_{\omega'}(a)\right|
  & \leq \|[\omega-\omega',a]\|_{\B} = \|[\omega-\omega', a-t\unit_\B]\|_{\B}  \\
  & \leq 2 \|\omega-\omega'\|_{\B} \|a-t\unit_\A\|_{\A}\text.
\end{align*}
Since $\Lip$ is a Lip-norm, there exists $t\in\R$ such that $\|a-t\unit_\A\|_{\A} \leq r \Lip(a)$. Thus we conclude
\begin{equation*}
  \forall a \in \dom{\Lip} \quad \left|\Lip(a)-\Lip_\omega(a)\right| \leq 2 r \|\omega\|_{\B} \Lip(a) \text.
\end{equation*}
Thus
\begin{equation*}
  \forall a \in \dom{\Lip} \quad (1-2 r \|\omega\|_{\B}) \Lip(a) \leq \Lip_\omega(a) \leq (1 + 2 r \|\omega\|_{\B}) \Lip(a) \text.
\end{equation*}

Since $\|\omega\|_{\B}<\frac{1}{2r}$, using \cite[Lemma 1.10]{Rieffel98a}, we conclude that, indeed, $(\A,\Lip_\omega)$ is a {\qcms}. Moreover, we note that the diameter $\diam{\A}{\Lip_\omega}$ of $(\StateSpace(\A),\Kantorovich{\Lip_\omega})$ is, at most, $r_\omega = \frac{r}{1 - 2 r \|\omega\|_\B}$.

We then have
\begin{equation*}
   \forall a \in \dom{\Lip} \quad \left|\Lip(a)_{\omega'}-\Lip_\omega(a)\right| \leq 2 \min\{ r_\omega \|\omega'-\omega\|_{\B} \Lip_\omega(a), r_{\omega'} \|\omega-\omega'\|_\B \Lip_{\omega'}(a) \} \text.
\end{equation*}

Now, a direct application of Lemma (\ref{perturb-lemma}) shows that for all $\omega,\omega' \in \sa{\B}$:
\begin{multline*}
\qpropinquity{}((\A,\Lip_\omega),(\A,\Lip_{\omega'})) \\
\begin{aligned}
&\leq 2 \min\{ r_\omega \|\omega'-\omega\|_{\B} , r_{\omega'} \|\omega-\omega'\|_\B \}\left(1+ r \max\left\{\frac{1}{1-2 r\|\omega\|_\B},\frac{1}{1-2 r\|\omega'\|_\B}\right\} \right)
\end{aligned}
\end{multline*}
Our proposition follows immediately.
\end{proof}

\subsubsection{Perturbation of the length functions for Lip-norms from ergodic actions}

In \cite{Latremoliere05}, we showed that we can collapse quantum tori of the form $C^\ast(\Z^d,\sigma)$ to quantum tori of the form $C^\ast(\Z^r,\eta)$ where $r < d$, i.e. observe a noncommutative form of dimensional collapse. This phenomenon is also present with the quantum Gromov-Hausdorff propinquity. The following proof borrows from \cite[Theorem 4.4]{Latremoliere05} and uses our Lemma (\ref{perturb-lemma}), while illustrating another form of metric perturbation. It should be observed that all the Lip-norms in this result are Leibniz.

\begin{theorem}
Let $\alpha$ be a strongly continuous action of a compact group $G$ on a unital C*-algebra $\A$ such that:
\begin{equation*}
\left\{ a\in\sa{\A} : \forall g \in G \quad \alpha^g(a) = a \right\} = \C\unit_\A\text{.}
\end{equation*}
For all $n\in\N$, let $l_n$ be a continuous length function on $G$ and $M > 0$ such that:
\begin{equation*}
\sup_{n\in\N} \diam{G}{l_n} \leq M\text{.}
\end{equation*}

Let $H$ be a closed normal subgroup of $G$ and $K = \bigslant{G}{H}$ be the compact quotient group. Let $l'_\infty$ be a function on $G$ such that:
\begin{equation*}
\left\{ g \in H : l'_\infty(g) = 0 \right\} = H \text{.}
\end{equation*}

Let:
\begin{equation*}
\A_K = \left\{ a \in \A : \forall g \in H \quad \alpha^g(a) = a \right\}
\end{equation*}
be the fixed point of $\alpha$ restricted to $H$, and denote the quotient map from $G$ to $K$ by $g in \G \mapsto [g] \in K$.  Let $\alpha_K$ be the action of $K$ induced by $\alpha$ on $\A_K$ via:
\begin{equation*}
\alpha_K^{[g]}(a) = \alpha^g(a)
\end{equation*} 
for all $a\in\A_K$.

Note that if we set $l_\infty([g]) = l'_\infty(g)$ for all $g \in G$ we define a continuous length function on $K$. Let $\Lip_\infty$ be the Lip-norm induced on $\A_K$ by $\alpha_K$ and $l_\infty$ via Theorem (\ref{Rieffel-ergo-thm}). Similarly, for all $n\in\N$ let $\Lip_n$ be the Lip-norm on $\A$ induced by $\alpha$ and $l_n$ via Theorem (\ref{Rieffel-ergo-thm}).

Denoting the unit of $G$ by $e$, if:
\begin{equation*}
\lim_{n\rightarrow\infty} \sup \left\{ \left| \frac{l_\infty(g)}{l_n(g)} - 1 \right| : g \in G \setminus \{e\} \right\} = 0
\end{equation*}
then:
\begin{equation*}
\lim_{n\rightarrow\infty} \qpropinquity{}((\A,\Lip_n), (\A_K, \Lip_\infty)) = 0 \text{.}
\end{equation*}
\end{theorem}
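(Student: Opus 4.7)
The plan is to construct, for each $n \in \N$, an explicit bridge from $(\A, \Lip_n)$ to $(\A_K, \Lip_\infty)$ with vanishing height, then estimate its reach via Lemma~\ref{perturb-lemma}. Let $\iota : \A_K \hookrightarrow \A$ be the inclusion and set $\gamma_n = (\A, \unit_\A, \mathrm{id}_\A, \iota)$. Since the pivot is $\unit_\A$, every state of $\A$ is a $1$-level state (i.e., belongs to $\StateSpace(\A|\unit_\A)$), so the height of $\gamma_n$ vanishes and the bridge seminorm simplifies to $\bridgenorm{\gamma_n}{a,b} = \|a - \iota(b)\|_\A$. The uniform diameter hypothesis $\sup_n \diam{G}{l_n} \leq M$, combined with Theorem~\ref{Rieffel-ergo-thm}, gives a uniform bound on $\diam{\StateSpace(\A)}{\Kantorovich{\Lip_n}}$, which will control the final estimate produced by Lemma~\ref{perturb-lemma}.

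The essential analytic tool is the conditional expectation $\Exp_H : \A \to \A_K$ given by $\Exp_H(a) = \int_H \alpha^h(a)\, d\mu_H(h)$, where $\mu_H$ is the normalized Haar measure on $H$. Normality of $H$ in $G$ and invariance of $\mu_H$ under conjugation imply that $\Exp_H$ commutes with $\alpha^g$ for every $g \in G$; in particular, $\Exp_H$ is norm-contractive and maps $\A$ into $\A_K$. The easier half of Lemma~\ref{perturb-lemma} then handles $b \in \sa{\A_K}$: take $a = \iota(b)$, so the bridge norm vanishes; since $b$ is $H$-fixed and $\alpha^g(b)$ depends only on $[g] \in K$, the suprema defining both $\Lip_n(b)$ and $\Lip_\infty(b)$ effectively range only over $g \notin H$, and the hypothesis yields $|\Lip_n(b) - \Lip_\infty(b)| \leq \varepsilon_n \Lip_\infty(b)$ with $\varepsilon_n \to 0$, directly from the identity $1/l_n(g) - 1/l'_\infty(g) = (l'_\infty(g)/l_n(g) - 1)/l'_\infty(g)$.

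For the other direction, given $a \in \sa{\A}$ with $\Lip_n(a) < \infty$, set $b := \Exp_H(a) \in \sa{\A_K}$; commutation of $\Exp_H$ with $\alpha^g$ together with contractivity yields $\|b - \alpha^g(b)\| \leq \|a - \alpha^g(a)\|$, giving $\Lip_\infty(b) \leq (1+\varepsilon_n)\Lip_n(a)$, while $\|a - b\|_\A \leq \Lip_n(a)\int_H l_n(h)\, d\mu_H(h)$, which tends to $0$ since the hypothesis forces $l_n$ to vanish uniformly on $H$ (where $l'_\infty$ is zero). The hard part is the reverse bound $\Lip_n(a) - \Lip_\infty(b) \leq \delta_n \Lip_n(a)$: for $h \in H$ one uses $\alpha^h(b) = b$ to get $\|a - \alpha^h(a)\|/l_n(h) \leq 2\|a - b\|/l_n(h)$, absorbed by the uniform control of $l_n$ on $H$; for $g \notin H$, inserting $b$ and $\alpha^g(b)$ into the triangle inequality gives $\|a - \alpha^g(a)\|/l_n(g) \leq \Lip_\infty(b)(1+\varepsilon_n) + 2\|a-b\|/l_n(g)$, with the last term controlled similarly. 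Collecting all these estimates produces a sequence $\delta_n \to 0$ satisfying the hypotheses of Lemma~\ref{perturb-lemma}, which yields the desired convergence $\qpropinquity{}((\A, \Lip_n), (\A_K, \Lip_\infty)) \to 0$. The principal technical subtlety lies in the bookkeeping between the $H$-direction and the $G/H$-direction, ensuring all error terms are simultaneously controlled by the single hypothesis on the ratios $l'_\infty/l_n$.
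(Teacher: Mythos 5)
Your strategy is the same as the paper's: the bridge $(\A,\unit_\A,\mathrm{id}_\A,\iota)$ with unit pivot and vanishing height, the conditional expectation $\Exp_H = \int_H \alpha^h(\cdot)\,d\mu_H$ onto $\A_K$ (the paper's $\Exp$, with the Haar measure correctly placed on $H$, as you do), the estimate $\|a-\Exp_H(a)\|_\A \leq \diam{H}{l_n}\,\Lip_n(a)$, and an appeal to Lemma (\ref{perturb-lemma}). But there is a genuine gap at precisely the step you flag as the hard part. The reverse inequality $\Lip_n(a) - \Lip_\infty(\Exp_H(a)) \leq \delta_n \Lip_n(a)$ is false in general for the choice $b = \Exp_H(a)$, and your absorption argument for $h \in H$ is where it breaks: you bound $\|a-\alpha^h(a)\|_\A/l_n(h)$ by $2\|a-b\|_\A/l_n(h)$ and claim this is absorbed by the uniform control of $l_n$ on $H$. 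The available bound is $\|a-b\|_\A \leq \diam{H}{l_n}\,\Lip_n(a)$, so your quotient is only controlled by $2\Lip_n(a)\,\diam{H}{l_n}/l_n(h)$, and for each fixed $n$ the ratio $\diam{H}{l_n}/l_n(h)$ is unbounded as $h$ approaches the unit inside $H$; no uniform smallness is available. Nor can any refinement rescue this choice of $b$: if $a$ varies only in the $H$-directions (say $G = \U^2$, $H = \U\times\{1\}$, $a = f(z)$ nonconstant), then $\Exp_H(a)$ is a scalar, so $\Lip_\infty(\Exp_H(a)) = 0$ while $\Lip_n(a) > 0$, whence $|\Lip_n(a)-\Lip_\infty(b)| = \Lip_n(a)$, and hypothesis (1) of Lemma (\ref{perturb-lemma}) --- which demands the \emph{two-sided} bound $|\Lip_n(a)-\Lip_\infty(b)|\leq\delta\Lip_n(a)$ --- cannot be verified.

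The repair is to notice that the reverse bound is not needed: an inspection of the \emph{proof} of Lemma (\ref{perturb-lemma}) shows it only ever uses the one-sided estimate $\Lip_\B(b)\leq(1+\delta)\Lip_\A(a)$ together with the bridge-norm bound (it renormalizes $b$ by $(1+\delta)^{-1}$ and translates by a scalar multiple of the unit); the lower bound on $\Lip_\B(b)$ is never invoked. Your honest one-sided estimates --- $\Lip_\infty(\Exp_H(a))\leq(1+\varepsilon_n)\Lip_n(a)$ with $\|a-\Exp_H(a)\|_\A \leq \diam{H}{l_n}\,\Lip_n(a)$ in one direction (your equivariance argument via normality of $H$ is correct), and $a := \iota(b)$ with $\Lip_n(\iota(b))\leq(1+\varepsilon_n)\Lip_\infty(b)$ and zero bridge norm in the other (your observation that the $H$-directions contribute nothing to $\Lip_n(\iota(b))$ is also correct) --- already suffice, provided you run the argument inside the lemma's proof rather than quoting its statement, or first prove a one-sided variant of the lemma. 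You are in good company: the paper's own write-up asserts the same false two-sided bound $|\Lip_\infty(\Exp(a))-\Lip_n(a)|\leq\varepsilon\Lip_n(a)$, via the invalid identification of $\Lip_\infty(\Exp(a))$ with $\sup_{g\neq e}\|a-\alpha^g(a)\|_\A/l_\infty([g])$ (a quantity that is even infinite when $a$ moves in the $H$-directions, since $l_\infty([g])=0$ for $g\in H$), and it is saved by exactly the same observation.
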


\begin{proof}
We begin by checking that $l_\infty$ is indeed a continuous length on $K$. If $[g] = [g']$ for two $g,g' \in G$ then there exists $h\in H$ such that $gh = g'$. By assumption:
\begin{equation*}
\begin{split}
l'_\infty(g') &= l'_\infty(gh) \leq l'_\infty(g) + l'_\infty(h) = l'_\infty(g)\\
&= l'_\infty(g'h^{-1}) \leq l'_\infty(g') + l'_\infty(h^{-1}) = l'_\infty(g')\text{.}
\end{split}
\end{equation*}
and thus $l_\infty'(g) = l_\infty'(g')$. Thus $l_\infty$ is well-defined. It is then easy to check that $l_\infty$ is a length function on $K$.

By assumption, for all $n\in\N$ we have:
\begin{equation*}
\| l_n - l_\infty'\|_{C(G)} \leq \sup_{g\in G\setminus\{e\}} l_n(g) \left| 1- \frac{l'_\infty(g )}{l_n(g)}\right| \leq \sup_{k\in\N}\diam{G}{l_k} \left| 1- \frac{l'_\infty(g )}{l_n(g)} \right|\text{,}
\end{equation*}
we conclude by assumption that $(l_n)_{n\in\N}$ converges to $l'_\infty$ uniformly on $G$. Thus $l_\infty'$ is continuous on $G$. Consequently, $l_\infty$ is continuous on $K$ (see \cite[Lemma 4.1]{Latremoliere05} as well).

We define the expected value:
\begin{equation*}
\mathds{E} : a\in\A \longmapsto \int_H \alpha^g(a) \, d\mu(g)
\end{equation*}
for all $a\in\A$, where $\mu$ is the Haar probability measure of $G$. By construction, since $\mathds{E}(\alpha^g(a)) = \mathds{E}(a)$ for all $a\in\A$ and $g\in H$, we conclude that $\mathds{E}$ is valued in $\A_K$. It is also easy to check that $\mathds{E}$ is a unital, positive linear map.

Moreover, for all $a\in\sa{\A}$ and $n\in\N$:
\begin{equation*}
\begin{split}
\|a-\mathds{E}(a)\|_\A &\leq \int_H \|a-\alpha^g(a)\|_\A \, d\mu(g) \\
&\leq \int_H l_n(g) \frac{\|a-\alpha^g(a)\|_\A}{l_n(g)} \, d\mu(g)\\
&\leq \diam{H}{l_n} \Lip_n(a)\text{.}
\end{split}
\end{equation*}

Since $(l_n)_{n\in\N}$ converges uniformly to $l_\infty'$ which is null on $H$, we conclude that $\diam{H}{l_n}$ converges to $0$.

Let $\varepsilon > 0$ and let $N\in\N$ such that for all $n\geq N$:
\begin{itemize}
\item $\diam{H}{l_n} \leq \varepsilon$,
\item $\sup\left\{\left|\frac{l'_\infty(g)}{l_n(g)}-1\right| : g \in G\setminus\{e\} \right\} \leq \varepsilon$.
\end{itemize}
Let $n\geq N$.

Let $a\in\sa{\A}$ with $\Lip(a)_n < \infty$. We have:
\begin{equation*}
\begin{split}
|\Lip_\infty(\mathds{E}(a)) - \Lip_n(a)| &= \left|\sup\left\{ \frac{\|a-\alpha^g(a)\|_\A}{l_\infty([g])} : g\in G\setminus\{e\} \right\} - \Lip_n(a)\right|\\
&\leq\sup\left\{ \left|\frac{l_n(g)}{l'_\infty(g)}-1\right| \frac{\|a-\alpha^g(a)\|_\A}{l_n([g])} : g\in G\setminus\{e\} \right\}\\
&\leq \varepsilon \sup\left\{ \frac{\|a-\alpha^g(a)\|_\A}{l_n([g])} : g\in G\setminus\{e\} \right\}\\
&\leq \varepsilon \Lip_n(a) \text{.}
\end{split}
\end{equation*}

Hence, for all $a\in\sa{\A}$ there exists $b = \mathds{E}(a)$ in $\sa{\A_K}$ with:
\begin{equation*}
\| a - \mathds{E}(a) \|_\A \leq \varepsilon\Lip_n(a) \text{ and }|\Lip_\infty(\mathds{E}(a)) - \Lip_n(a)|\leq \varepsilon \Lip_n(a)\text{.}
\end{equation*}

On the other hand, if $a\in\sa{\A_K}$ then $a = \mathds{E}(a)$ and thus, by a similar computation:
\begin{equation*}
\| a - a \|_\A = 0 \text{ and }|\Lip_\infty(a) - \Lip_n(a)| \leq \varepsilon \Lip_\infty(a)\text{.}
\end{equation*}

We thus conclude by Lemma (\ref{perturb-lemma}) that for all $n\geq N$:
\begin{equation*}
\qpropinquity{}((\A,\Lip_n),(\A_K,\Lip_\infty)) \leq \varepsilon\text{.}
\end{equation*}

This concludes our theorem.
\end{proof}

Lemma (\ref{perturb-lemma}) thus provides a convenient tool to simplify certain computations related to relatively simple modifications of the Lip-norms.

\section{A Gromov-Hausdorff hypertopology for quantum proper metric spaces}

We propose in \cite{Latremoliere14b} a new topology on proper quantum metric spaces, which extends both Gromov's topology on proper metric spaces and the topology of the Gromov-Hausdorff propinquity. This work is quite technical, so this section will focus on the key ideas.

\subsection{Gromov-Hausdorff topology}

We begin with a few notations, and refer to \cite{Latremoliere14b} for details:

\begin{notation}
Let $(X,\mathsf{d})$ be a metric space, $x_0\in X$ and $r\geq 0$. The closed ball:
\begin{equation*}
\left\{ x\in X : \mathsf{d}(x,x_0)\leq r \right\}
\end{equation*}
is denoted by $\cBall{X}{\mathsf{d}}{x_0}{r}$. When the context is clear, we simply write $\cBall{X}{}{x_0}{r}$ for $\cBall{X}{\mathsf{d}}{x_0}{r}$.
\end{notation}

When working with Gromov-Hausdorff distance, we will often use the following notion of approximate inclusion:

\begin{notation}
Let $A,B \subseteq Z$ be subsets of a metric space $(Z,\mathsf{d})$. We write $B\almostsubseteq{Z,\mathsf{d}}{\varepsilon} A$ when:
\begin{equation*}
B\subseteq \cBall{Z}{\mathsf{d}}{A}{\varepsilon} = \bigcup_{a\in A}\cBall{Z}{\mathsf{d}}{a}{\varepsilon}\text{.}
\end{equation*}
When the context is clear, we may simply write $B\subseteq_\varepsilon A$ for $B\almostsubseteq{Z,\mathsf{d}}{\varepsilon} A$.
\end{notation}

Gromov defined in \cite{Gromov81} a topology on the class of pointed proper metric spaces as follows. We first define a local form of the Hausdorff distance.

\begin{definition}\label{delta-r-def}
Let $X,Y\subseteq Z$ be two subsets of a metric space $(Z,\mathsf{d})$ and let $x_0\in X$, $y_0\in Y$. For any $r > 0$, we define:
\begin{equation*}
\delta_r^{(Z,\mathsf{d})}( (X,x_0) ,(Y,y_0) ) = \inf\Set{\varepsilon > 0}{
\mathrm{d}(x_0,y_0) &\leq \varepsilon\text{ and }\\
\cBall{X}{}{x_0}{r} &\almostsubseteq{Z,\mathsf{d}}{\varepsilon} Y\text{,}\\
\cBall{Y}{}{y_0}{r}&\almostsubseteq{Z,\mathsf{d}}{\varepsilon} X \text{.}\\
}
\end{equation*}
\end{definition}

We note in \cite[Theorem 2.1.6]{Latremoliere14b}, we show that:
\begin{equation}\label{alt-exp-eq0}
\delta_r^{(Z,\mathsf{d})}((X,x_0),(Y,y_0)) = \min\Set{\varepsilon > 0}{
\mathsf{d}(x_0,y_0)&\leq\varepsilon \text{ and }\\
\cBall{X}{}{x_0}{r}&\subseteq_\varepsilon \cBall{Y}{}{y_0}{r+2\varepsilon},
\\ \cBall{Y}{}{y_0}{r}&\subseteq_\varepsilon \cBall{X}{}{x_0}{r+2\varepsilon}}
\end{equation}
with the notations of Definition (\ref{delta-r-def}).

Now, we use Definition (\ref{delta-r-def}) to define an intrinsic notion of convergence.

\begin{definition}\label{Delta-r-def}
Let $\mathds{X} = (X,\mathsf{d}_X,x_0)$ and $\mathds{Y} = (Y,\mathsf{d}_Y,y_0)$ be two pointed proper metric spaces and $r > 0$. We define $\Delta_r(\mathds{X},\mathds{Y})$ as:
\begin{equation*}
\inf \Set{ \delta_r^{(Z,\mathsf{d})}((\iota_X(X)&,\iota_X(x_0)),\\
&(\iota_Y(Y),\iota_Y(y_0)))}{ (Z,\mathsf{d}) &\text{ metric space,}\\
&\iota_X:X\hookrightarrow Z, \iota_Y:Y\hookrightarrow Z,\\
&\iota_X \text{ isometry from $(X,\mathsf{d}_X)$ into $(Z,\mathsf{d}_Z)$,}\\
&\iota_Y \text{ isometry from $(Y,\mathsf{d}_Y)$ into $(Z,\mathsf{d}_Z)$}
}\text{.}
\end{equation*}
\end{definition}

We thus may define convergence of pointed proper metric spaces as:

\begin{definition}\label{GH-cv-def}
A net $(X_j,\mathsf{d}_j,x_j)_{j\in J}$ of pointed proper metric spaces \emph{converges in the sense of Gromov-Hausdorff} to a pointed proper metric space $(X,\mathsf{d},x)$ when:
\begin{equation*}
\forall r > 0 \quad \lim_{j\in J} \Delta_r((X_j,\mathsf{d}_j,x_j),(X,\mathsf{d},x)) = 0\text{.}
\end{equation*}
\end{definition}

Remarkably, a net of compact metric spaces converge for the Gromov-Hausdorff distance described in the previous section if and only if it converges in the sense of Definition (\ref{GH-cv-def}), for any appropriate choice of base points.

There is, in fact, a distance associated with the convergence in the sense of Gromov-Hausdorff, which was the original Gromov-Hausdorff distance in\-tro\-du\-ced in \cite{Gromov81}:

\begin{theorem}
  We let:
  \begin{equation*}
    \GHl((X,x_0,Y,y_0)) = \max\left\{ \inf \Set{r>0}{\Delta_{\frac{1}{r}}((X,x_0),(Y,y_0)) < r} , \frac{1}{2}\right\} \text{,}
  \end{equation*}

  A net $(X_j,\mathsf{d}_j,x_j)_{j\in J}$ of pointed proper metric spaces converges to a pointed proper metric space $(X,\mathsf{d},x)$ in the Gromov-Hausdorff sense if and only if:
\begin{equation*}
  \lim_{j\in J}\GHl((X_j,\mathsf{d},x_j),(X,\mathsf{d},x)) = 0\text{.}
\end{equation*}
\end{theorem}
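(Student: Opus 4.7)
The plan is to deduce the equivalence from a single monotonicity property of $r\mapsto \Delta_r$ together with the formal definition of $\GHl$. First I would establish the following lemma: for any two pointed proper metric spaces $(X,\mathsf{d}_X,x_0)$ and $(Y,\mathsf{d}_Y,y_0)$, the map $r\in(0,\infty)\mapsto \Delta_r((X,x_0),(Y,y_0))$ is non-decreasing. Using the alternative expression in equation (\ref{alt-exp-eq0}), this reduces to the following observation: if $0 < r' \leq r$ and $\cBall{X}{}{x_0}{r}\almostsubseteq{Z,\mathsf{d}}{\varepsilon}\cBall{Y}{}{y_0}{r+2\varepsilon}$ inside some common metric ambient $(Z,\mathsf{d})$, then any $y\in Y$ $\varepsilon$-close to some $x\in \cBall{X}{}{x_0}{r'}$ satisfies
\begin{equation*}
\mathsf{d}(y,y_0)\leq \mathsf{d}(y,x)+\mathsf{d}(x,x_0)+\mathsf{d}(x_0,y_0)\leq \varepsilon+r'+\varepsilon=r'+2\varepsilon,
\end{equation*}
and similarly with the roles of $X$ and $Y$ reversed, which gives $\Delta_{r'}\leq\Delta_r$.

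With monotonicity in hand, the forward direction is immediate. Assume $\GHl((X_j,\mathsf{d}_j,x_j),(X,\mathsf{d},x))\to 0$ (read, as appears to be intended, with a $\min$ with $\tfrac{1}{2}$ so that $\GHl\leq\tfrac{1}{2}$ and convergence to $0$ is meaningful). Fix $r>0$ and $\varepsilon\in(0,\min(1/r,1/2))$. Eventually $\GHl<\varepsilon$, so the infimum $\inf\{s>0:\Delta_{1/s}<s\}$ is strictly less than $\varepsilon$, and therefore some $s<\varepsilon$ satisfies $\Delta_{1/s}((X_j,x_j),(X,x))<s$. Since $1/s>1/\varepsilon>r$, the monotonicity lemma gives $\Delta_r\leq\Delta_{1/s}<s<\varepsilon$. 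As $r$ and $\varepsilon$ were arbitrary, this yields convergence in the sense of Gromov-Hausdorff, per Definition (\ref{GH-cv-def}).

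For the converse, assume $\Delta_r\to 0$ for every $r>0$. Fix $\varepsilon\in(0,1/2)$ and set $r=2/\varepsilon$. By hypothesis, eventually $\Delta_r((X_j,x_j),(X,x))<\varepsilon/2$. Taking $s=\varepsilon/2$ we get $\Delta_{1/s}=\Delta_{2/\varepsilon}=\Delta_r<\varepsilon/2=s$, so $s$ lies in $\{t>0:\Delta_{1/t}<t\}$. Hence $\inf\{t>0:\Delta_{1/t}<t\}\leq\varepsilon/2<\varepsilon$, and combined with $\varepsilon<1/2$ this gives $\GHl<\varepsilon$ eventually, proving convergence to $0$.

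The only genuine subtlety — and the main obstacle I would flag — is a notational one: as the definition of $\GHl$ is literally stated, taking a maximum with $\tfrac{1}{2}$ would force $\GHl\geq \tfrac{1}{2}$, making the hypothesis $\lim\GHl=0$ never satisfied and the theorem vacuous. I would interpret this as a typographical slip for a minimum (capping $\GHl$ above by $\tfrac{1}{2}$ to get a bounded pseudometric on the class of pointed proper metric spaces, in analogy with standard constructions in the classical Gromov-Hausdorff setting), and proceed as above. Once that interpretation is fixed, the argument is nothing more than threading monotonicity through the change of variable $r\leftrightarrow 1/s$, and no deeper property of the ambient isometric embeddings underlying Definition (\ref{Delta-r-def}) is needed.
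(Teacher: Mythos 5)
Your proof is correct and takes essentially the route of the detailed treatment this survey defers to (\cite{Latremoliere14b}, Section 2): monotonicity of $r\mapsto\Delta_r((X,x_0),(Y,y_0))$ --- which is in fact immediate already from Definition (\ref{delta-r-def}), since $\cBall{X}{}{x_0}{r'}\subseteq\cBall{X}{}{x_0}{r}$ for $r'\leq r$, though your verification via Expression (\ref{alt-exp-eq0}) using $\mathsf{d}(x_0,y_0)\leq\varepsilon$ is also valid --- threaded through the change of variable $r\leftrightarrow 1/s$ in both directions. You are likewise right to read the $\max$ in the displayed definition of $\GHl$ as a typographical slip for $\min$: as literally written it would force $\GHl\geq\frac{1}{2}$ and make the theorem vacuous, the original source caps the quantity \emph{above} by the constant, and the same slip occurs in this survey's definition of the topographic propinquity $\mathsf{\Lambda}^{\#}$.
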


These results are well-known but not always given in details, so we offer a detailed survey in \cite[Section 2]{Latremoliere14b}. 

Now, we provide in \cite{Latremoliere14b} a generalization of the notion of Gromov-Hausdorff convergence for \emph{quantum} proper metric spaces.

\subsection{Quantum Proper Metric Spaces}

We begin with the definition of a quantum equivalent to the notion of a proper metric space. Proper spaces are complete, so our approach also begins the exploration of the notion of a complete {\lcqms}. A first surprise in \cite{Latremoliere14b} is that we wish to relate the topography and the Lip-norm of a {\lcqms} more tightly than in \cite{Latremoliere12b}:

\begin{definition}[\cite{Latremoliere14b}, Definition 3.2.3]
A {\lcqms} $(\A,\Lip,\M)$ is \emph{standard} when the set:
\begin{equation*}
\set{ m \in \sa{\M} } { \Lip(m) < \infty }
\end{equation*}
is dense in $\sa{\M}$, i.e. when $(\M,\Lip)$ is a Lipschitz pair (where we use the same notation for the restriction of $\Lip$ to $\sa{\M}$ and $\Lip$ itself).
\end{definition}

The motivation for the notion of a standard {\lcqms} is:

\begin{theorem}[\cite{Latremoliere14b}, Theorem 3.2.5]
Let $(\A,\Lip,\M)$ be a {\lcqms}. Then $(\A,\Lip,\M)$ is standard if and only if $(\M,\Lip,\M)$ is a {\lcqms}.
\end{theorem}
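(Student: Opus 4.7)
The plan is to verify both directions using Theorem (\ref{lcqms-char-thm}) as the main tool, together with a state-lifting argument based on normal extensions. First I would observe that $\M$ is always a topography on itself (as an Abelian C*-subalgebra containing its own approximate unit), so $(\M,\Lip,\M)$ is a Lipschitz triple as soon as $(\M,\Lip)$ is a Lipschitz pair. Standardness supplies the density of $\dom{\Lip}\cap\sa{\M}$ in $\sa{\M}$; combined with the fact that $\Lip$ vanishes on $\R\unit_\A$ and $\sa{\unital{\M}}=\sa{\M}+\R\unit_\A$, this yields density of $\dom{\Lip}\cap\sa{\unital{\M}}$ in $\sa{\unital{\M}}$, while the kernel condition $\{a\in\sa{\unital{\M}}:\Lip(a)=0\}=\R\unit_\M$ is inherited from the corresponding statement for $(\A,\Lip)$.

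Next, to complete the forward direction, I would invoke Theorem (\ref{lcqms-char-thm}) applied to $(\M,\Lip,\M)$: it suffices to show that for all compactly supported $s,t\in\M$ and every local state $\mu$ of $(\M,\M)$, the set
\begin{equation*}
\left\{ s a t : a\in\sa{\unital{\M}},\ \Lip(a)\leq 1,\ \mu(a)=0\right\}
\end{equation*}
is precompact for $\|\cdot\|_\M$. The key sub-step is to lift $\mu$ to a local state $\tilde\mu$ of $\A$: pick any Hahn--Banach extension $\tilde\mu\in\StateSpace(\A)$, and observe that its normal extension to $\A^{\ast\ast}$ restricts on $\M^{\ast\ast}\subseteq\A^{\ast\ast}$ to a normal state extending $\mu$, which by uniqueness of normal extensions equals the normal extension of $\mu$; hence $\mu(\chi_K)=1$ implies $\tilde\mu(\chi_K)=1$ for the same $K\in\compacts{\M}$. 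Since $\sa{\unital{\M}}\subseteq\sa{\unital{\A}}$ and $\mu=\tilde\mu$ on $\M$, the displayed set is contained in the analogous set for $(\A,\Lip,\M,\tilde\mu)$, which is precompact in $\|\cdot\|_\A$ by the {\lcqms} hypothesis on $(\A,\Lip,\M)$ and Theorem (\ref{lcqms-char-thm}); precompactness in $\M$ then follows because $\M$ is norm-closed in $\A$ and the displayed set lies in $\M$.

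The converse is mostly a density bookkeeping argument. If $(\M,\Lip,\M)$ is a {\lcqms}, then in particular $(\M,\Lip)$ is a Lipschitz pair, so $\dom{\Lip}\cap\sa{\unital{\M}}$ is dense in $\sa{\unital{\M}}$. Given $m\in\sa{\M}$ and $\varepsilon>0$, choose $a+t\unit_\A\in\dom{\Lip}\cap\sa{\unital{\M}}$ with $\|m-a-t\unit_\A\|_{\unital{\M}}<\varepsilon$; passing through the quotient $\unital{\M}\twoheadrightarrow\unital{\M}/\M\cong\C$ forces $|t|<\varepsilon$, and since $\Lip(a)=\Lip(a+t\unit_\A)<\infty$, we obtain $a\in\dom{\Lip}\cap\sa{\M}$ with $\|m-a\|_\M<2\varepsilon$, establishing standardness.

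The main obstacle I anticipate is the extension of local states from $\M$ to $\A$ while preserving locality: this rests on the compatibility of the normal extensions to the universal enveloping von Neumann algebras $\M^{\ast\ast}\subseteq\A^{\ast\ast}$, which is a structural fact rather than a metric one, and on the fact that the projection $\chi_K\in\M^{\ast\ast}$ retains its interpretation inside $\A^{\ast\ast}$. Every other step is either a routine density manipulation or a direct appeal to the characterization Theorem (\ref{lcqms-char-thm}) already established for general Lipschitz triples.
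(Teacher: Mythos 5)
Your plan is correct and follows essentially the same route as the cited proof: both directions reduce to the characterization Theorem (\ref{lcqms-char-thm}), the forward direction resting on the extension of a local state of $\M$ to a local state of $\A$ with the same compact $K$ (via the compatibility of normal extensions on $\M^{\ast\ast}\subseteq\A^{\ast\ast}$, where $\chi_K$ lives, together with norm-closedness of $\M$ in $\A$ to transfer precompactness), and the converse being the scalar-translation density argument using $\Lip(a+t\unit_\A)=\Lip(a)$. The one sub-step worth making explicit is that your Hahn--Banach extension $\tilde\mu$ is genuinely a state of $\A$ rather than a quasi-state: since the topography $\M$ contains an approximate unit $(e_\lambda)$ of $\A$, one has $\|\tilde\mu|_{\A}\| \geq \lim_\lambda \tilde\mu(e_\lambda) = \lim_\lambda \mu(e_\lambda) = 1$.
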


Thus, while every {\lcqms} comes equipped with a natural metric on its topography by Theorem (\ref{induced-metric-thm}), we will use another metric associated with standard {\lcqms}:
\begin{notation}
If $(\A,\Lip,\M)$ is a standard {\lcqms} then we denote the {\mongekant} associated with $(\M,\Lip,\M)$ by $\sigmaKantorovich{\Lip}$.
\end{notation}

The reason for this choice is simple: we wish to define a topology which is capable of distinguishing between {\pqms s}, which in particular means distinguishing topographies. This latter point forces us to have a stronger relationship between Lip-norms and topographies.

We need another notion to introduce the concept of a {\pqms}: elements which are, in effect, locally supported from the perspective of the topography:

\begin{notation}
Let $(\A,\Lip,\M)$ be a Lipschitz triple. Let $K\in\compacts{\M}$. The set $\sa{\A}\cap\indicator{K}\A\indicator{K}$ is denoted by $\lsa{\A}{\M}{K}$.

We define:
\begin{equation*}
\Loc{\A}{\M}{K} = \set{ a \in \lsa{\A}{\M}{K} }{ \Lip(a) < \infty }\text{.}
\end{equation*}
We also set:
\begin{equation*}
\Loc{\A}{\M}{\star} = \bigcup_{K \in \compacts{\M^\sigma}}\Loc{\A}{\M}{K}\text{.}
\end{equation*}
Moreover, if $\mu$ is a pure state of $\M$ and $r > 0$, then we denote:
\begin{equation*}
\Loc{\A}{\M}{\M^\sigma [ \mu, r ]_{\sigmaKantorovich{\Lip}}}
\end{equation*}
simply by $\Loc{\A}{\M}{\mu,r}$.
\end{notation}

With all these ingredients, we thus can define:

\begin{definition}[\cite{Latremoliere14b}, Definition 3.2.7]\label{pqms-def}
A {\lcqms} $(\A,\Lip,\M)$ is a \emph{\pqms} when:
\begin{enumerate}
\item $\A$ is separable,
\item $(\A,\Lip)$ is a Leibniz Lipschitz pair,
\item $\Lip$ is lower semi-continuous with respect to the norm topology on $\sa{\A}$,
\item $\Loc{\A}{\M}{\star}$ is norm dense in $\dom{\Lip}$,
\item the set $\Loc{\A}{\M}{\star}\cap\M_\A$ is dense in $\sa{\M_\A}$ (in particular, $(\A,\Lip,\M)$ is standard),
\item $(\M^\sigma,\sigmaKantorovich{\Lip})$ is a proper metric space.
\end{enumerate}
\end{definition}

Our notion of a {\pqms} is the weakest we could use to define our Gromov-Hausdorff hypertopology; however we believe the natural concept is as follows:

\begin{definition}[\cite{Latremoliere14b}, Definition 3.2.12]\label{strongly-proper-def}
A triple $(\A,\Lip,\M)$ is \emph{strongly proper} quantum metric space when:
\begin{enumerate}
\item $\Lip$ is defined on a dense subset of $\A$,
\item $\A$ is separable, and $\Lip$ is lower semi-continuous,
\item $(\A,\Lip,\M)$ is a standard {\lcqms} (where we identify $\Lip$ with its restriction to $\sa{\A}$),
\item for all $a,b\in\A$ we have:
\begin{equation*}
\Lip(ab)\leq\|a\|_\A \Lip(b) + \|b\|_\A\Lip(a)\text{,}
\end{equation*}
\item there exists an approximate unit $(e_n)_{n\in\N}$ in $\sa{\M}$ for $\A$ such that for all $n\in\N$, we have $\|e_n\|_\A\leq 1$ and $e_n \in \Loc{\A}{\M_\A}{\star}$, $\lim_{n\rightarrow\infty} \Lip(e_n) = 0$.
\end{enumerate}
\end{definition}

In general, we see that strongly proper implies proper:

\begin{proposition}[\cite{Latremoliere14b}, Proposition 3.2.14]\label{strongly-proper-implies-proper-prop}
A strongly proper quantum metric space $(\A,\Lip,\M)$ is a {\pqms}, and moreover for any $a\in\sa{\A}$ with $\Lip(a)<\infty$ there exists a sequence $(a_n)_{n\in\N}$ with $a_n\in\Loc{\A}{\M}{\star}$ for all $n\in\N$, converging to $a$ in norm and such that $\lim_{n\rightarrow\infty}\Lip(a_n) = \Lip(a)$. If moreover $a\in\M_\A$ then we can choose $a_n \in \M_\A$ for all $n\in\N$.
\end{proposition}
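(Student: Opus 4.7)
The plan is to reduce everything to a single truncation construction. Given $a\in\sa{\A}$ with $\Lip(a)<\infty$ and an approximate unit $(e_n)_{n\in\N}\subseteq\sa{\M}\cap\Loc{\A}{\M_\A}{\star}$ supplied by strong properness, I would set $a_n = e_n a e_n$ for all $n\in\N$. Since $e_n\in\indicator{K_n}\A\indicator{K_n}$ for some $K_n\in\compacts{\M^\sigma}$, the self-adjoint element $a_n$ lies in $\lsa{\A}{\M}{K_n}$. Applying the associative Leibniz inequality twice, together with $\|e_n\|_\A\leq 1$, yields
\begin{equation*}
\Lip(a_n)\leq 2\|a\|_\A\Lip(e_n) + \Lip(a)\text{,}
\end{equation*}
so that $a_n\in\Loc{\A}{\M}{K_n}\subseteq\Loc{\A}{\M}{\star}$. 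Since $(e_n)_{n\in\N}$ is an approximate unit, $a_n\to a$ in norm; combining $\Lip(e_n)\to 0$ with the displayed inequality gives $\limsup_{n\to\infty}\Lip(a_n)\leq\Lip(a)$, and the norm lower semicontinuity of $\Lip$ supplies the reverse inequality, so $\lim_{n\to\infty}\Lip(a_n)=\Lip(a)$. If $a\in\sa{\M_\A}$, then since $\M_\A$ is abelian and $e_n\in\M_\A$, each $a_n=e_n^2 a$ also belongs to $\sa{\M_\A}\cap\Loc{\A}{\M}{\star}$.

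This single construction already delivers several items of Definition \ref{pqms-def}. Density of $\Loc{\A}{\M}{\star}$ in $\dom{\Lip}$ is the first conclusion, and density of $\Loc{\A}{\M}{\star}\cap\sa{\M_\A}$ in $\sa{\M_\A}$ follows from the second conclusion together with standardness, which by hypothesis furnishes a norm-dense supply of $\Lip$-finite elements in $\sa{\M_\A}$. Separability of $\A$ and lower semicontinuity of $\Lip$ are hypothesized outright, while the Leibniz property for the Jordan and Lie products on $\sa{\A}$ is an immediate consequence of the associative Leibniz inequality on $\A$ after averaging $\Lip(ab)$ and $\Lip(ba)$. Only the properness of $(\M^\sigma,\sigmaKantorovich{\Lip})$ remains.

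For this final condition, I would exploit the same approximate unit through Kantorovich duality on the topography. By Theorem 3.2.5, the triple $(\M,\Lip,\M)$ is itself a {\lcqms}, so Theorem \ref{induced-metric-thm} ensures that $(\M^\sigma,\sigmaKantorovich{\Lip})$ is a locally compact metric space whose topology is the weak*-topology. To upgrade to properness, fix $\mu_0\in\M^\sigma$ and $R>0$, and set $B=\cBall{\M^\sigma}{\sigmaKantorovich{\Lip}}{\mu_0}{R}$. Pick $n$ with $\Lip(e_n)<\tfrac{1}{2R}$ and $\mu_0(e_n)>\tfrac{3}{4}$, the second inequality being achievable because $\mu_0(e_n)\to 1$ for any state $\mu_0$ along an approximate unit. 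For $\nu\in B$, Kantorovich duality gives
\begin{equation*}
|\mu_0(e_n)-\nu(e_n)|\leq R\,\Lip(e_n)<\tfrac{1}{2}\text{,}
\end{equation*}
so $\nu(e_n)>\tfrac{1}{4}$. Via Gelfand, $e_n$ is a continuous function on $\M^\sigma$ vanishing off $K_n$, so $\nu(e_n)\neq 0$ forces $\nu\in K_n$. Hence $B\subseteq K_n$, and since $B$ is $\sigmaKantorovich{\Lip}$-closed inside the weak*-compact set $K_n$ on which the two topologies coincide, $B$ is compact.

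The main obstacle is this properness argument: one must convert purely metric data into a compact topographic truncation region, and the bridge between them is exactly the joint behavior of the approximate unit demanded by strong properness --- namely $\|e_n\|_\A\leq 1$, $e_n$ compactly supported in the topography, and $\Lip(e_n)\to 0$. This is what forces $\sigmaKantorovich{\Lip}$-balls to sit inside level sets of the $e_n$, and is the step where the strong properness hypothesis does genuinely more than simply producing a standard Leibniz {\lcqms} with Jordan--Lie Leibniz. Everything else is a bookkeeping consequence of the Leibniz inequality and lower semicontinuity.
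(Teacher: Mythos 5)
Your proposal is correct and is essentially the argument of the cited source: the two-sided truncation $a_n = e_n a e_n$ with the Leibniz estimate $\Lip(a_n)\leq \Lip(a)+2\|a\|_\A\Lip(e_n)$ and lower semicontinuity yields the approximation and density claims, while trapping a closed ball of radius $R$ inside the compact support $K_n$ of some $e_n$ with $\Lip(e_n)<\frac{1}{2R}$ and $\mu_0(e_n)>\frac{3}{4}$, via the Kantorovich duality bound $|\mu_0(e_n)-\nu(e_n)|\leq R\,\Lip(e_n)$, is precisely how properness of $(\M^\sigma,\sigmaKantorovich{\Lip})$ is obtained there. The axioms on the approximate unit in the definition of strong properness were engineered for exactly these two uses, and you have identified both.
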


Last, we need a notion of a pointed space. We simply propose to pick a point in the spectrum of the topography. As a result, we get:

\begin{definition}[\cite{Latremoliere14b}, Definition 3.2.10]
A quadruple $(\A,\Lip,\M,\mu)$ is a \emph{\pqpms} when $(\A,\Lip,\M)$ is a {\pqms} and $\mu \in \M^\sigma$ is a pure state of $\M$. The state $\mu$, identified with a point in $\M^\sigma$, is called the \emph{base point} of $(\A,\Lip,\M,\mu)$.
\end{definition}

\subsection{Tunnels}

We follow the model of the construction of the Gromov-Hausdorff propinquity as described in this survey. However, the notion of tunnel is more subtle. In general, asking for isometric *-epimorphisms is too strict, and we relax the notion of tunnels somewhat --- although, in the case when we work with {\Lqcms s}, we recover our original concept.

A tunnel will be a special king of passage:

\begin{definition}[\cite{Latremoliere14b}, Definition 4.1.1]\label{passage-def}
Let:
\begin{equation*}
\mathds{A}_1=(\A_1,\Lip_1,\M_1,\mu_1)\text{ and }\mathds{A}_2=(\A_2,\Lip_2,\M_2,\mu_2)
\end{equation*}
be two {\pqpms s}. A \emph{passage}:
\begin{equation*}
(\D,\Lip_\D,\M_\D,\pi_1,\mathds{A}_1,\pi_2,\mathds{A}_2)
\end{equation*}
from $\mathds{A}_1$ to $\mathds{A}_2$ is a {\lcqms} $(\D,\M_\D,\Lip_\D)$ such that:
\begin{enumerate}
\item the Lip-norm $\Lip_\D$ is lower semi-continuous with respect to the norm $\|\cdot\|_\D$ of $\D$,
\item $\pi_\A$ and $\pi_\B$ are proper *-morphisms which map $\M_\D$ to, respectively, $\M_\A$ and $\M_\B$ (such maps are called \emph{topographic morphisms}).
\end{enumerate}
\end{definition}

\begin{notation}
Let $\mathds{A}$ and $\mathds{B}$ be two {\pqpms s}. If $\tau$ is a passage from $\mathds{A}$ to $\mathds{B}$, then the \emph{domain} $\dom{\tau}$ of $\tau$ is $\mathds{A}$ while the \emph{co-domain} $\codom{\tau}$ of $\tau$ is $\mathds{B}$.
\end{notation}

Now, we produce a form of local admissibility for a passage, which leads to our concept of tunnel. The key idea here is that a tunnel is a passage for which we can define a notion of local length. Thus, the quantity associated with tunnels, in this context, becomes part of the notion of tunnel itself. We refer to \cite{Latremoliere14b} for a detailed account of this matter.

To understand the notion of weak admissibility for tunnels, we first introduce the notion of a lift set and a tunnel set for a passage. 

\begin{definition}[\cite{Latremoliere14b}, Definition 4.1.3]\label{lift-set-def}
Let:
\begin{equation*}
\mathds{A} = (\A,\Lip_\A,\M_\A,\mu_\A)\text{ and }\mathds{B} = (\B,\Lip_\B,\M_\B,\mu_\B)
\end{equation*}
be two {\pqpms s}. Let:
\begin{equation*}
\tau = (\D,\Lip_\D,\M_\D,\pi_\A,\mathds{A},\pi_\B,\mathds{B})
\end{equation*}
be a passage from $\mathds{A}$ to $\mathds{B}$. If $K\in\compacts{\M_\D^\sigma}$, $l > 0$, $\varepsilon > 0$, $r > 0$ and $a\in\sa{\A}$ with $\Lip_\A(a)\leq l$, then the \emph{lift set} of $a$ for $\tau$ associated with $(l,r,\varepsilon,K)$ is:
\begin{equation*}
\liftsettunnel{\tau}{a}{l,r,\varepsilon,K} = \Set{d\in\Loc{\D}{\M_\D}{K}}{&\pi_\A(d) = a,\\ &\pi_\B(d) \in \Loc{\B}{\M_\B}{\mu_\B,r+4\varepsilon},\\ &\Lip_\D(d)\leq l}\text{,}
\end{equation*}
and the \emph{target set} of $a$ for $\tau$ associated with $(l,r,K,\varepsilon)$ is:
\begin{equation*}
\targetsettunnel{\tau}{a}{l,r,\varepsilon,K} = \pi_\B\left(\liftsettunnel{\tau}{a}{l,r,\varepsilon,K}\right)\text{.}
\end{equation*}
\end{definition}

Of course, as defined, a lift set for some passage may be empty. The key to our notion of tunnel is, indeed, related to when lift sets are not empty. In essence, our notion of admissibility for a passage, which depends on the choice of some radius $r > 0$, relies on whether one can lift elements locally supported on a ball centered at the base point and of radius $r$ for that passage. Formally:

\begin{definition}[\cite{Latremoliere14b}, Definition 4.1.4]\label{left-admissible-def}
Let:
\begin{equation*}
\mathds{A} = (\A,\Lip_\A,\M_\A,\mu_\A)\text{ and }\mathds{B} = (\B,\Lip_\B,\M_\B,\mu_\B)
\end{equation*}
be two {\pqpms s}. Let:
\begin{equation*}
\tau = (\D,\Lip_\D,\M_\D,\pi_\A,\mathds{A},\pi_\B,\mathds{B})
\end{equation*}
be a passage from $\mathds{A}$ to $\mathds{B}$. Let $r > 0$. A pair $(\varepsilon,K)$, where $\varepsilon > 0$ and $K\in\compacts{\M_\D^\sigma}$, is \emph{$r$-left admissible} when:
\begin{enumerate}
\item $\pi_\A^\ast\left(\StateSpace[\A|\mu_\A,r]\right) \subseteq \StateSpace[\D|K]$,
\item we have:
\begin{equation*}
\StateSpace[\D|K]\almostsubseteq{(\StateSpace(\D),\Kantorovich{\Lip_\B})}{\varepsilon} \pi_\A^\ast\left(\StateSpace[\A|\mu_\A,r+4\varepsilon]\right)\text{,}
\end{equation*}
\item for all $a\in\Loc{\A}{\M_\A}{\mu_\A,r}$, we have:
\begin{equation*}
\targetsettunnel{\tau}{a}{\Lip_\A(a),r,\varepsilon,K} \not= \emptyset\text{,}
\end{equation*}
\item for all $a\in\Loc{\A}{\M_\A}{\mu_\A,r}\cap \M_\A$, we have:
\begin{equation*}
\targetsettunnel{\tau}{a}{\Lip_\A(a),r,\varepsilon,K}\cap\M_\B\not=\emptyset\text{,}
\end{equation*}
\item for all $d\in\Loc{\D}{\M_\D}{K}$, we have $\Lip_\B(\pi_\B(d)) \leq \Lip_\D(d)$.
\end{enumerate}
\end{definition}

We then define our notion of admissibility by symmetrizing left-admissibility, and adding appropriate requirements related to the Leibniz property:

\begin{definition}\cite{Latremoliere14b}, Definition 4.1.8]\label{admissible-pair-def}
Let:
\begin{equation*}
\tau = (\D,\Lip_\D,\M_\D,\pi,\dom{\tau},\rho,\codom{\tau})
\end{equation*}
be a passage and let $r > 0$. 
\begin{enumerate}
\item A pair $(\varepsilon,K)$ is \emph{$r$-right admissible} when $(\varepsilon,K)$ is $r$-left admissible for $\tau^{-1} $ (which is the passage where we simply switch the domain and codomain of $\tau$),
\item A pair $(\varepsilon,K)$ is \emph{$r$-admissible} when:
\begin{itemize}
\item $(\varepsilon,K)$ is both $r$-left and $r$-right admissible for $\tau$,
\item if $\mu$ and $\nu$ are the respective base points of $\dom{\tau}$ and $\codom{\tau}$, then $\sigmaKantorovich{\Lip_\D}(\mu\circ\pi,\nu\circ\rho)\leq\varepsilon$,
\item for all $d,d'\in\Loc{\D}{\M_\D}{K}$, we have:
\begin{equation}\label{admissible-Jordan-eq}
\Lip_\D\left(\Jordan{d}{d'}\right)\leq \Lip_\D(d)\|d'\|_\D+\Lip_\D(d')\|d\|_\D\text{,}
\end{equation} 
\item for all $d,d'\in\Loc{\D}{\M_\D}{K}$, we have:
\begin{equation}\label{admissible-Lie-eq}
\Lip_\D\left(\Lie{d}{d'}\right)\leq \Lip_\D(d)\|d'\|_\D+\Lip_\D(d')\|d\|_\D\text{,}
\end{equation}
\end{itemize}
\end{enumerate}
\end{definition}

In order to associate a number to tunnels, we need to ensure that there exists at least one admissible number. 

\begin{definition}[\cite{Latremoliere14b}, Definition 4.1.10]\label{admissible-def}
A number $\varepsilon > 0$ is $r$-admissible when there exists a family $(K_t)_{t \in (0,r]}$ of compacts of $\M_\D^\sigma$ such that:
\begin{enumerate}
\item for all $t\leq t' \in (0,r]$, we have $K_t\subseteq K_{t'}$,
\item for all $t \in (0,r]$, the pair $(\varepsilon,K_t)$ is $t$-admissible.
\end{enumerate}
\end{definition}

\begin{notation}\label{admissible-notation}
The set of $r$-admissible numbers for some passage $\tau$ is denoted by $\Adm(\tau|r)$.
\end{notation}

The apparent complexity in the notion of an admissible number is justified in part by the following desirable monotonicity:

\begin{remark}\label{inclusion-adm-rmk}
By Definition (\ref{admissible-def}), if $R \geq r$ then:
\begin{equation*}
\Adm(\tau|R) \subseteq \Adm(\tau|r)\text{,}
\end{equation*}
and
\begin{equation*}
\Adm(\tau^{-1}|r) = \Adm(\tau|r)\text{.}
\end{equation*}
\end{remark}

An $r$-tunnel is a passage for which some number is $r$-admissible:

\begin{definition}[\cite{Latremoliere14b}, Definition 4.1.13]\label{tunnel-def}
Let $\mathds{A}$ and $\mathds{B}$ be two {\pqpms s} and $r > 0$. An \emph{$r$-tunnel} $\tau$ from $\mathds{A}$ to $\mathds{B}$ is a passage from $\mathds{A}$ to $\mathds{B}$ such that the set $\Adm(\tau|r)$ is not empty.
\end{definition}

\begin{remark}
If $\tau$ is an $r$-tunnel, then $\tau$ is a $t$-tunnel for all $t\in (0,r]$ and $\tau^{-1}$ is an $r$-tunnel as well by Remark (\ref{inclusion-adm-rmk}).
\end{remark}

Thus, we took a detour in defining tunnels in such a manner that we can now define their extent --- which is given by the infimum of possible admissible numbers:

\begin{definition}[\cite{Latremoliere14b}, Definition 4.1.15]\label{extent-def}
Let $\mathds{A}$ and $\mathds{B}$ be two {\pqpms s}, let $r > 0$, and let $\tau$ be an \emph{$r$-tunnel} from $\mathds{A}$ to $\mathds{B}$. The \emph{$r$-extent} $\tunnelextent{\tau}{r}$ is the non-negative real number:
\begin{equation*}
\tunnelextent{\tau|r} = \inf \Adm(\tau|r) \text{.}
\end{equation*}
When $\tau$ is an $r$-tunnel, we call $r$ a \emph{radius of admissibility}.
\end{definition}

\begin{remark}\label{extent-monotone-rmk}
For any $r$-tunnel $\tau$, we have $\tunnelextent{\tau^{-1}|r} = \tunnelextent{\tau|r}$, and if $t \in (0,r]$ then:
\begin{equation*}
\tunnelextent{\tau|t} \leq \tunnelextent{\tau|r}\text{,}
\end{equation*}
by Remark (\ref{inclusion-adm-rmk}).
\end{remark}

At this point, it may be quite obvious that this new approach to tunnels agrees with our standard approach for the Gromov-Hausdorff propinquity; nonetheless we proved:

\begin{proposition}[\cite{Latremoliere14b}, 4.1.17]\label{compact-tunnel-prop}
If $\mathds{A}_1 = (\A_1,\Lip_1,\M_1,\mu_1)$ and $\mathds{A}_2 = (\A_2,\Lip_2,\M_2,\mu_2)$ are two {\pqpms s}, if $r > 0$, and if:
\begin{equation*}
\tau = (\D,\Lip_\D,\M_\D,\pi_1,\mathds{A}_1,\pi_2,\mathds{A}_2)
\end{equation*}
 is an \emph{$r$-tunnel} from $\mathds{A}_1$ to $\mathds{A}_2$ such that, for some $j\in\{1,2\}$, we have:
\begin{equation*}
r \geq \diam{\M_j^\sigma}{\sigmaKantorovich{\Lip_j}}\text{,}
\end{equation*}
then:
\begin{enumerate}
\item $(\A_1,\Lip_1)$, $(\A_2,\Lip_2)$ and $(\D,\Lip_\D)$ are {\Lqcms s},
\item if $(\varepsilon,K)$ is $r$-admissible, then $K = \M_\D^\sigma$,
\item for all $a\in\sa{\A_j}$, we have:
\begin{equation*}
\Lip_j(a) = \inf\Set{\Lip_\D(d)}{\pi_j(d) = a}\text{,}
\end{equation*}
\item $\pi_j$ is a *-epimorphism.
\end{enumerate}
\end{proposition}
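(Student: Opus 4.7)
The plan is to upgrade the hypothesis into full unitality of $\A_1$, $\D$, and $\A_2$, after which claims (1), (3), and (4) become routine consequences of Rieffel's equivalence for the unital case.

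Assuming without loss of generality that $j = 1$, I would first observe that $(\M_1^\sigma, \sigmaKantorovich{\Lip_1})$, being proper and of finite diameter, is compact; hence $\M_1$ is unital, and the approximate unit of $\A_1$ drawn from $\M_1$ must converge to $1_{\M_1}$, making $\A_1$ unital with $1_{\A_1} = 1_{\M_1}$. The key step is then to show simultaneously that $\D$ is unital and that any $r$-admissible pair $(\varepsilon, K)$ satisfies $K = \M_\D^\sigma$, which is claim (2). Taking such a pair, I would apply condition 3 of $r$-left-admissibility (Definition (\ref{left-admissible-def})) to $a = 1_{\A_1}$: since $\M_1^\sigma$ is compact, $1_{\A_1}$ lies in $\Loc{\A_1}{\M_1}{\mu_1,r}$ with $\Lip_1(1_{\A_1}) = 0$, so there exists $d \in \Loc{\D}{\M_\D}{K}$ with $\pi_1(d) = 1_{\A_1}$ and $\Lip_\D(d) = 0$. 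By Definition (\ref{Lipschitz-pair-def}), the kernel of $\Lip_\D$ on $\sa{\unital{\D}}$ is $\R 1_{\unital{\D}}$; if $\D$ were non-unital, this kernel would meet $\sa{\D}$ only in $\{0\}$, contradicting $\pi_1(d) = 1_{\A_1}$. So $\D$ is unital and $d = 1_\D$, since $\pi_1(1_\D) = 1_{\A_1}$ by properness of $\pi_1$. The membership $1_\D \in \lsa{\D}{\M_\D}{K}$ then forces $\chi_K = 1_\D$ in $\D^{\ast\ast}$, which yields $K = \M_\D^\sigma$ and unitality of $\M_\D$. Because $\pi_2$ is proper and $\D$ is unital, $\pi_2(1_\D)$ is a unit of $\A_2$, so $\A_2$ is unital as well.

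With all three algebras unital, claim (1) follows immediately: each of $(\A_1, \Lip_1, \M_1)$, $(\D, \Lip_\D, \M_\D)$, and $(\A_2, \Lip_2, \M_2)$ is a unital lcqms whose Lip-norm is lower semi-continuous and Leibniz (from the pqms hypothesis), and by the unital-case equivalence for lcqms (the example from \cite{Latremoliere12b}) each pair is a compact quantum metric space and hence a Lqcms. For claim (3), with $K = \M_\D^\sigma$ we have $\Loc{\D}{\M_\D}{K} = \dom{\Lip_\D}$, so condition 5 of $r$-right-admissibility delivers $\Lip_1(\pi_1(d)) \leq \Lip_\D(d)$ for every $d \in \dom{\Lip_\D}$ (which also handles the case $\Lip_1(a) = \infty$, since any finite-Lip-norm lift would force $\Lip_1(a)$ finite), while condition 3 of $r$-left-admissibility supplies, for each $a \in \dom{\Lip_1}$, a lift $d$ with $\Lip_\D(d) \leq \Lip_1(a)$; together these yield the quotient formula. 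Claim (4) then follows because $\pi_1(\D)$ is a C*-subalgebra of $\A_1$ containing the dense subspace $\dom{\Lip_1}$ of $\sa{\A_1}$, and hence equals $\A_1$.

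The main obstacle is the bootstrap step: extracting unitality of $\D$ and the forced equality $K = \M_\D^\sigma$ from target-set non-emptiness applied to the single element $1_{\A_1}$. Once this is in hand, compactness and unitality propagate cleanly from $\M_1^\sigma$ through $\D$ to $\A_2$, and the quotient and epimorphism claims follow from standard C*-algebraic bookkeeping.
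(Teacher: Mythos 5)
Your bootstrap is the right one, and it is essentially the intended route for this proposition: since $r \geq \diam{\M_1^\sigma}{\sigmaKantorovich{\Lip_1}}$, the closed ball of center $\mu_1$ and radius $r$ in $\M_1^\sigma$ is all of the (proper, bounded, hence compact) space $\M_1^\sigma$, so $\M_1$ is unital; your approximate-unit argument is correct and can even be made sharper, since $e_\lambda = e_\lambda \unit_{\M_1} \rightarrow \unit_{\M_1}$ in norm, whence $\unit_{\M_1}$ is a unit for $\A_1$. Consequently $\Loc{\A_1}{\M_1}{\mu_1,r} = \dom{\Lip_1}$, and lifting $\unit_{\A_1}$ through condition (3) of Definition (\ref{left-admissible-def}) yields $d\in\Loc{\D}{\M_\D}{K}$ with $\Lip_\D(d)=0$ and $\pi_1(d)=\unit_{\A_1}$; the kernel condition of Definition (\ref{Lipschitz-pair-def}) rules out non-unitality of $\D$ (otherwise $d\in\sa{\D}\cap\R\unit_{\unital{\D}}=\{0\}$), forces $d=\unit_\D$ via properness of $\pi_1$, and $\unit_\D\in\indicator{K}\D\indicator{K}$ forces $\indicator{K}=\unit_\D$, hence $K=\M_\D^\sigma$ by an Urysohn function supported off $K$. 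This gives assertion (2) for an arbitrary $r$-admissible pair, and your derivations of unitality of $\A_2$, of the quotient formula from conditions (3) and (5) of the two one-sided admissibilities (including the correct handling of $\Lip_1(a)=\infty$ via contractivity), and of surjectivity of $\pi_1$ from closed range plus density of $\dom{\Lip_1}$ in $\sa{\A_1}$, are all sound.

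The one genuine flaw is in your proof of assertion (1) for $\D$: you justify the Leibniz property of $\Lip_\D$ as coming ``from the pqms hypothesis,'' but $\D$ is not hypothesized to be a {\pqms}. Definition (\ref{passage-def}) only requires $(\D,\Lip_\D,\M_\D)$ to be a {\lcqms} with $\Lip_\D$ lower semi-continuous --- no Leibniz inequality on $\D$ is part of being a passage, nor of being an $r$-tunnel per se. The Leibniz property must instead be harvested from the definition of an $r$-admissible pair: Inequalities (\ref{admissible-Jordan-eq}) and (\ref{admissible-Lie-eq}) are exactly the Jordan and Lie Leibniz estimates, but they are only postulated for $d,d'\in\Loc{\D}{\M_\D}{K}$. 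Here your own assertion (2) repairs the argument: once $K=\M_\D^\sigma$ and $\indicator{K}=\unit_\D$, we get $\lsa{\D}{\M_\D}{K}=\sa{\D}$ and therefore $\Loc{\D}{\M_\D}{K}=\dom{\Lip_\D}$, so the admissibility inequalities hold on the entire domain; moreover a finite right-hand side forces a finite left-hand side, so $\dom{\Lip_\D}$ is closed under the Jordan and Lie products and $(\D,\Lip_\D)$ is a Leibniz pair. With this substitution --- and the unital-case equivalence from \cite{Latremoliere12b} that you already invoke to pass from unital {\lcqms} to {\qcms} --- assertion (1) holds for all three algebras and your proof is complete.
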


\subsection{A Gromov-Hausdorff Topology}

We prove in \cite[Theorem 4.2.1]{Latremoliere14b} that a tunnel composition process similar to Theorem (\ref{tunnel-composition-thm}) can be defined for our tunnels between {\pqms s}. The notions of lift sets and target sets are also extended in \cite{Latremoliere14}, and they enjoy various local versions of the properties of tunnels, seen as morphisms of sort, between {\Lqcms s}. These matters are however rather technical as they always involve working ``locally'', i.e. involving the topography quite explicitly.

None the less, we can formulate a local form of the propinquity as follows:

\begin{notation}
The set of all $r$-tunnels from $\mathds{A}$ to $\mathds{B}$ is denoted by:
\begin{equation*}
\tunnelset{\mathds{A}}{\mathds{B}}{(r)}\text{,}
\end{equation*}
for any two {\pqpms s} $\mathds{A}$ and $\mathds{B}$.This set may be empty.
\end{notation}
We should note that in \cite{Latremoliere14b}, we do introduce a notion of appropriate classes of tunnels, in the same spirit as in the compact setting; we avoid overloading this already involved section with more concepts and notations and refer to \cite{Latremoliere14b} for discussions on all these topics.

We thus can define:

\begin{definition}[\cite{Latremoliere14b}, Definition 5.1.1]\label{local-propinquity-def}
Let $\mathds{A}$ and $\mathds{B}$ be two {\pqpms s} and $r > 0$. The \emph{$r$-local propinquity} between $\mathds{A}$ and $\mathds{B}$ is:
\begin{equation*}
\mathsf{\Lambda}^{\#}_{r}(\mathds{A},\mathds{B}) = \inf\set{ \tunnelextent{\tau|r} }{\tau \in \tunnelset{\mathds{A}}{\mathds{B}}{(r)} }
\end{equation*}
with the usual convention that the infimum of the empty set is $\infty$.
\end{definition}

We then showed in \cite{Latremoliere14b} that the local propinquity enjoys properties which allow us to define an equivalent of the Gromov-Hausdorff distance:

\begin{definition}[\cite{Latremoliere14b}, Definition 5.2.1]
The \emph{topographic Gromov-Hausdorff Propinquity} $\mathsf{\Lambda}^\#(\mathds{A},\mathds{B})$ is the non-negative real number:
\begin{equation*}
\mathsf{\Lambda}^\#(\mathds{A},\mathds{B}) = \max\left\{\inf\set{\varepsilon > 0}{\mathsf{\Lambda}^\#_{\frac{1}{\varepsilon}}(\mathds{A},\mathds{B}) < \varepsilon},\frac{\sqrt{2}}{4}\right\}\text{.}
\end{equation*}
\end{definition}

Now, much work is involved in \cite{Latremoliere14b} to prove that, in fact, the topographic propinquity is an infra-metric with the property that:

\begin{theorem}[\cite{Latremoliere14b}, Theorem 5.3.7]\label{lc-coincidence-thm}
If:
\begin{equation*}
\propinquity{\mathcal{T}}(\mathds{A},\mathds{B}) = 0
\end{equation*}
for two {\pqpms s} $\mathds{A},\mathds{B}$, then there exists a pointed isometric isomorphism $\pi : \mathds{A} \longrightarrow \mathds{B}$.
\end{theorem}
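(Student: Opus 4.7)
The plan is to adapt the strategy of Theorem (\ref{coincidence-thm}) for the compact propinquity, carefully tracking the topographic data. Write $\mathds{A}=(\A,\Lip_\A,\M_\A,\mu_\A)$ and $\mathds{B}=(\B,\Lip_\B,\M_\B,\mu_\B)$. From $\mathsf{\Lambda}^\#(\mathds{A},\mathds{B})=0$, pick radii $r_n\to\infty$ and $r_n$-tunnels $\tau_n=(\D_n,\Lip_{\D_n},\M_{\D_n},\pi_n,\mathds{A},\rho_n,\mathds{B})$ with $\tunnelextent{\tau_n|r_n}\to 0$. By Definition (\ref{admissible-def}), for each $n$ one chooses $\varepsilon_n\to 0$ and a family $(K_{n,t})_{t\in(0,r_n]}$ with $(\varepsilon_n,K_{n,t})$ being $t$-admissible. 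In particular $\sigmaKantorovich{\Lip_{\D_n}}(\mu_\A\circ\pi_n,\mu_\B\circ\rho_n)\le\varepsilon_n$, which will force base points to match in the limit.

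First, I would build a Jordan-Lie morphism via a diagonal Arzel\`a-Ascoli-type extraction. Fix a countable $\|\cdot\|_\A$-dense subset $\mathfrak{D}$ of $\Loc{\A}{\M_\A}{\star}$, closed under the Jordan and Lie products and containing a dense subset of $\Loc{\A}{\M_\A}{\star}\cap\sa{\M_\A}$; this is available because $(\A,\Lip_\A,\M_\A)$ is a {\pqms} (Definition (\ref{pqms-def})). For each $a\in\mathfrak{D}$, say $a\in\Loc{\A}{\M_\A}{\mu_\A,R}$ with $\Lip_\A(a)\le l$, Definitions (\ref{lift-set-def}) and (\ref{left-admissible-def}) provide $b_n^a\in\targetsettunnel{\tau_n}{a}{l,R,\varepsilon_n,K_{n,R}}$ for $n$ large enough; moreover, by the local analog of Proposition (\ref{fundamental-extent-prop}) and Corollary (\ref{diameter-corollary}), each such $b_n^a$ lies in $\Loc{\B}{\M_\B}{\mu_\B,R+4\varepsilon_n}$ with norm at most $\|a\|_\A + l\,\varepsilon_n$ and $\Lip_\B(b_n^a)\le l$. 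Since the relevant local sets of $\B$ are totally bounded in norm (being controlled by tame state-space balls via Theorem (\ref{lcqms-char-thm}) applied locally), a diagonal extraction along a subsequence (not relabelled) yields a limit $\pi(a)\in\sa{\B}$ for every $a\in\mathfrak{D}$.

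Second, I would verify $\pi$ is a pointed Jordan-Lie isomorphism intertwining the Lip-norms. Proposition (\ref{generalized-morphism-prop}), applied locally to each $\tau_n$, implies that $\pi$ is $\R$-linear and preserves $\Jordan{\cdot}{\cdot}$ and $\Lie{\cdot}{\cdot}$ on $\mathfrak{D}$; the quasi-Leibniz conditions (\ref{admissible-Jordan-eq})--(\ref{admissible-Lie-eq}) in Definition (\ref{admissible-pair-def}) are exactly what is needed to carry the algebraic bounds through the limit. Lower semi-continuity of $\Lip_\B$ together with the Lip-contractivity of $\rho_n$ on $\Loc{\D_n}{\M_{\D_n}}{K_{n,R}}$ gives $\Lip_\B(\pi(a))\le\Lip_\A(a)$. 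By the fourth admissibility clause, the restriction of $\pi$ sends $\mathfrak{D}\cap\sa{\M_\A}$ into $\sa{\M_\B}$. Extending by norm-continuity, $\pi$ becomes a topographic Jordan-Lie morphism on a dense subspace of $\sa{\A}$, and then complexifies to a $*$-morphism $\A\to\B$. Applying the same construction to $(\tau_n^{-1})_{n\in\N}$ (admissible by Remark (\ref{inclusion-adm-rmk})) and refining to a common subsequence produces $\pi':\B\to\A$ with analogous properties; the back-and-forth argument using target-set diameter bounds forces $\pi'\circ\pi=\mathrm{id}_\A$ and $\pi\circ\pi'=\mathrm{id}_\B$ on the dense sets, hence everywhere, yielding the isometric $*$-isomorphism. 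The base-point condition $\mu_\B\circ\pi=\mu_\A$ on $\M_\A$ comes from $\sigmaKantorovich{\Lip_{\D_n}}(\mu_\A\circ\pi_n,\mu_\B\circ\rho_n)\le\varepsilon_n\to 0$.

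The main obstacle is that, unlike in the compact case, there is no single ambient Lipschitz ball on which to extract one compact limit: each radius $R$ comes with its own compact $K_{n,R}$ in a different $\D_n$, and the local structures must be glued coherently. I expect the technical heart of the proof to be the organisation of the diagonal extraction so that the limit morphism on a ball of radius $R$ agrees with the one on a ball of radius $R'>R$, combined with checking that topographic compatibility (mapping $\M_\A$ to $\M_\B$, preserving local supports) survives the limit despite the shifting $4\varepsilon_n$-enlargements of balls built into Definition (\ref{lift-set-def}).
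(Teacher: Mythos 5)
Your proposal is correct and follows essentially the same route as the proof in \cite{Latremoliere14b}: extraction, via a diagonal Arzel\`a--Ascoli-type argument over a countable dense Jordan-Lie subset of $\Loc{\A}{\M_\A}{\star}$ and radii $r_n\rightarrow\infty$, of limits of elements in local target sets, using the local norm, Lip-norm and target-set diameter estimates for tunnels together with norm-compactness of locally supported Lipschitz balls, followed by the back-and-forth with $(\tau_n^{-1})_{n\in\N}$ to obtain the pointed isometric *-isomorphism. The gluing problem you identify --- making the limits on balls of different radii coherent while the $4\varepsilon_n$-enlargements and the compacts $K_{n,t}$ shift with $n$ --- is indeed exactly where the technical bulk of the cited proof lies.
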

We note that a pointed morphism is a morphism whose dual map associates one base point to another.

Thus, our topographic propinquity defines a Hausdorff topology on the class of all {\pqpms s}. 

Moreover, our new topology extends both the Gromov-Hausdorff propinquity topology on {\Lqcms s} and the Gromov-Hausdorff topology on proper metric spaces:

\begin{theorem}[\cite{Latremoliere14b}, Theorem 6.1.1]
If a sequence:
\begin{equation*}
(X_n,\mathsf{d}_n,x_n)_{n\in\N}
\end{equation*}
of pointed proper metric spaces converges to a pointed proper metric space $(Y,\mathsf{d}_Y,y)$ for the Gromov-Hausdorff distance, then the sequence:
\begin{equation*}
\left(C_0(X_n),\Lip_n,C_0(X_n),x_n\right)_{n\in\N}
\end{equation*}
converges to $(C_0(Y),\Lip,C_0(Y),y)$ for the topographic Gromov-Hausdorff propinquity, \\ where $\Lip_n$ and $\Lip$ are the Lipschitz seminorms on, respectively, $C_0(X_n)$ for $\mathsf{d}_n$ for any $n\in\N$, and $C_0(Y)$ for $\mathsf{d}_Y$.
\end{theorem}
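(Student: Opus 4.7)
The plan is to translate Gromov-Hausdorff data for $(X_n,\mathsf{d}_n,x_n)$ directly into tunnels of small extent between the corresponding {\pqpms s}. Write $\mathds{A}_n=(C_0(X_n),\Lip_n,C_0(X_n),x_n)$ and $\mathds{A}=(C_0(Y),\Lip_Y,C_0(Y),y)$. Fix $r>0$ and $\varepsilon\in(0,r)$. By Definitions~(\ref{GH-cv-def})--(\ref{Delta-r-def}) together with the alternate expression~(\ref{alt-exp-eq0}), for all $n$ sufficiently large there exist a metric space $(Z_n,\mathsf{d}_n^Z)$ and isometric embeddings $\iota_n\colon X_n\hookrightarrow Z_n$ and $\kappa_n\colon Y\hookrightarrow Z_n$ satisfying $\mathsf{d}_n^Z(\iota_n(x_n),\kappa_n(y))\leq\varepsilon$ together with $\cBall{X_n}{}{x_n}{r}\almostsubseteq{Z_n,\mathsf{d}_n^Z}{\varepsilon}\cBall{Y}{}{y}{r+2\varepsilon}$ and the symmetric inclusion.

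I would then form the topological disjoint union $W_n=X_n\sqcup Y$ equipped with the metric $\mathsf{d}_{W_n}$ that restricts to $\mathsf{d}_n$ on $X_n$, to $\mathsf{d}_Y$ on $Y$, and is given by $\mathsf{d}_{W_n}(a,b)=\mathsf{d}_n^Z(\iota_n(a),\kappa_n(b))$ for $a\in X_n$ and $b\in Y$. Then $(W_n,\mathsf{d}_{W_n})$ is a proper metric space in which both $X_n$ and $Y$ sit isometrically as clopen subspaces, so $C_0(W_n)\cong C_0(X_n)\oplus C_0(Y)$. Let $\Lip_{W_n}$ denote the Lipschitz seminorm on $C_0(W_n)$ induced by $\mathsf{d}_{W_n}$, and let $\pi_n$ and $\rho_n$ be the coordinate projections onto $C_0(X_n)$ and $C_0(Y)$. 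Taking the full algebra $C_0(W_n)$ as topography, the quadruple $\tau_n=(C_0(W_n),\Lip_{W_n},C_0(W_n),\pi_n,\mathds{A}_n,\rho_n,\mathds{A})$ is a passage in the sense of Definition~(\ref{passage-def}), since $\Lip_{W_n}$ is lower semicontinuous and Leibniz and the projections are proper topographic *-morphisms.

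Next I would verify $r$-admissibility of the pair $(C\varepsilon,K_n)$, where $K_n=\cBall{W_n}{}{x_n}{r+5\varepsilon}$ is compact by properness of $W_n$ and $C=C(r)$ is some constant, in the sense of Definition~(\ref{admissible-pair-def}). The base-point estimate $\sigmaKantorovich{\Lip_{W_n}}(\delta_{x_n}\circ\pi_n,\delta_y\circ\rho_n)\leq\mathsf{d}_{W_n}(x_n,y)\leq\varepsilon$ is immediate from Kantorovich duality on $(W_n,\mathsf{d}_{W_n})$; the dominance $\Lip_Y\circ\rho_n\leq\Lip_{W_n}$ holds because restriction cannot increase a Lipschitz seminorm; and the Leibniz inequalities~(\ref{admissible-Jordan-eq})--(\ref{admissible-Lie-eq}) on $\Loc{C_0(W_n)}{C_0(W_n)}{K_n}$ are the classical Leibniz inequalities for bounded Lipschitz functions. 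The Hausdorff conditions on state spaces required by Definition~(\ref{left-admissible-def}) follow from the $\varepsilon$-closeness of the $r$-balls in $Z_n$ transported inside $(W_n,\mathsf{d}_{W_n})$. What remains is the non-emptiness of the target sets of Definition~(\ref{lift-set-def}): given $a\in\Loc{C_0(X_n)}{C_0(X_n)}{x_n,r}$, I would construct a lift $d\in\Loc{C_0(W_n)}{C_0(W_n)}{K_n}$ with $\pi_n(d)=a$, $\Lip_{W_n}(d)\leq\Lip_n(a)$, and $\rho_n(d)$ supported in $\cBall{Y}{}{y}{r+4\varepsilon}$, via a McShane-style extension of $a$ to $W_n$.

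The main obstacle is precisely the calibration of this lift, which must simultaneously control Lip-norm, compact support inside $K_n$, and support of $\rho_n(d)$ inside $\cBall{Y}{}{y}{r+4\varepsilon}$. The two-sided McShane extension $\tilde{a}$ preserves the Lip-norm and is automatically compactly supported because $\|a\|_\infty\leq 2r\Lip_n(a)$ whenever $a$ is supported in the $r$-ball around $x_n$, but its support may extend into a ball of radius roughly $3r$ around $y$ rather than $r+4\varepsilon$; cutting off with a Lipschitz function supported in $K_n$ restores the support condition at the cost of inflating the Lip-norm by a factor depending on $r$ and $\varepsilon$. The remedy is to enlarge the admissibility tolerance from $\varepsilon$ to $C(r)\varepsilon$, which only inflates the resulting extent by a bounded factor. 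Once this is achieved, $\tunnelextent{\tau_n|r}=O_r(\varepsilon)$ for every fixed $r>0$, and the definition of $\mathsf{\Lambda}^{\#}$ as a calibrated infimum over $r$ yields $\mathsf{\Lambda}^{\#}(\mathds{A}_n,\mathds{A})\to 0$, completing the proof.
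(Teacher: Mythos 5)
Your overall architecture --- building $W_n = X_n \sqcup Y$ with a bridging metric from the Gromov--Hausdorff data, taking $(C_0(W_n),\Lip_{W_n},C_0(W_n))$ with the two restriction *-morphisms as a passage, verifying $r$-admissibility, and concluding through the calibrated infimum defining $\mathsf{\Lambda}^{\#}$ --- is the right one, and it is essentially the route of \cite{Latremoliere14b}. But your treatment of the one genuinely hard step, the non-emptiness of the lift sets, contains a real gap. By Definition (\ref{lift-set-def}) and condition (3) of Definition (\ref{left-admissible-def}), the lift $d$ of $a\in\Loc{C_0(X_n)}{C_0(X_n)}{x_n,r}$ must satisfy $\Lip_{W_n}(d)\leq \Lip_n(a)$ \emph{exactly}: the admissibility tolerance $\varepsilon$ enters only the support radius $r+4\varepsilon$ of $\rho_n(d)$ and the Hausdorff conditions on state spaces, never the Lip-norm bound on lifts. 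So your remedy --- ``enlarge the admissibility tolerance from $\varepsilon$ to $C(r)\varepsilon$'' to absorb the Lip-norm inflation caused by cutting off the McShane extension --- cannot work: no choice of tolerance relaxes the constraint $\Lip_{W_n}(d)\leq\Lip_n(a)$, and a cutoff that multiplies the Lipschitz constant produces an element outside every lift set $\liftsettunnel{\tau_n}{a}{\Lip_n(a),r,\varepsilon',K_n}$. Moreover, the estimate you use to claim automatic compact support, $\|a\|_\infty\leq 2r\Lip_n(a)$ for $a$ supported in $\cBall{X_n}{\mathsf{d}_n}{x_n}{r}$, is false: if $X_n$ is compact and equals that ball, nonzero constants have $\Lip_n = 0$; and if $X_n\setminus\cBall{X_n}{\mathsf{d}_n}{x_n}{r}$ is nonempty but distant (a ``gapped'' space), one only has $\|a\|_\infty\leq \Lip_n(a)\bigl(r+\dist(x_n,X_n\setminus\cBall{X_n}{\mathsf{d}_n}{x_n}{r})\bigr)$, which is not of the order $r\Lip_n(a)$.

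The repair has to preserve the Lipschitz constant exactly, and the slack must be bought elsewhere. First, inflate the cross-distances: set $\mathsf{d}_{W_n}(a,b)=\mathsf{d}_n^Z(\iota_n(a),\kappa_n(b))+\varepsilon$ for $a\in X_n$, $b\in Y$. This is needed anyway --- as you define it, $\mathsf{d}_{W_n}$ may fail to be a metric and the two pieces may fail to be clopen (the images can intersect in $Z_n$), so your identification $C_0(W_n)\cong C_0(X_n)\oplus C_0(Y)$ is not justified without it. The inflation gives a uniform additive margin $L\varepsilon$ across the seam: writing $L=\Lip_n(a)$ and $b_0$ for the restriction to $Y$ of the $\mathsf{d}_n^Z$-McShane extension of $a$ (applied separately to the positive and negative parts of $a$), any $b$ with $\|b-b_0\|_{C_0(Y)}\leq L\varepsilon$ and $\Lip_Y(b)\leq L$ yields $d=(a,b)$ with $\Lip_{W_n}(d)\leq L$ \emph{exactly}. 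Soft-thresholding $b_0$ at level $L\varepsilon$ is such a perturbation, and it forces $b$ to vanish at every $w\in Y$ admitting an $\varepsilon$-partner in $X_n\setminus\cBall{X_n}{\mathsf{d}_n}{x_n}{r}$, which handles the annulus covered by the approximate inclusions. What remains --- and what your proposal does not address at all --- is the region of $Y$ beyond the radius at which the Gromov--Hausdorff data gives control, where $b_0$ may still exceed $L\varepsilon$ when $\|a\|_\infty$ is large relative to $Lr$; since admissibility quantifies over \emph{all} $a\in\Loc{C_0(X_n)}{C_0(X_n)}{x_n,r}$, you must argue that the needed control radius, governed by $\dist(x_n,X_n\setminus\cBall{X_n}{\mathsf{d}_n}{x_n}{r})$, is uniformly bounded along the convergent sequence (or else fall into the compact regime of Proposition (\ref{compact-tunnel-prop})). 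Finally, note that Definition (\ref{admissible-def}) requires a \emph{nested family} $(K_t)_{t\in(0,r]}$ with $(\varepsilon,K_t)$ $t$-admissible for every $t\leq r$, not a single compact $K_n$; this is routine with $K_t = \cBall{W_n}{\mathsf{d}_{W_n}}{x_n}{t+5\varepsilon}$, but it must be said.
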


\begin{theorem}[\cite{Latremoliere14b}, Theorem 6.2.1]
A sequence $(\A_n,\Lip_n)$ of {\Lqcms s} converges to a {\Lqcms} $(\A,\Lip)$ for the dual propinquity if, and only if it converges for the topographic Gromov-Hausdorff propinquity.
\end{theorem}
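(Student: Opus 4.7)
The plan is to reduce the theorem to the identification of the two propinquity values via the canonical realization of {\Lqcms s} as trivially-topographic {\pqpms s}. Given any {\Lqcms} $(\A,\Lip)$, I equip it with the trivial topography $\M_\A = \C\unit_\A$; its Gel'fand spectrum $\M_\A^\sigma$ is the singleton $\{\mu_\A\}$, which serves as the canonical base point. Because $\sigmaKantorovich{\Lip}$ is then the zero metric on a one-point space, Definition \ref{pqms-def} is satisfied trivially, and the quadruple $\mathds{A} = (\A, \Lip, \C\unit_\A, \mu_\A)$ is a {\pqpms}. Writing $\mathds{A}_n$ for the quadruple associated with $(\A_n,\Lip_n)$ and $\mathds{A}$ for the one associated with the limit $(\A,\Lip)$, the theorem will follow at once from the pointwise identity
\begin{equation*}
\mathsf{\Lambda}^{\#}_{r}(\mathds{A}_n, \mathds{A}) = \propinquity{}((\A_n,\Lip_n),(\A,\Lip)),
\end{equation*}
valid for every $r > 0$, since convergence in the topographic propinquity is by construction the vanishing of $\mathsf{\Lambda}^{\#}_{r}$ for every $r > 0$.

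To prove $\mathsf{\Lambda}^{\#}_{r} \leq \propinquity{}$, I take a Leibniz tunnel $\tau = (\D,\Lip_\D,\pi_n,\pi)$ in the sense of Definition \ref{tunnel-def} and promote it to a passage by assigning $\D$ the trivial topography $\M_\D = \C\unit_\D$. Since $\M_\D^\sigma$ is a singleton, the only admissible compact is $K = \M_\D^\sigma$ itself; every local set appearing in Definitions \ref{lift-set-def} and \ref{left-admissible-def}, namely $\Loc{\D}{\M_\D}{K}$, $\Loc{\A_n}{\M_{\A_n}}{\mu_{\A_n}, r+4\varepsilon}$ and $\Loc{\A}{\M_\A}{\mu_\A, r+4\varepsilon}$, then collapses to the whole self-adjoint part intersected with the domain of the respective Lip-norm. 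The admissibility conditions of Definition \ref{admissible-pair-def} therefore reduce to: the two Hausdorff inclusions between $\StateSpace(\D)$ and $\pi_j^\ast(\StateSpace(\A_j))$ (which match the two components of Definition \ref{tunnel-extent}), the Leibniz inequalities on $\Lip_\D$, and a base-point compatibility clause which is automatic because the target spectrum is a singleton. Thus $(\tunnelextent{\tau} + \delta, \M_\D^\sigma)$ is $r$-admissible for every $\delta > 0$, which gives the claimed inequality. For the reverse direction $\propinquity{} \leq \mathsf{\Lambda}^{\#}_{r}$, I begin with an $r$-tunnel $\tau$ between the two trivially-topographic {\pqpms s}; since $\diam{\M_{\A_n}^\sigma}{\sigmaKantorovich{\Lip_n}} = 0 \leq r$, Proposition \ref{compact-tunnel-prop} applies at once and delivers that $(\D,\Lip_\D)$ is a {\Lqcms}, that $\pi_n$ and $\pi$ are isometric *-epimorphisms, and that every $r$-admissible pair has $K = \M_\D^\sigma$. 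With $K$ thus forced, the same bookkeeping as above shows that $(\varepsilon, \M_\D^\sigma)$ being $r$-admissible is equivalent to $\tunnelextent{\tau} \leq \varepsilon$ in the sense of Definition \ref{tunnel-extent}, so that $\propinquity{}((\A_n,\Lip_n),(\A,\Lip)) \leq \tunnelextent{\tau|r}$.

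The hard part, as I see it, is almost entirely notational: one must carefully disentangle Definitions \ref{lift-set-def}, \ref{left-admissible-def}, \ref{admissible-pair-def}, and \ref{admissible-def}, and verify that every clause either trivializes or reduces exactly as described when both topographies shrink to a single point. The only analytic input required beyond this bookkeeping is Proposition \ref{compact-tunnel-prop}, which handles the passage from an arbitrary topography on the tunnel C*-algebra $\D$ to the trivial one. Once the identity $\mathsf{\Lambda}^{\#}_{r} = \propinquity{}$ is in hand for every $r > 0$, no further argument is needed, and the equivalence of the two notions of convergence for sequences of {\Lqcms s} follows immediately.
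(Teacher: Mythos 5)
Your proposal is correct and takes essentially the same route as the paper: \cite{Latremoliere14b} likewise realizes each {\Lqcms} as a {\pqpms} with the trivial topography $\C\unit_\A$ and its unique base point, promotes compact-theory Leibniz tunnels to passages with $\M_\D = \C\unit_\D$, and uses Proposition (\ref{compact-tunnel-prop}) to collapse arbitrary $r$-tunnels back to Leibniz tunnels, thereby identifying $\mathsf{\Lambda}^{\#}_{r}$ with $\propinquity{}$ for every $r>0$. The single clause that does not merely trivialize under your bookkeeping --- and which you should make explicit --- is the nonemptiness of the lift sets $\liftsettunnel{\tau}{a}{\Lip_\A(a),r,\varepsilon,K}$ with the exact bound $\Lip_\D(d)\leq\Lip_\A(a)$, i.e.\ the attainment of the quotient infimum, which holds because the Lip-norms involved are lower semicontinuous and their unit balls are compact modulo scalars (cf.\ \cite[Lemma 4.2]{Latremoliere13c}, as recalled after Definition (\ref{targetset-def})).
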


We thus propose that the topographic propinquity provides a possible avenue to discuss the convergence of {\pqpms s}.

\providecommand{\bysame}{\leavevmode\hbox to3em{\hrulefill}\thinspace}
\providecommand{\MR}{\relax\ifhmode\unskip\space\fi MR }
% \MRhref is called by the amsart/book/proc definition of \MR.
\providecommand{\MRhref}[2]{%
  \href{http://www.ams.org/mathscinet-getitem?mr=#1}{#2}
}
\providecommand{\href}[2]{#2}

\vfill


\begin{thebibliography}{10}

\bibitem{Aguilar}
{K}. {A}guilar, \emph{Noncommutative metric geometry and twisted {$SU(2)$}},
  Research notes.

\bibitem{Latremoliere15d}
{K}. {A}guilar and {F}. {L}atr{\'e}moli{\`e}re, \emph{Quantum ultrametrics on
  af algebras and the {G}romov--{H}ausdorff propinquity}, Studia Mathematica
  \textbf{231} (2015), no.~2, 149--194, ArXiv: 1511.07114.

\bibitem{Akemann89}
{C.} {A}kemann, {J}. {A}nderson, and {G.}~{K.} {P}edersen, \emph{Approaching
  infinity in {$C^\ast$-algebras}}, Journal of Operator Theory \textbf{21}
  (1989), 255--271.

\bibitem{Alfsen01}
{E}. {A}lfsen and {F}. {S}hultz, \emph{State spaces of operator algebras},
  Birkh{\"a}user, 2001.

\bibitem{Antonescu04}
{C}. {A}ntonescu and {E}. {C}hristensen, \emph{Spectral triples for af
  {$C^\ast$}-algebras and metrics on the cantor set}, J. Oper. Theory
  \textbf{56} (2006), no.~1, 17--46, ArXiv: 0309044.

\bibitem{Arveson76}
{W}. {A}rveson, \emph{An invitation to {C*--A}lgebras}, {GTM}, vol.~39,
  Springer, 1976.

\bibitem{Barnsley93}
{M}. {B}arnsley, \emph{Fractals everywhere}, 2nd edition ed., Morgan Kaufmann,
  1993, 1998.

\bibitem{Bellissard10}
{J}. {B}ellissard, {M}. {M}arcolli, and {K}. {R}eihani, \emph{Dynamical systems
  on spectral metric spaces}, Submitted (2010), 46 pages, ArXiv: 1008.4617.

\bibitem{Voigt14}
{J}. {B}howmick, {C}. {V}oigt, and {J}. {Z}acharias, \emph{Compact quantum
  metric spaces from quantum groups of rapid decay}, Submitted (2014), 19
  Pages, arXiv:1406.0771.

\bibitem{Blackadar91}
{B}. {B}lackadar and {J}. {C}untz, \emph{Differential {B}anach algebra norms
  and smooth subalgebras of {C*--A}lgebras}, Journal of Operator Theory
  \textbf{26} (1991), no.~2, 255--282.

\bibitem{Blackadar97}
{B}. {B}lackadar and {E}. {K}irchberg, \emph{Generalized inductive limits of
  finite dimensional {$C^\ast$}-algebras}, Math. Ann. \textbf{307} (1997),
  343--380.

\bibitem{Blanchard97}
{E}. {B}lanchard, \emph{Subtriviality of continuous fields of nuclear
  {$C^\ast$}-algebras}, J. Reine Angew. Math. \textbf{489} (1997), 133--149,
  ArXiv: math/OA 0012128.

\bibitem{burago01}
{D}. {B}urago, {Y}. {B}urago, and {S}. {I}vanov, \emph{A course in metric
  geomtry}, Graduate Texts in Mathematics, vol.~33, American Mathematical
  Society, 2001.

\bibitem{Cagnache11}
{E}. {C}agnache, {F} {D'A}ndrea, {P}. {M}artinetti, and {J}.~{C}. {W}allet,
  \emph{The spectral distance on the moyal plane}, J. Geom. Phys. \textbf{61}
  (2011), 1881--1897, ArXiv: 0912.0906.

\bibitem{Chakraborty03}
{P}.~{S}. {C}hakraborty and {P}al {A}., \emph{Equivariant spectral triples on
  the quantum {$SU(2)$} group}, {$K$}-theory \textbf{28} (2003), no.~3,
  107--126, ArXiv: math/0201004.

\bibitem{Connes89}
A.~{C}onnes, \emph{Compact metric spaces, {F}redholm modules and
  hyperfiniteness}, Ergodic Theory and Dynamical Systems \textbf{9} (1989),
  no.~2, 207--220.

\bibitem{Connes}
\bysame, \emph{Noncommutative geometry}, Academic Press, San Diego, 1994.

\bibitem{Connes97}
{A}. {C}onnes, {M}. {D}ouglas, and {A}. {S}chwarz, \emph{Noncommutative
  geometry and matrix theory: Compactification on tori}, JHEP \textbf{9802}
  (1998), hep-th/9711162.

\bibitem{Connes08}
A.~{C}onnes and H.~{M}oscovici, \emph{Type {III} and spectral triples}, Traces
  in Geometry, Number Theory and Quantum Fields, Aspects of Math., vol. E38,
  Springer-Verlag, 2008, pp.~51--71.

\bibitem{Dabrowski05}
{L}. {D}abrowski, {G}. {L}andi, {A}. {S}itarz, {W}. van {S}uijlekom, and J.~C.
  {V}arilly, \emph{The dirac operator on {$SU_q(2)$}}, Comm. Math. Phys.
  \textbf{259} (2005), 729--759.

\bibitem{Sitarz13}
{L}. {D}{\k{a}}browski and {A}. {S}itarz, \emph{Curved noncommutative torus and
  {G}au{\ss}-{B}onnet}, Journal of Mathematical Physics \textbf{013518} (2013).

\bibitem{Sitarz15}
\bysame, \emph{Asymmetric noncommutative torus}, Submitted (2014), 10 pages,
  ArXiv: 1406.4645.

\bibitem{Dixmier}
J.~{D}ixmier, \emph{Les {C*--}algebres et leur repr{\'e}sentations},
  Gauthier-Villars, 1969, (reprint) Editions Jacques Gabay, 1996.

\bibitem{Dobrushin70}
{R}.~{L}. {D}obrushin, \emph{Prescribing a system of random variables by
  conditional probabilities}, Theory of probability and its applications
  \textbf{15} (1970), no.~3, 459--486.

\bibitem{Dudley}
R.~M. Dudley, \emph{Real analysis and probability}, 2002 ed., Cambridge Studies
  in Advanced Mathematics, vol.~74, Cambridge University Press, 2002.

\bibitem{Edwards75}
{D}. {E}dwards, \emph{The structure of superspace}, Studies in Topology (1975),
  121--133.

\bibitem{Effros80}
{E}.~{G}. {E}ffros, {D}.~{E}. {H}andelman, and {C}.~{L}. {S}hen,
  \emph{Dimension groups and their affine representations}, Amer. J. Math.
  \textbf{102} (1980), no.~2, 385--407.

\bibitem{DAndrea10}
{P}.~{M}artinetti {F}.~{D'A}ndrea, \emph{A view on optimal transport from
  noncommutative geometry}, SIGMA \textbf{6} (2010), no.~57, 24 oages, ArXiv:
  0906.1267.

\bibitem{Folland89}
{G}. {F}olland, \emph{Harmonic analysis in phase space}, Princteon University
  Press, 1989.

\bibitem{Fortet-Mourier-53}
{R}. {F}ortet and {E}. {M}ourier, \emph{Convergence de la r{\'e}partition
  empirique vers la r{\'e}partition th{\'e}orique}, Ann. Scient. Ec. Norm. Sup
  \textbf{70} (1953), no.~3, pp. 267--285.

\bibitem{Varilly04}
{V}. {G}ayral, {J}.~{M}. {G}raci {B}ondia, {B}. {I}ochum, {T}. {S}ch{\"u}cker,
  and {J}.~{C}. {V}arilly, \emph{Moyal planes are spectral triples}, Comm.
  Math. Phys. \textbf{246} (2004), 569--623, ArXiv: hep-th/0307241.

\bibitem{Glimm60}
{J}. {G}limm, \emph{On a certain class of operator algebras}, Trans. Amer.
  Math. Soc. \textbf{95} (1960), 318--340.

\bibitem{GraciaBondia88a}
J.~M. {G}racia {B}ondia and J.~C. {V}arilly, \emph{Algebras of distributions
  suitable for phase-space quantum mechanics {I}}, J. Math. Phys. \textbf{29}
  (1988), no.~4, 869--879.

\bibitem{GraciaBondia88b}
\bysame, \emph{Algebras of distributions suitable for phase-space quantum
  mechanics {II}}, J. Math. Phys. \textbf{29} (1988), no.~4, 880--887.

\bibitem{Gromov81}
M.~{G}romov, \emph{Groups of polynomial growth and expanding maps},
  Publications math{\'e}matiques de l' {I. H. E. S.} \textbf{53} (1981),
  53--78.

\bibitem{Gromov}
\bysame, \emph{Metric structures for {R}iemannian and non-{R}iemannian spaces},
  Progress in Mathematics, Birkh{\"a}user, 1999.

\bibitem{Hausdorff}
{F}. {H}ausdorff, \emph{{G}rundz{\"u}ge der {M}engenlehre}, Verlag Von Veit und
  Comp., 1914.

\bibitem{Hinz13}
{M}. {H}inz, {D}.~{J}. {K}elleher, and {A}. {T}eplyaev, \emph{Metrics and
  spectral triples for dirichlet and resistance forms}, European Journal of
  Noncommutative Geometry (2013), 25 pages, ArXiv: 1309.5937.

\bibitem{Kadisson51}
{R}. {K}adison, \emph{A representation theory for commutative topological
  algebras}, Mem. Amer. Math. Soc. \textbf{7} (1951).

\bibitem{Kantorovich40}
{L}.~{V}. {K}antorovich, \emph{On one effective method of solving certain
  classes of extremal problems}, Dokl. Akad. Nauk. USSR \textbf{28} (1940),
  212--215.

\bibitem{Kantorovich58}
{L}.~{V}. {K}antorovich and {G}.~{Sh}. {R}ubinstein, \emph{On the space of
  completely additive functions}, Vestnik Leningrad Univ., Ser. Mat. Mekh. i
  Astron. \textbf{13} (1958), no.~7, 52--59, In Russian.

\bibitem{Kellendock12}
{J}. {K}ellendonk and {J}. {S}avinien, \emph{Spectral triples and
  characterization of aperiodic order}, Proc. London Math. Soc. \textbf{104}
  (2012), no.~1, 123--157.

\bibitem{kerr02}
D.~{K}err, \emph{Matricial quantum {G}romov-{H}ausdorff distance}, J. Funct.
  Anal. \textbf{205} (2003), no.~1, 132--167, math.OA/0207282.

\bibitem{kerr09}
{D}. {K}err and {H}. {L}i, \emph{On {G}romov--{H}ausdorff convergence of
  operator metric spaces}, J. Oper. Theory \textbf{1} (2009), no.~1, 83--109.

\bibitem{Latremoliere05}
{F}. {L}atr{\'e}moli{\`e}re, \emph{Approximation of the quantum tori by finite
  quantum tori for the quantum {G}romov-{H}ausdorff distance}, Journal of
  Funct. Anal. \textbf{223} (2005), 365--395, math.OA/0310214.

\bibitem{Latremoliere05b}
\bysame, \emph{Bounded-lipschitz distances on the state space of a
  {C*}-algebra}, Tawainese Journal of Mathematics \textbf{11} (2007), no.~2,
  447--469, math.OA/0510340.

\bibitem{Latremoliere12b}
\bysame, \emph{Quantum locally compact metric spaces}, Journal of Functional
  Analysis \textbf{264} (2013), no.~1, 362--402, ArXiv: 1208.2398.

\bibitem{Latremoliere13c}
\bysame, \emph{Convergence of fuzzy tori and quantum tori for the quantum
  {G}romov--{H}ausdorff {P}ropinquity: an explicit approach.}, Accepted,
  M{\"u}nster Journal of Mathematics (2014), 41 pages, ArXiv: math/1312.0069.

\bibitem{Latremoliere14b}
\bysame, \emph{A topographic {G}romov-{H}ausdorff hypertopology for quantum
  proper metric spaces}, Submitted (2014), 67 Pages, ArXiv: 1406.0233.

\bibitem{Latremoliere14}
\bysame, \emph{The triangle inequality and the dual {G}romov-{H}ausdorff
  propinquity}, Accepted in Indiana University Journal of Mathematics (2014),
  16 Pages., ArXiv: 1404.6633.

\bibitem{Latremoliere15c}
\bysame, \emph{Curved noncommutative tori as {L}eibniz compact quantum metric
  spaces}, Journal of Math. Phys. \textbf{56} (2015), no.~12, 123503, 16 pages,
  ArXiv: 1507.08771.

\bibitem{Latremoliere13b}
\bysame, \emph{The dual {G}romov--{H}ausdorff {P}ropinquity}, Journal de
  Math{\'e}matiques Pures et Appliqu{\'e}es \textbf{103} (2015), no.~2,
  303--351, ArXiv: 1311.0104.

\bibitem{Latremoliere15}
\bysame, \emph{A compactness theorem for the dual {G}romov-{H}ausdorff
  propinquity}, Accepted in Indiana University Journal of Mathematics (2016),
  40 Pages, ArXiv: 1501.06121.

\bibitem{Latremoliere13}
\bysame, \emph{The {Q}uantum {G}romov-{H}ausdorff {P}ropinquity}, Trans. Amer.
  Math. Soc. \textbf{368} (2016), no.~1, 365--411, electronically published on
  May 22, 2015, http://dx.doi.org/10.1090/tran/6334, ArXiv: 1302.4058.

\bibitem{Latremoliere11c}
{F}. {L}atr{\'e}moli{\`e}re and {J}. {P}acker, \emph{Noncommutative solenoids},
  Accepted in New York Journal of Mathematics (2011), 30 pages, ArXiv:
  1110.6227.

\bibitem{li03}
H.~{L}i, \emph{{$C^\ast$}-algebraic quantum {G}romov-{H}ausdorff distance},
  (2003), ArXiv: math.OA/0312003.

\bibitem{li05}
\bysame, \emph{{$\theta$}-deformations as compact quantum metric spaces}, Comm.
  Math. Phys. \textbf{1} (2005), 213--238, ArXiv: math/OA: 0311500.

\bibitem{li06}
\bysame, \emph{Order-unit quantum {G}romov-{H}ausdorff distance}, J. Funct.
  Anal. \textbf{233} (2006), no.~2, 312--360.

\bibitem{Li04}
\bysame, \emph{Compact quantum metric sspace and ergodic actions of compact
  quantum groups}, J. Funct. Anal. \textbf{256} (2009), no.~10, 3368--3408,
  ArXiv: 0411178.

\bibitem{li09}
{H}. {L}i, \emph{Metrics aspects of noncommutative homogenous space}, J. Funct.
  Anal. \textbf{257} (2009), no.~7, 2325--2350, ArXiv: 0810.4694.

\bibitem{Martinetti11}
{P}. {M}artinetti and {L}. {T}omassini, \emph{Noncommutative geometry of the
  moyal plane: translation isometries, connes spectral distance between
  coherent states, pythagoras inequality},  (2011), 29 pages, ArXiv: 1110.6164.

\bibitem{McShane34}
{E}.~{J}. {M}c{S}hane, \emph{Extension of range of functions}, Bull. Amer.
  Math. Soc. \textbf{40} (1934), no.~12, 825--920.

\bibitem{Zumino98}
B.~Morariu and B.~Zumino, \emph{Super {Y}ang-{M}ills on the noncomutative
  torus}, Arnowitt Festschrift (1998), no.~Relativity, Particle Physics, and
  Cosmology, hep-th/9807198.

\bibitem{Seiberg99}
{N}athan {S}eiberg and {E}dward {W}itten, \emph{String theory and
  noncommutative geometry}, JHEP \textbf{9909} (1999), no.~32, ArXiv:
  hep-th/9908142.

\bibitem{Ozawa05}
{N}. {O}zawa and M.~A. {R}ieffel, \emph{Hyperbolic group {$C\sp\ast$}-algebras
  and free product {$C\sp\ast$}-algebras as compact quantum metric spaces},
  Canad. J. Math. \textbf{57} (2005), 1056--1079, ArXiv: math/0302310.

\bibitem{Paterson14}
{A}. {P}aterson, \emph{Constractive spectral triples for crossed-products},
  Math. Scand. \textbf{114} (2014), no.~2, 257--298, 1204.4404.

\bibitem{Pavlovic98}
{B}. {P}avlovic, \emph{Defining metric spaces via operators from unital
  ${C^\ast}$-algebras}, Pacific J. Math. \textbf{186} (1998), no.~2, 285--313.

\bibitem{Pedersen79}
{G}.~{K}. {P}edersen, \emph{{C}*-{A}lgebras and their automorphism groups},
  Academic Press, 1979.

\bibitem{Ponge14}
{R}. {P}onge and {H}. {W}ang, \emph{Noncommutative geometry and conformal
  geometry i : local index formula and conformal invariant}, Submitted (2014),
  40 Pages, ArXiv: 1411.3701.

\bibitem{Rieffel98a}
M.~A. {R}ieffel, \emph{Metrics on states from actions of compact groups},
  Documenta Mathematica \textbf{3} (1998), 215--229, math.OA/9807084.

\bibitem{Rieffel99}
\bysame, \emph{Metrics on state spaces}, Documenta Math. \textbf{4} (1999),
  559--600, math.OA/9906151.

\bibitem{Rieffel02}
\bysame, \emph{Group {$C\sp\ast$}-algebras as compact quantum metric spaces},
  Documenta Mathematica \textbf{7} (2002), 605--651, ArXiv: math/0205195.

\bibitem{Rieffel01}
\bysame, \emph{Matrix algebras converge to the sphere for quantum
  {G}romov--{H}ausdorff distance}, Mem. Amer. Math. Soc. \textbf{168} (2004),
  no.~796, 67--91, math.OA/0108005.

\bibitem{Rieffel06}
\bysame, \emph{Lipschitz extension constants equal projection constants},
  Contemporary Math. \textbf{414} (2006), 147--162, ArXiv: math/0508097.

\bibitem{Rieffel08}
\bysame, \emph{A global view of equivariant vector bundles and {D}irac
  operators on some compact homogenous spaces}, Contemporary Math \textbf{449}
  (2008), 399--415, ArXiv: math/0703496.

\bibitem{Rieffel09}
\bysame, \emph{Distances between matrix alegbras that converge to coadjoint
  orbits}, Proc. Sympos. Pure Math. \textbf{81} (2010), 173--180, ArXiv:
  0910.1968.

\bibitem{Rieffel10c}
\bysame, \emph{{L}eibniz seminorms for "matrix algebras converge to the
  sphere"}, Clay Math. Proc. \textbf{11} (2010), 543--578, ArXiv: 0707.3229.

\bibitem{Rieffel10}
\bysame, \emph{Vector bundles and {G}romov-{H}ausdorff distance}, Journal of
  {K}-theory \textbf{5} (2010), 39--103, ArXiv: math/0608266.

\bibitem{Rieffel11}
\bysame, \emph{Leibniz seminorms and best approximation from
  {$C^\ast$}-subalgebras}, Sci China Math \textbf{54} (2011), no.~11,
  2259--2274, ArXiv: 1008.3773.

\bibitem{Rieffel11b}
\bysame, \emph{Leibniz seminorms and best approximations from c*-subalgebras},
  Sci. China Math. \textbf{54} (2011), no.~11, 2259--2274, ArXiv: 1008.3773.

\bibitem{Rieffel12}
\bysame, \emph{Standard deviation is a strongly {L}eibniz seminorm}, Submitted
  (2012), 24 pages, ArXiv: 1208.4072.

\bibitem{Rieffel14}
\bysame, \emph{Non-commutative resistance networks}, SIGMA \textbf{10} (2014),
  no.~64, 46 pages, ArXiv: 1401.4622.

\bibitem{Rieffel15}
\bysame, \emph{Matricial bridges for "matrix algebras converge to the sphere"},
  Submitted (2015), 31 pages, ArXiv: 1502.00329.

\bibitem{Rieffel00}
\bysame, \emph{{G}romov-{H}ausdorff distance for quantum metric spaces}, Mem.
  Amer. Math. Soc. \textbf{168} (March 2004), no.~796, math.OA/0011063.

\bibitem{tHooft02}
G.~{T}'Hooft, \emph{Determinism beneath quantum mechanics}, Presentation at
  "Quo Vadis Quantum Mechanics?", Temple University, Philadelphia (2002),
  quant-ph/0212095.

\bibitem{Villani09}
{C}. {V}illani, \emph{Optimal transport: old and new}, Gr{\"u}ndehleren der
  Mathematischen Wissershaften, vol. 338, Springer, 2009.

\bibitem{Wallet12}
{J}. {W}allet, \emph{Connes distance by examples: Homothetic spectral metric
  spaces}, Rev. Math. Phys. \textbf{24} (2012), no.~9, ArXiv: 1112.3285.

\bibitem{Wasserstein69}
{L}.~{N}. {W}asserstein, \emph{Markov processes on a countable product space,
  describing large systems of automata}, Problemy Peredachi Infomatsii
  \textbf{5} (1969), no.~3, 64--73, In Russian.

\bibitem{Wheeler68}
{J}. {W}heeler, \emph{Superspace and the nature of quantum geometrodynamics},
  {B}attelle rencontres. 1967 Lectures in mathematics and physics ({D}e{W}itt
  {C}. and {J}. {W}heeler, eds.), 1968.

\bibitem{Wu05}
{W}. {W}u, \emph{Non-commutative metrics on state spaces}, J Ramanujan Math.
  Soc. \textbf{20} (2005), no.~3, 215--214, ArXiv: math.OA/04112267.

\bibitem{Wu06a}
\bysame, \emph{Non-commutative metric topology on matrix state spaces}, Proc.
  Amer. Math. Soc. \textbf{134} (2006), no.~2, 443--453, ArXiv:
  math.OA/0410587.

\bibitem{Wu06b}
\bysame, \emph{Quantized {G}romov-{H}ausdorff distance}, J. Funct. Anal.
  \textbf{238} (2006), no.~1, 58--98, ArXiv: math.OA/0503344.

\end{thebibliography}
\end{document}